\newcommand{\B}{\mathcal{B}}
\def\XXint#1#2#3{{\setbox0=\hbox{$#1{#2#3}{\int}$ }
\vcenter{\hbox{$#2#3$ }}\kern-.56\wd0}}
\newcommand{\Qb}{\mathcal{Q}}
\newcommand{\N}{\mathbb{N}}
\newcommand{\R}{\mathbb{R}}
\newcommand{\Z}{\mathbb{Z}}
\newcommand{\bS}{\mathbb{S}}
\newcommand{\p}{\mathfrak p}
\newcommand{\cH}{\mathcal{H}}
\newcommand{\Sp}{\mathbb{S}}
\newcommand{\ep}{\varepsilon}
\newcommand{\conv}[1]{\xrightarrow{\,#1\,}}
\newcommand{\s}{\hspace{7pt}}
\newcommand{\vsp}{\vspace{4.5pt}}
\newcommand{\ov}[1]{\widetilde{#1}}
\newcommand{\sg}{\sigma}
\newcommand{\inj}{\textnormal{inj}}
\newtheorem{theorem}{Theorem}[section]
\newtheorem{proposition}[theorem]{Proposition}
\newtheorem{lemma}[theorem]{Lemma}
\newtheorem{corollary}[theorem]{Corollary}
\theoremstyle{definition}
\newtheorem{definition}[theorem]{Definition}
\newtheorem{remark}[theorem]{Remark}
\newcommand{\addressa}[1]{\gdef\@addressa{#1}}
\newcommand{\emaila}[1]{\gdef\@emaila{\url{#1}}}
\newcommand{\@endstuff}{\par\vspace{\baselineskip}\noindent
\begin{tabular}{@{}l}\scshape\@addressa\\\textit{E-mail address:} \@emaila\end{tabular} 

}
\begin{document}
\setlength{\abovedisplayskip}{6pt}          
\setlength{\belowdisplayskip}{6pt}          
\setlength{\abovedisplayshortskip}{6pt}     
\setlength{\belowdisplayshortskip}{6pt}     

\setlength{\parskip}{0pt plus 1pt}
\setlength{\parindent}{15pt}
\setlength{\abovedisplayskip}{5pt plus 1pt minus 2pt}
\setlength{\belowdisplayskip}{5pt plus 1pt minus 2pt}

\setlist[itemize]{itemsep=2pt, topsep=3pt, partopsep=1pt, parsep=1pt}
\setlist[enumerate]{noitemsep, topsep=2pt, partopsep=0pt, parsep=0pt}

\titlespacing{\section}{0pt}{10pt plus 2pt minus 2pt}{6pt plus 1pt minus 1pt}
\titlespacing{\subsection}{0pt}{8pt plus 2pt minus 2pt}{5pt plus 1pt minus 1pt}

\title{\huge Weyl Law and convergence in the classical limit for min-max nonlocal minimal surfaces}
\author{Enric Florit-Simon}
\date{}
\maketitle

\addressa{Enric Florit-Simon \\ Department of Mathematics, ETH Z\"{u}rich \\ Rämistrasse 101, 8092 Zürich, Switzerland}
\emaila{enric.florit@math.ethz.ch}

\abstract{We study nonlocal minimal surfaces as a new approximation theory for the area functional, and more specifically in the context of Yau's conjecture on the existence of minimal surfaces in closed three-dimensional manifolds. This programme offers an alternative to the Almgren--Pitts and Allen--Cahn approaches, with advantageous features both from the existence and regularity viewpoints.

We build on recent work in which the author and collaborators constructed infinitely many nonlocal $s$-minimal hypersurfaces (via min-max methods) on any closed $n$-dimensional Riemannian manifold, establishing a full analogue of Yau's conjecture for $s\in(0,1)$.

The present article first proves a Weyl-type Law for the fractional perimeters of these hypersurfaces. The rest---and main part---of the article is devoted to obtaining uniform estimates (in the classical limit $s\to 1$) for min-max $s$-minimal surfaces in closed three-manifolds, eventually establishing their convergence to smooth, classical minimal surfaces. We recover in particular recent results on existence, generic density and equidistribution of minimal surfaces, which are a strong form of Yau's conjecture in this setting.}

\tableofcontents

\section{Introduction and main results}
In \cite{CFS}, the author and collaborators obtained the existence (for any fixed $s\in(0,1)$) of infinitely many nonlocal $s$-minimal (hyper)surfaces $\{E_{\p}^s\}_{\p\in\N}$ in any closed Riemannian manifold $M$ of arbitrary dimension $n$, which in particular proves the nonlocal version of a well-known conjecture (recently proved by Song \cite{Song}) of Yau \cite{Yau82} on the existence of infinitely many classical minimal surfaces on closed three-dimensional manifolds. A partial statement of the result in \cite{CFS} is the following:
\begin{theorem}[\textbf{Fractional Yau Conjecture \cite{CFS}}]\label{FracYau3}
    Let $(M^n, g)$ be an $n$-dimensional, closed Riemannian manifold. Then, for every $s\in(0,1)$ and every natural number $\p\in\N^+$, there exists an $s$-minimal surface $E_{\p}^s$ with Morse index at most $\p$ (in the sense of Definition \ref{WeakMorseDef}) and fractional perimeter
    \begin{equation}\label{PerGrowth}
    \frac{c(M)}{1-s}\p^{s/n}\le {\rm Per}_s(\partial E_{\p}^s)=: l_s(\p,M)\le \frac{C(M)}{1-s}\p^{s/n},
\end{equation}
    where $c(M)$ and $C(M)$ are uniform as $s\to 1$.\\
    In particular, $M$ contains infinitely many $s$-minimal surfaces. In addition,  if  $s\in(s_0,1)$ where $s_0\in(0,1)$ is a universal constant, if $n=3$ or $n=4$ then $\partial E_{\p}^s$ is a smooth hypersurface, whereas if $n\geq 5$ then $\partial E_{\p}^s$ is smooth outside of a set of Hausdorff dimension at most $n-5$.
\end{theorem}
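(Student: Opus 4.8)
The plan is to set up an Almgren--Pitts min-max theory (in the refined form of Marques--Neves) for the nonlocal perimeter ${\rm Per}_s$ directly in the category of Caccioppoli sets of $M$ --- rather than passing through a phase-transition approximation, which for $s\ge\tfrac12$ would only see the classical perimeter in the limit --- and then to obtain \eqref{PerGrowth} by a Weyl-type (Gromov--Guth--Liokumovich--Marques--Neves) argument adapted to the nonlocality of ${\rm Per}_s$. Fix $s\in(0,1)$. Let $\mathcal{Z}$ be the space of boundaries of Caccioppoli sets of $M$ with the $L^1$ topology, which by Almgren's isomorphism is weakly homotopy equivalent to $\mathbb{RP}^\infty$; its $\Z_2$-cohomology ring is generated by a class $\bar\lambda\in H^1$, and for $p\in\N^+$ one defines the \emph{fractional $p$-width}
\[
l_s(p,M)\;=\;\inf_{\Phi}\ \sup_{x\in X}\ {\rm Per}_s\big(\partial\,\Phi(x)\big),
\]
the infimum running over all $p$-sweepouts, i.e.\ $L^1$-continuous maps $\Phi$ from a finite cubical complex $X$ into $\mathcal{Z}$ with no concentration of mass and $\Phi^*(\bar\lambda)^p\neq 0$. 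Since ${\rm Per}_s$ is lower semicontinuous in $L^1$ with compact sublevel sets, $l_s(p,M)$ is a genuine min-max value; it is monotone in $p$ with $l_s(p,M)\to\infty$, and $l_s(1,M)>0$ by the fractional isoperimetric inequality.

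The core existence step is to show each $l_s(p,M)$ is attained by an $s$-minimal set $E_p^s$ of Morse index at most $p$ (in the sense of Definition~\ref{WeakMorseDef}). This requires a discretization and interpolation scheme adapted to the $L^1$-topology and the $\mathbf{F}_s$-metric induced by ${\rm Per}_s$, a pull-tight deformation decreasing ${\rm Per}_s$ away from $s$-stationary configurations (using the fractional mean curvature flow as deformation vector field), and the combinatorial Pitts argument producing a limit set that is ${\rm Per}_s$-almost minimizing in small annuli and has vanishing nonlocal mean curvature. The index bound is then the Lusternik--Schnirelmann deformation argument of Marques--Neves transported to the nonlocal second variation: if $E_p^s$ had index $>p$, the negative cone of the second variation would let one push the optimal $p$-sweepout strictly below $l_s(p,M)$. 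Finally, the regularity of $\partial E_p^s$ follows by combining the interior regularity theory for $s$-minimal surfaces (Caffarelli--Roquejoffre--Savin, Savin--Valdinoci, and the $s\to1$ improvement of Caffarelli--Valdinoci) with the bounded-index input, which confines the possible singularities and reduces matters to classifying stable $s$-minimal cones in low dimensions; this yields that $\partial E_p^s$ is smooth when $n=3,4$ and smooth away from a set of codimension at least $5$ when $n\ge5$, provided $s>s_0$.

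It remains to prove the two-sided estimate \eqref{PerGrowth}. For the upper bound, decompose $M$ into $\sim p$ bi-Lipschitz coordinate cubes of side $r\sim p^{-1/n}$ and build, by a Guth ``bend and cancel'' construction, a $p$-sweepout each of whose slices is, inside every cube, a hypersurface nearly flat at scale $r$ with $\HH$-area $\lesssim r^{n-1}$ there (so the whole slice has $\HH$-area $\lesssim p^{1/n}$). The scaling identity ${\rm Per}_s(\partial(rE))=r^{n-s}{\rm Per}_s(\partial E)$ together with the Bourgain--Brezis--Mironescu--D\'avila asymptotic $(1-s){\rm Per}_s(\partial E;B_1)\to\kappa_n\,\HH(\partial E\cap B_1)$ bounds the contribution of each cube by $\lesssim\tfrac{1}{1-s}r^{n-s}=\tfrac{1}{1-s}p^{-(n-s)/n}$, while the mutual interaction of distinct cubes, where the kernel $|x-y|^{-n-s}$ acts only at scales $>r$, contributes $O(\tfrac{1}{1-s})$; summing over the $\sim p$ cubes gives ${\rm Per}_s\le\tfrac{C(M)}{1-s}p^{s/n}$ with $C(M)$ uniform as $s\to1$ --- note the exponent $s/n$, rather than $1/n$, reflects that the efficient sweepouts are spread out precisely at scale $p^{-1/n}$. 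For the lower bound, take $\sim p$ disjoint geodesic balls of radius $r\sim p^{-1/n}$; a Lusternik--Schnirelmann (Gromov--Guth) argument shows that some slice $\Phi(x_\star)$ must simultaneously divide $\gtrsim p$ of these balls into two pieces of comparable volume, and the localized fractional relative isoperimetric inequality --- whose constant, after the $(1-s)$ normalization, is uniform in $s$ --- forces each such ball $B_i$ to carry localized fractional perimeter $\gtrsim\tfrac{1}{1-s}r^{n-s}$. Since the balls are disjoint and all interaction terms of ${\rm Per}_s$ are nonnegative,
\[
{\rm Per}_s\big(\partial\Phi(x_\star)\big)\ \ge\ \sum_i\mathcal{L}\big(\Phi(x_\star)\cap B_i,\ \Phi(x_\star)^c\cap B_i\big)\ \gtrsim\ \tfrac{1}{1-s}\,p\cdot r^{n-s}\ =\ \tfrac{c(M)}{1-s}\,p^{s/n}.
\]

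I expect the main obstacle to be the existence-and-regularity step. Because the first variation of ${\rm Per}_s$ is a global operator, the classical varifold compactness and the Pitts almost-minimizing-in-annuli scheme must be rebuilt for Caccioppoli sets, and --- unlike in the classical Almgren--Pitts setting --- the regularity of the min-max hypersurface cannot simply be quoted: one must combine the nonlocal $\varepsilon$-regularity theorem, a stratification of the singular set, and the reduction of bounded-index points to stable nonlocal cones. A secondary difficulty is that the long-range interactions of ${\rm Per}_s$ break the naive additivity underlying the classical Weyl estimate, so that in the upper bound the inter-cube cross terms must be estimated directly and in the lower bound one must check that the localized fractional isoperimetric constants stay uniform as $s\to1$; it is precisely this $(1-s)$-normalized uniformity that produces the uniformity of $c(M)$ and $C(M)$ asserted in the statement.
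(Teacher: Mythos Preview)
Your proposal takes a fundamentally different route from the paper. The paper (following \cite{CFS}) does \emph{not} run min-max directly on ${\rm Per}_s$ over Caccioppoli sets; it works instead with the fractional Allen--Cahn energy
\[
\mathcal E_{\ep,s}(v,M)=\tfrac14[v]^2_{H^{s/2}(M)}+\ep^{-s}\int_M W(v),
\]
performs min-max over compact symmetric sets $A\subset H^{s/2}(M)$ with topological index $\ge \p+1$ (and finite $\mathcal H^\p$-measure, which via the Lazer--Solimini mechanism forces the Morse-index bound), obtains for each $\ep>0$ an Allen--Cahn critical point $u_{\ep,\p}$ with index $\le\p$ and energy $c_{\ep,s}(\p,M)$, and then sends $\ep\to 0$: the potential forces $u_{\ep,\p}\to\chi_E-\chi_{E^c}$ in $H^{s/2}$, the Sobolev part becomes exactly ${\rm Per}_s(E)$, and criticality and index pass to the limit (Theorems~\ref{mmbounds} and~\ref{StrongConv}, Corollary~\ref{AttNloc}). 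The bounds \eqref{PerGrowth} are established at the Allen--Cahn level: the upper bound by an explicit sweepout built from (smoothed) sign functions of degree-$\p$ polynomials in normal coordinates on $\sim\p$ balls of radius $\sim\p^{-1/n}$, the lower bound by a Lusternik--Schnirelmann argument; both are then carried through the limit $\ep\to 0$.

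Your dismissal of the phase-transition route --- ``for $s\ge\tfrac12$ would only see the classical perimeter in the limit'' --- is the key misconception. In the paper's scaling the Sobolev term carries \emph{no} power of $\ep$, so the $\ep\to0$ limit is ${\rm Per}_s$ itself for every $s\in(0,1)$; the dichotomy you have in mind concerns a different, rescaled functional. This is exactly what makes the Allen--Cahn route so economical here: one works with a smooth functional on a Hilbert space, standard deformation lemmas apply out of the box, and no discretization/interpolation or almost-minimizing machinery is needed. By contrast, your direct Almgren--Pitts scheme would require building pull-tight and an ``almost minimizing in annuli'' theory for a genuinely nonlocal functional, where the cut-and-paste and replacement-in-annuli arguments that drive Pitts' regularity have no obvious analogue; you flag this honestly as the main obstacle, but it is a substantial open problem rather than a routine adaptation, and the proposal as written is a research program, not a proof. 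Your upper/lower-bound sketch is close in spirit to the paper's, though the paper executes it at the $H^{s/2}$ level (using $BV$--$L^1$ interpolation rather than direct ${\rm Per}_s$ estimates) and then passes to the limit.
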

Here $l_s(\p,M)$ corresponds to a (limiting) min-max value arising from considering certain $\p$-parameter families of functions on $M$, and is defined in Section \ref{sec:minmax} in the present article. We remark that we will be employing slightly different min-max families than those in \cite{CFS} --- all the relevant setting and necessary modifications to obtain Theorem \ref{FracYau3} with our definitions are given in Section \ref{Setting}.\\

The main motivation behind the present article is to study nonlocal minimal surfaces as a {\it new approximation theory for the area functional}, more specifically in the context of min-max problems. The first result in this work is that, perhaps surprisingly, a Weyl-type Law holds for the fractional perimeters of these $s$-minimal surfaces, already for a fixed $s$:
\begin{theorem}[\textbf{Weyl Law for nonlocal minimal surfaces}]\label{thmweyl}
Let $(M,g)$ be a closed manifold of dimension $n$, and let $s\in(0,1)$. There exists a universal constant $\tau(n,s)>0$, depending only on $n$ and the fractional parameter $s$, such that
\begin{equation}\label{weyllaw}
    \lim_{\p\to\infty} \p^{-\frac{s}{n}}l_{s}(\p,M) = \tau(n,s)\textnormal{vol}(M,g)^{\frac{n-s}{n}}\, .
\end{equation}
\end{theorem}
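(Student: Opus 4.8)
The plan is to adapt, to the fractional setting, the proof of the Weyl law for the volume spectrum of Liokumovich--Marques--Neves; the genuinely new point is the control of the nonlocal interactions generated by $\mathrm{Per}_s$. Throughout, for measurable $A,B\subset\R^n$ I write $L(A,B)=\int_A\int_B|x-y|^{-n-s}\,dx\,dy$, and for an open set $\Omega$ I use the localized fractional perimeter $\mathrm{Per}_s(E;\Omega)=L(E\cap\Omega,E^c\cap\Omega)+L(E\cap\Omega,E^c\setminus\Omega)+L(E\setminus\Omega,E^c\cap\Omega)$; the associated $\p$-parameter min-max values $l_s(\p,\Omega)$, for $\Omega$ a Euclidean cube or a cubic chart of $M$, are defined through the framework of Section \ref{sec:minmax}. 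Two elementary facts get used constantly: $l_s(\p,\Omega)$ is nondecreasing in $\p$, and $L(\lambda A,\lambda B)=\lambda^{\,n-s}L(A,B)$, so $l_s(\p,Q)=(\mathrm{side}\,Q)^{\,n-s}\,l_s(\p,[0,1]^n)$.

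\emph{Step 1: the universal constant.} I would first show $\tau(n,s):=\lim_{\p\to\infty}\p^{-s/n}\,l_s(\p,[0,1]^n)$ exists in $(0,\infty)$. Subdivide $[0,1]^n$ into $k^n$ congruent subcubes $Q_1,\dots,Q_{k^n}$ of side $1/k$ and fix $m\in\N^+$. On one hand, the Lusternik--Schnirelmann inequality for the min-max construction of Section \ref{sec:minmax}, applied to the disjoint $\{Q_i\}$ with the localized perimeter, gives $l_s(k^nm,[0,1]^n)\ge\sum_i l_s(m,Q_i)=k^s\,l_s(m,[0,1]^n)$. On the other hand, the product of $m$-parameter sweepouts of the $Q_i$ is an admissible $k^nm$-parameter family of $[0,1]^n$, and the global fractional perimeter of any member is at most $\sum_i l_s(m,Q_i)$ plus the cross energy $\sum_{i\ne j}L(Q_i,Q_j)$, which is bounded by $\sum_i\mathrm{Per}_s(Q_i)\le C(n,s)\,k^{s}$ — finite precisely because $s<1$. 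Hence $k^s l_s(m,[0,1]^n)\le l_s(k^nm,[0,1]^n)\le k^s l_s(m,[0,1]^n)+C(n,s)k^s$. Writing $h(\p):=\p^{-s/n}l_s(\p,[0,1]^n)$, this reads $h(m)\le h(k^nm)\le h(m)+C(n,s)m^{-s/n}$ for all $k,m$, and combined with monotonicity in $\p$ (to interpolate between the indices $k^nm$) and the a priori two-sided bound (as in Theorem \ref{FracYau3}, taking a cubic chart) one gets $\limsup_\p h(\p)\le\liminf_\p h(\p)$ by a short Fekete-type argument: the limit exists, is positive and finite, and depends only on $n$ and $s$.

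\emph{Step 2: transfer to $(M,g)$.} Fix a fine partition of $M$ into $N$ charts $\Omega_1,\dots,\Omega_N$ that are $(1+\delta)$-bi-Lipschitz to Euclidean cubes, plus a remainder of arbitrarily small volume. A comparison lemma — estimating the distortion of $L$, and hence of $l_s$, under a $(1+\delta)$-bi-Lipschitz change of coordinates — gives $l_s(\p_i,\Omega_i)=(1+o_\delta(1))\,\tau(n,s)\,\mathrm{vol}(\Omega_i)^{\frac{n-s}{n}}\,\p_i^{s/n}$ for $\p_i$ large, uniformly in $i$. Running the two inequalities globally, the Lusternik--Schnirelmann inequality gives $l_s(\p,M)\ge\sum_i l_s(\p_i,\Omega_i)$ whenever $\sum_i\p_i\le\p$, while gluing product sweepouts gives $l_s(\p,M)\le\sum_i l_s(\p_i,\Omega_i)+\sum_{i\ne j}L(\Omega_i,\Omega_j)$ with the cross energy now a fixed finite constant $C(M,N,s)$ independent of $\p$. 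It remains to optimize the allocation $\p_1,\dots,\p_N$: since $t\mapsto t^{(n-s)/n}$ is concave, $\sum_i\mathrm{vol}(\Omega_i)^{\frac{n-s}{n}}\p_i^{s/n}$ subject to $\sum_i\p_i=\p$ is maximized at $\p_i\approx\p\,\mathrm{vol}(\Omega_i)/\mathrm{vol}(M,g)$, for which $\sum_i\mathrm{vol}(\Omega_i)^{\frac{n-s}{n}}\p_i^{s/n}=\p^{s/n}\,\mathrm{vol}(M,g)^{-s/n}\sum_i\mathrm{vol}(\Omega_i)=\p^{s/n}\,\mathrm{vol}(M,g)^{\frac{n-s}{n}}$ up to the small-volume remainder. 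Letting $\p\to\infty$ (so all $\p_i\to\infty$), then $\delta\to0$ and the partition refine, both bounds converge to $\tau(n,s)\,\mathrm{vol}(M,g)^{\frac{n-s}{n}}$, giving \eqref{weyllaw}.

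\emph{Main obstacle.} The heart of the matter is making the splitting and gluing of min-max families rigorous within the nonlocal min-max framework of Section \ref{sec:minmax}: the two Lusternik--Schnirelmann-type inequalities, the admissibility and topological non-degeneracy of product sweepouts (for which one arranges the factor sweepouts to be trivial near $\partial\Omega_i$, which also makes the cross-term bookkeeping in the gluing clean), and the passage from families to the geometric min-max values $l_s$. The analytic side — the bi-Lipschitz distortion estimates for $\mathrm{Per}_s$ and the uniform control of $\sum_{i\ne j}L(\Omega_i,\Omega_j)$ — is where the nonlocality genuinely enters, but it is benign: the kernel $|x-y|^{-n-s}$ is integrable across Lipschitz interfaces for $s<1$, so these cross energies are finite, scale with the correct homogeneity $n-s$, and are of strictly lower order in $\p$ than the main term, hence invisible in the limit \eqref{weyllaw}.
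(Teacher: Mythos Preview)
Your outline follows the Liokumovich--Marques--Neves template and you correctly isolate the cross-interaction terms as the new nonlocal ingredient; that intuition is right and the paper uses it too. But there are two genuine gaps.

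First, and most importantly, you never address the kernel. The quantity $l_s(\p,M)$ in the theorem is defined in Section~\ref{sec:minmax} through the Allen--Cahn energy built from the \emph{canonical} kernel $K_{M,s}$ of \eqref{wethiowhoihw2}, which comes from the heat kernel of $M$, not from $|x-y|^{-n-s}$ or $\mathrm{dist}_M(p,q)^{-n-s}$. Your argument works entirely with the Euclidean kernel; the comparison between the two is not automatic and is in fact the technical choice that organizes the paper's proof: one first proves the Weyl law for the explicit distance kernel $\alpha_{n,s}\,\mathrm{dist}_M^{-n-s}$ (Proposition~\ref{prop:weyld}), and only then transfers it to $K_{M,s}$ via the pointwise estimate $\big|K_{M,s}-\alpha_{n,s}\mathrm{dist}_M^{-n-s}\big|\le C\,\mathrm{dist}_M^{-(n+s-1)}$ (Proposition~\ref{kermfdconv} and Lemma~\ref{complem}), which makes the difference of min-max values $o(\p^{s/n})$. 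Without this step your argument proves a Weyl law for a different functional.

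Second, your ``product of sweepouts'' upper bound is not available in the min-max framework of Section~\ref{sec:minmax}: the admissible families are subsets $A\subset H^{s/2}(M)$ with $\mathrm{ind}(A)\ge \p+1$ and $\mathcal H^\p(A)<\infty$, and there is no evident product operation on them. The paper sidesteps this entirely. For the cube (Proposition~\ref{cubelim}) it uses \emph{only} the one-sided Lusternik--Schnirelmann inequality, applied with subsequences realizing the $\liminf$ and $\limsup$, to get $\liminf\ge\limsup$ directly---no gluing needed. For general Euclidean domains (Proposition~\ref{weyleucd}) the upper bound comes from packing copies of $\Omega$ inside a cube and applying L--S again. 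The only place gluing enters is the upper bound on the manifold (Proposition~\ref{supleq2}), and there it is done at the level of \emph{functions}: one pulls back $u\in H^{s/2,d}(\Omega)$ piecewise through the chart maps to get $U\in H^{s/2,d}(M)$. Even this requires a delicate regularization (Step~4 there) to preserve the Hausdorff-measure condition $\mathcal H^\p(A')<\infty$, since the pullback $u\mapsto U$ is not Lipschitz across chart boundaries. Your proposal flags this as ``the heart of the matter'' but then treats it as routine; it is not, and the paper's detour through the distance kernel is chosen precisely to make these comparisons tractable.
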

This gives a nonlocal analogue of the Weyl-type Law for classical minimal hypersurfaces, which was conjectured by Gromov \cite{Gromov} in general codimension and proved by Liokumovich--Marques--Neves \cite{LMN} in the case of minimal hypersurfaces obtained via the Almgren--Pitts min-max procedure. Soon after that, Gaspar--Guaraco \cite{GG2} showed that the Weyl Law holds for limits (as $\ep\to 0$) of min-max Allen--Cahn solutions as well, with a similar proof to the one in \cite{LMN}. Interestingly, their Weyl Law can hold only for the limits obtained as $\ep\to 0$; on the other hand, Theorem \ref{FracYau3} holds already for any fixed $s\in(0,1)$. We emphasise here that $s$-minimal hypersurfaces are geometric objects on their own, with a canonical definition on Riemannian manifolds (see Section \ref{sec:defssetting} and \cite{FracSobPaper}), and moreover the parameter $s$ is ``dimensionless''; in the Allen--Cahn case, $\ep$ has ``length units'', thus being an Allen--Cahn solution for a certain $\ep$ is not a scaling invariant notion anymore, and solutions with larger energy (i.e. more transitions) can only exist when making $\ep$ smaller.\\

The rest (and main part) of the article focuses on the classical limit $s\to 1^-$. For a smooth open set $E\subset M$, it will follow from our results that
$$\lim_{s\to 1^-} (1-s){\rm Per}_s(\partial E)=\gamma_n{\rm Per}(\partial E)\,,$$
with $\gamma_n$ defined in \eqref{gammadef}. This compels us to define
\begin{equation}\label{deflp1}
    l_1(\p,M):=\liminf_{s\to 1}[(1-s)l_s(\p,M)]=\liminf_{s\to 1}[(1-s)\text{Per}_s(\partial E_{\p}^s)]\, ,
\end{equation}
and we will obtain a Weyl Law for these quantities as well:
\begin{theorem}[\textbf{Weyl Law in the classical limit}]\label{clasweylthm} Let $(M,g)$ be a closed manifold of dimension $n$. There exists a universal constant $\tau(n,1)>0$, depending only on $n$, such that
\begin{equation}\label{clasweyl}
    \lim_{\p\to\infty} \p^{-\frac{1}{n}}l_{1}(\p,M) = \tau(n,1)\textnormal{vol}(M,g)^{\frac{n-1}{n}}\, .
\end{equation}
\end{theorem}
One would expect for $l_{1}(\p,M)$ to correspond to the area of a classical minimal hypersurface $\Sigma_{\p}$ of optimal regularity, obtained as a limit of the $\partial E_{\p}^s$ after letting $s\to 1$. For this reason, the main bulk of the article is devoted to studying the convergence, as $s\to 1$, of sequences of $s$-minimal surfaces with bounded Morse index, focusing on the case $n=3$. We will exhibit their convergence to a smooth classical minimal surface in a remarkably clean way, which could be interpreted as saying that stability assumptions actually confer $s$-minimal hypersurfaces a length scale $\sqrt{1-s}$ as $s\to 1$, thus effectively becoming a regularisation of the area functional. A partial statement is the following:
\begin{theorem}[\textbf{Compactness of $s$-minimal surfaces}]\label{FMIConv}
Let $M$ be a closed Riemannian manifold of dimension $3$. Let $\{E_{s_i}\}_{i}$ be a sequence of $C^2$ $s_i$-minimal surfaces with Morse index bounded by $\p\in\N$, and suppose that $s_i\to 1^-$. Assume that
$$
\sup_i (1-s_i)\textnormal{Per}_{s_i}(E_{s_i})<\infty\, .
$$
Then, up to passing to a subsequence (not relabeled), the following hold:\\
There exists a collection of smooth, connected, disjoint classical minimal surfaces $\Sigma_1,...,\Sigma_m$, together with positive integers $n_1,...,n_m$, such that the $\partial E_{s_i}$ converge in the sense of varifolds to $\sum_{k=1}^m n_k[\Sigma_k]$, where $[\Sigma_k]$ denotes the integer rectifiable varifold associated to the smooth surface $\Sigma_k$. Moreover, 
$$\lim_{i\to\infty}(1-s_i)\textnormal{Per}_{s_i}(E_{s_i})=\gamma_n\sum_{k=1}^m n_k\textnormal{Per}(\Sigma_k)\,.
$$
Regarding the convergence, there exists a (possibly empty) collection of points $q_1,...,q_{l}$, with $l\leq\p$, such that: in a neighbourhood of every point in $\Sigma_k\setminus\{q_1,...,q_{l}\}$, for every $\alpha\in(0,1)$ the $\partial E_{s_i}$ converge in $C^{2,\alpha}$ as $n_k$ ordered normal graphs with separation of order at least $\sqrt{1-s_i}$.\\
Finally, the Morse index is lower semicontinuous, in the sense that 
$$\sum_{k=1}^m \textnormal{index}(\Sigma_k)\leq \p\,.
$$
\end{theorem}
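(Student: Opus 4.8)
The plan is to run the bounded-index compactness scheme for minimal hypersurfaces --- in the spirit of Sharp and of Chodosh--Ketover--M\'aximo --- in the nonlocal category, using throughout the classical-limit identity $(1-s)\mathrm{Per}_s(\partial E)\to\gamma_n\mathrm{Per}(\partial E)$ for smooth sets and its first-variation counterpart. First I would fix $V_i$ to be the multiplicity-one integral $2$-varifold carried by $\partial E_{s_i}$. From $\sup_i(1-s_i)\mathrm{Per}_{s_i}(E_{s_i})<\infty$, the monotonicity formula and density estimates for $s$-minimal surfaces (uniform for $s\in(s_0,1)$) give uniform local mass bounds $\|V_i\|(B_r(x))\le Cr^2$, so Allard's compactness theorem produces a subsequential limit $V$, an integral $2$-varifold, with $\partial^*E\subset\mathrm{spt}\,V$ for the finite-perimeter limit $E=\lim_i E_{s_i}$ (thin slabs may collapse into higher-multiplicity sheets). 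Once the curvature bounds of the next step are available, the asymptotics of the nonlocal mean curvature, $(1-s)H_s\to(\text{const})\,H_{\mathrm{cl}}$ for surfaces of bounded curvature, show the classical mean curvature of $\partial E_{s_i}$ tends to $0$ away from the concentration set; hence $V$ is stationary there and --- that set being finite, hence removable for stationarity of a density-bounded varifold --- $V$ is stationary in $M$, so by the constancy theorem its regular part is a finite disjoint union of smooth connected minimal surfaces $\Sigma_1,\dots,\Sigma_m$ with multiplicities $n_1,\dots,n_m\in\N^+$.

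\textbf{Concentration points, curvature estimates, graphical convergence, removability.} A point-selection/blow-up argument controlled by the index bound shows the second fundamental forms $|A_{\partial E_{s_i}}|$ concentrate at no more than $\p$ points $q_1,\dots,q_l$: a concentration point would, after rescaling and letting $s_i\to1$, yield a nontrivial complete minimal surface in $\R^3$ of index $\le\p$, and at most $\p$ such surfaces (each of index $\ge1$, at distinct points) can coexist; away from them the restriction of $\partial E_{s_i}$ is stable at a definite scale and the uniform-in-$s$ nonlocal analogue of Schoen's curvature estimate bounds $|A_{\partial E_{s_i}}|$ on compact subsets of $M\setminus\{q_j\}$. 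Hence near each point of $\Sigma_k\setminus\{q_j\}$ the $\partial E_{s_i}$ are $n_k$ disjoint graphs with uniform $C^{1,1}$ bounds, and Schauder estimates for the nonlocal minimal surface equation upgrade these to uniform $C^{2,\alpha}$ bounds; Arzel\`a--Ascoli then gives $C^{2,\alpha}$ convergence of the sheets to $n_k$ copies of $\Sigma_k$, with $n_k$ locally constant on the connected $\Sigma_k\setminus\{q_j\}$. The lower bound $\sqrt{1-s_i}$ on the inter-sheet distance follows by comparing two adjacent $s_i$-minimal graphs: their (positive) difference solves a linearized nonlocal Jacobi equation whose intrinsic length scale at parameter $s_i$ is $\sqrt{1-s_i}$, so a Harnack/barrier argument forbids faster collapse. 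Finally, across each $q_j$ the limit $V$ is, on a punctured ball, a smooth minimal surface of finite area with multiplicity-$n_k$ planar tangent cone, so it extends smoothly by Allard regularity (equivalently, removability of isolated singularities of $2$-dimensional minimal surfaces in a $3$-manifold); thus $V=\sum_{k=1}^m n_k[\Sigma_k]$ with the $\Sigma_k$ smooth and closed.

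\textbf{Energy identity and lower semicontinuity of the index.} The inequality $\liminf_i(1-s_i)\mathrm{Per}_{s_i}(E_{s_i})\ge\gamma_n\sum_k n_k\mathrm{Per}(\Sigma_k)=\gamma_n\|V\|(M)$ is the localized $\Gamma$-$\liminf$ inequality for the fractional perimeter (Bourgain--Brezis--Mironescu, D\'avila, Caffarelli--Valdinoci), applied near each sheet to keep track of multiplicity. For the reverse inequality one splits $(1-s_i)\mathrm{Per}_{s_i}(E_{s_i})$ into the renormalized fractional energies of the individual sheets --- each converging to $\gamma_n\mathrm{Per}(\Sigma_k)$ by the $C^{2,\alpha}$ convergence and the pointwise classical-limit identity --- plus the cross-interaction between distinct sheets, which after the $(1-s_i)$ renormalization is $O\big((1-s_i)^{1-s_i/2}\big)=o(1)$ since consecutive sheets sit at distance $\ge\sqrt{1-s_i}$; contributions of fixed-size neighbourhoods of the $q_j$ are negligible by the uniform area ratios. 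For the index, suppose $\sum_k\mathrm{index}(\Sigma_k)\ge\p+1$; choose, for each $k$, a family of $\mathrm{index}(\Sigma_k)$ smooth functions on $\Sigma_k$, pairwise disjointly supported away from the $q_j$ and negative for the classical Jacobi form (possible since the index equals the maximal number of such functions), and transplant each onto one fixed sheet of the corresponding graphical region. For large $i$ these $\p+1$ functions have pairwise disjoint supports at uniformly positive mutual distances, so the renormalized second variation $(1-s_i)Q_{s_i}$ has diagonal entries converging to $\gamma_n$ times the negative classical Jacobi form and off-diagonal entries carrying a factor $(1-s_i)$ times a bounded kernel integral, hence vanishing in the limit; therefore $(1-s_i)Q_{s_i}$ is negative definite on a $(\p+1)$-dimensional subspace, contradicting $\mathrm{index}(\partial E_{s_i})\le\p$. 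Hence $\sum_k\mathrm{index}(\Sigma_k)\le\p$.

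\textbf{Expected main obstacle.} The genuinely new input is the second step: the curvature estimate for stable $s$-minimal surfaces in $3$-manifolds with constants that do not degenerate as $s\to1^-$, together with the sharp $\sqrt{1-s}$ sheet-separation rate. A secondary delicate point is making the cancellation of the cross-sheet interaction in the energy identity, and of the off-diagonal second-variation terms, quantitative precisely at the collapse scale $\sqrt{1-s_i}$; the remainder combines standard varifold machinery with the classical-limit behaviour of $\mathrm{Per}_s$.
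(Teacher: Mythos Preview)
Your overall architecture matches the paper's, and you have correctly isolated the decisive new input (uniform-in-$s$ curvature and $\sqrt{1-s}$ separation estimates for stable $s$-minimal surfaces in dimension $3$). There are, however, two places where your sketch diverges from what is actually needed in the nonlocal setting and where the paper proceeds differently.

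\textbf{Almost-stability versus stability.} You write that ``away from [the concentration points] the restriction of $\partial E_{s_i}$ is stable at a definite scale''. In the nonlocal problem this is false as stated: if $\mathcal U_1,\dots,\mathcal U_{m+1}$ are disjoint, the second variation with test functions supported in different $\mathcal U_k$'s still has nonzero cross terms coming from the kernel $K_s$. The paper introduces the notion of \emph{$\Lambda$-almost stability} (Definition~\ref{almoststab}) and shows (Lemma~\ref{asineq}) that finite Morse index forces almost-stability, not stability, in one of any $m+1$ separated sets, with $\Lambda$ controlled by the maximal kernel value between them. All the curvature, separation and $C^{2,\alpha}$ estimates in Section~\ref{sec:asests} are proved in this almost-stable class. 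Your point-selection/blow-up argument for locating the concentration points would also have to be run with almost-stability; the paper instead uses a direct argument on the ``concentration scale'' of almost-stability (Lemma~\ref{aspoints}).

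\textbf{Stationarity and removability.} You obtain stationarity of the limit varifold only after proving curvature bounds away from the concentration set, and then remove the singularities by asserting a multiplicity-$n_k$ planar tangent cone and invoking Allard. The planarity of the tangent cone is not automatic for a stationary integral $2$-varifold with an isolated singularity; one needs stability (or an equivalent input). The paper proceeds in a different order: first, an integral estimate $\int_{\partial E}|H_{\partial E}|\le C\sqrt{1-s}$ is proved \emph{globally}, including near the concentration points (Propositions~\ref{LocBoundFMI}--\ref{GlobBoundFMI}), via a covering Lemma (Lemma~\ref{morsecovering}) that upgrades almost-stable estimates to finite-index ones. This gives stationarity of $V$ on all of $M$ immediately. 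For removability, the paper then shows (Theorem~\ref{smoothlim}, Step~1) that the \emph{limit} is genuinely stable on a punctured ball around each concentration point --- using that the almost-stability error carries a factor $(1-s_i)$ in the convergence of second variations (Proposition~\ref{globperconv}) --- and only then invokes the Schoen--Simon/Gulliver removable singularity theorem for stable minimal surfaces. Your sketch is missing this ``stable on the punctured ball'' step.

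For the energy identity, your $\Gamma$-liminf plus cross-interaction bound is the right heuristic; the paper's implementation (Proposition~\ref{globperconv}) is more delicate because it must also handle the possibly singular convergence near the $q_j$'s and the boundary interactions in a decomposition of $M$, but the mechanism is what you describe. Your index argument is essentially the paper's (Step~3 of the proof of Theorem~\ref{FMIConv}), with the same observation that the off-diagonal terms vanish after multiplying by $(1-s_i)$.
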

Applying this result to the $E_\p^s$ from Theorem \ref{FracYau3}, which have index bounded by $\p$, we get:
\begin{corollary}[\textbf{Attainability of min-max widths}]\label{FracYau4}
    Let $(M^n, g)$ be a $3$-dimensional, closed Riemannian manifold. Then, for every natural number $\p\in\N^+$, there exists a collection of smooth, connected, disjoint classical minimal surfaces $\Sigma_1,...,\Sigma_{m_\p}$, together with positive integers $n_1,...,n_{m_\p}$, such that
    $$l_{1}(\p,M)=\gamma_n\sum_{k=1}^{m_\p} n_k\textnormal{Per}(\Sigma_k)\,.$$
    Moreover, $\sum_{i=k}^{m_\p} \textnormal{index}(\Sigma_k)\leq \p$.
\end{corollary}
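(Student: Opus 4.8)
The plan is to obtain Corollary \ref{FracYau4} as an immediate consequence of Theorem \ref{FMIConv}, applied to the min-max hypersurfaces $E_\p^s$ constructed in Theorem \ref{FracYau3}. Fix $\p\in\N^+$. By the definition \eqref{deflp1} of $l_1(\p,M)$ as a $\liminf$, I would first choose a sequence $s_i\to 1^-$ with
\[
(1-s_i)\,l_{s_i}(\p,M)=(1-s_i)\,\textnormal{Per}_{s_i}(\partial E_\p^{s_i})\longrightarrow l_1(\p,M)\qquad (i\to\infty).
\]
Discarding finitely many indices, we may assume $s_i>s_0$ for every $i$, where $s_0$ is the universal constant of Theorem \ref{FracYau3}; since $n=3$, this guarantees that each $\partial E_\p^{s_i}$ is a smooth --- hence $C^2$ --- hypersurface of Morse index at most $\p$, so that the structural hypotheses of Theorem \ref{FMIConv} are met.

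Next I would verify the uniform bound on the fractional perimeters required by Theorem \ref{FMIConv}. The upper estimate in \eqref{PerGrowth} gives
\[
(1-s_i)\,\textnormal{Per}_{s_i}(\partial E_\p^{s_i})=(1-s_i)\,l_{s_i}(\p,M)\le C(M)\,\p^{s_i/n}\le C(M)\,\p^{1/n},
\]
with $C(M)$ uniform as $s\to 1$, so $\sup_i(1-s_i)\,\textnormal{Per}_{s_i}(E_\p^{s_i},M)<\infty$ (this is in any case automatic, the numerical sequence $(1-s_i)\,l_{s_i}(\p,M)$ being convergent). Applying Theorem \ref{FMIConv} to $\{E_\p^{s_i}\}_i$, we obtain, after passing to a subsequence that we do not relabel, smooth connected disjoint classical minimal surfaces $\Sigma_1,\dots,\Sigma_{m_\p}$ and positive integers $n_1,\dots,n_{m_\p}$ with $\sum_{k=1}^{m_\p}\textnormal{index}(\Sigma_k)\le\p$, and such that
\[
\lim_{i\to\infty}(1-s_i)\,\textnormal{Per}_{s_i}(E_\p^{s_i})=\gamma_n\sum_{k=1}^{m_\p}n_k\,\textnormal{Per}(\Sigma_k).
\]
Since $(1-s_i)\,l_{s_i}(\p,M)$ already converges to $l_1(\p,M)$ along the full index set, it converges to the same value along the subsequence; identifying the two limits gives $l_1(\p,M)=\gamma_n\sum_{k=1}^{m_\p}n_k\,\textnormal{Per}(\Sigma_k)$, which is exactly the assertion of the corollary (together with the index bound).

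I do not expect any genuine obstacle here, since all the analytic content is already packaged in Theorems \ref{FracYau3} and \ref{FMIConv}. The only points needing (routine) attention are checking that the sequence $\partial E_\p^{s_i}$ satisfies the hypotheses of Theorem \ref{FMIConv} --- $C^2$ (indeed smooth) regularity, valid once $s_i>s_0$ and specific to the dimension $n=3$; the index bound inherited from Theorem \ref{FracYau3}; and the uniform bound on $(1-s_i)\,\textnormal{Per}_{s_i}$ coming from \eqref{PerGrowth} --- together with the observation that the further subsequence extraction in the compactness theorem cannot alter the already-convergent value $(1-s_i)\,l_{s_i}(\p,M)\to l_1(\p,M)$.
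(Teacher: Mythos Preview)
Your proposal is correct and follows essentially the same argument as the paper: choose a sequence $s_i\to 1^-$ realizing the $\liminf$ defining $l_1(\p,M)$, verify the hypotheses of Theorem \ref{FMIConv} for the $E_\p^{s_i}$ via Theorem \ref{FracYau3} and \eqref{PerGrowth}, and identify the limit. Your added remarks about discarding finitely many $s_i$ to ensure $s_i>s_0$ and about the subsequence extraction not affecting the already-convergent limit are minor clarifications of points the paper leaves implicit.
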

Combining Corollary \ref{FracYau4} with the asymptotics for the $l_1(\p,M)$ in Theorem \ref{clasweylthm}, this gives a Weyl-type Law for classical minimal surfaces like the one by Liokumovich--Marques--Neves \cite{LMN}. With the Weyl Law at hand, we can in particular recover the results by Irie--Marques--Neves \cite{IMN} and Marques--Neves--Song \cite{MNS} on the density and equidistribution of minimal surfaces on closed manifolds with a generic metric:
\begin{theorem}[\textbf{Density}]\label{Density}
Let $M$ be a closed manifold of dimension $3$.
For a $C^\infty$-generic Riemannian metric $g$ on $M$, the union of all closed, smooth, embedded minimal hypersurfaces in $(M,g)$ is dense (and in particular, there are infinitely many such hypersurfaces).
\end{theorem}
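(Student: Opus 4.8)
The plan is to run the argument of Irie--Marques--Neves \cite{IMN} (and Marques--Neves--Song \cite{MNS}), using the widths $l_1(\p,M)$ and their Weyl Law (Theorem \ref{clasweylthm}) in the role played there by the Almgren--Pitts volume spectrum and its asymptotics, and Corollary \ref{FracYau4} in the role of the attainability of classical min-max widths by smooth minimal hypersurfaces. Indeed, Corollary \ref{FracYau4} and Theorem \ref{clasweylthm} together say precisely that $\{l_1(\p,M)\}_{\p}$ behaves like a volume spectrum obeying a Weyl Law à la \cite{LMN}.

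By a Baire category argument it suffices to show that, for every nonempty open $U\subseteq M$, the set $\mathcal D_U$ of metrics admitting a closed, smooth, embedded minimal hypersurface meeting $U$ contains an open dense set; intersecting over a countable basis $\{U_j\}$ then gives a $C^\infty$-generic set of metrics whose minimal hypersurfaces are dense (in particular infinitely many). The packaging into an \emph{open} dense set is done exactly as in \cite{IMN}, via White's bumpy metric theorem, the implicit function theorem, and Sharp's compactness theorem --- the latter being applicable thanks to the index bound $\sum_k\textnormal{index}(\Sigma_k)\le\p$ of Corollary \ref{FracYau4}. So the real content is \emph{density} of $\mathcal D_U$, which I would prove by contradiction: assume some $g_0$ has a $C^\infty$-neighbourhood in which no metric admits any closed embedded minimal hypersurface meeting $U$. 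Fix a nonnegative $h\in C^\infty(M)$ with $h\not\equiv0$ and $\textnormal{supp}\,h\subset U$, and consider the conformal family $g_t:=(1+th)g_0$, which for $t\in[0,\delta)$ lies in the bad neighbourhood; thus no $g_t$, $t\in[0,\delta)$, admits a closed embedded minimal hypersurface meeting $U$.

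The contradiction compares two monotonicities of $t\mapsto l_1(\p,(M,g_t))$. On one hand $\textnormal{vol}(M,g_t)=\int_M(1+th)^{3/2}\,d\textnormal{vol}_{g_0}$ is strictly increasing in $t$, so by the Weyl Law \eqref{clasweyl} the limit $\lim_{\p\to\infty}\p^{-1/3}l_1(\p,(M,g_t))=\tau(3,1)\,\textnormal{vol}(M,g_t)^{2/3}$ is strictly increasing in $t$. On the other hand, I claim that for \emph{each fixed} $\p$ the function $t\mapsto l_1(\p,(M,g_t))$ is non-increasing, which is incompatible with the previous sentence. For the claim, first note that $t\mapsto l_1(\p,(M,g_t))$ is continuous --- in fact locally Lipschitz: the $g_t$ are uniformly biLipschitz on compact $t$-intervals, so expanding ${\rm Per}_s$ as a double integral of $d_g(\cdot,\cdot)^{-(n+s)}$ against $d\textnormal{vol}_g\otimes d\textnormal{vol}_g$ and taking the inf--sup over $\p$-sweepouts gives $(1-s)\,|l_s(\p,(M,g_t))-l_s(\p,(M,g_{t'}))|\le C|t-t'|\,\p^{s/n}$ uniformly in $s$ near $1$ (using the upper bound in \eqref{PerGrowth}), and one lets $s\to1$. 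It then suffices to show the upper right Dini derivative satisfies $D^+\big[t\mapsto l_1(\p,(M,g_t))\big]\le0$ for all $t$. To see this, use that by Corollary \ref{FracYau4} the width $l_1(\p,(M,g_t))$ is attained by a collection of smooth closed embedded minimal hypersurfaces $\Sigma_k^t$ of $(M,g_t)$, which by our assumption are disjoint from $\textnormal{supp}\,h$; and that, along a near-optimal $\p$-sweepout for $(M,g_t)$ (for $s_i\to1$ realizing the liminf defining $l_1$), the slices of near-maximal energy concentrate, after rescaling by $(1-s_i)$, on $\sum_k n_k^t\,[\Sigma_k^t]$ --- this is the content of the compactness and attainability statements (Theorem \ref{FMIConv}, Corollary \ref{FracYau4}). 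Re-measuring that sweepout with $g_{t+u}$ changes its maximal energy only by $u$ times, up to $o(u)$, the first variation of $\gamma_n\sum_k n_k^t\,{\rm Per}(\Sigma_k^t)$ under the conformal perturbation $h$; but this first variation vanishes because $h\equiv0$ on each $\Sigma_k^t$. Hence $l_1(\p,(M,g_{t+u}))\le l_1(\p,(M,g_t))+o(u)$, i.e.\ $D^+\big[t\mapsto l_1(\p,(M,g_t))\big]\le0$, which together with continuity shows $t\mapsto l_1(\p,(M,g_t))$ is non-increasing --- the desired contradiction.

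I expect the crux to be the concentration/first-variation step. Because ${\rm Per}_s$ is \emph{nonlocal}, inserting a conformal factor supported in $U$ genuinely changes ${\rm Per}_s(\partial E)$ even for $\partial E$ far from $U$, so --- unlike in the classical Almgren--Pitts setting --- one cannot run the first-variation argument at a fixed $s\in(0,1)$, and must instead carry it out for $l_1$ itself, exploiting that $(1-s){\rm Per}_s\to\gamma_n{\rm Per}$ is a local functional and controlling both the energies of the near-maximal slices and their behaviour under the perturbation uniformly as $s\to1^-$; this is exactly where the compactness theorem for index-bounded $s$-minimal surfaces (Theorem \ref{FMIConv}) and the attainability result (Corollary \ref{FracYau4}) do the work. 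The remaining steps --- the Baire packaging with bumpy metrics, the implicit function theorem, and Sharp's compactness --- are routine and enter exactly as in \cite{IMN} and \cite{MNS}.
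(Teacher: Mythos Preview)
Your overall plan is right, and the paper also follows \cite{IMN} by substituting Lemma \ref{lipcompwidth}, Corollary \ref{FracYau4}, and Theorem \ref{clasweylthm} for the corresponding Almgren--Pitts ingredients. But the density step you propose diverges from the actual IMN argument and has a gap. IMN does \emph{not} compute a derivative of the widths; instead, after passing to a bumpy $g_0$ (so that there are only countably many closed embedded minimal hypersurfaces), one observes that for $g_t=(1+th)g_0$ the attaining surfaces $\Sigma_k^t$ from Corollary \ref{FracYau4}, which by hypothesis avoid $U$, are minimal for $g_0$ with the same \emph{classical} perimeter (since $g_t=g_0$ outside $U$ and both classical minimality and ${\rm Per}$ are local). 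Hence $l_1(\p,g_t)=\gamma_n\sum_k n_k{\rm Per}_{g_0}(\Sigma_k^t)$ lies in a fixed countable set for every $t$ and $\p$; by Lemma \ref{lipcompwidth} the map $t\mapsto l_1(\p,g_t)$ is continuous, so it is constant, contradicting the Weyl Law. This is the paper's argument, and the locality of the classical perimeter is precisely what makes the passage $s\to1$ pay off here --- your worry about nonlocality simply does not arise.

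Your Dini-derivative route instead asserts that ``near-maximal slices of a near-optimal $\p$-sweepout concentrate on $\sum_k n_k^t[\Sigma_k^t]$'' and attributes this to Theorem \ref{FMIConv} and Corollary \ref{FracYau4}. Those results, however, concern \emph{critical points} --- the $s$-minimal surfaces $E_\p^s$ with bounded index --- and say nothing about arbitrary slices of a sweepout; a near-optimal Allen--Cahn family can have many near-maximal slices far from any critical point. Proving such concentration would require a pull-tight / almost-minimizing argument in the fractional Allen--Cahn setting, which is not developed in the paper. Even granting concentration, the claim that re-measuring the sweepout with $g_{t+u}$ changes its maximal energy by $u$ times the first variation of $\gamma_n\sum_k n_k{\rm Per}(\Sigma_k^t)$ plus $o(u)$ is not justified: at each finite $s$ the change in $(1-s)\mathcal E_{\ep,s}$ under a perturbation supported in $U$ involves genuine nonlocal interactions with $U$, and you have not shown these are $o(u)$ uniformly in $s\to1$, $\ep\to0$, across \emph{all} slices. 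The countable-set argument sidesteps all of this.
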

\begin{theorem}[\textbf{Equidistribution}]\label{Equi}
    Let $M$ be a closed manifold of dimension $3$.
    For a $C^\infty$-generic Riemannian metric $g$ on $M$, there exists a sequence $\{\Sigma_i\}_{i\in\N}$ of closed, smooth, embedded minimal hypersurfaces in $(M,g)$ which is equidistributed. More precisely, for any $f\in C^\infty(M)$,
    \begin{equation*}
        \lim_{k\to\infty} \frac{1}{\sum_{i=1}^k {\rm Per}(\Sigma_i)}\sum_{i=1}^k \int_{\Sigma_i} f\,d\Sigma_i = \frac{1}{{\rm Vol}_g(M)} \int_M f\,dM\, .
    \end{equation*}
\end{theorem}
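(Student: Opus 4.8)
The plan is to follow the now-standard Irie--Marques--Neves strategy, but with the fractional min-max widths $l_s(\p,M)$ (and their classical limits $l_1(\p,M)$) playing the role usually played by the Almgren--Pitts widths. The key structural inputs are already in place: Corollary \ref{FracYau4} tells us that each $l_1(\p,M)$ is \emph{attained} by an integral combination of smooth, embedded, closed minimal hypersurfaces, and Theorem \ref{clasweylthm} gives the Weyl asymptotics $\p^{-1/n}l_1(\p,M)\to\tau(n,1)\vol(M,g)^{(n-1)/n}$. Fix a nonempty open set $U\subset M$ and a nonnegative $f\in C^\infty(M)$ supported in $U$, and for $t\ge 0$ consider the perturbed metrics $g_t=(1+tf)g$. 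The map $t\mapsto l_1(\p,M,g_t)$ is monotone nondecreasing (enlarging the metric enlarges perimeters), and I would first establish that it is \emph{Lipschitz}, hence differentiable for a.e.\ $t$, with derivative controlled by the first variation of area along the conformal deformation of whichever minimal hypersurfaces attain the width. The one-sided directional derivative at $t=0$ should be computed exactly as in \cite{IMN}: if $l_1(\p,M,g_0)=\gamma_n\sum_k n_k\Per_{g_0}(\Sigma_k)$, then differentiating area under $g_t=(1+tf)g$ along $\Sigma_k$ produces a term proportional to $\int_{\Sigma_k}f\,d\Sigma_k$ (the mean-curvature term vanishes since the $\Sigma_k$ are minimal for $g_0$). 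The upshot: if $U$ contained \emph{no} closed embedded minimal hypersurface of $(M,g_0)$, then $f$ (supported in $U$) would integrate to zero over every $\Sigma_k$ appearing for every $\p$, forcing $\frac{d}{dt}\big|_{t=0^+}l_1(\p,M,g_t)=0$ for all $\p$ — but this contradicts the Weyl law, which forces these widths to grow (hence to move under a volume-increasing perturbation) at a definite rate. This is precisely the Irie--Marques--Neves contradiction, now run through $l_1$.

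More carefully, the density statement is obtained by a Baire-category argument. For a fixed open set $U$, let $\mathcal{M}_U$ be the set of smooth metrics $g$ on $M$ such that $U$ contains a closed, smooth, embedded minimal hypersurface of $(M,g)$; one shows $\mathcal{M}_U$ is open (by the implicit function theorem / standard perturbation of nondegenerate minimal hypersurfaces, after first passing to bumpy metrics via White's theorem \cite{White}) and dense (this is where the argument above enters). Taking a countable basis $\{U_j\}$ of $M$ and intersecting the residual sets $\mathcal{M}_{U_j}$ gives a $C^\infty$-generic set of metrics for which minimal hypersurfaces are dense, which is Theorem \ref{Density}. For the equidistribution Theorem \ref{Equi}, one upgrades the contradiction argument to a quantitative statement about the \emph{value} of the derivative: for a generic metric, and for each $\p$ in a density-one subset of $\N$, the attained minimal hypersurfaces satisfy $\sum_k n_k\int_{\Sigma_k}f\,d\Sigma_k \approx \big(\tfrac{n-1}{2n}\big)\frac{l_1(\p,M,g)}{\vol(M,g)}\int_M f\,dM$ up to lower-order errors, by comparing $\frac{d}{dt}\big|_0 l_1(\p,M,g_t)$ with the $t$-derivative of the Weyl asymptotic $\tau(n,1)\vol(M,g_t)^{(n-1)/n}$. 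Summing a suitable subsequence $\{\Sigma_i\}$ built out of these hypersurfaces and normalizing by total perimeter then yields the equidistribution identity, exactly as in Marques--Neves--Song \cite{MNS}. A Baire-category argument over a countable dense family of test functions $f$ (and sign changes) produces a single generic set of metrics that works simultaneously for all $f\in C^\infty(M)$.

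The main obstacle I anticipate is establishing the regularity (Lipschitz continuity, and the precise form of the one-sided derivative) of $\p\mapsto l_1(\p,M,g_t)$ \emph{as a function of the metric}, since $l_1$ is itself defined through a $\liminf$ as $s\to 1$ of the fractional widths $l_s$ — so one must control how $l_s(\p,M,g_t)$ depends on $t$ \emph{uniformly in $s$} near $s=1$, and then pass to the limit. This requires knowing that the constants $c(M),C(M)$ in \eqref{PerGrowth} and the convergence in Theorem \ref{FMIConv} are stable under the conformal perturbations $g_t=(1+tf)g$, which should follow from the fact that those results are already phrased with constants uniform as $s\to 1$, but the bookkeeping is delicate. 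An alternative that sidesteps part of this is to work directly with the attained classical minimal hypersurfaces from Corollary \ref{FracYau4} and the classical first-variation formula, using the fractional theory only as a black box producing the Weyl-law-respecting sequence of minimal hypersurfaces; in that case the argument becomes essentially identical to \cite{IMN,MNS} and the new content is entirely upstream, in Theorems \ref{clasweylthm} and \ref{FMIConv}. I would present the proof in that streamlined form, citing \cite{IMN} and \cite{MNS} for the parts of the Baire-category and derivative computations that transfer verbatim, and spelling out only the step where the Weyl law for $l_1(\p,M)$ is fed into the contradiction.
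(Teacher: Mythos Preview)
Your proposal is correct and takes essentially the same approach as the paper: the paper's proof simply says to replicate \cite{MNS} verbatim, substituting Corollary \ref{FracYau4} for attainability, Theorem \ref{clasweylthm} for the Weyl law, and Lemma \ref{lipcompwidth} for \cite[Lemma 1]{MNS}. The Lipschitz-in-metric continuity of $l_1(\p,M,\cdot)$ that you flag as the main obstacle is exactly what Lemma \ref{lipcompwidth} provides (proved by working with the distance-kernel widths $l_s^d$ uniformly in $s$ and then passing to the $\liminf$), so your anticipated difficulty is already resolved in the paper and your ``streamlined'' alternative is precisely what the paper does.
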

We refer the reader to \cite{Alm2, CM, MN, Pitts, SS, Zhou} as well as the references \cite{IMN, LMN, MNS, Song} already mentioned in this introduction for an overview of some of the main developments that have led to the proof of Yau's conjecture.

\section{Overview of the article and further directions}
The proofs of the Weyl Law(s) and of Theorems \ref{Density} and \ref{Equi} follow the ideas of \cite{LMN} and of \cite{IMN, MNS}. The main difficulties in the nonlocal case stem from the presence of nonlocal interactions and the fact that the fractional Sobolev energy on a closed manifold $M$ is canonically defined through a singular kernel $K_s(p,q)$ which depends on the heat kernel of the entire manifold (see \eqref{wethiowhoihw2}): Indeed, the proof of the Weyl Law is based on comparison between $M$ and its subdomains, and between subdomains of $M$ and subdomains of Euclidean space. Likewise, the proofs of Theorems \ref{Density} and \ref{Equi} require the precise knowledge of how changing the metric of $M$ affects the behaviour of the problem. Therefore, the right setting needs to be selected carefully. Our chosen approach, which we believe to be of independent interest, is to first prove a Weyl Law for a related min-max problem, where an analogous fractional Sobolev energy is defined through a different kernel which depends explicitly on the distance function of $M$. We then show how this implies the Weyl Law for the canonical, original problem as well. The study of the precise behaviour of the singular kernel $K_s$, which was initiated in \cite{FracSobPaper} and is continued in the present paper, is used in an essential way in the aforementioned analysis as well as in the rest of the paper.\\

The proof of the compactness of $s$-minimal surfaces with bounded index occupies the rest (and main bulk) of our work. Most of the results are of a remarkably ``clean'' nature, providing---together with \cite{CFS}---a self-contained and robust (as $s\to 1$) regularity theory for min-max $s$-minimal surfaces. Combined with the elementary existence theory in \cite{CFS}, this showcases the nonlocal approximation as an attractive alternative to the Almgren--Pitts and Allen--Cahn approaches. For comparison, the Allen--Cahn approach offers a simple existence theory too, but at the expense of a very hard regularity theory:
\begin{itemize}
    \item In general dimension, the only available results are via the deep and powerful regularity theory in \cite{Wick} for stable integral varifolds. This method presents moreover important difficulties, such as a potential loss of geometric information in the prescribed mean curvature \cite{BW} or manifold-with-boundary \cite{PSL} cases.
    \item In dimension $n=3$, a stronger, alternative regularity theory is present, developed in the (similarly deep) breakthrough articles \cite{WW2,WW,CM}. An extension to higher dimensions depends on the longstanding open problem of classification of stable solutions (with bounded energy density) to Allen--Cahn, which so far has only been achieved for $n=4$ in the very recent (independent) article \cite{FS}.
\end{itemize}
Now, despite the ``clean" nature of the nonlocal approach, the regularity and compactness part of the article will require a lengthier exposition. The main reason is that, in fact, we need to build most of the theory from scratch: As an example, our very first step is to obtain the (challenging, yet deceptively simple-looking) formula for the second variation of the fractional perimeter of a domain on a manifold, which will require considering a smooth extension to the entire manifold of the normal vector of the domain, and will then take the form of a singular integral which will be shown to converge due to second order cancellations.

A point we consider most interesting in our strategy is the eventual proof of global estimates for $s$-minimal surfaces with bounded index, including the uniform, quantitative $L^1$ decay of their {\it classical} mean curvatures as $s\to 1$ which holds even around points of singular convergence, and which will allows us to obtain the convergence in the varifold sense to a limit. Once the limit varifold is obtained, we will show its regularity outside of finitely many points; then, a careful argument using this partial regularity will allow us to show the convergence of the second (or any) variations of the approximating $s$-minimal surfaces to those of the limit, as well as the convergence of the fractional perimeters, and only then we will remove the singularities thanks to the stability properties inherited by the limit. We emphasise here that, to our knowledge, this is the first instance of a compactness theorem for $s$-minimal surfaces as $s\to 1$ without the presence of uniform curvature estimates; in particular, the convergence of critical points under exclusively an area bound assumption, as in \cite{All} or \cite{HutTon}, is an open and very interesting problem.

Section \ref{sec:asests}, which exhibits uniform $C^{2,\alpha}$ and sheet separation estimates for almost-stable $s$-minimal surfaces, builds on the recent breakthrough article \cite{CDSV}, which among other things studies the stable case (without any area bound assumption) on $\R^3$ and proves the classification of stable $s$-minimal cones in $\R^4$ as hyperplanes for $s$ close to $1$. Most of the proofs in our Section \ref{sec:asests} introduce alternative arguments, in order to deal with the challenging form of the second variation formula and the Riemannian setting. As a notable point, through our approach we obtain uniform $C^{2,\alpha}$ estimates for every $\alpha\in(0,1)$, an improvement of the result in \cite{CDSV} which is for $\alpha$ sufficiently small.\\

We expect our methods and results to be extended soon to dimensions $4\leq n \leq 7$; the missing points are a classification of stable $s$-minimal cones (under the assumption of area bounds as $s\to 1$) in dimensions $5\leq n\leq 7$, a problem which should be much simpler than an unconditional classification of stable $s$-minimal cones\footnote{This should be compared, for example, with the stable Bernstein conjecture on the classification of stable minimal hypersurfaces in $\R^n$ with and without assuming area bounds.}, and an adaptation of Section \ref{sec:asests} to dimensions $4\leq n \leq 7$. For $n=3$, one can decouple the problem of quantifying the separation between different layers of an $s$-minimal surface from the regularity of the individual layers. Attacking both problems at the same time using an approach as in \cite{WW} (see also \cite{CM}) would seem like a promising strategy for $4\le n \le 7$, and we believe that the methods introduced in the present article will be useful for this approach.

Regarding the nonlocal Weyl Law in Theorem \ref{thmweyl}, it would be interesting to study which properties it implies for the $s$-minimal hypersurfaces $\partial E_\p^s$. The proof of Theorem \ref{Density} would seem hard to adapt to the nonlocal case, given the nonlocal interactions inherent to the objects under consideration. The proof of Theorem \ref{Equi}, a result which is stronger anyway, could however potentially have an adaptation to the nonlocal setting. As a matter of fact, we would expect even stronger equidistribution properties to hold in the nonlocal case, such as the possibility of obtaining results for arbitrary (and not only generic) metrics.

\section{Setting}\label{Setting}
\subsection{The fractional setting on a manifold}
\subsubsection{Definitions}\label{sec:defssetting}
Let $(M,g)$ be a closed (i.e. compact and without boundary) Riemannian manifold of dimension $n$. Let $H_M(t,p,q)$ be the heat kernel of $M$, and let $s\in(0,2)$. Set
 \begin{equation}\label{wethiowhoihw2}
K_{M,s}(p,q) := \frac{s/2}{\Gamma(1-s/2)} \int_{0}^{\infty} H_M(p,q,t) \frac{dt}{t^{1+s/2}}\,.
 \end{equation}
We will often omit the dependence in $M$ and $s$ of $K$ when there is no risk of confusion.\\
Define
\begin{equation}\label{alphadef}
    \alpha_{n,s}= \frac{2^s \Gamma\Big(\tfrac{n+s}{2}\Big)}{\pi^{ n/2} |\Gamma(-s/2)| } = \frac{s 2^{s-1} \Gamma\Big(\tfrac{n+s}{2}\Big)}{\pi^{n/2}\Gamma(1-s/2)}.
\end{equation}
Then, the kernel $K_{M,s}(p,q)$ differs from $\frac{\alpha_{n,s}}{d_M(p,q)^{n+s}}$ only by a lower order term, see Proposition \ref{kermfdconv}, and they coincide exactly in the case $M=\R^n$.\\
We then define the fractional Sobolev space $H^{s/2}(M)$ as the space of functions $u\in L^2(M)$ such that
 \begin{equation}\label{wethiowhoihw}
 [u]^2_{H^{s/2}(M)} := \iint_{M\times M}(u(p)-u(q))^2 K_{M,s}(p,q) \,dV_p \,d V_q
 \end{equation}
is finite. Several other definitions for the fractional Sobolev seminorm, identical (and not just equivalent) to \eqref{wethiowhoihw}, are given in \cite{CFS}, showing that it is canonically defined. We recall here in particular the spectral definition
\begin{equation}\label{ghfghfg}
    [u]^2_{H^{s/2}(M)} = 2\sum_{k\ge 1} \lambda_k^{s/2} \langle u,\phi_k \rangle^2_{L^2(M)}\,,
\end{equation}
 where $\{\phi_k\}_k$ is an orthonormal basis of eigenfunctions of the Laplace-Beltrami operator $(-\Delta_g)$ and $\{\lambda_k\}_k$ are the corresponding eigenvalues. This shows that in the limit $s\to 2$ we recover the usual $[u]^2_{H^{1}(M)}$ seminorm.\\
 
We can now define the fractional perimeter $\text{Per}_s$. Given $s \in (0, 1)$
and a (measurable) set $E \subset M$, we define
\begin{equation}
\textnormal{Per}_s(E): = [\chi_E]^2_{H^{{s/2}}(M)} = \frac{1}{4}[\chi_E-\chi_{E^c}]^2_{H^{{s/2}}(M)}  = 2\int_{E}\int_{E^c} K_{M,s}(p,q)dV_pdV_q\,, 
\end{equation}
where $\chi_E$ is the characteristic function of $E$, and $E^c: = M\setminus E$. In practice, $E$ will always be a domain with some regularity, and the notation $\textnormal{Per}_s(\partial E)$ will sometimes be used to denote $\textnormal{Per}_s(E)$ instead in analogy with the classical case. More generally, if $\Omega\subset M$, we set
\begin{equation}
\textnormal{Per}_s(E;\Omega): = 2\int_{E\cap\Omega}\int_{E^c\cap\Omega} K_{M,s}(p,q)dV_pdV_q\,.
\end{equation}
We have omitted the interactions between $\Omega$ and $\Omega^c$, which will be convenient in the present paper, but which the reader should keep in mind when consulting related literature in which different notation might be employed.

One can see that, for every subset $E\subset M$  with smooth boundary,
$(1-s)\textnormal{Per}_s(\partial E) \to \gamma_n\textnormal{Per}(\partial E)$  as $s\uparrow 1$, where
\begin{equation}\label{gammadef}
    \gamma_n= 2\alpha_{n,s}\omega_{n-1}
\end{equation}
and $\omega_{n-1}=|\B^{n-1}|$ indicates the volume of the unit ball in $\R^{n-1}$. See \cite{ADPM11} for further details in the case of $\R^n$, or the proof of Proposition \ref{globperconv} in the present paper for a significantly stronger result.

\begin{definition}[\textbf{$s$-minimal surface}]\label{def-fracminsurface}
Let $M$ be a closed Riemannian manifold.  Given $s\in (0,1)$, the boundary $\partial E$ of a set $E\subset M$ (or the set $E$ itself) is said to be an $s$-\textit{minimal surface} if  $\textnormal{Per}_s(E)<\infty$, and
\begin{equation}\label{innercrit}
     \frac{d}{dt}\Big|_{t=0} \textnormal{Per}_s(\psi^t_X(E)) = 0\qquad\mbox{for every vector field}\quad X\in C^2(M) \, ,
\end{equation}
where $\psi^t_X : M \times \R \to M$ denotes the flow of $X$ at time $t$.
\end{definition}

The previous definition admits a natural local version.
\begin{definition}\label{def-fracminsurface2}
Let $(M,g)$ be a closed Riemannian manifold. Given $\mathcal U\subset M$ open, the boundary $\partial E$ of a set $E\subset M$ is said to be an $s$-\textit{minimal surface} in $\mathcal U$ if $\textnormal{Per}_s(E)<\infty$, and for every smooth and compactly supported vector field $X$ in $U$ (denoted $X\in \mathfrak X_c(\mathcal U)$) we have
\begin{equation*}
     \frac{d}{dt}\Big|_{t=0} \textnormal{Per}_s(\psi^t_X(E)) = 0 \, .
\end{equation*}
\end{definition}

\begin{definition}[\textbf{Morse index and stability}]\label{WeakMorseDef} Let $(M,g)$ be a closed Riemannian manifold and $\partial E$ be an $s$-minimal surface in $\mathcal U\subset M$ open (as in Definition \ref{def-fracminsurface2}). Then,  $\partial E$ is said to have {\em Morse index} at most $m$ in $\mathcal U$ if for every $(m+1)$ vector fields $X_0,\dotsc ,X_m \in \mathfrak X_c(\mathcal U)$ there exists some linear combination $X=a_0X_0+\dots +a_mX_m$ with $a_0^2 + a_1^2+\dots +a_m^2=1$ such that 
\begin{equation*}
    \frac{d^2}{dt^2}\Big|_{t=0}  \textnormal{Per}_s(\psi^t_X(E)) \ge 0.
\end{equation*}
In the particular case $m=0$, we say that $\partial E$ is \textit{stable} in $\mathcal U$.
\end{definition}

\begin{remark}
As we proved in Lemma 3.10 in \cite{FracSobPaper}, if $\textnormal{Per}_s(E)<\infty$ and $X\in \mathfrak X_c(\mathcal U)$ then the map $t\mapsto \textnormal{Per}_s(\psi^t_X(E))$ is well-defined for all $t$ and of class $C^{\infty}$. Thus, the previous definitions are meaningful. 
\end{remark}

The $s$-minimal surfaces in Theorem \ref{FracYau3} are obtained as limits as $\ep\to 0$ of solutions to the fractional Allen-Cahn equation on $M$. Given $v:M\to\R$, the fractional Allen-Cahn energy of $v$ is defined as
\begin{equation}\label{ACenergy}
 \mathcal E_{\ep,s} (v,M) :=  \mathcal E_{\ep,s}^{\rm Sob}(v,M)   + \mathcal E_{\ep,s}^{\rm Pot}(v,M) ,
 \end{equation}
 where
 \[
  \mathcal E_{\ep,s}^{\rm Sob}(v,M) := \frac 1 4 \iint_{M\times M} (v(p)-v(q))^2 K_{M,s}(p,q)dV_p dV_q,   \quad \mathcal E_{\ep,s}^{\rm Pot}(v,M) :=\ep^{-s}\int_{M} W(v)\,dx\, ,
  \]
and $W(v)=\frac{1}{4}(1-v^2)^2$ is the standard quartic double-well potential with wells at $\pm 1$. The double-well potential penalizes functions which are not identical to $\pm 1$, thus why one expects to find nonlocal $s$-minimal surfaces as the limits of critical points of this energy when $\ep\to 0$.
A function $u:M\to \R $ is a critical point of $\mathcal E_{\ep,s}$ if and only if it satisfies the Allen-Cahn equation
\begin{equation}\label{restrictedeq}
 (-\Delta)_M^{s/2}u + \ep^{-s}W'(u)=0 \, .
\end{equation}
Here $(-\Delta)^{{s/2}}$ is the fractional Laplacian on $(M,g)$, and it can be represented as
\begin{equation}
    (-\Delta)_M^{{s/2}} u(p) = \int_{M} (u(p)-u(q))K_{M,s}(p,q) \, dV_q \, .
\end{equation}
\subsubsection{Fundamental properties}
Here, and in the rest of the paper, $\B_R(x)$ denotes the Euclidean ball of radius $R$ centered at $x\in\R^n$, and $B_R(p) $ denotes the metric ball on $M$ of radius $R$ and center $p$. We will often write $\B_R$ in place of $\B_R(0)$.

\begin{definition}[\textbf{Local flatness assumption}]\label{flatnessassup}
Let $(M^n,g)$ be an $n$-dimensional Riemannian manifold and $p\in M$. For $R>0$, we say that $ (M,g)$ satisfies the $\ell$-th order \textit{flatness assumption at scale $R$ around the point $p$, with parametrization $\varphi$}, abbreviated as ${\rm FA}_\ell(M,g, R,p,\varphi)$,  whenever there exists an open neighborhood $V$ of $p$ and a diffeomorphism
\begin{equation*}
\varphi:  \B_R(0) \to V, \quad \mbox{with }\varphi(0)=p_0 \,,  
\end{equation*}
such that, letting $g_{ij}=g\left(\varphi_*\left(\frac{\partial}{\partial x^i}\right), \varphi_* \left(\frac{\partial}{\partial x^j} \right) \right)$  
be the representation of the metric $g$ in the coordinates $\varphi^{-1}$, we have
\begin{equation}\label{hsohoh1}
\big( 1-\frac{1}{100}\big) |v|^2 \le  g_{ij}(x)v^i v^j \le \big( 1+\frac{1}{100}\big)|v|^2 \s\s \forall \, v\in \R^n  \mbox{ and } \forall \, x\in \B_R(0) \,,
\end{equation}
and
\begin{equation}\label{hsohoh2}
R^{|\alpha|}\bigg|\frac{\partial^{|\alpha|} g_{ij} (x)}{\partial x^{\alpha}}\bigg|\le \frac{1}{100} \s\s\forall \alpha \mbox{ multi-index  with }1\le|\alpha|\le \ell, \mbox{ and }\forall   x\in \B_R(0).
\end{equation}
\end{definition}

\begin{remark}\label{fbsvdg}
For any smooth closed Riemannian manifold $(M,g)$, given $\ell \ge 0$, there exists $R_0>0$ for which ${\rm FA}_\ell(M,g,R_0,p,\varphi_p)$ is satisfied for all $p\in M$. For example, $\varphi_p$ can be chosen to be the restriction of the exponential map (of $M$) at $p$ to the (normal) ball $\B_{R_0}(0)\subset T_p M \cong \R^n$.
\end{remark}

\begin{remark}
    As in \cite{CFS}, this notion of local flatness is used in our results to stress the fact that, once the local geometry of the manifold is controlled in the sense of Definition \ref{flatnessassup}, then our estimates are independent of
    $M$. Interestingly, this makes our estimates of local nature even though the objects we consider are nonlocal. 
\end{remark}

\begin{remark}\label{flatscalingrmk}
Throughout the paper the following scaling properties will be used several times.
\begin{itemize}
\item[(a)] Given $M = (M,g)$ and $r>0$, we can consider the "rescaled manifold" $\widehat M = (M, \frac{1}{r^2}g)$. When performing this rescaling, the new heat kernel $H_{\widehat {M}}$ satisfies 
\begin{equation*}
    H_{\widehat{M}}(p,q,t) = r^{n} H_M(p,q,r^2 t)  \,.
\end{equation*}
As a consequence,  the "rescaled kernel" $\widehat  K_s$ defining the $s$-perimeter on $\widehat M$ satisfies 
\[
\widehat  K_s(p,q) = r^{n+s} K_s(p,q).
\]

\item[(b)] Concerning the flatness assumption, it is  easy to show that ${\rm FA}_\ell(M,g, R, p,\varphi) \Rightarrow {\rm FA}_\ell(M,g, R',p,\varphi)$ for all $R'<R$ and  ${\rm FA}_\ell(M,g, R, p,\varphi)\Leftrightarrow {\rm FA}_\ell(M, \frac{1}{r^2}g, R/r, p,\varphi(r\,\cdot\,))$.

\item[(c)] Similarly, if ${\rm FA}_\ell(M,g, R, p,\varphi)$ holds,  and $q\in \varphi(\B_R(0))$ is such that $\B_\varrho(\varphi^{-1}(q))\subset \B_R(0)$, then 
${\rm FA}_\ell(M,\frac{1}{r^2}g, \varrho/r, q,\varphi_{\varphi^{-1}(q),r})$ holds, where $\varphi_{x,\,r} := \varphi(x+r \, \cdot \,)$. 
\end{itemize}
\end{remark}

We now give precise estimates for the kernel $K_s(p,q)$ which we will need in the article, taken from \cite{FracSobPaper}.
\begin{proposition}[{\cite{FracSobPaper}}] \label{prop:kern1}
Let $(M,g)$ be a Riemannian $n$-manifold, not necessarily closed, $s\in(0,2)$ and let $p\in M$. Assume ${\rm FA_\ell}(M,g,R,p, \varphi)$ holds, where $l\geq 1$, and denote $K(x,y): = K_s(\varphi(x), \varphi(y))$. 

\vsp
Given $x\in \B_R(0)$, let
$A(x)$ denote the positive symmetric square root of the matrix $(g_{ij}(x))$ ---$g_{ij}$ being the metric in coordinates $\varphi^{-1}$--- and, for $x,z\in \B_{R/2}(0)$, define
\begin{equation*}
    k(x, z): = K(x,x+z) \quad \mbox{and}\quad  \widehat k(x,z) := k(x,z) - \alpha_{n,s} \frac{{\rm det}\big(A(x)\big) }{|A(x)z|^{n+s}}.
\end{equation*}
Then
\begin{equation}\label{remaining0}
\big|\widehat k(x,z)\big| \le  R^{-1}\frac{C(n,s)}{|z|^{n+s -1}}\quad \mbox{for all } x,z \in \B_{R/4}(0) \,,
\end{equation}
and, for every multi-indices $\alpha,\beta$  with $|\alpha|+|\beta| \le \ell$,
we have
\begin{equation}\label{remaining}
  \bigg|\frac{\partial^{|\alpha|}}{\partial x^{\alpha}} \frac{\partial^{|\beta|}}{\partial z^{\beta}}  k(x,z)\bigg| \le \frac{C(n,s, \ell)}{|z|^{n+s+ |\beta|}}\quad \mbox{for all } x,z \in \B_{R/4}(0) .  
\end{equation}

Moreover, for all $x\in \B_{R/4}(0)$ and for all $q\in M\setminus \varphi(\B_R(0))$ we have
\begin{equation}\label{remaining2}
\bigg|\frac{\partial^{|\alpha|}}{\partial x^{\alpha}}  K_s(\varphi(x),q)\bigg| \le \frac{C(n,\ell)}{R^{n+s}} \,,
\end{equation}
and 
\begin{equation}\label{remaining3}
\int_{M\setminus \varphi(\B_R(0))}\bigg|\frac{\partial^{|\alpha|}}{\partial x^{\alpha}}  K_s(\varphi(x),q)\bigg| dV_q \le \frac{C(n,\ell)}{R^{s}}, 
\end{equation}
for every multi-index  $\alpha$ with $|\alpha|\le \ell$.\\
Here, the constants $c(n,s)$ depending on $s$ are uniform for $s$ away from $0$ and $2$.
\end{proposition}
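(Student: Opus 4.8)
The plan is to deduce every estimate from a Gaussian-type bound on the heat kernel $H_M$ (or its $x$- and $z$-derivatives), fed into the subordination formula \eqref{wethiowhoihw2}. First I would normalise $R=1$: by the rescaling identities of Remark~\ref{flatscalingrmk}(a)--(b), replacing $(M,g)$ by $(M,R^{-2}g)$ multiplies $K_s$ by $R^{n+s}$, turns the scale-$R$ flatness assumption into a scale-$1$ one, and sends $z\mapsto z/R$; the powers of $R$ in \eqref{remaining0}, \eqref{remaining2}, \eqref{remaining3} are then recovered mechanically when this rescaling is undone. Writing $H(x,y,t):=H_M(\varphi(x),\varphi(y),t)$, the starting point is then
\begin{equation*}
 k(x,z)\;=\;\frac{s/2}{\Gamma(1-s/2)}\int_0^\infty H(x,x+z,t)\,\frac{dt}{t^{1+s/2}},
\end{equation*}
together with the elementary principle that a bound $|F(t)|\lesssim t^{-\frac{n+j}{2}}e^{-c|z|^2/t}$ on the integrand integrates (substitute $u=|z|^2/t$ and use $\int_0^\infty u^{\theta-1}e^{-cu}\,du=c^{-\theta}\Gamma(\theta)$) to $\lesssim|z|^{-(n+s+j)}$, whereas a bound that is $O(1)$ for $t\gtrsim1$ and rapidly decaying for $t\lesssim|z|^2$ contributes only $O(1)\lesssim|z|^{-(n+s+j)}$ since $|z|\le\tfrac14$.

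For the main term and \eqref{remaining0} I would freeze the coefficients of $\Delta_g$ at $x$. The frozen, constant-coefficient operator has an explicit Gaussian heat kernel $\gamma_x(z,t)$, and evaluating the resulting one-dimensional subordination integral --- essentially $\int_0^\infty t^{-\frac{n+s}{2}-1}e^{-|A(x)z|^2/4t}\,dt=\Gamma(\tfrac{n+s}{2})\big(|A(x)z|^2/4\big)^{-\frac{n+s}{2}}$ --- reproduces, with the constant $\alpha_{n,s}$ and the $dV_g$ normalisation in coordinates, precisely the term subtracted off in the definition of $\widehat k$ (in particular $\widehat k\equiv0$ when $M=\R^n$). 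For the remainder $E(x,z,t):=H(x,x+z,t)-\gamma_x(z,t)$ I would run a Levi parametrix / Duhamel expansion of $H$ around $\gamma_x$: since the coefficients of $\Delta_g$ coincide with the frozen ones at $x$ and have first derivatives bounded by $\tfrac1{100}$ on $\B_1(0)$ --- this is precisely \eqref{hsohoh2} with $R=1$ --- the coefficient error is $O(|y-x|)$ on the chart, and the time-convolution in Duhamel's formula converts this first-order vanishing into the gain of one power of $\sqrt t$, i.e.\ $|E(x,z,t)|\lesssim t^{-\frac{n-1}{2}}e^{-c|z|^2/t}$ for $t\lesssim1$, with $\gamma_x$ and $E$ both $O(1)$ for $t\gtrsim1$. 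Subordinating gives $|\widehat k(x,z)|\lesssim|z|^{-(n+s-1)}$, which is \eqref{remaining0}.

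The derivative bounds \eqref{remaining} I would get from standard interior parabolic estimates (equivalently, from the parametrix construction): on $\B_{1/2}(0)$ and for $|a|+|b|\le\ell$ one has $|\partial_x^a\partial_y^b H(x,y,t)|\lesssim_{n,s,\ell}\,t^{-\frac{n+|a|+|b|}{2}}e^{-c|x-y|^2/t}$, with constants controlled solely by the $C^\ell$-jet of the metric, hence by $n,s,\ell$ after normalisation. Differentiating $k(x,z)=H(x,x+z,t)$ in $z$ lands a derivative on the second slot and costs a factor $t^{-1/2}$ each; differentiating in $x$ slides the whole pair $(x,x+z)$ and costs no power of $t^{-1/2}$ --- this is already visible on $\gamma_x$, whose $x$-dependence is only through the slowly-varying matrix $A(x)$ (with $|\nabla g|=O(1)$), so that $\partial_x$ acts as a bounded multiplier, the factor $|z|^2/t$ it extracts from the Gaussian exponent being reabsorbed into the Gaussian. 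Thus $|\partial_x^\alpha\partial_z^\beta k(x,z)|\lesssim t^{-\frac{n+|\beta|}{2}}e^{-c|z|^2/t}$ up to $t\sim|z|^2$ and is $O(1)$ for $t\gtrsim1$, giving \eqref{remaining}. The far-field estimates \eqref{remaining2}--\eqref{remaining3} are easier: for $q\notin\varphi(\B_1(0))$ and $x\in\B_{1/4}(0)$ one has $\operatorname{dist}_M(\varphi(x),q)\gtrsim1$, so the Gaussian factor in $|\partial_x^\alpha H_M(\varphi(x),q,t)|\lesssim t^{-\frac{n+|\alpha|}{2}}e^{-c\operatorname{dist}_M(\varphi(x),q)^2/t}$ confines the contribution to $t\gtrsim1$; subordinating gives \eqref{remaining2}, and integrating in $q$ over $M\setminus\varphi(\B_1(0))$ using the conservation bound $\int_M H_M(\varphi(x),q,t)\,dV_q\le1$ for $t\gtrsim1$ (together with the rapid decay of $\int_{M\setminus\varphi(\B_1(0))}H_M$ for $t\lesssim1$, again by finite propagation speed) yields \eqref{remaining3}.

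The step I expect to be the main obstacle is the parametrix estimate behind \eqref{remaining0} and \eqref{remaining}: one must run the heat-kernel expansion using \emph{only} the local $C^\ell$ control of the metric supplied by the flatness assumption (no global geometry is available, and $M$ may be incomplete), and must track precisely the single extra power of $\sqrt t$ --- equivalently of $|z|$ --- gained in the remainder from the first-order agreement of the coefficients at the base point. Related to the incompleteness, I would first replace $H_M$ by the Dirichlet heat kernel of the open set $\varphi(\B_1(0))$; by finite propagation speed and interior parabolic estimates the two differ on $\B_{1/4}(0)$ by a smooth function bounded uniformly in all parameters, so they give the same $s$-kernel up to an $O(1)$ term which is harmless. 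The large-time regime $t\gtrsim1$ has to be handled with similarly uniform constants. These reductions are routine but require care, and the rest of the proof is the bookkeeping of Gaussian integrals described above.
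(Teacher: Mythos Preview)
This proposition is not proved in the present paper; it is quoted verbatim from the companion article \cite{FracSobPaper}. That said, Appendix~\ref{app:gradk} here proves the closely related estimate of Proposition~\ref{gradcomp} and reveals the method used in \cite{FracSobPaper}: one invokes a localisation principle (Lemma~\ref{lemaux2}, also from \cite{FracSobPaper}) to replace $M$ by a flat torus carrying a perturbed metric that agrees with $g$ on the chart, and then applies the standard heat-kernel parametrix expansion on that closed manifold (Proposition~\ref{prop:hkexp}) before integrating against $t^{-1-s/2}\,dt$. Your proposal follows the same overall scheme --- subordination formula, frozen-coefficient parametrix, Gaussian integration --- and correctly identifies the key technical point as tracking the single $\sqrt t$ gain in the Duhamel remainder coming from the first-order vanishing of the coefficient error at the base point.

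The one substantive difference is your choice to localise via the Dirichlet heat kernel on $\varphi(\B_1)$ rather than via an auxiliary torus. Both devices serve the same purpose (reducing to a setting where the parametrix construction is standard and the constants are quantified purely by the $C^\ell$ jet of the metric), but the torus route avoids introducing a boundary and makes the large-$t$ behaviour and the uniformity of constants more transparent; with the Dirichlet kernel you would still need to argue that $H_M-H_{\rm Dir}$ is controlled on $\B_{1/4}$ uniformly in the unknown global geometry of $M$, which is exactly what the torus comparison of Lemma~\ref{lemaux2} packages. One terminological slip: the heat equation does not have ``finite propagation speed''; what you need (and what Lemma~\ref{lemaux2} supplies) is the $e^{-c/t}$ decay, for small $t$, of the difference between two heat kernels whose underlying metrics coincide on the chart.
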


\begin{lemma}\label{loccomparability}
    Let $s_0 \in (0,2)$ and $s \in (s_0,2)$. Let $(M,g)$ be a Riemannian $n$-manifold and $ p \in M$. Assume that ${\rm FA}_1(M,g,p,R,\varphi)$ holds. Then 
\begin{equation*}
   c_7 \frac{\alpha_{n,s}}{|x-y|^{n+s}} \le  K_s(\varphi(x),\varphi(y)) \le  c_8 \frac{\alpha_{n,s}}{|x-y|^{n+s}} \,, 
\end{equation*}
for all $x,y \in \B_{R/2}(0)$, where $c_7,c_8>0$ depend on $n$ and $s_0$.
\end{lemma}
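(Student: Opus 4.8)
The statement is a two-sided bound comparing $K_s(\varphi(x),\varphi(y))$ to the Euclidean kernel $\alpha_{n,s}/|x-y|^{n+s}$ on the ball $\B_{R/2}(0)$, and the natural route is to derive it directly from the fine kernel estimates in Proposition \ref{prop:kern1}. Writing $z=y-x$ and invoking the notation there, we have
\[
K_s(\varphi(x),\varphi(y)) = k(x,z) = \alpha_{n,s}\frac{\det\big(A(x)\big)}{|A(x)z|^{n+s}} + \widehat k(x,z),
\]
so the whole problem reduces to (i) controlling the main term $\alpha_{n,s}\det(A(x))/|A(x)z|^{n+s}$ from above and below by a constant multiple of $\alpha_{n,s}/|z|^{n+s}$, and (ii) absorbing the error term $\widehat k(x,z)$, which by \eqref{remaining0} is bounded by $R^{-1}C(n,s)|z|^{-(n+s-1)}$, into the main term.

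\textbf{Step 1: the main term.} Since ${\rm FA}_1(M,g,R,p,\varphi)$ holds, \eqref{hsohoh1} says that $(g_{ij}(x))$ has eigenvalues in $[1-\tfrac1{100},1+\tfrac1{100}]$, hence so does its symmetric square root $A(x)$. Therefore $|A(x)z|$ lies between $(1-\tfrac1{100})^{1/2}|z|$ and $(1+\tfrac1{100})^{1/2}|z|$, and $\det(A(x)) = \prod \text{(eigenvalues)}$ lies between $(1-\tfrac1{100})^{n/2}$ and $(1+\tfrac1{100})^{n/2}$. Combining these, the main term is comparable to $\alpha_{n,s}/|z|^{n+s}$ with purely dimensional constants, say
\[
\Big(\tfrac{99}{100}\Big)^{n/2}\Big(\tfrac{100}{101}\Big)^{(n+s)/2}\frac{\alpha_{n,s}}{|z|^{n+s}} \le \alpha_{n,s}\frac{\det(A(x))}{|A(x)z|^{n+s}} \le \Big(\tfrac{101}{100}\Big)^{n/2}\Big(\tfrac{100}{99}\Big)^{(n+s)/2}\frac{\alpha_{n,s}}{|z|^{n+s}}.
\]

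\textbf{Step 2: absorbing the error and handling the $|z|$ range.} The difficulty — though a mild one — is that the error bound $R^{-1}C(n,s)|z|^{-(n+s-1)}$ is only a lower-order correction \emph{when $|z|$ is small relative to $R$}; for $|z|$ comparable to $R/2$ it need not be small compared to the main term, so a single application of \eqref{remaining0} does not suffice on all of $\B_{R/2}(0)$. I would split into two regimes. For $|z|\le \delta R$ with $\delta=\delta(n,s_0)$ chosen small enough that $\delta\, C(n,s) \le \tfrac12\big(\tfrac{99}{100}\big)^{n/2}\big(\tfrac{100}{101}\big)^{(n+s)/2}\alpha_{n,s}$ (uniformly for $s\ge s_0$, which is possible since the constants in Proposition \ref{prop:kern1} are uniform away from $0$ and $2$), the error is at most half the lower bound on the main term, giving the two-sided estimate immediately with $c_7 = \tfrac12(\tfrac{99}{100})^{n/2}(\tfrac{100}{101})^{(n+s)/2}$ and a corresponding $c_8$. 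For $\delta R \le |z| \le R$, both $K_s(\varphi(x),\varphi(y))$ and $|z|^{-(n+s)}$ are bounded above and below by positive constants depending only on $n$, $s_0$, and $R$ — more precisely, by the rescaling in Remark \ref{flatscalingrmk}(a) one reduces to $R=1$, and then on the compact annulus $\{\delta\le|z|\le 1\}$ the kernel is continuous and strictly positive (it is finite away from the diagonal and bounded below since $K_s>0$, e.g. by positivity of the heat kernel), so the ratio $K_s/|z|^{-(n+s)}$ is pinched between two positive constants. Taking the worst constants from the two regimes yields the claim. The only genuine subtlety is checking that all constants can be taken uniform in $s\in(s_0,2)$, which follows from the stated uniformity of the constants in Proposition \ref{prop:kern1} together with the fact that $\alpha_{n,s}$ is bounded above and below on $[s_0,2]$.
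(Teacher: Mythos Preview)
The paper does not include its own proof of this lemma; it is stated without argument, evidently imported (like the surrounding Proposition~\ref{prop:kern1} and Proposition~\ref{sdfgsdrgs}) from \cite{FracSobPaper}. So there is no in-paper proof to compare against directly. Your route via the decomposition in Proposition~\ref{prop:kern1} is the natural one and is presumably what is intended, and Step~1 together with the small-$|z|$ half of Step~2 is fine (modulo the minor point that \eqref{remaining0} is stated for $x,z\in\B_{R/4}$, so one must re-center via Remark~\ref{flatscalingrmk}(c) when $x\notin\B_{R/4}$).

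Your large-$|z|$ argument, however, has a genuine gap. The compactness claim --- ``the kernel is continuous and strictly positive on the compact annulus $\{\delta\le |z|\le 1\}$, hence pinched between two positive constants'' --- yields constants that depend on the particular manifold $M$, whereas the lemma requires $c_7,c_8$ depending only on $n$ and $s_0$. Positivity of the heat kernel gives $K_s>0$ pointwise, but not a \emph{uniform} positive lower bound over all $(M,g)$ satisfying ${\rm FA}_1$ at scale $R$. And Proposition~\ref{prop:kern1} alone cannot rescue this: the error $R^{-1}C(n,s)|z|^{-(n+s-1)}$ is of the same order as the main term when $|z|\sim R$, so it cannot be absorbed. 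What is actually needed for $|z|$ comparable to $R$ is a quantitative local two-sided bound on the heat kernel $H_M(\varphi(x),\varphi(y),t)$ for short times $t\lesssim R^2$, with constants depending only on the ${\rm FA}_1$ data (this is the kind of estimate underlying \cite{FracSobPaper}); integrating such a Gaussian lower bound in $t$ then produces the required $M$-independent lower bound on $K_s$.
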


\begin{proposition}[\cite{FracSobPaper}]\label{sdfgsdrgs}
        Let $(M,g)$ be a closed $n$-dimensional Riemannian manifold and $s\in(0,2)$. Assume that the flatness assumption ${\rm FA}_\ell (M,g,R,p,\varphi)$ holds, and let $X\in \mathfrak{X}( M)$ be a smooth vector field supported on $\varphi(\B_{R/4})$. Writing $\psi_X^t$ for the flow of $X$ at time $t$, then for every $x,y \in \B_{R/4}(0)$ we have
        \begin{equation}\label{cdsacsdc1}
    \bigg| \frac{d^\ell}{dt^\ell}\bigg|_{t=0} K_s(\psi_X^t(\varphi(x)),\psi_X^t(\varphi(y))) \bigg|\leq C K_s(\varphi(x), \varphi(y))\,,
\end{equation}
for some constant $C=C(n,s,l, \|X \|_{C^\ell(\varphi(\B_R))})$ which stays bounded for $s$ away from $0$ and $2$.
    \end{proposition}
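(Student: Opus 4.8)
The plan is to pass to the chart, express the kernel through its difference variable as in Proposition \ref{prop:kern1}, and differentiate $\ell$ times with the Fa\`a di Bruno formula, exploiting a cancellation between the extra singularity gained each time the kernel is differentiated in its difference variable and the vanishing, to first order in $|x-y|$, of the corresponding time-derivative of the flow.

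Concretely, I would set $K(x,y):=K_s(\varphi(x),\varphi(y))$ and $k(x,z):=K(x,x+z)$, and let $\phi^t:=\varphi^{-1}\circ\psi^t\circ\varphi$ be the flow in the chart of the coordinate vector field $\bar X$, which is smooth and compactly supported in $\B_{R/4}(0)$, with $\|\bar X\|_{C^\ell(\B_R(0))}\le C(n,\ell)\,\|X\|_{C^\ell(\varphi(\B_R))}$ by the flatness bounds \eqref{hsohoh1}--\eqref{hsohoh2}. For $x,y\in\B_{R/4}(0)$ one writes, with $a(t):=\phi^t(x)$ and $z(t):=\phi^t(y)-\phi^t(x)$,
$$
K_s\big(\psi^t(\varphi(x)),\psi^t(\varphi(y))\big)=k(a(t),z(t)),\qquad z(0)=y-x=:z_0
$$
(the case $x=y$ being vacuous, both sides of \eqref{cdsacsdc1} being $+\infty$; for $x\neq y$ the point $(x,z_0)$ lies off the diagonal, so $k$ is smooth there, and $(a,z)$ is smooth in $t$ near $0$). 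The first input I would record is the standard fact that $\tfrac{d^j}{dt^j}\big|_{t=0}\phi^t(p)=X^{(j)}(p)$, where $X^{(1)}:=\bar X$ and $X^{(j+1)}:=DX^{(j)}\cdot\bar X$ are smooth vector fields whose $C^1$ norms are controlled by $\|\bar X\|_{C^\ell}$ for $j\le\ell$. Hence $a^{(j)}(0)=X^{(j)}(x)$ is bounded, while
$$
\big|z^{(j)}(0)\big|=\big|X^{(j)}(y)-X^{(j)}(x)\big|\le \|X^{(j)}\|_{C^1}\,|x-y|\le C\big(n,\ell,\|\bar X\|_{C^\ell}\big)\,|z_0|,\qquad 1\le j\le\ell\,,
$$
i.e.\ \emph{every} time-derivative of the difference $z(t)$ carries a factor $\lesssim |z_0|$.

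The core step is then the multivariate Fa\`a di Bruno formula applied to $t\mapsto k(a(t),z(t))$: its $\ell$-th derivative at $t=0$ is a finite sum (with combinatorial coefficients depending only on $\ell$) of terms of the form $(\partial_x^\alpha\partial_z^\beta k)(x,z_0)$ times a product of time-derivatives at $t=0$ of the components of $a$ and $z$, in which exactly $|\alpha|$ factors come from $a$ and $|\beta|$ from $z$, the positive orders of differentiation summing to $\ell$ (so each order is $\le\ell$). By the display above, each such term is at most $C(n,\ell,\|\bar X\|_{C^\ell})\,|z_0|^{|\beta|}\,\big|(\partial_x^\alpha\partial_z^\beta k)(x,z_0)\big|$, and I would invoke estimate \eqref{remaining} of Proposition \ref{prop:kern1} — used at $(x,z_0)$ with $x\in\B_{R/4}(0)$, $|z_0|<R/2$, noting it continues to hold there with constants of the same type (as is clear from its proof in \cite{FracSobPaper}) — to bound this by $C(n,s,\ell,\|\bar X\|_{C^\ell})\,|z_0|^{|\beta|}\,|z_0|^{-(n+s+|\beta|)}=C(n,s,\ell,\|\bar X\|_{C^\ell})\,|z_0|^{-(n+s)}$. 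Finally, summing the finitely many terms and applying Lemma \ref{loccomparability} (valid since ${\rm FA}_1$ follows from ${\rm FA}_\ell$) to replace $|z_0|^{-(n+s)}$ by $(c_7\alpha_{n,s})^{-1}K_s(\varphi(x),\varphi(y))$ — with $\alpha_{n,s}^{-1}$ bounded for $s$ away from $0$ and $2$ by \eqref{alphadef} — gives \eqref{cdsacsdc1} after translating $\|\bar X\|_{C^\ell(\B_R)}$ back into $\|X\|_{C^\ell(\varphi(\B_R))}$.

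The main obstacle will be the Fa\`a di Bruno bookkeeping in this last step: one must observe that the singularity-increasing derivatives of $k$ are precisely the $z$-derivatives (the loss in \eqref{remaining} is $|z|^{-|\beta|}$, and only in $|\beta|$), and that each such derivative is necessarily paired with a $z$-derivative of the flow, whose difference structure $X^{(j)}(y)-X^{(j)}(x)$ supplies the compensating factor $|z_0|$, so that the two losses cancel term by term. The point that makes this clean is to write the kernel directly in the difference variable $z$, which keeps the $z$-order of each summand equal to the number of $z$-type flow factors; differentiating $K$ in its two arguments separately would not display the cancellation. The remaining technical points — the mild radius mismatch ($R/4$ vs.\ $R/2$) and the uniformity of the constant as $s$ stays away from $0$ and $2$ — are routine given Remark \ref{flatscalingrmk} and the explicit form \eqref{alphadef} of $\alpha_{n,s}$.
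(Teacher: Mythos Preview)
The paper does not supply its own proof of this proposition; it is quoted from \cite{FracSobPaper}. Your approach is correct and is the standard one: pass to the difference variable $z$, apply Fa\`a di Bruno, and observe that each $z$-derivative of $k$ (which by \eqref{remaining} costs a factor $|z_0|^{-1}$) is paired with a time-derivative $z^{(j)}(0)=X^{(j)}(y)-X^{(j)}(x)=O(|z_0|)$, so the losses cancel. This is precisely the mechanism the present paper invokes when it later ``argues as in the proof of Proposition~\ref{sdfgsdrgs}'' (see the Claim inside Step~2 of the proof of Proposition~\ref{12varprop}, where the same difference-variable trick is used to bound $K'(x,x+z)-K'(x,x-z)$). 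The only minor caveats you flag --- the $R/4$ versus $R/2$ radius and the uniformity in $s$ --- are indeed routine.
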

\begin{lemma}[\cite{FracSobPaper}]\label{enboundslemma2} 
Let $s\in(0,2)$. Let $M$ satisfy the flatness assumptions ${\rm FA}_\ell(M,g,R,p,\varphi)$, and let $ v=\chi_E-\chi_{E^c}\in H^{s/2}(M)$. Let $X \in \mathfrak{X} (M)$ be a smooth vector field supported on $\varphi(\B_{R/2})$, and put $v_t := v \circ \psi_X^{-t}$, where $\psi_X^t$ is the flow of $X$ at time $t$. Then, for all $T>0$ there holds
\begin{equation*}
    \sup_{0< t<T} \left| \frac{d^\ell}{d  t^\ell}\textnormal{Per}_s(\psi_X^{t}(E)) \right|\leq C(1+\textnormal{Per}_s(E;\varphi(\B_{R})))\, ,
\end{equation*}
for some constant $C=C(n,s,l,T, \|X \|_{C^\ell(\varphi(\B_R))})$ which stays bounded for $s$ away from $0$ and $2$.
\end{lemma}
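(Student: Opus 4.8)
The plan is to shift the $t$-dependence off the non-smooth function $v_t$ and onto the kernel via a change of variables, and then to differentiate under the integral sign. Since $M$ is closed and $X$ smooth, $\psi^t_X$ is a diffeomorphism of $M$; writing $J^t$ for its Jacobian determinant and using $\chi_{\psi^t_X(E)}=\chi_E\circ\psi^{-t}_X$, the substitution $p=\psi^t_X(p')$, $q=\psi^t_X(q')$ gives
\[
\textnormal{Per}_s(\psi^t_X(E))=\tfrac14\iint_{M\times M}(v(p)-v(q))^2\,K^t(p,q)\,dV_p\,dV_q,\qquad K^t(p,q):=K_{M,s}(\psi^t_Xp,\psi^t_Xq)\,J^t(p)\,J^t(q).
\]
From the Liouville formula $\tfrac{d}{dt}J^t=((\dive X)\circ\psi^t_X)\,J^t$ one obtains $0<c\le J^t\le C$ and uniform bounds on $\partial_t^k J^t$ over $M\times[0,T]$ for $k\le\ell$, depending only on $T$ and $\|X\|_{C^\ell}$; a Grönwall estimate for $\tfrac{d}{dt}|\psi^t_X(\varphi(x))-\psi^t_X(\varphi(y))|$ gives $|\psi^t_X(\varphi(x))-\psi^t_X(\varphi(y))|\simeq|x-y|$ for $x,y\in\B_R(0)$ and $t\in[0,T]$; and since $\psi^t_X=\mathrm{id}$, $J^t\equiv1$ off $\varphi(\B_{R/2})\supseteq\mathrm{supp}\,X$, the function $\partial_tK^t(p,q)$ vanishes unless $p$ or $q$ lies in $\varphi(\B_{R/2})$.

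The core of the argument is then the pointwise bound, for $0\le j\le\ell$ and $t\in[0,T]$, with $p$ and $q$ interchangeable: $|\partial_t^jK^t(p,q)|\le C\,K_{M,s}(p,q)$ whenever $p,q\in\varphi(\B_R)$, and $\int_{M\setminus\varphi(\B_R)}|\partial_t^jK^t(p,q)|\,dV_q\le C$ whenever $p\in\varphi(\B_{R/2})$, with $C=C(n,s_0,s_1,\ell,T,\|X\|_{C^\ell})$. When $p,q\in\varphi(\B_{R/2})$ the flow moves both arguments of the kernel, and the first inequality follows from (the proof of) Proposition~\ref{sdfgsdrgs} — which is tailored precisely to this comoving situation — together with the comparability $K_{M,s}(p,q)\simeq\alpha_{n,s}|x-y|^{-n-s}$ of Lemma~\ref{loccomparability} and the bi-Lipschitz estimate above.

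The step I expect to be the main obstacle is the complementary ``mixed'' case, where exactly one argument — say $q$ — lies outside $\varphi(\B_{R/2})$. Then $\psi^t_X(q)=q$ and $J^t(q)=1$, so $\partial_t^jK^t=\partial_t^j\bigl(K_{M,s}(\psi^t_Xp,q)\,J^t(p)\bigr)$, and differentiation in $t$ now moves only the first argument, which formally raises the order of the singularity near the common boundary of $\varphi(\B_{R/2})$ and its complement; for $j\ge1$ a naive term-by-term bound would produce a divergent near-diagonal integral. The resolution is that $q\in\{X=0\}$ (because $\mathrm{supp}\,X\subseteq\varphi(\B_{R/2})$), so $|X(\psi^t_Xp)|\le\|DX\|_{L^\infty}\,\mathrm{dist}(\psi^t_Xp,\{X=0\})\le\|DX\|_{L^\infty}\,|\psi^t_Xp-q|$; expanding $\partial_t^j[K_{M,s}(\psi^t_Xp,q)]$ by the chain rule into terms built from $(\nabla_1^iK_{M,s})(\psi^t_Xp,q)$ contracted with products of (bounded) derivatives of $X$ and undifferentiated copies of $X$, all evaluated at $\psi^t_Xp$, one checks that each such term carries at least $i$ undifferentiated factors $X(\psi^t_Xp)$, so the bound $|\nabla_1^iK_{M,s}|\lesssim|\cdot|^{-n-s-i}$ from \eqref{remaining} is exactly compensated. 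This yields $|\partial_t^jK^t(p,q)|\lesssim|\psi^t_Xp-q|^{-n-s}\lesssim K_{M,s}(p,q)$ when $q\in\varphi(\B_R)$, and the claimed integral bound when $q\notin\varphi(\B_R)$, now using the localized estimates \eqref{remaining2}--\eqref{remaining3} in place of \eqref{remaining} — which is also what keeps only the \emph{local} quantity $\textnormal{Per}_s(E;\varphi(\B_R))$, and not $\textnormal{Per}_s(E)$, in the final bound.

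Granting these estimates, dominated convergence justifies differentiating the integral $\ell$ times (the dominating functions being integrable against $(v(p)-v(q))^2$ since $\textnormal{Per}_s(E)<\infty$), giving $\bigl|\tfrac{d^\ell}{dt^\ell}\textnormal{Per}_s(\psi^t_X(E))\bigr|\le\tfrac14\iint_{M\times M}(v(p)-v(q))^2|\partial_t^\ell K^t(p,q)|\,dV_p\,dV_q$. The contribution of $\{p,q\in\varphi(\B_R)\}$ is at most $\tfrac C4\iint_{\varphi(\B_R)^2}(v(p)-v(q))^2K_{M,s}(p,q)=C\,\textnormal{Per}_s(E;\varphi(\B_R))$, and the contribution of $\{p\in\varphi(\B_{R/2}),\,q\notin\varphi(\B_R)\}$ together with its mirror image is, using $(v(p)-v(q))^2\le4$, at most $\int_{\varphi(\B_{R/2})}\bigl(\int_{M\setminus\varphi(\B_R)}|\partial_t^\ell K^t|\,dV_q\bigr)dV_p\le C\,\mathrm{vol}(\varphi(\B_{R/2}))\le C$ (the last step at the working scale $R\simeq1$, the estimates being scale-covariant). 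Adding these gives $\sup_{0<t<T}\bigl|\tfrac{d^\ell}{dt^\ell}\textnormal{Per}_s(\psi^t_X(E))\bigr|\le C\bigl(1+\textnormal{Per}_s(E;\varphi(\B_R))\bigr)$, as desired.
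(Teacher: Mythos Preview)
Your proof is correct and follows the same approach as the paper's sketch: change variables to push the $t$-dependence onto the kernel, then bound $\partial_t^\ell K^t$ by $C\,K_{M,s}$ near the diagonal (via Proposition~\ref{sdfgsdrgs}) and by an integrable tail far from it (via \eqref{remaining3}). Your careful handling of the ``mixed'' case---one point in $\mathrm{supp}\,X$, the other nearby but outside---is essentially the mechanism behind Proposition~\ref{sdfgsdrgs}, whose hypothesis requires only that both points lie in the chart ball, not in $\mathrm{supp}\,X$, so the paper's invocation of that proposition already absorbs this case.
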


\begin{proof}
This is Lemma 2.17 in \cite{FracSobPaper}. We give a sketch of its proof, since we will use similar ideas later. One starts by changing variables with the flow of $X$, to see that
\begin{align*}
\frac{d^\ell}{d t^\ell}\textnormal{Per}_s(\psi_X^{t}(E))&=\frac{d^\ell}{d t^\ell}\iint |v(\psi_X^{- t}(p))-v(\psi_X^{- t}(q))|^2K_s(p,q) \, dV_p\, dV_q\\
&=\frac{d^\ell}{d t^\ell}\iint |v(p)-v(q)|^2 K_s(\psi_X^{ t}(p),\psi_X^{ t}(q))\,J_{ t}(p)J_{ t}(q) \, dV_p\, dV_q\\
&=\iint |v(p)-v(q)|^2\frac{d^\ell}{d t^\ell}\Big[K_s(\psi_X^{ t}(p),\psi_X^{ t}(q)) \,J_{ t}(p)J_{ t}(q)\Big]  dV_p\, dV_q\,.
\end{align*}
The Jacobians $J_t(p)$ and their derivatives can simply be bounded by a constant with the right dependencies. The kernel part and its derivatives can be estimated appropriately using Proposition \ref{sdfgsdrgs} for $p$ and $q$ close and \eqref{remaining3} for $p$ and $q$ separated enough.
\end{proof}

We conclude this subsection with two simple interpolation results, which in particular imply that sets of finite (classical) perimeter have bounded fractional perimeter as well.
\begin{lemma} \label{lem:interpropRn} Let $M$ satisfy the flatness assumptions ${\rm FA}_1(N,g,p_0,4R,\varphi)$. Let $s_0\in(0,1)$ and $s\in(s_0,1)$, and let $u:\varphi(\B_R)\to\R$ be a function of bounded variation (BV). Then,
\begin{equation}\label{ipr11}
    \iint_{\varphi(\B_R)\times \varphi(\B_R)} |u(p)-u(q)|^2 K_s(p,q)\,dV_p\,dV_q \leq \frac{C(n,s_0)}{1-s}[u]_{BV(\varphi(\B_R))}^s\|u\|_{L^1(\varphi(\B_R))}^{1-s}\, .
\end{equation}
    In particular, if $E\subset M$ is a set of finite perimeter in $\varphi(\B_R)$, then
\begin{equation}\label{interpropRn}
    R^{s-n}{\rm Per}_s(E;\varphi(\B_R)) \leq \frac{C(n,s_0)}{1-s} [R^{1-n}{\rm Per}(E;\varphi(\B_R))]^s\, .
\end{equation}
\end{lemma}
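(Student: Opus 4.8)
The plan is to prove the $H^{s/2}$–$BV$ interpolation inequality \eqref{ipr11} first, and then deduce the perimeter estimate \eqref{interpropRn} by applying \eqref{ipr11} with $u=\chi_E$ (for which $[u]_{BV(\varphi(\B_R))}=\mathrm{Per}(E;\varphi(\B_R))$ and $\|u\|_{L^1(\varphi(\B_R))}\le |\varphi(\B_R)|\le C R^n$, after absorbing the measure bound, which accounts for the powers of $R$ upon rescaling). The main point is therefore \eqref{ipr11}. Here I would reduce everything to a Euclidean computation: by the flatness assumption ${\rm FA}_\ell$, Lemma \ref{loccomparability} gives $K_s(\varphi(x),\varphi(y))\le c_8\,\alpha_{n,s}|x-y|^{-n-s}$ on $\B_{R/2}\times\B_{R/2}$ (and the Jacobian of $\varphi$ is comparable to $1$), so it suffices to bound
\begin{equation*}
\iint_{\B_R\times\B_R}\frac{|\tilde u(x)-\tilde u(y)|^2}{|x-y|^{n+s}}\,dx\,dy\le \frac{C(n,s_0)}{1-s}[\tilde u]_{BV(\B_R)}^s\|\tilde u\|_{L^1(\B_R)}^{1-s}
\end{equation*}
for the pulled-back function $\tilde u=u\circ\varphi$, using $\alpha_{n,s}\le C(n)(1-s)$ as $s\to 1$ to kill the $\alpha_{n,s}$ factor; note this requires care that $\varphi(\B_R)$ is the \emph{full} domain of integration, which is why the flatness hypothesis is imposed at scale $4R$, giving room for the comparability estimates on a slightly larger ball.

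For the Euclidean inequality, the standard argument is to split the double integral over $|x-y|<\rho$ and $|x-y|\ge\rho$ for a parameter $\rho>0$ to be optimized. On the far region $|x-y|\ge \rho$ one crudely bounds $|\tilde u(x)-\tilde u(y)|^2\le 2|\tilde u(x)|^2+2|\tilde u(y)|^2$ — or better, uses $|\tilde u(x)-\tilde u(y)|\le |\tilde u(x)|+|\tilde u(y)|$ together with one factor bounded in $L^\infty$ when $\tilde u$ is a characteristic function; in the general BV case one instead writes $|\tilde u(x)-\tilde u(y)|^2\le 2\|\tilde u\|_{L^\infty}\cdots$, but since we only want $L^1$ on the right we should use $|\tilde u(x)-\tilde u(y)|^2 \le \big(|\tilde u(x)|+|\tilde u(y)|\big)\,|\tilde u(x)-\tilde u(y)|$ and keep one factor linear. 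Integrating the kernel $|x-y|^{-n-s}$ over $\{|x-y|\ge\rho\}$ gives $\le C(n)\,\rho^{-s}/s$. On the near region $|x-y|<\rho$ one uses the classical fact that for $\tilde u\in BV$,
\begin{equation*}
\int_{\B_R}|\tilde u(x+h)-\tilde u(x)|\,dx\le |h|\,[\tilde u]_{BV(\B_{R+|h|})},
\end{equation*}
so that, writing $|\tilde u(x)-\tilde u(y)|^2\le 2\|\tilde u\|_{L^\infty}|\tilde u(x)-\tilde u(y)|$ is \emph{not} allowed; instead bound $|\tilde u(x)-\tilde u(y)|^2\le C\,|\tilde u(x)-\tilde u(y)|$ only if $\tilde u$ is bounded by a constant. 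To keep the statement valid for general BV $u$ one uses $\|\tilde u-\text{(median)}\|$ type arguments, or simply proves \eqref{interpropRn} directly (the case of characteristic functions, where $|\chi_E(x)-\chi_E(y)|^2=|\chi_E(x)-\chi_E(y)|$) and then obtains \eqref{ipr11} for general BV $u$ by the coarea/layer-cake decomposition $u=\int_{-\infty}^\infty(\chi_{\{u>t\}}-\chi_{\{u<0\}\cap\{0>t\}})\,dt$ combined with Minkowski's inequality and $[u]_{BV}=\int|D\chi_{\{u>t\}}|\,dt$. The parameter $\rho$ is then chosen to balance the two regions, namely $\rho\sim \|u\|_{L^1}/[u]_{BV}$, which yields the exponents $s$ and $1-s$ and, crucially, the singular constant: integrating the near-region kernel produces $\int_0^\rho r^{-n-s}\cdot r\cdot r^{n-1}\,dr = \rho^{1-s}/(1-s)$, which is exactly where the $\frac{1}{1-s}$ in \eqref{ipr11} comes from.

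The main obstacle — or rather the main bookkeeping point — is twofold: first, tracking the dependence of constants on $s$ so that the only blow-up as $s\to 1$ is the explicit $\frac{1}{1-s}$ (this uses that $\alpha_{n,s}\sim C(n)(1-s)$, that $\int_{\{|x-y|\ge\rho\}}|x-y|^{-n-s}\,dy$ stays bounded for $s$ bounded away from $0$, and that $c_7,c_8$ from Lemma \ref{loccomparability} are $s_0$-uniform); and second, handling the boundary of the ball correctly — the layer-cake reduction to characteristic functions must be done on the fixed domain $\varphi(\B_R)$, and the translation estimate for $BV$ functions on a ball introduces the slightly larger ball $\B_{R+|h|}$, which is harmless because $|h|<\rho\ll R$ but must be noted, and is the reason the flatness assumption is stated at scale $4R$. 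The passage from \eqref{ipr11} to \eqref{interpropRn} is then purely a matter of inserting $u=\chi_E$, using $[\chi_E]_{BV(\varphi(\B_R))}={\rm Per}(E;\varphi(\B_R))$ and $\|\chi_E\|_{L^1(\varphi(\B_R))}^{1-s}\le (CR^n)^{1-s}\le C(n)R^{n(1-s)}$, and rearranging the powers of $R$; the factor $\alpha_{n,s}$ hidden in the comparison of $K_s$ with $|x-y|^{-n-s}$ is absorbed into $C(n,s_0)$ since it is bounded uniformly for $s$ near $1$.
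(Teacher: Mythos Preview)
Your reduction to the Euclidean case via the flatness assumption and Lemma~\ref{loccomparability} is exactly the paper's approach; the paper then simply cites \cite[Proposition~4.2]{Brasc} for the Euclidean interpolation inequality rather than reproving it by the near/far splitting you sketch.

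Two corrections to your sketch. First, $\alpha_{n,s}$ does \emph{not} vanish as $s\to 1$: from \eqref{alphadef} one has $\alpha_{n,1}=\Gamma(\tfrac{n+1}{2})/\pi^{(n+1)/2}>0$, and $\alpha_{n,s}$ is bounded above and below by positive constants on $(s_0,1)$. Your earlier claim ``$\alpha_{n,s}\le C(n)(1-s)$'' is false (you in fact contradict it in your final sentence, where you correctly say $\alpha_{n,s}$ is merely bounded and absorbed into $C(n,s_0)$). The only source of the $\tfrac{1}{1-s}$ is the near-region integral $\int_0^\rho r^{-s}\,dr$, as you identify. Second, you rightly flag that \eqref{ipr11} is quadratic in $u$ on the left but degree one on the right, so it cannot hold for general unbounded $u\in BV$; however, your proposed fix via coarea fails for the same homogeneity reason, since the left-hand side is not subadditive under the layer-cake decomposition. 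The statement as written implicitly needs $\|u\|_{L^\infty}\le C$, and the only application in the paper is to $u=\chi_E$ (or $u=\chi_E-\chi_{E^c}$), where $|u(p)-u(q)|^2=|u(p)-u(q)|$ and the issue disappears.
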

\begin{proof}
If $v:\B_R\subset\R^n\to\R$ is a function of bounded variation, using the fundamental theorem of calculus it is simple to see that
\begin{equation*}
    \iint_{\B_R\times \B_R} \frac{|v(x)-v(y)|^2}{|x-y|^{n+s}}\,dx\,dy \leq \frac{C(n)}{(1-s)s}[v]_{BV(\B_R)}^s\|v\|_{L^1(\B_R)}^{1-s}\, ,
\end{equation*}
see for instance \cite[Proposition 4.2]{Brasc}. The bound \eqref{ipr11} then follows from applying this result to $v=u\circ\varphi$ and using Lemma \ref{loccomparability} to compare the corresponding kernels.
\end{proof}

We can obtain a global version as well:
\begin{proposition} Let $(M,g)$ be a closed Riemannian manifold. Let $s_0\in(0,1)$ and $s\in(s_0,1)$, and let $E\subset M$ be a set of finite perimeter. Then,
\begin{equation}\label{interpropM}
    {\rm Per}_s(E;M) \leq \frac{C(M,s_0)}{1-s} [{\rm Per}(E;M)]^s|E|^{1-s} .
\end{equation}
\end{proposition}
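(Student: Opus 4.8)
The plan is to localize the double integral defining ${\rm Per}_s(E;M)$, handling the near-diagonal interactions with the Euclidean-type interpolation of Lemma~\ref{lem:interpropRn} on a fixed finite atlas, and the remaining long-range interactions crudely, using that the tail $\int_{M\setminus B_{r_0}(p)}K_s(p,q)\,dV_q$ is uniformly bounded together with the isoperimetric inequality on $M$. First I would fix, via Remark~\ref{fbsvdg}, a radius $R_0>0$ such that ${\rm FA}_1(M,g,R_0,p,\varphi_p)$ holds at every $p\in M$ with $\varphi_p=\exp_p|_{\B_{R_0}(0)}$, and extract from the open cover $\{\varphi_p(\B_{R_0/16}(0))\}_{p\in M}$ a finite subcover. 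Writing $\varphi_j:=\varphi_{p_j}$ and $V_j:=\varphi_j(\B_{R_0/4}(0))$ for $j=1,\dots,N$, the $V_j$ still cover $M$ and each of them meets the hypotheses of Lemma~\ref{lem:interpropRn} with $R=R_0/4$ (so that $4R=R_0$). A Lebesgue-number argument then yields $r_0\in(0,R_0/16)$ such that every ball $B_{r_0}(p)$ is contained in some $V_j$; equivalently, $\chi_{\{d_M(p,q)<r_0\}}\le\sum_{j=1}^N\chi_{V_j}(p)\chi_{V_j}(q)$ pointwise on $M\times M$.

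Next I split ${\rm Per}_s(E;M)=2\int_E\int_{E^c}K_s(p,q)\,dV_q\,dV_p=I+II$, where $I$ is the contribution of the pairs $(p,q)$ with $d_M(p,q)<r_0$ and $II$ that of the remaining pairs. The pointwise bound above gives $I\le 2\sum_j\int_{E\cap V_j}\int_{E^c\cap V_j}K_s=\sum_j{\rm Per}_s(E;V_j)$, and applying \eqref{ipr11} with $u=\chi_E$ on each $V_j$ (noting $[\chi_E]_{BV(V_j)}={\rm Per}(E;V_j)$ and $\|\chi_E\|_{L^1(V_j)}=|E\cap V_j|$) gives ${\rm Per}_s(E;V_j)\le\frac{C(n,s_0)}{1-s}{\rm Per}(E;V_j)^s|E\cap V_j|^{1-s}$. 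Summing over $j$, applying H\"older's inequality for finite sums with conjugate exponents $1/s$ and $1/(1-s)$, and using the trivial overlap bounds $\sum_j{\rm Per}(E;V_j)\le N\,{\rm Per}(E;M)$ and $\sum_j|E\cap V_j|\le N|E|$, one arrives at $I\le\frac{C(M,s_0)}{1-s}{\rm Per}(E;M)^s|E|^{1-s}$.

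For the far part, I would first show that $\int_{M\setminus B_{r_0}(p)}K_s(p,q)\,dV_q\le C(M,s_0)$ uniformly in $p\in M$ and in $s\in(s_0,1)$: in the chart $\varphi_p$ one splits this integral at the radius $R_0/4$, on $\varphi_p(\B_{R_0/4}(0))\setminus B_{r_0}(p)$ using \eqref{remaining} (which gives $K_s(\varphi_p(0),\varphi_p(y))\le C(n,s)|y|^{-n-s}$, integrable away from $y=0$) and on $M\setminus\varphi_p(\B_{R_0/4}(0))$ using \eqref{remaining3}. Hence $II\le 2\int_E\big(\int_{M\setminus B_{r_0}(p)}K_s(p,q)\,dV_q\big)\,dV_p\le C(M,s_0)|E|$. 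Since $II=2\iint_{\{d_M(p,q)\ge r_0\}}\chi_E(p)\chi_{E^c}(q)K_s(p,q)\,dV_p\,dV_q$ is symmetric under $E\leftrightarrow E^c$ (using $K_s(p,q)=K_s(q,p)$), the same argument gives $II\le C(M,s_0)|E^c|$, hence $II\le C(M,s_0)\min(|E|,|E^c|)$. Now the isoperimetric inequality on the closed manifold $(M,g)$ provides $c=c(M,g)>0$ with ${\rm Per}(E;M)\ge c\,\min(|E|,|E^c|)^{(n-1)/n}$; writing $\min(|E|,|E^c|)=\min(|E|,|E^c|)^{s(n-1)/n}\cdot\min(|E|,|E^c|)^{1-s+s/n}\le c^{-s}\,{\rm Per}(E;M)^s\cdot|E|^{1-s+s/n}$ (using $\min(|E|,|E^c|)\le|E|$ and $1-s+s/n>0$) and $|E|^{s/n}\le|M|^{s/n}$, one gets $II\le C(M,s_0)\,{\rm Per}(E;M)^s|E|^{1-s}\le\frac{C(M,s_0)}{1-s}{\rm Per}(E;M)^s|E|^{1-s}$. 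Adding the bounds for $I$ and $II$ completes the proof.

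The only step that is not routine bookkeeping is the uniform tail bound $\int_{M\setminus B_{r_0}(p)}K_s(p,q)\,dV_q\le C(M,s_0)$ --- which, as indicated, follows from the kernel estimates already recorded (or, alternatively, from the standard Gaussian upper bound for $H_M$ via \eqref{wethiowhoihw2}). Beyond that, the point to watch throughout is that every constant produced --- by Lemma~\ref{lem:interpropRn}, by the kernel estimates \eqref{remaining}--\eqref{remaining3} (where $\alpha_{n,s}$ and the $C(n,s)$ stay bounded for $s$ away from $0$ and $2$), and by the isoperimetric inequality --- remains bounded as $s\uparrow 1$, so that the sole genuine degeneration is the explicit factor $1/(1-s)$ inherited from Lemma~\ref{lem:interpropRn} and already displayed in the statement.
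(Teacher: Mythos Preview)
Your proof is correct and complete; in particular the uniform tail bound and the Lebesgue-number covering are fine, and your use of H\"older for finite sums together with the isoperimetric estimate on $\min(|E|,|E^c|)$ correctly absorbs the far part into the desired form. The route, however, differs from the paper's. The paper first establishes the weaker functional inequality
\[
\iint_{M\times M} |u(p)-u(q)|^2 K_s\,dV_p\,dV_q \le \frac{C(M,s_0)}{1-s}[u]_{BV(M)}^s\|u\|_{L^1(M)}^{1-s}+C(M)\|u\|_{L^1(M)}
\]
for general $u\in BV(M)$ (by the same local interpolation plus tail bound), and then removes the additive $\|u\|_{L^1}$ term by exploiting that the left-hand side is invariant under $u\mapsto u+{\rm const}$: applying the weak inequality to $v=u-\fint_M u$ and invoking the $L^1$--Poincar\'e inequality $\|v\|_{L^1}\le C[u]_{BV}$ yields the clean bound $\frac{C}{1-s}[u]_{BV}^s\|u\|_{L^1}^{1-s}$ for all $u\in BV(M)$, from which the proposition follows by taking $u=\chi_E$. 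Your argument is more direct for sets and uses the isoperimetric inequality where the paper uses the (essentially equivalent) $L^1$--Poincar\'e inequality; the trade-off is that the paper's detour through the mean-subtraction trick produces the sharper functional inequality \eqref{itrop2} for arbitrary $BV$ functions, which it flags as being of independent interest, whereas your approach does not immediately extend beyond characteristic functions.
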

\begin{proof}
\textbf{Step 1.} Given $u:M\to\R$ in $BV(M)$, we show that
\begin{equation}\label{itrop1}
    \iint_{M\times M} |u(p)-u(q)|^2 K_s\,dV_p\,dV_q \leq \frac{C(M,s_0)}{1-s}[u]_{BV(M)}^s\|u\|_{L^1(M)}^{1-s}+C(M)\|u\|_{L^1(M)}\, .
\end{equation}
First, note that since $M$ is closed, there exists a number $\delta>0$ with the property that, given any $p\in M$, the flatness assumptions ${\rm FA}_1(M,g,4\delta,p,\varphi_p)$ are satisfied by some $\varphi_p$; see Remark \ref{fbsvdg}. Having fixed such a $\delta>0$, by compactness we can find a finite collection of points $p_1,...,p_N$ such that the balls $\varphi_{p_1}(\B_\delta(p_1)),...,\varphi_{p_N}(\B_\delta(p_N))$ cover $M$.\\
Using \eqref{remaining2} and \eqref{ipr11}, we can then bound
\begin{align*}
    \iint_{\varphi_{p_i}(\B_{\delta}(p_i))\times M} |u(p)-u(q)|^2 K_s\,dV_p\,dV_q&\leq \iint_{\varphi_{p_i}(\B_{2\delta}(p_i))\times \varphi_{p_i}(\B_{2\delta}(p_i))} |u(p)-u(q)|^2 K_s\,dV_p\,dV_q+C(M)\|u\|_{L^1(M)}\\
    &\leq \frac{C(M,s_0)}{1-s}[u]_{BV(\varphi_{p_i}(\B_{2\delta}(p_i)))}^s\|u\|_{L^1(\varphi_{p_i}(\B_{2\delta}(p_i)))}^{1-s}+C(M)\|u\|_{L^1(M)}\\
    &\leq \frac{C(M,s_0)}{1-s}[u]_{BV(M)}^s\|u\|_{L^1(M)}^{1-s}+C(M)\|u\|_{L^1(M)}\,,
\end{align*}
which adding for $i=1,...,N$ shows \eqref{itrop1}.\\

\textbf{Step 2.} We prove that
\begin{equation}\label{itrop2}
    \iint_{M\times M} |u(p)-u(q)|^2 K_s\,dV_p\,dV_q\leq \frac{C(M,s_0)}{1-s}[u]_{BV(M)}^s\|u\|_{L^1(M)}^{1-s}\,,
\end{equation}
an inequality which is interesting on its own. Considering $u=\chi_E-\chi_{E^c}$ then yields the Proposition.

The main observation is that the LHS of \eqref{itrop1} is invariant under addition of a constant, while the RHS is not. This suggests to consider the function $v:=u-\frac{1}{{\rm Vol}(M)}\int_M u$ instead of $u$ in the inequality obtained in \eqref{itrop1}. Combining this with the $L^1$--Poincar\'e inequality on $M$ allows us to estimate
\begin{align*}
    \iint_{M\times M} &|u(p)-u(q)|^2 K_s\,dV_p\,dV_q=\iint_{M\times M} |v(p)-v(q)|^2 K_s\,dV_p\,dV_q\leq \frac{C(M,s_0)}{1-s}[v]_{BV(M)}^s\|v\|_{L^1(M)}^{1-s}+C(M)\|v\|_{L^1(M)}\\
    &= \frac{C(M,s_0)}{1-s}[u]_{BV(M)}^s\|v\|_{L^1(M)}^{1-s}+C(M)\|v\|_{L^1(M)}^s\|v\|_{L^1(M)}^{1-s}\\
    &\leq \frac{C(M,s_0)}{1-s}[u]_{BV(M)}^s\|v\|_{L^1(M)}^{1-s}+C(M)[u]_{BV(M)}^s\|v\|_{L^1(M)}^{1-s}\leq \frac{C(M,s_0)}{1-s}[u]_{BV(M)}^s\|v\|_{L^1(M)}^{1-s}\,.
\end{align*}
Bounding $\|v\|_{L^1(M)}\leq 2\|u\|_{L^1(M)}$ leads then to the bound \eqref{itrop2}, concluding the proof.
\end{proof}

\subsubsection{Monotonicity formula}
The monotonicity formula for $s$-minimal surfaces requires using the Caffarelli-Silvestre extension in one extra dimension, which we now recall.
\begin{theorem}[{\cite[Theorem 2.25 and Proposition 3.2]{FracSobPaper}}]\label{extmfd}
Let $(M^n,g)$ be a closed Riemannian manifold, let $s\in (0,2)$ and $u:M\to\R$ be in $H^{s/2}(M)$. Consider the product manifold $\widetilde{M}=M\times (0,+\infty)$ endowed with the natural product metric\footnote{That is, the metric defined by $\widetilde g \big((\xi_1, z_1), (\xi_2,z_2)\big)= g(\xi_1, \xi_2) + z_1z_2$, and where $\widetilde{\rm div}$ and $\widetilde \nabla$ denote the divergence and Riemannian gradient with respect to this product metric respectively.}. Then, there is a unique solution $U:M \times (0, \infty) \to \R $ among functions with $\int_{\widetilde{M}} |\widetilde \nabla U|^2 z^{1-s} \, dVdz<\infty$ to
\begin{equation}\label{caffextMfd}
    \begin{cases} \widetilde{ {\rm div}}(z^{1-s} \widetilde \nabla U) = 0  &   \mbox{in} \s \widetilde{M} \,,  \\ U(p,0) = u(p) &  \mbox{for} \s p \in \partial \widetilde{M} = M\, , \end{cases}
\end{equation}
and it satisfies
\begin{align}
    [u]^2_{H^{{s/2}}(M)} &= 2\beta_s \int_{\widetilde{M}} |\widetilde \nabla U|^2 z^{1-s} \, dVdz \label{eneqal}\\
    &= \inf\left\{ 2\beta_s \int_{\widetilde{M}} |\widetilde \nabla V|^2 z^{1-s} \, dVdz \s {\mbox{s.t.}} \s V(x, 0)=u(x) \right\}\,, \label{eneqal2}
\end{align}
where 
\begin{equation}\label{betadef}
    \beta_s = \frac{2^{s-1} \Gamma(s/2)}{\Gamma(1-s/2)}\, .
\end{equation}
In \eqref{caffextMfd} and \eqref{eneqal2}, the boundary condition is to be interpreted in the trace sense. 
\end{theorem}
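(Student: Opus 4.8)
# Proof Proposal for Theorem~\ref{extmfd}

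The plan is to establish the Caffarelli--Silvestre extension on the manifold by combining the spectral decomposition of the fractional Laplacian with the classical one-dimensional ODE analysis that underlies the Euclidean extension. First I would decompose $u$ in the orthonormal eigenbasis $\{\phi_k\}_k$ of $(-\Delta_g)$, writing $u = \sum_k a_k \phi_k$ with $\sum_k a_k^2 = \|u\|_{L^2}^2 < \infty$ and, by the spectral characterization~\eqref{ghfghfg}, $[u]^2_{H^{s/2}(M)} = 2\sum_k \lambda_k^{s/2} a_k^2 < \infty$. I would then look for $U$ in the separated form $U(p,z) = \sum_k a_k \phi_k(p) \varphi_k(z)$, where each $\varphi_k$ solves the one-dimensional degenerate ODE $\varphi_k'' + \frac{1-s}{z}\varphi_k' - \lambda_k \varphi_k = 0$ on $(0,\infty)$ with $\varphi_k(0)=1$ and appropriate decay at infinity. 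The bounded solution is the standard Bessel-type profile $\varphi_k(z) = \theta(\sqrt{\lambda_k}\, z)$ for a fixed universal function $\theta$ with $\theta(0)=1$, $\theta$ decaying at $+\infty$, and satisfying $-\lim_{z\to 0^+} z^{1-s}\theta'(z) = c_s$ for an explicit constant $c_s$; this is exactly the computation from the Euclidean case, and the product metric $\widetilde g = g + dz^2$ makes $\widetilde{\rm div}(z^{1-s}\widetilde\nabla\,\cdot\,)$ split as $z^{1-s}\Delta_g + \partial_z(z^{1-s}\partial_z)$, so the separation of variables goes through cleanly.

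Next I would verify the energy identity~\eqref{eneqal}. Plugging the separated form into $\int_{\widetilde M}|\widetilde\nabla U|^2 z^{1-s}\,dV\,dz$ and using orthonormality of the $\phi_k$, the integral becomes $\sum_k a_k^2 \int_0^\infty \big(\lambda_k \varphi_k(z)^2 + \varphi_k'(z)^2\big)z^{1-s}\,dz$. Using the ODE to integrate by parts in $z$, the bracket integrates to $-\lim_{z\to 0^+}z^{1-s}\varphi_k'(z)\varphi_k(z) = c_s\sqrt{\lambda_k}$ after the substitution $z \mapsto \sqrt{\lambda_k}\,z$ (the scaling turns $\int_0^\infty(\cdot)z^{1-s}dz$ into $\lambda_k^{s/2}$ times a universal constant). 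Matching the resulting $\sum_k a_k^2 \lambda_k^{s/2}$ (times the universal constant) against~\eqref{ghfghfg} identifies the constant as $1/\beta_s$ with $\beta_s$ as in~\eqref{betadef}, giving~\eqref{eneqal}. The minimality~\eqref{eneqal2} follows from a one-line argument: for any competitor $V$ with the same trace, write $V = U + W$ with $W(\cdot,0)=0$; then $\int z^{1-s}\widetilde\nabla U\cdot\widetilde\nabla W = 0$ by the weak formulation of~\eqref{caffextMfd} (the boundary term vanishes since $W$ has zero trace and $z^{1-s}\partial_z U$ has a well-defined limit), so $\int z^{1-s}|\widetilde\nabla V|^2 = \int z^{1-s}|\widetilde\nabla U|^2 + \int z^{1-s}|\widetilde\nabla W|^2 \ge \int z^{1-s}|\widetilde\nabla U|^2$. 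Uniqueness among finite-energy solutions is the same computation with $V$ another solution.

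The main obstacle I anticipate is the analytic justification of the series manipulations: one must show that the separated series converges in the correct weighted Sobolev space $\{V : \int_{\widetilde M}|\widetilde\nabla V|^2 z^{1-s}\,dV\,dz < \infty\}$, that the trace of the limit is genuinely $u$, and that term-by-term integration by parts in $z$ (including the delicate passage to the limit $z\to 0^+$ in the boundary term $z^{1-s}\varphi_k'(z)$, which requires uniform control of the profiles $\theta(\sqrt{\lambda_k}\,z)$ and their derivatives near $0$) is legitimate. I would handle this by truncating at finitely many eigenmodes, proving the identity for the truncation where everything is smooth and compactly controlled, and then passing to the limit using the monotone convergence of $\sum_k a_k^2 \lambda_k^{s/2}$ together with the minimality property to control the tails of the energy. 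The Weyl asymptotics $\lambda_k \sim c\, k^{2/n}$ guarantee $\sum_k a_k^2\lambda_k^{s/2}<\infty$ is a genuine constraint and that the tails are summable. Since this theorem is quoted from~\cite{FracSobPaper}, I would in fact cite that reference for the full technical details and only indicate the spectral proof sketch here.
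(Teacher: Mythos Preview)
The paper does not prove this theorem at all: it is stated with a citation to \cite[Theorem 2.25 and Proposition 3.2]{FracSobPaper} and used as a black box (specifically, only \eqref{eneqal2} is invoked, in the proof of Lemma~\ref{PerMon}). Your spectral-decomposition approach---separating variables along the eigenbasis of $-\Delta_g$, reducing to the one-dimensional Bessel ODE, and reading off the constant $\beta_s$ from the scaling---is the standard and correct way to establish the Caffarelli--Silvestre extension on a closed manifold, and is in fact the approach taken in the cited reference. Your identification of the only genuine technical point (justifying term-by-term manipulations and the trace via truncation and monotone convergence) is accurate, and your final remark that one should simply cite \cite{FracSobPaper} for the details is exactly what the paper does.
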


We introduce some notational conventions for balls. For $r>0$ and $p \in M $, we will denote
\begin{equation}\label{notationballs}
\begin{aligned}
     B_r(p) & = \big\{ q \in M \, : \, d_g(q, p) < r \big\} \,, \\[0.5mm]
     \ov{B}_{r}^+(p, 0) & = \big\{ (q,z) \in \ov{M} \, : \, d_{\ov{g}}((q,z), (p, 0) ) < r \big\} \,,  \\
    \partial \ov{B}_{r}^+(p, 0) & = \partial \left( \ov{B}_{r}^+(p, 0) \right) \,, \\ \partial^+ \ov{B}_{r}^+(p, 0) & = \partial \ov{B}_{r}^+(p, 0) \cap \{ z>0 \} \,.
\end{aligned} 
\end{equation}
In the next theorem, we use $\nabla$ instead of $\widetilde \nabla$ to denote the gradient in  $\widetilde M$ with respect to the product metric.

\begin{theorem}[\textbf{Monotonicity formula} {\cite[Theorem 3.4]{FracSobPaper}}]\label{monfor}
Let $(M^n, g)$ be an $n$-dimensional, closed Riemannian manifold. Let $s \in (0, 1)$ and let $\Sigma$ be an $s$-minimal surface. For $(p_\circ,0) \in \ov{M} $ define 
\begin{equation*}
    \Phi(R) := \frac{1}{R^{n-s}} \left( \frac{\beta_s}{2} \int_{\widetilde{B}^+_R(p_\circ,0)} z^{1-s}| \nabla  U(p,z)|^2 \, dV_p dz \right) , 
    \end{equation*}
where $U$ is the unique solution given by Theorem \ref{extmfd} with $u:M\to\R$ given by $u=\chi_\Sigma-\chi_{\Sigma^c}$. Then, there exist constants $C=C(n)$ and $R_{\rm max}=R_{\rm max}(M, p_\circ)>0$ with the following property: whenever $R_\circ \le \inj_{M}(p_\circ)/4$ and $K$ is an upper bound for all the sectional curvatures of $M$ in $B_{R_\circ}(p_\circ)$, then
\begin{equation*}
    R \mapsto \Phi(R)e^{C \sqrt{K} R } \s \textit{is nondecreasing for} \s R < R_\circ \,.
\end{equation*} 
\end{theorem}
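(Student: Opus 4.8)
The plan is to carry out the classical Rellich--Pohozaev monotonicity computation for the Caffarelli--Silvestre extension, performed on the Riemannian product $\widetilde M=M\times(0,\infty)$, replacing the Euclidean dilation field by (one half the gradient of) the squared distance to $(p_\circ,0)$ and absorbing the resulting curvature mismatch into the exponential factor. All differential operators below are taken on $\widetilde M$ with respect to the product metric; as in Theorem~\ref{monfor} we write $\nabla$ for its gradient, and $\nabla_M$ for the gradient on $(M,g)$. Let $\rho=d_{\widetilde g}(\cdot,(p_\circ,0))$; since the metric splits, $\rho(q,z)^2=d_g(q,p_\circ)^2+z^2$, so
\[ Y:=\tfrac12\nabla(\rho^2)=d_g(\cdot,p_\circ)\,\nabla_M d_g(\cdot,p_\circ)+z\,\partial_z \]
is a $C^\infty$ field on a neighbourhood of $(p_\circ,0)$ in $\widetilde M$ (for $R_\circ\le\inj_M(p_\circ)/4$ the function $d_g(\cdot,p_\circ)^2$ is smooth on $B_{R_\circ}(p_\circ)$), it equals $\rho\,\nabla\rho$ on $\{z>0\}$, and its trace on $\{z=0\}$ is the radial field $X:=d_g(\cdot,p_\circ)\,\nabla_M d_g(\cdot,p_\circ)$ on $M$, smooth on $B_{R_\circ}(p_\circ)$ and hence extendable to a $C^2$ field on $M$. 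Writing $E(R):=\int_{\widetilde B^+_R(p_\circ,0)}z^{1-s}|\nabla U|^2$ one has $\Phi(R)=\tfrac{\beta_s}{2}R^{s-n}E(R)$, and since $|\nabla\rho|\equiv1$ the coarea formula gives $E'(R)=\int_{\partial^+\widetilde B^+_R(p_\circ,0)}z^{1-s}|\nabla U|^2$ for a.e.\ $R$.

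The core step is to multiply the extension equation $\mathrm{div}(z^{1-s}\nabla U)=0$ by $\langle Y,\nabla U\rangle$ and integrate over $\widetilde B^+_R(p_\circ,0)$. Two integrations by parts, using $\langle\nabla\langle Y,\nabla U\rangle,\nabla U\rangle=\langle\nabla_{\nabla U}Y,\nabla U\rangle+\tfrac12\langle Y,\nabla|\nabla U|^2\rangle$ and evaluating the boundary contributions on $\partial^+\widetilde B^+_R$ (where the outer unit normal is $\nabla\rho$, $\langle Y,\nabla\rho\rangle=R$ and $\langle Y,\nabla U\rangle=R\,U_\rho$ with $U_\rho:=\langle\nabla\rho,\nabla U\rangle$), lead to an identity of the form
\[ R\,E'(R)=(n-s)\,E(R)+2R\!\int_{\partial^+\widetilde B^+_R}\!z^{1-s}U_\rho^2\;+\;2\,\mathcal B(R)\;+\;\mathcal R(R), \]
where $\mathcal B(R)$ gathers the boundary terms on the flat face $B_R(p_\circ)\times\{0\}$, and $\mathcal R(R)$ gathers the ``curvature defect'' terms: those measuring the deviation of $\nabla^2(\tfrac{\rho^2}{2})$ from $\mathrm{Id}$ and of $\mathrm{div}(z^{1-s}Y)$ from $(n+2-s)z^{1-s}$ (i.e.\ of $\rho\,\Delta\rho$ from $n$). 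By the Hessian and Laplacian comparison theorems applied to $d_g(\cdot,p_\circ)$ on $B_{R_\circ}(p_\circ)$, using that the $z$-direction of $\widetilde M$ is flat and that $K$ bounds the sectional curvatures there, both defects are controlled by the curvature, so $|\mathcal R(R)|\le C(n)\sqrt K\,R\,E(R)$ for $R$ below $\min\{R_\circ,R_{\max}\}$, with $R_{\max}=R_{\max}(M,p_\circ)$ the range in which the comparison estimates are valid.

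It remains to show $\mathcal B(R)=0$, which is where $s$-minimality of $\Sigma$ is used. As $Y$ is tangent to $\{z=0\}$ there, only $\mathcal B(R)=-\int_{B_R(p_\circ)}\langle X,\nabla_M u\rangle\,(z^{1-s}\partial_z U)\big|_{z=0}$ survives, with $u=\chi_\Sigma-\chi_{\Sigma^c}$. By \eqref{eneqal} and \eqref{ghfghfg} the conormal derivative satisfies $-(z^{1-s}\partial_z U)\big|_{z=0}=\beta_s^{-1}(-\Delta)_M^{s/2}u$, hence $\mathcal B(R)=\beta_s^{-1}\int_{B_R(p_\circ)}\langle X,\nabla_M u\rangle(-\Delta)_M^{s/2}u$; since $u=\chi_\Sigma-\chi_{\Sigma^c}$, the distribution $\langle X,\nabla_M u\rangle$ is supported on $\Sigma$, where the trace of $(-\Delta)_M^{s/2}u$ is (a multiple of) the nonlocal mean curvature $H_s[\Sigma]$, which vanishes by the Euler--Lagrange equation for $s$-minimal surfaces --- equivalently, $\mathcal B(R)$ is a constant multiple of $\tfrac{d}{dt}\big|_{t=0}{\rm Per}_s(\psi^t_{X}(E))$, which is $0$ by Definition~\ref{def-fracminsurface}. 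Substituting $\mathcal B\equiv0$ and the bound on $\mathcal R$ into the identity above, and combining with $\Phi'(R)=\tfrac{\beta_s}{2}R^{s-n-1}\big[(s-n)E(R)+R\,E'(R)\big]$, the terms involving $\int_{\partial^+}z^{1-s}|\nabla U|^2$ cancel and one is left with
\[ \Phi'(R)=\beta_s R^{s-n}\!\int_{\partial^+\widetilde B^+_R}\!z^{1-s}U_\rho^2\;+\;\tfrac{\beta_s}{2}R^{s-n-1}\mathcal R(R)\ \ge\ -\,C(n)\sqrt K\,\Phi(R), \]
so $R\mapsto\Phi(R)e^{C\sqrt K R}$ is nondecreasing on the claimed range.

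The main obstacle is rigour near the free boundary $\Sigma\times\{0\}$: because $u$ is only a characteristic function, $U$ is merely H\"older continuous up to $\{z=0\}$ and $|\nabla U|$ is unbounded there, so both integrations by parts and the identification of the trace of $z^{1-s}\partial_z U$ (and its equality with the vanishing nonlocal mean curvature of $\Sigma$) have to be justified by a limiting procedure: perform all computations on the truncations $\{z>\varepsilon\}$ and let $\varepsilon\downarrow0$ along a suitable sequence, exploiting the finiteness of $\int_{\widetilde M}z^{1-s}|\nabla U|^2$ from Theorem~\ref{extmfd} to make the extra interior face $\{z=\varepsilon\}$ negligible and to pass to the limit in the boundary integrals. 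This, together with the precise bookkeeping of $\beta_s$ and of the defect $\mathcal R(R)$, is the only delicate point; the remainder is the Euclidean Pohozaev computation for $s$-minimal surfaces, the product metric contributing only the lower-order error $\mathcal R$.
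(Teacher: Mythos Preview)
The paper does not prove this theorem; it is quoted verbatim from \cite[Theorem 3.4]{FracSobPaper} and used as a black box (its only application is in deriving the weakened form of Lemma~\ref{PerMon}). So there is no ``paper's own proof'' to compare against. Your sketch is the standard Rellich--Pohozaev argument for the Caffarelli--Silvestre extension, which is indeed how such monotonicity formulas are obtained; the overall structure is correct and this is presumably the method of the cited reference as well.

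Two points deserve more care. First, your claim that $\mathcal B(R)$ is ``a constant multiple of $\tfrac{d}{dt}\big|_{t=0}{\rm Per}_s(\psi^t_{X}(E))$'' is not literally true: $X$ is not compactly supported, and $\mathcal B(R)$ is an integral over $B_R(p_\circ)$ rather than over all of $M$. What you actually need is the \emph{pointwise} vanishing of the nonlocal mean curvature on $\partial\Sigma$, which is the Euler--Lagrange equation; since Definition~\ref{def-fracminsurface} does not assume $\partial\Sigma$ is $C^2$, the passage from criticality under flows to the pointwise equation (and the meaning of the pairing $\langle X,\nabla_M u\rangle(-\Delta)^{s/2}u$ for a characteristic function $u$) requires the limiting argument you mention, carried out carefully. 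Second, your appeal to Hessian and Laplacian comparison to bound $|\mathcal R(R)|\le C\sqrt K\,R\,E(R)$ implicitly uses a \emph{two-sided} curvature bound: with only ${\rm sec}\le K$ one obtains a lower bound on $\nabla^2(\tfrac{d^2}{2})$ but not an upper bound, yet your $\mathcal R(R)$ contains $-2\int z^{1-s}D(\nabla U,\nabla U)$, whose lower bound requires an upper bound on $D$. Either the hypothesis should be read as a bound on $|{\rm sec}|$, or the sign analysis must be sharpened to show that only the one-sided comparison is needed; you should check which is the case in \cite{FracSobPaper}.
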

From the proper monotonicity formula in Theorem \ref{monfor}, we now obtain a weaker form which involves the fractional perimeter on balls of $M$ (instead of $\widetilde M$). This will be repeatedly (and crucially) used in what follows, since it gives the decay of the fractional perimeter with respect to the radius with the right rate. We note here that the proof can be slightly modified so that it recovers the monotonicity formula for classical minimal surfaces in the limit $s\to 1$.
\begin{lemma}\label{PerMon}
Let $E\subset M$ be an $s$-minimal surface in $\varphi(\B'_1\times[-1,1])$, and assume that ${\rm FA}_1(M,g,4,p_0,\varphi)$ holds.
Then there exists a constant $C$ depending only on $n$ such that, for any $0<r_1\leq r_2\leq 1$, we have that
    \begin{equation*}
        \frac{(1-s)\textnormal{Per}_s(E;B_{r_1}(p_0))}{r_1^{n-s}}\leq (1-s)C+C\frac{(1-s)\textnormal{Per}_s(E;B_{r_2}(p_0))}{r_2^{n-s}}\, .
    \end{equation*}
    In particular, setting $r_2=1$, for all $0<r_1<\frac{1}{8}$ we have that
    \begin{align}
        \textnormal{Per}_s(E;B_{r_1}(p_0))&\leq Cr_1^{n-s}(1+\textnormal{Per}_s(E;B_{1}(p_0)))\label{fpergrowth1}\\
        &\leq Cr_1^{n-s}(1+\textnormal{Per}_s(E;M))\label{fpergrowth}\, .
    \end{align}
\end{lemma}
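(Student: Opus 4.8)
The plan is to deduce this "perimeter version" of monotonicity from the genuine Caffarelli--Silvestre monotonicity formula in Theorem \ref{monfor}, sandwiching the fractional perimeter on a ball between the extension energy on the half-ball above it at a comparable scale. First I would fix $p_0$ and work in the flat coordinates $\varphi$ provided by ${\rm FA}_\ell(M,g,4,p_0,\varphi)$; since $R\le 1$ is below the injectivity radius scale and the sectional curvatures are bounded there by some $K=K(M)$, Theorem \ref{monfor} gives that $\Phi(R)e^{C\sqrt K R}$ is nondecreasing on the relevant range, hence $\Phi(r_1)\le e^{C\sqrt K(r_2-r_1)}\Phi(r_2)\le C'\Phi(r_2)$ for $0<r_1<r_2\le 1$, with $C'=C'(n,M)$ — and, crucially, $C'$ can be taken uniform as $s\to 1$ since $\sqrt K$ and the exponent are $s$-independent. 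The factor $\beta_s$ in $\Phi$ behaves like $(1-s)^{-1}$ up to constants as $s\to1$ (from \eqref{betadef}, $\Gamma(1-s/2)\to\Gamma(1/2)$ while $\Gamma(s/2)$ is bounded, so the $(1-s)$ scaling must be tracked carefully — actually $\beta_s$ stays bounded, so the $(1-s)$ will instead come from the comparison below).

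The two comparisons I need are: (i) a \emph{lower} bound $\textnormal{Per}_s(E;B_{r_1}(p_0))\le C\, r_1^{n-s}\big(\Phi(2r_1)+(1-s)\big)$, and (ii) an \emph{upper} bound $\Phi(r_2/2)\le C\,\frac{1}{1-s}\frac{\textnormal{Per}_s(E;B_{r_2}(p_0))}{r_2^{n-s}}+C$. For (i), one uses that the trace of the extension $U$ on $M\times\{0\}$ is $\chi_E-\chi_{E^c}$, together with the energy identity \eqref{eneqal}: the fractional perimeter on a ball is controlled by the local Sobolev energy of the trace, which by the extension/trace inequality (the minimality \eqref{eneqal2} run on a slightly larger half-ball, plus \eqref{remaining3} to absorb the far interactions as a term of size $(1-s)\cdot$const$\cdot r_1^{n-s}$ after the $(1-s)$ normalization) is bounded by the weighted Dirichlet energy of $U$ on a half-ball of radius $\sim 2r_1$, i.e. by $r_1^{n-s}\Phi(2r_1)$ up to dimensional constants and the $\beta_s$ factor. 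For (ii), conversely, one needs to bound the full extension energy on a half-ball by the fractional perimeter of $E$ in $M$ (or in a fixed ball): this is exactly the kind of estimate one gets by taking $V$ in \eqref{eneqal2} to be a good competitor supported near $M\times\{0\}$ built from $\chi_E-\chi_{E^c}$, e.g. the Poisson-type extension, whose weighted energy on $\{z<r_2\}$ is comparable to $\textnormal{Per}_s(E;B_{r_2})$ plus lower-order tail terms controlled again via \eqref{remaining2}--\eqref{remaining3}; the constant $1+\textnormal{Per}_s(E;B_1(p_0))$ appearing in \eqref{fpergrowth1} accounts for the tail. Chaining (i), the monotonicity $\Phi(2r_1)\le C'\Phi(r_2/2)$ (valid since $8r_1<r_2$ gives $2r_1<r_2/2$), and (ii) yields the displayed inequality; setting $r_2=1$ and using $(1-s)\le 1$ gives \eqref{fpergrowth1}, and \eqref{fpergrowth} follows from $\textnormal{Per}_s(E;B_1(p_0))\le\textnormal{Per}_s(E;M)$ by monotonicity of the domain in the definition of $\textnormal{Per}_s(\cdot\,;\Omega)$.

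The main obstacle I expect is keeping all constants \emph{uniform as $s\to1$} while passing back and forth between the extension energy and the fractional perimeter on balls — in particular isolating the single factor of $(1-s)$ correctly. The extension constants $\beta_s$ stay bounded, so the $(1-s)$ that appears in the statement must emerge from the relation between $[\,\cdot\,]^2_{H^{s/2}}$-type energies on a ball and $\textnormal{Per}_s$: concretely one should multiply through by $(1-s)$ from the outset and verify that each of the comparison steps (i) and (ii), as well as the tail estimates coming from \eqref{remaining2}--\eqref{remaining3}, produces only $s$-uniform constants after this normalization — the statement of Proposition \ref{prop:kern1} explicitly guarantees this for the kernel estimates, and Theorem \ref{extmfd} for $\beta_s$, so the bookkeeping is the real work. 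A secondary subtlety is that Theorem \ref{monfor} is stated for $E$ an $s$-minimal surface on all of $M$, whereas here $E$ is only $s$-minimal in a cylinder $\varphi(\B'_1\times[-1,1])$; one checks that the proof of the monotonicity formula only uses the Euler--Lagrange (inner variation) equation tested against vector fields supported in that cylinder, so it goes through verbatim for half-balls $\widetilde B^+_R(p_0,0)$ with $R$ small enough that the relevant supports stay inside the cylinder, which is why the hypothesis is phrased with radius $4$ and we only ever use $r_1<1/8$.
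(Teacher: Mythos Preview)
Your approach is essentially the same as the paper's: sandwich the fractional perimeter on balls between values of the extension energy $\Phi$ at comparable radii, and use the monotonicity formula in between. The paper carries out your comparison (i) via the specific cutoff trick of \cite[Proposition~7.1]{CRS}: one multiplies $U$ by a cutoff $\widetilde\eta_r$ on $\widetilde M$ supported in $\widetilde\B_{4r}^+$ with $\widetilde\eta_r|_{z=0}=\eta_r$, and uses that $U\widetilde\eta_r$ is a competitor in \eqref{eneqal2} for $[u\eta_r]_{H^{s/2}(M)}$; the far interactions are absorbed using \eqref{remaining3} exactly as you say. For your comparison (ii), the paper simply cites \cite[Lemma~3.15]{FracSobPaper} rather than building a competitor by hand.

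One point worth correcting: your anticipated ``main obstacle'' about tracking a single factor of $(1-s)$ is not actually an obstacle. Divide the displayed inequality in the statement through by $(1-s)$ and it becomes
\[
\frac{\textnormal{Per}_s(E;B_{r_1}(p_0))}{r_1^{n-s}}\le C+C\,\frac{\textnormal{Per}_s(E;B_{r_2}(p_0))}{r_2^{n-s}},
\]
with no $(1-s)$ anywhere; the statement is written with $(1-s)$ only because that is the normalization used elsewhere in the paper. Since $\beta_s$ is bounded as $s\to 1$ (as you note) and the kernel estimates in Proposition~\ref{prop:kern1} are uniform for $s$ away from $0$ and $2$, both comparison steps go through with $s$-uniform constants and there is nothing delicate to isolate. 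Your (i) should read $\textnormal{Per}_s(E;B_{r_1})\le C\,r_1^{n-s}(1+\Phi(4r_1))$, without the stray $(1-s)$.
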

\begin{proof}
    Let $\widetilde\varphi(x,z)=(\varphi(x),z)$ be the natural local parametrisation of $\widetilde M$. If $8r_1>r_2$, letting $C=8^n$ the Lemma is satisfied, therefore we can assume that $8r_1\leq \frac{1}{8}r_2$.
    
    \noindent \textbf{Step 1.} We show that, given $u:M\to [-1,1]$, we have the inequality
    $$\iint_{(M\times M)\setminus (\varphi(\B_r)\times \varphi(\B_r))} (u(p)-u(q))^2K(p,q) \leq C\Big(r^{n-s}+\int_{\widetilde \varphi(\widetilde \B_{4r}^+)} z^a|\nabla U|^2\Big)\,.$$
    An easy application of \eqref{remaining3} from Proposition \ref{prop:kern1} with $\alpha=0$, considering the rescaled manifold $(M,\frac{1}{r^2}g)$ and (a) in Remark \ref{flatscalingrmk}, allows us to bound
    \begin{align*}
        &\iint_{(M\times M)\setminus (\varphi(\B_r^c)\times \varphi(\B_r)^c)} (u(p)-u(q))^2K(p,q)\leq\iint_{\varphi(\B_{2r})\times \varphi(\B_{2r})} (u(p)-u(q))^2K(p,q) +8\iint_{\varphi(\B_r)\times (M\setminus \varphi(\B_{2r})^c)} K(p,q)\\
    &\hspace{1cm}\leq \iint_{\varphi(\B_{2r})\times \varphi(\B_{2r})} (u(p)-u(q))^2K(p,q) +C\int_{\varphi(\B_r)} r^{-s}\leq \iint_{\varphi(\B_{2r})\times \varphi(\B_{2r})} (u(p)-u(q))^2K(p,q)+Cr^{n-s}\,.
    \end{align*}
    We will now continue by proceeding as in \cite[Proposition 7.1]{CRS}. Let $\xi$ be a standard cutoff on $\widetilde\B_4^+\subset\R^{n+1}$, verifying that $\chi_{\widetilde\B_2^+\cap \{z=0\}}\leq \xi\leq \chi_{\widetilde\B_4^+}$. Define then $\widetilde \eta_r =\xi(\cdot/r)\circ\widetilde\varphi^{-1}$, extended by zero outside of $\widetilde\varphi(\widetilde \B_{4r}^+)\subset \widetilde M$. Moreover, set $\eta_r(p)=\widetilde\eta_r(p,0):M\to \R$. Then, $\eta_r(p)$ acts as a cutoff function on $\varphi(\B_{4r})\subset M$ which verifies that $\chi_{\varphi(\B_{2r})}\leq \eta_r\leq \chi_{\varphi(\B_{4r})}$. We can then compute
    \begin{align*}
    &\iint_{(M\times M)\setminus (\varphi(\B_r)^c\times \varphi(\B_r)^c)} (u(p)-u(q))^2K(p,q)\leq \iint_{\varphi(\B_{2r})\times \varphi(\B_{2r})} (u(p)-u(q))^2K(p,q)+Cr^{n-s}\\
    &\qquad\leq \iint_{M\times M} ((u\eta_r)(p)-(u\eta_r)(q))^2K(p,q)+Cr^{n-s}=[u\eta_r]_{H^{s/2}(M)}^2+Cr^{n-s}\leq C\Big(\int_{\widetilde M} z^{1-s}|\nabla (U\widetilde\eta_r)|^2+r^{n-s}\Big)\,.
    \end{align*}
    We have applied \eqref{eneqal2} in the last line, using that $U\widetilde\eta_r(p,0)=u\eta_r(p)$ for all $p\in M$. We can now bound
    \begin{align*}
    &\iint_{(M\times M)\setminus (\varphi(\B_r)\times \varphi(\B_r))} (u(p)-u(q))^2K(p,q)\leq C\Big(\int_{\widetilde\varphi(\widetilde \B_{4r}^+)} z^{1-s}|\nabla U|^2\widetilde\eta_r^2+\int_{\widetilde\varphi(\widetilde \B_{4r}^+)} z^{1-s}U^2|\nabla\widetilde\eta_r|^2+r^{n-s}\Big)\\
    &\hspace{1cm}\leq C\Big(\int_{\widetilde\varphi(\widetilde \B_{4r}^+)} z^{1-s}|\nabla U|^2+r^{-2}\int_{\widetilde\varphi(\widetilde \B_{4r}^+)} z^{1-s}+r^{n-s}\Big)\leq C\Big(\int_{\widetilde\varphi(\widetilde \B_{4r}^+)} z^{1-s}|\nabla U|^2+r^{n-s}\Big) \, .
    \end{align*}
    \textbf{Step 2.} Applying Step 1 with $u=\chi_E-\chi_{E^c}$ and $r=r_1$, we find that
    \begin{equation*}
        \frac{\textnormal{Per}_s(E;\varphi(\B_{r_1}))}{r_1^{n-s}}\leq C+\frac{C}{r_1^{n-s}}\int_{\widetilde\varphi(\widetilde \B_{4r_1}^+)} z^{1-s}|\nabla U|^2\, .
    \end{equation*}
    Applying then the monotonicity formula of Theorem \ref{monfor}, recalling that we are assuming that $4r_1<\frac{1}{2}r_2$, we have that
    \begin{align*}
        \frac{\textnormal{Per}_s(E;\varphi(\B_{r_1}))}{r_1^{n-s}}&\leq C+\frac{C}{r_2^{n-s}}\int_{\widetilde\varphi(\widetilde \B_{\frac{1}{2}r_2}^+)} z^{1-s}|\nabla U|^2\, .
    \end{align*}
    We now need essentially the converse of Step 1 to bound the RHS. This is \cite[Lemma 3.15]{FracSobPaper}, or more precisely the chain of inequalities at the end of its proof, and it allows us to conclude that
    \begin{align*}
        \frac{\textnormal{Per}_s(E;\varphi(\B_{r_1}))}{r_1^{n-s}}&\leq C(1+\frac{1}{r_2^{n-s}}\iint_{\varphi( \B_{r_2})\times \varphi( \B_{r_2})} (u(p)-u(q))^2K(p,q))\leq C+C\frac{\textnormal{Per}_s(E;\varphi(\B_{r_2}))}{r_2^{n-s}}
    \end{align*}
    as desired.
\end{proof}

\subsection{Min-max procedure -- The nonlocal volume spectrum}\label{sec:minmax}
Critical points of \eqref{ACenergy} can be obtained using an equivariant min-max procedure, exploiting the $\mathbb{Z}_2$-symmetry of the functional $\mathcal{E}_{\ep,s}$. In \cite{CFS}, a ``nonlocal spectrum" is defined employing as a min-max family of sets with $\p$ parameters the one consisting of odd images of the sphere $\Sp^\p$ into $H^{s/2}(M)$. This family has a very simple definition and served for the purposes of that article, but others could have been employed in an essentially exchangeable way, including families which come from a topological index; see \cite[Remark 3.1]{CFS} where this is briefly discussed. To obtain a Weyl Law, one wants to restrict instead to families which arise from some topological index.\\
Begin by defining
    \begin{equation*}
        \text{ind}(A)=\sup\{k: \nexists\text{ any continuous odd map }g:A\to \mathbb{R}^{k-1}\backslash\{0\}\}
    \end{equation*}
    if $A\subset H^{s/2}(M)$ is compact, symmetric (i.e. $A=-A$), and does not contain zero. Here odd means that $g(-x)=-g(x)$. Set moreover $\text{ind}(A)=\infty$ if $A$ contains $0$, and set $\text{ind}(\emptyset)=0$.\\
    It is standard that this notion of index satisfies all the axioms of a topological index, including the one that we will need and which is its subadditivity:
    \begin{lemma}\label{indsub} $\textnormal{ind}(A_1\cup A_2)\leq \textnormal{ind}(A_1)+\textnormal{ind}(A_2)$
    \end{lemma}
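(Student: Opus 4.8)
The plan is to prove the subadditivity $\textnormal{ind}(A_1\cup A_2)\leq \textnormal{ind}(A_1)+\textnormal{ind}(A_2)$ by the standard argument for the Krasnoselskii genus, adapted to the cohomological-index formulation given here. Write $k_i=\textnormal{ind}(A_i)$ for $i=1,2$; we may assume both are finite, since otherwise the right-hand side is $\infty$ and there is nothing to prove. By the definition of $\textnormal{ind}$, for $i=1,2$ there is a continuous odd map $g_i:A_i\to\R^{k_i}\setminus\{0\}$ (the definition says no odd map exists into $\R^{k_i-1}\setminus\{0\}$, so the supremum being $k_i$ forces existence of an odd nonvanishing map into $\R^{k_i}$; one should spell this out, noting that $\textnormal{ind}(A_i)\ge k_i+1$ would fail precisely because such a $g_i$ exists). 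The goal is to glue $g_1$ and $g_2$ into a single continuous odd map $g:A_1\cup A_2\to\R^{k_1+k_2}\setminus\{0\}$, which by definition forces $\textnormal{ind}(A_1\cup A_2)\le k_1+k_2$.

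The key step is the extension. First extend each $g_i$ from the compact symmetric set $A_i$ to a continuous map on all of $H^{s/2}(M)$ (or at least on a symmetric open neighbourhood of $A_i$) using the Tietze extension theorem componentwise; then symmetrize by replacing the extension $\tilde g_i(x)$ with $\tfrac12(\tilde g_i(x)-\tilde g_i(-x))$ to make it odd, which does not change its values on $A_i$ since $g_i$ was already odd there. Call these odd continuous extensions $G_i:H^{s/2}(M)\to\R^{k_i}$, defined on a symmetric neighbourhood $U_i\supset A_i$. Now define
\begin{equation*}
g:A_1\cup A_2\to\R^{k_1}\times\R^{k_2}=\R^{k_1+k_2},\qquad g(x)=(G_1(x),G_2(x)).
\end{equation*}
This $g$ is continuous and odd. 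It is nonvanishing on $A_1\cup A_2$: if $x\in A_1$ then $G_1(x)=g_1(x)\neq 0$, so the first block is nonzero; similarly if $x\in A_2$ the second block is nonzero. Hence $g$ maps $A_1\cup A_2$ into $\R^{k_1+k_2}\setminus\{0\}$, and by the defining property of $\textnormal{ind}$ this rules out $\textnormal{ind}(A_1\cup A_2)\ge k_1+k_2+1$, giving the claim. One must also dispose of the trivial cases: if either $A_i$ contains $0$ then $\textnormal{ind}(A_i)=\infty$ and the inequality holds vacuously, and if $A_1\cup A_2$ contains $0$ then so does one of the $A_i$, again making the right side infinite; and if some $A_i=\emptyset$ the statement reduces to $\textnormal{ind}(A_j)\le\textnormal{ind}(A_j)$.

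The main obstacle — really the only point requiring care — is the Tietze extension together with the symmetrization, because $H^{s/2}(M)$ is an infinite-dimensional Banach space rather than $\R^N$; one needs that Tietze's theorem applies to closed subsets of an arbitrary metric (or normal) space, which it does, so this is a genuine but harmless technicality. The symmetrization trick is what guarantees oddness survives the extension. Everything else is bookkeeping: continuity of $g$ is immediate from continuity of the $G_i$, oddness is inherited blockwise, and non-vanishing is exactly the observation that on $A_i$ the $i$-th block equals $g_i$, which was chosen nonvanishing. For completeness one can remark that this is precisely the argument that shows the Krasnoselskii genus (equivalently, this $\mathbb{Z}_2$-cohomological index) is subadditive, and cite a standard reference such as \cite{Rabinowitz} if desired; no properties of $M$, the fractional parameter $s$, or the energy $\mathcal{E}_{\ep,s}$ enter the proof at all.
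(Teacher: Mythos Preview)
Your proposal is correct and follows essentially the same argument as the paper: extend each $g_i$ by Tietze, antisymmetrize to preserve oddness, and form the product map $(G_1,G_2)$, which is nonvanishing on $A_1\cup A_2$ since on each $A_i$ the $i$-th block equals the original $g_i$. The paper's proof is identical in structure, only slightly terser on the trivial cases and the justification that Tietze applies in the infinite-dimensional setting.
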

    \begin{proof}
        We can assume that $\max\{\textnormal{ind}(A_1),\textnormal{ind}(A_2)\}<\infty$.\\
        By the definition of $\textnormal{ind}$, there exist then odd maps $g_1:A_1\to\mathbb{R}^{\textnormal{ind}(A_1)}\backslash\{0\}$ and $g_2:A_2\to\mathbb{R}^{\textnormal{ind}(A_2)}\backslash\{0\}$. Extend them to maps $\tilde{g}_1$ and $\tilde{g}_2$ defined on all of $H^{s/2}(M)\backslash\{0\}$ by the Tietze extension theorem. After antisymmetrizing, we can assume that they are also antisymmetric, and they still coincide with the original maps on $A_1$ and $A_2$.\\
        The map $(\tilde g_1, \tilde g_2)$ restricted to $A:=A_1\cup A_2$ is then an odd map from $A$ into $\mathbb{R}^{\textnormal{ind}(A_1)+\textnormal{ind}(A_2)}\backslash\{0\}$, since either $\tilde g_1$ or $\tilde g_2$ is always nonzero on $A$. This means that $\textnormal{ind}(A)\leq \textnormal{ind}(A_1)+\textnormal{ind}(A_2)$, which concludes the proof.
    \end{proof}
    
    The family of sets to which the min-max procedure will be applied is
    \begin{equation}\label{cohom}
        \mathcal{F}_\p=\{A\subset H^{s/2}(M):A\text{ is compact, symmetric, satisfies } \mathcal{H}^{\p}(A)<\infty \text{, and has ind}(A)\geq \p+1\}\, ,
    \end{equation}
    where $\mathcal{H}^\p$ denotes the Hausdorff measure of dimension $\p$.\\
For fixed $\ep$, the min-max value of the family $\mathcal{F}_\p$ is defined as
\begin{equation}\label{minmaxval}
     c_{\ep,s}(\p,M):= \inf_{A\in\mathcal{F}_\p} \sup_{u\in A}\mathcal E_{\ep,s} (u,M).
\end{equation}
Note that, defining $T(u):=\max\{-1,\min\{u,+1\}\}$ the truncation of $u$ between the values $\pm 1$, we have that $|T(u)|(x)\le 1$ for all $x\in M$ and $\mathcal E_{\ep,s} (T(u),M) \le \mathcal E_{\ep,s} (u,M)$. Moreover, since $T$ is odd, is it immediate to see ${\rm ind}(T(A))\geq {\rm ind}(A)$. Hence
\begin{equation*}
     c_{\ep,s}(\p,M) = \inf_{A\in\mathcal{F}_\p} \sup_{u\in A}\mathcal E_{\ep,s} (u,M) =  \inf_{A\in \widetilde{\mathcal{F}}_\p} \sup_{u\in A}\mathcal E_{\ep,s} (u,M) \,,
\end{equation*}
where 
\begin{equation*}
    \widetilde{\mathcal{F}}_\p = \{A \in \mathcal{F}_\p : |u|\le 1 \s \textnormal{for all} \s u \in A \} .
\end{equation*}
The min-max values just defined are attained by critical points of the Allen-Cahn equation, and satisfy lower and upper bounds depending on $\p$:

\begin{theorem}[\textbf{Existence of min-max solutions}]\label{mmbounds}
Let $(M^n,g)$ be a compact Riemannian manifold. Fix $s_0\in(0,1)$, and let $s\in(s_0,1)$. Then, for every $\p \in \N$ there exists $\ep_\p>0$ such that, for all $\ep\in(0,\ep_\p)$, the min-max values of \eqref{minmaxval} satisfy
\begin{equation}\label{critlevbound}
    C^{-1} \p^{s/n} \le (1-s)c_{\ep,s}(\p,M) \le C\p^{s/n} \,, \s \textnormal{for all} \,\, \ep \in (0,\ep_\p) \,,
\end{equation}
and moreover there exists $u_{\ep,\p} \in H^{s/2}(M)$ critical point of $\mathcal E_{\ep,s}$ with $\mathcal E_{\ep,s} (u_{\ep,\p},M) =c_{\ep,s}(\p,M)$ and Morse index $m(u_{\ep,\p}) \le \p $. Here $C$ depends only on $M$ and $s_0$.
\end{theorem}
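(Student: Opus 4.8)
The plan is to combine the standard Allen--Cahn min-max machinery with the $\Gamma$-convergence of the fractional Allen--Cahn energy to the fractional perimeter, the latter with uniform constants in $\ep$, and with the a priori perimeter growth estimates coming from the monotonicity formula. First I would set up the variational scheme: the family $\mathcal{F}_\p$ has the correct topological properties (compactness under Hausdorff measure bound, symmetry, $\mathrm{ind}\ge \p+1$, and the subadditivity of Lemma \ref{indsub}), which are exactly what is needed to run an equivariant min-max / Lusternik--Schnirelmann argument. Together with the Palais--Smale condition for $\mathcal{E}_{\ep,s}$ (which holds for fixed $\ep>0$, since the Sobolev term dominates and $M$ is compact, making sublevel sets precompact in $H^{s/2}(M)$ modulo the $\mathbb{Z}_2$-action), this produces, for each $\p$, a critical point $u_{\ep,\p}$ at level $c_{\ep,s}(\p,M)$. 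The Morse index bound $m(u_{\ep,\p})\le\p$ follows from the index-$(\p+1)$ constraint by the usual deformation/minimax-over-$(\p+1)$-families argument: if the index were $\ge\p+1$ one could push the sweepout down below the min-max value using the negative eigenspace, contradicting optimality.

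Next I would establish the two-sided bound \eqref{critlevbound}. For the \emph{upper} bound, one constructs an explicit competitor family in $\widetilde{\mathcal{F}}_\p$: take $\p+1$ disjoint small balls $B_{r}(x_0),\dots,B_r(x_\p)$ in $M$ with $r\sim \p^{-1/n}$ (possible by a packing argument using $\mathrm{vol}(M)$), and consider the sphere $\Sp^\p$ mapped into $H^{s/2}(M)$ by sending $a=(a_0,\dots,a_\p)$ to (a suitable $\ep$-regularized, $\mathbb{Z}_2$-equivariant) function that is $\approx\mathrm{sign}(a_j)$ on $B_r(x_j)$; this family has index $\ge\p+1$ by a Borsuk--Ulam argument, and the supremum of $\mathcal{E}_{\ep,s}$ over it is controlled, using the interpolation inequality \eqref{interpropRn} (or \eqref{interpropM}) to bound the Sobolev term of the transition layer by $C(1-s)^{-1} r^{n-s}\cdot(\text{number of balls})\sim C(1-s)^{-1}\p^{1-s/n}\cdot\p^{s/n-1}\cdot\p = $ wait --- more carefully one localizes so that each ball contributes $\sim (1-s)^{-1}r^{n-s}$, and summing $\p+1$ of them with $r^{n}\sim \mathrm{vol}(M)/\p$ gives $(1-s)c_{\ep,s}(\p,M)\le C\p^{s/n}$, with the potential term negligible after optimizing $\ep\ll r$. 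For the \emph{lower} bound, one uses that any $A\in\mathcal{F}_\p$ must contain a function $u$ with $\mathcal{E}_{\ep,s}(u,M)$ large: this is the fractional analogue of the lower bound in \cite{LMN}-type arguments, obtained by a covering/Vitali argument combined with the local lower density estimate for the fractional perimeter (which itself follows from the monotonicity formula, Lemma \ref{PerMon}, applied in reverse, i.e. a nondegeneracy estimate), showing that sweepouts of index $\ge\p+1$ must ``detect'' perimeter of order $\p^{s/n}$ at some parameter.

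The main obstacle is making the upper bound genuinely \emph{uniform in $\ep$} and, crucially, uniform as $s\to1$ --- that is, carrying the factor $(1-s)$ correctly through every estimate. The interpolation inequalities \eqref{ipr11}--\eqref{interpropM} are designed precisely for this (their constants blow up like $(1-s)^{-1}$, which is the correct rate), but one must be careful that the regularized transition functions used in the competitor family have $BV$ norm $\sim r^{n-1}\cdot(\#\text{balls})$ and $L^1$ norm $\sim \mathrm{vol}(M)$, so that \eqref{interpropM} gives a Sobolev energy $\lesssim (1-s)^{-1}[\p\cdot r^{n-1}]^s\cdot\mathrm{vol}(M)^{1-s}$, and checking that with $r\sim(\mathrm{vol}(M)/\p)^{1/n}$ this is $\lesssim (1-s)^{-1}\p^{s/n}$; the potential term is $\ep^{-s}\cdot(\text{layer volume})\sim \ep^{-s}\cdot\p\cdot r^{n-1}\cdot\ep$, which forces the choice $\ep_\p\to0$ fast enough (depending on $\p$) so that it is absorbed --- this is why $\ep_\p$ must be allowed to depend on $\p$ in the statement.

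\begin{proof}[Sketch of proof of Theorem \ref{mmbounds}]
We only indicate the structure; full details follow standard equivariant min-max theory adapted to the fractional setting.

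\textbf{Existence and Morse index.} For fixed $\ep>0$ the functional $\mathcal{E}_{\ep,s}(\cdot,M)$ is $C^2$ and $\mathbb{Z}_2$-even on $H^{s/2}(M)$, and its sublevel sets are bounded in $H^{s/2}(M)$ (the Sobolev term controls the $H^{s/2}$-seminorm, and the potential term plus $L^2\subset L^1$-control on the compact $M$ controls the $L^2$ norm); hence $\mathcal{E}_{\ep,s}$ satisfies the Palais--Smale condition away from $0$ by the compact embedding $H^{s/2}(M)\hookrightarrow L^2(M)$. Using the index $\mathrm{ind}$ and its subadditivity (Lemma \ref{indsub}), the standard Lusternik--Schnirelmann/equivariant min-max deformation argument (see \cite{CFS}) applied to the family $\widetilde{\mathcal{F}}_\p$ produces a critical point $u_{\ep,\p}$ with $\mathcal{E}_{\ep,s}(u_{\ep,\p},M)=c_{\ep,s}(\p,M)$; the constraint $\mathrm{ind}(A)\ge \p+1$ on the sweepouts yields, again by a deformation argument along the negative eigenspace of the second variation, the Morse index bound $m(u_{\ep,\p})\le\p$.

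\textbf{Upper bound.} Fix $r=(c\,\mathrm{vol}(M,g)/(\p+1))^{1/n}$ with $c$ a small dimensional constant, chosen so that there exist $\p+1$ disjoint metric balls $B_r(x_0),\dots,B_r(x_\p)\subset M$. For $a=(a_0,\dots,a_\p)\in\Sp^\p$ define $v_a:M\to[-1,1]$ to be a fixed $\ep$-regularization (an optimal one-dimensional transition profile of width $\sim\ep$, transplanted) of the function equal to $\mathrm{sign}(a_j)$ on $B_{r/2}(x_j)$ and $0$ outside $\bigcup_j B_r(x_j)$, interpolating continuously in $a$ and oddly in the sense $v_{-a}=-v_a$. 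The assignment $a\mapsto v_a$ is a continuous odd map $\Sp^\p\to H^{s/2}(M)$, so its image $A$ lies in $\widetilde{\mathcal{F}}_\p$ (it has $\mathrm{ind}\ge\p+1$ by Borsuk--Ulam and $\mathcal{H}^\p(A)<\infty$). For each $a$, by \eqref{interpropM} applied to the set $\{v_a=1\}$ (of finite perimeter $\lesssim (\p+1)r^{n-1}$ and volume $\lesssim (\p+1)r^{n}\lesssim \mathrm{vol}(M)$),
\begin{equation*}
\mathcal{E}_{\ep,s}^{\rm Sob}(v_a,M)\leq \frac{C(M,s_0)}{1-s}\big[(\p+1)r^{n-1}\big]^s\,\mathrm{vol}(M)^{1-s}+ (\text{profile correction})\leq \frac{C}{1-s}\,\p^{s/n},
\end{equation*}
while $\mathcal{E}_{\ep,s}^{\rm Pot}(v_a,M)\leq \ep^{-s}\cdot C\,(\p+1)\,r^{n-1}\,\ep\to0$ as $\ep\to0$ with $\p$ fixed; choosing $\ep_\p>0$ small enough that this last term is $\le \p^{s/n}$ for all $\ep<\ep_\p$, we conclude $(1-s)c_{\ep,s}(\p,M)\le C\p^{s/n}$.

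\textbf{Lower bound.} Let $A\in\widetilde{\mathcal{F}}_\p$. Since $\mathrm{ind}(A)\ge\p+1$ and $\mathrm{ind}$ is subadditive, $A$ cannot be covered by $\p$ sets each of index $1$; by a Vitali-type covering of $M$ into $\p$ pieces of volume $\sim \mathrm{vol}(M)/\p$ and the standard comparison between the index of $A$ and of the images of $A$ under the localized ``restriction'' maps (as in the proof of the volume-spectrum lower bound in \cite{LMN}), there is some $u\in A$ whose localized Allen--Cahn energy on one such piece $\Omega$, rescaled, is bounded below; translating this via the uniform lower density estimate for $(1-s)\mathcal{E}_{\ep,s}$ on balls (a consequence of the monotonicity formula, Lemma \ref{PerMon}, run as a nondegeneracy bound) gives $\mathcal{E}_{\ep,s}(u,M)\ge c^{-1}(1-s)^{-1}\p^{s/n}$. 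Taking the infimum over $A$ yields $(1-s)c_{\ep,s}(\p,M)\ge C^{-1}\p^{s/n}$, completing the proof.
\end{proof}
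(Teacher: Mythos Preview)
There is a genuine gap in your upper bound construction. Your competitor $v_a$ is set to $0$ on $M\setminus\bigcup_j B_r(x_j)$, and since $W(0)=\tfrac14$ this forces
\[
\mathcal{E}_{\ep,s}^{\rm Pot}(v_a,M)\ \ge\ \tfrac14\,\ep^{-s}\,\mathrm{vol}\Big(M\setminus\bigcup_j B_r(x_j)\Big)\ \sim\ \ep^{-s},
\]
which diverges as $\ep\to0$; your claimed bound $\ep^{-s}\cdot(\p+1)r^{n-1}\ep$ is simply wrong. The obvious fix---setting $v_a\equiv\pm1$ outside the balls---is blocked by the oddness requirement $v_{-a}=-v_a$. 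There is also a continuity problem: as $a_j$ crosses $0$, $\mathrm{sign}(a_j)$ jumps, so $a\mapsto v_a$ is not continuous into $H^{s/2}(M)$; any regularization repairing this pushes $|v_a|$ away from $1$ on a set of positive measure and makes the potential term worse, not better.

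The paper (following \cite{CFS}) uses a different competitor: one reduces to a single ball of radius $r_\p\sim\p^{-1/n}$ and takes $u_a(x)={\rm sgn}(P_a(x_n))$ with $P_a(z)=a_0+a_1z+\cdots+a_\p z^\p$, so that $u_a=\pm1$ a.e.\ (hence zero potential) while the $BV$ norm is $\lesssim\p^{1/n}$, yielding the correct Sobolev bound via interpolation. The new difficulty the paper must address---and which you do not mention---is that $a\mapsto u_a$ is only H\"older, not Lipschitz, into $H^{s/2}$ (when two roots of $P_a$ coalesce the seminorm behaves like a fractional power of their separation), so one cannot directly conclude $\mathcal{H}^\p(f(\Sp^\p))<\infty$. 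This Hausdorff measure condition is required for membership in $\mathcal{F}_\p$ and is precisely what yields the Morse index bound via \cite{LS}; the topological condition $\mathrm{ind}(A)\ge\p+1$ alone, which is what you invoke, does not suffice. The paper resolves this by replacing ${\rm sgn}$ with a smooth odd $\eta_\delta$, verifying that $a\mapsto\eta_\delta\circ P_a(x_n)$ is Lipschitz into $H^{s/2}$, and showing that the potential contribution on $\{|P_a|<\delta\}$ vanishes uniformly in $a$ as $\delta\to0$.
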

\begin{proof}
    We explain the slight modifications needed with respect to \cite[Section 3.1]{CFS} in Appendix \ref{app:minmaxthm}.
\end{proof}
\begin{remark}
    The min-max families employed also give lower bounds for the extended Morse index of the $u_{\ep,\p}$, i.e. the Morse index plus the nullity of the second variation of the Allen-Cahn energy. We will not use this in this work.
\end{remark}
We now consider the limit as $\ep\to 0$ for the min-max quantities. Define the ``nonlocal volume widths"
\begin{equation}\label{deflps}
    l_{s}(\p,M):=\lim_{\ep\to 0}c_{\ep,s}(\p,M)\, .
\end{equation}
The existence of the limit in $\ep$ is obvious, as the Allen-Cahn energy of any function $u$ is monotone nondecreasing in $\ep$, and hence so is the value of $c_{\ep,s}(\p,M)$. Furthermore, by the lower and upper bounds from Theorem \ref{mmbounds} we know that 
\begin{equation}\label{lpsbounds}
    \frac{1}{C}\p^{s/n} \leq (1-s)l_{s}(\p,M) \leq C\p^{s/n}.
\end{equation}
In \cite{CFS} it is proved that critical points of $\mathcal{E}_{\ep,s}$ with bounded Morse index converge, as $\ep\to 0$, to $s$-minimal surfaces. A partial statement is the following:
\begin{theorem}[\textbf{Convergence as $\ep\to 0$,} \cite{CFS}]
\label{StrongConv}
Fix $s \in (0,1)$. Let $u_{\ep_j}$ be a sequence of solutions of (\ref{restrictedeq}) on $M$ with parameters $\ep_j\to 0$ and Morse index at most $m$. Then, there exist a subsequence, still denoted by $u_{\ep_j}$, and a nonlocal $s$-minimal surface $E\subset M$ with Morse index at most $m$ (in the sense of Definition \ref{WeakMorseDef}), such that
\begin{equation*}
    u_{\ep_j} \conv{H^{s/2}} u_0=\chi_E-\chi_{E^c}\, .
\end{equation*}
In particular $\mathcal E_{\ep,s}^{\rm Sob} (u_{\ep_j},M)\to \textnormal{Per}_s(E) = \mathcal E_{\ep,s}^{\rm Sob} (u_{0},M)$. Moreover, $\mathcal E_{\ep,s}^{\rm Pot} (u_{\ep_j},M)\to 0=\mathcal E_{\ep,s}^{\rm Pot} (u_{0},M)$.

In addition, up to changing $E$ in a set of measure zero, we have
\begin{eqnarray}\label{wngioewhtioeh1}
E &=& \Big\{p \in M \ : \liminf_{r\downarrow 0} \tfrac{|E \cap B_r(p) |}{|B_r(p)|} =1 \Big\} \,,\\
M\setminus E &=& \Big\{p \in M \ : \limsup_{r\downarrow 0} \tfrac{|E \cap B_r(p) |}{|B_r(p)|} =0 \Big\} \,, \\
\partial E &= & \Big\{p \in M \ :  \tfrac{|E \cap B_r(p) |}{|B_r(p)|} \in [c, 1-c]\quad \forall r\in (0,r_\circ)\Big\}. 
\end{eqnarray}
Finally, there is a universal constant $s_0\in(0,1)$ such that if $s\in(s_0,1)$, then $\partial E$ is a smooth hypersurface outside from a set of Hausdorff dimension at most $n-5$.
\end{theorem}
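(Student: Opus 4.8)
The plan is to follow the standard phase-transition-to-minimal-surface program, specialized to the subcritical nonlocal regime. Throughout I use that $|u_{\ep_j}|\le 1$ (comparison with the constants $\pm1$ and the maximum principle for \eqref{restrictedeq}) and the uniform energy bound $\sup_j\mathcal E_{\ep_j,s}(u_{\ep_j})<\infty$ (for the min-max solutions this is Theorem~\ref{mmbounds}). First I would extract the candidate: from $\ep_j^{-s}\int_M W(u_{\ep_j})\le C$ we get $\int_M W(u_{\ep_j})\le C\ep_j^s\to0$, so, since $W\ge0$ vanishes exactly at $\pm1$ and $|u_{\ep_j}|\le1$, a subsequence converges a.e.\ (hence in every $L^p(M)$, $p<\infty$, by dominated convergence) to $u_0\in\{-1,1\}$ a.e., i.e.\ $u_0=\chi_\Sigma-\chi_{\Sigma^c}$ with $\Sigma:=\{u_0=1\}$. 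The uniform bound on $[u_{\ep_j}]_{H^{s/2}(M)}$ together with lower semicontinuity of the Gagliardo seminorm under $L^2$-convergence gives $[u_0]^2_{H^{s/2}(M)}<\infty$, i.e.\ $\textnormal{Per}_s(\Sigma)<\infty$, and weak convergence $u_{\ep_j}\rightharpoonup u_0$ in $H^{s/2}(M)$.

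The crux — and what I expect to be \textbf{the main obstacle} — is to show $\mathcal E^{\rm Pot}_{\ep_j,s}(u_{\ep_j})=\ep_j^{-s}\int_M W(u_{\ep_j})\to0$. By the liminf inequality $\liminf_j\mathcal E^{\rm Sob}_{\ep_j,s}(u_{\ep_j})\ge\textnormal{Per}_s(\Sigma)$ (weak lsc) this is equivalent to $\limsup_j\mathcal E_{\ep_j,s}(u_{\ep_j})\le\textnormal{Per}_s(\Sigma)$, which simultaneously upgrades $u_{\ep_j}\to u_0$ to \emph{strong} convergence in $H^{s/2}(M)$. This is where $s\in(0,1)$ is genuinely used: a transition layer of \eqref{restrictedeq} carries Sobolev energy of order $1$ per unit interfacial area but potential energy of order $\ep^{1-s}\to0$, so the energy does not equipartition and concentrates on the interface, with $\Gamma$-limit $\textnormal{Per}_s$ and no $(1-s)$-renormalization (unlike the classical perimeter). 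To prove this for our critical points I would combine the $\ep$-version of the monotonicity formula (the analogue of Theorem~\ref{monfor} for the Caffarelli--Silvestre extension of $u_{\ep_j}$, controlling density ratios up to a multiplicative curvature error) with a clearing-out / $\ep$-regularity statement ruling out energy concentration on lower-dimensional sets; alternatively, a localized Pohozaev--Rellich identity for \eqref{restrictedeq} bounds $\ep^{-s}\int W(u_\ep)$ by Sobolev-energy fluxes through spheres, which the monotonicity formula keeps under control.

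Granted strong $H^{s/2}$-convergence, I would pass the first inner variation to the limit. Since $u_{\ep_j}$ is a smooth critical point (elliptic regularity for \eqref{restrictedeq}), for every $X\in C^2(M)$ with flow $\psi^t_X$ we have $\frac{d}{dt}\big|_{t=0}\mathcal E_{\ep_j,s}(u_{\ep_j}\circ\psi^{-t}_X)=0$; changing variables in the potential term, $\frac{d}{dt}\big|_{t=0}\mathcal E^{\rm Pot}_{\ep_j,s}(u_{\ep_j}\circ\psi^{-t}_X)=\ep_j^{-s}\int_M W(u_{\ep_j})\,\textnormal{div}_g X=O(\mathcal E^{\rm Pot}_{\ep_j,s}(u_{\ep_j}))\to0$, while, exactly as in the proof of Lemma~\ref{enboundslemma2}, the Sobolev term equals $\tfrac14\iint(u_{\ep_j}(p)-u_{\ep_j}(q))^2\,\tfrac{d}{dt}\big|_{t=0}[K(\psi^tp,\psi^tq)J_t(p)J_t(q)]$, whose kernel factor is bounded by $C_XK(p,q)$ via Proposition~\ref{sdfgsdrgs} with $\ell=1$ (after a partition of unity into flatness charts) and \eqref{remaining3} for separated points; strong $H^{s/2}$-convergence then lets us pass to the limit and conclude $\frac{d}{dt}\big|_{t=0}\textnormal{Per}_s(\psi^t_X(\Sigma))=0$ for all $X$, i.e.\ $\Sigma$ is $s$-minimal. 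The Morse-index bound is the same argument one derivative higher: fixing $m+1$ vector fields, the index-$\le m$ property of each $u_{\ep_j}$ (and criticality, so the first inner variation drops out) produces a unit combination with nonnegative second inner variation for $u_{\ep_j}$; by compactness of the unit sphere in $\R^{m+1}$ extract a limiting combination and pass the second inner variation of $\mathcal E^{\rm Sob}_{\ep_j,s}$ to that of $\textnormal{Per}_s$ using Proposition~\ref{sdfgsdrgs} with $\ell=2$ (the second inner variation of the potential again being $O(\mathcal E^{\rm Pot}_{\ep_j,s})\to0$), obtaining Morse index $\le m$ for $\Sigma$ in the sense of Definition~\ref{WeakMorseDef}.

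Finally, the density characterizations of $\Sigma$, $M\setminus\Sigma$ and $\partial\Sigma$ follow from the monotonicity formula (Theorem~\ref{monfor}) applied to the limiting $s$-minimal surface, together with the density lower bounds and clearing-out for $s$-minimal surfaces, after redefining $\Sigma$ on a null set; and the partial regularity for $s\in(s_0,1)$ follows from the interior regularity theory for $s$-minimal surfaces (\cite{CRS}) and Federer-type dimension reduction, using the classification — valid for $s$ close to $1$ — of stable $s$-minimal cones in $\R^m$, $m\le4$, as hyperplanes (\cite{CDSV}), which forces the singular set of $\partial\Sigma$ to have dimension at most $n-5$ (the finitely many points at which bounded index prevents local stability being negligible for this dimension count).
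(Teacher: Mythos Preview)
This theorem is not proved in the present paper: it is quoted verbatim from \cite{CFS} (note the attribution in the theorem header), and the paper provides no proof or sketch of it here. There is therefore nothing in the paper to compare your proposal against.

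That said, your outline is a reasonable roadmap for the result as proved in \cite{CFS}. A couple of comments. First, you invoke a uniform energy bound $\sup_j\mathcal E_{\ep_j,s}(u_{\ep_j})<\infty$ and justify it only for the min-max solutions via Theorem~\ref{mmbounds}; the theorem as stated does not assume this, so in a full proof you must either add it as a hypothesis or derive it from the Morse-index bound (the latter is what is actually done in \cite{CFS}, via a BV estimate for finite-index Allen--Cahn solutions analogous to Theorem~\ref{BVest} here, combined with interpolation). Second, your identification of the potential-energy decay $\ep_j^{-s}\int_M W(u_{\ep_j})\to 0$ as the crux is correct, and the mechanism you describe (no equipartition for $s<1$, so the $\Gamma$-limit is $\textnormal{Per}_s$ without renormalization) is the right heuristic; in \cite{CFS} this is made rigorous through density/clearing-out estimates coming from the extended monotonicity formula, essentially as you suggest. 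The passage of first and second inner variations via the kernel bounds of Proposition~\ref{sdfgsdrgs} and the final regularity statement are also in line with the arguments in \cite{CFS}.
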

Combined with Theorem \ref{mmbounds}, this shows:
\begin{corollary}[\textbf{Attainability of nonlocal widths}]\label{AttNloc}
Let $(M^n,g)$ be a compact Riemannian manifold. For every $\p\in\N$, there exists an $s$-minimal surface $E_{\p}^s\subset M$ such that ${\rm Per}_s(E_{\p}^s)=l_{s}(\p,M)$. Moreover, there is a universal constant $s_0\in(0,1)$ such that if $s\in(s_0,1)$, then $\partial E_{\p}^s$ is a smooth hypersurface outside from a set of Hausdorff dimension at most $n-5$.    
\end{corollary}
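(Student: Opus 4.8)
The plan is to combine Theorem \ref{mmbounds} with the compactness statement of Theorem \ref{StrongConv} in a single diagonal argument. Fix $\p\in\N$ and let $\ep_\p$ be as in Theorem \ref{mmbounds}. First I would choose a decreasing sequence $\ep_j\downarrow 0$ with $\ep_j<\ep_\p$ for every $j$, and invoke Theorem \ref{mmbounds} to obtain critical points $u_{\ep_j}:=u_{\ep_j,\p}\in H^{s/2}(M)$ of $\mathcal E_{\ep_j,s}$ with $\mathcal E_{\ep_j,s}(u_{\ep_j},M)=c_{\ep_j,s}(\p,M)$ and Morse index $m(u_{\ep_j})\le\p$.

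Next I would apply Theorem \ref{StrongConv} with $m=\p$ to the sequence $\{u_{\ep_j}\}_j$: after passing to a (not relabeled) subsequence, there is a nonlocal $s$-minimal surface $\Sigma\subset M$, with Morse index at most $\p$ in the weak sense, such that $u_{\ep_j}\conv{H^{s/2}} u_0=\chi_\Sigma-\chi_{\Sigma^c}$, with $\mathcal E^{\rm Sob}_{\ep_j,s}(u_{\ep_j},M)\to {\rm Per}_s(\Sigma)$ and $\mathcal E^{\rm Pot}_{\ep_j,s}(u_{\ep_j},M)\to 0$. Adding the two limits,
\[
c_{\ep_j,s}(\p,M)=\mathcal E_{\ep_j,s}(u_{\ep_j},M)=\mathcal E^{\rm Sob}_{\ep_j,s}(u_{\ep_j},M)+\mathcal E^{\rm Pot}_{\ep_j,s}(u_{\ep_j},M)\longrightarrow {\rm Per}_s(\Sigma)\, .
\]
On the other hand $\ep\mapsto c_{\ep,s}(\p,M)$ is monotone and, by \eqref{deflps}, $c_{\ep_j,s}(\p,M)\to l_{s}(\p,M)$; uniqueness of the limit therefore forces ${\rm Per}_s(\Sigma)=l_{s}(\p,M)$, and we set $\Sigma_{s,\p}:=\Sigma$. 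The bound \eqref{lpsbounds} gives $l_s(\p,M)<\infty$, so ${\rm Per}_s(\Sigma_{s,\p})<\infty$ and $\Sigma_{s,\p}$ is a genuine $s$-minimal surface in the sense of Definition \ref{def-fracminsurface}. Finally, the regularity claim for $s\in(s_0,1)$ --- smoothness of $\partial\Sigma$ away from a closed set of dimension at most $n-5$ --- is inherited verbatim from the last assertion of Theorem \ref{StrongConv}.

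The only point requiring care is the passage $c_{\ep_j,s}(\p,M)\to {\rm Per}_s(\Sigma)$: lower semicontinuity of ${\rm Per}_s$ under weak $H^{s/2}$ convergence would only yield ${\rm Per}_s(\Sigma)\le l_{s}(\p,M)$, and to exclude a strict inequality one genuinely needs the \emph{strong} $H^{s/2}$ convergence together with the vanishing $\mathcal E^{\rm Pot}_{\ep_j,s}(u_{\ep_j},M)\to 0$, i.e.\ the absence of energy loss in the limit $\ep\to 0$. Both are exactly the content of Theorem \ref{StrongConv}, whose proof is carried out in \cite{CFS}; once these inputs are in hand the corollary reduces to the bookkeeping above, and no further estimates are needed.
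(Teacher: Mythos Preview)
Your proof is correct and follows exactly the approach the paper intends: the corollary is stated immediately after Theorem \ref{StrongConv} with the remark ``Combined with Theorem \ref{mmbounds}, this shows'', and your argument spells out precisely this combination, including the passage from $c_{\ep_j,s}(\p,M)\to l_s(\p,M)$ via \eqref{deflps} and the identification of the limit energy with ${\rm Per}_s(\Sigma)$ via the strong $H^{s/2}$ convergence and vanishing potential energy in Theorem \ref{StrongConv}.
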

This gives Theorem \ref{FracYau3}, with $\text{Per}_s(E_{\p}^s)=l_{s}(\p,M)$ instead of the different min-max values used in \cite{CFS}.

\section{The Weyl Law}

\subsection{The Weyl Law with the distance kernel}

\subsubsection{The distance kernel}
The Allen-Cahn energies and fractional perimeters that we have (canonically) defined on a manifold $M$ depend on a non-explicit kernel $K_{M,s}$. Since the proof of the Weyl Law will be based on comparison between a manifold and its subdomains, as well as between such subdomains and Euclidean subdomains,
we make the technical choice of first proving the Weyl Law for a different (non-canonical) definition of fractional perimeter and Allen-Cahn energy:\\

Let $M$ be a closed Riemannian manifold or $\R^n$, and let $\Omega$ be an open subset of $M$. Define the space $H^{s/2,d}(\Omega)$ by setting
\begin{equation}\label{wethiowhoihwd}
 [u]^2_{H^{s/2,d}(\Omega)} := \iint_{\Omega\times \Omega}(u(p)-u(q))^2 \frac{\alpha_{n,s}}{\text{dist}_M^{n+s}(p,q)} \,dV_p \,d V_q\, ,
 \end{equation}
 and define
 \begin{equation}\label{ACenergyd}
 \mathcal E_{\ep,s}^d (v,\Omega) := \frac 1 4 \iint_{\Omega\times\Omega} (v(p)-v(q))^2 \frac{\alpha_{n,s}}{\text{dist}_M^{n+s}(p,q)}dV_p dV_q +\ep^{-s}\int_{\Omega} W(v)\,dx\, ,
\end{equation} where $W(v)=\frac{1}{4}(1-v^2)^2$.\\

Likewise, define the min-max value \begin{equation}\label{minmaxvald}
     c_{\ep,s}^d(\p,\Omega):= \inf_{A\in\tilde{\mathcal{F}}_\p^d} \sup_{u\in A}\mathcal E_{\ep,s}^d (u,\Omega)\, ,
\end{equation}
where
\begin{equation}\label{Fpd}
    \tilde{\mathcal{F}}_\p^d=\{A\in\mathcal{F}_\p^d:|u|\leq 1 \ \forall u\in A\}
\end{equation}
and
$$
    \mathcal{F}_\p^d=\{A\subset H^{s/2,d}(\Omega):A\text{ is compact, symmetric, satisfies } \mathcal{H}^{\p}(A)<\infty \text{, and has ind}(A)\geq \p+1\}\, ,
$$
and put
\begin{equation}\label{deflpsd}
    l_{s}^d(\p,\Omega):=\lim_{\ep\to 0}c_{\ep,s}^d(\p,\Omega)\, .
\end{equation}
The lower and upper bounds \eqref{lpsbounds}, which we have proved in Theorem \ref{critlevbound}, are immediately seen to also hold for $l_{s}^d(\p,\Omega)$ with the same proof (since essentially the only property of the kernel $K_{M,s}$ that they use is its comparability with $\frac{\alpha_{n,s}}{\text{dist}_M^{n+s}(p,q)}$). Therefore, given $s_0\in (0,1)$, for $s\in(s_0,1)$ we have that
\begin{equation}\label{lpsboundsd}
    \frac{1}{C}\p^{s/n} \leq (1-s)l_{s}^d(\p,\Omega) \leq C\p^{s/n}
\end{equation}
with $C=C(s_0,\Omega)$.\\

We will first show a Weyl Law in this setting:
\begin{proposition}\label{prop:weyld}
Let $(M,g)$ be a closed manifold of dimension $n$. There exists a universal $\tau(n,s)>0$ such that
\begin{equation}\label{weyllawd}
    \lim_{\p\to\infty} \p^{-\frac{s}{n}}l_{s}^d(\p,M) = \tau(n,s)\textnormal{vol}(M,g)^{\frac{n-s}{n}}\, .
\end{equation}
\end{proposition}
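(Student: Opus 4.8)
The plan is to follow the Liokumovich--Marques--Neves strategy, exploiting the fact that the distance kernel $\frac{\alpha_{n,s}}{\mathrm{dist}_M^{n+s}(p,q)}$ is, away from the diagonal, continuous in the metric and is, up to a lower order error controlled by the flatness assumption, \emph{equal} to the Euclidean kernel in normal coordinates. The argument has two halves: (i) prove the limit exists and is of the form $\tau(n,s)\,\mathrm{vol}(M,g)^{(n-s)/n}$ with a \emph{universal} constant, and (ii) identify $\tau(n,s)$ as a genuine constant (finite and positive), which will come for free from the two-sided bounds \eqref{lpsboundsd}.

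First I would establish \textbf{superadditivity/subadditivity under decompositions}. Given an open set $\Omega \subset M$ and a partition (up to measure zero) $\Omega = \bigsqcup_{j=1}^N \Omega_j$ into disjoint open sets, one shows
$$
\sum_{j=1}^N l_s^d(\p_j,\Omega_j) \;\lesssim\; l_s^d\Big(\textstyle\sum_j \p_j,\,\Omega\Big) \;\lesssim\; \text{(something involving the pieces)}.
$$
The lower bound direction: given an admissible family $A$ for $\Omega$ with $\mathrm{ind}(A)\ge \sum\p_j+1$, the restriction maps $u\mapsto u|_{\Omega_j}$ together with the subadditivity of the index (Lemma \ref{indsub}) force $\sup_{u\in A}\mathcal E^d_{\ep,s}(u,\Omega)\ge \sum_j \mathcal E^d_{\ep,s}(u|_{\Omega_j},\Omega_j)$ to be large on at least one set of parameters; this is the standard pigeonhole on the index, using that $\mathcal E^d_{\ep,s}(u,\Omega)\ge \sum_j \mathcal E^d_{\ep,s}(u,\Omega_j)$ because we have \emph{dropped} the cross-interactions in the definition \eqref{ACenergyd} (this is exactly why the no-cross-interaction convention was adopted). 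The upper (subadditivity) direction: glue together near-optimal families on the $\Omega_j$'s, controlling the interaction energy between different pieces; here one uses that the pieces can be taken with small $\mathrm{dist}_M$-diameter or well-separated, so the interaction term $\iint_{\Omega_i\times\Omega_j}|u(p)-u(q)|^2\frac{\alpha_{n,s}}{\mathrm{dist}_M^{n+s}}$ is lower order compared to $\p^{s/n}$ scaling — this is the technically fussy step.

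Next I would run the \textbf{Euclidean comparison + scaling}. Cover $M$ by finitely many coordinate charts $\varphi_i(\B_{R_i})$ in which the metric is $(1+\eta)$-close to Euclidean (flatness assumption, with $\eta\to 0$ as the charts shrink), decompose $M$ into small pieces subordinate to this cover, and on each small piece compare $l_s^d(\cdot,\Omega_j)$ to the corresponding Euclidean quantity $l_s^d(\cdot,U_j)$ for $U_j\subset\R^n$, picking up multiplicative errors $(1\pm C\eta)$ from the metric distortion in both $\mathrm{dist}_M^{n+s}$ and $dV$. For the Euclidean pieces one uses the exact scaling $l_s^d(\p,\lambda U)=\lambda^{n-s}l_s^d(\p,U)$ (immediate from \eqref{wethiowhoihwd}, since the Euclidean distance kernel is exactly homogeneous), together with the two-sided bounds \eqref{lpsboundsd} which give $l_s^d(\p,U)\sim c\,\p^{s/n}$, to run a \emph{Fekete-type subadditivity argument}: define $\tau(n,s):=\lim_{\p\to\infty}\p^{-s/n}l_s^d(\p,Q)$ for $Q$ the unit cube, show the limit exists and is finite/positive by subadditivity over subcubes plus the bounds \eqref{lpsboundsd}, then bootstrap from cubes to arbitrary bounded Euclidean domains by inner/outer approximation by cubes, obtaining $\lim_\p \p^{-s/n}l_s^d(\p,U)=\tau(n,s)|U|^{(n-s)/n}$. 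Transferring back to $M$ via the chart comparison and summing over the decomposition, using $\sum_j |U_j|^{(n-s)/n}$ and a convexity/Hölder bookkeeping (as in LMN, one passes to finer and finer decompositions so that $\sum_j a_j^{(n-s)/n}\to (\sum_j a_j)^{(n-s)/n}$ in the relevant limiting sense after the $\p\to\infty$ limit is taken first), yields \eqref{weyllawd} with the same universal $\tau(n,s)$.

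The \textbf{main obstacle} I expect is making the two directions of the decomposition estimate quantitatively match in the iterated limit: one must take $\p\to\infty$ \emph{before} refining the partition and sending the metric-distortion parameter $\eta\to 0$, and show that the errors (interaction energies between pieces, the $(1\pm C\eta)$ distortions, and the discrepancy between $\sum_j\p_j^{s/n}$-type lower bounds and $(\sum_j\p_j)^{s/n}$-type upper bounds) all wash out in the correct order of limits. Keeping the constant $\tau(n,s)$ genuinely \emph{independent of $M$} requires that at no point does a constant depending on the global geometry leak into the leading-order term — this is guaranteed by the local nature of the flatness-assumption estimates (Proposition \ref{prop:kern1} and Lemma \ref{loccomparability}) and by the fact that $\tau(n,s)$ is \emph{defined} purely Euclidean-ly, but verifying it cleanly is where the care lies.
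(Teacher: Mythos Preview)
Your strategy is the right one and matches the paper's LMN scheme: Fekete argument on cubes (Proposition~\ref{cubelim}), extension to Euclidean domains (Proposition~\ref{weyleucd}), transfer to $M$ via bi-Lipschitz charts (Lemma~\ref{Lipineq}). The $\liminf$ direction via index-pigeonhole and the superadditivity $c_\ep^d(\p,\Omega)\ge\sum_j c_\ep^d(\p_j,\Omega_j^*)$ is exactly Proposition~\ref{infgeq2}.

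The gap is in your $\limsup$ plan. ``Glue together near-optimal families on the $\Omega_j$'' runs into an index obstruction: a product $A_1\times\cdots\times A_N$ under the diagonal $\mathbb Z_2$-action does \emph{not} have index $\sum_j(\p_j+1)$ (already $S^0\times S^0$ has index $1$, not $2$), and a join construction does not control energy as a sum of piece-energies. The paper's Proposition~\ref{supleq2} sidesteps this entirely: it maps the Lipschitz pieces $C_i\subset M$ via normal charts $F_i$ to disjoint Euclidean pieces $\mathcal C_i$, adjoins thin tubes to form a \emph{single} connected Euclidean domain $\Omega$ with $\mathrm{vol}(\Omega)\le(1+2\delta)^n\mathrm{vol}(M)$, applies the already-proved Euclidean Weyl law to $\Omega$, and then pulls back a single admissible family $A\in\widetilde{\mathcal F}_\p^d(\Omega)$ via the equivariant piecewise map $u\mapsto U$, $U|_{C_i}=u\circ F_i$. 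The index is preserved automatically by equivariance, and the cross-interactions on $M$ are bounded by $\sum_j\mathrm{Per}_s^d(C_j)$, a $\p$-independent constant negligible after dividing by $\p^{s/n}$. Relatedly, your assertion that ``$\sum_j a_j^{(n-s)/n}\to(\sum_j a_j)^{(n-s)/n}$ as the partition is refined'' is false---the left side diverges since $(n-s)/n<1$---and no such step occurs in LMN: in the Lusternik--Schnirelman bookkeeping \eqref{linq}--\eqref{i3in} one allocates $\p_i\approx\p|\Omega_i^*|$, and the scaling produces a \emph{linear} sum $\sum_i|\Omega_i^*|$ directly, so the volume power $(n-s)/n$ appears only once, on the single domain $\Omega$.
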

We will afterwards show that this implies the Weyl Law with the canonical kernel $K_{M,s}$ of \eqref{weyllaw}, and in fact with the {\bf same constant} $\tau(n,s)$. This is reflective of the ``homogenisation" property which is behind the Weyl Law in the first place.

\subsubsection{Euclidean domain case}

The Weyl Law \eqref{weyllawd} will be proved first for Euclidean domains with piecewise smooth boundary, adapting the reasoning in \cite{LMN} to our case. As mentioned before, if $\Omega$ is such a domain, the Allen-Cahn energy will be defined using \eqref{ACenergyd} with the ambient Euclidean kernel $\frac{\alpha_{n,s}}{|x-y|^{n+s}}$.

The main tool is the so-called Lusternik-Schnirelman inequality, which compares the min-max quantities of domains with those for their subdomains:
\begin{lemma}[\textbf{Lusternik--Schnirelman inequality}]\label{LSineq}
Let $\Omega$, $\{\Omega_i\}_{i=1}^N$ and $\{\Omega_i^*\}_{i=1}^N$ be bounded open subsets in $\mathbb{R}^{n}$ with piecewise smooth boundaries such that
\begin{enumerate}
    \item $|\Omega|=|\Omega_i|=1$ for $i=1,...,N$
    \item $\Omega_i^*$ is similar to $\Omega_i$ for $i=1,...,N$
    \item $\{\Omega_i^*\}$ are pairwise disjoint subsets of $\Omega$
\end{enumerate}
Then 
\begin{equation}\label{linq}
    \p^{-\frac{s}{n}}l_{s}^d(\p,\Omega) \geq \sum_{i=1}^N |\Omega_i^*|\p_i^{-\frac{s}{n}}l_{s}^d(\p_i,\Omega_i) - \frac{c}{pV},
\end{equation}
where $\p_i=\lfloor \p|\Omega_i^*|\rfloor $, $V=\min\{|\Omega_i^*|\}$ and $c=\max_i \gamma(\Omega_i)$. Here $\gamma(\Omega_i)$ is a bound for $\mathfrak q^{-s/n}l_{s}^d(\mathfrak q,\Omega_i)$ independent of $\mathfrak q$ (which exists by \eqref{lpsboundsd}). \\

\end{lemma}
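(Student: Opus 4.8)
The plan is to run a Lusternik--Schnirelman covering argument at a fixed value of $\ep$ and then pass to the limit $\ep\to 0$, adapting \cite{LMN}. Fix $\ep>0$ and let $A\in\widetilde{\mathcal F}^d_{\p}(\Omega)$ be an arbitrary admissible family, so that $\textnormal{ind}(A)\ge\p+1$, $|u|\le 1$ on $A$, and $\mathcal H^{\p}(A)<\infty$. Two preliminary observations will be used repeatedly. The first is a \emph{scaling identity}: writing $\Omega_i^*=x_i+r_iR_i(\Omega_i)$ with $R_i\in O(n)$ and $r_i:=|\Omega_i^*|^{1/n}$ (possible since $|\Omega_i|=1$) and defining the restriction-and-rescaling map $\Phi_i u(y):=u(x_i+r_iR_i y)$ for $y\in\Omega_i$, a change of variables in both the Gagliardo double integral and the potential term, using that $\textnormal{dist}_{\R^n}(x,y)=|x-y|$ and $\ep^{-s}r_i^n=r_i^{n-s}(\ep/r_i)^{-s}$, gives
\begin{equation*}
\mathcal E^d_{\ep,s}(u,\Omega_i^*)=r_i^{n-s}\,\mathcal E^d_{\ep_i,s}(\Phi_i u,\Omega_i)=|\Omega_i^*|^{\frac{n-s}{n}}\,\mathcal E^d_{\ep_i,s}(\Phi_i u,\Omega_i),\qquad \ep_i:=\ep/r_i\,.
\end{equation*}
The second is that each $\Phi_i\colon H^{s/2,d}(\Omega)\to H^{s/2,d}(\Omega_i)$ is a bounded linear, hence Lipschitz, \emph{odd} map (restriction does not increase the Gagliardo seminorm, and rescaling only contributes a fixed power of $r_i$), and preserves the pointwise bound $|u|\le 1$.

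Next I cover $A$. For $i=1,\dots,N$ and a small $\delta>0$ set
\begin{equation*}
B_i:=\big\{u\in A:\ \mathcal E^d_{\ep_i,s}(\Phi_i u,\Omega_i)\le c^d_{\ep_i,s}(\p_i,\Omega_i)-\delta\big\},\qquad B_0:=A\setminus(B_1\cup\dots\cup B_N)\,.
\end{equation*}
Each $B_i$ with $i\ge 1$ is closed in the compact set $A$, hence compact, symmetric, and does not contain $0$. The heart of the matter is the estimate $\textnormal{ind}(B_i)\le\p_i$. Suppose not. Since $\Phi_i$ is odd and continuous we get $\textnormal{ind}(\Phi_i(B_i))\ge\textnormal{ind}(B_i)\ge\p_i+1$, and $0\notin\Phi_i(B_i)$ for $\ep$ small: indeed $u\in B_i$ forces $\mathcal E^d_{\ep_i,s}(\Phi_i u,\Omega_i)\le\tfrac{C}{1-s}\p_i^{s/n}$ by \eqref{lpsboundsd}, whereas $\Phi_i u\equiv 0$ would have energy $\ep_i^{-s}|\Omega_i|/4\to\infty$. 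Thus $\Phi_i(B_i)$ is a compact, symmetric family in $H^{s/2,d}(\Omega_i)$ with $|v|\le 1$, with $\textnormal{ind}\ge\p_i+1$, and with finite Hausdorff measure (a Lipschitz image of the $\mathcal H^{\p}$-finite set $A$); after a harmless modification it becomes a competitor in the definition of $c^d_{\ep_i,s}(\p_i,\Omega_i)$, so $\sup_{\Phi_i(B_i)}\mathcal E^d_{\ep_i,s}(\cdot,\Omega_i)\ge c^d_{\ep_i,s}(\p_i,\Omega_i)$, contradicting the definition of $B_i$. By subadditivity of the index (Lemma \ref{indsub}) and $\sum_i\p_i\le\sum_i\p|\Omega_i^*|\le\p$,
\begin{equation*}
\p+1\le\textnormal{ind}(A)\le\textnormal{ind}(B_0)+\sum_{i=1}^N\textnormal{ind}(B_i)\le\textnormal{ind}(B_0)+\p\,,
\end{equation*}
so $B_0\neq\emptyset$.

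Fix $u\in B_0$. As the $\Omega_i^*$ are pairwise disjoint subsets of $\Omega$ and the Gagliardo integrand and $W$ are nonnegative, $\mathcal E^d_{\ep,s}(u,\Omega)\ge\sum_{i=1}^N\mathcal E^d_{\ep,s}(u,\Omega_i^*)$; combining this with the scaling identity and with $\mathcal E^d_{\ep_i,s}(\Phi_i u,\Omega_i)\ge c^d_{\ep_i,s}(\p_i,\Omega_i)-\delta$ (which holds for every $i$ since $u\notin B_i$) yields
\begin{equation*}
\mathcal E^d_{\ep,s}(u,\Omega)\ \ge\ \sum_{i=1}^N|\Omega_i^*|^{\frac{n-s}{n}}\big(c^d_{\ep_i,s}(\p_i,\Omega_i)-\delta\big)\,.
\end{equation*}
Taking $\sup_{u\in A}$, then $\inf_{A\in\widetilde{\mathcal F}^d_{\p}(\Omega)}$, then $\delta\downarrow0$ and finally $\ep\downarrow0$ (so that $\ep_i=\ep/|\Omega_i^*|^{1/n}\downarrow0$ and $c^d_{\ep_i,s}(\p_i,\Omega_i)\to l^d_s(\p_i,\Omega_i)$) gives $l^d_s(\p,\Omega)\ge\sum_{i=1}^N|\Omega_i^*|^{\frac{n-s}{n}}l^d_s(\p_i,\Omega_i)$. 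Multiplying by $\p^{-s/n}$ and writing $|\Omega_i^*|^{\frac{n-s}{n}}\p^{-s/n}=|\Omega_i^*|\big(\tfrac{\p_i}{\p|\Omega_i^*|}\big)^{s/n}\p_i^{-s/n}$, then using that $\p_i=\lfloor\p|\Omega_i^*|\rfloor$ implies $\tfrac{\p_i}{\p|\Omega_i^*|}\in[0,1]$ and $\tfrac{\p_i}{\p|\Omega_i^*|}\ge 1-\tfrac1{\p|\Omega_i^*|}\ge 1-\tfrac1{\p V}$, together with $t^{s/n}\ge t$ on $[0,1]$, the bound $\p_i^{-s/n}l^d_s(\p_i,\Omega_i)\le c$, and $\sum_i|\Omega_i^*|\le 1$, we obtain \eqref{linq} (terms with $\p_i=0$ being read as $0$).

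The step I expect to be the main obstacle is the index-reduction inequality $\textnormal{ind}(B_i)\le\p_i$: one must turn $\Phi_i(B_i)$ into a genuine element of the admissible class $\widetilde{\mathcal F}^d_{\p_i}(\Omega_i)$. Enforcing $|v|\le 1$ is easy --- post-compose with the truncation operator $T$, which does not increase the energy and preserves oddness and compactness --- but matching the Hausdorff-measure normalization of that class is delicate: here one uses that $\Phi_i$ is Lipschitz to reduce to the $\mathcal H^{\p}$-finiteness of $A$, reconciling the exponents $\p_i\le\p$ if necessary by passing through the Lipschitz sphere-parametrized families that also furnish the matching upper bound. The remaining ingredients --- the scaling identity, the disjointness estimate, and the floor-function bookkeeping, which only costs the stated $c/(\p V)$ error --- are routine.
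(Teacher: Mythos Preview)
Your proposal follows essentially the same Lusternik--Schnirelman covering argument as the paper: define sublevel sets $B_i$ (the paper's $A_i$) by an energy threshold, bound $\mathrm{ind}(B_i)\le\p_i$, apply subadditivity of the index to find a point in the complement, and then do the floor-function bookkeeping after passing $\ep\to 0$. The only organizational difference is that you fold the scaling $\Omega_i^*\leftrightarrow\Omega_i$ into the definition of $B_i$ via $\Phi_i$, whereas the paper first proves $c_\ep^d(\p,\Omega)\ge\sum_i c_\ep^d(\p_i,\Omega_i^*)$ and scales afterwards; by your scaling identity these are equivalent.

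On the point you flag as the main obstacle---matching the $\mathcal H^{\p_i}$-finiteness condition for $\Phi_i(B_i)$---the paper does not address it either: its one-line justification ``$A_i\notin\mathcal F_{\p_i}(\Omega_i^*)$ hence $\mathrm{ind}(A_i)\le\p_i$'' silently assumes that the only way $A_i$ can fail to be admissible is through the index, which is exactly the gap you identify (having $\mathcal H^{\p}(A)<\infty$ does not give $\mathcal H^{\p_i}<\infty$ for $\p_i<\p$). So your proof is at least as complete as the paper's, and more honest about where the soft spot lies. Your suggested fix (approximate by Lipschitz sphere-parametrized families) is in the right spirit; another route is to observe that dropping the Hausdorff condition in the definition of $c_{\ep,s}^d$ can only lower the min-max value, so the superadditivity inequality $c_\ep^d(\p,\Omega)\ge\sum_i c_\ep^d(\p_i,\Omega_i^*)$ holds a fortiori for the version without the Hausdorff constraint, and one then checks separately that the two versions agree (the Hausdorff condition is only needed later to extract Morse-index bounds via \cite{LS}, not for this comparison lemma).
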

\begin{proof} For simplicity, we will write $c_\ep^d(\p,\Omega)$ and $l^d(\p,\Omega)$ instead of $c_{\ep,s}^d(\p,\Omega)$ and $l_{s}^d(\p,\Omega)$ during the proof, and likewise for other instances of this notation such as for $\mathcal E_{\ep}^d (u,\Omega)$ instead of $\mathcal E_{\ep,s}^d (u,\Omega)$. We shall use both notations interchangeably also in other proofs as long as there is no possible confusion. The proof consists of two steps.

\noindent \textbf{Step 1.} We prove that 
    \begin{equation}\label{cepcomp} c_{\ep}^d(\p,\Omega)\geq \sum_{i=1}^N c_{\ep}^d(\p_i,\Omega_i^*)\, ,
    \end{equation}
    i.e. that $c_{\ep}$ is superadditive in a certain sense.
    
    Fix $A\in \mathcal{F}_\p$. Choose some small room $\delta$ that will be made to go to zero, and define $A_i:=\{u\in A: \mathcal E_{\ep}^d (u,\Omega_i^*)\leq c_{\ep}^d(\p_i,\Omega_i^*)-\delta\}$. By definition of $c_{\ep}^d(\p_i,\Omega_i^*)$ as the min-max value among sets in $\mathcal{F}_{\p_i}(\Omega_i^*)$, the set $A_i$ does not belong to $\mathcal{F}_{\p_i}(\Omega_i^*)$, which shows that $\text{ind}(A_i)\leq \p_i$.
    
    What we need to see to show \eqref{cepcomp} is precisely that there exists some $u\in A\setminus\cup_{i=1}^N A_i$: Then,
    \begin{equation*}
        \sup_{v\in A}\mathcal E_{\ep}^d (v,\Omega)\geq \mathcal E_{\ep}^d (u,\Omega) \geq \sum_{i=1}^N\mathcal E_{\ep}^d (u,\Omega_i^*) > \sum_{i=1}^N c_\ep^d(\p_i,\Omega_i^*)-N\delta\, ,
    \end{equation*}
    which sending $\delta$ to $0$ and taking infima among all $A\in\mathcal{F}_\p$ gives $c_{\ep,s}^d(\p,\Omega)\geq \sum_{i=1}^N c_{\ep}^d(\p_i,\Omega_i^*)$.
    
    To see that $A\setminus\cup_{i=1}^N A_i \neq \emptyset$, observe that by Lemma \ref{indsub}
    $$
    \text{ind}(\cup_{i=1}^N A_i)\leq\sum_{i=1}^N \text{ind}(A_i)\leq\sum_{i=1}^N \p_i\leq \p\, ,
    $$
    but on the other hand $\text{ind}(A)\geq \p+1$ by definition since $A\in\mathcal{F}_\p$.

\noindent \textbf{Step 2.} Conclusion.

From \eqref{cepcomp}, we get
\begin{align*}
    \p^{-\frac{s}{n}}c_{\ep}^d(\p,\Omega)&\geq \sum_{i=1}^N \p^{-\frac{s}{n}}c_{\ep}^d(\p_i,\Omega_i^*) =\sum_{i=1}^N \p^{-\frac{s}{n}}|\Omega_i^*|^{\frac{n-s}{n}}c_{\ep/|\Omega_i^*|^{1/n}}^d(\p_i,\Omega_i).
\end{align*}
We have used that $\Omega_i$ is isometric to $|\Omega_i^*|^{-\frac{1}{n}}\Omega_i^*$ together with the scaling of the Allen-Cahn energy \eqref{ACenergyd} under dilations.

Next, we multiply and divide by $\p_i^{-\frac{s}{n}}$ inside each term in the sum, and then use $\frac{\p_i}{\p|\Omega_i^*|}\geq 1-\frac{1}{\p|\Omega_i^*|}\geq (1-\frac{1}{\p|\Omega_i^*|})^{\frac{n}{s}}$ which comes from $\p_i=\lfloor \p|\Omega_i^*|\rfloor \geq \p|\Omega_i^*|-1$:

\begin{align*}
    \p^{-\frac{s}{n}}c_{\ep}^d(\p,\Omega) &\geq \sum_{i=1}^N \p^{-\frac{s}{n}}|\Omega_i^*|^{\frac{n-s}{n}}c_{\ep/|\Omega_i^*|^{1/n}}^d(\p_i,\Omega_i= \sum_{i=1}^N |\Omega_i^*|(\frac{\p_i}{\p|\Omega_i^*|})^{\frac{s}{n}}\p_i^{-\frac{s}{n}}c_{\ep/|\Omega_i^*|^{1/n}}^d(\p_i,\Omega_i)\\
    &\geq \sum_{i=1}^N |\Omega_i^*|(1-\frac{1}{\p|\Omega_i^*|})\p_i^{-\frac{s}{n}}c_{\ep/|\Omega_i^*|^{1/n}}^d(\p_i,\Omega_i).
\end{align*}
Now, we let $\ep\to 0$ and use that $\sum_{i=1}^N |\Omega_i^*|\leq |\Omega|=1$ to find that 
\begin{align*}
    \p^{-\frac{s}{n}}l^d(\p,\Omega)&\geq \sum_{i=1}^N |\Omega_i^*|\p_i^{-\frac{s}{n}}l^d(\p_i,\Omega_i)-\frac{\max_j \gamma(\Omega_j)}{\p V}\, ,
\end{align*}
which concludes the proof.
\end{proof}

Set $\tilde{l}_{s}^d(\p,\Omega):=\p^{-\frac{s}{n}}l_{s}^d(\p,\Omega)$. We will first prove the Weyl Law for Euclidean cubes, which amounts to showing

\begin{proposition}
\label{cubelim} For the unit cube $C$ in $\mathbb{R}^n$, \ $\liminf_{\p\to\infty} \tilde{l}_{s}^d(\p,C) = \limsup_{\p\to\infty} \tilde{l}_{s}^d(\p,C)$.
\end{proposition}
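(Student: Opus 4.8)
The plan is to reproduce the sub/super-additivity argument of Liokumovich--Marques--Neves for the unit cube, using only the Lusternik--Schnirelman inequality of Lemma~\ref{LSineq} and the two-sided bound \eqref{lpsboundsd}. Write $L := \limsup_{\p\to\infty}\tilde l_s^d(\p,C)$ and $\ell := \liminf_{\p\to\infty}\tilde l_s^d(\p,C)$; by \eqref{lpsboundsd} both are finite and strictly positive, and since $\ell\le L$ automatically, it will be enough to prove $\ell\ge L$.

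The main step is to subdivide $C$ into $N=k^n$ congruent subcubes $\{C_i^*\}_{i=1}^{k^n}$ of side $1/k$, each of which is similar to $C$, and to apply Lemma~\ref{LSineq} with $\Omega=\Omega_i=C$ and $\Omega_i^*=C_i^*$. Because all $|C_i^*|$ equal $1/k^n$, the indices $\p_i=\lfloor \p|C_i^*|\rfloor$ all equal $\lfloor \p/k^n\rfloor$, the constant $c=\max_i\gamma(C_i)=\gamma(C)$ is fixed, and $V=1/k^n$, so \eqref{linq} collapses to
\begin{equation*}
    \tilde l_s^d(\p,C)\ \ge\ \tilde l_s^d\big(\lfloor \p/k^n\rfloor,\,C\big)\ -\ \frac{\gamma(C)\,k^n}{\p}\,.
\end{equation*}
Whenever $\lfloor\p/k^n\rfloor=q$ one has $\p\ge qk^n$, so the remainder is at most $\gamma(C)/q$, and hence $\tilde l_s^d(\p,C)\ge \tilde l_s^d(q,C)-\gamma(C)/q$.

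To conclude I would fix a sequence $q_j\to\infty$ with $\tilde l_s^d(q_j,C)\to L$ and observe that, for each fixed $q=q_j$, the intervals $I_k:=[qk^n,(q+1)k^n)$ cover $[qK^n,\infty)$ as soon as $k\ge K$ for a suitable $K=K(q)$ (this is because consecutive $I_k$, $I_{k+1}$ overlap once $q[(k+1)^n-k^n]\le k^n$, i.e.\ for all large $k$). Therefore every integer $\p\ge qK^n$ satisfies $\lfloor\p/k^n\rfloor=q$ for some admissible $k$, so the displayed inequality gives $\tilde l_s^d(\p,C)\ge \tilde l_s^d(q_j,C)-\gamma(C)/q_j$ for all such $\p$; taking $\liminf_{\p\to\infty}$ and then $j\to\infty$ yields $\ell\ge L$, which is exactly what is needed.

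I expect the only mildly delicate point to be this last elementary covering argument: ensuring that, as $k$ varies, the rescaled levels $\lfloor\p/k^n\rfloor$ realize a prescribed value $q$ for \emph{every} large $\p$, while the Lusternik--Schnirelman remainder $\gamma(C)k^n/\p$ stays controlled by $1/q$. Everything else is a mechanical application of Lemma~\ref{LSineq} together with \eqref{lpsboundsd}; no regularity theory and no kernel estimate enters here, since the cube case is purely combinatorial once the superadditivity inequality is in hand.
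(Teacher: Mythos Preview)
Your argument is correct. Both your proof and the paper's rest entirely on the Lusternik--Schnirelman inequality applied to congruent subcubes of $C$, and both conclude by showing $\liminf\ge\limsup$; the difference is only in how the subcubes are parametrized. The paper fixes the sequences $\{\p_k\}$, $\{\mathfrak q_j\}$ realizing the $\limsup$ and $\liminf$, then takes subcubes of volume $\delta_j=\p_k/\mathfrak q_j$ so that $\lfloor \mathfrak q_j|C_i^*|\rfloor=\p_k$ exactly, at the price that the subcubes do not tile $C$ perfectly and one only has $N_j\delta_j\to 1$. You instead tile $C$ exactly by $k^n$ congruent subcubes, which makes the sum of volumes equal to $1$ on the nose but forces the floor $\lfloor\p/k^n\rfloor$ to be variable; your covering argument with the intervals $I_k=[qk^n,(q+1)k^n)$ then ensures every large $\p$ hits the target $q$ for some $k$. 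That covering step is sound (consecutive $I_k$ overlap once $(1+1/k)^n\le 1+1/q$, i.e.\ for $k\gtrsim nq$), and the remainder bound $\gamma(C)k^n/\p\le\gamma(C)/q$ is exactly what you need. The two routes are equivalent in strength; the paper's choice trades a covering argument for a packing-density limit, yours does the reverse.
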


Once we prove this proposition, we will be able to designate the universal constant $\tau(n,s)$ that will appear in the Weyl Law of an arbitrary manifold. Indeed, if Theorem \ref{thmweyl} holds then Proposition \ref{cubelim} implies that the constant needs to be $\tau(n,s):=\lim_\p \tilde{l}_{s}^d(\p,C)$.

\begin{proof}[Proof of Proposition \ref{cubelim}]
Choose sequences $\{\p_k\}$ and $\{\mathfrak q_j\}$ giving the values of the limsup and the liminf of $\tilde{l}_{s}^d(\p,C)$, respectively.

Fix $k$, and for all $j$ large enough define $\delta_j=\frac{\p_k}{\mathfrak q_j}<1$. We will use the Lusternik--Schnirelman inequality with $\mathfrak q_j$ on the left side and $\p_k$ on the right side, together with the similarity between a cube and its subcubes, to prove that liminf$\geq$limsup. This will conclude the proof.\\

Let $N_j$ be the maximal number of cubes $C_i^*$ of \textnormal{vol}ume $\delta_j$ contained in $C$ and with pairwise disjoint interiors. From the definition of $\delta_j$, $\lfloor \mathfrak q_j|C_i^*|\rfloor = \mathfrak q_j|C_i^*| = \p_k$. Lemma \ref{LSineq} then gives
\begin{align}\label{cubein}
    \tilde{l}_{s}^d(\mathfrak q_j,C)\geq\sum_{i=1}^{N_j}|C_i^*|\tilde{l}_{s}^d(\p_k,C)-\frac{\gamma(C)}{\mathfrak q_j\delta_j}=N_j\delta_j\tilde{l}_{s}^d(\p_k,C)-\frac{\gamma(C)}{\p_k}.
\end{align}
Since $N_j\delta_j$ tends to the volume of $C$ by definition of $N_j$, letting $j\to \infty$ we get that
\begin{align*}
    \liminf_\p \tilde{l}_{s}^d(\p,C)\geq \tilde{l}_{s}^d(\p_k,C)-\frac{\gamma(C)}{\p_k}.
\end{align*}
Sending now $k$ to infinity, we arrive at $\liminf_\p \tilde{l}_{s}^d(\p,C)\geq \limsup_\p \tilde{l}_{s}^d(\p,C)$.
\end{proof}

Next, we will consider an arbitrary Lipschitz domain $\Omega$ with piecewise smooth boundary (assumed to be of \textnormal{vol}ume 1, after scaling) and show that $\lim_\p\tilde{l}_{s}^d(\p,\Omega)$ exists and is equal to $\tau(n,s)=\lim_p \tilde{l}_{s}^d(\p,C)$:
\begin{proposition}[Weyl Law for Euclidean domains]\label{weyleucd} Let $\Omega\subset\R^n$ be a Lipschitz domain with piecewise smooth boundary. Then
\begin{equation}
    \lim_\p\tilde{l}_{s}^d(\p,\Omega)=\tau(n,s)\textnormal{vol}(\Omega)^{\frac{n-s}{n}}\, ,
\end{equation}
where $\tau(n,s)=\lim_p \tilde{l}_{s}^d(\p,C)>0$ is a universal constant depending only on $n$ and $s$.
\end{proposition}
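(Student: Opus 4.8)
The plan is to prove the two inequalities $\limsup_\p \tilde l_s^d(\p,\Omega)\le \tau(n,s)\vol(\Omega)^{\frac{n-s}{n}}$ and $\liminf_\p \tilde l_s^d(\p,\Omega)\ge \tau(n,s)\vol(\Omega)^{\frac{n-s}{n}}$ separately, by sandwiching $\Omega$ between (disjoint unions of) small cubes from the inside and covering it by small cubes from the outside, in the spirit of \cite{LMN}. After rescaling we may assume $\vol(\Omega)=1$; the claimed limit is then simply $\tau(n,s)$. Note that by the scaling of the energy \eqref{ACenergyd}, for a cube $Q$ of side length $h$ one has $l_s^d(\mathfrak q,Q)=h^{n-s}l_s^d(\mathfrak q,C)$, hence $\tilde l_s^d(\mathfrak q,Q)=|Q|^{\frac{n-s}{n}}\tilde l_s^d(\mathfrak q,C)$, and by Proposition \ref{cubelim} the quantity $\tilde l_s^d(\mathfrak q,C)\to\tau(n,s)$.

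For the lower bound, fix a small $h>0$ and let $Q_1^*,\dots,Q_{N}^*$ be the interior dyadic-type subcubes of side $h$ that fit inside $\Omega$ with pairwise disjoint interiors, so $\sum_i|Q_i^*|=Nh^n\to 1$ as $h\to 0$ (using that $\partial\Omega$ has measure zero, being piecewise smooth). Normalise so that each $Q_i^*$ has the same shape as a cube $Q_i$ of unit volume (they are all similar to the unit cube $C$), apply the Lusternik--Schnirelman inequality \eqref{linq} with $\p_i=\lfloor \p|Q_i^*|\rfloor$:
\begin{equation*}
\tilde l_s^d(\p,\Omega)\ \ge\ \sum_{i=1}^N |Q_i^*|\,\tilde l_s^d(\p_i,C)\ -\ \frac{c}{\p V},
\end{equation*}
where $V=h^n$ and $c=\gamma(C)$ is the uniform bound on $\mathfrak q^{-s/n}l_s^d(\mathfrak q,C)$ from \eqref{lpsboundsd}. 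Letting $\p\to\infty$ (so each $\p_i\to\infty$ and the error term vanishes) gives $\liminf_\p \tilde l_s^d(\p,\Omega)\ge Nh^n\,\tau(n,s)$; then letting $h\to 0$ yields $\liminf_\p \tilde l_s^d(\p,\Omega)\ge \tau(n,s)$.

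For the upper bound, the idea is dual: cover $\Omega$ by finitely many small cubes $Q_1,\dots,Q_N$ of side $h$ with $\bigcup Q_i\supset\Omega$ and $\sum|Q_i|\le 1+o_h(1)$, and build competitor sweepouts of $\Omega$ by gluing optimal sweepouts on each $Q_i\cap\Omega$. Concretely, given admissible families $A_i\in\widetilde{\mathcal F}_{\p_i}^d(Q_i)$ nearly attaining $l_s^d(\p_i,Q_i)$ with $\sum\p_i\le\p$, one forms a join-type family on $\Omega$ of index $\ge\p+1$ whose elements have energy $\mathcal E_{\ep,s}^d(u,\Omega)\le\sum_i \mathcal E_{\ep,s}^d(u_i,Q_i)+(\text{cross terms})$; the cross terms, which are the nonlocal interactions between different $Q_i$'s absent in the Euclidean single-cube problem, must be controlled. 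One distributes $\p$ among the cubes proportionally to their volumes, $\p_i\approx \p|Q_i|$, so that $\tilde l_s^d(\p,\Omega)\lesssim \sum_i |Q_i|\,\tilde l_s^d(\p_i,C)+o(1)\to (1+o_h(1))\tau(n,s)$, and then send $h\to 0$. \textbf{The main obstacle} is precisely this upper bound construction: unlike \cite{LMN}, where the localised energy of a sweepout on disjoint pieces is genuinely additive, here the kernel $\alpha_{n,s}/\mathrm{dist}^{n+s}$ is nonlocal, so when one superposes $\pm 1$-valued competitors supported on overlapping-in-a-collar cubes the cross-interaction energy $\sum_{i\ne j}\iint_{Q_i\times Q_j}(u(p)-u(q))^2\frac{\alpha_{n,s}}{\mathrm{dist}^{n+s}(p,q)}$ is not obviously negligible. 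The resolution is to use the interpolation bound \eqref{interpropRn}: since each competitor can be taken of bounded variation with $[u]_{BV}\lesssim \p_i h^{n-1}$ on $Q_i$, the interaction across a fixed positive-width collar is $\le \frac{C}{1-s}(\text{perimeter})^s(\text{volume})^{1-s}$, which after summing over the $O(h^{-n})$ cubes and dividing by $\p^{s/n}$ contributes an error that is $o(1)$ as $\p\to\infty$ and then $o(1)$ as $h\to 0$ — one must only be careful that the exponent bookkeeping (the $\p^{s/n}$ normalisation against $\sum\p_i^{s/n}\le N^{1-s/n}\p^{s/n}$) works out, which it does because $N$ is fixed once $h$ is fixed. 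Combining the two inequalities gives $\lim_\p\tilde l_s^d(\p,\Omega)=\tau(n,s)$, and undoing the normalisation gives the stated $\tau(n,s)\vol(\Omega)^{\frac{n-s}{n}}$.
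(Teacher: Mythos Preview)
Your lower bound argument is correct and matches the paper's Step~1.

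The upper bound, however, has a genuine gap in the ``join-type family'' construction. To obtain a family on $\Omega$ with index $\geq\p+1$ from families $A_i\in\widetilde{\mathcal F}_{\p_i}^d(Q_i)$, you need more than a product: the product $\prod_i A_i$ with the diagonal $\Z_2$-action has index at most $\min_i(\p_i+1)$, not $\sum_i\p_i+1$, since each projection $\prod_j A_j\to A_i$ is odd. A true topological join does raise the index additively, but the natural realisation $[t_1,u_1,\dots,t_N,u_N]\mapsto\sum_i t_i\,(u_i|_{Q_i\cap\Omega})$ scales the functions by $t_i\in[0,1]$, and then $W(t_iu_i)$ is bounded away from zero on a set of positive measure whenever $t_i<1$; after dividing by $\ep^s$ and sending $\ep\to 0$, the potential energy blows up and you get no useful bound on $l_s^d(\p,\Omega)$. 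There is also the issue that for boundary cubes (those with $Q_i\not\subset\Omega$) the restriction $u_i\mapsto u_i|_{Q_i\cap\Omega}$ may send nonzero functions to zero, so the image of the join can contain $0$ and fail to lie in $\widetilde{\mathcal F}_\p^d(\Omega)$. The cross-interaction terms you flag as the main obstacle are in fact harmless: since $|u|\le1$ they are bounded by $\sum_i{\rm Per}_s^d(Q_i)\le C(h)/(1-s)$, which vanishes after dividing by $\p^{s/n}$ and sending $\p\to\infty$; the real difficulty is constructing the admissible family in the first place.

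The paper bypasses this entirely with a dual argument: rather than covering $\Omega$ by cubes, it packs disjoint similar copies of $\Omega$ inside the unit cube $C$ (this is possible for any bounded domain, cf.\ Lemma~3.5 in \cite{LMN}), and then applies the Lusternik--Schnirelman inequality \eqref{linq} with $C$ on the left and the copies of $\Omega$ on the right. This uses only the already-proved superadditivity (which relies on index \emph{subadditivity}, Lemma~\ref{indsub}, and requires no join construction), and after a further layer of cube-subdivision yields
\[
\tau(n,s)\ \geq\ |\Omega_1|\,\limsup_\p\tilde l^d_s(\p,\Omega)\ +\ (1-|\Omega_1|-\ep)\,\liminf_\p\tilde l^d_s(\p,\Omega),
\]
which combined with the lower bound from Step~1 gives $\limsup_\p\tilde l^d_s(\p,\Omega)\leq\tau(n,s)$.
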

\begin{proof}
By scaling, we can assume that ${\rm vol}(\Omega)=1$.

\noindent \textbf{Step 1.} We prove that $\liminf_{\p\to\infty}\tilde{l}_{s}^d(\p,\Omega)\geq\tau(n,s)$.

Fix $\delta>0$. We can find a family of $N$ similar cubes $C_i^*$ with volume $\delta_i$ inside $\Omega$ with pairwise disjoint interiors and with total volume $\sum_{i=1}^{N}|C_i^*|\geq (1-\delta)$. Applying Lemma \ref{LSineq},
    \begin{align}\label{i1in} \tilde{l}(\p,\Omega)\geq \sum_{i=1}^{N}|C_i^*|\tilde{l}(\lfloor \p\delta_i \rfloor,C)-\frac{\gamma(c)}{\p \min_i \delta_i}\, .
    \end{align}
Taking $\liminf$ on both sides, we find for every $\delta>0$ that
\begin{align*}
    \liminf_p\tilde{l}(\p,\Omega)\geq(1-\delta)\liminf_\p\tilde{l}(\p,C)=(1-\delta)\tau(n,s)\, ,
\end{align*}
which gives the conclusion after letting $\delta\to 0$.

\noindent \textbf{Step 2.} $\limsup_{\p\to\infty}\tilde{l}_{s}^d(\p,\Omega)\leq\tau(n,s)$.

We will need the following simple result, corresponding to \cite[Lemma 3.5]{LMN}:

\noindent \textbf{Claim.} We can find a family $\{\Omega_i\}_{i\in\N}$ of subsets of $C$ with disjoint interiors, all of them similar to $\Omega$, such that given $\ep>0$ there exists $N\in\N$ such that $\sum_{i=1}^N \textnormal{vol}(\Omega_i)\geq 1-\ep$.
\begin{proof}[Proof of the Claim:]

Let $\Omega_1$ be similar to $\Omega$ and contained in $C$, and write $v=\textnormal{vol}(\Omega_1)$. Using the notation and reasoning in \cite{LMN}, define $R_1$ to be the closure of $C\setminus\Omega_1$. We can find disjoint cubes $\{C_{i,1}\}_{i=1}^{Q_1}$ contained in $R_1$ and such that $\sum_{i=1}^{Q_1} \textnormal{vol}(C_{i,1})\geq \textnormal{vol}(R_1)/2$. Now, for each $i=1,...,Q_1$, we know that there exists a region $\Omega_{i,1}\subset C_{i,1}$ similar to $\Omega$ and with \textnormal{vol}ume $v\textnormal{vol}(C_{i,1})$, by correspondence with the case of $\Omega_1$ and $C$. Let $\Omega_2=\cup_{1\leq i\leq Q_1}\Omega_{i,1}$.

    The next step is to define $R_2=C\setminus(\Omega_1\cup\Omega_2)$. Once again, we can find disjoint cubes $\{C_{i,2}\}_{i=1}^{Q_2}$ contained in $R_2$ and such that $\sum_{i=1}^{Q_2} \textnormal{vol}(C_{i,2})\geq \textnormal{vol}(R_2)/2$, and regions $\Omega_{i,2}\subset C_{i,2}$ for each $i=1,...,Q_2$ with \textnormal{vol}ume $v\textnormal{vol}(C_{i,2})$. We can now define $\Omega_3=\cup_{1\leq i\leq Q_2}\Omega_{i,2}$, and repeat the procedure inductively.
    
    We claim that for every $\ep>0$ there exists an $N\in\N$ such that $\sum_{i=1}^N \textnormal{vol}(\Omega_i)\geq 1-\ep$. Indeed, by construction, $\sum_{i=1}^n \textnormal{vol}(\Omega_i)\geq \sum_{i=1}^{n-1} \textnormal{vol}(\Omega_i) + v(1-\sum_{i=1}^{n-1} \textnormal{vol}(\Omega_i))/2=\sum_{i=1}^{n-1} \textnormal{vol}(\Omega_i)(1-v/2)+v/2$. This shows that $\sum_{i=1}^n \textnormal{vol}(\Omega_i)\geq v/2\sum_{i=1}^n (1-v/2)^i$, and the right side goes to $1$ as $n\to \infty$.
\end{proof}

We proceed with the proof of Step 2. Consider the $\{\Omega_i\}_{i\in \N}$ from the claim above. Given $\ep>0$, select $N\in \N$ such that $\sum_{i=1}^N \textnormal{vol}(\Omega_i)\geq 1-\ep$. Moreover, given $\delta>0$, let $\{C_j\}_{j=1}^{N_\delta}$  be a maximal collection of $N_\delta$ cubes contained in $C$ with \textnormal{vol}ume $\delta$ and pairwise disjoint interiors; in particular, $\delta N_{\delta}\to 1$ as $\delta\to 0$. For each $C_j$, by correspondence with the case of $C$ we can find regions $\{\Omega_{i,j}\}_{i=1}^{N}$ inside $C_j$ with pairwise disjoint interiors, similar to $\Omega$ and such that $|\Omega_{i,j}|=\delta|\Omega_i|$.

    The Lusternik-Schinerlman inequality of Lemma \ref{LSineq} tells us then that, given $\p\in \N$ and $\p_i=\lfloor \p |\Omega_{i,j}| \rfloor=\lfloor \p \delta|\Omega_{i}| \rfloor$, we have
    \begin{align*}
        \tilde l^d(\p,C) &\geq \sum_{j=1}^{N_\delta}\sum_{i=1}^N |\Omega_{i,j}|\tilde l^d(\p_i,\Omega) - \frac{\gamma(\Omega)}{\p\delta v}= N_{\delta}\sum_{i=1}^N \delta|\Omega_{i}|\tilde l^d(\p_i,\Omega) - \frac{\gamma(\Omega)}{\p\delta v}\, .
    \end{align*}
    Let $\mathfrak q\in\N$; selecting $\delta(\p)=\frac{\mathfrak q}{\p|\Omega_1|}$, we have that $\p_i=\lfloor \mathfrak q \frac{|\Omega_{i}|}{|\Omega_{1}|} \rfloor$, and in particular $\p_1=\mathfrak q$. Therefore,
    \begin{align}\label{i2in}
        \tilde l^d(\p,C) &\geq \delta(\p) N_{\delta(\p)}\Big[|\Omega_{1}|\tilde l^d(\mathfrak q,\Omega)+\sum_{i=2}^N |\Omega_{i}|\tilde l^d(\Big\lfloor \mathfrak q \frac{|\Omega_{i}|}{|\Omega_{1}|} \Big\rfloor,\Omega)\Big] - \frac{|\Omega_1|\gamma(\Omega)}{\mathfrak q\delta v}\, .
    \end{align}
    Taking the limit in $\p$, since $\delta(\p) N_{\delta(\p)}\to 1$ we get that
    \begin{align*}
        \tau(n,s) \geq \Big[|\Omega_{1}|\tilde l^d(\mathfrak q,\Omega)+\sum_{i=2}^N |\Omega_{i}|\tilde l^d(\Big\lfloor \mathfrak q \frac{|\Omega_{i}|}{|\Omega_{1}|} \Big\rfloor,\Omega)\Big] - \frac{|\Omega_1|\gamma(\Omega)}{\mathfrak q\delta v}\, .
    \end{align*}
    Choosing now $\mathfrak q=\mathfrak q_k$ such that $\lim_{k\to\infty} \tilde l_{\mathfrak q_k}(\Omega)=\limsup_{\p\to\infty}\tilde l_\p(\Omega)$ and taking $\liminf_k$ in the inequality, we get that
    \begin{align*}
        \tau(n,s) &\geq \Big[|\Omega_{1}|\limsup_{\p\to\infty}\tilde l^d(\p,\Omega)+\liminf_{\p\to\infty}\tilde l^d(\p,\Omega)\sum_{i=2}^N |\Omega_{i}|\Big]\geq\Big[|\Omega_{1}|\limsup_{\p\to\infty}\tilde l^d(\p,\Omega)+\liminf_{\p\to\infty}\tilde l^d(\p,\Omega)(1-|\Omega_1|-\ep)\Big]\, .
    \end{align*}
    Finally, using Step 1 which stated that $\liminf_{\p\to\infty}\tilde l^d(\p,\Omega)\geq \tau(n,s)$ we find that
    \begin{align*}
        (|\Omega_1|+\ep)\tau(n,s) \geq |\Omega_{1}|\limsup_{\p\to\infty}\tilde l^d(\p,\Omega)\, ,
    \end{align*}
    which finishes the proof after letting $\ep\to 0$.
\end{proof}

\subsubsection{Closed manifold case -- Proof of Proposition \ref{prop:weyld}}

Let $(M,g)$ be a closed manifold. We will use the Weyl Law for Euclidean domains that we just proved to show the Weyl Law for closed manifolds, following as before the strategy in \cite{LMN}. For that, we will need to be able to compare min-max energies on a domain of a closed manifold with min-max energies on a Euclidean domain, for which the next simple lemma will prove useful.
\begin{lemma}\label{Lipineq}
Let $(M_1,g_1)$ and $(M_2,g_2)$ be Riemannian manifolds, and let $\Omega_1\subset M_1$ and $\Omega_2\subset M_2$ be subdomains with piecewise $\mathcal{C}^1$ boundary. Let $F:\Omega_1\to\Omega_2$ be a $(1+\delta)$-biLipschitz diffeomorphism, meaning that $\max\{{\rm Lip}(F),{\rm Lip}(F^{-1})\}\leq (1+\delta)$. Then, if $u\in H^{s/2,d}(\Omega_2)$, it holds that $u\circ F\in H^{s/2,d}(\Omega_1)$ and
$$
\mathcal E_{\ep,s}^d(u\circ F,\Omega_1)\leq (1+\delta)^{3n+s}\mathcal E_{\ep,s}^d(u,\Omega_2)\, .
$$
The Lipschitz constant ${\rm Lip}(F)$ of $F$ is computed using the ambient Riemannian distances of $M_1$ and $M_2$.
\end{lemma}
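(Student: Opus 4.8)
The plan is a direct change of variables in both terms of the functional $\mathcal E_{\ep,s}^d$, keeping careful track of the distortion constants introduced by $F$. I would first split $\mathcal E_{\ep,s}^d(u\circ F,\Omega_1)$ into its nonlocal (Sobolev) term $\frac14\iint_{\Omega_1\times\Omega_1}(u(F(p))-u(F(q)))^2\frac{\alpha_{n,s}}{\text{dist}_{M_1}^{n+s}(p,q)}\,dV_p\,dV_q$ and its potential term $\ep^{-s}\int_{\Omega_1}W(u\circ F)\,dV_p$, and estimate the two separately. The only preliminary observation needed is that, since ${\rm Lip}(F)\le 1+\delta$ and ${\rm Lip}(F^{-1})\le 1+\delta$ with respect to the ambient metrics, every singular value of the differential $DF_p\colon T_pM_1\to T_{F(p)}M_2$ lies in $[(1+\delta)^{-1},1+\delta]$; hence the Riemannian volume elements satisfy $dV_p\le (1+\delta)^n\,dV_{F(p)}$ (and similarly with $q$), and also $\text{dist}_{M_2}(F(p),F(q))\le (1+\delta)\,\text{dist}_{M_1}(p,q)$, so that $\text{dist}_{M_1}^{-(n+s)}(p,q)\le (1+\delta)^{n+s}\,\text{dist}_{M_2}^{-(n+s)}(F(p),F(q))$.

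For the Sobolev term I would substitute $p'=F(p)$, $q'=F(q)$. Since $F$ is a bijection one has the pointwise identity $u(F(p))-u(F(q))=u(p')-u(q')$; combining this with the distance estimate and the two volume-element estimates above, the integrand picks up a factor $(1+\delta)^{n+s}\cdot(1+\delta)^n\cdot(1+\delta)^n=(1+\delta)^{3n+s}$, so this term is bounded by $(1+\delta)^{3n+s}$ times the corresponding Sobolev term for $u$ on $\Omega_2$. For the potential term the same substitution uses only a single volume-element distortion, giving $\ep^{-s}\int_{\Omega_1}W(u\circ F)\,dV_p\le (1+\delta)^n\,\ep^{-s}\int_{\Omega_2}W(u)\,dV_{p'}$; since $\delta\ge 0$ we have $(1+\delta)^n\le (1+\delta)^{3n+s}$, so this term too is at most $(1+\delta)^{3n+s}$ times the potential term for $u$. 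Adding the two estimates yields $\mathcal E_{\ep,s}^d(u\circ F,\Omega_1)\le (1+\delta)^{3n+s}\mathcal E_{\ep,s}^d(u,\Omega_2)$, and in particular the right-hand side is finite whenever $u\in H^{s/2,d}(\Omega_2)$, so $u\circ F\in H^{s/2,d}(\Omega_1)$.

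There is no genuine obstacle here; the argument is bookkeeping. The two points that deserve a moment's care are: (i) all distances and differentials must be measured with the \emph{ambient} metrics of $M_1$ and $M_2$ — this is exactly why the statement specifies the ambient Riemannian distances, and it is what lets the biLipschitz hypothesis be applied verbatim to the distance kernel; and (ii) the biLipschitz bound on $F$ is what controls the singular values of $DF_p$ and hence the Jacobian of the change of variables, so the exponents $n$ (volume) and $n+s$ (kernel) assemble precisely into $3n+s$. Since $F$ is assumed to be a diffeomorphism, $DF$ exists everywhere and the change-of-variables formula is valid with no measure-theoretic caveats.
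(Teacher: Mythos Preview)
Your proof is correct and follows essentially the same route as the paper: split the energy into the Sobolev and potential terms, use the Lipschitz bound on $F$ to control the distance kernel (contributing $(1+\delta)^{n+s}$), change variables and bound the Jacobian of $F^{-1}$ by $(1+\delta)^n$ per volume element, and then take the larger of the two resulting constants. The only cosmetic difference is that you phrase the Jacobian bound via singular values of $DF_p$, whereas the paper writes $|JF^{-1}|\le\|DF^{-1}\|_\infty^n\le\mathrm{Lip}(F^{-1})^n$; these are equivalent observations.
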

\begin{proof}
By definition $\text{dist}_{M_2}(F(p),F(q)) \leq \text{Lip}(F)\text{dist}_{M_1}(p,q)$. Moreover, letting $JF^{-1}$ denote the Jacobian of the map $F^{-1}$, we have the bound
$$
|JF^{-1}|(p)\leq\|DF^{-1}\|_{\infty}^n\leq\text{Lip}(F^{-1})^n\, .
$$
Then, changing variables with $F$ and using the facts above, we can compute
{\small
    \begin{align*}
    \mathcal E_{\ep,s}^d(u\circ F&,\Omega_1)=\frac{1}{4}\iint_{\Omega_1\times \Omega_1}dp\,dq\, (u\circ F(p)-u\circ F(q))^2{\rm dist}_{M_1}(p,q)^{-(n+s)}+ \frac{1}{\ep^s}\int_{\Omega_1}dp\, W(u\circ F(p))\\
    &\leq \frac{{\rm Lip}(F)^{n+s}}{4}\iint_{\Omega_1\times \Omega_1}dp\,dq\, (u\circ F(p)-u\circ F(q))^2{\rm dist}_{M_2}(F(p),F(q))^{-(n+s)}+ \frac{1}{\ep^s}\int_{\Omega_1}dp\, W(u\circ F)\\
    &= \frac{{\rm Lip}(F)^{n+s}}{4}\iint_{\Omega_2\times \Omega_2}dp\,dq\, (u(p)-u(q))^2{\rm dist}_{M_2}(p,q)^{-(n+s)}|JF^{-1}(p)||JF^{-1}(q)|+\frac{1}{\ep^s}\int_{\Omega_2}dp\, W(u(p))|JF^{-1}(p)|\\
    &\leq \{{\rm Lip}(F)^{n+s}\|DF^{-1}\|_{\infty}^{2n},\|DF^{-1}\|_{\infty}^{n}\}\mathcal E_\ep(u,\Omega_2)\leq (1+\delta)^{3n+s}\mathcal E_{\ep,s}^d(u,\Omega_2)\, ,
\end{align*}
}
which concludes the proof.
\end{proof}

We show Proposition \ref{prop:weyld} in two comparisons. We start with the simpler one:
\begin{proposition}\label{infgeq2}
$\liminf_p\tilde{l}_{s}^d(\p,M)\geq\tau(n,s)\text{\textnormal{vol}}(M,g)^{\frac{n-s}{n}}$.
\end{proposition}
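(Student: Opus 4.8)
The plan is to mirror Step~1 of the proof of Proposition~\ref{weyleucd}, replacing the cubes packed inside a Euclidean domain by small geodesic balls packed inside $M$. Concretely, I would fix $\delta>0$, pack into $M$ finitely many pairwise disjoint geodesic balls, each $(1+\delta)$-biLipschitz to a Euclidean ball and together filling a fraction $(1-\delta)$ of $\textnormal{vol}(M,g)$, run the Lusternik--Schnirelman superadditivity argument on $M$ to split $c_{\ep,s}^d(\p,M)$ over these balls, transfer each piece to the corresponding Euclidean ball through the biLipschitz chart (losing a factor $(1+\delta)^{3n+s}$ by Lemma~\ref{Lipineq}), and finally apply the Euclidean Weyl Law of Proposition~\ref{weyleucd} on each ball before letting $\p\to\infty$ and then $\delta\to 0$.

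\textbf{The packing.} Since $M$ is compact there is $r_0=r_0(M,\delta)>0$ such that every geodesic ball $B_r(p)$ with $r\le r_0$ is strongly convex and such that the exponential chart $\exp_p\colon \B_r(0)\to B_r(p)$ is $(1+\delta)$-biLipschitz for the ambient distances: in normal coordinates $g_{ij}=\delta_{ij}+O(r^2)$, so for $r$ small the pulled-back metric is $(1+\delta)$-close to the Euclidean one, and strong convexity guarantees that $\textnormal{dist}_M$ restricted to $B_r(p)$ is exactly the intrinsic distance of the ball (and likewise $|\cdot|$ on $\B_r(0)$), which is what is needed to apply Lemma~\ref{Lipineq} to $F=\exp_p$ and to $F^{-1}$. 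By a Vitali covering argument --- iterating a maximal disjoint packing by such balls, exactly as in the Claim inside the proof of Proposition~\ref{weyleucd} --- I obtain finitely many pairwise disjoint balls $U_1=B_{r_1}(p_1),\dots,U_N=B_{r_N}(p_N)$ with $r_i\le r_0$ and $\sum_{i=1}^N |U_i|\ge (1-\delta)\,\textnormal{vol}(M,g)$.

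\textbf{Splitting and transfer.} Set $\p_i:=\lfloor \p\,|U_i|/\textnormal{vol}(M,g)\rfloor$, so $\sum_i \p_i\le \p$. The argument in Step~1 of the proof of Lemma~\ref{LSineq}, which uses only the subadditivity of the index (Lemma~\ref{indsub}) and the superadditivity $\mathcal E_{\ep,s}^d(u,M)\ge \sum_i \mathcal E_{\ep,s}^d(u,U_i)$ over the disjoint $U_i$, goes through verbatim and gives $c_{\ep,s}^d(\p,M)\ge \sum_{i=1}^N c_{\ep,s}^d(\p_i,U_i)$. For the transfer, observe that $u\mapsto u\circ\exp_{p_i}$ is a linear, odd bijection $H^{s/2,d}(U_i)\to H^{s/2,d}(\B_{r_i}(0))$ which, by Lemma~\ref{Lipineq} applied both to $\exp_{p_i}$ and to its inverse, is bi-Lipschitz between the two normed spaces; hence it preserves compactness, symmetry, finiteness of $\mathcal H^\p$ (a Lipschitz map does not increase $\mathcal H^\p$), the index (it is an odd homeomorphism), and the pointwise bound $|u|\le 1$, so it carries $\tilde{\mathcal F}_{\p_i}^d(U_i)$ bijectively onto $\tilde{\mathcal F}_{\p_i}^d(\B_{r_i}(0))$. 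Combining this with the energy comparison of Lemma~\ref{Lipineq} yields $c_{\ep,s}^d(\p_i,U_i)\ge (1+\delta)^{-(3n+s)}\,c_{\ep,s}^d(\p_i,\B_{r_i}(0))$, and letting $\ep\to 0$ gives the same inequality for $l_s^d$.

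\textbf{Conclusion.} Multiplying $l_s^d(\p,M)\ge (1+\delta)^{-(3n+s)}\sum_i l_s^d(\p_i,\B_{r_i}(0))$ by $\p^{-s/n}$ and writing $\p^{-s/n}l_s^d(\p_i,\B_{r_i}(0))=(\p_i/\p)^{s/n}\,\tilde l_s^d(\p_i,\B_{r_i}(0))$, I let $\p\to\infty$: since $N$ is fixed, $\p_i\to\infty$ and $\p_i/\p\to |U_i|/\textnormal{vol}(M,g)$, while Proposition~\ref{weyleucd} gives $\tilde l_s^d(\p_i,\B_{r_i}(0))\to \tau(n,s)\,|\B_{r_i}(0)|^{(n-s)/n}$. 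Using $|\B_{r_i}(0)|\ge (1+\delta)^{-n}|U_i|$ (again biLipschitz) and $\sum_i |U_i|^{s/n}|U_i|^{(n-s)/n}=\sum_i |U_i|\ge (1-\delta)\,\textnormal{vol}(M,g)$, one obtains
\[
\liminf_{\p\to\infty}\tilde l_s^d(\p,M)\ \ge\ (1+\delta)^{-4n}(1-\delta)\,\tau(n,s)\,\textnormal{vol}(M,g)^{(n-s)/n},
\]
and letting $\delta\to 0$ finishes the argument. I expect no genuine obstacle here --- in line with the remark that this is the easier of the two comparisons --- the only points that require care being the verification that the chart pullback respects the entire admissibility class $\tilde{\mathcal F}^d_\p$, and the use of strong convexity to reconcile $\textnormal{dist}_M$ on a small ball with its intrinsic distance so that Lemma~\ref{Lipineq} genuinely applies.
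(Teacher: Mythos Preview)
Your proposal is correct and follows essentially the same route as the paper: pack $M$ by small disjoint geodesic balls, apply the superadditivity of Step~1 in Lemma~\ref{LSineq}, transfer each ball to a Euclidean one via Lemma~\ref{Lipineq}, invoke the Euclidean Weyl Law (Proposition~\ref{weyleucd}), and send $\delta\to 0$. Your additional remarks on strong convexity and on the preservation of the admissibility class $\tilde{\mathcal F}^d_\p$ under the Lipschitz chart are valid refinements that the paper leaves implicit.
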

\begin{proof}
    Without loss of generality we can assume (by scaling) that $\textnormal{vol}(M,g)=1$. Let $\delta>0$, and select disjoint geodesic balls $B_i$ inside $M$ of radius $r_i\leq \bar r$ very small such that $\sum \textnormal{vol}(B_i)\geq \textnormal{vol}(M)/(1+\delta)=1/(1+\delta)$. We know that $c_\ep^d(\p,M)\geq\sum c_\ep^d(\p_i,B_i)$ with $\p_i=\lfloor \p\textnormal{vol}(B_i) \rfloor$, by Step $1$ in the proof of Lemma \ref{LSineq}. Consider now normal coordinates centered at the centers of the $B_i$, with associated diffeomorphisms $F_i:\B_i\to B_i$ going from Euclidean balls $\B_i$ to the geodesic balls $B_i$. Then, thanks to the compactness of $M$ we have that
    $$
    1-\delta(\bar r)\leq{\rm Lip}(F_i)\leq 1+\delta(\bar r)\,,
    $$
    where $\delta(\bar r)\to 0$ as $\bar r\to 0$.
    Then Lemma \ref{Lipineq} immediately gives (after taking min-max values) that, denoting by $\B$ the Euclidean ball of \textnormal{vol}ume $1$,  
    $$
    c_\ep^d(\p_i,B_i)\geq (1+\delta(\bar r))^{-(3n+s)}c_{\ep}^d(\p_i,\B_i)=(1+\delta(\bar r))^{-(3n+s)}|\B_i|^{\frac{n-s}{n}}c_{\ep|\B_i|^{-\frac{1}{n}}}{1+\delta(\bar r)}^d(\p_i,\B)\, ,
    $$
    thus
    $$
    c_\ep^d(\p,M)\geq\sum_i c_\ep^d(\p_i,B_i)\geq(1+\delta(\bar r))^{-(3n+s)}\sum_i|\B_i|^{\frac{n-s}{n}}c_{\ep|\B_i|^{-\frac{1}{n}}}^d(\p_i,\B)\, .
    $$
    Multiplying by $\p^{-s/n}$ and manipulating a bit as in the proof of Lemma \ref{LSineq},
    \begin{align*}
        \p^{-\frac{s}{n}}c_\ep^d(\p,M)&\geq(1+\delta(\bar r))^{-(3n+s)}\sum_i \p^{-\frac{s}{n}}|\B_i|^{\frac{n-s}{n}}c_{\ep|\B_i|^{-\frac{1}{n}}}^d(\p_i,\B)\\
        &=(1+\delta(\bar r))^{-(3n+s)}\sum_i |\B_i|(\frac{\p_i}{\p|\B_i|})^{\frac{s}{n}}\p_i^{-s/n} c_{\ep|\B_i|^{-\frac{1}{n}}}^d(\p_i,\B)\\
        &\geq (1+\delta(\bar r))^{-(3n+s)}\sum_i |B_i|(\frac{|B_i|}{|\B_i|}-\frac{1}{\p|\B_i|})^{\frac{s}{n}}\p_i^{-s/n} c_{\ep|\B_i|^{-\frac{1}{n}}}^d(\p_i,\B)\, .
    \end{align*}
    Letting $\ep\to 0$, we arrive at
    \begin{align}\label{i3in}
        \tilde l_{s}^d(\p,M)
        &\geq (1+\delta(\bar r))^{-(3n+s)}\sum_i |B_i|(\frac{|B_i|}{|\B_i|}-\frac{1}{\p|\B_i|})^{\frac{s}{n}}\tilde l_{s}^d(\p_i,\B)\, .
    \end{align}
    Taking then $\liminf$ in $\p$ on both sides, and using the Weyl Law for the Euclidean domain $\B$ (which is a ball with volume $1$), we get that
    \begin{align*}
        \liminf_\p \tilde l_{s}^d(\p,M)
        &\geq (1+\delta(\bar r))^{-(3n+s)}\sum_i |B_i|(\frac{|B_i|}{|\B_i|})^{\frac{s}{n}}\tau(n,s)
    \end{align*}
    Using that ${\rm Lip}(F_i^{-1})\leq 1+\delta(\bar r)$ and $\sum_i |B_i|\geq 1/(1+\delta)$, we find that
    \begin{align*}
        \liminf_\p \tilde l_{s}^d(\p,M)&\geq(1+\delta(\bar r))^{-(3n+s)}(1+\delta(\bar r))^{-s}\sum_i |B_i|\tau(n,s)\geq (1+\delta(\bar r))^{-(3n+2s)}(1+\delta)^{-1}\tau(n,s)\, .
    \end{align*}
    Letting $\delta\to 0$ and $\bar r\to 0$, we conclude the result.
\end{proof}
We now prove the opposite inequality:
\begin{proposition}\label{supleq2}
$\limsup_\p\tilde{l}_{s}^d(\p,M)\leq\tau(n,s)\text{\textnormal{vol}}(M,g)^{\frac{n-s}{n}}$.
\end{proposition}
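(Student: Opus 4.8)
\emph{Plan.} By scaling we may assume $\textnormal{vol}(M,g)=1$, and we fix $\delta>0$. Following \cite{LMN}, I would compare $M$ with a finite family of Euclidean domains, for which the Weyl Law is already available by Proposition \ref{weyleucd}. Using a sufficiently fine smooth triangulation of $M$ together with normal-coordinate charts (cf. Remark \ref{fbsvdg}), write $M$, up to a null set, as a disjoint union of open sets $U_1,\dots,U_m$ with $U_j=\varphi_j(\Omega_j)$ for a Euclidean simplex $\Omega_j\subset\R^n$ and a $(1+\delta)$-biLipschitz diffeomorphism $\varphi_j\colon\Omega_j\to U_j$; set $V:=\sum_j\textnormal{vol}(\Omega_j)\le(1+\delta)^n$. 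The goal is the ``superadditivity up to a $\p$-independent error''
\begin{equation}\label{eq:glueUM}
  c_{\ep,s}^d(\p,M)\le\sum_{j=1}^m c_{\ep,s}^d(\p_j,U_j)+C_0,
\end{equation}
valid for every $\ep>0$, every $\p$, and every choice of nonnegative integers $\p_j$ with $\sum_j\p_j=\p-m+1$, where $C_0=C_0(M,\{U_j\},s)$ is independent of $\ep$ and of $\p$. Granting \eqref{eq:glueUM}, apply Lemma \ref{Lipineq} to the maps $\varphi_j^{-1}$ to get $c_{\ep,s}^d(\p_j,U_j)\le(1+\delta)^{3n+s}c_{\ep,s}^d(\p_j,\Omega_j)$, let $\ep\to0$, divide by $\p^{s/n}$, and choose $\p_j\approx(\p-m+1)\,\textnormal{vol}(\Omega_j)/V$, so that $\p_j\to\infty$ and $\p_j/\p\to\mu_j:=\textnormal{vol}(\Omega_j)/V$. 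Proposition \ref{weyleucd} then gives
\[
  \limsup_{\p\to\infty}\tilde l_s^d(\p,M)\le(1+\delta)^{3n+s}\,\tau(n,s)\sum_{j=1}^m\mu_j^{\,s/n}\textnormal{vol}(\Omega_j)^{\frac{n-s}{n}}=(1+\delta)^{3n+s}\,\tau(n,s)\,V^{\frac{n-s}{n}}\le(1+\delta)^{4n}\,\tau(n,s),
\]
and letting $\delta\to0$ concludes, since $\textnormal{vol}(M,g)=1$.

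\emph{The gluing estimate.} This is the heart of the argument, and where I expect the main difficulty. Given admissible families $\bar A_j\in\tilde{\mathcal F}^d_{\p_j}(U_j)$, one builds $A\in\tilde{\mathcal F}^d_{\p}(M)$, $\p=\sum_j\p_j+m-1$, as a $\Z_2$-equivariant join of the $\bar A_j$ realised on $M$: each $v_j\in\bar A_j$ is extended off $U_j$ by the constant $+1$, and the join coordinate $r=(r_1,\dots,r_m)$ ($r_j\ge0$, $\sum_j r_j^2=1$) is used so that, once $r_j$ drops below a fixed threshold, $u|_{U_j}$ is ``filled'' to the constant $+1$ through the parallel-slab sweep $w_t=\max\!\bigl(v_j,\eta_\sigma(x_n^{(j)}-t)\bigr)$, where $x_n^{(j)}$ is a linear coordinate on $U_j$ from $\varphi_j$ and $\eta_\sigma$ is a mollified sign function; antipodal parameters use the corresponding $\min$ with $-v_j$, keeping $A$ symmetric. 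The energy bookkeeping on each $U_j$ is: (i) the pointwise inequality $(\max(f,g)(p)-\max(f,g)(q))^2+(\min(f,g)(p)-\min(f,g)(q))^2\le(f(p)-f(q))^2+(g(p)-g(q))^2$ gives $[u|_{U_j}]^2_{H^{s/2,d}(U_j)}\le[v_j]^2_{H^{s/2,d}(U_j)}+[\eta_\sigma(x_n^{(j)}-t)]^2_{H^{s/2,d}(U_j)}$, and the last term is $\le\tfrac{C(U_j,s)}{1-s}$ uniformly in $\sigma,t$ by the interpolation bound of Lemma \ref{lem:interpropRn} (valid for the distance kernel by the same proof), since the slab profile has $BV$-norm $\le C(U_j)$ independently of $\sigma$; (ii) from $W(\max(f,g))\le W(f)+W(g)$ and the fact that $\eta_\sigma(x_n^{(j)}-t)$ is $\pm1$ outside a slab of volume $O_{U_j}(\sigma)$, one has $\ep^{-s}\!\int_{U_j}W(u|_{U_j})\le\ep^{-s}\!\int_{U_j}W(v_j)+\ep^{-s}C(U_j)\sigma$, the last term made $\le1$ by choosing $\sigma=\sigma(\ep)$ small (the family $A$ is allowed to depend on $\ep$ and $\p$); (iii) the cross interactions are bounded by $\tfrac14\sum_{j\ne k}\iint_{U_j\times U_k}(u(p)-u(q))^2\tfrac{\alpha_{n,s}}{\textnormal{dist}_M^{n+s}(p,q)}\,dV_p\,dV_q\le\tfrac12\sum_j\textnormal{Per}^d_s(U_j;M)$, which is finite and independent of $\ep$ and $\p$ because each $U_j$ has piecewise smooth boundary (the interpolation inequality \eqref{interpropM}, again valid for the distance kernel). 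Summing over $j$, $\sup_A\mathcal E_{\ep,s}^d(\cdot,M)\le\sum_j\sup_{\bar A_j}\mathcal E_{\ep,s}^d(\cdot,U_j)+C_0$ with $C_0$ purely geometric; taking the infimum over each $\bar A_j$ separately yields \eqref{eq:glueUM}.

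\emph{Admissibility and the obstacle.} To see $A\in\tilde{\mathcal F}^d_{\p}(M)$: $|u|\le1$ on $A$ by construction; $\mathcal H^{\p}(A)<\infty$ since $A$ is a Lipschitz image of a $\p$-dimensional complex built from the $\bar A_j$ (each $\mathcal H^{\p_j}$-finite) and the simplices of the join; and $\textnormal{ind}(A)\ge\sum_j\textnormal{ind}(\bar A_j)\ge\sum_j(\p_j+1)=\p+1$ by the superadditivity of a topological index under joins (with the normalisation $\textnormal{ind}(\Sp^k)=k+1$ used in this paper) --- a standard property, which can also be obtained by induction on $m$ from the two-piece case. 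The principal difficulty is making this join rigorous: one must (a) choose the fill compatibly with the free $\Z_2$-action so that $A=-A$; (b) make the map from the join parameter space into $H^{s/2,d}(M)$ continuous --- hence Lipschitz after the usual smoothing, so that the index and Hausdorff-measure conditions hold --- which requires care on the faces of the join where several $r_j$ vanish simultaneously; and (c) keep each error term independent of $\p$, the only delicate point being the potential cost of the fill, handled by the freedom in $\sigma$ above. As in the join-of-sweepouts arguments of \cite{LMN}, it is cleanest to carry (a)--(c) out first for $m=2$ and then iterate.
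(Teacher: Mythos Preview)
Your strategy diverges from the paper's in an essential way. The paper does \emph{not} join separate families from the pieces $U_j$; instead it assembles the Euclidean pieces $\mathcal C_j$ into a \emph{single connected} Euclidean domain $\Omega=\bigcup_j\mathcal C_j\cup\bigcup T_j$ (the $T_j$ being thin tubes), takes one admissible $A\in\tilde{\mathcal F}_\p^d(\Omega)$, and pushes each $u\in A$ to $M$ piecewise via $U|_{C_j}=u\circ F_j$. Since this push is a linear odd map with trivial kernel, the index is automatically preserved, and the only analytic costs are the cross terms $\sum_j\mathrm{Per}_s^d(C_j)$ and a Lipschitz/Hausdorff regularisation (the paper's Step~4). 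This completely avoids any join.

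Your join construction, as written, does not produce a $\Z_2$-equivariant map, and this is a genuine obstruction rather than a technicality. At a face $\{r_j=0\}$ of the join simplex the point is independent of $v_j$, so the image on $U_j$ must also be independent of $v_j$; at the same time, equivariance under $(v_1,\dots,v_m)\mapsto(-v_1,\dots,-v_m)$ forces that value to flip sign. The only constant compatible with both constraints is $0$, whose potential energy $\ep^{-s}\!\int_{U_j}W(0)$ blows up as $\ep\to0$ and destroys the $\p$-independent bound $C_0$ you need. Your sentence ``antipodal parameters use the corresponding $\min$ with $-v_j$'' attempts to make the fill odd in $v_j$, but then the endpoint of the fill on $U_j$ is $+1$ or $-1$ according to a ``sign'' of $v_j$; a continuous odd choice of such a sign on $\bar A_j$ would be an odd map $\bar A_j\to\R\setminus\{0\}$, forcing $\mathrm{ind}(\bar A_j)\le1$ and contradicting $\mathrm{ind}(\bar A_j)\ge\p_j+1$ whenever $\p_j\ge1$. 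In short, in the Allen--Cahn setting there is no equivariant ``basepoint'' of bounded energy to which the fill can collapse, unlike in the flat-cycle setting where the zero cycle plays this role.

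Your energy bookkeeping (i)--(iii) is correct once a well-defined equivariant map exists, and the Lipschitz/Hausdorff issues you flag are the same ones the paper handles in its Step~4. But to close the argument you should replace the join by the paper's device: connect the $\Omega_j$ by tubes into one domain, apply Proposition~\ref{weyleucd} there, and push a single family to $M$ through the charts. This is both simpler and avoids the equivariance obstruction entirely.
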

We will need the following auxiliary result.
\begin{lemma}\label{LipDecomp}
    Let $N$ be a compact manifold, possibly with boundary. There exists a decomposition $\{C_i\}_i$ of $N$ into connected Lipschitz sets such that the $C_i$ intersect only at the boundaries and for each $i$ there exists a $(1+\delta)$-biLipschitz equivalence $F_i: C_i \to \mathcal{C}_i$ which maps $C_i$ to a Euclidean domain $\mathcal{C}_i\subset\R^n$. More precisely, the $F_i$ are given by the restriction of geodesic normal coordinate maps to the $C_i$.
\end{lemma}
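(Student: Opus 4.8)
The plan is to first upgrade the pointwise fact that geodesic normal coordinates are close to a Euclidean isometry near the basepoint into a \emph{uniform} statement over $N$, and then to cut $N$ into pieces small enough that each one lies inside a single such chart. For the uniform statement, recall that in geodesic normal coordinates centered at an interior point $p\in N$ one has $g_{ij}(x)=\delta_{ij}+O(|x|^2)$, with the error controlled by the sectional curvature, and that near a boundary point one uses Fermi (boundary-normal) coordinates, in which $g=I+O(|x|)$. Since $N$ is compact, its injectivity radius is bounded below and its curvature (together with the second fundamental form of $\partial N$) is bounded, so these expansions hold with uniform constants. Hence, given $\delta>0$, there exists $r_0=r_0(N,\delta)>0$ such that for every $p\in N$ the normal chart $\varphi_p$ (geodesic normal if $p$ is interior, Fermi if $p\in\partial N$) is defined on the ball, resp.\ half-ball, of radius $r_0$ about $p$ and, since the Riemannian length of any curve there is distorted by a factor in $[1-\delta,1+\delta]$ and minimizing paths between nearby points stay inside a slightly larger ball, $\varphi_p$ is $(1+\delta)$-biLipschitz from $B_{r_0}(p)$ onto its image --- the distances being the ambient Riemannian distance of $N$ and the Euclidean distance on $\R^n$.

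Next I would fix a smooth triangulation $\mathcal{T}$ of $N$ compatible with $\partial N$ (whose existence is the classical smooth triangulation theorem for compact smooth manifolds), and apply barycentric subdivision repeatedly: by the Lebesgue number lemma applied to the open cover $\{B_{r_0/2}(p)\}_{p\in N}$, after finitely many subdivisions every closed top-dimensional simplex of the resulting triangulation has diameter $<r_0/2$, hence is contained in some $B_{r_0}(p_j)$. Let $\{C_i\}_i$ be the (finite) set of top-dimensional closed simplices of this subdivided triangulation; for each $i$ choose such a center $p_{j(i)}$ and set $F_i:=\varphi_{p_{j(i)}}|_{C_i}$ and $\mathcal{C}_i:=F_i(C_i)\subset\R^n$.

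The remaining verifications are routine. Each $C_i$ is the image of the standard $n$-simplex under a smooth embedding, hence a connected Lipschitz set with piecewise smooth boundary, and $\mathcal{C}_i$ is its image under a diffeomorphism, so it too is (the closure of) a bounded Lipschitz domain with piecewise smooth boundary in $\R^n$. The map $F_i:C_i\to\mathcal{C}_i$ is $(1+\delta)$-biLipschitz, being the restriction of the $(1+\delta)$-biLipschitz map $\varphi_{p_{j(i)}}$. Two distinct simplices of a triangulation meet only along a common lower-dimensional face, so the $C_i$ have pairwise disjoint interiors, intersect only within their boundaries, and cover $N$. This is the asserted decomposition.

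The argument is essentially standard, and the only points needing a little care are (i) passing from the pointwise estimate $g_{ij}=\delta_{ij}+O(|x|^2)$ to a biLipschitz bound at a scale $r_0$ independent of the basepoint, which is exactly where compactness of $N$ is used, and (ii) the treatment of $\partial N$, where geodesic normal coordinates centered at a boundary point would leave $N$, so one must use Fermi coordinates and a triangulation adapted to $\partial N$. If one prefers to avoid triangulations, one can instead carve $N$ inductively: order the finitely many charts, and in the $j$-th chart pull back a sufficiently fine cubical grid, keeping those closed cubes whose preimages lie in the part of $N$ not already assigned to an earlier chart; this produces Lipschitz pieces with piecewise smooth boundary in each chart, exactly as in the Euclidean construction used by Liokumovich--Marques--Neves \cite{LMN}, and yields the same conclusion.
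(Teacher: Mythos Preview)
Your proof is correct but takes a different route from the paper. The paper does not triangulate; instead it covers $N$ by small metric balls $B_{r/2}(p_1),\dots,B_{r/2}(p_d)$ (possibly centered at boundary points), sets $\widetilde C_1=B_r(p_1)$, and inductively defines $\widetilde C_{k+1}=B_{r_{k+1}}(p_{k+1})\setminus\bigcup_{i\le k}\widetilde C_i$, where $r_{k+1}\in[r/2,r]$ is chosen so that $\partial B_{r_{k+1}}$ intersects the already-constructed boundaries transversally; the $C_i$ are then the connected components of the $\widetilde C_k$, and the $F_i$ are restrictions of the normal coordinate charts of the balls $B_r(p_k)$. Your triangulation argument is equally valid and conceptually clean---one imports a single classical theorem and the pieces are automatically Lipschitz simplices with piecewise smooth boundary---whereas the paper's ball-carving is more elementary in that it avoids the smooth triangulation theorem entirely and treats interior and boundary points uniformly via metric balls, at the cost of needing the transversality tweak of the radii. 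Your closing alternative sketch (inductive carving via cubical grids in successive charts) is close in spirit to what the paper actually does, except that the paper uses balls and set differences rather than pulled-back cubes.
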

\begin{proof}[Proof of the lemma]
This simple result is proved in \cite{LMN}. One first covers $N$ with small metric balls (possibly centered at boundary points) $B_{r/2}(p_1),...,B_{r/2}(p_d)$. One then constructs the $C_i$ as follows: Put $ \widetilde C_1=B_r(p_1)$, and inductively set $\widetilde C_{k+1}=B_{r_{k+1}}(p_{k+1})\setminus\cup_{i=1}^k \widetilde C_k$, where $r_{k+1}\in[r/2,r]$ is selected so that $\partial B_{r_{k+1}}$ intersects the boundaries of $ \widetilde C_1,...,\widetilde C_k$ transversally. Finally, one defines the collection $\{C_i\}_i$ by considering the connected components of the $\widetilde C_i$. If $r$ is small enough, the normal coordinate maps of the balls $B_{r}(p_1),...,B_{r}(p_d)$ are biLipschitz equivalences with constants close to $1$, thus their restrictions to the $C_i$ act as the $F_i$ we are looking for.
\end{proof}
\begin{proof}[Proof of Proposition \ref{supleq2}]
The proof is split into four steps.

\noindent \textbf{Step 1.}
    We consider a decomposition $\{C_i\}_i$ of $M$ as in the Lemma, with associated $(1+\delta)$-biLipschitz equivalences $F_i:C_i\to\mathcal{C}_i\subset\R^n$. By translating the $\mathcal C_i$ if necessary, we can assume that they are pairwise disjoint; this gives rise to a Euclidean domain $\widetilde\Omega=\bigcup \mathcal C_i\subset\R^n$. Construct then the connected Euclidean domain $\Omega=\bigcup_i \mathcal C_i\cup\bigcup_j T_j$, where the $T_j$ are small tubes connecting the $\mathcal C_i$ and with total volume less than $\delta \textnormal{vol}(M)$, so that 
    \begin{align}
    {\rm vol}(\Omega)&\leq {\rm vol}(\widetilde\Omega)+\delta {\rm vol}(M)=\sum_i {\rm vol}(\mathcal C_i)+\delta {\rm vol}(M)\nonumber\\
    &\leq (1+\delta)^n\sum_i{\rm vol}(C_i)+\delta {\rm vol}(M)\leq(1+2\delta)^n{\rm vol}(M).\label{volglue}
    \end{align}
    To prove the Proposition, given an admissible set $A$ in $\tilde{\mathcal{F}}_\p^d(\Omega)$ we need to construct an admissible set $A'$ in $\tilde{\mathcal{F}}_\p^d(M)$ with energy not surpassing that of $A$ by much. We just need an equivariant assignation from functions on $\Omega$ to functions on $M$ which doesn't increase energy by more than a little.
    
    Let, then, $u\in H^{s/2,d}(\Omega)$ with $|u|\leq 1$; it suffices to consider functions with absolute value at most one by \eqref{minmaxvald}-\eqref{deflpsd}. Thanks to our functional setting, we can obtain our desired function $U$ on $H^{s/2,d}(M)$ simply by composition:
    Given $i$, define the function $U\big|_{C_i}$ on $C_i$ by $U\big|_{C_i}=u\circ F_i$ . Since $M=\cup_i \mathcal{C}_i$ and the $\mathcal{C}_i$ intersect only on a set of zero Lebesgue measure, we get a well-defined function $U\in L^2(M)$. Let $f:H^{s/2,d}(\Omega)\to H^{s/2,d}(M)$ be defined by $f(u):=U$. Then, given $A\in\tilde{\mathcal{F}}_\p^d(\Omega)$, we set $A'=f(A)$. All of the properties in \eqref{cohom} are easily seen to be satisfied by $A'$ if they are by $A$, with the exception of the Hausdorff measure condition in \eqref{cohom}. In fact, $A'$ might not be in $\tilde{\mathcal{F}}_\p^d(M)$, but we can find a set $A'_\delta\in \tilde{\mathcal{F}}_\p^d(M)$ which is arbitrarily close to $A'$ in terms of Allen--Cahn energies, which allows us to argue as if $A'\in\tilde{\mathcal{F}}_\p^d(M)$ without loss of generality. The set $A'_\delta$ is constructed by regularisation as in the proof of Theorem \ref{mmbounds}, using a cutoff depending on the distance to the boundaries of the $C_i$; we postpone this construction to the last step of the proof.
    
    \noindent \textbf{Step 2.}
    Our job now is to compare the energy of $U$ with the energy of $u$. First, we bound the contributions of the interactions between different pieces, using that $|U|\leq 1$:
    \begin{align*}
    \mathcal E_\ep^d(U,M)&=\sum_i \mathcal E_\ep^d(U,C_i) + \sum_{i\neq j} \iint_{C_i\times C_j}(U(p)-U(q))^2\text{dist}_M(p,q)^{-(n+s)}\,dp\,dq\\
    &\leq \sum_i \mathcal E_\ep^d(U,C_i) + \sum_{i\neq j} \iint_{C_i\times C_j}4\text{dist}_{M}(p,q)^{-(n+s)}\,dp\,dq\\
    &\leq \sum_i \mathcal E_\ep^d(U,C_i) + \sum_{i\neq j} \iint_{C_i\times(M\setminus C_i)}4\text{dist}_M(p,q)^{-(n+s)}\,dp\,dq\\
    &= \sum_i \mathcal E_\ep^d(U,C_i) + c\sum_{i,j} \text{Per}_s^d(C_i) \,.
    \end{align*}
     Here the fractional perimeter $\text{Per}_s^d$ is of course understood as being defined with $\frac{\alpha_{n,s}}{\text{dist}_M^{n+s}}$ as the associated kernel. Since the $C_i$ are Lipschitz domains, which in particular have bounded classical perimeter, we have that
     \begin{equation}\label{lipdper}
        {\rm Per}_s^d( C_i)\leq \frac{C}{1-s}{\rm Per}(C_i)^s<\infty\,,
    \end{equation}
    see \eqref{interpropM}.
    
    Now, by Lemma \ref{Lipineq} we can bound $
    \mathcal E_\ep^d(U,C_i)\leq (1+\delta)^{3n+s} \mathcal E_\ep^d(u,C_i)
    $. Substituting, we obtain that
    \begin{align}
    \mathcal E_\ep^d(U,M)&\leq (1+\delta)^{3n+s}\sum_i \mathcal E_\ep^d(u,C_i) + c\sum_{i,j} \text{Per}_s^d(C_i)\nonumber\\
    &\leq (1+\delta)^{3n+s}\mathcal E_\ep^d(u,\Omega)+ c\sum_{i,j} \text{Per}_s^d(C_i)\, .\label{Uucin}
    \end{align}
    In particular, this shows that $U\in H^{s/2,d}(M)$.
    
    \noindent \textbf{Step 3.}
    Let $\p\in\N$. Taking min-max values in \eqref{Uucin}, letting $\ep\to 0$, and dividing by $\p^{s/n}$, we arrive at
    \begin{equation}\label{i4in}
    \tilde{l}_{s}^d(\p,M)\leq (1+\delta)^{3n+s}\tilde{l}_{s}^d(\p,\Omega)+\frac{c}{\p^{s/n}}\sum_{i,j} \text{Per}_s^d(C_i)\, .
    \end{equation}
    Taking $\limsup$ in $\p$ on both sides, using the Weyl Law of Proposition \eqref{weyleucd} for the Euclidean domain $\Omega$, and using \eqref{volglue}, we get that
    $$
    \limsup_\p \tilde l_{s}^d(\p,M)\leq (1+\delta)^{3n+s}\tau(n,s)\text{\textnormal{vol}}(\Omega)^\frac{n-s}{n} \leq (1+\delta)^{3n+s}(1+2\delta)^{n-s}\tau(n,s)\text{\textnormal{vol}}(M,g)^\frac{n-s}{n}.
    $$
    We conclude by letting $\delta\to 0$.
    
\noindent \textbf{Step 4.} To conclude, it only remains to deal with the technical Hausdorff measure issue from Step 1. We give the necessary modifications in Appendix \ref{app:hausdorffmod}.
\end{proof}
We can now conclude:
\begin{proof}[Proof of Proposition \ref{prop:weyld}]
    It follows from the combination of Propositions \ref{infgeq2} and \ref{supleq2}.
\end{proof}

\subsection{Equivalence with the canonical Sobolev kernel -- Proof of Theorem \ref{thmweyl}}
In this subsection we show that using the canonical kernel $K_{M,s}(p,q)$ for the Sobolev energy $H^{s/2}(M)$ we obtain exactly the same nonlocal Weyl Law. We use the same notation as in the previous sections, so that $l_{s}(\p,M)$ denotes the limit in \eqref{deflps} and which uses the kernel $K_{M,s}(p,q)$ in the definition \eqref{ACenergy} of the Allen--Cahn energy, and likewise for $l_{s}^{d}(\p,M)$ and the associated kernel $\frac{\alpha_{n,s}}{\text{dist}_{M}^{n+s}(p,q)}$.

We will need two preliminary results.

\begin{proposition}\label{kermfdconv}
    Let $M$ be a closed manifold, and let $s_0\in(0,1)$. Then, there exists $C=C(M,s_0)$ such that if $s\in(s_0,1)$, then 
$$\Big|K_{M,s}(p,q)-\frac{\alpha_{n,s}}{{\rm dist}_M(p,q)^{n+s}}\Big|\leq \frac{C(M,s_0)}{{\rm dist}_M(p,q)^{n+s-1}}\, .$$
In other words, the difference between the kernels in the left hand side is of lower order.
\end{proposition}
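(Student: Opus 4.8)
The plan is to read the estimate straight off the local kernel bounds of Proposition \ref{prop:kern1}, used in geodesic normal coordinates, after splitting into a near-diagonal and a far regime. Fix, using Remark \ref{fbsvdg}, a radius $R_0=R_0(M)>0$ with $R_0<\inj(M)$ such that ${\rm FA}_2(M,g,R_0,p,\exp_p)$ holds for every $p\in M$, where $\exp_p\colon\B_{R_0}(0)\subset T_pM\cong\R^n\to M$ is the exponential map. We then handle ${\rm dist}_M(p,q)<R_0/4$ and ${\rm dist}_M(p,q)\ge R_0/4$ separately.

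\textbf{Near-diagonal regime.} Suppose ${\rm dist}_M(p,q)<R_0/4$ and set $z:=\exp_p^{-1}(q)\in\B_{R_0/4}(0)$, so that in the notation of Proposition \ref{prop:kern1} (taking $\varphi=\exp_p$ and $x=0$) one has $k(0,z)=K_s(\exp_p(0),\exp_p(z))=K_{M,s}(p,q)$. Because $\exp_p$ are normal coordinates, $g_{ij}(0)=\delta_{ij}$, hence $A(0)$ is the identity, $\det(A(0))=1$ and $|A(0)z|=|z|$; moreover $|z|={\rm dist}_M(p,q)$ since radial lines are unit-speed geodesics. Thus the ``Euclidean model'' term in Proposition \ref{prop:kern1} is exactly $\alpha_{n,s}\det(A(0))/|A(0)z|^{n+s}=\alpha_{n,s}\,{\rm dist}_M(p,q)^{-(n+s)}$, so $\widehat k(0,z)=K_{M,s}(p,q)-\alpha_{n,s}\,{\rm dist}_M(p,q)^{-(n+s)}$, and estimate \eqref{remaining0} gives
\begin{equation*}
    \Big|K_{M,s}(p,q)-\frac{\alpha_{n,s}}{{\rm dist}_M(p,q)^{n+s}}\Big|\le\frac{C(n,s)}{R_0}\,\frac{1}{{\rm dist}_M(p,q)^{n+s-1}}\,,
\end{equation*}
with $C(n,s)\le C(n,s_0)$ for $s\in(s_0,1)$ by the last sentence of Proposition \ref{prop:kern1}.

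\textbf{Far regime.} Suppose ${\rm dist}_M(p,q)\ge R_0/4$. Then ${\rm dist}_M(p,q)^{n+s-1}\le\mathrm{diam}(M)^{n+s-1}$ is bounded above uniformly for $s\in(s_0,1)$, so it is enough to bound $K_{M,s}(p,q)$ and $\alpha_{n,s}\,{\rm dist}_M(p,q)^{-(n+s)}$ by a constant $C(M,s_0)$. The second term satisfies $\alpha_{n,s}\,{\rm dist}_M(p,q)^{-(n+s)}\le\alpha_{n,s}(R_0/4)^{-(n+s)}\le C(M,s_0)$, since $\alpha_{n,s}$ stays bounded for $s$ in compact subsets of $(0,2)$. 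For the first term, apply \eqref{remaining2} with $\varphi=\exp_p$ and the smaller radius $R=R_0/8$, for which ${\rm FA}_2(M,g,R_0/8,p,\exp_p)$ still holds by Remark \ref{flatscalingrmk}(b): since ${\rm dist}_M(p,q)\ge R_0/4>R_0/8$ and $\exp_p(\B_{R_0/8}(0))=B_{R_0/8}(p)$, the point $q$ lies outside $\exp_p(\B_{R_0/8}(0))$, whence $|K_{M,s}(p,q)|\le C(n)(R_0/8)^{-(n+s)}\le C(M,s_0)$. Combining the two regimes and enlarging $C(M,s_0)$ to absorb the far-regime bounds proves the claim.

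There is essentially no analytic obstacle here: the real content — namely that $H_M(p,q,t)$ differs from the Euclidean model $(4\pi t)^{-n/2}e^{-d^2/(4t)}$ by a term of relative order $d^2+t$ for small $t$ (small-time parametrix) and is merely bounded for $t\gtrsim1$, so that integrating against $t^{-1-s/2}\,dt$ costs exactly one power of $d$ — is already packaged into Proposition \ref{prop:kern1}. The only mild points to get right are the bookkeeping in the far regime and the observation that choosing the exponential parametrization collapses the anisotropic model term $\det(A(x))/|A(x)z|^{n+s}$ into the isotropic $1/{\rm dist}_M^{n+s}$, which is what makes the statement come out with ${\rm dist}_M$ rather than a coordinate-dependent quantity.
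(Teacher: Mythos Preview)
Your proof is correct and follows essentially the same approach as the paper's: both split into a near-diagonal regime (distance $<R_0/4$) handled via \eqref{remaining0} in normal coordinates centered at $p$ (so that $A(0)={\rm Id}$ and $|z|={\rm dist}_M(p,q)$), and a far regime handled by bounding each term separately using \eqref{remaining2} and the diameter of $M$. The only inconsequential difference is that you invoke \eqref{remaining2} with radius $R_0/8$ rather than $R_0/4$.
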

\begin{proof}
    As stated in Remark \ref{fbsvdg}, there is $R_0>0$ depending on $M$ such that the assumptions ${\rm FA}_1(M,g,R_0,p,\varphi_p)$ are satisfied at every $p\in M$, with $\varphi_p$ given by the restriction of the exponential map centered at $p$ to $\B_{R_0}(0)$.\\
    Let then $p$ and $q$ be such that $\text{dist}_M(p,q)\leq R_0/4$. We will apply Proposition \ref{prop:kern1} with $x=0=\varphi_p^{-1}(p)$, so that $A(x)=Id$ since the metric in normal coordinates is Euclidean at the origin. Letting $y=\varphi^{-1}(q)$, \eqref{remaining0} then gives that
    \begin{align*}
        \Big|K(p,q)-\frac{\alpha_{n,s}}{|x-y|^{n+s}}\Big|\leq R_0^{-1}\frac{C(n,s)}{|x-y|^{n+s-1}}\, ,
    \end{align*}
    where $C(n,s)$ is uniform for $s\in(s_0,1)$.\\ Since $\text{dist}_M(p,q)=|x-y|$ thanks to having chosen normal coordinates centered at $p$, we deduce that
    \begin{align*}
        \Big|K(p,q)-\frac{\alpha_{n,s}}{\text{dist}_M(p,q)^{n+s}}\Big|&\leq \frac{C(M,s_0)}{\text{dist}_M(p,q)^{n+s-1}}\, .
    \end{align*}
    Consider now $p$ and $q$ such that $\text{dist}_M(p,q)\geq R_0/4$ instead. The bound
    \begin{align*}
        \frac{\alpha_{n,s}}{\text{dist}_M(p,q)^{n+s}}\leq C(n,s_0)\frac{R_0/4}{\text{dist}_M(p,q)^{n+s-1}}=\frac{C(M,s_0)}{\text{dist}_M(p,q)^{n+s-1}}
    \end{align*}
    is then immediate. Moreover, applying \eqref{remaining2} with $\alpha=0$ and $R=R_0/4$ we can bound
    \begin{align*}
        K(p,q)&\leq \frac{C(n,s_0)}{(R_0/4)^{n+s}}=C(M,s_0)\, .
    \end{align*}
    Since $\text{dist}_M(p,q)\leq {\rm diam}(M)<\infty$, we can then deduce that
    \begin{align*}
        \Big|K(p,q)-\frac{\alpha_{n,s}}{\text{dist}_M(p,q)^{n+s}}\Big|&\leq K(p,q)+\frac{\alpha_{n,s}}{\text{dist}_M(p,q)^{n+s}}\leq C(M,s_0)+\frac{C(M,s_0)}{\text{dist}_M(p,q)^{n+s-1}}\leq \frac{C(M,s_0)}{\text{dist}_M(p,q)^{n+s-1}}\, .
    \end{align*}
    Combining the results for $\text{dist}_M(p,q)\leq R_0/4$ and $\text{dist}_M(p,q)\geq R_0/4$, we reach the desired conclusion.
\end{proof}
\begin{lemma}\label{complem}
The function spaces $H^{s/2}(M)$ and $H^{s/2,d}(M)$ coincide, and the associated fractional Sobolev norms are equivalent. Moreover, if $s_0\in(0,1)$ and $s\in(s_0,1)$, given $\delta>0$ we can bound
$$
|\mathcal E_{\ep,s}(u,M)-\mathcal E_{\ep,s}^d(u,M)|\leq C(M,s_0)\big[C(\delta)\|u\|_{L^\infty(M)}^2+\delta \mathcal E_{\ep,s}^d(u,M)\big]\,
$$
for all $u\in H^{s/2}(M)\cap L^\infty(M)$.
\end{lemma}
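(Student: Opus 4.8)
The plan is to deduce both assertions from the pointwise kernel estimate in Proposition \ref{kermfdconv}, which says that $K_{M,s}(p,q)$ and $\tfrac{\alpha_{n,s}}{{\rm dist}_M(p,q)^{n+s}}$ differ by a term of order ${\rm dist}_M(p,q)^{-(n+s-1)}$.

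First, for the energy comparison, I note that the potential terms $\ep^{-s}\int_M W(u)$ in $\mathcal E_{\ep,s}(u,M)$ and in $\mathcal E_{\ep,s}^d(u,M)$ are literally the same, so that
$$\mathcal E_{\ep,s}(u,M)-\mathcal E_{\ep,s}^d(u,M)=\tfrac14\iint_{M\times M}(u(p)-u(q))^2\Big(K_{M,s}(p,q)-\tfrac{\alpha_{n,s}}{{\rm dist}_M(p,q)^{n+s}}\Big)\,dV_p\,dV_q\,,$$
and by Proposition \ref{kermfdconv} the absolute value of the right-hand side is at most $C(M,s_0)\iint_{M\times M}(u(p)-u(q))^2\,{\rm dist}_M(p,q)^{-(n+s-1)}\,dV_p\,dV_q$. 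I then split this integral at a scale $\rho\in(0,1)$ to be fixed in terms of $\delta$. On $\{{\rm dist}_M(p,q)<\rho\}$ I use ${\rm dist}_M^{-(n+s-1)}={\rm dist}_M\cdot{\rm dist}_M^{-(n+s)}\le\rho\,{\rm dist}_M^{-(n+s)}$, together with the fact that $\alpha_{n,s}$ is bounded below by a positive constant for $s\in(s_0,1)$ (immediate from \eqref{alphadef}) and $W\ge 0$, to bound that part by $C(n,s_0)\,\rho\,\mathcal E_{\ep,s}^d(u,M)$. On $\{{\rm dist}_M(p,q)\ge\rho\}$ I use $(u(p)-u(q))^2\le 4\|u\|_{L^\infty(M)}^2$, ${\rm dist}_M^{-(n+s-1)}\le\rho^{-(n+1)}$ and $\iint_{M\times M}dV_p\,dV_q=\textnormal{vol}(M)^2$ to bound that part by $C(M)\,\rho^{-(n+1)}\|u\|_{L^\infty(M)}^2$. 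Choosing $\rho\simeq\delta$ so that $C(M,s_0)C(n,s_0)\rho\le\delta$ then gives the claimed inequality with $C(\delta)=C(M,s_0)\rho^{-(n+1)}\simeq\delta^{-(n+1)}$.

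For the coincidence of the spaces and the equivalence of norms, it suffices to establish a two-sided bound $c\,\tfrac{\alpha_{n,s}}{{\rm dist}_M^{n+s}}\le K_{M,s}\le C\,\tfrac{\alpha_{n,s}}{{\rm dist}_M^{n+s}}$ with $c,C>0$ depending only on $M$ and $s_0$, since this immediately yields $[u]_{H^{s/2}(M)}^2\simeq[u]_{H^{s/2,d}(M)}^2$ and hence, adding the common $\|u\|_{L^2(M)}^2$, equivalent full norms and identical underlying spaces. When ${\rm dist}_M(p,q)\le R_0$, with $R_0$ as in Remark \ref{fbsvdg}, Proposition \ref{kermfdconv} gives $\big|K_{M,s}-\tfrac{\alpha_{n,s}}{{\rm dist}_M^{n+s}}\big|\le C(M,s_0)\,{\rm dist}_M\cdot{\rm dist}_M^{-(n+s)}$, which after further shrinking $R_0$ (depending only on $M$ and $s_0$, using the lower bound on $\alpha_{n,s}$) is at most $\tfrac12\tfrac{\alpha_{n,s}}{{\rm dist}_M^{n+s}}$, yielding both inequalities there. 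When ${\rm dist}_M(p,q)\ge R_0$, the quantity $\tfrac{\alpha_{n,s}}{{\rm dist}_M^{n+s}}$ lies between two positive constants depending on $M,s_0$ (since ${\rm dist}_M\le{\rm diam}(M)$), $K_{M,s}$ is bounded above by \eqref{remaining2}, and $K_{M,s}$ is bounded below by a positive constant by combining \eqref{wethiowhoihw2} with the strict positivity of the heat kernel on the closed manifold $M$ and its uniform convergence $H_M(p,q,t)\to\textnormal{vol}(M)^{-1}$ as $t\to\infty$, which give $K_{M,s}(p,q)\ge\tfrac{s/2}{\Gamma(1-s/2)}\int_T^\infty\tfrac{1}{2\textnormal{vol}(M)}\,\tfrac{dt}{t^{1+s/2}}>0$ for a suitable $T=T(M)$, uniformly in $p,q$ and in $s\in(s_0,1)$.

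The only point requiring more than bookkeeping is this last lower bound on $K_{M,s}$ at macroscopic distances, which cannot be read off from the local estimates of Proposition \ref{prop:kern1} and instead uses heat-kernel positivity and long-time behaviour on the closed manifold; apart from that, the main care needed throughout is to keep all constants (in particular $\alpha_{n,s}$ and $\tfrac{s/2}{\Gamma(1-s/2)}$) uniform as $s$ ranges over $(s_0,1)$, which holds since these are continuous and strictly positive on $[s_0,1]$.
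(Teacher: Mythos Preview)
Your proof is correct and follows essentially the same route as the paper: both steps hinge on Proposition~\ref{kermfdconv}, splitting the error integral at a small distance scale for the energy comparison, and obtaining the two-sided kernel bound at macroscopic distances via heat-kernel positivity (the paper uses positivity on a compact time interval rather than the long-time limit, but the effect is the same). The only cosmetic differences are your explicit choice $\rho\simeq\delta$ and the slightly wasteful exponent $\rho^{-(n+1)}$ in place of $\rho^{-(n+s-1)}$, neither of which affects the argument.
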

\begin{proof}[Proof of the Lemma] We divide the proof in two steps. We will omit the dependence of the constants on $s_0$.\\
\textbf{Step 1.} We show that $u\in H^{s/2}(M)$ if and only if $u\in H^{s/2,d}(M)$.\\
Proposition \ref{kermfdconv} shows that there is some $R(M)>0$ such that, for $p$ and $q$ with ${\rm dist}(p,q)\leq R(M)$,
\begin{equation*}
    \frac{c(M)}{\text{dist}_M(p,q)^{n+s}}\leq K_{M,s}(p,q)\leq \frac{C(M)}{\text{dist}_M(p,q)^{n+s}}\, ,
\end{equation*}
where $0<c(M)\le C(M)<\infty$. Moreover, it gives that
\begin{equation*}
    K_{M,s}(p,q)\leq C(M) \frac{\alpha_{n,s}}{\text{dist}_M(p,q)^{n+s}}
\end{equation*}
for ${\rm dist}(p,q)\geq R(M)$ as well. To also get a lower bound in this inequality, observe that, by strict positivity of the heat kernel at a fixed time and continuity,
$$
H_M(p,q,t)\geq c(M)>0
$$
for $t\in [1,2]$ and all $p,q$ in $M$, so that if ${\rm dist}(p,q)\geq R(M)$ we can bound
\begin{equation*}
    K_{M,s}(p,q)=\frac{s/2}{\Gamma(1-s/2)} \int_{0}^{\infty} H_M(p,q,t) \frac{dt}{t^{1+s/2}}\geq c(M) \geq\frac{c(M)}{\text{dist}_M(p,q)^{n+s}}\,.
\end{equation*}
Putting everything together, we deduce that
\begin{equation}\label{kerkdtcmp}
    \frac{c(M)}{\text{dist}_M(p,q)^{n+s}}\leq K_{M,s}(p,q)\leq \frac{C(M)}{\text{dist}_M(p,q)^{n+s}}\, ,
\end{equation}
for any $p,q\in M$. This shows that $u\in H^{s/2}(M)$ if and only if $u\in H^{s/2,d}(M)$, and that the associated fractional Sobolev norms are equivalent.

\noindent \textbf{Step 2.} To obtain the inequality in the Lemma, we make our comparison precise.\\
Using Proposition \ref{kermfdconv} once again,
\begin{align*}
\Bigg|\mathcal E_{\ep,s}(u,M)-\mathcal E_{\ep,s}^d(u,M)\Bigg|&=\frac{1}{4}\Bigg|\iint_{M\times M} (u(p)-u(q))^2\Big[K_{M,s}(p,q)-\frac{\alpha_{n,s}}{\text{dist}_M(p,q)^{n+s}}\Big]\Bigg|\\
&\leq \frac{1}{4}\iint_{M\times M} (u(p)-u(q))^2\frac{C_1(M)}{\text{dist}_M(p,q)^{n+s-1}}\,.
\end{align*}
Let $\delta>0$. There is some $R(M,\delta)>0$ be such that if $\text{dist}_M(p,q)\leq R(M,\delta)$, then
$$
\frac{C_1(M)}{\text{dist}_M(p,q)^{n+s-1}}\leq \delta\frac{\alpha_{n,s}}{\text{dist}_M(p,q)^{n+s}}\,.$$
Moreover, if $\text{dist}_M(p,q)\geq R(M,\delta)$ then
$$
\frac{C(M)}{\text{dist}_M(p,q)^{n+s-1}}\leq C(M,\delta)\,.$$
Substituting these bounds, we deduce that
\begin{align*}
|\mathcal E_{\ep,s}(u,M)-\mathcal E_{\ep,s}^d(u,M)|&\leq \delta\frac{1}{4}\iint_{M\times M} (u(p)-u(q))^2\frac{\alpha_{n,s}}{\text{dist}_M(p,q)^{n+s}}+C(M,\delta)\|u\|_{L^\infty(M)}^2\, ,
\end{align*}
which is what we wanted to prove.
\end{proof}
We are ready to give:
\begin{proof}[Proof of Theorem \ref{thmweyl}]
By Proposition \ref{prop:weyld}, which shows the Weyl Law for the $l_{s}^{d}(\p,M)$, it suffices to see that
\begin{equation}\label{equivkernelsp}
    \lim_{\p\to\infty}\big|\p^{-s/n}l_{s}^{d}(\p,M)-\p^{-s/n}l_{s}(\p,M)\big|=0\, .
\end{equation}
For that, let $A\in\tilde{\mathcal{F}}_\p$. Lemma \ref{complem} tells us that this is equivalent to $A$ being in $\tilde{\mathcal{F}}_\p^d$, since the Hausdorff dimension of a set is preserved under equivalent metrics, and moreover that (omitting the dependence on $s_0$)
$$
|\mathcal E_{\ep,s}(u,M)-\mathcal E_{\ep,s}^d(u,M)|\leq C(M,\delta)+\delta \mathcal E_{\ep,s}^d(u,M)\,
$$
for every $u\in A$, since $|u|\leq 1$ by definition of $\tilde{\mathcal{F}}_\p$.\\
In particular,
$$
\mathcal E_{\ep,s}(u,M)\leq C(M,\delta)+(1+\delta) \mathcal E_{\ep,s}^d(u,M)\,,
$$
so that
\begin{align*}
c_{\ep,s}(u,M)&=\inf_{A\in\tilde{\mathcal{F}}_\p}\max_{u\in A} \mathcal E_{\ep,s}(u,M)\leq C(M,\delta)+\inf_{A\in\tilde{\mathcal{F}}_\p}\max_{u\in A} (1+\delta) \mathcal E_{\ep,s}^d(u,M)=C(M,\delta)+(1+\delta)c_{\ep,s}^d(u,M)\,.
\end{align*}
Similarly, we obtain that
$$
(1-\delta)c_{\ep,s}^d(u,M)\leq C(M,\delta)+c_{\ep,s}(u,M)\,.
$$
In other words, we have shown that
$$
|c_{\ep,s}(u,M)-c_{\ep,s}^d(u,M)|\leq C(M,\delta)+\delta c_{\ep,s}^d(u,M)\,;
$$
letting $\ep\to 0$, we get that
\begin{equation}\label{lscomd}
|l_{s}(\p,M)-l_{s}^d(\p,M)|\leq C(M,\delta)+\delta l_{s}^d(\p,M)\, .
\end{equation}
If we now multiply by $\p^{-s/n}$ and take $\limsup$ in $\p$ on both sides, we find that
$$
\limsup_\p|\p^{-s/n}l_{s}(\p,M)-\p^{-s/n}l_{s}^d(\p,M)|\leq \delta \limsup_\p [\p^{-s/n}l_{s}^d(\p,M)]\, .
$$
We know that $\limsup_\p [\p^{-s/n}l_{s}^d(\p,M)]$ is a bounded quantity by \eqref{lpsboundsd}, so letting $\delta\to 0$ the right-hand side goes to $0$ and we conclude the proof.

\end{proof}

\subsection{The Weyl Law in the classical limit $s\to 1$ -- Proof of Theorem \ref{clasweylthm}}
This section proves Theorem \ref{clasweylthm}, which we restate for the reader's convenience:
\begin{theorem}[\textbf{Weyl Law for classical minimal surfaces}] Let $(M,g)$ be a closed manifold of dimension $n$. There exists a universal constant $\tau(n,1)>0$, depending only on $n$, such that
\begin{equation}\label{clasweyl2t}
    \lim_{\p\to\infty} \p^{-\frac{1}{n}}l_{1}(\p,M) = \tau(n,1)\textnormal{vol}(M,g)^{\frac{n-1}{n}}\, .
\end{equation}
\end{theorem}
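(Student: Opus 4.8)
The plan is to re-run the proof of Theorem~\ref{thmweyl} with $l_s^d(\p,\Omega)$ replaced throughout by its classical counterpart
\[
l_1^d(\p,\Omega):=\liminf_{s\to1}\,(1-s)\,l_s^d(\p,\Omega)\,,
\]
defined for Euclidean Lipschitz domains $\Omega$ with piecewise smooth boundary and for $\Omega=M$. The point is that \emph{every} structural input used in Subsection~3.1 survives multiplication by $(1-s)$ with constants that stay bounded as $s\to1$, and therefore passes to $l_1^d$ upon taking $\liminf_{s\to1}$. The manipulations needed are soft: $\liminf$ preserves $\le$-inequalities, commutes with multiplication by a positive factor converging to a positive limit (e.g. $\p^{(1-s)/n}\to1$ or $(1+\delta)^{3n+s}\to(1+\delta)^{3n+1}$), dominates the sum of the summands' $\liminf$s, and ignores additive errors whose $\limsup$ as $s\to1$ is bounded by a quantity tending to $0$ as $\p\to\infty$.

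First, from the uniform bounds \eqref{lpsboundsd} --- whose constant, with $s_0$ fixed, does not depend on $s\in(s_0,1)$ --- one gets $\tfrac1C\p^{1/n}\le l_1^d(\p,\Omega)\le C\p^{1/n}$, so $\tilde l_1^d(\p,\Omega):=\p^{-1/n}l_1^d(\p,\Omega)$ is bounded away from $0$ and $\infty$. Next, multiplying the Lusternik--Schnirelman inequality \eqref{linq} by $(1-s)$ (the error $\frac{c(s)}{\p V}$ becomes $\le\frac{C(s_0)}{\p V}$, since $(1-s)c(s)$ is uniformly bounded) and passing to $\liminf_{s\to1}$ gives
\[
\tilde l_1^d(\p,\Omega)\ \ge\ \sum_{i=1}^N |\Omega_i^*|\,\tilde l_1^d(\p_i,\Omega_i)\ -\ \frac{C(s_0)}{\p V}\,;
\]
similarly, taking $\liminf_{s\to1}$ of $(1-s)$ times the bound of Lemma~\ref{Lipineq} (after passing to min-max values and letting $\ep\to0$) yields $l_1^d(\p,\Omega_1)\le(1+\delta)^{3n+1}l_1^d(\p,\Omega_2)$. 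With these analogues of Lemmas~\ref{LSineq}--\ref{Lipineq} in hand, the Fekete-type argument of Proposition~\ref{cubelim} applies verbatim to $\tilde l_1^d(\cdot,C)$, producing the universal constant $\tau(n,1):=\lim_\p\tilde l_1^d(\p,C)>0$; then the proofs of Proposition~\ref{weyleucd} (its geometric Claim involving no $s$-dependence) and of Propositions~\ref{infgeq2} and~\ref{supleq2} carry over word for word, giving $\lim_\p\p^{-1/n}l_1^d(\p,M)=\tau(n,1)\textnormal{vol}(M,g)^{\frac{n-1}{n}}$. In the $\limsup$ half one carries the additional term $\frac{c}{\p^{s/n}}\sum_i\textnormal{Per}_s^d(C_i)$ from \eqref{i4in}: multiplied by $(1-s)$ it is $\le\frac{C(M,n,s_0)}{\p^{s/n}}\le\frac{C(M,n,s_0)}{\p^{s_0/n}}$ by \eqref{lipdper} and the boundedness of $\alpha_{n,s}$ near $s=1$, hence harmless in the eventual limit $\p\to\infty$; the Hausdorff-measure regularization of Step~4 of Proposition~\ref{supleq2} is performed for each fixed $s$ exactly as written. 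Finally, multiplying \eqref{lscomd} by $(1-s)$ gives $(1-s)l_s(\p,M)\le(1+\delta)(1-s)l_s^d(\p,M)+(1-s)C(M,\delta)$ and its symmetric partner, so that $\liminf_{s\to1}$ (the $(1-s)C(M,\delta)$ term vanishing) followed by $\delta\to0$ yields $l_1(\p,M)=l_1^d(\p,M)$ for all $\p$. Combined with the previous display, this is exactly \eqref{clasweyl2t}.

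The argument is entirely soft; the only point requiring care --- and the main (if modest) obstacle --- is the bookkeeping just described: one must check that each inequality in the chain running from \eqref{linq} to \eqref{i4in}, once multiplied by $(1-s)$, has the shape $\textnormal{(l.h.s.)}\ge\bigl(1-o_{s\to1}(1)\bigr)\textnormal{(r.h.s.)}-E_\p(s)$ (or the reverse inequality, in the comparisons going the other direction), with the implied constants uniform in $s$ near $1$ and $\limsup_{s\to1}E_\p(s)\to0$ as $\p\to\infty$. Only then is it legitimate to take $\liminf_{s\to1}$ on both sides, losing at most $E_\p$. This uniformity is supplied by the estimates \eqref{lpsboundsd} and \eqref{lipdper} together with the fact, recorded in Theorem~\ref{FracYau3}, that the constants in the fractional-perimeter growth bound are uniform as $s\to1$; no genuinely new analysis beyond Subsection~3.1 is required.
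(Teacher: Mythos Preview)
Your proposal is correct and follows essentially the same route as the paper: define $l_1^d(\p,\Omega)=\liminf_{s\to1}(1-s)l_s^d(\p,\Omega)$, multiply each of the key inequalities \eqref{cubein}, \eqref{i1in}, \eqref{i2in}, \eqref{i3in}, \eqref{i4in} by $(1-s)$ and take $\liminf_{s\to1}$ before repeating the $\p\to\infty$ arguments, then use \eqref{lscomd} to identify $l_1(\p,M)=l_1^d(\p,M)$. The paper organizes Step~2 slightly differently (bounding $\limsup_{s\to1}|(1-s)l_s-(1-s)l_s^d|$ directly), but your two-sided inequality via $\liminf$ is equivalent.
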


\begin{proof}
Recall that
\begin{equation*}
l_{1}(\p,M)=\liminf_{s\to 1}[(1-s)l_{s}(\p,M)]\, .
\end{equation*}
In particular, by \eqref{lpsbounds} we deduce the bounds
\begin{equation}\label{lp1bounds}
c(M)\p^{1/n}\leq l_{1}(\p,M)\leq C(M)\p^{1/n}\, .
\end{equation}
We then split the proof into two steps.\\

\textbf{Step 1}.
Define analogously
\begin{equation}\label{deflp11}
    l_{1}^d(\p,M):=\liminf_{s\to 1}[(1-s)l_{s}^d(\p,M)]
\end{equation}
and $\tilde l_{1}^d(\p,M)=\p^{-\frac{1}{n}}l_{1}^d(\p,M)$. We will first prove \eqref{clasweyl} for the quantities $l_{1}^d(\p,M)$, meaning that
\begin{align}\label{clasweyld}
\lim_{p\to\infty}\p^{-\frac{1}{n}} l_{1}^d(\p,M)=\tau(n,1)\textnormal{vol}(M)^{\frac{n-1}{n}}
\end{align}
for some dimensional constant $\tau(n,1)$. To see this, recall that in the proof of the Weyl Law for $s<1$ we found several inequalities for quantities of the type $\p^{-s/n}l_{s}^d(\p,M)$, and then we took $\p\to\infty$ on those inequalities to obtain the results leading to the Weyl Law. Roughly, if we first multiply those inequalities by $(1-s)$ and take $\liminf_{s\to 1}$ instead, and only then send $\p\to\infty$, we will find \eqref{clasweyld}. More precisely, we proceed as follows:
\begin{itemize}
    \item We first obtain the analogue of Lemma \ref{cubelim} for $l_{1}^d(\p,C)$, by multiplying \eqref{cubein} by $(1-s)$ and taking $\liminf_{s\to 1}$, and then finishing the proof with the argument that ensues, up to selecting the sequences $\p_k$ and $\mathfrak q_j$ so that they give the values of the limsup and the liminf of $l_{1}^d(\p,C)$ instead.
    
    This shows that
    \begin{equation}\label{wlawc1}
    \liminf_{\p\to\infty} \tilde l_{1}^d(\p,C) = \limsup_{\p\to\infty} \tilde l_{1}^d(\p,C)
    \end{equation}
    for the unit cube $C$ in $\mathbb{R}^n$, and it allows us to define $\tau(n,1)$ as the common limit.
    
    \item We then prove the analogue for $l_{1}^d(\p,\Omega)$ of Proposition \ref{weyleucd}, meaning that
    \begin{equation}\label{wed1}
    \lim_{\p\to\infty} \tilde l_{1}^d(\p,\Omega)= \tau(n,1)\textnormal{vol}(\Omega)^{\frac{n-1}{n}}
    \end{equation}
    for any Euclidean domain $\Omega$.
    
    To show that
    $$
    \liminf_{\p\to\infty} \tilde l_{1}^d(\p,\Omega)\geq \tau(n,1)\textnormal{vol}(\Omega)^{\frac{n-1}{n}}\, ,
    $$
    we use the first step of the proof of Proposition \ref{weyleucd}: We first multiply \eqref{i1in} by $(1-s)$ and take $\liminf_s$. We then finish the proof by taking $\liminf_p$ on both sides of the resulting inequality, applying \eqref{wlawc1}, and sending $\delta\to 0$.
    
    Likewise, to show that
    $$
    \limsup_{\p\to\infty} \tilde l_{1}^d(\p,\Omega)\leq \tau(n,1)\textnormal{vol}(\Omega)^{\frac{n-1}{n}}\, ,
    $$
    we use the second step of the proof of Proposition \ref{weyleucd}: We multiply \eqref{i2in} by $(1-s)$, take $\liminf_s$, and follow the same arguments of the proof but selecting the subsequence $\mathcal q_k$ so that $\lim_{k\to\infty}\tilde l_{1}^d(\mathcal q_k, \Omega)=\limsup_{p\to\infty}\tilde l_{1}^d(\p,\Omega)$.
    
    \item To prove the Weyl Law in \eqref{clasweyld} for a closed manifold $M$, we prove the analogues of Proposition \ref{infgeq2} and Proposition \ref{supleq2}. They follow from multiplying the inequalities \eqref{i3in} and \eqref{i4in} by $(1-s)$, taking $\liminf_{s\to 1}$, and replicating the few steps that follow them in the proofs (using in particular the Weyl Law \eqref{wed1} for Euclidean domains and $s=1$, which we have just argued to be true, instead of the one for $s<1$). We remark that by \eqref{interpropM} we have $\limsup_{s\to 1}(1-s)\text{Per}_s^d(\mathcal{C}_i)\leq C$, so that the second term on the right hand side of \eqref{i4in} remains bounded when arguing as indicated.
\end{itemize}
This concludes the proof of \eqref{clasweyld}.

\noindent \textbf{Step 2}. To prove the Weyl Law for $l_{1}(\p,M)$ from the one for $l_{1}^d(\p,M)$ in \eqref{clasweyld}, we will actually prove that $l_{1}(\p,M)=l_{1}^d(\p,M)$ for any $\p$, a result which is interesting on its own and which of course gives \eqref{clasweyl2t} from \eqref{clasweyld}. It suffices, then, to show that
\begin{equation}\label{widthlims}
\lim_{s\to 1} |(1-s)l_{s}(\p,M)-(1-s)l_{s}^d(\p,M)|=0\, .
\end{equation}
By \eqref{lscomd}, we already know that
$$
|l_{s}(\p,M)-l_{s}^d(\p,M)|\leq C(M,\delta)+\delta l_{s}^d(\p,M)
$$
for any $\delta>0$.

Multiplying by $(1-s)$ and taking $\limsup$ in $s$ on both sides, we find that
$$
\limsup_{s\to 1} |(1-s)l_{s}(\p,M)-(1-s)l_{s}^d(\p,M)|\leq \delta \limsup_{s\to 1} [(1-s)l_{s}^d(\p,M)]\, .
$$
Since $\limsup_{s\to 1} [(1-s)l_{s}^d(\p,M)]$ is a bounded quantity by \eqref{lpsboundsd}, letting $\delta\to 0$ we obtain the desired result.
\end{proof}

\section{Compactness of $s$-minimal surfaces with bounded index as $s\to 1$}
\subsection{First and second variation formulas}
We obtain for the first time a formula for the second variation of the fractional perimeter on a Riemannian manifold. The Euclidean formula, which was obtained in \cite[Theorem 6.1]{FFMMM}, contains a difference of normal vectors based at different points, which does not have a Riemannian analogue. Moreover, a nonlocal correspondent of the Ricci curvature should implicitly appear. For these reasons, both the formula and its proof need fundamental modifications.
\begin{proposition}\label{12varprop} Let $M$ be a closed Riemannian manifold. Let $E\subset M$ be an open domain with boundary of class $C^2$, and let $\nu_{\partial E}$ denote its outer normal vector. Let $X$ be a vector field of class $C^2$ on $M$, and define $\xi(p)=\langle X,\nu_{\partial E}\rangle_g(p)$ on $\partial E$. Then the following hold:
\begin{itemize}
    \item \textbf{First variation}. The first variation of the fractional perimeter of $E$ is given by
    \begin{align}\label{1stvar}
\frac{d}{dt}\big|_{t=0}\textnormal{Per}_s(\psi_X^t(E))&=\int_{\partial E}dp\, \widetilde H_s[\partial E](p)\xi(p)
\, ,
\end{align}
where the nonlocal mean curvature $\widetilde H_s[\partial E](p)$ is defined as a principal value integral by
\begin{align*}
    \widetilde H_s[\partial E](p)&=-\,{\rm p.v.}\int_M dq\, (\chi_E-\chi_{E^c})(q)K(p,q)\\
    &:=-\lim_{\ep\to 0} \int_M dq\, (\chi_E-\chi_{E^c})(q)K_\ep(p,q)\,.
\end{align*}
Here $K_{\ep}(p,q)$ denotes the regularised (smooth) kernel
\begin{equation}\label{app ker K}
    K_{\ep}(p,q)=\frac{s/2}{\Gamma(1-s/2)} \int_{0}^{\infty} H_M(p,q,t) e^{-\ep^2/4t} \frac{dt}{t^{1+s/2}}\, .
\end{equation}
Other regularisations are possible, for example one can see that
\begin{align*}
    \widetilde H_s[\partial E](p)&=-\lim_{\ep\to 0} \int_{M\setminus B_\ep(p)} dq\, (\chi_E-\chi_{E^c})(q)K(p,q)
\end{align*}
by arguing as in \cite[Proposition 2.5]{FracSobPaper}.\\
\item \textbf{Second variation}. Choose a $C^1$ extension of $\nu_{\partial E}$, the normal vector to $E$, to a vector field (still denoted by $\nu_{\partial E}$) on all of $M$. The second variation of fractional perimeter of $E$ can then be expressed as
\begin{align*}
\frac{d^2}{dt^2}\Big|_{t=0}\textnormal{Per}_s(\psi_X^t(E))=&\iint_{\partial E\times \partial E} (\xi(p)-\xi(q))^2K(p,q)\,dp\,dq\\
&-\int_{\partial E}dp\, \xi(p)^2 \,{\rm p.v.} \int_M dq\, (\chi_E-\chi_{E^c})(q)[\textnormal{div}_q (\nu_{\partial E}(q)K(p,q))+\textnormal{div}_p (\nu_{\partial E}(p)K(p,q))]\\
&-\int_{\partial E}dp\, \widetilde H_s[\partial E](p)\Big[\xi^2 {\rm div}_p(\nu_{\partial E})+\langle \nabla_p^{\rm tan}\xi, X\rangle_g +\xi^2H[\partial E] - \xi{\rm div}_p^\perp X\Big](p)\,.
\end{align*}
The integral over $M$ is to be taken in the principal value sense around $p$, meaning that it is defined as the limit as $\ep$ tends to $0$ of the same expression computed with the smooth kernel \eqref{app ker K} instead. This limit is convergent due to second-order cancellations, as will be seen in the proof.
\end{itemize}
\end{proposition}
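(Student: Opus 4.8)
The plan is to push all $t$-dependence onto the kernel by a change of variables and then differentiate under the integral sign. Writing $v=\chi_E-\chi_{E^c}$ and substituting $p\mapsto\psi_X^t(p)$, $q\mapsto\psi_X^t(q)$ in $\text{Per}_s(\psi_X^t(E))=\iint_{M\times M}\big(\chi_{\psi_X^t(E)}(p)-\chi_{\psi_X^t(E)}(q)\big)^2K(p,q)\,dV_p\,dV_q$ gives
\[
\text{Per}_s(\psi_X^t(E))=\iint_{M\times M}\big(\chi_E(p)-\chi_E(q)\big)^2\,G_t(p,q)\,dV_p\,dV_q,\qquad G_t(p,q):=K(\psi_X^t(p),\psi_X^t(q))\,J_t(p)\,J_t(q),
\]
where $J_t$ is the Jacobian of $\psi_X^t$. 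The factor $(\chi_E(p)-\chi_E(q))^2$ is now $t$-independent; since $t\mapsto\text{Per}_s(\psi_X^t(E))$ is $C^\infty$ (Remark after Definition \ref{WeakMorseDef}) and differentiation under the integral is legitimate by Proposition \ref{sdfgsdrgs} and \eqref{remaining3}, it suffices to compute $\partial_t^kG_t|_{t=0}$ for $k=1,2$. Viewing $G_t\,dV\otimes dV$ as the pullback of $K\,dV\otimes dV$ under the flow of the vector field $Y(p,q):=(X(p),X(q))$ on $M\times M$, the Lie-derivative identity $\mathcal L_Y(h\,dV\otimes dV)=\big(\text{div}_{M\times M}(hY)\big)\,dV\otimes dV$ yields
\[
\partial_tG_t\big|_0=g_1:=\text{div}_p\big(X(p)K(p,q)\big)+\text{div}_q\big(X(q)K(p,q)\big),\qquad \partial_t^2G_t\big|_0=g_2:=\text{div}_p\big(X(p)\,g_1\big)+\text{div}_q\big(X(q)\,g_1\big).
\]
All of the following is carried out with the smooth kernel $K_s^\ep$ from \eqref{app ker K} in place of $K$, passing to the limit $\ep\to0$ at the end.

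For the first variation, plug $g_1$ into the integral, use the $p\leftrightarrow q$ symmetry to replace $g_1$ by $2\,\text{div}_p(X(p)K)$, and integrate by parts in $p$: the distributional gradient $D_p\big[(\chi_E(p)-\chi_E(q))^2\big]=(\chi_E-\chi_{E^c})(q)\,\nu_{\partial E}(p)\,\HH\mres\partial E$ is a measure on $\partial E$, so the double integral collapses to $\int_{\partial E}\widetilde H_s[\partial E](p)\xi(p)\,dp$ once one identifies the principal value defining $\widetilde H_s[\partial E]$; the p.v. converges because for $p\in\partial E$ the $d(p,q)^{-n-s}$ singularity of $K(p,\cdot)$ integrates to zero against $(\chi_E-\chi_{E^c})$ by oddness in the normal direction, leaving an integrable $d(p,q)^{-n-s+1}$ tail (here $s<1$ is used).

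For the second variation the identical two steps with $g_2=\text{div}_{M\times M}(g_1Y)$ and one integration by parts against $D\big[(\chi_E(p)-\chi_E(q))^2\big]$ (again a measure on $\partial E$ in each slot) give, after symmetrizing, the master formula
\[
\frac{d^2}{dt^2}\Big|_{t=0}\text{Per}_s(\psi_X^t(E))=-2\int_{\partial E}\xi(p)\;\text{p.v.}\!\int_M g_1(p,q)\,(\chi_E-\chi_{E^c})(q)\,dV_q\;dp.
\]
The work is to turn this into the displayed three-term expression. The crucial structural point is that one must \emph{not} split $g_1$ into its two divergences naively: each of $\text{div}_p(X(p)K)$ and $\text{div}_q(X(q)K)$ is separately non-integrable against $(\chi_E-\chi_{E^c})$, but their leading $d^{-n-s-1}$ singularities cancel each other (since $\nabla_qK\approx-\nabla_pK$ up to the lower-order error controlled by \eqref{remaining0}), so $g_1(p,\cdot)=O(d^{-n-s})$, and then the $d^{-n-s}$ term of $g_1$ — whose leading coefficient is, in a local chart, proportional to $2\,\text{div}X(p)-(n+s)\langle\nabla X(p)\,\hat v,\hat v\rangle$ — integrates to zero against $(\chi_E-\chi_{E^c})$ by oddness across $\partial E$, leaving an integrable remainder. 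This "second-order cancellation" is what makes the p.v. finite. To reach the exact form one then decomposes $X|_{\partial E}=\xi\,\nu_{\partial E}+X^{\rm tan}$ and integrates the $\text{div}_q$-part of $g_1$ by parts in $q$ over $M\setminus B_\varrho(p)$ while keeping the $\text{div}_p$-part alongside (so that the divergent small-sphere boundary terms of the two cancel as $\varrho\to0$): the normal part $\xi\,\nu_{\partial E}$ produces, after symmetrizing over $\partial E\times\partial E$, the Gagliardo term $\iint_{\partial E\times\partial E}(\xi(p)-\xi(q))^2K$ together with the $\xi(p)^2$-weighted principal value term $-\int_{\partial E}\xi^2\,\text{p.v.}\!\int_M(\chi_E-\chi_{E^c})(q)\big[\text{div}_q(\nu_{\partial E}(q)K)+\text{div}_p(\nu_{\partial E}(p)K)\big]\,dV_q$; the tangential part $X^{\rm tan}$, together with the Jacobian factors $\text{div}X$ and the residual terms, reassemble via surface integration by parts, the identity $\text{div}_{\partial E}\nu_{\partial E}=H[\partial E]$, and the first variation formula \eqref{1stvar} into the last line $-\int_{\partial E}\widetilde H_s[\partial E](p)\big[\xi^2\,\text{div}_p\nu_{\partial E}+\langle\nabla_p^{\rm tan}\xi,X\rangle+\xi^2H[\partial E]-\xi\,\text{div}_p^\perp X\big]\,dp$.

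The main obstacle is this last step: establishing the precise second-order cancellation (and the exact constant multiplying the $\nu_{\partial E}$ principal value term) requires a careful second-order Taylor expansion of $K(p,q)$ off the diagonal on a curved manifold — for which Proposition \ref{prop:kern1} is indispensable — combined with parity arguments in normal coordinates; and the normal/tangential bookkeeping is delicate because there is no global coordinate chart and the tangential flow of $X$ genuinely reparametrizes $\partial E$ (this is exactly what the mean-curvature correction line accounts for, and why it vanishes when $\partial E$ is $s$-minimal). The remaining ingredients — differentiation under the integral, the Lie-derivative computation of $g_1$ and $g_2$, and the first integration by parts — I expect to be routine given Proposition \ref{sdfgsdrgs} and Lemma \ref{enboundslemma2}.
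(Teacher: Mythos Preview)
Your approach is essentially the paper's: both push the $t$-dependence onto the kernel via change of variables, compute $\partial_t G_t|_0$ and $\partial_t^2 G_t|_0$ as iterated divergences (the paper phrases the second derivative as differentiating the first-variation formula in $t$, but this is your $g_2$ computation), reduce to boundary integrals by the divergence theorem, work with the smooth kernel $K_\varepsilon$, and pass to the limit. Your ``master formula'' coincides with the paper's intermediate expression after one integration by parts, and the add--subtract of $2\iint_{\partial E\times\partial E}\xi^2(p)K$ to produce the Gagliardo term is exactly what the paper does.

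The one place where the paper has a sharper device than you describe is the principal-value convergence you flag as the main obstacle. Rather than a direct second-order Taylor expansion of $K$, the paper observes that the problematic combination
\[
\langle\nu_{\partial E}(p),\nabla_p K\rangle + \langle\nu_{\partial E}(q),\nabla_q K\rangle \;=\; \frac{d}{dt}\Big|_{t=0} K\big(\psi^t_{\nu_{\partial E}}(p),\psi^t_{\nu_{\partial E}}(q)\big) \;=:\; K'(p,q)
\]
is itself a kernel of the same type (bounded by $CK$ via Proposition~\ref{sdfgsdrgs}), and then shows $|K'(x,x+z)-K'(x,x-z)|\le C|z|^{-(n+s-1)}$ by writing the increment as an integral of $\frac{d}{dt}K'$ along the flow of a constant vector field and invoking Proposition~\ref{sdfgsdrgs} once more. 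This ``symmetry at first order'' of $K'$ is exactly the cancellation needed to make the p.v.\ against $(\chi_E-\chi_{E^c})$ converge, by the same parity argument you use for $\widetilde H_s$. This reduces what you anticipated as a delicate chart-by-chart Taylor computation to two clean applications of the existing kernel estimates; you may want to adopt it.
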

\begin{proof}
The proof is divided into two steps.

\noindent \textbf{Step 1.} We compute formally, assuming our kernel (and its derivatives) to be smooth. We will explain how to reduce to this case in Step 2, where we will consider the smooth kernel defined in \eqref{app ker K}.

Define $I_{t}:=\textnormal{Per}_s(\psi_X^t(E))$.

We can write:
\begin{align*}
I_{t+r}&=\int_{\psi^t(\psi^r(E))}dp\,\int_{\psi^t(\psi^r(E^c))}\,dq\,K(p,q)=\int_{\psi^t(E)}dp\,\int_{\psi^t(E^c)}\,dq\,K(\psi^r(p),\psi^r(q))J_r(p)J_r(q)\, .
\end{align*}
Differentiating at $r=0$, using the divergence theorem, and using that $K(p,q)=K(q,p)$, we find
\begin{align*}
\frac{d}{ds}\big|_{r=0}I_{t+r}&=\int_{\psi^t(E)}dp\,\int_{\psi^t(E^c)}\,dq\,[\text{div}_p(K(p,q)X(p))+\text{div}_q(K(p,q)X(q))]\\
&=-\int_{\psi^t(\partial E)}dp\,\langle X,\nu_{\Sigma}\rangle_g(p)\int_M dq\, (\chi_{\psi^t(E)}-\chi_{\psi^t(E^c)})(q)K(p,q)\, .
\end{align*}
Substituting $t=0$, we get the formula for the first variation.\\

To compute the second variation, we want to first differentiate the expression with respect to $t$ instead, and then substitute $t=0$. By the product rule, separating according to which of the two integrals gets differentiated, we can write
\begin{align}\label{eqderts}
\frac{d}{dt}\big|_{t=0}\frac{d}{dr}\big|_{r=0}I_{t+r}=&-\frac{d}{dt}\big|_{t=0}\int_{\psi^t(\partial E)}dp\,\langle X,\nu_{\Sigma}\rangle_g(p)\int_M dq\, (\chi_{E}-\chi_{E^c})(q)K(p,q)\\
&-2\int_{\partial E}dp\,\langle X,\nu_{\Sigma}\rangle_g(p)\int_{\partial E}dq\,\langle X,\nu_{\Sigma}\rangle_g(q)K(p,q) \, . \nonumber
\end{align}
To compute the first term, we proceed as follows. Using the divergence theorem, we can write 
$$
\int_{\psi^t(\partial E)}dp\,\langle X,\nu_{\Sigma}\rangle_g(p)\int_M dq\, (\chi_{E}-\chi_{E^c})(q)K(p,q)=\int_{\psi^t(E)}dp\,\int_M dq\, \text{div}_p(K(p,q)X(p))(\chi_{E}-\chi_{E^c})(q)\, .
$$
The time dependence in the right hand side is only on the domain of integration $\psi^t(E)$, which allows us to compute
\begin{align*}
\frac{d}{dt}\big|_{t=0}\int_{\psi^t(\partial E)}dp\,\langle X,\nu_{\Sigma}\rangle_g(p)\int_M dq\, (\chi_{E}-\chi_{E^c})(q)K(p,q)=\int_{\partial E}dp\,\langle X,\nu_{\Sigma}\rangle_g(p)\int_M dq\, \text{div}_p(K(p,q)X(p))(\chi_{E}-\chi_{E^c})(q)\, .
\end{align*}
The divergence $\text{div}_p(K(p,q)X(p))$ can be split into its tangential and perpendicular parts to $\partial E$. Applying the divergence theorem (on $\partial E$) to the tangential part, we have that
\begin{align*}
\int_{\partial E}dp\,\langle X,\nu_{\Sigma}\rangle_g(p)\int_M dq\, \text{div}_p^{\rm tan}(K(p,q)X(p))(\chi_{E}-\chi_{E^c})(q)&=-\int_{\partial E}dp\,\langle \nabla_p^{\rm tan}\langle X,\nu_{\Sigma}\rangle_g, X\rangle_g(p)\int_M dq\, K(p,q)(\chi_{E}-\chi_{E^c})(q)\\
&-\int_{\partial E}dp\,\langle X,\nu_{\Sigma}\rangle_g^2(p)H[\partial E](p)\int_M dq\, K(p,q)(\chi_{E}-\chi_{E^c})(q)\\
&=\int_{\partial E}dp\,[\langle \nabla_p^{\rm tan}\xi, X\rangle_g+\xi^2H[\partial E]](p)\widetilde H_s[\partial E](p)\,.
\end{align*}
We now focus on the perpendicular part of the divergence. Using $\nabla$ to denote covariant differentiation (unless stated otherwise, $\nabla$ always denotes the Riemannian gradient operator), we can compute 
\begin{align*}
\text{div}_p^\perp (K(p,q)X(p))&=\langle \nu_{\partial E}(p), \nabla_{\nu_{\partial E}(p)} [K(p,q)X(p)]\rangle_g\\
&=\langle\nu_{\partial E}(p),X(p)\rangle_g \langle\nu_{\partial E}(p),\nabla_p K(p,q)\rangle_g+\langle\nu_{\partial E}(p),\nabla_{\nu_{\partial E}(p)} X(p)\rangle_g K(p,q)\\
&=\langle\nu_{\partial E}(p),\nabla_p K(p,q)\rangle_g\xi(p)+[\text{div}_p^\perp X(p)] K(p,q)\, .
\end{align*}
Combining these expressions, we find that
\begin{align*}
\frac{d}{dt}\big|_{t=0}\int_{\psi^t(\partial E)}dp\,\langle X,\nu_{\Sigma}\rangle_g(p)\int_M dq\, (\chi_{E}-\chi_{E^c})(q)K(p,q)&=\int_{\partial E}dp\,\xi^2(p)\int_M dq\,(\chi_{E}-\chi_{E^c})(q) [\nu_{\partial E}(p)\cdot\nabla_p K(p,q)]\\
&\ \ - \int_{\partial E} \Big[\langle \nabla_p^{\rm tan}\xi, X\rangle_g - \xi\text{div}_p^\perp X\Big](p)\int_M dq\, K(p,q)(\chi_{E}-\chi_{E^c})(q)\\
&=\int_{\partial E}dp\,\xi^2(p)\int_M dq\,(\chi_{E}-\chi_{E^c})(q) [\nu_{\partial E}(p)\cdot\nabla_p K(p,q)]\\
&\ \ + \int_{\partial E} \widetilde H_s[\partial E](p)\Big[\langle \nabla_p^{\rm tan}\xi, X\rangle_g +\xi^2H[\partial E] - \xi\text{div}_p^\perp X\Big](p)\, .
\end{align*}
Substituting everything into \eqref{eqderts}, we reach
\begin{align*}
\frac{d}{dt}\big|_{t=0}\frac{d}{dr}\big|_{r=0}I_{t+r}=&-\int_{\partial E}dp\,\xi^2(p)\int_M dq\,(\chi_{E}-\chi_{E^c})(q) (\nu_{\partial E}(p)\cdot\nabla_p K(p,q))\\
&-2\int_{\partial E}dp\,\int_{\partial E}dq\,\xi(p)\xi(q)K(p,q) - \int_{\partial E} \widetilde H_s[\partial E](p)\Big[\langle \nabla_p^{\rm tan}\xi, X\rangle_g +\xi^2H[\partial E] - \xi\text{div}_p^\perp X\Big](p) \, . \nonumber
\end{align*}
Adding and subtracting the quantity
$$
\int_{\partial E}dp\,\int_{\partial E}dq\,(\xi^2(p)+\xi^2(q))K(p,q)=2\int_{\partial E}dp\,\int_{\partial E}dq\,\xi^2(p)K(p,q)\, ,
$$
we get
\begin{align*}
\frac{d}{dt}\big|_{t=0}\frac{d}{dr}\big|_{r=0}I_{t+r}=&\int_{\partial E}dp\,\int_{\partial E}dq\,(\xi(p)-\xi(q))^2K(p,q)-2\int_{\partial E}dp\,\int_{\partial E}dq\,\xi^2(p)K(p,q)\\
&-\int_{\partial E}dp\,\xi^2(p)\int_M dq\,(\chi_{E}-\chi_{E^c})(q) \langle\nu_{\partial E}(p),\nabla_p K(p,q)\rangle_g\\
&- \int_{\partial E} \widetilde H_s[\partial E](p)\Big[\langle \nabla_p^{\rm tan}\xi, X\rangle_g +\xi^2H[\partial E] - \xi\text{div}_p^\perp X\Big](p) \, . \nonumber
\end{align*}
We recognise the first term as the fractional Sobolev term in the statement of the Proposition. On the other hand, we note that, were we not considering $K$ to be smooth for now, the second term in the first line and the second line would both involve divergent integrals.\\

We will now rewrite the second term in a way which hints at the possibility of cancellations. Recall that we are considering a fixed $C^1$ extension of $\nu_{\partial E}$ to all of $M$. Using that for $q\in \partial E$ we can write $K(p,q)=\langle\nu_{\partial E}(q) ,\nu_{\partial E}(q) K(p,q)\rangle_g$, and applying the divergence theorem on $E$ and on $E^c$, we can write
\begin{align*}
-2\int_{\partial E}dp\,\int_{\partial E}dq\,\xi^2(p)K(p,q)&=-2\int_{\partial E}dp\,\xi^2(p)\int_{\partial E}dq\,\langle\nu_{\partial E}(q) ,\nu_{\partial E}(q) K(p,q)\rangle_g\\
&=-\int_{\partial E}dp\,\xi^2(p)\int_{ E}dq\,\text{div}_q(\nu_{\partial E}(q)K(p,q))+\int_{\partial E}dp\,\xi^2(p)\int_{ E^c}dq\,\text{div}_q(\nu_{\partial E}(q)K(p,q))\\
&=\int_{\partial E}dp\,\xi^2(p)\int_{ M}dq\,(\chi_{E^c}-\chi_{E})(q)\text{div}_q(\nu_{\partial E}(q)K(p,q))\,.
\end{align*}
Moreover, the second line in the second variation can be rewritten as
\begin{align*}
&-\int_{\partial E}dp\,\xi^2(p)\int_M dq\,(\chi_{E}-\chi_{E^c})(q) \langle\nu_{\partial E}(p),\nabla_p K(p,q)\rangle_g=\\
&=\int_{\partial E}dp\,\xi^2(p)\text{div}_p(\nu_{\partial E}(p))\int_M dq\,(\chi_{E}-\chi_{E^c})(q) K(p,q)-\int_{\partial E}dp\,\xi^2(p)\int_M dq\,(\chi_{E}-\chi_{E^c})(q) \text{div}_p(\nu_{\partial E}(p) K(p,q))\\
&=-\int_{\partial E}dp\,\xi^2(p)\text{div}_p(\nu_{\partial E}(p))\widetilde H_s[\partial E](p)+\int_{\partial E}dp\,\xi^2(p)\int_M dq\,(\chi_{E^c}-\chi_{E})(q) \text{div}_p(\nu_{\partial E}(p) K(p,q))\, .
\end{align*}
Substituting all of this into the second variation, we finally get
\begin{align*}
\frac{d}{dt}\big|_{t=0}\frac{d}{dr}\big|_{r=0}I_{t+r}=&\iint_{\partial E\times \partial E} (\xi(p)-\xi(q))^2K(p,q)\,dp\,dq\\
&-\int_{\partial E}dp\, \xi(p)^2 \, \int_M dq\, (\chi_E-\chi_{E^c})(q)[\textnormal{div}_q (\nu_{\Sigma}(q)K(p,q))+\textnormal{div}_p (\nu_{\Sigma}(p)K(p,q))]\\
&-\int_{\partial E}dp\, \widetilde H_s[\partial E](p)\Big[\xi^2\text{div}_p(\nu_{\partial E})+\langle \nabla_p^{\rm tan}\xi, X\rangle_g +\xi^2H[\partial E] - \xi\text{div}_p^\perp X\Big](p)\,,
\end{align*}
which is the formula we wanted to prove.

\noindent \textbf{Step 2.} We remove the smoothness assumption on the kernel.

Let $K_\ep(p,q)=\frac{s/2}{\Gamma(1-s/2)} \int_{0}^{\infty} H_M(p,q,t) e^{-\ep^2/4t} \frac{dt}{t^{1+s/2}}$, which is smooth, and which as $\ep\to 0$ increases monotonically to $K(p,q)$ and converges uniformly to $K(p,q)$ away from the diagonal of $M\times M$.

Fix a vector field $X$ on $M$. Applying Step 1 with $K_\ep$, we see that, defining
\begin{equation*}
\textnormal{Per}_s^\ep(F):= 2\int_{F}\int_{F^c} K_{\ep}(p,q)dV_pdV_q\,,
\end{equation*}
and moreover $I_{t}^\ep:=\textnormal{Per}_s^\ep(\psi_X^t(E))$ and $\widetilde H_s^\ep[\partial E](p)=\int_{M}(\chi_E-\chi_{E^c})(q)K_{\ep}(p,q)$,
the formulas
\begin{equation}\label{1varfep}
    \frac{d}{dt}\big|_{t=0}I_{t}^\ep=\int_{\partial E}dp\, \widetilde H_s^\ep[\partial E](p)\xi(p)
\end{equation}
and
\begin{align}
\frac{d^2}{dt^2}\big|_{t=0}I_{t}^\ep=&\iint_{\partial E\times \partial E} (\xi(p)-\xi(q))^2K_\ep(p,q)\,dp\,dq\label{2varfep}\\
&-\int_{\partial E}dp\, \xi(p)^2 \int_M dq\, (\chi_E-\chi_{E^c})(q)[\textnormal{div}_q (\nu_{\Sigma}(q)K_\ep(p,q))+\textnormal{div}_p (\nu_{\Sigma}(p)K_\ep(p,q))] \nonumber\\
&-\int_{\partial E}dp\, \widetilde H_s^\ep[\partial E](p)\Big[\xi^2 {\rm div}_p(\nu_{\partial E})+\langle \nabla_p^{\rm tan}\xi, X\rangle_g +\xi^2H[\partial E] - \xi{\rm div}_p^\perp X\Big](p) \nonumber
\end{align}
hold. We want to argue now by approximation to show that the formulas are true for $K$ as well, as formulated in the statement of the present Proposition.\\

We start by showing that, fixed $t$, we have that
\begin{equation}\label{convItep}
\lim_{\ep\to 0}\frac{d}{dt}I_t^\ep=\frac{d}{dt}I_t \hspace{0.3 cm} \mbox{ and } \hspace{0.3 cm} \lim_{\ep\to 0}\frac{d^2}{dt^2}I_t^\ep=\frac{d^2}{dt^2}I_t\,.
\end{equation}
By monotone convergence, since $K_\ep\nearrow K$ it is already clear that
$\lim_{\ep\to 0}I_t^\ep=I_t
$
for every $t\in\R$.

To conclude \eqref{convItep}, it suffices to show that the $I_t^\ep$, viewed as functions of $t$, are locally uniformly bounded in $C^3$. Indeed, then by Ascoli--Arzel\`a we deduce that for every sequence $\ep_i\to 0$, there is a further subsequence $\ep_{i_j}$ such that $\frac{d^k}{dt^k}I_t^{\ep_{i_j}}$, $k\in\{1,2\}$, converge locally uniformly to some functions, but since $\lim_{\ep\to 0}I_t^\ep=I_t
$
it then easily follows that actually $\lim_{\ep\to 0}\frac{d^k}{dt^k}I_t^\ep=\frac{d^k}{dt^k}I_t$ as desired.\\
It remains then to see that $\frac{d^k}{dt^k}I_t^\ep$ is locally uniformly bounded for $1\leq k \leq 3$. This follows directly from Lemma \ref{enboundslemma2}, applied to a covering of $M$ with small enough balls as given in Remark \ref{flatscalingrmk}; to be precise, Lemma \ref{enboundslemma2} is stated for $I_t$ instead, using the bound in Proposition \ref{sdfgsdrgs} for $K$, but the same hold for $K_\ep$ instead of $K$ with exactly the same proof. With this we conclude the proof of \eqref{convItep}.\\

Now that we know that \eqref{convItep} holds, and in particular for $t=0$, we see that the LHS of \eqref{1varfep} and \eqref{2varfep} converge as $\ep\to 0$ to the desired limits. The arguments which follow will be devoted to showing that each of the terms in the RHS of \eqref{1varfep} and \eqref{2varfep} converge to the analogous ones with $K$ instead of $K_\ep$, and with this we will conclude the proof of Proposition \ref{12varprop}.

We start by showing that
\begin{equation}\label{1varfepconv}
    \int_{\partial E}dp\, \widetilde H_s^\ep[\partial E](p)\xi(p)\xrightarrow{\ep\to 0} \int_{\partial E}dp\, \widetilde H_s[\partial E](p)\xi(p)\,,
\end{equation}
for which it suffices to prove that
\begin{equation}\label{hsepconv}
 \widetilde H_s^\ep[\partial E](p)\xrightarrow{\ep\to 0} \widetilde H_s[\partial E](p) \ \mbox{ uniformly in } p
\end{equation}
to some limit $\widetilde H_s[\partial E](p)$.\\
Now, by Remark \ref{fbsvdg}, the flatness assumptions ${\rm FA}_\ell(M,g,R_0,p,\varphi)$ are satisfied at every $p\in M$, where $\varphi$ denotes the exponential map on $T_p M$. Fix one such $p\in M$, and let $F:=\varphi^{-1}(E)$. From the definition of $\widetilde H_s$, given $\delta>0$ by passing to coordinates we can write 
\begin{align*}
    -\widetilde H_s^\ep[\partial E](p)&=\int_{M}(\chi_E-\chi_{E^c})(q)K_{\ep}(p,q)\\
    &=\int_{\B_\delta}dy\, (\chi_F-\chi_{F^c})(y) K_\ep(0,\varphi(y))\sqrt{|g|(y)}+\int_{M\setminus \varphi(\B_\delta)}(\chi_E-\chi_{E^c})(q)K_{\ep}(p,q)\,,
\end{align*}
and analogously for $\widetilde H_s^\ep[\partial E](p)$.\\
It is clear that, fixed $\delta$,
\begin{align*}
    \int_{M\setminus \varphi(\B_\delta)}(\chi_E-\chi_{E^c})(q)K_{\ep}(p,q)\xrightarrow{\ep\to 0}\int_{M\setminus \varphi(\B_\delta)}(\chi_E-\chi_{E^c})(q)K_{\ep}(p,q)\,.
\end{align*}
Therefore, to conclude that $\widetilde H_s^\ep[\partial E](p)\to \widetilde H_s[\partial E](p)$ it suffices to show that $\int_{\B_\delta}dy\, (\chi_F-\chi_{F^c})(y)K_\ep(0,y)\sqrt{|g|(y)}$ converges to zero as $\delta\to 0$, uniformly in $\ep$. Thanks to having chosen normal coordinates we have $g=Id+O(|x|^2)$, which combined with \eqref{remaining0} (see also Lemma \ref{loccomparability}) shows that
\begin{align*}
   \int_{\B_\delta}dy\, (\chi_F-\chi_{F^c})(y)K_\ep(0,\varphi(y))\sqrt{|g|(y)}&=\int_{\B_\delta}dy\, (\chi_F-\chi_{F^c})(y)K_\ep(0,\varphi(y))+O\Big(\int_{\B_\delta}dy\,K_\ep(0,\varphi(y))|y|^2\Big)\\
   &=\int_{\B_\delta}dy\, (\chi_F-\chi_{F^c})(y)\frac{\alpha_{n,s}}{|y|^{n+s}}+O\Big(\int_{\B_\delta}dy\,\frac{1}{|y|^{n+s-1}}\Big)\\
   &=\int_{\B_\delta}dy\, (\chi_F-\chi_{F^c})(y)\frac{\alpha_{n,s}}{|y|^{n+s}}+O(\delta^{1-s})\,.
\end{align*}
Up to a rotation, we can assume that $\partial F$ is tangent to $\{x_n=0\}$ at $x=0$. Since $\partial E$ is a compact $C^2$ hypersurface, there is some $r>0$ (independent of $p$) such that we can write $\partial F$ around $0$ as the graph of a function $g:\B_\delta'\to (-\delta,\delta)$, satisfying moreover $g(x')=h(x')|x'|^2$ for some continuous function $h$. Let $M:=\max |h|$, which is again bounded independently of $p$. Letting $H=\chi_{\{y_n<0\}}$ we see that $F\Delta H\subset \{|y_n|\leq M|y'|^2\}$, and likewise for $F^c\Delta H^c$. Moreover, observe that (by symmetry)
$$\int_{\B_\delta}dy\, (\chi_H-\chi_{H^c})(y)\frac{\alpha_{n,s}}{|y|^{n+s}}=0\,.$$
We can then write
\begin{align*}
   \int_{\B_\delta}dy\, (\chi_F-\chi_{F^c})(y)\frac{\alpha_{n,s}}{|y|^{n+s}}&=\int_{\B_\delta}dy\, \big[(\chi_F-\chi_{F^c})-(\chi_H-\chi_{H^c})\big](y)\frac{\alpha_{n,s}}{|y|^{n+s}}\\
   &=O\Big(\int_{\B_\delta\cap (F\Delta H)}dy\,\frac{\alpha_{n,s}}{|y|^{n+s}}\Big)=O\Big(\int_{\B_\delta\cap \{|y_n|\leq M|y'|^2\}}dy\,\frac{\alpha_{n,s}}{|y|^{n+s}}\Big)\,.
\end{align*}
Passing to polar coordinates it is immediate to see that the last integral is bounded by $C\int_0^{\delta} dr\,\frac{1}{r^{s}}=C\delta^{1-s}$. With this, as mentioned above, we conclude our proof of \eqref{hsepconv}. In particular, we deduce \eqref{1varfepconv}.\\

We now focus on the terms in the RHS of \eqref{2varfep}. Thanks to \eqref{hsepconv}, we immediately see that
\begin{align*}
    &\int_{\partial E} dp\,\widetilde H_s^\ep[\partial E](p)\Big[\xi^2 {\rm div}_p(\nu_{\partial E})+\langle \nabla_p^{\rm tan}\xi, X\rangle_g +\xi^2H[\partial E] - \xi{\rm div}_p^\perp X\Big](p)\\
    &\hspace{1cm}\xrightarrow{\ep\to 0} \int_{\partial E} dp\,\widetilde H_s[\partial E](p)\Big[\xi^2 {\rm div}_p(\nu_{\partial E})+\langle \nabla_p^{\rm tan}\xi, X\rangle_g +\xi^2H[\partial E] - \xi{\rm div}_p^\perp X\Big](p)\,,
\end{align*}
Moreover, it is clear that
$$
\int_{\partial E}dp\,\int_{\partial E}dq\,(\xi(p)-\xi(q))^2K_\ep(p,q)\xrightarrow{\ep\to 0} \int_{\partial E}dp\,\int_{\partial E}dq\,(\xi(p)-\xi(q))^2K(p,q)\,,$$
simply by monotone convergence.\\
It remains to see that the second term in \eqref{2varfep} converges as $\ep\to 0$. Recall that $\nu_{\partial E}$ denotes a $C^1$ extension to $M$ of the outer normal vector to $E$. We start by applying the Leibniz (product) rule for the divergences appearing in the integral, by which we can expand:
\begin{align}
    &\int_M dq\,(\chi_{E}-\chi_{E^c})(q)[\text{div}_p(\nu_{\partial E}(p) K_\ep(p,q))+\text{div}_q(\nu_{\partial E}(q)K_\ep(p,q))]= \label{divin2vt}\\
    &\hspace{0.75cm}=\text{div}_p(\nu_{\partial E}(p))\int_M dq\,(\chi_{E}-\chi_{E^c})(q) K_\ep(p,q)+\int_M dq\,(\chi_{E}-\chi_{E^c})(q)\text{div}_q(\nu_{\partial E}(q))K_\ep(p,q) \nonumber\\
    &\hspace{1.25cm}+\int_M dq\,(\chi_{E}-\chi_{E^c})(q)\langle\nu_{\partial E}(p), \nabla_p K_\ep(p,q)\rangle_g+\int_M dq\,(\chi_{E}-\chi_{E^c})(q)\langle\nu_{\partial E}(q),\nabla_q K_\ep(p,q)\rangle_g\,. \nonumber
\end{align}
Thanks to the smoothness of $\nu_{\partial E}(p)$, we can Taylor--expand $\text{div}_q(\nu_{\partial E}(q))=\text{div}_p(\nu_{\partial E}(p))+h(p,q)$, with $h(p,q)= O({\rm dist}(p,q))$. We can then write the first two terms on the right hand side as
\begin{align*}
    2\text{div}_p(\nu_{\partial E}(p))\int_M dq\,(\chi_{E}-\chi_{E^c})(q) K_\ep(p,q)+\int_M dq\,(\chi_{E}-\chi_{E^c})(q)h(p,q)K_\ep(p,q)\,.
\end{align*}
The first integral (which corresponds to $\widetilde H_s^\ep(p)$) converges as $\ep\to 0$ by \eqref{hsepconv}. The second integral can be shown to converge simply by dominated convergence (without the need for cancellations), since combining Lemma \ref{loccomparability} (see also Proposition \ref{kermfdconv}) and the fact that $h(p,q)= O({\rm dist}(p,q))$ we can bound $h(p,q)K_\ep(p,q)\leq \frac{C}{{\rm dist}^{n+s-1}(p,q)}$, and the latter is Lebesgue integrable in $q$.\\

We now focus on the last two terms on the RHS of \eqref{divin2vt}. These are the most challenging ones, since it is not obvious at all at first sight how they can be made to converge as $\ep\to 0$ (observe that $|\nabla_p K(p,q)|$ is of order $\frac{1}{{\rm dist}^{n+s+1}(p,q)}$, which is $1+s$ orders away from being Lebesgue integrable), and necessarily one needs to exploit some cancellations. In fact, the two terms need to be considered together, i.e. we write them as $$\int_M dq\,(\chi_{E}-\chi_{E^c})(q)[\nu_{\partial E}(p)\nabla_p K_\ep(p,q)+\nu_{\partial E}(q)\nabla_q K_\ep(p,q)]\,.$$
Let $\psi_{\nu_{\partial E}}^t$ denote the flow of the extended vector field $\nu_{\partial E}$, which is defined on all of $M$, at time $t$. Note that this is not related in any way to $\psi_X^t$, the flow of the vector field $X$ that we fixed at the beginning, and which does not appear at all in the integral we are considering. The first crucial observation is that we can actually write
\begin{align*}
\nu_{\partial E}(p)\nabla_p K(p,q)+\nu_{\partial E}(q)\nabla_q K(p,q)&=\frac{d}{dt}\big|_{t=0} [K(\psi_{\nu_{\partial E}}^t(p),q)+K(p,\psi_{\nu_{\partial E}}^t(q))]\\
&=\frac{d}{dt}\big|_{t=0} K(\psi_{\nu_{\partial E}}^t(p),\psi_{\nu_{\partial E}}^t(q))\\
&=:K'(p,q)\, .
\end{align*}
This new kernel $K'(p,q)$ can be bounded by $K(p,q)$, thanks to \eqref{cdsacsdc1} of Proposition \ref{sdfgsdrgs}. Likewise, defining $K'_\ep(p,q)=\frac{d}{dt}\big|_{t=0} K_\ep(\psi_{\nu_{\partial E}}^t(p),\psi_{\nu_{\partial E}}^t(q))$, we get that it can also be bounded by $K(p,q)$: Proposition \ref{sdfgsdrgs} is stated for $K$, but its proof works exactly the same for $K_\ep$. We then need to make sense of 
$\int_M dq\,(\chi_{E}-\chi_{E^c})(q)K'(p,q)$ as a singular integral, and for that, the second crucial observation is that $K'_\ep$ is actually symmetric at first order:\\

\noindent \textbf{Claim.} Denoting $K'_\ep(x,y):=K'_\ep(\varphi(x),\varphi(y))$, we have that $|K'_\ep(x,x+z)-K'_\ep(x,x-z)|\leq \frac{C}{|z|^{n+s-1}}$.
\begin{proof}[Proof of the claim]
    Since $K'_\ep(p,q)=K'_\ep(q,p)$, we can rewrite this as $|K'_\ep(x,x+z)-K'_\ep(x-z,x)|\leq \frac{C}{|z|^{n+s-1}}$. Equivalently, defining the constant vector field $v=\frac{z}{|z|}$, this corresponds to $|K'_\ep(x-z+|z|v,x+|z|v)-K'_\ep(x-z,x)|\leq \frac{C}{|z|^{n+s-1}}$. Consider the vector field $V=v\eta$, where $\eta$ is a cutoff with small support around $x$, and let $\psi_V^t$ denote the flow of $V$ at time $t$; then, $(x-z+|z|v,x+|z|v)=(\psi_V^{|z|}(x-z),\psi_V^{|z|}(x))$, so that our desired inequality can be rewritten as $|K'_\ep(\psi_V^{|z|}(x-z),\psi_V^{|z|}(x))-K'_\ep(x-z,x)|\leq \frac{C}{|z|^{n+s-1}}$. By the fundamental theorem of calculus, the LHS is \begin{align*}
    \Big|K'_\ep(\psi_V^{|z|}(x-z),\psi_V^{|z|}(x))-K'_\ep(x-z,x)\Big|&=\Big|\int_0^{|z|}dt\, \frac{d}{dt}K'_\ep(\psi_V^{t}(x-z),\psi_V^{t}(x))\Big|\\
    &\leq \int_0^{|z|}dt\, \Big|\frac{d}{dt}K'_\ep(\psi_V^{t}(x-z),\psi_V^{t}(x))\Big|\\
    &\leq C|z|\sup_{0\leq t \leq |z|} \Big|\frac{d}{dt}K'_\ep(\psi_V^{t}(x-z),\psi_V^{t}(x))\Big|\,.
\end{align*}
Finally, arguing as in the proof of Proposition \ref{sdfgsdrgs}, which corresponds to \cite[Proposition 3.9]{FracSobPaper} (recall that $K'_\ep(p,q)=\frac{d}{dt}\big|_{t=0} K_\ep(\psi_{\nu_{\partial E}}^t(p),\psi_{\nu_{\partial E}}^t(q))$), we see that we can bound
$$
\sup_{0\leq t \leq |z|} \Big|\frac{d}{dt}K'_\ep(\psi_V^{t}(x-z),\psi_V^{t}(x))\Big|\leq CK(x-z,x)\,,$$
which after applying Lemma \ref{loccomparability} concludes the proof of the claim.
\end{proof}
With this claim at hand, the proof of the convergence of $$\int_M dq\,(\chi_{E}-\chi_{E^c})(q)[\nu_{\partial E}(p)\nabla_p K_\ep(p,q)+\nu_{\partial E}(q)\nabla_q K_\ep(p,q)]=\int_M dq\,(\chi_{E}-\chi_{E^c})(q)K'_\ep(p,q)$$ to
$$
\int_M dq\,(\chi_{E}-\chi_{E^c})(q)K'(p,q)
$$
follows like the proof of the convergence of $\widetilde H_s^\ep[\partial E](p)=\int_M dq\,(\chi_{E}-\chi_{E^c})(q)K_\ep(p,q)$ to $\widetilde H_s[\partial E](p)$. The only difference is that, in place of the full cancellation property
$$\int_{\B_\delta}dy\, (\chi_H-\chi_{H^c})(y)\frac{\alpha_{n,s}}{|y|^{n+s}}=0\,,$$
one needs to estimate
$$\int_{\B_\delta}dy\, (\chi_H-\chi_{H^c})(y)K'_\ep(0,y)$$
by some function of $\delta$ which goes to zero as $\delta\to 0$. To see that the latter holds, using the symmetry of the Lebesgue measure and the claim, we compute
\begin{align*}
    \int_{\B_\delta}dy\, \chi_H(y)K'_\ep(0,y)&=\int_{\B_\delta}dy\, \chi_H(-y)K'_\ep(0,-y)=\int_{\B_\delta}dy\, \chi_{H^c}(y)K'_\ep(0,-y)\\
    &=\int_{\B_\delta}dy\, \chi_{H^c}(y)K'_\ep(0,y)+O\Big(\int_{\B_\delta}dy\, |K'_\ep(0,-y)-K'_\ep(0,y)|\Big)\\
    &=\int_{\B_\delta}dy\, \chi_{H^c}(y)K'_\ep(0,y)+O\Big(\int_{\B_\delta}dy\, \frac{1}{|y|^{n+s-1}}\Big)\\
    &=\int_{\B_\delta}dy\, \chi_{H^c}(y)K'_\ep(0,y)+O\Big(\delta^{1-s}\Big)\,.
\end{align*}
Bringing the first term on the RHS to the LHS, we see that
$$\int_{\B_\delta}dy\, (\chi_H-\chi_{H^c})(y)K'_\ep(0,y)=O\Big(\delta^{1-s}\Big)$$
as desired.
\end{proof}

The formulas above motivate the following definitions.
\begin{definition}\label{1vardef} For a set $E$ as in Proposition \ref{12varprop}, as indicated there the fractional mean curvature $\widetilde H_s[\partial E](p)$ at a point $p\in\partial E$ is defined as
\begin{equation*}
    \widetilde H_s[\partial E](p)=-\,{\rm p.v.} \int_M dq\, (\chi_{E}-\chi_{E^c})(q)K_s(p,q)\, .
\end{equation*}
In particular, by arbitrariness of $\xi$ in \eqref{1stvar}, $E$ is an $s$-minimal surface (recall Definition \ref{def-fracminsurface}) iff $\widetilde H_s[\partial E](p)=0$ for every $p\in\partial E$.\\
Denoting $\sigma:=1-s$, we also set
\begin{equation*}
    H_s[\partial E](p) := \sigma \widetilde H_s[\partial E](p)\, ,
\end{equation*}
with which we will work in the rest of the article.
Then, up to a multiplicative dimensional constant, $H_s[\partial E]$ converges as $s\to 1$ to the classical mean curvature $H[\partial E]$, as we will see later.
\end{definition}

\begin{definition}\label{2quaddef} The second variation $\delta^2 \textnormal{Per}_s(E)$ of an $s$-minimal surface $E$ is defined as the quadratic form acting on test functions $\xi\in C^1(\partial E)$ by
    \begin{align}
        \delta^2 \textnormal{Per}_s(E)[\xi,\xi]=\iint_{\partial E\times\partial E}&|\xi(p)-\xi(q)|^2K(p,q)\\
        &-\int_{\partial E}\,dp\,\xi^2(p)\,{\rm p.v.}\int_M\,dq\,(\chi_E-\chi_{E^c})(q)\Big[\textnormal{div}_p(\nu_{\partial E}(p)K(p,q))+\textnormal{div}_q(\nu_{\partial E}(q)K(p,q))\Big]\, .
    \end{align}
    If $X$ is a $C^1$ vector field on $M$, set $\xi=X\cdot\nu$ on $\partial E$. By Proposition \ref{12varprop}, since $\widetilde H_s[\partial E](p)=0$ we then have that $\frac{d^2}{dt^2}\Big|_{t=0}\textnormal{Per}_s(\psi_X^t(E))=\delta^2 \textnormal{Per}_s(E)[\xi,\xi]$.
\end{definition}

\subsection{A first classical perimeter estimate for $s$-minimal surfaces with bounded index}
In this section, we will prove the following classical perimeter estimate in all dimensions:
\begin{theorem}[\textbf{Perimeter estimate}]\label{BVest}
Let $p_0 \in M$, $s_0 \in (0,1)$, $s \in (s_0,1)$ and assume that $M$ satisfies the flatness assumption ${\rm FA}_2(M,g,1,p_0,\varphi)$. Let $E$ be a $C^2$ $s$-minimal surface with index bounded by $m$ in $\varphi(\B_1)$. 

\vspace{1pt}
Then, there exists a constant $ C=C(n,s_0,m)$ such that
\begin{equation}\label{bvunofin}
\textnormal{Per}(E;\varphi(\B_{1/2})) \leq \frac{C}{1-s} \,.
\end{equation}
If moreover we add the assumption that
\begin{equation}\label{bvfin}
    \textnormal{Per}_s(E;\varphi(\B_1))\leq \frac{\kappa}{1-s}=\frac{\kappa}{\sigma}
\end{equation}
for some $\kappa>0$, then we get the improved estimate
\begin{equation}\label{bvufin}
\textnormal{Per}(E;\varphi(\B_{1/2})) \leq C\frac{1+\sqrt \kappa}{\sqrt{\sigma}} \,.
\end{equation} 
\end{theorem}
\begin{remark}
    The proof is similar to the one in \cite{CFS}, which deals with a BV estimate for solutions of Allen--Cahn with bounded index.
\end{remark}
\begin{remark}
    In the case where $\textnormal{Per}_s(E;\varphi(\B_1))\leq \frac{\kappa}{\sigma}$, for $n=3$ we will eventually show (see Proposition \ref{LocBoundFMI}) that $\textnormal{Per}(E;\varphi(\B_{1/2}))\leq C(\kappa)$, which is what one naturally expects: Recall that, at least for a fixed smooth set $E$, $\displaystyle \lim_{s\to 1} (1-s){\rm Per}_s(E)= \gamma_n{\rm Per}(E)$, and a similar convergence will be obtained in the present article for sequences $E_k$ with bounded index. Nevertheless, the non-optimal perimeter bound in Theorem \ref{BVest} will prove very helpful in the proof of the sharp result.
\end{remark}
Before giving the proof of Theorem \ref{BVest}, we need some preliminary results that will be repeatedly used during the article. The first one is a lemma which deals with the finite index property in the nonlocal case. For critical points of local functionals, it is a standard fact that having Morse index bounded by $m$ implies stability in one out of every $m+1$ disjoint open sets. For nonlocal functionals this is not true anymore, but in one of the sets we can obtain a weaker, quantitative lower bound on the second derivative which we will refer to as \textit{almost stability}.
\begin{definition}[\textbf{Almost stability}]\label{almoststab}
Let $\Omega\subset M$ open and $E\subset M$ be an $s$-minimal surface in $\Omega$. Given $\Lambda\in\R$, we say that $E$ is \textit{$\Lambda$-almost stable in $\Omega$} if 
\begin{equation}\label{lamasineq}
    \delta^2\textnormal{Per}_s(E)[\xi,\xi] \ge -\Lambda \Big(\int_{\partial E\cap \Omega} |\xi|\Big)^2  \,\quad \forall \, \xi \in C_c^{1}(\partial E\cap \Omega) \,.
\end{equation}
\end{definition}

\begin{lemma}[\textbf{Finite Morse index and almost stability}]\label{asineq}
Let $\Omega\subset M$ open and let $E\subset M$ be an $s$-minimal surface in $\Omega$ with Morse index at most $m$. Consider a collection $\mathcal{U}_1,\dotsc, \mathcal{U}_{m+1}$ of $(m+1)$ disjoint open subsets of $\Omega$, and set
\begin{equation*}
\Lambda := m\max_{i\neq j} \sup_{\mathcal{U}_i\times \mathcal{U}_j} K(p,q) \,.    
\end{equation*}
Then, there is (at least) one set $\mathcal{U}_k$ among $\mathcal{U}_1, \dotsc , \mathcal{U}_{m+1}$ such that $E$ is \textit{$\Lambda$-almost stable in $\mathcal{U}_k$}, that is 
\begin{equation*}
    \delta^2\textnormal{Per}_s(E)[\xi,\xi] \ge -\Lambda \Big(\int_{\partial E\cap \mathcal{U}_k} |\xi|\Big)^2  \,\quad \forall \, \xi \in C_c^{1}(\partial E\cap \mathcal{U}_k) \,.
\end{equation*}
\end{lemma}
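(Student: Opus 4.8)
The plan is to argue by contradiction, mimicking the classical proof that bounded Morse index forces stability on one of $m+1$ disjoint domains, but carefully tracking the nonlocal cross-interaction terms. Suppose that on \emph{each} of the $m+1$ sets $\mathcal{U}_k$ there is a test function $\xi_k \in C_c^1(\partial E \cap \mathcal{U}_k)$ witnessing the failure of $\Lambda$-almost stability, i.e. $\delta^2\textnormal{Per}_s(E)[\xi_k,\xi_k] < -\Lambda\big(\int_{\partial E\cap\mathcal{U}_k}|\xi_k|\big)^2$. Since the $\mathcal{U}_k$ are disjoint, these $\xi_k$ have disjoint supports. The idea is to show that \emph{every} nonzero linear combination $\xi = \sum_{k=1}^{m+1} a_k \xi_k$ satisfies $\delta^2\textnormal{Per}_s(E)[\xi,\xi] < 0$; this produces an $(m+1)$-dimensional space of directions of negative second variation. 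Realizing these $\xi_k$ as normal components $X_k\cdot\nu$ of compactly supported vector fields $X_k \in \mathfrak{X}_c(\mathcal{U}_k)$ (possible since $\partial E$ is $C^2$ and the $\xi_k$ are $C^1$ compactly supported), and using Definition \ref{2quaddef} which identifies $\tfrac{d^2}{dt^2}\big|_{t=0}\textnormal{Per}_s(\psi^t_X(E)) = \delta^2\textnormal{Per}_s(E)[\xi,\xi]$ for an $s$-minimal surface, this contradicts the Morse index bound in Definition \ref{WeakMorseDef}: indeed, that definition demands that among any $m+1$ such vector fields there is a unit linear combination with nonnegative second variation.

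The key computation is to expand $\delta^2\textnormal{Per}_s(E)[\xi,\xi]$ for $\xi = \sum_k a_k\xi_k$ and extract the cross terms. The quadratic form from Definition \ref{2quaddef} has two pieces: the fractional Sobolev (Gagliardo) double integral $\iint_{\partial E\times\partial E}|\xi(p)-\xi(q)|^2 K(p,q)$, and the single integral $-\int_{\partial E}\xi^2(p)\,\mathrm{p.v.}\!\int_M(\chi_E-\chi_{E^c})(q)[\cdots]$. The second piece is purely ``diagonal'' in the decomposition (it depends on $\xi$ only pointwise via $\xi^2$), so it splits exactly as $\sum_k a_k^2 \int_{\partial E}\xi_k^2(p)[\cdots]$, with no cross terms, because the $\xi_k$ have disjoint supports. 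The first piece expands as
\begin{align*}
\iint_{\partial E\times\partial E}\Big|\sum_k a_k(\xi_k(p)-\xi_k(q))\Big|^2 K(p,q)
= \sum_k a_k^2 \iint_{\partial E\times\partial E}|\xi_k(p)-\xi_k(q)|^2 K(p,q)
+ \sum_{k\neq l} a_k a_l \,\mathcal{C}_{kl}\,,
\end{align*}
where the cross term, using $\xi_k(p)\xi_l(p)=0$ for all $p$ (disjoint supports), reduces to
\begin{align*}
\mathcal{C}_{kl} = -2\iint_{\partial E\times\partial E}\xi_k(p)\xi_l(q)K(p,q)\,dp\,dq
= -2\iint_{(\partial E\cap\mathcal{U}_k)\times(\partial E\cap\mathcal{U}_l)}\xi_k(p)\xi_l(q)K(p,q)\,dp\,dq\,.
\end{align*}
Bounding $|\mathcal{C}_{kl}| \le 2\big(\sup_{\mathcal{U}_k\times\mathcal{U}_l}K\big)\big(\int_{\partial E\cap\mathcal{U}_k}|\xi_k|\big)\big(\int_{\partial E\cap\mathcal{U}_l}|\xi_l|\big)$ and writing $b_k := |a_k|\int_{\partial E\cap\mathcal{U}_k}|\xi_k|$, we get $\sum_{k\neq l}a_ka_l\mathcal{C}_{kl} \le 2(\max_{i\neq j}\sup K)\sum_{k\neq l}b_k b_l \le 2(\max_{i\neq j}\sup K)\big[(\sum_k b_k)^2 - \sum_k b_k^2\big]$. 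Since at most $m+1$ indices are present, $(\sum_k b_k)^2 \le (m+1)\sum_k b_k^2$, giving $\sum_{k\neq l}a_ka_l\mathcal{C}_{kl} \le 2m(\max_{i\neq j}\sup K)\sum_k b_k^2 = 2\Lambda_0 \sum_k a_k^2 \big(\int_{\partial E\cap\mathcal{U}_k}|\xi_k|\big)^2$ with $\Lambda_0 := m\max_{i\neq j}\sup_{\mathcal U_i\times\mathcal U_j}K = \Lambda$. Combining with the assumed strict inequalities $\delta^2\textnormal{Per}_s(E)[\xi_k,\xi_k] < -\Lambda\big(\int_{\partial E\cap\mathcal{U}_k}|\xi_k|\big)^2$, we obtain
\begin{align*}
\delta^2\textnormal{Per}_s(E)[\xi,\xi] \le \sum_k a_k^2\,\delta^2\textnormal{Per}_s(E)[\xi_k,\xi_k] + \Lambda\sum_k a_k^2\Big(\int_{\partial E\cap\mathcal{U}_k}|\xi_k|\Big)^2 < 0
\end{align*}
for every nonzero $(a_1,\dots,a_{m+1})$, the desired contradiction.

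The main subtlety — really the only place where nonlocality matters — is pinning down exactly which constant makes the cross-interaction estimate close, and in particular checking that the factor $m$ (rather than $m+1$) in the definition of $\Lambda$ is the right one: this comes from the elementary bound $(\sum_{k=1}^{m+1} b_k)^2 - \sum_k b_k^2 \le m\sum_k b_k^2$, which is sharp. A secondary point to be careful about is the passage between test functions $\xi_k$ on $\partial E$ and ambient vector fields $X_k$: one must check that the $X_k$ can be taken compactly supported in $\mathcal{U}_k$ and that $X_k\cdot\nu = \xi_k$ on $\partial E$ (standard, using a tubular neighborhood of the $C^2$ hypersurface $\partial E$ and cutting off away from $\mathcal{U}_k$), and that the principal-value integral in the second variation is well-defined — but this is exactly the content of Proposition \ref{12varprop} and Definition \ref{2quaddef}, which we are entitled to invoke. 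Everything else is the same bookkeeping as in the local case.
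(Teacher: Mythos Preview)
Your approach is essentially identical to the paper's (which writes out the case $m=1$ and asserts the general case is the same): expand the quadratic form, kill the diagonal cross terms by disjointness of supports, bound the off-diagonal Sobolev interactions by $L^1$ norms, and contradict the index bound. The paper uses AM--GM on each pair where you use Cauchy--Schwarz on the full sum; both give the same constant.

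There is, however, a factor-of-two slip in your last display. Your cross-term estimate correctly yields
\[
\sum_{k\neq l}a_ka_l\,\mathcal C_{kl}\ \le\ 2\Lambda\sum_k a_k^2\Big(\int_{\partial E\cap\mathcal U_k}|\xi_k|\Big)^2,
\]
but in the next line you write $+\Lambda\sum_k(\cdots)$ instead of $+2\Lambda\sum_k(\cdots)$, and with the factor $2$ restored the assumed inequality $\delta^2\textnormal{Per}_s(E)[\xi_k,\xi_k]<-\Lambda(\int|\xi_k|)^2$ is no longer enough to force strict negativity. Amusingly, the paper's own proof has a compensating typo: the cross term in its expansion should read $-4ab\iint\xi_1\xi_2K$ rather than $-2ab\iint\xi_1\xi_2K$ (each of the two domain orderings $\mathcal U_1\times\mathcal U_2$ and $\mathcal U_2\times\mathcal U_1$ contributes once). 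So the argument, both yours and the paper's, actually proves the lemma with constant $2\Lambda=2m\max_{i\neq j}\sup_{\mathcal U_i\times\mathcal U_j}K$ rather than $\Lambda$. This is completely harmless for every application in the paper, where only the finiteness and the scaling of the almost-stability constant matter.
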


\begin{proof}
We prove the Lemma just for $m=1$ for the sake of clarity, the proof goes on exactly the same for general $m$. Let $\xi_1\in C_c^{\infty}(\partial E\cap\mathcal{U}_1)$ and $\xi_2\in C_c^{\infty}(\partial E\cap\mathcal{U}_2)$. Testing the second variation of the fractional perimeter, the expression of which we gave in Definition \ref{2quaddef}, with linear combinations of $\xi_1$ and $\xi_2$ gives (by expanding the square in the Sobolev part of the expression)
\begin{align*}
   \delta^2\textnormal{Per}_s(E)[a\xi_1+b\xi_2,a\xi_1+b\xi_2] =\,
    & a^2\delta^2\textnormal{Per}_s(E)[\xi_1,\xi_1] +b^2\delta^2\textnormal{Per}_s(E)[\xi_{2},\xi_{2}]\\
    &\hspace{1cm}-2ab\iint_{(\partial E\cap\mathcal{U}_1) \times (\partial E\cap\mathcal{U}_2)} \xi_1(p)\xi_2(q) K(p,q) \,.
\end{align*}
Since by assumption $K(p,q) \le \Lambda$ for all $(p,q)\in \mathcal{U}_1\times \mathcal{U}_2$, the interaction term can be bounded as
\begin{equation*}
\begin{split}
   - 2ab\iint_{(\partial E\cap\mathcal{U}_1) \times (\partial E\cap\mathcal{U}_2)}\xi_1(p)\xi_2(q) K(p,q) & \leq 2ab \Lambda \|\xi_1\|_{L^1(\partial E\cap\mathcal{U}_1)}\|\xi_2\|_{L^1(\partial E\cap\mathcal{U}_2)} \\
     &\leq a^2 \Lambda\|\xi_1\|_{L^1(\partial E\cap\mathcal{U}_1)}^2+b^2 \Lambda\|\xi_2\|_{L^1(\partial E\cap\mathcal{U}_2)}^2 . 
\end{split}
\end{equation*}
Hence
\begin{align}
    \delta^2\textnormal{Per}_s(E)[a\xi_1+b\xi_2,a\xi_1+b\xi_2]  &\le a^2 \Big( \underbrace{\delta^2\textnormal{Per}_s(E)[\xi_1,\xi_1]+\Lambda\|\xi_1\|_{L^1(\partial E\cap\mathcal{U}_1)}^2}_{=:F_1(\xi_1)}\Big)\nonumber\\
    &\hspace{1cm}+b^2\Big( \underbrace{\delta^2\textnormal{Per}_s(E)[\xi_2,\xi_2]+\Lambda\|\xi_2\|_{L^1(\partial E\cap\mathcal{U}_2)}^2}_{=:F_2(\xi_2)}\Big) \label{fmias}.
\end{align}
We want to show that either $F_1(\xi_1) \ge 0$ for all $\xi_1 \in C_c^{\infty}(\partial E\cap\mathcal{U}_1) $ or $F_2(\xi_2) \ge 0$ for all $\xi_2 \in C_c^{\infty}(\partial E\cap\mathcal{U}_2) $. Suppose neither of these two held, then there would exist $\xi_1$ and $\xi_2$ such that $F_1(\xi_1)<0$ and $F_2(\xi_2)<0$. This would imply, however, that \eqref{fmias} is negative for all $(a,b)\neq (0,0)$, thus contradicting that the Morse index of $u$ is at most one.
\end{proof}
\begin{remark}\label{rmk:asresc}
    Almost-stability improves upon zooming in. More precisely, if $E$ is a $\Lambda$-almost stable $s$-minimal set in $\Omega\subset M$ for $(M,g)$, it is a $(r^{n+s}\Lambda)$-almost stable $s$-minimal set in $\Omega$ for $(M,\frac{1}{r^2} g)$. This follows from (a) in Remark \ref{flatscalingrmk}, since it immediately gives that the $s$-perimeter of any set just rescales by a constant factor $(\frac{1}{r})^{n-s}$ under the scaling of the metric, which shows that the scaling preserves the criticality/$s$-minimality condition and it transforms (as seen by rescaling both sides of equation \eqref{lamasineq}) $\Lambda$-almost stability into $(r^{n+s}\Lambda)$-almost stability.\\
\end{remark}
We first prove Theorem \ref{BVest} in the almost-stable case.
\begin{proposition}[\textbf{Almost-stable case}]\label{prop:BVestas}
Let $p_0 \in M$, $s_0 \in (0,1)$, $s \in (s_0,1)$ and assume that $M$ satisfies the flatness assumption ${\rm FA}_2(M,g,1,p_0,\varphi)$. Let $E$ be a $C^2$ $\Lambda$-almost stable $s$-minimal surface in $\varphi(\B_1)$. 

\vspace{1pt}
Then, there exists a constant $ C=C(n,s_0,\Lambda)$ such that
\begin{equation}\label{bvunoas}
\textnormal{Per}(E;\varphi(\B_{1/2})) \leq \frac{C}{1-s} \,.
\end{equation}
If moreover we add the assumption that
\begin{equation}\label{bvfas}
    \textnormal{Per}_s(E;\varphi(\B_1))\leq \frac{\kappa}{1-s}=\frac{\kappa}{\sigma}
\end{equation}
for some $\kappa>0$, then we get the improved estimate
\begin{equation}\label{bvufas}
\textnormal{Per}(E;\varphi(\B_{1/2})) \leq C\frac{1+\sqrt \kappa}{\sqrt{\sigma}} \,.
\end{equation} 
\end{proposition}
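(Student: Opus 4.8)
The plan is to use the $\Lambda$-almost stability inequality tested with a carefully chosen function $\xi$ supported on $\partial E \cap \varphi(\B_1)$, in order to bound the classical perimeter $\textnormal{Per}(E;\varphi(\B_{1/2}))$. The natural choice is to test with $\xi = \eta$, a cutoff function equal to $1$ on $\varphi(\B_{1/2})$ and supported in $\varphi(\B_1)$ (actually on $\partial E$ restricted to these balls), so that the left-hand side $\delta^2\textnormal{Per}_s(E)[\eta,\eta]$ involves the Gagliardo seminorm $\iint_{\partial E \times \partial E}|\eta(p)-\eta(q)|^2 K(p,q)$ and a principal-value term, while the right-hand side is $-\Lambda(\int_{\partial E \cap \varphi(\B_1)}|\eta|)^2 \ge -\Lambda \textnormal{Per}(E;\varphi(\B_1))^2$, which is roughly the square of what we want to control. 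The key is that the principal-value term on the left, which reads $-\int_{\partial E}\eta^2\, \mathrm{p.v.}\int_M (\chi_E-\chi_{E^c})[\textnormal{div}_q(\nu K) + \textnormal{div}_p(\nu K)]$, should produce (after integrating the divergence by parts back onto $\partial E$, or recognizing it via the structure of the second variation) a term comparable to a negative multiple of the classical perimeter with a large constant like $\frac{c_n}{1-s}\textnormal{Per}(E;\varphi(\B_{3/4}))$ — this is precisely the mechanism by which, for smooth minimal surfaces, $(1-s)\textnormal{Per}_s \to \gamma_n \textnormal{Per}$ and the second variation controls curvature. The Gagliardo seminorm term, by the interpolation inequality \eqref{interpropRn} (Lemma \ref{lem:interpropRn}), is bounded by $\frac{C}{1-s}\textnormal{Per}(E;\varphi(\B_1))^s \cdot (\text{lower order})$, hence is also controlled by $\frac{C}{1-s}$ times a sublinear power of the perimeter.

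**Key steps, in order.** First I would fix the cutoff $\eta$ and write down the almost-stability inequality \eqref{lamasineq} with $\xi = \eta|_{\partial E}$. Second, I would analyze the principal-value term: using the Leibniz rule as in Step 2 of the proof of Proposition \ref{12varprop}, expand $\textnormal{div}_q(\nu K) + \textnormal{div}_p(\nu K)$ and isolate the leading singular contribution, which after applying the divergence theorem on $\partial E$ picks up a factor of $H[\partial E]$ and, more importantly, a term of the form $\int_{\partial E}\eta^2 |A_{\partial E}|^2$-type or a direct lower bound $\gtrsim \frac{c(n,s_0)}{1-s}\textnormal{Per}(E; \varphi(\B_{3/4}))$ coming from the fact that the "tangential energy" of a cutoff on $\partial E$ dominates the classical perimeter up to the constant $\gamma_n/(1-s)$; concretely one expects a chain of inequalities showing that the negative of this term is $\ge \frac{c}{1-s}\textnormal{Per}(E;\varphi(\B_{3/4})) - (\text{error bounded by }\frac{C}{1-s}\text{ or by the Gagliardo term})$. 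Third, I would estimate the Gagliardo seminorm term $\iint_{\partial E\times\partial E}|\eta(p)-\eta(q)|^2 K(p,q)$ using Lemma \ref{loccomparability} to compare $K$ with the Euclidean kernel and then the interpolation bound \eqref{ipr11}, getting something like $\frac{C}{1-s}[\eta]_{BV(\partial E)}^s \|\eta\|_{L^1(\partial E)}^{1-s} \le \frac{C}{1-s}\textnormal{Per}(E;\varphi(\B_1))^{\text{something}\le 1}$ — here one must be a bit careful because the relevant space is $\partial E$, an $(n-1)$-dimensional set, so the exponents shift and one uses $\eta \in \mathrm{Lip}$. Fourth, combine: the inequality becomes, schematically, $\frac{c}{1-s}P_{3/4} \le \frac{C}{1-s}(1 + P_1^{\theta}) + \Lambda P_1^2$ where $P_r := \textnormal{Per}(E;\varphi(\B_r))$, $\theta < 1$. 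Fifth, absorb using a standard covering/iteration argument (a Simon-type absorption lemma, interpolating between radii $1/2$ and $1$) to conclude $\textnormal{Per}(E;\varphi(\B_{1/2})) \le \frac{C}{1-s}$; this handles \eqref{bvunoas}. For the improved estimate \eqref{bvufas}, under the extra hypothesis $\textnormal{Per}_s(E;\varphi(\B_1)) \le \kappa/\sigma$, one reruns the argument keeping track of $\kappa$: the Gagliardo term is now directly bounded by (a constant times) $\textnormal{Per}_s(E;\varphi(\B_1)) \le \kappa/\sigma$, and the almost-stability right-hand side contributes $\Lambda P_1^2$; balancing $\frac{c}{\sigma}P_{3/4} \lesssim \frac{1}{\sigma} + \frac{\kappa}{\sigma} + \Lambda P_1^2$ and using the rescaling-improvement of $\Lambda$ from Remark \ref{rmk:asresc} to make the $\Lambda P_1^2$ term subcritical yields $P_{1/2} \le C(1+\sqrt\kappa)/\sqrt\sigma$.

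**Main obstacle.** The hard part will be Step 2 — extracting from the principal-value term $\int_{\partial E}\eta^2\,\mathrm{p.v.}\int_M(\chi_E-\chi_{E^c})[\textnormal{div}_q(\nu K)+\textnormal{div}_p(\nu K)]$ a genuinely coercive lower bound of the form $\frac{c(n,s_0)}{1-s}\textnormal{Per}(E;\varphi(\B_{3/4}))$ (with the correct sign and a constant that blows up like $1/(1-s)$), rather than just bounding it in absolute value. This requires delicately exploiting the second-order cancellation structure identified in Proposition \ref{12varprop}: one rewrites the kernel-divergence combination using the flow $\psi^t_{\nu}$ of the extended normal (as in the Claim in that proof), reduces to a local computation in flatness coordinates where $\partial E$ is nearly a hyperplane, and shows that the leading term is, up to $O(1)$ errors, the Euclidean nonlocal-to-classical-perimeter constant $\gamma_n/(1-s)$ times the perimeter — this is essentially the quantitative version of $(1-s)\textnormal{Per}_s \to \gamma_n\textnormal{Per}$ localized on $\partial E$, and making it rigorous in the Riemannian, non-flat setting (controlling the curvature of $\partial E$ and of $M$, and the error from extending $\nu$) is the crux. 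A secondary technical point is ensuring the interpolation inequality \eqref{ipr11} is applied on $\partial E$ with the right ambient dimension and the right regularity for the test function, and that the covering argument in Step 5 closes with the constants staying uniform for $s \in (s_0,1)$.
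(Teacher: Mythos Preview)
Your Step~2 rests on a misidentification of what the principal-value term controls. The quantity
\[
-\int_{\partial E}\eta^2\,\mathrm{p.v.}\int_M(\chi_E-\chi_{E^c})\bigl[\textnormal{div}_q(\nu K)+\textnormal{div}_p(\nu K)\bigr]
\]
is the nonlocal analogue of $\int_{\partial E}\eta^2\bigl(|A_{\partial E}|^2+\mathrm{Ric}(\nu,\nu)\bigr)$, not of the perimeter. For a half-space the two divergences cancel exactly (so the term vanishes), and for a $C^2$ surface the second-order deviation from its tangent plane produces a contribution of order $\tfrac{1}{1-s}\int\eta^2|A|^2$. There is no reason this dominates $\tfrac{c}{1-s}\textnormal{Per}(E;\varphi(\B_{3/4}))$: if $\partial E$ is nearly flat (many parallel almost-planar sheets, say), the p.v.\ term is tiny while the perimeter is large. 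So the coercivity you need in Step~2 simply is not there, and the chain in Step~4 never closes. The convergence $(1-s)\textnormal{Per}_s\to\gamma_n\textnormal{Per}$ is a statement about the \emph{functional}, not about this term in the \emph{second} variation; you are conflating the two.

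The paper's argument avoids the p.v.\ term entirely by a cancellation trick. One tests almost-stability with $\xi=\langle X,\nu_{\partial E}\rangle_g$ for a vector field $X$, and \emph{also} with $|\xi|$, and subtracts: since $\xi^2=|\xi|^2$, the p.v.\ terms drop out and one is left with
\[
\delta^2\textnormal{Per}_s(E)[\xi,\xi]-\delta^2\textnormal{Per}_s(E)[|\xi|,|\xi|]
=8\iint_{\partial E\times\partial E}\xi_+(p)\,\xi_-(q)\,K(p,q)
\;\ge\;c\Big(\int_{\partial E}\xi_+\Big)\Big(\int_{\partial E}\xi_-\Big),
\]
using only the crude lower bound $K\ge c>0$ on $\varphi(\B_1)\times\varphi(\B_1)$. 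The divergence theorem gives $\bigl|\int_{\partial E}\xi\bigr|=\bigl|\int_E\textnormal{div}\,X\bigr|\le C$, so the algebraic identity $(\int|\xi|)^2=(\int\xi)^2+4(\int\xi_+)(\int\xi_-)$ yields
\[
\Big(\int_{\partial E}|\xi|\Big)^2\le C+C\,\delta^2\textnormal{Per}_s(E)[\xi,\xi]+C\Lambda\Big(\int_{\partial E}|\xi|\Big)^2.
\]
For $\Lambda$ small the last term absorbs; the general $\Lambda$ is handled by the rescaling in Remark~\ref{rmk:asresc}. Crucially, $\delta^2\textnormal{Per}_s(E)[\xi,\xi]=\tfrac{d^2}{dt^2}\big|_{t=0}\textnormal{Per}_s(\psi_X^t(E))$ is bounded \emph{as a whole} by Lemma~\ref{enboundslemma2} in terms of $1+\textnormal{Per}_s(E;\varphi(\B_1))$, with no need to dissect the p.v.\ part. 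Summing over $X=\varphi_*(e_i\eta)$ then gives $\textnormal{Per}(E;\varphi(\B_{1/2}))\le C(1+\textnormal{Per}_s(E;\varphi(\B_1)))^{1/2}$, from which both \eqref{bvufas} and (via interpolation \eqref{interpropRn} plus a Simon-type covering) \eqref{bvunoas} follow. The missing idea in your sketch is exactly this $\xi$-versus-$|\xi|$ subtraction combined with the divergence-theorem bound on $\int\xi$.
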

\begin{proof}
    \textbf{Step 1.} We first prove that there exists a $\Lambda_0>0$, depending only on $n$ and $s_0$, such that if $E$ is $\Lambda$-almost stable in $ \varphi(\B_1)\subset M$ (see Definition \ref{almoststab}) with $\Lambda \le \Lambda_0$, then the Proposition is true.\\

The proof goes as follows. 
 Let $X$ be a $C^1$ vector field, compactly supported on $\varphi(\B_1)$, and define $\xi=\langle X,\nu_{\partial E}\rangle_g$ on $\partial E$. Considering the test functions $\xi$ and $|\xi|$ for the second variation formula (recall Definition \ref{2quaddef}), and applying the $\Lambda$-almost stability assumption to bound $\delta^2\text{Per}_s(E)[|\xi|,|\xi|]$ from below appropriately (see Definition \ref{almoststab}, and observe that it can be extended to Lipschitz functions such as $|\xi|$  by standard approximation arguments), we get
    \begin{equation}\label{diff2var}
       \delta^2\text{Per}_s(E)[\xi,\xi]-\delta^2\text{Per}_s(E)[|\xi|,|\xi|]\leq \delta^2\text{Per}_s(E)[\xi,\xi]+\Lambda\Big(\int_{\partial E} |\xi|\Big)^2\, . 
    \end{equation}

    On the other hand, both second variations in the left hand side have the same ``local parts'', since obviously $\xi^2=|\xi|^2$. Also for this reason, looking at the expression in Definition \ref{2quaddef}, one also sees that the difference of the terms of the form $\iint_{\partial E\times\partial E}$ has a very simple form, since only the cross-terms appearing after expanding the squares do not cancel out. Precisely, we obtain that
    \begin{equation}\label{diff2var2}
    \delta^2\text{Per}_s(E)[\xi,\xi]-\delta^2\text{Per}_s(E)[|\xi|,|\xi|]=2\iint_{\partial E\times\partial E} \Big[|\xi(p)||\xi(q)|-\xi(p)\xi(q)\Big]K(p,q)\, .
    \end{equation}
    Using the identity $|a||b|-ab=2a_+b_-+2a_-b_+$, which holds for real numbers $a,b$ and their positive and negative parts, together with the symmetry of $K$ we find that
    \begin{equation*}
    \delta^2\text{Per}_s(E)[\xi,\xi]-\delta^2\text{Per}_s(E)[|\xi|,|\xi|]=8\iint_{\partial E\times\partial E} \xi_+(p)\xi_-(q)K(p,q)\, .
    \end{equation*}
    Thanks to Proposition \ref{loccomparability}, we can bound $K(p,q)\geq c>0$ for $(p,q)\in\varphi(\B_1)\times\varphi(\B_1)$. Applying this to \eqref{diff2var2}, we then find that
    $$
    c\Big(\int_{\partial E} dp\,\xi_+(p)\Big)\Big(\int_{\partial E}dq\,\xi_-(q)\Big)\leq\delta^2\text{Per}_s(E)[\xi,\xi]-\delta^2\text{Per}_s(E)[|\xi|,|\xi|]\, .
    $$
    Recall now that $\xi=X\cdot\nu$, where $X$ is a $C^1$ vector field supported on $\varphi(\B_1)$. By the divergence theorem, we then have that 
    $$\Big|\int_{\partial E} \xi(p)\,dp\Big|=\Big|\int_{\partial E} X\cdot \nu(p)\,dp\Big|=\Big|\int_E \textnormal{div}(X)\Big|\leq |B_2|C_X\, .$$
    Therefore, putting together all the above, we can bound
    \begin{align*}
        \Big(\int_{\partial E}\Big|\xi\Big|\Big)^2&=\Big(\int_{\partial E}(X\cdot\nu)_++\int_{\partial E}(X\cdot\nu)_-\Big)^2=\Big(\int_{\partial E}(X\cdot\nu)_+-\int_{\partial E}(X\cdot\nu)_-\Big)^2+4\Big(\int_{\partial E}(X\cdot\nu)_+\Big)\Big(\int_{\partial E}(X\cdot\nu)_-\Big)\\
        &=\Big(\int_{\partial E}X\cdot\nu\Big)^2+4\Big(\int_{\partial E}(X\cdot\nu)_+\Big)\Big(\int_{\partial E}(X\cdot\nu)_-\Big)\leq C+C\delta^2\text{Per}_s(E)[\xi,\xi]+C\Lambda\Big(\int_{\partial E} |\xi|\Big)^2\, .
    \end{align*}
    In case $\Lambda_0$ is small enough (say, so that $C\Lambda_0\leq 1/2$), we can absorb the last term of the inequality on the LHS, which after taking square roots in the expression gives
    \begin{align*}
        \int_{\partial E}\Big|X\cdot\nu\Big|=\int_{\partial E}\Big|\xi\Big|&\leq \Big(C+C\delta^2\text{Per}_s(E)[\xi,\xi]\Big)^{1/2}\, .
    \end{align*}
    We now make the precise choice of $X$. We consider $X_1=\varphi_*(e_1\eta)$, $X_2=\varphi_*(e_2\eta)$, ... , up to $X_n=\varphi_*(e_n\eta)$, where $e_i$ is the $i$-th basis coordinate vector in $\R^n$ and $\chi_{\B_{1/2}}\leq\eta\leq\chi_{\B_{3/4}}$ is a standard cutoff. We denote then $\xi_i=\langle X_i,\nu_{\partial E}\rangle_g$. Adding up the inequalities that we obtain for these choices of $X$, and since the $e_1,...,e_n$ form an orthonormal basis of vectors for $\R^n$, together with the flatness assumptions we obtain the perimeter in the LHS:
    \begin{align*}
        \text{Per}(E,\varphi(\B_{1/2}))&\leq \sum_i\Big(C+C\delta^2\text{Per}_s(E)[\xi_i,\xi_i]\Big)^{1/2}=\sum_i\Big(C+C\frac{d^2}{dt^2}\Big|_{t=0}\text{Per}_s(\psi_{X_i}^t(E))\Big)^{1/2}\, .
    \end{align*}
    On the other hand, applying Lemma \ref{enboundslemma2} we can get rid of the second derivatives on the RHS, obtaining
    \begin{align*}
        \text{Per}(E,\varphi(\B_{1/2}))&\leq \Big(C+C\text{Per}_s(E, \varphi(\B_1))\Big)^{1/2}\, .
    \end{align*}
    Under the assumption \eqref{bvfas}, i.e. assuming a constant bound for the fractional perimeter $\text{Per}_s(E)$, we conclude \eqref{bvufas} as desired.
    
    The former is the only case we will need in this article. If, on the other hand, we did not assume \eqref{bvfas} and wished to obtain \eqref{bvunoas} instead, we would need to continue the argument as in the proof of \cite[Proposition 3.14]{CFS}. First, we would use the interpolation result in \eqref{interpropRn} and the inequality $ab\leq \frac{a^2}{\delta}+\delta b^2$, valid for any $\delta>0$, to get
    \begin{align*}
        \text{Per}(E,\varphi(\B_{1/2}))&\leq \frac{C}{(1-s)\delta}+C\delta\text{Per}(E, \varphi(\B_1))\, .
    \end{align*}
    Rescaling and translating this inequality, see Step 2 next for full details in a similar argument, we would find that
    \begin{align*}
        r^{1-n}\text{Per}(E,\varphi(\B_{r/2}(x)))&\leq \frac{C}{(1-s)\delta}+C\delta r^{1-n}\text{Per}(E, \varphi(\B_{r}(x)))
    \end{align*}
    for all $x\in\B_{3/4}$ and $r\leq 1/8$. Using then a covering lemma from \cite{Simon}, see \cite[Lemma 3.18]{CFS} for the exact statement, we would then conclude that $\text{Per}(E,\varphi(\B_{1/2}))\leq \frac{C}{1-s}$ as desired.

    \noindent \textbf{Step 2.} We obtain the Proposition in the $\Lambda$-almost stable case, for any $\Lambda$.
    
    We need to get rid of the requirement in Step 1 that $\Lambda\leq \Lambda_0$; this will follow from a scaling + finite covering argument. Let $0<r<1/2$ be a small radius to be chosen later depending only on $\Lambda$. Given $x_k\in \B_{1/2}$, define $\varphi_{x_k,r}(z):=\varphi(x_k+rz)$, for $z\in\B_1$. By (c) in Remark \ref{flatscalingrmk} and our flatness assumptions, the rescaled manifold $(M,\frac{1}{r^2} g)$ satisfies the flatness assumptions ${\rm FA}_\ell(M,\frac{1}{r^2} g, 1, \varphi(x_k),\varphi_{x_k,r})$. Moreover, since by assumption $E$ is a $\Lambda$-almost stable $s$-minimal set in $\varphi_{x_k,r}(\B_1)$ for $(M,g)$, by Remark \ref{rmk:asresc} it is a $(r^{n+s}\Lambda)$-almost stable $s$-minimal set in $\varphi_{x_k,r}(\B_1)$ for $(M,\frac{1}{r^2} g)$.\\
    Assume for concreteness that we are assuming \eqref{bvfas} and we want to prove \eqref{bvufas}; the proof of \eqref{bvunoas} is even simpler. Letting $r$ small enough so that $r^{n+s}\Lambda\leq\Lambda_0$, the above discussion shows that $E$ satisfies the almost-stability requirement in Step 1 on $\varphi_{x_k,r}(\B_1)$ for $(M,\frac{1}{r^2} g)$. Moreover, applying \eqref{fpergrowth1} with $r_1=r$ and rescaling shows that (up to changing the value of $\kappa$), the condition \eqref{bvfas} also holds for $E$ in the rescaled setting. Therefore, by Step 1 we find that \eqref{bvufas} holds on $(M,\frac{1}{r^2} g)$. Scaling back this information, this gives that
    \begin{align}
        \textnormal{Per}(E;\varphi(\B_{r/2}(x_k))) &\leq C(n,s_0)\frac{1+\sqrt \kappa}{\sqrt{\sigma}}r^{n-1}\label{perbvpfresc}\\
        &= C(n,s_0,\Lambda)\frac{1+\sqrt \kappa}{\sqrt{\sigma}}\,,\nonumber
    \end{align}
where the perimeter is now taken on $(M,g)$. We can now conclude by covering $\B_{1/2}$ with a finite number $C(n,\Lambda)$ of balls $\B_{r/2}(x_k)$ as above, so that summing over $k$ the perimeter bounds on each of them we conclude that
$$\textnormal{Per}(E;\varphi(\B_{1/2})) \leq C(n,s_0,\Lambda)\frac{1+\sqrt \kappa}{\sqrt{\sigma}}$$
as desired.
\end{proof}

The proof of Theorem \ref{BVest} in the finite index case will need the following covering-type lemma, which was devised in \cite{CFS} to exhibit estimates in the finite Morse index case by iteratively reducing to the almost-stable case. It was inspired by the proof of \cite[Proposition 2.6]{FZ}. Essentially, it says that if every time we take $(m+1)$ cubes which are sufficiently far from each other we are able to prove an estimate of power-type on at least one of them, then the estimate actually holds everywhere. The proof consists in subdividing a cube at scale 1 into small subcubes, adding the estimate on the small subcubes (except for a ``bad" set, which will consist of at most $m$ subcubes
and their close neighbours), and then iterating the procedure dyadically by subdividing the ``bad" subcubes once again into smaller subcubes and starting the argument again.
\vsp

\begin{lemma}[{\cite[Lemma 3.19]{CFS}}]\label{morsecovering}
    Let $n\ge 1$, $m\ge 0$, $\theta \in (0,1)$, $D_0>0$ and $\beta>0$. Let $\mathcal{S} : \mathfrak{B} \to [0, +\infty)$ be a subadditive\footnote{Meaning subadditive for finite unions of (hyper)cubes.} function defined on the family $\mathfrak{B}$ of the (hyper)cubes contained in $\Qb_1(0) \subset \R^n$. Denote by $\Qb_r(x) \subset \R^n$ the closed (hyper)cube of center $x$ and side $r$, and assume that
    \begin{itemize} 
        \item[(i)] $\displaystyle\sup_{\{x\,:\,\Qb_r(x)\in\mathfrak{B}\}}\mathcal{S}(\Qb_r(x)) \to 0$ \hspace{0.2cm} as $r\to 0$. 
        \item[(ii)] Whenever $\Qb_{r}(x_0), \Qb_r(x_1), \dotsc , \Qb_r(x_m) \subset \Qb_1(0) $ are $(m+1)$ disjoint cubes of the same side at pairwise distance at least $D_0r $, then 
    \begin{equation*}
        \exists \,i \in \{0,1,\dotsc,m \} \s \textit{such that} \s \mathcal{S}(\Qb_{\theta r}(x_i)) \le r^\beta M_0 \,.
    \end{equation*}
    \end{itemize}
     Then 
    \begin{equation*}
        \mathcal{S}(\Qb_{1/2}(0)) \le CM_0 \,, 
    \end{equation*}
    for some $C=C(n,\theta, m , \beta, D_0) >0$. 

\end{lemma}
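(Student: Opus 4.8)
The plan is to prove the covering lemma (Lemma \ref{morsecovering}) by a dyadic iteration scheme: at each scale, subdivide the ``bad'' cubes (the ones where the power estimate fails) into smaller subcubes, apply hypothesis (ii) to control all but at most $m$ of them (plus a bounded number of neighbours needed to enforce the pairwise-distance condition $D_0$), and recurse. The finiteness of the Morse index $m$ is what makes this terminate in a controlled way: at every level the total ``mass'' $\mathcal{S}$ of the uncontrolled region is carried by a set of cubes whose number grows only by a fixed multiplicative factor $C_1=C_1(n,m,D_0)$ per level, while the sidelength shrinks geometrically, so the power $\beta>0$ beats the combinatorial growth after finitely many steps and the leftover mass can be absorbed using hypothesis (i).

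\textbf{Setup of the iteration.} Fix an integer $N=N(n,D_0)$ large enough that $\Qb_1(0)$ can be partitioned into $N^n$ closed subcubes of side $1/N$, and so that among any $N^n$ such subcubes one can always select a sub-collection realizing pairwise distance $\geq D_0 r$ (here $r=1/N$) after discarding at most a fixed fraction; concretely, one colours the subcubes by their position modulo $\lceil D_0\rceil+1$ in each coordinate, giving $(\lceil D_0\rceil+1)^n$ colour classes, each of which is $D_0 r$-separated. Within one colour class of size $\geq m+1$, hypothesis (ii) applies and yields at least one subcube $\Qb$ with $\mathcal{S}(\Qb_{\theta/N})\le (1/N)^\beta M_0$; remove it and repeat within the class. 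Iterating over all colour classes, we conclude: among the $N^n$ subcubes of side $1/N$, all but at most $m\cdot(\lceil D_0\rceil+1)^n =: m_1$ of them satisfy the good estimate at the shrunken scale $\theta/N$. The remaining $\le m_1$ ``bad'' subcubes, together with a bounded neighbourhood so that we keep working inside $\Qb_1(0)$ and keep the separation structure, form the region we recurse into.

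\textbf{Estimating the sum.} Using subadditivity of $\mathcal{S}$ over finite unions of cubes, write $\mathcal{S}(\Qb_{1/2}(0))$ (more conveniently, $\mathcal{S}(\Qb_1(0))$, which dominates it) as a sum over the good subcubes at level $k$ plus the contribution of the bad subcubes at level $k$. At level $k$ there are at most $m_1^k$ bad subcubes of side $N^{-k}$, and each contributes, through its good children at the next level, a bounded number $N^n$ of terms each of size $\le (N^{-k})^\beta M_0$ up to a scaling factor. Summing the geometric-type series
\begin{equation*}
    \mathcal{S}(\Qb_{1/2}(0)) \le C(n,\theta,D_0)\,M_0\sum_{k=0}^{\infty} m_1^{k}\,(N^{-k})^{\beta} + \lim_{k\to\infty}(\text{bad contribution at level }k),
\end{equation*}
and the series converges provided $N$ is chosen (which we are free to do, enlarging it if necessary) so that $m_1 N^{-\beta}<1$; note $m_1$ depends on $n,m,D_0$ but not on $N$, so this is achievable. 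The tail term — the residual mass sitting on the $\le m_1^k$ bad cubes of side $N^{-k}$ — is handled by hypothesis (i): $\mathcal{S}$ of each such cube is at most $\sup_{\Qb_{N^{-k}}}\mathcal{S}(\Qb)$, which tends to $0$; since the number of bad cubes is only $m_1^k$ and we may instead just bound each bad cube's contribution by the good-child decomposition one level further, the honest argument is to not take a limit but to observe that every cube is eventually ``resolved'' into good cubes, so the bad contribution at level $k$ is itself $\le C m_1^k (N^{-k})^\beta M_0$ and already included in the convergent series. Thus $\mathcal{S}(\Qb_{1/2}(0))\le C M_0$ with $C=C(n,\theta,m,\beta,D_0)$.

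\textbf{Main obstacle.} The delicate point is the bookkeeping that keeps the separation hypothesis (ii) applicable at every recursion step while ensuring we stay inside $\Qb_1(0)$ and that the number of bad cubes grows by only a fixed factor per level — in particular, being careful that ``neighbours added to preserve separation'' do not cause the bad set to blow up combinatorially. This is exactly the role of the colouring/modular-selection trick: it decouples the separation requirement from the geometry of which cubes happened to be bad, so that at each level the bad set is literally a union of $\le m$ cubes per colour class, independent of position. A secondary subtlety is matching scales: hypothesis (ii) gives control on $\Qb_{\theta r}(x_i)$, a cube strictly smaller than $\Qb_r(x_i)$, so when we recurse we must subdivide the scale-$r$ bad cubes (not the scale-$\theta r$ ones), and the factor $\theta<1$ only improves constants. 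Once these combinatorial points are set up correctly, the analytic content is just the convergence of a geometric series, which is why the power $\beta>0$ is essential.
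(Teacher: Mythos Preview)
Your dyadic-subdivision/colouring/geometric-series strategy is exactly the scheme the paper sketches (the lemma is cited from \cite{CFS} and not proved here, only described in the paragraph preceding it). The structure is right, but two points in your write-up do not go through as stated.

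\textbf{The $\theta$ gap.} You partition into cubes of side $r=1/N$ and obtain the good estimate on the \emph{shrunk} cubes $\Qb_{\theta r}(x_i)$; but the $\Qb_{\theta r}(x_i)$ from a partition do not cover the parent, so subadditivity over the partition does not let you use those estimates. Your remark that ``$\theta<1$ only improves constants'' skips this. The standard fix is to place the centers on a grid of spacing at most $\theta r$ (so that the $\theta$-shrunk cubes cover $\Qb_{1/2}(0)$ with bounded overlap) and to adjust the colouring modulus accordingly; the number of colour classes becomes $L'^n$ with $L'=L'(D_0,\theta)$, and the rest of the argument is unchanged.

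\textbf{The tail.} Your count $b_k\le m_1^{\,k}$ for the number of bad cubes at level $k$ is far too weak, and it breaks the conclusion: with that bound the residual term is $\le m_1^{K}\sup_x\mathcal S(\Qb_{r_K}(x))$, and hypothesis~(i) gives no rate of decay, so this product need not tend to $0$. Your ``honest argument'' (resolve bad cubes one level further) is circular---it presupposes that the tail vanishes. The correct observation is that the colouring argument applies to \emph{all} level-$k$ cubes inside $\Qb_1(0)$ simultaneously, not just per parent: since ``bad'' is an intrinsic condition on a cube, any $(m+1)$ pairwise $D_0r_k$-separated bad cubes anywhere in $\Qb_1(0)$ would contradict (ii). Hence $b_k\le m_1$ for every $k$, a constant independent of $k$. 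With this the tail is $\le m_1\sup_x\mathcal S(\Qb_{r_K}(x))\to 0$ by (i), and the good contributions sum to $C\,m_1 N^n M_0\sum_{k\ge 1}N^{-k\beta}<\infty$ for any fixed $N\ge 2$ (no need to make $N$ beat $m_1^{1/\beta}$).

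Both corrections are minor once noticed; after them your argument is complete and matches the paper's description (``at most $m$ subcubes and their close neighbours'' at each level is exactly the constant bound $b_k\le m_1$).
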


\vsp

With the lemma at hand, we can finally give the
\begin{proof}[Proof of Theorem \ref{BVest}]\label{proof:BVest}
As in previous arguments, we assume for concreteness that we are assuming \eqref{bvfin} and we want to prove \eqref{bvufin}; the proof of \eqref{bvunofin} is again even simpler.\\
Let $\Qb_{r}(x_0), \Qb_r(x_1), \dotsc , \Qb_r(x_m) \subset \Qb_1(0) $ be $(m+1)$ disjoint cubes of the same side $r$ and at pairwise distance at least $D_0r $. By Lemmas \ref{asineq} and \ref{loccomparability}, we immediately see that there exists some $\ell \in \{0, \dotsc m \}$ such that the inequality
\begin{equation}
    \delta^2{\rm Per}_s(E)[\xi,\xi] \ge - \frac{C m}{(D_0r/2)^{n+s}} \|\xi \|^2_{L^1(\partial E\cap \varphi(\B_{r/2}(x_\ell)))}=: - \frac{\Lambda}{(r/2)^{n+s}} \|\xi \|^2_{L^1(\partial E\cap \varphi(\B_{r/2}(x_\ell)))}
\end{equation}
holds for all $\xi \in C_c^\infty(\partial E\cap \varphi(\B_{r/2}(x_\ell)))$ and some $C=C(n,s_0)$; we have used that $\B_{r/2}(x_l) \subset \Qb_r(x_l)$. Considering the rescaled manifold $ \widehat M := (M, (2/r)^2 g ) $, by Remarks \ref{flatscalingrmk} and \ref{rmk:asresc} the flatness assumption ${\rm FA}_2(M,  (2/r)^2 g, 1 , q_\ell , \varphi_{x_\ell,r/2}) $ holds and $E$ is a $\Lambda$-almost stable $s$-minimal surface in $ \varphi_{x_\ell,r/2} (\B_{1}(x_\ell))\subset\widehat M$. Therefore, we can apply Proposition \ref{prop:BVestas} (selecting $D_0$ be large enough that $\Lambda = \frac{C m}{D_0^{n+s}} \le \Lambda_0$) and get that
\begin{equation*}
   \textnormal{Per}(E;\varphi_{x_\ell,1/2}(\B_{1/2})) \leq C\frac{1+\sqrt \kappa}{\sqrt{\sigma}} \,,
\end{equation*}
which scaling back to $M$ shows that
\begin{equation*}
    \mathcal{S}(\Qb_{r/C}(x_\ell)) \le \textnormal{Per}(E;\varphi(\B_{r/4}(x_l))) \le C\frac{1+\sqrt \kappa}{\sqrt{\sigma}} r^{n-1}
\end{equation*}
for some $C=C(n,s_0,m)$.
Defining the subadditive function $\mathcal{S}(\Qb) := {\rm Per}(E;\varphi(\frac{1}{2\sqrt{n}}\Qb))$,
where $\lambda \Qb := \{\lambda y \, : \, y \in \Qb \}\,,
$
this shows that (ii) in Lemma \ref{morsecovering} is satisfied, with $\beta=n-1$, $M_0= C\frac{1+\sqrt \kappa}{\sqrt{\sigma}}$, and $\theta$ and $D_0$ depending only on $n$, $s_0$ and $m$. Moreover, (i) in Lemma \ref{morsecovering} trivially holds, since $E$ is just a fixed set with finite perimeter and thus has small perimeter on small enough subsets. Hence, applying the Lemma we find that
\begin{equation*}
    \mathcal{S}(\Qb_{1/2}(0)) = {\rm Per}(E;\varphi(\frac{1}{4\sqrt{n}}\Qb))  \le C\frac{1+\sqrt \kappa}{\sqrt{\sigma}} \,,
\end{equation*}
which after a simple scaling+covering argument shows that
$$
{\rm Per}(E;\varphi(\B_{1/2}))  \le C\frac{1+\sqrt \kappa}{\sqrt{\sigma}}
$$
as well.
\end{proof}

\subsection{Estimates for almost-stable $s$-minimal surfaces}\label{sec:asests}
In all of Sections \ref{sec:asests} and \ref{sec:convbind} we will always assume that $n\ge 2$ and
\begin{equation}
    l:=n+2\,.
\end{equation}
\subsubsection{$L^{-2}$ separation estimate for almost-stable graphs}
We will prove our local estimates under the assumption that
\begin{equation}\label{flatcond}
    {\rm FA}_\ell(M,g,4,p_0,\varphi) \s\mbox{is satisfied} 
\end{equation}
and that there is $\kappa<\infty$ such that $E\subset M$ is an $s$-minimal set in $\varphi(\B_2)$
with a fractional perimeter bound
\begin{equation}\label{locfperbound}
    \textnormal{Per}_s(E;\varphi(\B_2))\leq \frac{\kappa}{1-s}=\frac{\kappa}{\sigma}\, .
\end{equation}
Recall that in \eqref{flatcond}, $\varphi:\B_4\to M$ denotes a chart parametrization. We will suppose, in addition, that defining\footnote{The notation $\B_r'$ denotes the $(n-1)$-th dimensional Euclidean ball of radius $r$ and centered at $0\in\R^{n-1}$.} $\Omega_0 := \B_1'\times(-1,1)\subset\B_2\subset\R^n$,
\begin{eqnarray}\label{whtoihwohw1}
&&  \varphi^{-1}(\partial E)  \mbox{ coincides with } \ \Gamma=\bigcup_{i=1}^N \,\Gamma_i = \bigcup_{i=1}^N \,\{x_n = g_i (x') \} \   \mbox{ inside }\Omega_0,\quad
{\mbox{for some }} g_i: \B_1'\to \R\\
&&{\mbox{satisfying}}\quad
\label{whtoihwohw2}
g_1 < g_2< \cdots < g_N \qquad \mbox{and} \qquad \|\nabla g_i\|_{L^\infty(\B_1')} + \|D^2 g_i\|_{L^\infty(\B_1')} < \delta\,.
\end{eqnarray}
Here the number $N$ of graphs is some arbitrarily large positive integer. We will shortly be able to deduce that, in a smaller cylinder contained in $\Omega_0$, the number of graphical layers is actually bounded by a constant depending only on the choice of constants above (but independent of $\sigma$, so that it is uniform as $s\to 1$).\\

We now record a simple comparison lemma for graphs verifying flatness assumptions as in our hypotheses.
\begin{lemma}[{\cite[Lemma 3.3]{CDSV}}]\label{lem:kernel-comp}
Let~$1-s =\sigma \in (0, 1/2)$.
Assume that $E$ is a set such that \eqref{flatcond} and \eqref{whtoihwohw1}--\eqref{whtoihwohw2} hold. Moreover, assume that for some $i< j$ we have   $\min_{\overline {B_1'}}(g_j-g_i)\le \widetilde \delta$ for some~$\widetilde \delta>0$,
and let\footnote{Note that, in terms of the standard Beta and Gamma functions,
\[
c_{n,s}
=\frac{\mathcal H^{n-2}(\bS^{n-2})}{2}{\mathrm B}\Bigl(\frac{n-1}{2},\frac{1+s}{2}\Bigr)
=\frac{\cH^{n-2}(\bS^{n-2})\Gamma\bigl(\frac{n-1}{2}\bigr)\Gamma\bigl(\frac{1+s}{2}\bigr)}{2\Gamma\bigl(\frac{n+s}{2}\bigr)}
=\frac{\cH^{n-2}(\bS^{n-2})}{n-1}+O(\sigma)
\]
is universally comparable to $1$.
}
$$c_{n,s} : =  \mathcal H^{n-2}(\bS^{n-2})\int_0^ \infty  (1 + t^2 )^{-\frac{n+s}{2}} \,t^{n-2}\,dt.$$

Then, it follows that
\[
\big(1-c_{\delta, \widetilde \delta}\big)  \frac{c_{n,s}\;\sigma}{|g_j(x')-g_i(x')|^{1+s}}   \le   \int_{\Gamma_j }  \frac{ \sigma }{|x-y|^{n+s} }  \,dy  \le \big(1+c_{\delta, \widetilde \delta}\big) \frac{c_{n,s}\;\sigma}{|g_j(x')-g_i(x')|^{1+s}}
\]
for all $x\in \Gamma_i \cap B_{3/4}'\times \R$, where $c_{\delta, \widetilde \delta} \leq C(\sqrt{\delta}+\widetilde{\delta})\downarrow 0$ as $\delta, \widetilde \delta\downarrow 0$.
\end{lemma}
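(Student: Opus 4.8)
\textbf{Proof strategy for Lemma \ref{lem:kernel-comp}.}

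The plan is to reduce the integral $\int_{\Gamma_j}\frac{\sigma}{|x-y|^{n+s}}\,dy$ to the model computation on a flat hyperplane at distance $|g_j(x')-g_i(x')|$, and to control all the errors by the smallness parameters $\delta$ (flatness of the graphs / Hölder smallness of the metric) and $\widetilde\delta$ (closeness of the two layers). First I would fix $x=(x',g_i(x'))\in\Gamma_i\cap(B'_{3/4}\times\R)$ and write $d:=g_j(x')-g_i(x')>0$ for the vertical separation at $x'$. Parametrising $\Gamma_j$ by $x'\mapsto(y',g_j(y'))$, the surface measure is $dy=\sqrt{1+|\nabla g_j(y')|^2}\,dy'=(1+O(\delta^2))\,dy'$ by \eqref{whtoihwohw2}, so that factor contributes only a multiplicative $(1+O(\delta))$. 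The Euclidean distance is $|x-y|^2=|x'-y'|^2+(g_j(y')-g_i(x'))^2$; Taylor-expanding $g_j(y')=g_j(x')+\nabla g_j(x')\cdot(y'-x')+O(\delta|y'-x'|^2)$ gives $g_j(y')-g_i(x')=d+\nabla g_j(x')\cdot(y'-x')+O(\delta|y'-x'|^2)$, and similarly the curvature bound keeps the quadratic remainder under control. One then compares $|x-y|^2$ with the flat model $|x'-y'|^2+d^2$.

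Next I would localise. For $|y'-x'|\le \rho$ with $\rho$ a small fixed radius (depending only on $n$), the above expansion shows $|x-y|^2=(1+O(\sqrt\delta+\widetilde\delta))(|x'-y'|^2+d^2)$ on the region where $|y'-x'|\lesssim d$ and, more generally, a ratio bound $c^{-1}(|x'-y'|^2+d^2)\le|x-y|^2\le c(|x'-y'|^2+d^2)$ with $c=1+O(\sqrt\delta)$ holds on all of $B'_\rho$ using only $\|\nabla g_j\|_\infty<\delta$ (the linear term $\nabla g_j(x')\cdot(y'-x')$ is absorbed since $|\nabla g_j(x')\cdot(y'-x')|\le\delta|y'-x'|$ and $d\le\widetilde\delta$ is small). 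Plugging this into the integrand and changing variables $y'=x'+d\,w'$,
\[
\int_{|y'-x'|\le\rho}\frac{\sigma\,dy'}{(|x'-y'|^2+(g_j(y')-g_i(x'))^2)^{\frac{n+s}{2}}}
=(1+O(\sqrt\delta+\widetilde\delta))\,\sigma\,d^{-1-s}\int_{\R^{n-1}}\frac{dw'}{(1+|w'|^2)^{\frac{n+s}{2}}},
\]
and the last integral equals $c_{n,s}$ after passing to polar coordinates in $\R^{n-1}$ (writing $\mathcal H^{n-2}(\bS^{n-2})\int_0^\infty t^{n-2}(1+t^2)^{-\frac{n+s}{2}}\,dt$), exactly the constant in the statement. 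For the far region $|y'-x'|\ge\rho$, since $d\le\widetilde\delta\le\rho$ we have $|x-y|\ge c\rho$, so the integrand is bounded and $\int_{|y'-x'|\ge\rho}\frac{\sigma\,dy'}{|x-y|^{n+s}}\le C(n)\sigma\rho^{-s}$; comparing with the main term $c_{n,s}\sigma d^{-1-s}\ge c\,\sigma\widetilde\delta^{-1-s}$ and using $d\le\widetilde\delta$, this tail is $O(\widetilde\delta)$ times the main term (here one uses $d\le\min_{\overline{B'_1}}(g_j-g_i)^{+}$-type control only to know $d$ is comparable to the separation; in fact we only need $d\le\widetilde\delta$). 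Collecting the two regions yields the two-sided bound with $c_{\delta,\widetilde\delta}\le C(\sqrt\delta+\widetilde\delta)$.

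The main obstacle I expect is bookkeeping the error from the linear term $\nabla g_j(x')\cdot(y'-x')$ in the denominator in a way that is uniform down to $s\to 1$ and does not degrade when $d$ is very small: naively this term is only $O(\delta|y'-x'|)$, which near $|y'-x'|\sim d$ is comparable to $d$ itself, so one cannot simply Taylor-expand $(|x'-y'|^2+d^2+2d\,\nabla g_j(x')\cdot(y'-x')+\dots)^{-\frac{n+s}{2}}$ around the flat model with a small relative error unless $\delta$ is small. The clean fix is to first apply an affine change of the $y'$ variable (a shear/rotation) that straightens $\Gamma_j$ to first order at $x'$ — equivalently work in coordinates adapted to the tangent plane of $\Gamma_j$ at $(x',g_j(x'))$ — after which the linear term disappears and only the genuinely quadratic $O(\delta|y'-x'|^2)$ and the metric-perturbation terms remain, each of which is an honest $O(\sqrt\delta)$ relative error by the scaling $y'=x'+d w'$ and integrability of $(1+|w'|^2)^{-\frac{n+s}{2}}|w'|^2$ (finite since $n+s>n>n-1+... $, i.e. the exponent beats $|w'|^2$ in $\R^{n-1}$ as long as $n+s-2>n-1$, which holds). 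This is precisely the argument of \cite[Lemma 3.3]{CDSV}, so I would simply cite it; the only adaptation is noting that the metric $g_{ij}$ in \eqref{flatcond} differs from the Euclidean one by $O(\delta)$ in $C^\ell$, which changes distances and volumes by $(1+O(\delta))$ factors and is therefore absorbed into $c_{\delta,\widetilde\delta}$.
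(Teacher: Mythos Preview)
The paper does not give its own proof of this lemma; it is stated with a direct citation to \cite[Lemma~3.3]{CDSV} and used as a black box. Your sketch is essentially the standard argument from that reference: parametrise $\Gamma_j$ as a graph, compare $|x-y|^2$ with the flat model $|x'-y'|^2+d^2$, rescale $y'=x'+d\,w'$ to produce the constant $c_{n,s}$, and control the tail $|y'-x'|\ge\rho$ and the metric perturbation from \eqref{flatcond} by $O(\widetilde\delta)$ and $O(\delta)$ respectively. So your approach agrees with what the paper invokes.

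One small slip to flag: your parenthetical claim that $\int_{\R^{n-1}}|w'|^2(1+|w'|^2)^{-\frac{n+s}{2}}\,dw'<\infty$ ``as long as $n+s-2>n-1$'' is false here, since that inequality is $s>1$ while $s\in(0,1)$; the integral actually diverges. Fortunately you do not need it. After the shear removing the linear term, the remainder satisfies $|R(y')|\le\delta|y'-x'|^2\le\delta\rho|y'-x'|$ on $|y'-x'|\le\rho$, so $|x-y|^2=(1+O(\delta))|y'-x'|^2+d^2+O(d\delta|y'-x'|^2)$, which gives a \emph{pointwise} relative error $|x-y|^2/(|y'-x'|^2+d^2)=1+O(\delta)$ (the cross term $2dR$ is $\le 2\delta d|y'-x'|^2\le\delta(|y'-x'|^2+d^2)$ by AM--GM). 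This pointwise bound, not the divergent weighted integral, is what feeds into the $(1+O(\sqrt\delta+\widetilde\delta))$ factor. With that correction your outline is correct.
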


\begin{proposition}\label{propL-1ps}
Let $E$ be $\Lambda$-almost stable $s$-minimal in $\varphi(\B_2)$, and assume that \eqref{flatcond}--\eqref{whtoihwohw2} hold true, with~$(1-s)=\sigma \in (0,1/2)$. There exists $C<\infty$, depending only on $\Lambda$, $n$, and the constants in \eqref{flatcond}--\eqref{whtoihwohw2}, such that for every $i$ such that \begin{equation*}\min_{\overline B_{1/2}'} |g_i| \le \frac12,\end{equation*} we have that
\begin{equation}\label{ghjgrgiu}
\sigma \int_{B'_{1/2}} \frac{dx'}{|g_{i+1}-g_i|^{1+s}} \le C.
\end{equation}
\end{proposition}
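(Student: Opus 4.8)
The plan is to test the stability inequality with a carefully chosen function supported on the layer $\Gamma_i$ and to extract from the negative term a bound on the nonlocal interaction between consecutive sheets $\Gamma_i$ and $\Gamma_{i+1}$. Concretely, I would take a cutoff $\eta\in C_c^\infty(B_{3/4}')$ with $\eta\equiv 1$ on $B_{1/2}'$, view $\xi$ as a function supported near $\Gamma_i$ (namely $\xi(x)=\eta(x')$ for $x\in\Gamma_i$, extended appropriately as a section of the normal bundle via the ambient vector field whose normal component on $\partial E$ is this $\xi$), and plug it into $\delta^2\mathrm{Per}_s(E)[\xi,\xi]\ge -\Lambda(\int_{\partial E\cap\varphi(\B_2)}|\xi|)^2$. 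The first variation vanishes since $E$ is $s$-minimal, so the second variation reduces (Definition \ref{2quaddef}) to the Sobolev-type term $\iint_{\partial E\times\partial E}|\xi(p)-\xi(q)|^2K(p,q)$ minus the ``curvature'' term $\int_{\partial E}\xi^2\,\mathrm{p.v.}\int_M(\chi_E-\chi_{E^c})[\mathrm{div}_q(\nu K)+\mathrm{div}_p(\nu K)]$. The right-hand side $-\Lambda(\int|\xi|)^2$ is bounded by a constant (using the perimeter bound and that $\xi$ is bounded with controlled support, together with the first classical-perimeter estimate of Theorem \ref{BVest} applied on the relevant ball to control $\mathrm{Per}(E;\varphi(\B_{3/4}))$).

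The key point is to show that the curvature term, when $\xi$ is concentrated on $\Gamma_i$, dominates a multiple of $\sigma\int_{B_{1/2}'}|g_{i+1}-g_i|^{-(1+s)}\,dx'$ up to controlled error. This is where Lemma \ref{lem:kernel-comp} enters: for $x\in\Gamma_i\cap(B_{3/4}'\times\R)$ the contribution to $\mathrm{p.v.}\int_M(\chi_E-\chi_{E^c})(y)[\ldots]$ coming from the region near $\Gamma_{i+1}$ is comparable to $\pm c_{n,s}\sigma|g_{i+1}(x')-g_i(x')|^{-(1+s)}$, because the extended normal vector $\nu$ is nearly constant there (the $g_j$ are $C^2$-small by \eqref{whtoihwohw2}) and the divergence terms differentiate the kernel in a direction transverse to the layers, which is precisely the direction picked up by the integral over $\Gamma_{j}$. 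I would divide the full principal-value integral into: (a) the self-interaction of $\Gamma_i$, which after the p.v. cancellation is bounded (this is the content of the convergence argument in Proposition \ref{12varprop}, Step 2, and can be estimated using \eqref{remaining} and \eqref{itrop2}-type bounds together with the $C^2$-smallness of $g_i$); (b) the interaction with $\Gamma_{i+1}$, the genuinely singular term we want to keep, handled by Lemma \ref{lem:kernel-comp}; (c) interactions with the remaining layers $\Gamma_j$, $j\notin\{i,i+1\}$, and with $\partial E\setminus\varphi(\Omega_0)$, which are bounded since there the distance $|x-y|$ is bounded below (for the far layers, by comparing with $\int_{\Gamma_j}\sigma|x-y|^{-(n+s)}\,dy\le C\sigma|g_j-g_i|^{-(1+s)}$ and summing, using that consecutive gaps are ordered so the geometric-type sum over $j$ converges, or more simply that only finitely many layers can be close by a pigeonhole argument); the term (c) away from $\varphi(\Omega_0)$ is controlled by \eqref{remaining3}.

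Assembling these, the stability inequality becomes, schematically,
\begin{equation*}
\iint_{\partial E\times\partial E}|\xi(p)-\xi(q)|^2K(p,q)\;-\;c\,\sigma\int_{B_{1/2}'}\frac{dx'}{|g_{i+1}-g_i|^{1+s}}\;\ge\;-C\,,
\end{equation*}
where I must also check that the \emph{positive} Sobolev term $\iint|\xi(p)-\xi(q)|^2K(p,q)$ is itself bounded by a constant: since $\xi$ depends only on $x'$ and $\nabla\eta$ is bounded, $|\xi(p)-\xi(q)|\le C|x'_p-x'_q|\le C|x_p-x_q|$ on the coincidence set, so this term is $\le C\iint_{\Gamma\times\Gamma}|x-y|^2K(x,y)\le C\mathrm{Per}(E;\varphi(\B_{3/4}))^2$-type bound via Lemma \ref{loccomparability} and \eqref{remaining} — again finite by Theorem \ref{BVest}, after possibly rescaling to absorb the $\sigma$-dependence, noting $(1-s)$ factors align correctly. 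Rearranging then yields \eqref{ghjgrgiu}.

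The main obstacle I anticipate is item (a)/(c): making the principal-value ``curvature'' integral rigorously split so that the \emph{sign} of the $\Gamma_{i+1}$-contribution is the one we need (it should be the one that makes $-\int\xi^2(\ldots)$ contribute $+c\sigma\int|g_{i+1}-g_i|^{-(1+s)}$, i.e. the divergence of $\nu K$ integrated across a sheet on one side of $\Gamma_i$ produces a definite-sign boundary-type term), and controlling all the other divergent-looking pieces uniformly as $\sigma\to 0$. This requires carefully exploiting that $\nu$ extends to a $C^1$ (indeed $C^\ell$) field which is $C^2$-close to the constant $e_n$ in $\Omega_0$ by \eqref{whtoihwohw2}, so that $\mathrm{div}_p(\nu K)+\mathrm{div}_q(\nu K)$ is the symmetrized directional derivative $K'$ of Proposition \ref{12varprop}, Step 2, with its first-order antisymmetry; this antisymmetry is exactly what kills the dangerous part of the self-interaction (a) and isolates the clean gap term (b). Getting the constants in Lemma \ref{lem:kernel-comp} to line up (the errors $c_{\delta,\widetilde\delta}$ must be small enough not to swallow the main term) forces one to first reduce to a sub-cylinder where the number of layers and their separations are controlled — which, as the text hints, will be done just afterward — or, alternatively, to absorb everything into the constant $C=C(\Lambda,n,\delta,\ldots)$ allowed in the statement.
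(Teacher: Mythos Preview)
Your approach has a genuine gap. With $\xi$ supported only on $\Gamma_i$ (i.e.\ $\xi=\eta(x')$ there and $\xi=0$ on the other sheets), your claim that the Sobolev term $\iint_{\partial E\times\partial E}|\xi(p)-\xi(q)|^2K$ is bounded is false: for $p\in\Gamma_i$ and $q\in\Gamma_{i+1}$ with the same $x'\in B_{1/2}'$ one has $|\xi(p)-\xi(q)|=|\eta(x')|=1$, not $\le C|x_p-x_q|$. Thus the Sobolev term already contains a \emph{positive} copy of $\iint_{\Gamma_i\times\Gamma_{i+1}}K\sim\int_{B_{1/2}'}|g_{i+1}-g_i|^{-(1+s)}$, of the same order as the contribution you hope to extract from the curvature part. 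In the Euclidean model the curvature integrand equals $2\int_{\partial E}(1-\nu(p)\cdot\nu(q))K(p,q)\,dq$, so both pieces carry the $\Gamma_{i\pm1}$ interactions with comparable coefficients; once combined, the almost-stability inequality no longer isolates the single gap $|g_{i+1}-g_i|$ but only an alternating sum over all layers, which need not control the adjacent gap. There is also the difficulty that a $\xi$ vanishing on $\Gamma_{i+1}$ but not on $\Gamma_i$ cannot be realised as $X\cdot\nu$ for an ambient $C^2$ field $X$ with controlled norms when the layers are a priori close.

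The paper's argument is different and never analyses the curvature term. One takes $X=\varphi_*(e_n\eta)$ with $\eta$ a cutoff in the full cylinder, so $\xi=\langle X,\nu_{\partial E}\rangle_g$ is defined on \emph{all} layers and \emph{alternates sign} between consecutive ones (since the outward normals do). Applying almost-stability to $|\xi|$ and subtracting the quadratic form at $\xi$, the curvature terms cancel identically (as $\xi^2=|\xi|^2$), leaving
\[
2\iint_{\partial E\times\partial E}\bigl[|\xi(p)||\xi(q)|-\xi(p)\xi(q)\bigr]K(p,q)\ \le\ \delta^2\mathrm{Per}_s(E)[\xi,\xi]+\Lambda\Big(\int_{\partial E}|\xi|\Big)^2.
\]
By the sign alternation the left side dominates $c\int_{B_{1/2}'}|g_{i+1}-g_i|^{-(1+s)}$ via Lemma~\ref{lem:kernel-comp} and Lemma~\ref{loccomparability}. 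After multiplying by $\sigma$, the right side is bounded: $\sigma\,\delta^2\mathrm{Per}_s[\xi,\xi]$ via Lemma~\ref{enboundslemma2} and \eqref{locfperbound} (crucially, $\xi$ comes from a genuine ambient vector field, so the flow estimate applies), and $\sigma\Lambda(\int|\xi|)^2$ via the classical perimeter estimate of Proposition~\ref{prop:BVestas}.
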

\begin{proof}
Observe that $B'_1\times[-1,1]\subset \B_2$.
Let $X$ be a $C^1$ vector field of the form $\varphi_*(e_n\eta)$, where $e_n$ is the $n$--th basis coordinate vector of $\R^n$ (i.e. pointing "upwards", in the direction in which the $\Gamma_i$ are graphical) and $\eta$ is a standard cutoff verifying $\chi_{\B_{1/2}'\times [-3/4,3/4]}\le \eta \le \chi_{\B_{2/3}'\times [-4/5,4/5]}$. Define the function $\xi=\langle X,\nu_{\partial E}\rangle_g$ on $\partial E$. As in \eqref{diff2var}--\eqref{diff2var2}, considering the test functions $\xi$ and $|\xi|$, the $\Lambda$-almost stability assumption gives 
    \begin{equation}\label{aggdhfgadeq}
    2\iint_{\partial E\times\partial E} \Big[|\xi(p)||\xi(q)|-\xi(p)\xi(q)\Big]K(p,q)\leq \delta^2\text{Per}_s(E)[\xi,\xi]+\Lambda\Big(\int_{\partial E} |\xi|\Big)^2\, .
    \end{equation}
    Now, we have defined 
    \begin{equation}\label{eqtestnor}
    \xi(p)=\langle X,\nu_{\partial E}\rangle_g(p)=(\eta\circ\varphi^{-1})(p)\langle\varphi_*(e_n),\nu_{\partial E}\rangle_g(p)\, .
    \end{equation}
    Recall that, by assumptions \eqref{whtoihwohw1} and \eqref{whtoihwohw2}, $\varphi^{-1}(\partial E)$ is a union of very flat graphs $\Gamma_j$ of the form $\Gamma_j=\{x_n = g_j (x') \}$. Together with the flatness assumptions on the metric, this implies that
    $$|\langle\varphi_*(e_n),\nu_{\partial E}\rangle_g|\geq c_1| e_n\cdot\varphi_*^{-1}(\nu_{\partial E})|\geq c_2\, ,$$
    where $c_2>0$ depends only on $\delta$ from \eqref{whtoihwohw2}.
    Furthermore, $\langle\varphi_*(e_n),\nu_{\partial E}\rangle_g$ changes sign on $\varphi(\Gamma_i)$ and $\varphi(\Gamma_{i+1})$, since the consecutive layers $\Gamma_i$ and $\Gamma_{i+1}$ have normal vectors pointing in (almost) opposite directions.\\
    These simple facts and \eqref{eqtestnor} imply that, for $p\in\varphi(\Gamma_i\cap \B_{1/2}'\times [-3/4,3/4])$ and $q\in\varphi(\Gamma_{i+1}\cap\B_{1/2}'\times [-3/4,3/4])$, then 
    $$\Big[|\xi(p)||\xi(q)|-\xi(p)\xi(q)\Big]=2|\xi(p)||\xi(q)|\geq 2c_2^2[(\eta\circ\varphi^{-1})(p)][(\eta\circ\varphi^{-1})(q)]\geq 2c_2^2\, .$$
Substituting into the original inequality \eqref{aggdhfgadeq}, we deduce that
$$
8c_2^2\iint_{\Sigma_i\times \Sigma_{i+1}}K(p,q)\leq \delta^2\text{Per}_s(E)[\xi,\xi]+\Lambda\Big(\int_{\partial E} |\xi|\Big)^2\, ,
$$
where $\Sigma_j:=\varphi(\Gamma_j\cap \B_{1/2}'\times [-3/4,3/4])$.
So, passing to coordinates with $\varphi$, using Lemma \ref{loccomparability} and Lemma \ref{lem:kernel-comp} we obtain
\begin{equation}\label{l-2rhsin}
c\sigma\int_{B'_{1/2}}\frac{1}{|g_{i+1}(x')-g_i(x')|^{1+s}}dx'\leq \sigma\delta^2\text{Per}_s(E)[\xi,\xi]+\Lambda\sigma\Big(\int_{\partial E} |\xi|\Big)^2\, .
\end{equation}

On the other hand, the right hand side in this inequality is bounded: To estimate the first term, recalling Definition \ref{2quaddef}, by Lemma \ref{enboundslemma2} and the assumption \eqref{locfperbound} we can bound 
\begin{equation}\label{hiujshgi}
    \sg\delta^2\text{Per}_s(E)[\xi,\xi]=\sg\frac{d^2}{dt^2}\Big|_{t=0}\textnormal{Per}_s(\phi_X^t(E))\leq \sg C\textnormal{Per}_s(E)\leq C\kappa\, .
\end{equation}
To estimate the second term (corresponding to the error introduced by almost-stability), we apply the classical perimeter bound \eqref{bvfas} given by Proposition \ref{prop:BVestas}, since then we can bound
\begin{equation}\label{aiugfouifg}
\sg\Lambda\Big(\int_{\partial E} |\xi|\Big)^2\leq C\sg \Lambda\textnormal{Per}^2(E,\B'_{2/3}\times[-4/5,\,4/5])\leq C(\Lambda,n,s_0)\, .
\end{equation}
To be precise, Proposition \ref{prop:BVestas} is stated on a ball of radius one. We can cover $\B'_{2/3}\times[-4/5,\,4/5]$ with a finite number $C_n$ of balls $\B_{1/10}(x_k)$ so that $\B_{1/5}(x_k)\subset \B'_1\times[-1,1]$ for all $1\leq k\leq C_n$. As in Step 2 of the proof of Proposition \ref{prop:BVestas}, applying Proposition \ref{prop:BVestas} on the manifold $(M,5^2g)$ and scaling back we see that
$$\textnormal{Per}(E;\B_{1/5}(x_k)) \leq C\frac{1+\sqrt \kappa}{\sqrt{\sigma}} \,,$$
which summing over $k$ shows that
$$\textnormal{Per}(E,\B'_{2/3}\times[-4/5,\,4/5])\leq C(\Lambda,n,s_0)\frac{1+\sqrt{\kappa}}{\sqrt{\sg}}$$
thus giving \eqref{aiugfouifg}.\\

Applying \eqref{hiujshgi} and \eqref{aiugfouifg} to the RHS of \eqref{l-2rhsin}, we conclude that
\begin{equation*}
\sigma\int_{B'_{1/2}}\frac{1}{|g_{i+1}(x')-g_i(x')|^{1+s}}dx'\leq C\, ,
\end{equation*}
where $C$ has the right dependencies.
\end{proof}

The next lemma gives a rough lower bound for the separation between layers. Any bound which is polynomial in $\sigma$ would serve our purposes here; later, we will upgrade it to the optimal rate $\sigma^{1/2}$ in the case $n=3$.
\begin{lemma}\label{whtiohwoiwh}
Let $E\subset M$ be $\Lambda$-almost stable $s$-minimal in $\varphi(\B_2)$, and assume that \eqref{flatcond}--\eqref{whtoihwohw2} hold true, with~$(1-s)=\sigma \in (0,1/2)$. There exists $C<\infty$, depending only on $\Lambda$, $n$, and the constants in \eqref{flatcond}--\eqref{whtoihwohw2}, such that for every $i$ such that \begin{equation*}\label{ASSNI} \min_{\overline B_{1/2}'} |g_i| \le \frac12,\end{equation*}
we have that
\[
\inf_{B_{1/2}'} (g_{i+1}-g_i) \ge \frac{\sigma^2}{C} .
\]
\end{lemma}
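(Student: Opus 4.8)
The plan is to combine the $L^{-2}$-type estimate from Proposition \ref{propL-1ps} with a crude pointwise comparison of the separation at nearby points, exactly in the spirit of the analogous step in \cite{CDSV}. Fix an index $i$ with $\min_{\overline B_{1/2}'}|g_i|\le\frac12$ and set $h:=g_{i+1}-g_i>0$; by \eqref{whtoihwohw2} the functions $g_i,g_{i+1}$ have $C^1$ norm bounded by $\delta$, so $h$ is Lipschitz on $B_{1/2}'$ with $\|\nabla h\|_{L^\infty}\le 2\delta\le 1$. Suppose, for contradiction, that $\inf_{B_{1/2}'}h=h(x_0')<\frac{\sigma^2}{C}$ for some $x_0'\in B_{1/2}'$ and some large constant $C$ to be chosen. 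Then on the small ball $B_r'(x_0')$ with radius $r:=\frac{\sigma^2}{C}$ (which is contained in $B_{1/2}'$ for $C$ large since $r<1$), the Lipschitz bound gives $h(x')\le h(x_0')+\|\nabla h\|_\infty|x'-x_0'|\le 2r$ for all $x'\in B_r'(x_0')$.

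First I would feed this into \eqref{ghjgrgiu}: since $h(x')\le 2r$ on $B_r'(x_0')\subset B_{1/2}'$, and using $1+s\ge 2-\sigma\ge 3/2$, I can bound from below
\[
C \ge \sigma\int_{B'_{1/2}}\frac{dx'}{h^{1+s}} \ge \sigma\int_{B_r'(x_0')}\frac{dx'}{(2r)^{1+s}} = \sigma\,\omega_{n-1}r^{n-1}\,(2r)^{-(1+s)} = c_n\,\sigma\, r^{\,n-2-s}.
\]
Here the exponent $n-2-s$ is negative exactly when $n=2$ and (after we fix $l=n+2\ge 4$, so $n\ge 2$) possibly zero or positive for $n\ge 3$, which is why a purely polynomial-in-$\sigma$ rate — rather than the sharp $\sigma^{1/2}$ — is all one can extract this way; this matches the statement of the lemma, which only claims $\sigma^2/C$. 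Plugging $r=\sigma^2/C$ gives $C\ge c_n\,\sigma\,(\sigma^2/C)^{n-2-s}$, i.e. $C^{\,1+(n-2-s)}\ge c_n\,\sigma^{\,1+2(n-2-s)}$. For $n=2$ this reads $C^{\,1-s}\ge c_2\,\sigma^{\,1-2s}\ge c_2\sigma^{-1}$ (using $s<1$), which is violated once $C$ is chosen larger than a dimensional constant times a negative power of $\sigma$ — so this crude argument in fact needs a little more care, and the clean way is to note that the exponent $n-2-s$, for the relevant range $n\ge 2$ and $s$ close to $1$, is $\ge n-3$, hence $\ge -1$; tracking constants, $C\ge c_n\sigma\,r^{-1}=c_n\sigma\cdot C/\sigma^2=c_nC/\sigma$, which is the contradiction $1\ge c_n/\sigma$ for $\sigma$ small. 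I would present it in this last form, since it isolates the one genuinely negative power that can occur.

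The cleanest route, which I would actually write out, is therefore: take $C_0$ the constant from Proposition \ref{propL-1ps}, suppose $\inf_{B_{1/2}'}h=:\mu$ and aim to show $\mu\ge\sigma^2/C$; by the Lipschitz bound $h\le \mu+2\delta|x'-x_0'|$, so on $B_{\mu}'(x_0')$ (shrinking if $\mu\ge 1/2$, in which case there is nothing to prove) we have $h\le (1+2\delta)\mu\le 2\mu$, giving
\[
C_0\ \ge\ \sigma\int_{B_\mu'(x_0')}\frac{dx'}{(2\mu)^{1+s}}\ =\ \frac{\omega_{n-1}}{2^{1+s}}\,\sigma\,\mu^{\,n-2-s}\ \ge\ c_n\,\sigma\,\mu^{\,n-2-s}.
\]
When $n-2-s\le 0$ (always true for $n=2$, and for $n=3$ when $s\ge 1$ — which we avoid, so one checks $n-2-s\in(-1,0)$ there for $s$ near $1$) this forces $\mu^{\,2-n+s}\ge c_n\sigma/C_0$, hence $\mu\ge (c_n\sigma/C_0)^{1/(2-n+s)}\ge (c_n\sigma/C_0)^{1}$ when the exponent $1/(2-n+s)\ge 1$, i.e. $2-n+s\le 1$, i.e. $s\le n-1$, which holds for $n\ge 2$. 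When $n-2-s>0$ (large $n$), $h^{1+s}\ge\mu^{1+s}$ trivially on all of $B_{1/2}'$ gives directly $C_0\ge \sigma\,\omega_{n-1}\,2^{-n}\mu^{-(1+s)}$, so $\mu\ge (c_n\sigma/C_0)^{1/(1+s)}\ge c_n'\sigma/C_0$ since $1/(1+s)\le 1$ and $\sigma<1$. In all cases $\mu\ge \sigma^2/C$ for a suitable $C=C(\Lambda,n,\delta,\ldots)$, absorbing the finitely many constants, which is the assertion (and in fact one even gets $\sigma^1/C$ by this argument — the statement asks only for $\sigma^2$, presumably for uniformity of the write-up).

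\textbf{Main obstacle.} The only real subtlety is the sign and size of the exponent $n-2-s$ in the integral lower bound, i.e. making sure that the one potentially problematic case ($n=2$, or $n=3$ with $s$ very close to $1$ where $n-2-s$ is a small negative number) still yields at least a $\sigma^2$ lower bound rather than something worse; this is handled by the elementary case split above and by never asking for more than $\sigma^2/C$. A secondary point is the passage to coordinates and the use of Lemma \ref{loccomparability} to replace $K(p,q)$ by the model kernel $\sigma|x-y|^{-(n+s)}$ when invoking Proposition \ref{propL-1ps}, but this is exactly the comparison already performed in the proof of that proposition, so no new work is needed.
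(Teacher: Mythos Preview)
Your argument has a genuine gap for $n\ge 3$ (in particular for the paper's main case $n=3$). Integrating over the small ball $B_\mu'(x_0')$ with the Lipschitz bound $h\le 2\mu$ produces
\[
\sigma\int_{B_\mu'(x_0')}\frac{dx'}{(2\mu)^{1+s}}\;=\;c_n\,\sigma\,\mu^{\,n-2-s},
\]
and for $n\ge 3$ the exponent $n-2-s=n-3+\sigma>0$, so this quantity tends to $0$ as $\mu\to 0$ and gives no contradiction at all. Your fallback for this case, namely ``$h^{1+s}\ge\mu^{1+s}$ on all of $B_{1/2}'$ gives $C_0\ge \sigma\,\omega_{n-1}2^{-n}\mu^{-(1+s)}$'', has the inequality backwards: from $h\ge\mu$ one gets $1/h^{1+s}\le 1/\mu^{1+s}$, which yields an \emph{upper} bound on the integral, not a lower one, and combined with $\sigma\int 1/h^{1+s}\le C_0$ gives nothing. (Even using the full Lipschitz information $h(x')\le\mu+2\delta|x'-x_0'|$ and integrating in polar coordinates, for $n=3$ one finds $\sigma\int\frac{r\,dr}{(\mu+2\delta r)^{1+s}}\sim c(\delta)$, a constant independent of $\mu$; so the $C^1$ route is genuinely insufficient.)

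The paper's proof uses a different mechanism: it \emph{rescales} around the point of small separation by the factor $\varrho=\tau\sigma$ (where the separation equals $\tau^2\sigma^2$) and then reapplies Proposition~\ref{propL-1ps} on the rescaled manifold. The point is that under this blow-up the $C^2$ norm of the layers becomes $\|D^2\tilde g_j\|_\infty\le\varrho\delta$, and since $\tilde h\ge 0$ with $\tilde h(0)=\varrho$, a second-order Taylor expansion together with positivity forces $|\nabla\tilde h(0)|\lesssim\varrho\sqrt\delta$ and hence $\tilde h\le 2\varrho$ on the whole unit ball $B_1'$ (not just on a ball of radius $\varrho$). Feeding this into the rescaled version of \eqref{ghjgrgiu} yields $\sigma/\varrho^{1+s}\le C$, i.e.\ $\sigma^{-s}\le C\tau^{1+s}$, which bounds $\tau$ away from zero. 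The essential ingredient you are missing is this rescaling step that exploits the $C^2$ bound (which improves by a factor $\varrho$ upon zooming in) rather than the $C^1$ bound (which does not).
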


\begin{proof}
Let $i$ be as in the statement, and assume that there exists a point $z' \in \B'_{1/2}$ such that  $g_{i+1}(z')-g_i (z')= \tau^2 \sigma^2$  for some $\tau\in (0,1)$. Our goal will be to bound $\tau$ away from zero (hence we may assume without loss of generality that $\tau$ is  very small whenever needed).

By the assumption on $i$, we have that~$|g_i(z')|<3/4$. Let $z: = (z', g_i (z'))$ and $\varrho:= \tau \sigma$. Define $\varphi_{z,\varrho}:=\varphi(z+\varrho\, \cdot \,)$. In the coordinates given by $\varphi_{z,\varrho}$, $\Gamma$ is simply transforming into the rescaled set $\widetilde \Gamma : = \frac{\Gamma-z}{\varrho}\cap\B_1$, in the sense that $(\varphi_{z,\varrho}^{-1}\circ \varphi)(\Gamma\cap\B_{\varrho}(z))=\widetilde\Gamma$.\\

By (c) in Remark \ref{flatscalingrmk} and \eqref{flatcond}, if $\rho$ is sufficiently small the rescaled manifold $(M,\frac{1}{\varrho^2} g)$ satisfies the flatness assumptions ${\rm FA}_\ell(M,\frac{1}{\varrho^2} g, 1, \varphi(z),\varphi_{z,\varrho})$. Moreover, by the same argument we gave in Step 2 of the proof of Proposition \ref{prop:BVestas}, $E$ is a $\Lambda$-almost stable $s$-minimal set for $(M,\varrho^2 g)$ in $\varphi_{z,\varrho}(\B_1)$; in fact, it is even $\varrho^{n+s}\Lambda$-almost stable. Likewise, thanks to \eqref{fpergrowth1} the fractional perimeter bound \eqref{locfperbound} is satisfied in this rescaled setting, up to changing the value of $\kappa$. Finally, we also observe that the ``layers'' $\{x_n = g_{j}(x')\}$
become, in the coordinates given by $\varphi_{z,\varrho}$,
\[
\{x_n = \widetilde g_j(x')\} \qquad \mbox{where }  \| D^2 \widetilde g_j \|_{L^\infty} \le  \varrho \delta, \quad \mbox{for all } j=1,\dots, N.
\]
Therefore, we can apply Proposition~\ref{propL-1ps} in our rescaled situation and deduce that
\[
\sg\int_{B_{1/2}'} \frac{1}{|\widetilde g_{i+1}(x')-\widetilde g_i(x')|^{1+s}}\, dx' \le C.
\]
On the other hand, since the point $z$ is mapped to $0$, we have that
\[
\widetilde g_i(0) =0 \quad \mbox{and} \quad  \widetilde g_{i+1} (0) \le \varrho.
\]
Combining this with the bound for the $D^2 \widetilde g_j$, a Taylor expansion immediately gives that, for $\delta, \varrho$ sufficiently small,
\[
0\le \widetilde g_{i+1}- \widetilde g_i \le  2\varrho \quad\mbox{in } B_1'.
\]

Therefore, we also get a lower bound
\[
 \frac{1}{\varrho^{1+s}}\leq C\int_{B_{1/2}'} \frac{1}{|\widetilde g_{i+1}(x')-\widetilde g_i(x')|^{1+s}}\, dx' \,.
\]
Combining the above, we deduce that
\[
\frac{\sigma}{\varrho^{1+s}} \le  C\,,
\]
which since~$\varrho = \tau \sigma $ gives that~$\sigma^{-s} \le C\tau^{1+s}$. In particular, this shows
that~$\tau$ is bounded away from zero with a uniform bound for $\sigma \in (0,1/2)$, as desired.
\end{proof}

From this rough separation estimate, we can already deduce that the number of layers inside a smaller cylinder is bounded.
\begin{lemma}\label{classperball} Let the same assumptions as in Lemma \ref{whtiohwoiwh} hold. Then, defining $N_2$ as the number of layers of $\varphi^{-1}(\partial E)$ intersecting $\B_{1/2}'\times[-1/2,1/2]$, there exists $C<\infty$, depending only on $\Lambda$, $n$ and the constants in \eqref{flatcond}--\eqref{whtoihwohw2}, such that $$N_2\leq C\,.$$
Therefore, we deduce that
$$\textnormal{Per}(E;\varphi(\B_{1/2}'\times[-1/2,1/2]))\leq C\, .$$ 
\end{lemma}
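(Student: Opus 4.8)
The plan is to deduce $N_2\le C$ by playing the assumed bound $\textnormal{Per}_s(E;\varphi(\B_2))\le \kappa/\sigma$ from \eqref{locfperbound} against a matching lower bound $\textnormal{Per}_s(E;\varphi(\B_{3/4}))\ge cN_2/\sigma$; the factors $1/\sigma$ cancel, leaving a bound uniform in $s$. The lower bound is where the rough separation estimate of Lemma \ref{whtiohwoiwh} is used, and it is essentially the only place where it enters. (For $\sigma$ bounded away from $0$ the claim is immediate: by Lemma \ref{whtiohwoiwh} consecutive relevant layers are then separated by a fixed positive amount, so only finitely many fit in a window of unit size; hence one may assume $\sigma$ small throughout.)

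\textbf{Set-up.} Work in the chart $\varphi$, set $F=\varphi^{-1}(E)$ and $d_i:=g_{i+1}-g_i$. By \eqref{whtoihwohw1}--\eqref{whtoihwohw2}, inside $\Omega_0$ the set $F$ is bounded by the ordered graphs $\Gamma_1<\cdots<\Gamma_N$, and crossing any $\Gamma_i$ switches $F$ and $F^c$. The layers meeting $\B_{1/2}'\times[-1/2,1/2]$ form a block of consecutive indices $\Gamma_{j_0},\dots,\Gamma_{j_0+N_2-1}$, each of which, being a flat graph with $\|\nabla g_i\|_{L^\infty(\B'_1)}<\delta$, satisfies $|g_i|<1/2+\delta$ on $\B'_1$, and in particular $\min_{\overline{\B'_{1/2}}}|g_i|\le1/2$; hence Lemma \ref{whtiohwoiwh} gives $\inf_{\B'_{1/2}}d_i\ge\sigma^2/C_0$ for every $i$ in the block, with $C_0$ depending only on $\Lambda$, $n$ and the constants in \eqref{flatcond}--\eqref{whtoihwohw2}. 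For each such $i$ let $R_i:=\{(x',x_n):x'\in\B'_{1/8},\ g_i(x')<x_n<g_{i+1}(x')\}$, the topmost and bottommost indices being treated by truncating the slab to vertical thickness $1/100$. The $R_i$ are pairwise disjoint, each is contained in $\B_{3/4}\subset\B_2$, each lies entirely in $F$ or in $F^c$, and consecutive ones lie on opposite sides.

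\textbf{The key lower bound.} Since consecutive slabs lie on opposite sides and the pairs $(R_i,R_{i+1})$ are disjoint, Lemma \ref{loccomparability} and the flatness of the metric give
\[
\textnormal{Per}_s(E;\varphi(\B_{3/4}))\ \ge\ c\sum_{i=j_0}^{j_0+N_2-2}\iint_{R_i\times R_{i+1}}\frac{\alpha_{n,s}}{|x-y|^{n+s}}\,dx\,dy.
\]
For one term, parametrize $x\in R_i$, $y\in R_{i+1}$ near $\Gamma_{i+1}$ by $x=(x',g_{i+1}(x')-h)$ and $y=(y',g_{i+1}(y')+h')$ with $0<h,h'\le\sigma^2/C_0$ (admissible by the separation bound), so that membership in the two slabs is automatic and, since $|g_{i+1}(x')-g_{i+1}(y')|\le\delta|x'-y'|$, one has $|x-y|\le\sqrt2\,(|x'-y'|^2+(h+h')^2)^{1/2}$. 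Integrating out $y'$ over $\B'_{1/8}(x')$ produces, for $h+h'$ small, the factor $\ge c(n,s_0)(h+h')^{-(1+s)}$ (this is where the constant $c_{n,s}$ of Lemma \ref{lem:kernel-comp} appears), and what remains,
\[
\int_0^{a}\!\!\int_0^{a}\frac{dh\,dh'}{(h+h')^{1+s}}=\frac{a^{1-s}}{s(1-s)}\,(2-2^{1-s})\qquad\big(a:=\sigma^2/C_0\big),
\]
is $\ge c'/\sigma$ for $\sigma$ small, because $a^{1-s}=(\sigma^2/C_0)^{\sigma}\to1$ and $2-2^{1-s}\to1$ as $\sigma\to0^+$. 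Thus each term in the sum is $\ge c''/\sigma$, uniformly in $s$.

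\textbf{Conclusion and main obstacle.} Summing over the $N_2-1$ pairs gives $\textnormal{Per}_s(E;\varphi(\B_{3/4}))\ge c''(N_2-1)/\sigma$, and since $\textnormal{Per}_s(E;\varphi(\B_{3/4}))\le\textnormal{Per}_s(E;\varphi(\B_2))\le\kappa/\sigma$ we conclude $N_2\le1+\kappa/c''=:C$. The perimeter statement is then immediate: each of the $N_2$ relevant layers is a graph over $\B'_{1/2}$ with $\|\nabla g_i\|_{L^\infty}<\delta$, hence contributes at most a constant to $\textnormal{Per}(E;\varphi(\B'_{1/2}\times[-1/2,1/2]))$, so $\textnormal{Per}(E;\varphi(\B'_{1/2}\times[-1/2,1/2]))\le CN_2\le C$. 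The genuinely delicate step is the uniform lower bound $\iint_{R_i\times R_{i+1}}|x-y|^{-(n+s)}\gtrsim1/\sigma$: one must verify that the $(1-s)^{-1}$-singularity survives even though the slabs are only $\sigma^2$-thick (this is precisely why a polynomial-in-$\sigma$ separation suffices, as anticipated before Lemma \ref{whtiohwoiwh}), and one must pass from the curved $C^2$ graphs to a flat slab computation while keeping the transversal wobble $\lesssim\delta|x'-y'|$ under control, both of which are handled by the graph parametrization above. The remaining points (consecutiveness of the block of layers, the mild $\delta$-fuzziness of the window $[-1/2,1/2]$, and the top/bottom edge layers) are routine bookkeeping.
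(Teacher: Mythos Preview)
Your argument is correct and reaches the same endpoint as the paper's proof—both obtain a lower bound $\textnormal{Per}_s(E;\varphi(\B_{3/4}))\ge cN_2/\sigma$ and play it against \eqref{locfperbound}—but the mechanisms differ.

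The paper packs disjoint balls of radius $r=c\sigma^4$ centered on each layer (the separation Lemma~\ref{whtiohwoiwh} guarantees balls from different layers are disjoint), invokes the relative fractional isoperimetric inequality to get a contribution $\ge\frac{c}{\sigma}r^{n-s}$ per ball, fits $\sim r^{-(n-1)}$ balls per layer, and sums to $N_2\cdot\frac{c}{\sigma}r^{1-s}$; the crucial observation $r^{1-s}=\sigma^{-4\sigma}\to 1$ then finishes. Your route is more direct: you compute the nearest-neighbour slab interaction $\iint_{R_i\times R_{i+1}}|x-y|^{-(n+s)}$ by hand, reducing via the graph parametrization to the explicit one-dimensional integral $\int_0^a\!\int_0^a(h+h')^{-(1+s)}\,dh\,dh'=\tfrac{a^{1-s}}{s(1-s)}(2-2^{1-s})$, and the same observation $a^{1-s}=(\sigma^2/C_0)^{\sigma}\to 1$ makes each term $\gtrsim 1/\sigma$. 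The disjointness you need is that the product sets $R_i\times R_{i+1}$ are pairwise disjoint in $F\times F^c\cup F^c\times F$, which is immediate since the first factors are already disjoint. Your approach avoids the isoperimetric inequality and the ball-counting bookkeeping; the paper's approach is perhaps more robust (it only uses that each layer carries a definite amount of boundary in every small ball, not the full slab structure). Either way, the heart of the matter—that any polynomial-in-$\sigma$ separation yields $r^\sigma\to 1$—is identical.

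Two minor remarks: your pair count is off by one depending on whether you include the truncated top/bottom slabs, but this is harmless; and when integrating out $y'$ you should take $x'\in\B'_{1/16}$ and $y'-x'\in\B'_{1/16}$ (or similar) so that $y'$ stays in $\B'_{1/8}$, a trivial fix.
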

\begin{proof}
$N_2$ corresponds to the number of indices $i$ such that \begin{equation*} \min_{\overline B_{1/2}'} |g_i| \le \frac12.\end{equation*}
Since for any such $i$ the associated $\Gamma_i$ is a graph with uniform estimates (by \eqref{whtoihwohw1}--\eqref{whtoihwohw2}), bounding the number $N_2$ of such $i$ immediately gives a bound for the total classical perimeter. We thus focus on bounding $N_2$, which will be possible thanks to every layer $\Gamma_i$ ``contributing a positive amount to the fractional perimeter" due to the rough separation estimate between layers.

\noindent \textbf{Claim.} Let $i$ be such that $\min_{\overline B_{1/2}'} |g_i| \le \frac12$,
and let $j\neq i$.
Then there is $c>0$ small enough such that, given points $x\in\Gamma_i$ and $y\in\Gamma_j$, the balls $\B_r(x)$ and $\B_r(y)$ are disjoint for $r=c\sg^{4}$.
\begin{proof}[Proof of the claim]
    Indeed, if $\B_r(x)\cap\B_r(y)\neq\emptyset$, then in particular $|y'-x'|\leq 2r$ and $|g_{j}(y')-g_{i}(x')|\leq 2r$. But then, by Lemma \ref{whtiohwoiwh} and \eqref{whtoihwohw2},
\begin{align*}
2c\sg^{4}&=2r\geq|g_{j}(y')-g_{i}(x')|\geq |g_{j}(x')-g_{i}(x')|-|g_{j}(y')-g_{j}(x')|\\
&\geq \frac{\sg^{4}}{C} - \delta|y'-x'|\geq \frac{\sg^{4}}{C} - 2\delta r=\frac{\sg^{4}}{C} - 2\delta c\sg^{4}\, ,
\end{align*}
or in other words $\frac{1}{C}\leq 2(1+\delta)c$,
which is a contradiction for $c=\frac{1}{2C(2+\delta)}$ or smaller.
\end{proof}
Now that the claim has been proved, denote $F=\varphi^{-1}(E)$, and let $r$ be as in the claim. Now, thanks to the uniform $C^2$ estimates on our graphs, for $s$ sufficiently close to $1$ (i.e. $\sigma$ close to $0$) we have the following: if $x\in\Gamma_i$, then both $|F\cap \B_r(x)|\geq 1/4 |\B_r(x)|$ and $|F^c\cap \B_r(x)|\geq 1/4|\B_r(x)|$ hold. This is true since satisfying uniform $C^2$ estimates means that $\Gamma_i$ separates from its tangent hyperplane at $x$ at most $Cr^2$ inside $\B_r(x)$, so that for $\sigma$ (and thus $r$) sufficiently small $\Gamma_i$ actually divides the ball $\B_r(x)$ in two pieces of almost equal volume. Denote 
$$\textnormal{Per}_s|_{\B_r(x)}(F):=\iint_{(F\cap \B_r(x))\times (F^c\cap\B_r(x))} \frac{1}{|x-y|^{n+s}}\,dx\,dy\,.
$$
Then, by the relative fractional isoperimetric inequality (see \cite{FR}) together with $\min\{|F\cap \B_r(x)|,|F^c\cap \B_r(x)|\}\geq 1/4|\B_r(x)|$, we have that
$$
\textnormal{Per}_s|_{\B_r(x)}(F)\geq \frac{c}{1-s}r^{n-s}=\frac{c}{\sg}r^{n-s}
$$
with $c(n)>0$. Moreover, since $\Gamma_i$ is a graph over the $(n-1)$-dimensional ball of radius $1/2$ in $\R^{n-1}$, we can find at least $k\geq c/r^{n-1}$ disjoint balls of radius $r$ centered at points $x_1^{i}\in\Gamma_i,...,x_k^{i}\in\Gamma_i$ (indeed, it is clear that one can arrange at least $k\geq c/r^{n-1}$ points in a square lattice inside the Euclidean ball of radius $1/2$ in $\R^{n-1}$ with distance $r$ between adjacent points, and then it suffices to take their images under the graph parametrisation $g_i$). Repeating this for each $i=1,...,N_2$ and observing that any pair of balls obtained in this way will be disjoint (by construction if they belong to the same $\Gamma_i$, and by the choice of $r$ at the beginning of the proof if they belong to $\Gamma_i$ and $\Gamma_j$ with $i\neq j$), we can bound
\begin{align*}
\textnormal{Per}_s(F,\B'_{1/2}\times(-3/4,\,3/4))&\geq \sum_{1\leq i\leq N_2}\sum_{1\leq l\leq k}\textnormal{Per}_s|_{\B_r(x_l^i)}(F)\geq N_2k \frac{c}{\sg}r^{n-s}\\
&\geq N_2\frac{c}{\sg}r^{1-s}=N_2\frac{c}{\sg}\sg^{-4\sg}\geq N_2 \frac{c}{2\sg}\, .
\end{align*}
In the last line we are taking $s$ sufficiently close to $1$ (equiv. $\sigma=1-s$ sufficiently close to $0$) so that 
$$\sg^{-4\sg}=1+O( \sg |\log(\sg)|)\geq \frac{1}{2}\, .$$

In other words, we have found that the number $N_2$ of layers is bounded by
$$N_2\leq C(1-s)\textnormal{Per}_s(F,\B'_{1/2}\times(-3/4,\,3/4))\,.$$ Moreover, by Lemma \ref{loccomparability} and the flatness of the metric it is immediate that
$$
\textnormal{Per}_s(F,\B'_{1/2}\times(-3/4,\,3/4))\leq C \textnormal{Per}_s(E,\varphi(\B'_{1/2}\times(-3/4,\,3/4)))\leq C\textnormal{Per}_s(E,\varphi(\Omega_0))\,.
$$
Combining these two things, by assumption \eqref{locfperbound} we then deduce that $N_2\leq C\kappa$, which concludes the proof.
\end{proof}

We now introduce the following observation, which will be very useful since it will permit us to relate volume and boundary integrals through integration by parts while allowing us to reuse our known estimates for the family of kernels of the type $K_s$ (defined through \eqref{wethiowhoihw2}):
\begin{lemma}\label{kerdivlem}
The identity
$$
K_s(p,q)=-\textnormal{div}_q(\nabla_q K_{s-2})(p,q)
$$
holds.
\end{lemma}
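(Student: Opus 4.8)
The plan is to reduce the claimed identity to a statement about the heat kernel $H_M(p,q,t)$, where the relation between $K_s$ and $K_{s-2}$ becomes transparent from the defining formula \eqref{wethiowhoihw2}. Recall that
$$K_s(p,q) = \frac{s/2}{\Gamma(1-s/2)}\int_0^\infty H_M(p,q,t)\,\frac{dt}{t^{1+s/2}}\,,$$
so that the two kernels differ only in the exponent of $t$ and in the normalizing constant; the key analytic input is that $H_M$ solves the heat equation $\partial_t H_M = \Delta_q H_M$ (where $\Delta_q = \operatorname{div}_q \nabla_q$ acts in the $q$-variable), together with the fact that $H_M(p,q,t)\to 0$ as $t\to\infty$ and that $t\mapsto t^{-s/2}H_M(p,q,t)$ has the appropriate decay as $t\to 0$ (for $p\neq q$) to justify an integration by parts in $t$.

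First I would apply $\operatorname{div}_q\nabla_q = \Delta_q$ to the integral representation of $K_{s-2}$: since the operator commutes with the $t$-integral (for $p\neq q$, away from the diagonal, where everything is smooth and the integrals converge absolutely after differentiation under the integral sign),
$$\Delta_q K_{s-2}(p,q) = \frac{(s-2)/2}{\Gamma(2-s/2)}\int_0^\infty \big(\Delta_q H_M(p,q,t)\big)\,\frac{dt}{t^{s/2}} = \frac{(s-2)/2}{\Gamma(2-s/2)}\int_0^\infty \partial_t H_M(p,q,t)\,\frac{dt}{t^{s/2}}\,.$$
Then I would integrate by parts in $t$: $\int_0^\infty (\partial_t H_M)\,t^{-s/2}\,dt = \big[H_M\,t^{-s/2}\big]_0^\infty + \frac{s}{2}\int_0^\infty H_M\,t^{-s/2-1}\,dt$. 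The boundary term at $t=\infty$ vanishes since $H_M$ decays exponentially, and the boundary term at $t=0$ vanishes because for $p\neq q$ the heat kernel decays faster than any power of $t$ (by the off-diagonal Gaussian bound $H_M(p,q,t)\lesssim t^{-n/2}e^{-d(p,q)^2/(Ct)}$), so $t^{-s/2}H_M(p,q,t)\to 0$. This yields $\Delta_q K_{s-2}(p,q) = \frac{(s-2)/2}{\Gamma(2-s/2)}\cdot\frac{s}{2}\int_0^\infty H_M(p,q,t)\,t^{-1-s/2}\,dt$.

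Finally I would match the constants: using $\Gamma(2-s/2) = (1-s/2)\Gamma(1-s/2)$, one has $\frac{(s-2)/2}{\Gamma(2-s/2)}\cdot\frac{s}{2} = \frac{-(1-s/2)}{(1-s/2)\Gamma(1-s/2)}\cdot\frac{s}{2} = \frac{-s/2}{\Gamma(1-s/2)}$, so $\Delta_q K_{s-2}(p,q) = -K_s(p,q)$, i.e. $K_s(p,q) = -\operatorname{div}_q(\nabla_q K_{s-2})(p,q)$, which is the claim. I expect the main (minor) obstacle to be the justification of differentiating under the integral sign and of the integration by parts uniformly near $t=0$, which requires invoking the standard short-time asymptotics/Gaussian bounds for $H_M$ and its spatial derivatives on a closed manifold; away from the diagonal this is routine, and since the identity is a pointwise statement for $p\neq q$ (which is all that is used in Proposition~\ref{12varprop}) there is no issue at the diagonal. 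One should also note $s-2 \in (-2,-1)$ for $s\in(0,1)$, so $K_{s-2}$ is defined by the very same formula \eqref{wethiowhoihw2} with parameter $s-2$ in the admissible range $(-2,2)$ indicated there (indeed $2-s/2 > 0$ so $\Gamma(2-s/2)$ is finite), which is what makes the manipulation legitimate.
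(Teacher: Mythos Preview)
Your proof is correct and follows essentially the same route as the paper's: rewrite $K_{s-2}$ via \eqref{wethiowhoihw2}, apply $\Delta_q$ under the integral, use the heat equation $\Delta_q H_M=\partial_t H_M$, integrate by parts in $t$, and match constants with $\Gamma(2-s/2)=(1-s/2)\Gamma(1-s/2)$. The only difference is cosmetic (the paper simplifies the prefactor of $K_{s-2}$ before differentiating, you do it after).

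One small correction: on a closed manifold $H_M(p,q,t)\to 1/\mathrm{Vol}(M)>0$ as $t\to\infty$, so $H_M$ does \emph{not} decay exponentially; the boundary term $H_M\,t^{-s/2}\big|_{t=\infty}$ still vanishes, but because $t^{-s/2}\to 0$ and $H_M$ stays bounded. Relatedly, the integral defining $K_{s-2}$ itself actually diverges at $t=\infty$ (so the ``admissible range'' you cite from \eqref{wethiowhoihw2} is $(0,2)$, not $(-2,2)$); what is genuinely well-defined is $\nabla_q K_{s-2}$ and $\Delta_q K_{s-2}$, since $\nabla_q H_M$ and $\partial_t H_M$ do decay exponentially (the zero mode is constant). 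The paper's proof is equally informal on this point, so your argument is at the same level of rigor.
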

\begin{proof}
By definition, $K_s(p,q)=\frac{s/2}{\Gamma(1-s/2)}\int_t\frac{H_t}{t^{1+s/2}}\,dt$. We start by computing, using the identity $\Gamma(1+z)=z\Gamma(z)$:
\begin{align*}
    K_{s-2}(p,q)=\frac{(s-2)/2}{\Gamma(1-(s-2)/2)}\int_t H_t(p,q)/t^{1+(s-2)/2}&=-\frac{1-s/2}{\Gamma(1+[1-s/2])}\int_t H_t(p,q)/t^{s/2}=-\frac{1}{\Gamma(1-s/2)}\int_t H_t(p,q)/t^{s/2}\,.
\end{align*}
Therefore, by an application of the heat equation and integration by parts (in time), we can compute
\begin{align*}
    \textnormal{div}_q(\nabla_q K_{s-2})(p,q)=\Delta_q K_{s-2}(p,q)&=-\frac{1}{\Gamma(1-s/2)}\int_t \Delta_p H_t(p,q)/t^{s/2}=-\frac{1}{\Gamma(1-s/2)}\int_t (\partial_t H_t(p,q))/t^{s/2}\\
    &=-\frac{s/2}{\Gamma(1-s/2)}\int_tH_t(p,q)/t^{1+s/2}=-K_s(p,q)\,,
\end{align*}
giving the claimed result.
\end{proof}
This allows us to make the following "generalised" definition, justified by the remark that follows it:
\begin{definition}\label{fracmeandef}
    The fractional mean curvature (or nonlocal mean curvature, or NMC) of a two-sided regular hypersurface $\Sigma\subset M$ with choice of normal vector $\nu_\Sigma$, where $\Sigma\subset M$ is not assumed to be part of the boundary of a set, at $p\in\Sigma$ is defined as
    $$
    H_s[\Sigma](p):=2\sigma\int_{\Sigma}\langle\nu_{\Sigma}(q),\nabla_q K_{s-2}(p,q)\rangle_g\,dq\, .
    $$
\end{definition}
\begin{remark}
    Definition \ref{fracmeandef} is compatible with Definition \ref{1vardef}, in the sense that given a regular open domain $E$, setting $\Sigma=\partial E$ and $\nu_\Sigma$ the outer normal vector gives the same value for $H_s[\Sigma]$ with both definitions. This is immediately seen by an application of the divergence theorem together with Lemma \ref{kerdivlem}.
\end{remark}
We can now deduce the desired $L^{-2}$ bound for the separation between layers, as well as the decay in $L^2$ of the nonlocal mean curvature of each of the individual layers as $s\to 1$. We remark once again that we are making no assumptions on the dimension $n$ in this entire section.
\begin{proposition}\label{L2NMC}
Let $E\subset M$ be $\Lambda$-almost stable $s$-minimal in $\varphi(\B_2)$, and assume that \eqref{flatcond}--\eqref{whtoihwohw2} hold true, with~$(1-s)=\sigma \in (0,1/2)$. There exists $C<\infty$, depending only on $\Lambda$, $n$ and the constants in \eqref{flatcond}--\eqref{whtoihwohw2}, such that for every $i$ such that \begin{equation*}\min_{\overline B_{1/2}'} |g_i| \le \frac12,\end{equation*}
we have that
\begin{equation}\label{l2sepeq}
\sigma \int_{B'_{1/2}} \frac{dx'}{|g_{i+1}-g_i|^2} \le C.
\end{equation}
Moreover, defining $f_i:=H_s[\varphi(\Gamma_i)]$, we have that
\begin{equation} \label{wiowhoih1}
\|f_i\|_{L^2(\varphi(\Gamma_i \cap [B'_{1/2}\times(-1,1)]))} \le C\sqrt \sigma.
\end{equation}
\end{proposition}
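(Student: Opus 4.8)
The plan is to bootstrap from the already-established estimate $\sigma \int_{B'_{1/2}} |g_{i+1}-g_i|^{-(1+s)}\,dx' \le C$ in Proposition \ref{propL-1ps} together with the rough separation bound $g_{i+1}-g_i \ge \sigma^2/C$ of Lemma \ref{whtiohwoiwh}, and then to derive the $L^2$-decay of the NMC of a single layer by writing $H_s[\varphi(\Gamma_i)]$ as a sum of layer-by-layer interactions and using \eqref{l2sepeq}. For the first claim \eqref{l2sepeq}, I would split the integral over $B'_{1/2}$ according to whether $g_{i+1}-g_i$ is smaller or larger than $1$ (in fact any fixed constant). On the set $\{g_{i+1}-g_i \le 1\}$ one simply has $|g_{i+1}-g_i|^{-2} \le |g_{i+1}-g_i|^{-(1+s)}$ (since $2 < 1+s$ fails — so this direction needs care: here $1+s<2$, so $|g_{i+1}-g_i|^{-(1+s)}\le |g_{i+1}-g_i|^{-2}$, which is the wrong way). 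The correct route is instead to interpolate: write $|g_{i+1}-g_i|^{-2} = |g_{i+1}-g_i|^{-(1+s)}\cdot |g_{i+1}-g_i|^{-(1-s)}$ and bound the last factor using the rough lower bound $g_{i+1}-g_i\ge \sigma^2/C$, giving $|g_{i+1}-g_i|^{-(1-s)}\le (C/\sigma^2)^{1-s}=(C/\sigma^2)^\sigma$, which is $1+O(\sigma|\log\sigma|)\le 2$ for $\sigma$ small. On the set where $g_{i+1}-g_i\ge c$ the integrand is bounded by $c^{-2}$ and the set has measure $\le |B'_{1/2}|$, contributing a constant. Multiplying by $\sigma$ and using \eqref{ghjgrgiu} then yields $\sigma\int_{B'_{1/2}}|g_{i+1}-g_i|^{-2} \le 2\sigma\int_{B'_{1/2}}|g_{i+1}-g_i|^{-(1+s)} + C\sigma \le C$, as desired.

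For the second claim \eqref{wiowhoih1}, I would use Definition \ref{fracmeandef} (equivalently, the divergence-theorem reformulation via Lemma \ref{kerdivlem}) to write, for $x\in\Gamma_i\cap(B'_{1/2}\times(-1,1))$,
\begin{equation*}
H_s[\varphi(\Gamma_i)](x) = \sigma\sum_{j\ne i} \pm\int_{\Gamma_j} \frac{c_{n,s}}{|x-y|^{n+s}}\,dy \;+\; (\text{lower-order and far-away terms}),
\end{equation*}
where the sign of the $j$-th term is $+$ for $j>i$ and $-$ for $j<i$, reflecting that layers above and below contribute with opposite orientations of $\nu_\Sigma$. Here I would use Lemma \ref{lem:kernel-comp}, applied at scale $1$ after possibly restricting to a slightly smaller cylinder and using the bounded number of layers from Lemma \ref{classperball}, to replace each integral $\int_{\Gamma_j}\frac{\sigma}{|x-y|^{n+s}}\,dy$ by $(1+o(1))\frac{c_{n,s}\sigma}{|g_j(x')-g_i(x')|^{1+s}}$; the far-away contributions (from layers at distance $\gtrsim 1$, and the part of $\Gamma_j$ outside a neighborhood of $x$) are bounded by $C\sigma$ uniformly by Proposition \ref{prop:kern1} (estimate \eqref{remaining3}) together with $\sigma^{-1}$-bounded perimeter. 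The dominant contribution to $|H_s[\varphi(\Gamma_i)](x)|$ thus comes from the two adjacent layers $\Gamma_{i\pm 1}$, and is of order $\sigma/|g_{i+1}(x')-g_i(x')|^{1+s}$ plus $\sigma/|g_i(x')-g_{i-1}(x')|^{1+s}$; the non-adjacent terms are controlled by the adjacent ones since $|g_j-g_i|\ge |g_{i\pm1}-g_i|$ and the series telescopes geometrically using the rough separation (or simply: there are at most $C$ layers). Squaring and integrating over $\Gamma_i\cap(B'_{1/2}\times(-1,1))$, which is a graph with bounded gradient so that $d\mathcal H^{n-1}\le C\,dx'$, we get
\begin{equation*}
\|f_i\|_{L^2(\varphi(\Gamma_i\cap[B'_{1/2}\times(-1,1)]))}^2 \le C\sigma^2\int_{B'_{1/2}}\Big(\frac{1}{|g_{i+1}-g_i|^{2(1+s)}}+\frac{1}{|g_i-g_{i-1}|^{2(1+s)}}\Big)dx' + C\sigma^2.
\end{equation*}
Then, noting $2(1+s)\le 4$ and $|g_{i+1}-g_i|\le 2$ (from \eqref{whtoihwohw2}), one bounds $|g_{i+1}-g_i|^{-2(1+s)}\le 4\,|g_{i+1}-g_i|^{-2}$ up to absolute constants — wait, this needs $2(1+s)-2 = 2s \le $ something; since $|g_{i+1}-g_i|\le 2$, $|g_{i+1}-g_i|^{-2s}\le 2^{2s}\le 4$, so indeed $|g_{i+1}-g_i|^{-2(1+s)}\le 4|g_{i+1}-g_i|^{-2}$ when $|g_{i+1}-g_i|\le 1$; on $\{|g_{i+1}-g_i|\ge 1\}$ it is bounded by $1$. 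Hence by \eqref{l2sepeq} the right-hand side is $\le C\sigma^2\cdot\frac{C}{\sigma} + C\sigma^2 = C\sigma$, giving $\|f_i\|_{L^2}\le C\sqrt\sigma$.

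\textbf{Main obstacle.} I expect the principal difficulty to be making the layer-by-layer decomposition of $H_s[\varphi(\Gamma_i)]$ rigorous: one must carefully separate, for a fixed base point $x\in\Gamma_i$, the ``near-diagonal'' contribution of each layer $\Gamma_j$ (handled by Lemma \ref{lem:kernel-comp}, which requires smallness of $\delta$ and of the relevant separation $\widetilde\delta$ and works at a fixed scale) from the genuinely nonlocal tail, which lives outside $\varphi(\Omega_0)$ and is absorbed into a $C\sigma$ error via \eqref{remaining3} and the $\sigma^{-1}$-bounded fractional perimeter \eqref{locfperbound}; the passage from the local graph description inside $\Omega_0$ to the global manifold requires that the bounded number of layers (Lemma \ref{classperball}) be used to keep the combinatorics finite. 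A secondary, purely technical point is tracking uniformity of all constants in $s$ as $s\to 1$, i.e. checking that the exponents $1+s$, $2(1+s)$ appearing can be replaced by $2$ at the cost of factors $(\text{bounded})^\sigma = 1 + O(\sigma|\log\sigma|)$, which are harmless.
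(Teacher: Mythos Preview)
Your Step~1 is correct and identical to the paper's: write $|g_{i+1}-g_i|^{-2}=|g_{i+1}-g_i|^{-(1+s)}\cdot|g_{i+1}-g_i|^{-(1-s)}$ and use the rough separation $g_{i+1}-g_i\ge\sigma^2/C$ from Lemma~\ref{whtiohwoiwh} to bound the second factor by $1+O(\sigma|\log\sigma|)$, then invoke Proposition~\ref{propL-1ps}.

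Your Step~2, however, contains a genuine gap. In the layer-by-layer representation of $H_s[\varphi(\Gamma_i)]$ you write the contribution of $\Gamma_j$ as $\sigma\int_{\Gamma_j}|x-y|^{-(n+s)}\,dy$, which by Lemma~\ref{lem:kernel-comp} is $\sim \sigma/|g_j-g_i|^{1+s}$. After squaring and integrating, you need to control $\sigma^2\int_{B'_{1/2}}|g_{i+1}-g_i|^{-2(1+s)}\,dx'$, and your claim that this is $\le C\sigma$ rests on the inequality ``$|g_{i+1}-g_i|\le 2\Rightarrow |g_{i+1}-g_i|^{-2s}\le 2^{2s}\le 4$'', which is false: for small $|g_{i+1}-g_i|$ the factor $|g_{i+1}-g_i|^{-2s}$ blows up (as $\sigma^{-4s}\sim\sigma^{-4}$ at the rough separation scale), and no interpolation with \eqref{l2sepeq} can absorb a discrepancy of exponent $2s\approx 2$.

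The fix is precisely the point you gloss over when citing Definition~\ref{fracmeandef} and Lemma~\ref{kerdivlem}: the kernel appearing in the \emph{surface} integral over $\Gamma_j$ is $\nabla_q K_{s-2}\sim|x-y|^{-(n+s-1)}$, not $K_s\sim|x-y|^{-(n+s)}$. Equivalently (and this is how the paper proceeds), one writes $H_s[\varphi(\Gamma_i)]$ in terms of \emph{volume} integrals $\int_{E_j}K_s(p,q)\,dq$ over the slab regions $E_j$, bounds these by $\int_{A_{i-1}}K_s+\int_{B_{i+1}}K_s$, and only then integrates by parts to obtain surface integrals $\int_{\Gamma_{i\pm1}}|x-y|^{-(n+s-1)}$. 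The resulting pointwise bound is
\[
|H_s[\varphi(\Gamma_i)](p)|\le C\frac{\sigma}{|g_{i+1}(x')-g_i(x')|^{s}}+C\frac{\sigma}{|g_i(x')-g_{i-1}(x')|^{s}}+C\sigma,
\]
i.e.\ exponent $s$ rather than $1+s$ (and via \eqref{eq:power-2} one may replace $s$ by $1$). Now squaring gives exponent $2s$, which differs from $2$ by $2\sigma$, and \emph{this} small discrepancy is absorbed by the same $1+O(\sigma|\log\sigma|)$ trick you used in Step~1, after which \eqref{l2sepeq} applies directly. The one lost power in the kernel is exactly what makes the argument close.
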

\begin{proof}
\textbf{Step 1.}
In Proposition \ref{propL-1ps} we have shown that
\[
\sigma \int_{B'_{1/2}} \frac{dx'}{|g_{i+1}-g_i|^{1+s}} \le C.
\]
To reach \eqref{l2sepeq}, we need to upgrade the exponent in the denominator from $(1+s)$ to $2$. For this, it suffices to notice that since by Lemma \ref{whtiohwoiwh} we have that $ (g_{i+1}-g_i) \in (\sigma^4/C,1)$, we can then estimate
\begin{equation}\label{eq:power-2}
	|g_j-g_i|^{s}=\bigl(1+O(\sigma|\log\sigma|)\bigr)	|g_j-g_i|.
\end{equation}

\noindent \textbf{Step 2.} We deduce \eqref{wiowhoih1} from \eqref{l2sepeq}.

    As before, we use the notation $\Omega_0 = \B_1'\times(-1,1)\subset\R^n$. We want to split the contributions of each of the individual layers of $E$ to the total nonlocal mean curvature. For that, define the set $A_i=\varphi(\{x_n<g_i(x')\}\cap\Omega_0)$, so that $\partial A_i=\varphi(\Gamma_i)\cup \varphi(\partial \Omega_0\cap \{x_n<g_i(x')\})$. In other words, in coordinates $A_i$ comprises all the space below the graph $\Gamma_i$, and the set $\partial A_i$ corresponds to a portion of the layer $\Gamma_i$ (plus some extra piece contained in $\partial \Omega_0$). Define also the sets $E_i=A_i\setminus A_{i-1}$, so that (up to possibly interchanging $E$ with its complement) in particular it follows that $E\cap \varphi(\Omega_0)=\bigcup_{i\in 2\Z} E_i$.
    
    Let $i$ be such that $\min_{\overline B_{1/2}'} |g_i| \le \frac12$, with $i$ even (the odd case is essentially identical), and let $p\in\varphi(\Gamma_i\cap\{x'\in\B'_{1/2}\})$. With the definitions above, it is immediate to see that we can split
    \begin{align*}
\int_{\varphi(\Omega_0)} dq\,(\chi_{E}-\chi_{E^c})(q)K_s(p,q)=\int_{\varphi(\Omega_0)} dq\,(\chi_{A_i}-\chi_{A_i^c})(q)K_s(p,q)&-2\sum_{j<i,\,j-i \,{\rm odd}}\int_{\varphi(\Omega_0)}dq\, \chi_{E_{j}}(q)K_s(p,q)\\
&+2\sum_{j>i,\,j-i\, {\rm even}}\int_{\varphi(\Omega_0)}dq\,\chi_{E_{j}}(q)K_s(p,q)\, .
\end{align*}
On the other hand, since $E$ is an $s$-minimal surface in particular it has zero NMC at $p$, so that recalling Definition \ref{1vardef} we have that (we omit $dq$ in the notation from now on)
$$
\int_{\varphi(\Omega_0)} (\chi_{E}-\chi_{E^c})K_s(p,q)+\int_{\varphi(\Omega_0)^c} (\chi_{E}-\chi_{E^c})K_s(p,q)=\int_{M} (\chi_{E}-\chi_{E^c})K_s(p,q)=0\,.
$$
Combining both expressions, we can then write
\begin{align*}
-\int_{\varphi(\Omega_0)} (\chi_{A_i}-\chi_{A_i^c})K_s(p,q)=-\int_{\varphi(\Omega_0)^c} (\chi_{E}-\chi_{E^c})K_s(p,q)&-2\sum_{j<i,\,j-i \,{\rm odd}}\int_{\varphi(\Omega_0)} \chi_{E_{j}}K_s(p,q)\\
&+2\sum_{j>i,\,j-i \,{\rm even}}\int_{\varphi(\Omega_0)} \chi_{E_{j}}K_s(p,q)\, .
\end{align*}
Applying the divergence theorem on the left hand side using Lemma \ref{kerdivlem}, we then find that
\begin{align}\label{nmcsplit}
H_s[\varphi(\Gamma_i)](p)=-2\sigma\sum_{j<i, j-i \,even}\int_{\varphi(\Omega_0)} \chi_{E_{j}}K_s(p,q)+2\sigma\sum_{j>i,j-i \,even}\int_{\varphi(\Omega_0)} \chi_{E_{j}}K_s(p,q)+ \sigma {\rm Ext}(p)\,,
\end{align}
where
\begin{equation*}
{\rm Ext}(p):=-\int_{\varphi(\Omega_0)^c} (\chi_{E}-\chi_{E^c})K_s(p,q)-\int_{\varphi(\partial \Omega_0)}\langle\nu_{\partial \Omega_0}(q),\nabla_q K_{s-2}(p,q)\rangle_g\,d\mathcal{H}^{n-1}(q)\,.
\end{equation*}
In other words, \eqref{nmcsplit} says that, since the NMC of $E$ is zero, the ``individual" NMC of $\varphi(\Gamma_i)$ in the sense of Definition \ref{fracmeandef} is equal to the contribution from the other layers plus an exterior error.\\

Now, it is clear that the $E_j$ with $j<i$ are contained in $A_{i-1}:=\varphi(\{x_n<g_{i-1}(x')\}\cap\Omega_0)$, and likewise the $E_j$ with $j>i$ are contained in $B_{i+1}:=\varphi(\{x_n>g_{i+1}(x')\}\cap\Omega_0)$, so that
$$
\Big|H_s[\varphi(\Gamma_i)]\Big|(p)\leq 2\sigma\Big|\int_{A_{i-1}}K_s(p,q) \Big|+2\sigma\Big|\int_{B_{i+1}}K_s(p,q) \Big|+\sigma \Big|{\rm Ext}\Big|(p)\, .
$$
Using this observation, passing to coordinates with $\varphi^{-1}$ (letting $x:=\varphi^{-1}(p)$, and using Lemma \ref{loccomparability} and the flatness assumptions) and then integrating by parts (once again thanks to Lemma \ref{kerdivlem}/the divergence theorem, applied also with the Euclidean kernel $\frac{1}{|x-y|^{n+s}}$), we can bound
\begin{align*}
\Big|H_s[\varphi(\Gamma_i)]\Big|(p)&\leq 2\sigma\Big|\int_{A_{i-1}}K_s(p,q) \Big|+2\sigma\Big|\int_{B_{i+1}}K_s(p,q) \Big|+\sigma\Big|{\rm Ext}\Big|(p)\\
&\leq C\sigma\int_{\varphi^{-1}(A_{i-1})}dy\,\frac{1}{|x-y|^{n+s}} +C\sigma\int_{\varphi^{-1}(B_{i+1})}dy\,\frac{1}{|x-y|^{n+s}} +\sg\Big|{\rm Ext}\Big|(p)\\
&\leq C\sigma\int_{\Gamma_{i-1}}d\mathcal H^{n-1}(y)\frac{1}{|x-y|^{n+s-1}}+C\sigma\int_{\Gamma_{i+1}}d\mathcal H^{n-1}(y)\frac{1}{|x-y|^{n+s-1}}+\sigma {\rm Ext}_2(p)\, .
\end{align*}
Here, the new exterior error term is
\begin{align*}
{\rm Ext}_2(p)&=\Big|\int_{\varphi(\Omega_0)^c} dq\,(\chi_{E}-\chi_{E^c})K_s(p,q)\Big|+\Big|\int_{\varphi(\partial \Omega_0)}\langle\nu_{\Sigma}(q),\nabla_q K_{s-2}(p,q)\rangle_g\,d\mathcal{H}^{n-1}(q)\Big|\\
&\ \ +\Big|\int_{\partial \Omega_0\cap \{x_n<g_{i-1}(x')\}}\frac{1}{|x-y|^{n+s-1}}\,d\mathcal{H}^{n-1}(q)\Big|+\Big|\int_{\partial \Omega_0\cap \{x_n>g_{i+1}(x')\}}\frac{1}{|x-y|^{n+s-1}}\,d\mathcal{H}^{n-1}(q)\Big|\,.
\end{align*}
We can bound the first term in ${\rm Ext}_2(p)$ by a uniform constant using \eqref{remaining3} (with $\alpha=0$). Moreover, by an application of \eqref{remaining2}, we can bound the (differentiated) kernel in the second term by a uniform constant (since $\textnormal{dist}(p,q)\geq \frac{1}{C}>0$ for all $q$ in $\varphi(\Omega_0)^c$), and likewise for the two last terms involving $\frac{1}{|x-y|^{n+s-1}}$ (since $|x-y|\geq \frac{1}{C}$ for all $y$ in $\partial \Omega_0$). Therefore, ${\rm Ext}_2(p)\leq C$.\\
Substituting this above and using Lemma \ref{lem:kernel-comp} and \eqref{eq:power-2}, we can estimate
\begin{align}
\Big|H_s[\varphi(\Gamma_i)]\Big|(p)
&\leq C\sigma\int_{\Gamma_{i-1}}\frac{1}{|x-y|^{n+s-1}}+C\sigma\int_{\Gamma_{i+1}}\frac{1}{|x-y|^{n+s-1}}+\sigma {\rm Ext_2}(p) \nonumber\\
&\leq C\frac{\sigma}{|g_{i}(x')-g_{i-1}(x')|^{s}}+C\frac{\sigma}{|g_{i+1}(x')-g_{i}(x')|^{s}}+C\sigma \nonumber\\
&\leq C\frac{\sigma}{|g_{i}(x')-g_{i-1}(x')|}+C\frac{\sigma}{|g_{i+1}(x')-g_{i}(x')|}+C\sigma \label{nmcsep1}\, .
\end{align}

Applying the $L^{-2}$ estimate in \eqref{l2sepeq}, we conclude the desired bound for the $L^2$ norm of $H_s[\varphi(\Gamma_i)]$.
\end{proof}

\subsubsection{Uniform $C^{2,\alpha}$ estimates for almost-stable graphs in dimension $n=3$}

In this section we will prove that (for $n=3$) almost-stability implies a uniform $C^{2,\alpha}$ bound for the layers of our surface. We remark that this is the only section of the article in which the restriction to dimension $3$ is used.\\
Interestingly, our strategy is somewhat different from the one in \cite{CDSV}, and it allows us to avoid using the nonlocal mean curvature equation for graphs, which could be complicated to implement in a Riemannian setting. We show instead that, even before upgrading the regularity of our layers  to a uniform $C^{2,\alpha}$ estimate, the local and nonlocal mean curvatures of our layers are much closer to each other than their natural size. This allows us to argue using the classical minimal graph equation instead of the nonlocal one in Proposition \ref{optsep}, the latter being the crucial step in dimension $n=3$ which allows to decouple the separation estimate from the regularity estimates. Moreover, the $C^{2,\alpha}$ regularity result in \cite{CDSV} is only obtained for $\alpha$ sufficiently small, whereas our strategy allows us to obtain it for any $\alpha\in(0,1)$.\\
With an eye towards future directions, using that one can show the proximity of the nonlocal mean curvatures to the local ones before upgrading the regularity of our layers to the uniform $C^{2,\alpha}$ estimate will probably be essential to extend our results to dimensions $4\leq n \leq 7$. Indeed, it hints at the possibility of using the Toda system (obtained in \cite{CDSV} under the assumption of uniform $C^{2,\alpha}$ convergence as $s\to 1$ to a hyperplane) prior to knowing this desired uniform regularity.\\

We begin the section by showing a rough $C^{2,\alpha}$ bound which explodes as $\sigma\to 0$, but which surprisingly will still allow us to compare the nonlocal and local mean curvatures.
\begin{proposition}\label{NegC2alpha}
Let $E\subset M$ be $\Lambda$-almost stable $s$-minimal in $\varphi(\B_2)$, and assume that \eqref{flatcond}--\eqref{whtoihwohw2} hold true, with~$(1-s)=\sigma \in (0,1/8)$. Let $\alpha\in(0,1/4)$. Then, for every $i$ such that \begin{equation*}\min_{\overline B_{1/2}'} |g_i| \le \frac12,\end{equation*}
we have that, for every $\alpha\in(0,1/4)$, there exists a constant $C< \infty$ depending only on $\Lambda$, $n$, $s_0$, $\alpha$ and the constants in \eqref{flatcond}--\eqref{whtoihwohw2}, and in particular independent of $s$, such that
\begin{equation} \label{wiowhoih567}
\|g_i\|_{C^{2,\alpha}(\B_{1/4}')} \le C\sigma^{-12\alpha}.
\end{equation}
\end{proposition}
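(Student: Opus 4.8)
The plan is to bootstrap from the quantitative bounds already established --- namely, the uniform classical perimeter bound (Lemma \ref{classperball}), the rough separation estimate $g_{i+1}-g_i\ge \sigma^4/C$ (Lemma \ref{whtiohwoiwh}), and the $L^2$ decay of the nonlocal mean curvature $f_i=H_s[\varphi(\Gamma_i)]$ from Proposition \ref{L2NMC} --- to a $C^{2,\alpha}$ bound on each layer with an explicit (non-optimal) polynomial blow-up rate in $\sigma$. First I would pass to the rescaled picture: for a point $z=(z',g_i(z'))$ with $|z'|\le 1/2$, work in coordinates $\varphi_{z,\varrho}$ at a scale $\varrho$ comparable to a small power of $\sigma$ (to be chosen at the end). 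Since the layers are $C^2$-graphs with $\|D^2 g_j\|_\infty<\delta$ and consecutive layers are separated by at least $\sigma^4/C$, at scale $\varrho$ the graph $\Gamma_i$ is essentially isolated from the other layers provided $\varrho\ll \sigma^4$; concretely, in the rescaled ball the only layer crossing $\B_{1/2}$ is $\widetilde\Gamma_i$ itself, and the neighbouring layers sit at height $\gtrsim \sigma^4/\varrho$ above/below. This is exactly what is needed to reduce the analysis of the nonlocal mean curvature equation on $\Gamma_i$ to a single-layer problem modulo controlled error terms.

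Second, I would write the nonlocal mean curvature equation for the graph $\widetilde g_i$ in these coordinates, using Definition \ref{fracmeandef} and Lemma \ref{kerdivlem} to express $H_s[\varphi(\Gamma_i)](p)$ as a singular integral against $\nabla_q K_{s-2}$, then replacing $K_{s-2}$ by the Euclidean model kernel $\tfrac{\alpha_{n,s}}{|x-y|^{n+s-2}}$ up to the lower-order error controlled by Proposition \ref{prop:kern1} and Proposition \ref{kermfdconv}. This produces an equation of the schematic form
\[
\int_{\R^{n-1}} G\!\left(\frac{\widetilde g_i(x')-\widetilde g_i(y')}{|x'-y'|}\right)\,\frac{dy'}{|x'-y'|^{n-1+s-1}} \;=\; \text{(rescaled) }f_i + (\text{error terms}),
\]
where $G$ is a smooth odd kernel, the error terms come from (i) the other layers — bounded by the separation estimate and Lemma \ref{lem:kernel-comp}, which give a contribution of size $O(\sigma\cdot\varrho^{?}/\sigma^{4})$ after rescaling —, (ii) the exterior of $\varphi(\B_2)$ via \eqref{remaining3}, and (iii) the kernel error from Proposition \ref{prop:kern1}. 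The operator on the left is a fractional-Laplacian-type operator of order $s\in(s_0,1)$ with a uniformly elliptic structure on small, flat graphs; its associated Schauder theory (as in the literature on nonlocal minimal graphs, e.g.\ \cite{CRS}) gives $\|\widetilde g_i\|_{C^{2,\alpha}(\B_{1/4})}\le C(\|\widetilde g_i\|_{C^0}+\|\text{RHS}\|_{C^{0,\alpha}})$, uniformly for $s$ bounded away from $0$ and $1$. The point is that the RHS, after rescaling, is bounded by a (negative) power of $\sigma$ — the $L^2$ bound $\|f_i\|_{L^2}\le C\sqrt\sigma$ upgrades to an $L^\infty$ (hence $C^{0,\alpha}$) bound on the rescaled layer after paying powers of $\varrho$ through Sobolev/Morrey embedding at the small scale $\varrho$ — and scaling back multiplies by $\varrho^{-(2+\alpha)}$.

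Third, I would carefully track the powers of $\sigma$: choosing $\varrho\sim\sigma^{p}$ with $p$ a fixed (dimension-independent for $n=3$) constant large enough that $\varrho\ll\sigma^4$ and that all error terms are controlled, the Schauder estimate at scale $1$ scales back to $\|g_i\|_{C^{2,\alpha}(\B'_{1/4})}\le C\varrho^{-(2+\alpha)}\cdot(\text{bounded})\le C\sigma^{-12\alpha}$, after one also absorbs a harmless $\sigma^{-(2+\alpha)p}$-type factor into the exponent by a slightly wasteful but legitimate bookkeeping (the statement only asserts the rate $\sigma^{-12\alpha}$, not its optimality, so there is ample room; the precise constant $12$ is an artifact of the choices and one simply needs \emph{some} polynomial rate). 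The main obstacle, I expect, is not any single estimate but the bookkeeping: one must verify that every error term — other layers, exterior contributions, kernel discrepancy, and the $L^2\to L^\infty$ passage — is simultaneously dominated by the chosen power of $\varrho$, and that the nonlocal Schauder estimate is genuinely uniform as $s\to1$ (which follows since the kernel is uniformly comparable to the Euclidean one by Lemma \ref{loccomparability} and its derivatives are controlled by Proposition \ref{prop:kern1} uniformly in $s$ away from the endpoints). Once the uniform nonlocal Schauder estimate and the rescaling accounting are in place, the proof is a matter of assembling the pieces.
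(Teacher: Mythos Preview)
Your proposal has a genuine gap: the rescaling argument cannot produce the specific rate $\sigma^{-12\alpha}$, and this rate matters. The exponent must vanish as $\alpha\to 0$ (the result is later applied with $\alpha=1/48$ precisely so that $12\alpha=1/4$ is small enough to feed into Lemma \ref{LandNMC}). Rescaling from scale $\varrho\sim\sigma^p$ and scaling back contributes a factor $\varrho^{-(1+\alpha)}\sim\sigma^{-p(1+\alpha)}$ to the $C^{2,\alpha}$ seminorm, and this does \emph{not} tend to $1$ as $\alpha\to 0$. Your sentence ``the statement only asserts the rate $\sigma^{-12\alpha}$, not its optimality, so there is ample room'' has the inequality backwards: $\sigma^{-12\alpha}$ is a \emph{stronger} bound than $\sigma^{-p(1+\alpha)}$ for small $\alpha$, not a weaker one, so you cannot absorb the latter into the former. (Two smaller slips: the separation in Lemma \ref{whtiohwoiwh} is $\sigma^2/C$, not $\sigma^4/C$; and an $L^2$ bound on $f_i$ does not upgrade to $C^{0,\alpha}$ by Sobolev/Morrey alone.)

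The paper's argument is both simpler and structurally different, and the key idea you are missing is \emph{interpolation against the standing $C^2$ bound}. No rescaling is performed. From the layer decomposition \eqref{nmcsplit} and the rough separation $\inf(g_{i+1}-g_i)\ge\sigma^2/C$, one reads off directly the pointwise estimates
\[
|H_s[\varphi(\Gamma_i)]|\le C\sigma^{-1},\qquad |\nabla^{\varphi(\Gamma_i)} H_s[\varphi(\Gamma_i)]|\le C\sigma^{-3},
\]
the second by differentiating \eqref{nmcsplit} and rerunning the computation leading to \eqref{nmcsep1} with one extra order of singularity. Hence $\|H_s[\varphi(\Gamma_i)]\|_{C^{0,1/2}}\le C\sigma^{-3}$, and the Schauder estimate for the nonlocal mean curvature equation (\cite{FrancS}) gives $\|g_i\|_{C^{2,1/4}}\le C\sigma^{-3}$. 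Now interpolate this against the \emph{uniform} bound $\|g_i\|_{C^2}\le\delta$ already present in hypothesis \eqref{whtoihwohw2}: for $\alpha\in(0,1/4)$,
\[
[g_i]_{C^{2,\alpha}}\le C\,\|D^2 g_i\|_{L^\infty}^{1-4\alpha}\,[g_i]_{C^{2,1/4}}^{4\alpha}\le C\,(\sigma^{-3})^{4\alpha}=C\sigma^{-12\alpha}.
\]
It is this interpolation that makes the exponent proportional to $\alpha$; any argument that discards the a priori $C^2$ control from \eqref{whtoihwohw2} will not reach the stated conclusion.
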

\begin{proof}
\textbf{Step 1.} We first show the Hölder estimate $\|H_s[\varphi(\Gamma_i)](\cdot)\|_{C^{0,1/2}(\varphi(\Gamma_i \cap [B'_{1/2}\times(-1,1)]))}\leq C\sg^{-3}$.\\
We start by using Lemma \ref{whtiohwoiwh} in \eqref{nmcsep1}, which gives the rough pointwise estimate
\begin{equation}\label{H-3b}
\Big|H_s[\varphi(\Gamma_i)]\Big|(p)\leq C \sigma^{-1}\, .
\end{equation}
To find a higher-order bound, we differentiate both sides of equation \eqref{nmcsplit} over $\Gamma_i$. Doing this, and then reproducing the computations that led to \eqref{nmcsep1} from \eqref{nmcsplit}, we can compute
\begin{align}
|\nabla_p^{\varphi(\Gamma_i)}H_s[\varphi(\Gamma_i)]|(p)&\leq 2\sg\sum_{j<i, j-i\,{\rm odd}}\int_{\varphi(\Omega_0)} dq\,\chi_{E_{j}}(q)|\nabla_p^{\varphi(\Gamma_i)}K_s(p,q)|\nonumber\\
&\ \ \ \ +2\sg\sum_{j>i,j-i \,{\rm even}}\int_{\varphi(\Omega_0)} dq\,\chi_{E_{j}}(q)|\nabla_p^{\varphi(\Gamma_i)}K_s(p,q)|+ \sg|\nabla^{\varphi(\Gamma_i)} {\rm Ext}_1|(p)\nonumber\\
&\leq 2\sg\int_{A_{i-1}}|\nabla_p^{\varphi(\Gamma_i)}K_s(p,q)| +2\sg\int_{B_{i+1}}|\nabla_p^{\varphi(\Gamma_i)}K_s(p,q)| +C\sigma\nonumber\\
&\leq C\sg\int_{\varphi^{-1}(A_{i-1})}\frac{1}{|x-y|^{n+s+1}} +C\sg\int_{\varphi^{-1}(B_{i+1})}\frac{1}{|x-y|^{n+s+1}} +C\sg\nonumber\\
&\leq C\sg\int_{\Gamma_{i-1}}\frac{1}{|x-y|^{n+s}}+C\sg\int_{\Gamma_{i+1}}\frac{1}{|x-y|^{n+s}}+C\sigma\nonumber\\
&\leq C\frac{\sigma}{|g_{i}(0)-g_{i-1}(0)|^{2}}+C\frac{\sigma}{|g_{i+1}(0)-g_{i}(0)|^{2}}+C\sigma\label{derhgbound}\, .
\end{align}
Using Lemma \ref{whtiohwoiwh} once again, we get the pointwise bound
\begin{equation}\label{DH-7b}
|\nabla_p^{\varphi(\Gamma_i)}H_s[\varphi(\Gamma_i)]|(p)\leq C \sigma^{-3}\, .
\end{equation}
Combining equations \eqref{H-3b} and \eqref{DH-7b}, we find that $\|H_s[\varphi(\Gamma_i)](\cdot)\|_{C^{0,1/2}(\Gamma_i \cap B'_{1/2}\times(-1,1))}\leq C\sg^{-3}$. We could have argued by interpolation between Hölder spaces and obtained a bound of order $\sg^{-2}$, but we do not care about the precise negative exponent at this point.

\noindent \textbf{Step 2.} Schauder estimates and interpolation.

Define $A_i=\varphi(\{x_n<g_i(x')\}\cap\Omega_0)$, so that $\varphi^{-1}(\partial A_i)\cap [B'_{1/2}\times (-1/2,1/2)]=\Gamma_i\cap[B'_{1/2}\times (-1/2,1/2)]$ is a $C^2$ graph over $B'_{1/2}$ parametrised by $g_i$.  By Step 1, it moreover verifies that 
$$\|H_s[\varphi(\Gamma_i)](\cdot)\|_{C^{0,1/2}(\varphi(\Gamma_i \cap [B'_{1/2}\times(-3/4,\,3/4)]))}\leq C\sg^{-3}.$$
Then, by the Schauder estimate for the nonlocal mean curvature equation in \cite{FrancS} (in its version for graphs), we deduce that $\|g_i\|_{C^{2,1/4}(\B_{1/4}')} \le C\sigma^{-3}$. On the other hand, by assumption \eqref{whtoihwohw2} we have that $\|g_i\|_{C^{2}(\B_1')} \le C$. By interpolation, we deduce that
$$\|g_i\|_{C^{2,\alpha}(\B_{1/4}')} \le C(\alpha)\sigma^{-12\alpha}$$
for every $\alpha\in(0,1/4)$, as desired.
\end{proof}

The next proposition gives a precise representation for the gradient of the singular kernel $K_s$ up to a lower order error. The fact that we manage to get an error of two orders less will be crucial in the proof of Lemma \ref{LandNMC}.
\begin{proposition}\label{gradcomp}
    Assume that the flatness assumptions ${\rm FA}_\ell(M,g,4,p,\varphi)$ are satisfied, where $\varphi$ corresponds to the restriction of the exponential map ${\rm exp}_p$ to $\B_4\subset\R^n$. Then, the difference between the gradient of $K_s$ and the gradient of its Euclidean representative is of two orders less. More precisely, using the notation in Proposition \ref{prop:kern1}, there is $r=r(n)>0$ such that if $y \in \B_r(0)$ then
    \begin{align*}
        \nabla_y K_s(p,\varphi_p(y))&=\nabla_y  \frac{\alpha_{n,s}}{|y|^{n+s}} + O\Big(\frac{1}{|y|^{n+s-1}}\Big)\\
        &=-(n+s)\alpha_{n,s}\, \frac{y}{|y|^{n+s+2}} + O\Big(\frac{1}{|y|^{n+s-1}}\Big)\,.
    \end{align*}
\end{proposition}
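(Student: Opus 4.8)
The plan is to go back to the definition \eqref{wethiowhoihw2} of $K_s$ and exploit the classical Minakshisundaram--Pleijel parametrix expansion of the heat kernel. Fix $N=N(n)$ large and a cutoff $\chi$ with $\chi\equiv 1$ on $[0,\delta_0]$ and $\chi\equiv 0$ on $[2\delta_0,\infty)$, and write
$$H_M(p,q,t)=\chi(t)\,(4\pi t)^{-n/2}e^{-d_M(p,q)^2/4t}\sum_{j=0}^N u_j(p,q)\,t^j+\mathcal R(p,q,t),$$
where the $u_j$ are the smooth heat coefficients, $u_0(p,q)$ being (essentially) the inverse square root of the Jacobian of $\exp_p$, and $\mathcal R$ has derivatives of order $k$ in $(p,q)$ bounded by $Ct^{N+1-(n+k)/2}$ for $t\le\delta_0$ and decaying, together with all derivatives, exponentially as $t\to\infty$. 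Plugging this into \eqref{wethiowhoihw2}, the $\mathcal R$--term integrates (for $N$ large) to a function that is $C^1$ in $(p,q)$ with $O(1)$ gradient, and the parametrix terms are handled one by one.

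For each $j$, the substitution $t=d_M(p,q)^2/4u$ turns $\int_0^\infty\chi(t)(4\pi t)^{-n/2}e^{-d_M^2/4t}t^{j-1-s/2}\,dt$ into $c_j\,d_M(p,q)^{2j-n-s}$ plus a term that is bounded with bounded $(p,q)$--gradient when $j<(n+s)/2$, and into a bounded term (with gradient $O(d_M(p,q)^{-1})$) when $j\ge(n+s)/2$; the cutoff $\chi$ is precisely what makes all these $u$--integrals converge, even though $d_M(p,q)$ need not be small. Matching the $j=0$ term with the exact formula $K_s=\alpha_{n,s}|x-y|^{-(n+s)}$ on $M=\R^n$ identifies its coefficient as $\alpha_{n,s}$, so that
$$K_s(p,q)=\alpha_{n,s}\,u_0(p,q)\,d_M(p,q)^{-(n+s)}+E(p,q),$$
where $E$ gathers the subleading parametrix terms ($1\le j<(n+s)/2$, each of the form $c_j u_j(p,q)\,d_M(p,q)^{2j-n-s}$) and the smooth uniformly bounded pieces; in particular $|\nabla_q E(p,q)|\le C\,d_M(p,q)^{-(n+s-1)}$ for $d_M(p,q)\le r$.

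Now specialize to $\varphi=\exp_p$: then $q=\varphi_p(y)$ satisfies $d_M(p,\varphi_p(y))=|y|$ and, crucially because these are normal coordinates, $u_0(p,\varphi_p(y))=1+O(|y|^2)$ with $\nabla_y u_0=O(|y|)$ — this is exactly where the hypothesis $\varphi=\exp_p$ is used, and it is what upgrades the one--order cancellation of \eqref{remaining0} to the two--order cancellation claimed here. Differentiating,
$$\nabla_y K_s(p,\varphi_p(y))=\alpha_{n,s}\,|y|^{-(n+s)}\,\nabla_y u_0-(n+s)\alpha_{n,s}\,u_0(y)\,\frac{y}{|y|^{n+s+2}}+\nabla_y E,$$
and the three terms are $O(|y|^{-(n+s-1)})$, $-(n+s)\alpha_{n,s}\,y\,|y|^{-(n+s+2)}+O(|y|^{-(n+s-1)})$ (using $u_0(y)-1=O(|y|^2)$), and $O(|y|^{-(n+s-1)})$ respectively; collecting gives $\nabla_y K_s(p,\varphi_p(y))=-(n+s)\alpha_{n,s}\,y\,|y|^{-(n+s+2)}+O(|y|^{-(n+s-1)})=\nabla_y\bigl(\alpha_{n,s}|y|^{-(n+s)}\bigr)+O(|y|^{-(n+s-1)})$, as claimed.

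The main obstacle is the bookkeeping of the second paragraph: one must check that the parametrix remainder, the large--$t$ part of the heat integral, the $j\ge(n+s)/2$ terms and the $O(1)$--corrections are genuinely $C^1$ with gradients bounded by $C\le C|y|^{-(n+s-1)}$ \emph{uniformly down to} $y=0$ (using $n\ge 2$, so $n+s-2>0$), which requires taking $N$ large and being a little careful with the weak singularities at $t=0$ introduced by the substitution. Everything else is routine, the conceptual content being only that in normal coordinates the Euclidean representative of $K_s$ is exactly $\alpha_{n,s}|y|^{-(n+s)}$ and the first--order (Ricci) correction to $u_0$ vanishes.
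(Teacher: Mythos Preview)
Your proposal is correct and follows essentially the same strategy as the paper: both rest on the heat-kernel parametrix expansion together with the key fact that in normal coordinates the leading coefficient satisfies $u_0(p,\varphi_p(y))=1+O(|y|^2)$ with $\nabla_y u_0=O(|y|)$, which is precisely what upgrades the one-order cancellation of \eqref{remaining0} to two orders. The paper organizes the computation slightly differently---it first peels off the pure Gaussian $E_t=(4\pi t)^{-n/2}e^{-d_M^2/4t}$ (which integrates exactly to $\alpha_{n,s}\,d_M^{-(n+s)}$), then bounds $\nabla(H_t-E_t)$ via an auxiliary localization to a perturbed torus before invoking the parametrix expansion there---whereas you integrate the full parametrix term by term directly on $M$; the localization step in the paper is mainly bookkeeping to make the constants depend only on the local flatness parameters rather than on all of $M$. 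One small imprecision: for large $t$ on a closed manifold $H_M$ does not decay exponentially (it tends to $1/{\rm Vol}(M)$), only its spatial derivatives do; but since $\int_{\delta_0}^\infty t^{-1-s/2}\,dt<\infty$, boundedness of $H_M$ and its gradient for $t\ge\delta_0$ is all you need, so your conclusion stands.
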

\begin{proof}
    The proof is given in Appendix \ref{app:gradk}.
\end{proof}

The next proposition simply records the standard fact that having flatness assumptions centered at some point implies that, in a smaller neighbourhood, normal coordinates centered at any other point also induce flatness assumptions. See also Remark 1.6 in \cite{SS}.
\begin{proposition}\label{fanormal} Assume that the flatness assumptions ${\rm FA}_\ell(M,g,4,p_0,\varphi)$ are satisfied. Then, there exists a small constant $r_\circ=r_\circ(n)>0$ such that for every $x\in\B_2$ and $r\in(0,r_0)$, letting $\varphi_q:\B_r\to M$ denote the restriction of the exponential map $exp_q$ centered at $q=\varphi(x)$, then the flatness assumptions $FA_l(M,g,r,q,\varphi_q)$ are also satisfied. Moreover, the change of coordinates $F_q=\varphi^{-1}\circ\varphi_q$ is a well-defined diffeomorphism onto its image and both $F_q$ and $F_q^{-1}$ have bounded $C^3$ norm.
\end{proposition}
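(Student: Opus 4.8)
The plan is to reduce everything to the statement ${\rm FA}_\ell(M,g,4,p_0,\varphi)$ by a stability-under-perturbation argument for the exponential map. First I would fix $r_\circ$ small (to be chosen in terms of $n$ only, using the flatness bounds \eqref{hsohoh1}--\eqref{hsohoh2}). Given $x\in\B_2$, set $q=\varphi(x)$ and let $\varphi_q=\exp_q$ restricted to $\B_r\subset T_qM\cong\R^n$. The key point is that the composition $F_q:=\varphi^{-1}\circ\varphi_q$ is a smooth diffeomorphism from a neighbourhood of $0$ in $\R^n$ onto a neighbourhood of $x$ in $\B_4$, and its derivatives up to order $\ell$ (or at least order $3$) are controlled purely in terms of the $g_{ij}$ and their derivatives up to order $\ell$, hence by $1/100$-type bounds from ${\rm FA}_\ell$. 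Concretely, the geodesic equation $\ddot\gamma^k+\Gamma^k_{ij}\dot\gamma^i\dot\gamma^j=0$ with Christoffel symbols $\Gamma^k_{ij}$ depending on $g_{ij}, \partial g_{ij}$ shows, by smooth dependence of ODE solutions on initial data and parameters, that $F_q$ is close to the linear map $A(x)^{-1}$ (the inverse square root appearing in Proposition \ref{prop:kern1}, which is itself $\tfrac{1}{100}$-close to the identity in operator norm), with $C^3$-error $O(r)$ coming from the curvature terms.

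Next I would transfer the flatness assumption. Writing the metric in the $\varphi_q$-coordinates as $(g_q)_{ab}(u)=g_{ij}(F_q(u))\,\partial_a F_q^i\,\partial_b F_q^j$, the bound \eqref{hsohoh1} for $g$ together with the near-isometry of $DF_q$ gives \eqref{hsohoh1} for $g_q$ on $\B_r$ (choosing $r_\circ$ small so that the $O(r)$ corrections stay below, say, $1/200$, which still leaves room within the $1/100$ window for the composed estimate — here one uses that normal coordinates already give $(g_q)_{ab}(0)=\delta_{ab}$, so there is no loss at the centre). For the derivative bounds \eqref{hsohoh2}, one applies the chain rule to the formula above: each derivative $\partial^\alpha (g_q)_{ab}$ is a universal polynomial in the $\partial^\beta g_{ij}$ (with $|\beta|\le|\alpha|\le\ell$) evaluated at $F_q$, and in the $\partial^\gamma F_q^i$ (with $|\gamma|\le|\alpha|+1$), all of which are bounded; then rescaling by $r^{|\alpha|}$ and taking $r_\circ$ small enough absorbs these bounded-but-not-small contributions into $1/100$. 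This gives ${\rm FA}_\ell(M,g,r,q,\varphi_q)$ for all $r<r_\circ$, using part (b) of Remark \ref{flatscalingrmk} to pass from $r_\circ$ down to arbitrary $r\in(0,r_\circ)$.

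Finally the claim about $F_q=\varphi^{-1}\circ\varphi_q$: it is a well-defined diffeomorphism onto its image since $\varphi$ is a diffeomorphism on $\B_4$ and the image of $\varphi_q(\B_{r_\circ})$ stays inside $\varphi(\B_4)$ for $r_\circ$ small (both $x\in\B_2$ and the exponential map move points by at most $C r_\circ$, using the upper metric bound in \eqref{hsohoh1}). The $C^3$ bounds on $F_q$ were already established in the first paragraph; for $F_q^{-1}=\varphi_q^{-1}\circ\varphi$, one uses the inverse function theorem together with the uniform lower bound $\det DF_q\ge c(n)>0$ (again from near-isometry of $DF_q$) to bound $(DF_q)^{-1}$, and then differentiates the identity $F_q^{-1}\circ F_q=\mathrm{id}$ twice more to control the higher derivatives of $F_q^{-1}$ in terms of those of $F_q$. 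I expect the main technical obstacle to be the bookkeeping in estimating $\partial^\gamma F_q$ for $|\gamma|$ up to $\ell+1$: one must either differentiate the geodesic flow repeatedly — which requires the $\ell$-th order flatness precisely so that the Christoffel symbols and their derivatives are controlled up to the needed order — or invoke a standard ``normal coordinates have controlled regularity'' estimate from the literature (e.g. as in Remark 1.6 of \cite{SS}); I would cite such a result rather than reproduce the full induction. Everything else is a matter of choosing $r_\circ(n)$ small enough at finitely many steps.
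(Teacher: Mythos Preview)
The paper does not give a proof of this proposition; it is introduced as ``simply record[ing] the standard fact'' with a pointer to Remark~1.6 of \cite{SS}, and no proof environment follows the statement. Your outline is a correct sketch of the standard argument and in fact supplies considerably more detail than the paper itself: you correctly propose to control $F_q$ via smooth dependence of the geodesic flow on initial data, transfer the metric bounds by the chain rule, and exploit the built-in scaling factor $r^{|\alpha|}$ in \eqref{hsohoh2} to turn bounded-but-not-small derivative bounds into the required $1/100$ bounds once $r_\circ$ is chosen small. You also correctly flag the one genuine subtlety --- a naive chain-rule count asks for $(\ell+1)$ derivatives of $F_q$, hence $(\ell+1)$ derivatives of $g$, one more than ${\rm FA}_\ell$ supplies --- and propose to resolve it by citing the literature, which is precisely what the paper does. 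If you wanted to avoid that citation, the standard way to close the derivative count without loss is to express the metric in normal coordinates via Jacobi fields along radial geodesics: this writes $\partial^\alpha g_q$ for $|\alpha|\le\ell$ in terms of the curvature and its covariant derivatives up to order $\ell-2$, hence in terms of at most $\ell$ derivatives of $g$ in the original chart.
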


The next lemma shows that the local and nonlocal mean curvatures, of a uniformly $C^2$ graph whose $C^{2,\alpha}$ seminorm is allowed to degenerate with a small rate as $\sigma\to 0$, are nevertheless close to each other.
\begin{lemma}\label{LandNMC}
    Let~$1-s =\sigma \in (0, 1/2)$, and assume that ${\rm FA}_\ell(M,g,4,p_0,\varphi) \s\mbox{is satisfied}$. Let $\alpha>0$, and let $g:\B'_{1/2}\to(-1/2,1/2)$ be a $C^{2,\alpha}$ function with 
    $$\|\nabla g\|_{L^\infty(\B_{1/2}')} + \|D^2 g\|_{L^\infty(\B_{1/2}')} < \delta \quad\mbox{and}\quad [g]_{C^{2,\alpha}}\leq C\sigma^{-{1/4}}\, .
    $$  
    Define $\Gamma$ as the graph of $g$ in $\B'_{1/2}\times(-1/2,1/2)$, so that $\varphi(\Gamma)$ is a $C^{2,\alpha}$ hypersurface in $M$. Then, the nonlocal mean curvature (NMC) of $\varphi(\Gamma)$ is close to its classical mean curvature (or local mean curvature, LMC) with a small error. Precisely, if $p=\varphi(x)$ with $x\in\Gamma\cap [\B'_{1/4}\times(-1/2,1/2)]$, then
    $$H_s[\varphi(\Gamma)](p)=c_\circ H[\varphi(\Gamma)](p)+O(\sg^{3/4})\,,
    $$
    where $c_\circ: = \frac{\mathcal H^{n-2}(\partial \B_1')}{2(n-1)}$.
\end{lemma}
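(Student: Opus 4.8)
The goal is to compare the nonlocal mean curvature $H_s[\varphi(\Gamma)](p) = 2\sigma\int_{\varphi(\Gamma)}\langle\nu_\Gamma(q),\nabla_q K_{s-2}(p,q)\rangle_g\,dq$ (Definition \ref{fracmeandef}) with the classical mean curvature $H[\varphi(\Gamma)](p)$. The plan is to first replace the coordinate chart $\varphi$ by geodesic normal coordinates $\varphi_p$ centered at $p$; by Proposition \ref{fanormal} the flatness assumptions persist at a smaller scale $r_\circ(n)$, and the change of coordinates $F_p = \varphi^{-1}\circ\varphi_p$ is a controlled diffeomorphism, so $\varphi(\Gamma)$ is still a $C^{2,\alpha}$ graph of a function $\tilde g$ (with the same bounds up to constants) in the new coordinates, and $p$ corresponds to the origin. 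Working at the origin has two advantages: the normal vector $\nu_\Gamma(0)$ agrees with the Euclidean one (up to first-order metric corrections which are $O(|x|)$ near $0$), and crucially Proposition \ref{gradcomp} applies, giving $\nabla_y K_{s-2}(p,\varphi_p(y)) = \nabla_y \frac{\alpha_{n,s-2}}{|y|^{n+s-2}} + O(|y|^{-(n+s-3)})$ — an error that is \emph{two} orders below the leading singularity, which is exactly what makes $\sigma$ times the resulting integral converge to the classical mean curvature with an $O(\sigma^{3/4})$ error rather than merely $O(\sqrt\sigma)$.

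\textbf{Key steps.} First I would split the integral $\int_{\varphi(\Gamma)}$ into the piece over $\varphi_p(\Gamma\cap \B'_{1/16}\times(-1/2,1/2))$ (the local piece near $p$) and the far piece. For the far piece, $\mathrm{dist}(p,q)\geq c>0$, so by \eqref{remaining2} the integrand and its relevant derivatives are bounded, and multiplying by $\sigma$ gives an $O(\sigma)$ contribution — this is absorbed into the error. For the near piece, after passing to coordinates with $\varphi_p$, I would use Lemma \ref{kerdivlem} (in the form $K_s = -\mathrm{div}_q\nabla_q K_{s-2}$) together with Proposition \ref{gradcomp}; the leading term is the Euclidean computation $2\sigma\int_{\tilde\Gamma}\langle \nu(y), -(n+s-2)\alpha_{n,s-2}\,y/|y|^{n+s}\rangle\,d\mathcal H^{n-1}(y)$, and the standard Euclidean fact (the one that yields $\lim_{s\to1}(1-s)\mathrm{Per}_s = \gamma_n\mathrm{Per}$, cf. \cite{ADPM11}, \cite{FFMMM}) is that this equals $c_\circ H[\tilde\Gamma](0) + O(\sigma)$ once one expands $\tilde g$ to second order around the origin, with $c_\circ = \frac{\mathcal H^{n-2}(\partial\B'_1)}{2(n-1)}$. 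The key point in tracking the error: the $C^{2,\alpha}$ seminorm is allowed to blow up like $\sigma^{-1/4}$, and the $C^{2,\alpha}$ modulus enters the Euclidean curvature expansion through a term of the form $\sigma\cdot[\tilde g]_{C^{2,\alpha}}\cdot\int_0^c \rho^{\alpha}\rho^{-1-s}\,d\rho \sim \sigma \cdot\sigma^{-1/4}\cdot\frac{1}{\alpha-\sigma}$, which (for $\alpha$ fixed, $\sigma$ small) is $O(\sigma^{3/4})$. The error from Proposition \ref{gradcomp}'s two-orders-lower term contributes $\sigma\int_0^c \rho^{-(n+s-3)}\rho^{n-2}\,d\rho = \sigma\int_0^c\rho^{1-s}\,d\rho = O(\sigma)$, and the error from the metric being Euclidean only to first order at $0$ (so $\sqrt{|g|} = 1 + O(|y|^2)$ and $\nu_g = \nu_{\mathrm{eucl}} + O(|y|)$ on $\Gamma$ near $0$) contributes, after multiplying by $\sigma$, another $O(\sigma\int_0^c\rho^{-(n+s-1)}\rho\cdot\rho^{n-2}\,d\rho) = O(\sigma)$. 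Collecting, $H_s[\varphi(\Gamma)](p) = c_\circ H[\varphi(\Gamma)](p) + O(\sigma^{3/4})$, where I also use that the classical mean curvatures in the $\varphi$ and $\varphi_p$ coordinate systems differ by $O(1)$ geometric quantities that match up correctly since $H$ is intrinsic.

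\textbf{Main obstacle.} The delicate point is the careful bookkeeping of \emph{which} error terms are merely $O(\sigma)$ versus which are $O(\sigma^{3/4})$, and making sure no term of intermediate order (say $O(\sqrt\sigma)$) sneaks in. The genuinely subtle contribution is the one coming from the $C^{2,\alpha}$ remainder of the graph $\tilde g$: one must verify that expanding $\tilde g(y') = \tilde g(0) + \nabla\tilde g(0)\cdot y' + \tfrac12 (y')^TD^2\tilde g(0)y' + R(y')$ with $|R(y')|\leq [\tilde g]_{C^{2,\alpha}}|y'|^{2+\alpha}$, and plugging into the Euclidean curvature integral, the quadratic (Hessian) part reproduces exactly $c_\circ H$ (this is the classical computation), the linear and constant parts vanish by the relevant symmetry/cancellation of the odd kernel $y/|y|^{n+s+2}$ over symmetric domains, and the remainder $R$ produces precisely the $\sigma\cdot\sigma^{-1/4} = \sigma^{3/4}$ term. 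A secondary subtlety is handling the fact that $\Gamma$ is a graph over a ball of radius $1/2$ but we only claim the estimate for $x\in\Gamma\cap[\B'_{1/4}\times(-1/2,1/2)]$, so there is always a safety margin ensuring all the integrals over $\tilde\Gamma$ intersected with a small ball around $p$ stay inside the region where the graph bounds hold — this is routine but must be stated. I expect the whole argument to occupy a page or so, with the Euclidean curvature expansion either cited from \cite{FFMMM}/\cite{ADPM11} or recalled as a short lemma.
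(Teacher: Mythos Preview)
Your overall strategy matches the paper's: pass to normal coordinates at $p$ via Proposition~\ref{fanormal}, use Proposition~\ref{gradcomp} to replace $\nabla K_{s-2}$ by its Euclidean counterpart up to a two-orders-lower error, and then compare the Euclidean NMC of the coordinate graph $\hat\Gamma$ with $c_\circ H[\hat\Gamma](0)$ via a second-order Taylor expansion whose $C^{2,\alpha}$ remainder produces the $O(\sigma^{3/4})$. Two of your error computations fail as written, however.

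The first is a slip in exponents. After rotating so that $\hat g(0)=|\nabla\hat g(0)|=0$ (this is why the constant and linear parts vanish---by choice of coordinates, not by odd-kernel cancellation as you wrote), the remainder in $\hat g(y')-\nabla\hat g(y')\cdot y'$ is $O([\hat g]_{C^{2,\alpha}}|y'|^{2+\alpha})$, and integrating against $|y'|^{-(n+s)}$ over the $(n{-}1)$-ball gives $\int_0^c\rho^{\alpha-s}\,d\rho=c^{\alpha+\sigma}/(\alpha+\sigma)=O(1/\alpha)$; your integral $\int_0^c\rho^{\alpha-1-s}\,d\rho$ is one power short and diverges once $\sigma<1-\alpha$. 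The second is a real gap: in normal coordinates $g_{ij}(y)=\delta_{ij}+O(|y|^2)$, so both $\sqrt{|g|}-1$ and the Riemannian unit normal minus the Euclidean one are $O(|y|^2)$, not $O(|y|)$. This quadratic vanishing is essential. With only $O(|y|)$ your integral is $\sigma\int_0^c\rho\cdot\rho^{-(n+s-1)}\rho^{n-2}\,d\rho=\sigma\int_0^c\rho^{-s}\,d\rho=\sigma\cdot c^{\sigma}/\sigma=c^\sigma=O(1)$, not $O(\sigma)$---the $\sigma$'s cancel and your claimed evaluation is wrong. With the correct $O(|y|^2)$ the exponent becomes $1-s$, the integral is bounded, and the contribution is $O(\sigma)$. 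The paper uses exactly this in its Step~1 (see the line following \eqref{inprodcomp}).
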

\begin{proof}
    \textbf{Step 1.} We begin by showing that the Riemannian NMC of the set and the Euclidean NMC of its representative in normal coordinates are close.\\
    Let $x\in\Gamma\cap [\B'_{1/4}\times(-1/2,1/2)]$ be as in the statement of the lemma. By Proposition \ref{fanormal}, we can consider normal coordinates centered at $p=\varphi(x)$ and then our setting still applies. More precisely, $FA_l(M,g,r,p,\varphi_p)$ is satisfied for some $r$ sufficiently small. Moreover, 
    $$\hat \Gamma:=\Big[\varphi_p^{-1}\circ\varphi(\Gamma)\Big]\cap \Big[\B'_{r/2}\times(-r/2,r/2)\Big]$$ is still a $C^{2,\alpha}$ surface, which (possibly after a rotation and making $r$ sufficiently smaller) is given in $\B'_{r/2}\times(-r/2,r/2)$ by the graph of some function $\hat g:\B'_{r/2}\to(-r/2,r/2)$ with
    \begin{equation}\label{78uih76t}
    \hat g(0)=|\nabla \hat g(0)|= 0\,,
\end{equation}
and satisfying the bounds
    $$\|\nabla \hat g\|_{L^\infty(\B_{r/2}')} + \|D^2 \hat g\|_{L^\infty(\B_{r/2}')} < C\delta \quad\mbox{and}\quad [\hat g]_{C^{2,\alpha}}\leq C\sigma^{-{1/4}}\, .
    $$
    Now, thanks to taking normal coordinates, we have that $g_{ij}(0)=\delta_{ij}$ and $\nabla g_{ij}(0)=0$. This will simplify the computations that will follow. We start by writing
    \begin{align}
        \frac{1}{2}H_s[\varphi(\Gamma)](p)&=\frac{1}{2}H_s[\varphi_p(\hat\Gamma)](p)+\sigma\int_{\varphi(\Gamma)\setminus\varphi_p(\hat\Gamma)}\langle\nu_{\Sigma}(q),\nabla_q^M K_{s-2}(p,q)\rangle_g\,d\mathcal{H}^{n-1}(q)\nonumber\\
        &=\frac{1}{2}H_s[\varphi_p(\hat\Gamma)](p)+O(\sigma)\,,\label{dgnnb}
    \end{align}
    where we have used \eqref{remaining2} with $s-2$ instead of $s$ to bound the second term. This shows that we can work with $\hat\Gamma$ and $\varphi_p$ instead of $\Gamma$ and $\varphi$ without any loss of generality. 
    Now, passing to coordinates we can write
    \begin{align}\frac{1}{2}H_s[\varphi_p(\hat\Gamma)](p)&=\sigma\int_{\varphi_p(\hat \Gamma)}\langle\nu_{\varphi_p(\hat\Gamma)}(q'),\nabla_{q'}^M K_{s-2}(p,q')\rangle_g\,d\mathcal{H}^{n-1}(q')\nonumber\\
    &=\sigma\int_{\hat\Gamma}\langle\nu_{\varphi_p(\hat\Gamma)}(\varphi_p(y)),\nabla_{q'}^M K_{s-2}(\varphi_p(0),\varphi_p(y))\rangle_{g(\varphi_p(y))}\sqrt{|g_{ij}^{\hat\Gamma}|(y)}\,d\mathcal H^{n-1}(y)\label{inprodcomp0}\,.
    \end{align}
    A simple Riemannian computation, which we give in Appendix \ref{sec:riemcomput}, allows us to compute the terms in the inner product, obtaining that
    \begin{align}\label{inprodcomp}
        \frac{1}{2}H_s[\varphi(\hat\Gamma)](p)&=\sigma\int_{\hat\Gamma}\frac{1}{\sqrt{\nu_{\hat\Gamma}^t g^{-1} \nu_{\hat\Gamma}}}[\nu_{\hat\Gamma}(y)]^t g^{-1}(y) \nabla_y^{\R^n}K_{s-2}(\varphi_p(0),\varphi_p(y))\,\sqrt{|g_{ij}^{\hat\Gamma}|(y)}\,d\mathcal H^{n-1}(y)\, .
    \end{align}
    Here all products involving $g$ denote products of matrices. Now, since $g(y)=Id + O(|y|^2)$ thanks to having taken normal coordinates, we can exchange all instances of the Riemannian metric $g$ by the identity metric up to a controlled quadratic error. More precisely, we can estimate
    \begin{align}
        \frac{1}{2}H_s[\varphi(\hat\Gamma)](p)&=\sg\int_{\hat \Gamma}\big[\nu_{\hat\Gamma}(y)\cdot\nabla_y^{\R^n}K_{s-2}(\varphi_p(0),\varphi_p(y))\big]\,d\mathcal H^{n-1}(y)\\
        &\hspace{0.6cm} +O\Big(\sg\int_{\hat\Gamma}|y|^2|\nabla_y^{\R^n}K_{s-2}(\varphi_p(0),\varphi_p(y))|\,d\mathcal H^{n-1}(y)\Big)\nonumber\,,
    \end{align}
    where $\cdot$ denotes the usual Euclidean scalar product.
    
    Proposition \ref{gradcomp} (applied with $s-2$ instead of $s$) allows us now to compare the gradient of the kernel $K_{s-2}$ with the one of the corresponding Euclidean kernel. We reach
    \begin{align}
        \frac{1}{2}H_s[\varphi(\hat\Gamma)](p)&=-(n+s-2)\alpha_{n,(s-2)}\sg\int_{\hat \Gamma}\frac{\nu_{\hat\Gamma}(y)\cdot y}{|y|^{n+s}}\,d\mathcal H^{n-1}(y)\nonumber\\
        &\hspace{0.6cm} +O\Big(\sg\int_{\hat\Gamma}|y|^2\frac{1}{|y|^{n+s-1}}\,d\mathcal H^{n-1}(y)\Big)\, .\nonumber
    \end{align}
    Setting $c(n,s)=-\frac{2(n+s-2)}{s}\alpha_{n,(s-2)}$ and bounding the second integral by a uniform constant, we can then write
    \begin{align}
        H_s[\varphi(\hat\Gamma)](p)&=c(n,s)\sg\int_{\hat \Gamma}\frac{\nu_{\hat\Gamma}(y)\cdot y}{|y|^{n+s}}\,d\mathcal H^{n-1}(y)\nonumber\\
        &\hspace{0.6cm} +O(\sg)\nonumber\,.
    \end{align}
    Now, the last expression is exactly the expression for the Euclidean nonlocal mean curvature of $\hat\Gamma$, plus an $O(\sg)$ error term. In other words, together with \eqref{dgnnb} we have found that
    $$H_s[\varphi(\Gamma)](p)=H_s[\hat\Gamma](0)+O(\sg)\,,$$
    where the NMC on the left and right hand sides are respectively on $M$ and on $\R^n$.
    
    \noindent \textbf{Step 2.} We now show that $H_s[\hat\Gamma](0)=c_\circ H[\hat\Gamma](0)+O(\sigma^{3/4})$, with $c_\circ: = \frac{\mathcal H^{n-2}(\partial \B_1')}{2(n-1)}$. This is an exclusively Euclidean computation and which we reproduce from \cite{CDSV}, with the caveat that we are not assuming uniform $C^{2,\alpha}$ bounds for our graphs but rather bounds which degenerate as $\sigma\to 0$.

Observe first that, from the assumption that
\begin{equation}
\hat g(0)=|\nabla \hat g(0)|= 0\,,
\end{equation}
we have that
\[
H[\hat\Gamma](0) = - \Delta \hat g(0).
\]
Thus our goal is to relate $H_s[\hat\Gamma](0)$ with $\Delta \hat g(0)$.

Recall that
\[
H_s[\hat\Gamma](0) = c(n,s)\sigma \int_{\hat\Gamma}  \frac{\nu_{\hat\Gamma}(y)\cdot y}{|y|^{n+s}}\, d\mathcal H^{n-1}(y)  =  c(n,s)\sigma \int_{\B_{r/2}'}  \frac{(- \nabla \hat g(y'),1)\cdot (y',\hat g(y'))}{(|y'|^2 + \hat g(y')^2)^{\frac{n+s}{2}}}  \,dy'.
\]

Now, on the one hand, we have
\[
I_1: =\sigma \int_{B_\varrho'}  \frac{(- \nabla\hat g(y'),1)\cdot (y',\hat g(y'))}{(|y'|^2 + \hat g(y')^2)^{\frac{n+s}{2}}} \,dy' = (1+ O(\varrho^2))\sigma \int_{B_\varrho'}  \frac{\hat g(y')- \nabla \hat g(y')\cdot y'}{|y' |^{n+s}}\,dy',
\]
where we have used that $|y'|^2 + \hat g(y')^2 = |y'|^2(1 + \hat g(y')^2/|y'|^2) = |y|^2(1 + O(\varrho^2))$ for $y'\in B_\varrho'$,
thanks to~\eqref{78uih76t}.

Note that our assumptions on $[\hat g]_{C^{2,\alpha}}$ give the Taylor expansions
\[ \big|\hat g(y')- \tfrac1 2 y'\cdot  D^2\hat g(0) y'\big|  \le C\sigma^{-1/4}|y'|^{2+\alpha} \quad \mbox{and}\quad   \big|\nabla \hat g(y') - D^2 \hat g(0)y'\big|\le C\sigma^{-1/4}|y'|^{1+\alpha}.
\]
Hence,
choosing  $\varrho =\sigma$, and using that~$\sigma^\sigma = \exp(\sigma \log \sigma) = 1 + O(\sigma |\log \sigma|)$ as $\sigma\downarrow 0$, we obtain
\[
\begin{split}
I_1 &= (1+O(\varrho^2))\sigma\left( \int_{B_\varrho'}  \frac{- \tfrac 1 2 y'\cdot D^2 \hat g(0)y'}{|y'|^{n+s}}\,dy'  +  O\Big(C\sigma^{-1/4}\int_{B_\varrho'}  \frac{2 |y'|^{2+\alpha}}{|y'|^{n+s}}\,dy' \Big)\right)\\
& = (1+O(\varrho^2))\left( \frac{{\rm trace}(D^2 \hat g(0))}{2(n-1)} \sigma\int_{B_\varrho'}  \frac{|y'|^2}{|y'|^{n+s}}\,dy'  + \sigma^{3/4} O(\varrho^{\alpha +1-s}) \right)\\
& = (1+O(\varrho^2))\left(-\Delta \hat g(0) \frac{\cH^{n-2}(\partial B_1')}{2(n-1)} \sigma\frac{\varrho^{1-s}}{1-s} + \sigma^{3/4} O(\varrho^{\alpha +1-s}) \right)\\
& =  -c_\circ\Delta \hat g(0)  + O( \sigma |\log\sigma| + \sigma^{3/4+\alpha}).
\end{split}
\]

On the other hand, using that $|\hat g(y')| + |y'\cdot\nabla \hat g(y')|  \le \frac 1 2 |y'|^2 + |y'|^2$ (thanks to our assumptions on $\hat g$),
we obtain that
\[
I_2: =\sigma \int_{B_{r/2}'\setminus B_\varrho'}    \frac{(- \nabla \hat g(y'),1)\cdot (y',\hat g(y'))}{(|y'|^2 + \hat g(y')^2)^{\frac{n+s}{2}}}  \,dy'
\]
can be bounded by
\[
\big| I_2\big| \le \sigma \int_{B_{r/2}'\setminus B_\varrho'}  \frac{\frac 3 2 |y'|^2}{|y'|^{n+s}}\, dy'
= C \sigma \int_\varrho^1 r^{-s}\, dr= C(1-\sigma^\sigma) = O(\sigma |\log \sigma|).
\]
Since $H_s[\hat\Gamma](0) = I_1 + I_2$, these considerations complete the proof of Step 2.

\noindent \textbf{Step 3.} Conclusion of the proof.
    
    Combining Steps 1 and 2, we have seen that $$H_s[\varphi(\Gamma)](p)=c_0H[\hat\Gamma](0)+O(\sg^{3/4})\,.$$ On the other hand, the identity
    \begin{equation}\label{mcrequl}
        H[\hat\Gamma](0)=H[\varphi_p(\hat\Gamma)](p)
    \end{equation}
    holds, where the left and right mean curvatures are computed on $\R^n$ and on $M$ respectively, thanks to having taken normal coordinates centered at $p$; a proof is given in Appendix \ref{sec:riemcomput}. Therefore, we conclude that
    $$H_s[\varphi(\Gamma)](p)=c_0H[\varphi_p(\hat\Gamma)](p)+O(\sg^{3/4})=c_0H[\varphi(\Gamma)](p)+O(\sg^{3/4})$$
    as desired.
\end{proof}
With the results up to now, we can show the optimal separation estimate between layers in dimension $3$.
\begin{proposition}\label{optsep}
    Let $n=3$. Assume that \eqref{flatcond}--\eqref{whtoihwohw2} are satisfied, and let $E\subset M$ be $\Lambda$-almost stable $s$-minimal in $\varphi(\B_2)$, with~$(1-s)=\sigma \in (0,1/8)$. Then, there exists $c>0$, depending only on $\Lambda$, $n$ and the constants in \eqref{flatcond}--\eqref{whtoihwohw2}, such that for every $i$ such that \begin{equation*}\min_{\overline B_{1/2}'} |g_i| \le \frac12,\end{equation*}
    we have the separation estimate
\begin{equation} \label{wiowhoih2}
\inf_{B'_{1/2}} (g_{i+1}-g_i) \ge c \sigma^{1/2} >0\, .
\end{equation}
Moreover, we have the pointwise mean curvature estimates
\begin{equation}\label{wiowhoih3}
    \|H_s[\varphi(\Gamma_i)]\|_{L^\infty(\varphi(\Gamma_i \cap [B'_{1/2}\times(-1,1)]))}  \le C\sqrt{\sigma} 
    \quad \mbox{and} \quad
    \|H[\varphi(\Gamma_i)]\|_{L^\infty(\varphi(\Gamma_i \cap [B'_{1/2}\times(-1,1)]))}  \le C\sqrt{\sigma}.
\end{equation}
\end{proposition}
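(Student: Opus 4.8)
The plan is to combine two facts available in dimension $n=3$: (i) by the rough $C^{2,\alpha}$ bound of Proposition \ref{NegC2alpha} and Lemma \ref{LandNMC}, each layer $\Gamma_i$ satisfies the \emph{classical} minimal surface equation up to a small error, so the separation of two consecutive layers solves a uniformly elliptic equation over a \emph{two}-dimensional domain; and (ii) the $L^{-2}$-integrability of the separation from Proposition \ref{L2NMC} is, in two dimensions, incompatible with the separation being pointwise smaller than $\sqrt\sigma$, once Harnack is available. First I would fix $\alpha=\alpha(n)\in(0,1/4)$ with $12\alpha<1/4$; then Proposition \ref{NegC2alpha} gives $\|g_i\|_{C^{2,\alpha}(\B'_{1/4})}\le C\sigma^{-12\alpha}\le C\sigma^{-1/4}$, so the hypotheses of Lemma \ref{LandNMC} hold and $H_s[\varphi(\Gamma_i)]=c_\circ H[\varphi(\Gamma_i)]+O(\sigma^{3/4})$. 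Feeding the pointwise bound \eqref{nmcsep1} into this identity yields, on the layers with $\min_{\overline{B'_{1/2}}}|g_i|\le 1/2$,
\[
\bigl|H[\varphi(\Gamma_i)]\bigr|\le C\Bigl(\tfrac{\sigma}{|g_i-g_{i-1}|}+\tfrac{\sigma}{|g_{i+1}-g_i|}\Bigr)+C\sigma^{3/4},
\]
with a term $\sigma/|g_{j+1}-g_j|$ replaced by $C\sigma$ when the layer in question is absent from the cylinder; combined with \eqref{l2sepeq}, this says $\|H[\varphi(\Gamma_i)]\|_{L^2}\le C\sqrt\sigma$.

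\textbf{The separation equation and the lower bound.} Fix $i$ as above and set $w:=g_{i+1}-g_i>0$. Writing the Riemannian minimal graph operator in the chart $\varphi$ as a quasilinear operator $\mathcal A$ with $H[\varphi(\Gamma_j)]=\mathcal A[g_j]$, one has $\mathcal A[g_{i+1}]-\mathcal A[g_i]=Lw$ with $L:=\int_0^1 D\mathcal A[(1-t)g_i+tg_{i+1}]\,dt$; by \eqref{whtoihwohw2} and the bounds on the $g_j$, $L$ is a uniformly elliptic second-order operator with bounded measurable coefficients (ellipticity uniform since $\|\nabla g_j\|_\infty<\delta$). Hence $Lw=f$ with $\|f\|_{L^2}\le C\sqrt\sigma$ and $|f|\le C(\sigma/|g_i-g_{i-1}|+\sigma/w+\sigma/|g_{i+2}-g_{i+1}|)+C\sigma^{3/4}$. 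Since $n=3$, the base domain is $2$-dimensional, so $f\in L^2=L^q$ with $q=2>1=n/2$ and the De Giorgi--Nash--Moser Harnack inequality applies: $\sup_{B'_{r/2}(x_0')}w\le C_H\inf_{B'_{r/2}(x_0')}w+C_H\,r\,\|f\|_{L^2(B'_r(x_0'))}$. On the other hand $\sigma\int_{B'_{1/2}}w^{-2}\le C$ forces $w(x_1')\ge c\sqrt\sigma$ at some $x_1'$ in the interior cylinder. Applying Harnack on a ball $B'_r(x_1')$ with $r$ small but universal so that $C_H r\le c/2$ gives $w\ge c'\sqrt\sigma$ near $x_1'$, and then \eqref{wiowhoih2} on all of $B'_{1/2}$ follows by propagating this along a Harnack chain through the connected domain, the accumulated additive errors being handled by arranging the intermediate scales appropriately — here the improved $O(\sigma^{3/4})=o(\sqrt\sigma)$ error from Lemma \ref{LandNMC} is essential, and I expect this final balancing to be carried out via a zooming argument in the spirit of Lemma \ref{whtiohwoiwh} (zooming around a minimum point of $w$, where the flatness of the layers and the $L^{-2}$ bound improve under rescaling while the error term $\sigma/w$ is scale-invariant). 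The radius mismatch between Proposition \ref{NegC2alpha} and the statement is absorbed into this covering/rescaling step.

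\textbf{The mean curvature estimates.} Once \eqref{wiowhoih2} is known for every consecutive pair of layers inside the cylinder (the outermost gaps being handled by replacing the neighbouring-layer term with the exterior error), \eqref{nmcsep1} gives $\|H_s[\varphi(\Gamma_i)]\|_{L^\infty}\le C\sigma/\sqrt\sigma+C\sigma^{3/4}\le C\sqrt\sigma$, which is the first bound in \eqref{wiowhoih3}; the second follows at once from Lemma \ref{LandNMC}, since $|H[\varphi(\Gamma_i)]|\le c_\circ^{-1}|H_s[\varphi(\Gamma_i)]|+C\sigma^{3/4}\le C\sqrt\sigma$.

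\textbf{Main obstacle.} The heart of the argument is turning the integral estimate $\sigma\int w^{-2}\le C$ into the uniform lower bound $\inf w\ge c\sqrt\sigma$. Both the ``largeness'' extractable from the integral bound and the error in the approximate minimal surface equation for $w$ are of the critical size $\sqrt\sigma$, so a naive application of Harnack on $B'_{1/2}$ loses, and one must track how the various constants (the Harnack constant, the constant in \eqref{l2sepeq}, the ellipticity constants) interact — which I expect forces a careful scale-by-scale (zooming/iteration) argument rather than a one-shot estimate, exploiting both the two-dimensionality of the base (so that a positive harmonic-type function cannot degenerate while keeping $\int w^{-2}$ finite) and the $o(\sqrt\sigma)$ gain in Lemma \ref{LandNMC}. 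This is precisely the step where the restriction $n=3$ is used.
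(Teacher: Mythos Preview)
Your overall strategy --- use Proposition~\ref{NegC2alpha} with $12\alpha\le 1/4$ to feed into Lemma~\ref{LandNMC}, pass from nonlocal to classical mean curvature, write $w:=g_{i+1}-g_i$ as a solution of a uniformly elliptic equation over the two-dimensional base with right-hand side $f$ satisfying $\|f\|_{L^2}\le C\sqrt\sigma$, and apply Harnack with $q=2>n'/2=1$ --- is exactly the paper's, and your derivation of \eqref{wiowhoih3} from \eqref{wiowhoih2} via \eqref{nmcsep1} and Lemma~\ref{LandNMC} is the paper's Step~2 verbatim.

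The gap is in the heart of Step~1: the passage from $\sigma\int_{B'_{1/2}}w^{-2}\le C$ to $\inf w\ge c\sqrt\sigma$. Your proposed route --- find a good point with $w\ge c\sqrt\sigma$ and propagate by a Harnack chain --- does not close, for the reason you yourself flag: the additive error $C_H r\|f\|_{L^2}\sim C\sqrt\sigma$ is exactly critical, so after $N$ Harnack steps the lower bound becomes $c\sqrt\sigma/C_H^N-NC\sqrt\sigma$, which dies long before the chain crosses $B'_{1/2}$. No rearrangement of scales saves this, and the $O(\sigma^{3/4})=o(\sqrt\sigma)$ gain from Lemma~\ref{LandNMC} (which you call essential) does not help either --- the obstruction is the multiplicative Harnack loss, not the additive error.

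The paper resolves this by contradiction \emph{from a minimum point}, and the key is a logarithmic divergence you have not identified. Suppose $w(x_0')=\delta\sqrt\sigma$ for some $x_0'\in B'_{1/2}$. Rescaling around $(x_0',g_i(x_0'))$ by any $r\in(0,1/4)$ preserves \eqref{flatcond}--\eqref{whtoihwohw2} and almost-stability, so Harnack applies again at that scale and yields
\[
\sup_{B'_{r/2}(x_0')}w\;\le\; Cr\Bigl(\frac{\delta\sqrt\sigma}{r}+\sqrt\sigma\Bigr)\;\le\; Cr\sqrt\sigma\qquad\text{for all }r\ge\delta.
\]
Now feed this \emph{upper} bound back into the $L^{-2}$ estimate of Proposition~\ref{L2NMC} in polar coordinates:
\[
C\;\ge\;\int_{B'_{1/4}(x_0')}\frac{\sigma}{w^2}\;\ge\;\frac{\sigma}{C}\int_\delta^{1/4}\frac{r\,dr}{(Cr\sqrt\sigma)^2}\;=\;\frac{1}{C}\int_\delta^{1/4}\frac{dr}{r}\;\ge\;\frac{|\log\delta|}{C},
\]
forcing $\delta\ge c>0$. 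The two-dimensionality of the base enters precisely here: the area element $r\,dr$ cancels the $r^2$ in $w^2$, leaving the divergent $\int dr/r$. Only $\|f\|_{L^2}\le C\sqrt\sigma$ is used; the $o(\sqrt\sigma)$ improvement is not needed. Your parenthetical about ``zooming around a minimum point of $w$'' is pointing in the right direction, but the mechanism is this scale-by-scale upper bound combined with the logarithmic integral, not an improvement of the $L^{-2}$ bound under rescaling.
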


\begin{proof}

\textbf{Step 1.} Let us show the pointwise separation estimate in \eqref{wiowhoih2}.\\
It is well known that the classical mean curvature can be written, for a graph in coordinates, as an operator depending on the graph and the coefficients of the metric. Moreover, the difference of the parametrising functions of two parallel graphs satisfies an elliptic equation. The precise statement is that, for $x'\in B'_{3/4}$ and $v:= g_{i+1}-g_i$, we have that (see for example \cite[page 237]{CM})
\begin{align}
h(x´):&=H[\varphi(\Gamma_{i+1})](\varphi(x', g_{i+1}(x')))  - H[\varphi(\Gamma_{i})](\varphi(x', g_{i}(x')))\label{LKMS:DA}\\
&= a_{ij}(x')v_{x_ix_j}(x")+b_i(x')v_{x_i}+cv,\nonumber
\end{align}
where the $a_{ij}$ form a uniformly elliptic matrix and $b_i$ and $c$ are uniformly bounded.\\
Moreover, we know from Proposition \ref{L2NMC} that the NMCs $H_s[\varphi(\Gamma_{i})]$ and $H_s[\varphi(\Gamma_{i+1})]$ are bounded in $L^2$ by $C\sqrt{\sigma}$, which together with the combination of \eqref{wiowhoih567} (with $\alpha=\frac{1}{48}$, so that $12\alpha=1/4$) and Lemma \ref{LandNMC} gives that the LMCs $H[\varphi(\Gamma_{i})]$ and $H[\varphi(\Gamma_{i+1})]$ are bounded in $L^2$ by $C\sqrt{\sigma}$ as well. Thus \eqref{LKMS:DA} gives that 
$$\|h\|_{L^2(B_{3/4}')}\leq C\sqrt{\sigma}\,.$$
On the other hand, by the Harnack inequality for uniformly elliptic operators, see \cite[Theorems 8.17 and 8.18]{GT},
we have that
\[
\sup_{B_{1/2}'} v  \le  C \inf_{B_{1/2}'} v+ C\|h\|_{L^q(B_{3/4}')}
\]
for every $q> n'/2$, with $n':=n-1$. For $n=3$ we may take $q=2$, thus obtaining that
\begin{equation}\label{wtihoiewhw}
\sup_{B_{1/2}'} (g_{i+1}-g_i) \le  C \inf_{B_{1/2}'}  (g_{i+1}-g_i)  + C\sqrt \sigma.
\end{equation}

We now argue as in \cite[Proposition 3.5]{CDSV}. Still in dimension $n=3$ (and thus $n'=2$), assume that $ (g_{i+1}-g_i) (x_\circ')  = \delta \sqrt \sigma$ for some $x'_\circ\in B'_{1/2}$ and let us prove a lower bound for $\delta$.

For $r\in (0,1/4)$ we now dilate around $x_\circ = (x_\circ',g_i(x_\circ')) \in \Gamma_i$ and
we obtain new surfaces $\Gamma_{i,r} : = \frac 1 r (\Gamma_i-x_\circ)$
which have graphical expressions~$x_n = g_{i,r}(x')$ in~$\B_1'\times (-1,1)$, where~$g_{i,r}(x') : = \frac 1 r g_i(x_\circ' + rx')$. As we have used several times, our assumptions \eqref{flatcond}--\eqref{whtoihwohw2} and the $\Lambda$-almost stability condition are all preserved in this rescaled setting, considering the manifold $(M,\frac{1}{r^2}g)$ and the new parametrisation $\varphi(rx+x_0)$; see for example Step 2 of the proof of Proposition \ref{prop:BVestas}.\\
Then, reapplying the argument above we deduce that \eqref{wtihoiewhw} also holds with $g_i$ replaced by $g_{i,r}$. Since $(g_{i+1,r}-g_{i,r}) (0)  = \frac 1 r (g_{i+1}-g_i) (x_\circ')  =\frac{\delta \sqrt \sigma}{r}$, this gives that
\[
\sup_{B_{r/2}'(x_\circ')} (g_{i+1}-g_i)   = r  \sup_{B_{1/2}'} (g_{i+1,r}-g_{i,r}) \le  Cr\left(\frac{\delta \sqrt \sigma}{r}  + \sqrt \sigma\right).
\]
In other words, for all $r\ge \delta$ we have that $
\sup_{B_{r/2}'(x_\circ')} (g_{i+1}-g_i) \le Cr\sqrt \sigma$.

But then using again Proposition~\ref{L2NMC} and the fact that~$n'=2$, we obtain that
\begin{equation}\label{nwoihtoiwh}
C\ge \int_{B_{1/4}'(x_\circ') }  \frac{\sigma}{(g_{i+1}-g_i)^2}  
\ge  \frac{\sigma }{C}
\int_\delta ^{1/4} \frac{r \,dr}{(Cr\sqrt \sigma)^2} =\frac{1}{C}
\int_\delta ^{1/4} \frac{dr}{r}\ge \frac{|\log \delta|}{C},
\end{equation}
which proves that~$\delta \ge c>0$.

\noindent \textbf{Step 2.} Now that we know the pointwise separation estimate in \eqref{wiowhoih2}, using it in equation \eqref{nmcsep1} gives the pointwise bound for the NMC,
\[
\|H_s[\varphi(\Gamma_i)]\|_{L^\infty(\varphi(\Gamma_i \cap B'_{1/2}\times(-1,1)))}  \le C\sqrt{\sigma}.
\]
Combining this once again with Lemma \ref{LandNMC}, which we can apply thanks to \eqref{wiowhoih567} in Proposition \ref{NegC2alpha}, we complete the proof of~\eqref{wiowhoih3}.
\end{proof}
We finally obtain the uniform $C^{2,\alpha}$ estimate under the assumptions in the section.
\begin{corollary}\label{whtu292112-2}
Let $n=3$. Assume that \eqref{flatcond}--\eqref{whtoihwohw2} are satisfied, and let $E\subset M$ be $\Lambda$-almost stable $s$-minimal in $\varphi(\B_2)$, with~$(1-s)=\sigma \in (0,1/8)$. Given $\alpha\in(0,1)$, there exists $C>0$, depending only on $\alpha$, $\Lambda$, $n$ and the constants in \eqref{flatcond}--\eqref{whtoihwohw2}, such that for every $i$ such that \begin{equation*}\min_{\overline \B_{1/2}'} |g_i| \le \frac12,\end{equation*}
we have that
$\|g_i\|_{C^{2,\alpha}(\B_{1/2}')} \le C$.
\end{corollary}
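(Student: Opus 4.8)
The plan is to recast the regularity of the layers as a question about classical Schauder theory, the bridge being the smallness and Hölder control of the (non)local mean curvatures already established in this section. Fix an index $i$ with $\min_{\overline\B_{1/2}'}|g_i|\le\tfrac12$. The crucial preliminary observation is that, having the \emph{optimal} separation $\inf_{\B_{1/2}'}(g_{i+1}-g_i)\ge c\sqrt\sigma$ from Proposition \ref{optsep}, the denominators $|g_{i\pm1}-g_i|$ occurring in the nonlocal--mean--curvature splitting identity \eqref{nmcsplit}--\eqref{derhgbound} are bounded below by $c\sqrt\sigma$ on $\B_{1/2}'$, so \eqref{derhgbound} improves to $\|\nabla^{\varphi(\Gamma_i)}H_s[\varphi(\Gamma_i)]\|_{L^\infty(\varphi(\Gamma_i\cap[\B_{1/2}'\times(-1,1)]))}\le C$ with $C$ uniform in $\sigma$. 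Interpolating this with the pointwise bound $\|H_s[\varphi(\Gamma_i)]\|_{L^\infty}\le C\sqrt\sigma$ of Proposition \ref{optsep} gives
$$\|H_s[\varphi(\Gamma_i)]\|_{C^{0,\alpha}(\varphi(\Gamma_i\cap[\B_{1/2}'\times(-1,1)]))}\le C\,\sigma^{\frac{1-\alpha}{2}}\qquad\text{for every }\alpha\in(0,1),$$
i.e. the nonlocal mean curvature of each layer is uniformly bounded, indeed vanishing as $\sigma\to0$, in every $C^{0,\alpha}$ norm with $\alpha<1$.

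I would then transfer this to the classical mean curvature $H[\varphi(\Gamma_i)]$ of the layer and invoke classical elliptic theory. Writing $H[\varphi(\Gamma_i)]=c_\circ^{-1}H_s[\varphi(\Gamma_i)]+E_i$, Lemma \ref{LandNMC} (applicable because \eqref{wiowhoih567} of Proposition \ref{NegC2alpha} gives $[g_i]_{C^{2,1/48}}\le C\sigma^{-1/4}$, and this and all other hypotheses \eqref{flatcond}--\eqref{whtoihwohw2} are scale invariant) gives $\|E_i\|_{L^\infty}\le C\sigma^{3/4}$; interpolating this smallness against the finite, $\sigma$-deteriorating $C^{0,1/48}$ bound for $H[\varphi(\Gamma_i)]$ coming from \eqref{wiowhoih567} produces a \emph{uniform} bound $\|H[\varphi(\Gamma_i)]\|_{C^{0,\beta}}\le C$ for a small fixed $\beta>0$ (one checks $\beta=\tfrac1{64}$ works). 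Since $g_i$ solves the classical prescribed--mean--curvature equation $\textnormal{div}_g\!\big(\nabla_g g_i/\sqrt{1+|\nabla_g g_i|_g^2}\big)=H[\varphi(\Gamma_i)]$, which is uniformly elliptic because $\|\nabla g_i\|_{L^\infty}<\delta$ and the metric obeys ${\rm FA}_\ell$, the Schauder estimate for this equation (with constants depending only on $n$, the ellipticity and the $C^\ell$ norm of the metric, hence uniform as $s\to1$) yields $\|g_i\|_{C^{2,\beta}(\B_{1/8}')}\le C$, uniform in $\sigma$.

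The remaining step is a bootstrap upgrading the fixed exponent $\beta$ to an arbitrary $\alpha\in(0,1)$. Once a \emph{uniform} $C^{2,\beta}$ bound on $g_i$ is available (with no $\sigma^{-1/4}$ blow-up), re-examining the proof of Lemma \ref{LandNMC} with $[g_i]_{C^{2,\beta}}\le C$ shows that $E_i$ is small not merely in $L^\infty$ but, once one tracks the dependence of the error integrals on the base point, in $C^{0,\beta}$ as well; combined with the first paragraph this enlarges the range of $\alpha$ for which $\|H[\varphi(\Gamma_i)]\|_{C^{0,\alpha}}\le C$ uniformly, and re-applying the classical Schauder estimate enlarges the exponent in the $C^{2,\alpha}$ bound. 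Iterating finitely many times, and invoking at each step the scale invariance of \eqref{flatcond}--\eqref{whtoihwohw2}, of the $\Lambda$-almost stability and of the separation estimate so that the argument applies verbatim after rescaling around any point of $\B_{1/2}'$, one reaches $\|g_i\|_{C^{2,\alpha}}\le C$ on a small fixed ball for the prescribed $\alpha$; a routine covering argument, as in Step 2 of the proof of Proposition \ref{prop:BVestas}, then passes from that ball to $\B_{1/2}'$. (Alternatively, in place of the classical equation one may feed the uniform $C^{0,\alpha}$ bound on $H_s[\varphi(\Gamma_i)]$ from the first paragraph directly into the nonlocal--mean--curvature graph Schauder estimate, as in Step 2 of the proof of Proposition \ref{NegC2alpha}, now with uniform rather than $\sigma^{-3}$-blowing-up input.)

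I expect the main obstacle to be precisely this last step: producing a bound on the layers' classical mean curvature that is uniform in $\sigma$ in a Hölder norm rather than only in $L^\infty$. The interpolation between the $L^\infty$-smallness of the error $E_i$ and its a priori Hölder norm necessarily loses a fixed amount, so the bootstrap must be set up carefully — improving the error in Lemma \ref{LandNMC} at each round using the uniform $C^{2,\beta}$ bound just obtained, and running the interpolation/Schauder loop at all scales simultaneously — which is exactly where the quantitative $\sqrt\sigma$-separation of Proposition \ref{optsep} and the scale invariance of all the standing hypotheses are used in an essential way.
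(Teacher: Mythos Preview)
Your first paragraph is exactly the paper's opening move, and the parenthetical alternative you offer at the end is precisely the route the paper takes: feed the uniform $C^{0,\beta}$ bound on $H_s[\varphi(\Gamma_i)]$ directly into the nonlocal Schauder estimate of \cite{FrancS}, which gains $1+s$ derivatives and yields $\|g_i\|_{C^{2,\,s+\beta-1}}\le C$. One subtlety you do not address is that this only reaches $C^{2,\alpha}$ when $s+\beta-1\ge\alpha$, i.e.\ when $s$ is close enough to $1$; the paper therefore fixes some $\beta>\alpha$, chooses a threshold $s_0=s_0(\alpha)$, concludes for $s\in[s_0,1)$, and then handles $s\in(7/8,s_0)$ by a separate, easy argument (there $\sigma$ is bounded below, so the layer separation is uniform, $\partial E$ is a single graph on small balls, one differentiates \eqref{nmcsplit} twice to bound $\nabla^2 H_s$ pointwise, interpolates to $C^{1,1/4}$, and applies nonlocal Schauder to land in $C^3$).

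Your main route---transferring to the classical mean curvature via Lemma \ref{LandNMC}, applying the classical Schauder estimate to get a uniform $C^{2,\beta}$ bound for a small $\beta$, and then bootstrapping---is in principle workable and has the conceptual advantage that the classical operator always gains exactly two derivatives, so no case distinction in $s$ is needed. The cost is the bootstrap: to upgrade $\beta$ you must control the error $E_i=H[\varphi(\Gamma_i)]-c_\circ^{-1}H_s[\varphi(\Gamma_i)]$ not just in $L^\infty$ but in $C^{0,\beta}$, which means revisiting the proof of Lemma \ref{LandNMC} and tracking how its error integrals depend on the base point---a genuine extra step (and one you flag yourself). The paper sidesteps this entirely by staying on the nonlocal side, at the price of the $s$-range case distinction above.
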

\begin{proof}
In \eqref{derhgbound} from Proposition \ref{NegC2alpha} we found the pointwise bound
$$|\nabla_p^{\varphi(\Gamma_i)}H_s[\varphi(\Gamma_i)](p)|\leq C\frac{\sigma}{|g_{i}(0)-g_{i-1}(0)|^{2}}+C\frac{\sigma}{|g_{i+1}(0)-g_{i}(0)|^{2}}+C\sigma\, ,$$
which combined with the separation estimate \eqref{wiowhoih2} shows that
$$|\nabla_p^{\varphi(\Gamma_i)}H_s[\varphi(\Gamma_i)](p)|\leq C\, .$$
On the other hand, by \eqref{wiowhoih3} in Proposition~\ref{optsep} we know that
\[
\|H_s[\varphi(\Gamma_i)]\|_{L^\infty(\varphi(\Gamma_i \cap [B'_{1/2}\times(-1,1)]))}  \le C\sqrt{\sigma}.
\]
Then, by interpolation between Hölder spaces, we deduce that
\begin{equation*}
\|H_s[\varphi(\Gamma_i)]\|_{C^{\beta}(\varphi(\Gamma_i \cap [B'_{1/2}\times(-1,1)]))}  \le C\sigma^{(1-\beta)/2}
\end{equation*}
for all $\beta\in(0,1)$.

By the Schauder estimate for the nonlocal mean curvature equation in \cite{FrancS} (in its version for graphs), since the NMC $H_s$ is an operator of order $1+s$, we deduce that $\|g_i\|_{C^{2,1+s+\beta-2}(\B_{1/2}')} \le C(\beta)$. Given $\alpha$ as in the statement, choose some $\beta>\alpha$ in $(0,1)$, say  $\beta=\alpha+(1-\alpha)/2$. Choosing then $s_0=s_0(\alpha)$ in $(0,1)$ large enough, so that $1+s_0+\beta-2\geq\alpha$, we conclude that 
$$\|g_i\|_{C^{2,\alpha}(\B_{1/2}')} \le C(\alpha)$$ for all $s\in[s_0,1)$.

The proof for the range $s\in(7/8,s_0)$ instead, in which $\sigma>(1-s_0)>0$, is essentially trivial at this point. First, by the lower separation bounds that we have found (here any rough bound, such as the one in Lemma \ref{whtiohwoiwh}, suffices), there is a uniform $r_{\circ}>0$ such that our surface is a single graph in balls of radius $r_{\circ}$ for this range of $s$. Then, differentiating \eqref{nmcsplit} two times and arguing exactly as in the proof of Proposition \ref{NegC2alpha}, we get a uniform pointwise bound
\[
\|\nabla^{2,\varphi(\Gamma_i)} H_s[\varphi(\Gamma_i)]\|_{L^\infty(\varphi(\Gamma_i \cap [B'_{1/2}\times(-1,1)]))}  \le C\,,
\] or for any chosen derivative of $H_s$ for that matter. By interpolation, in particular we get a uniform bound
\begin{equation*}
\|H_s[\varphi(\Gamma_i)]\|_{C^{1,1/4}(\varphi(\Gamma_i \cap [B'_{1/2}\times(-1,1)]))}  \le C\, ,
\end{equation*}
which again by the Schauder estimate in \cite{FrancS} gives a bound for the $C^{3,1+s+(1+1/4)-3}=C^{3,s-3/4}$ norm of $g_i$. Since $s\geq 7/8>3/4$, in particular we find a bound for the $C^3$ norm of $g_i$, thus also a bound for its $C^{2,\alpha}$ norm as desired. This concludes the proof.
\end{proof}

\begin{lemma}\label{StableLocPerComp}
Let $\gamma_n$ be as in \eqref{gammadef}, and let $\Omega_0 = \B_1'\times(-1,1)\subset\R^n$. Assume that \eqref{flatcond}--\eqref{whtoihwohw2} are satisfied, and let $E\subset M$ be $\Lambda$-almost stable $s$-minimal in $\varphi(\B_2)$, with~$(1-s)=\sigma \in (0,1/8)$. Moreover, assume that the metric $g$ (in coordinates) and the Euclidean metric are $\delta$-close in $\B_4$, in the sense that 
$$\sup_{x\in \B_4}\sup_{i,j} |g_{ij}(x)-\delta_{ij}(x)|\leq \delta\,.$$
Then, there exists a function $\ep(t)$, depending only on $\Lambda$ and $n$, such that $\lim_{t\to 0} \ep(t)=0$ and
\begin{equation*}
    |(1-s)\textnormal{Per}_s(E;\varphi(\Omega_0))-\gamma_n\textnormal{Per}(E;\varphi(\Omega_0))|\leq \ep(\sigma)+\ep(\delta)[\textnormal{Per}(E;\varphi(\Omega_0))+(1-s)\textnormal{Per}_s(E;\varphi(\Omega_0))]
\end{equation*}
holds.
\end{lemma}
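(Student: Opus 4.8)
The plan is to prove the Lemma by reducing the fractional perimeter, computed layer-by-layer, to a sum of essentially Euclidean graphical contributions, and then to match each such contribution with the classical perimeter of the corresponding layer up to errors governed by $\sigma$ and $\delta$. First I would invoke Corollary \ref{whtu292112-2} together with Lemma \ref{whtiohwoiwh}, Lemma \ref{classperball} and Proposition \ref{optsep}: by these we may assume that inside $\varphi(\Omega_0)$ the set $\varphi^{-1}(\partial E)$ consists of a bounded number $N_2\le C$ of layers $\Gamma_i=\{x_n=g_i(x')\}$ with $\|g_i\|_{C^{2,\alpha}(\B_{1/2}')}\le C$ and with pairwise separations $\inf(g_{i+1}-g_i)\ge c\sqrt\sigma$. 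Then, as in the proof of Proposition \ref{L2NMC}, decompose $E\cap\varphi(\Omega_0)=\bigcup_{i\in 2\mathbb Z}E_i$ with $E_i=A_i\setminus A_{i-1}$ where $A_i=\varphi(\{x_n<g_i(x')\}\cap\Omega_0)$, so that $\textnormal{Per}_s(E;\varphi(\Omega_0))$ expands into a sum of cross-interaction terms $\int_{\Gamma_i}\int_{\Gamma_j}$ (with appropriate signs) plus boundary interactions with $\partial\Omega_0$; the latter are controlled by \eqref{remaining3} and $\textnormal{Per}_s(E;\varphi(\Omega_0))\le\kappa/\sigma$.

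Next I would treat each pairwise interaction. For two \emph{consecutive} layers $\Gamma_i$, $\Gamma_{i+1}$: by Lemma \ref{loccomparability}, Proposition \ref{kermfdconv} and Lemma \ref{lem:kernel-comp}, the kernel $K_s(p,q)$ restricted to such a pair agrees with $\alpha_{n,s}/|x-y|^{n+s}$ up to a factor $1+O(\sqrt\delta)$ and up to a lower-order additive error of size $O(1/\text{dist}^{n+s-1})$ which contributes $o(1/\sigma)$ after multiplying by $\sigma$; integrating the Euclidean kernel over the slab between two nearly-parallel $C^{2}$ graphs with separation $\sim\sqrt\sigma$ and using the elementary computation $\sigma\int_0^\infty (t^2+h^2)^{-(n+s)/2}\,dt\,\to\,$ (a dimensional constant)$\,\times h^{-s}\to$ const as $s\to1$ when integrated against the area element $\mathcal H^{n-1}$, one recovers $\gamma_n$ times the classical area of the layer, with an $\ep(\sigma)+\ep(\delta)$ error. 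This is precisely the mechanism behind $(1-s)\textnormal{Per}_s(\partial E)\to\gamma_n\textnormal{Per}(\partial E)$ for a smooth set; the point is that the $C^{2,\alpha}$ bounds and the $\sqrt\sigma$ separation make all error terms \emph{uniform}. For \emph{non-consecutive} layers $\Gamma_i$, $\Gamma_j$ with $|i-j|\ge 2$: their separation is $\ge c\sqrt\sigma$ as well, but now the contribution $\sigma\iint_{\Gamma_i\times\Gamma_j}|x-y|^{-(n+s)}$ can be bounded, after integrating out one variable as in Lemma \ref{lem:kernel-comp}, by $C\sigma\int_{\B_{1/2}'}|g_j-g_i|^{-s}\,dx'$, and since $|g_j-g_i|\ge\sum_{i\le k<j}(g_{k+1}-g_k)$ one gets a convergent geometric-type sum whose total is $o(1)$ as $\sigma\to0$ thanks to the $L^{-2}$ (hence $L^{-s}$ via \eqref{eq:power-2}) separation bound \eqref{l2sepeq} — in fact these terms are exactly the ones that \emph{vanish} in the classical limit, matching the fact that in $\gamma_n\textnormal{Per}$ one only sees the area of each individual layer and no interaction between distinct sheets.

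Finally I would assemble: summing the $N_2\le C$ consecutive-layer contributions gives $\gamma_n\sum_i\textnormal{Per}(\varphi(\Gamma_i);\cdot)=\gamma_n\textnormal{Per}(E;\varphi(\Omega_0))$ up to $\ep(\sigma)+\ep(\delta)\textnormal{Per}(E;\varphi(\Omega_0))$, the non-consecutive contributions are $\ep(\sigma)$, and the boundary-of-$\Omega_0$ interactions are $\ep(\sigma)(1+(1-s)\textnormal{Per}_s(E;\varphi(\Omega_0)))$; the stated inequality follows. I expect the main obstacle to be bookkeeping the $\delta$-dependence cleanly: the metric distortion enters both through the comparison of $K_s$ with the Euclidean kernel (Proposition \ref{kermfdconv}, Lemma \ref{loccomparability}) and through the area element $\sqrt{|g^{\Gamma_i}_{ij}|}$ and the normal vector in the graphical integral, and one must check that all these contribute only a multiplicative $(1+\ep(\delta))$ rather than, say, an additive error that fails to be absorbed; a secondary technical point is making the passage from $|x-y|^{n+s}$ to $|x-y|^{n+1}$ in the integrated kernel uniform in $s$, which is handled exactly as in \eqref{eq:power-2} using the lower separation bound to control $|g_j-g_i|^{\sigma}=1+O(\sigma|\log\sigma|)$.
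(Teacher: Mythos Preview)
There is a genuine gap: the fractional perimeter $\textnormal{Per}_s(E;\varphi(\Omega_0))=2\int_{E\cap\varphi(\Omega_0)}\int_{E^c\cap\varphi(\Omega_0)}K_s$ is a double \emph{volume} integral, and it does not decompose as a sum of surface integrals $\iint_{\Gamma_i\times\Gamma_j}K_s$. Your ``non-consecutive'' estimate already exposes the problem: by Lemma~\ref{lem:kernel-comp}, $\sigma\iint_{\Gamma_i\times\Gamma_j}|x-y|^{-(n+s)}\sim\sigma\int_{\B_{1/2}'}|g_j-g_i|^{-(1+s)}\,dx'$, and with separation $\sim|i-j|\sqrt\sigma$ this is $\sim|i-j|^{-(1+s)}$, which is bounded but does \emph{not} tend to $0$ as $\sigma\to 0$ (the $L^{-2}$ bound \eqref{l2sepeq} only gives $O(1)$, not $o(1)$). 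Likewise, the area of $\Gamma_i$ must come from the divergent volume interaction $\int_{E_i}\int_{E_{i+1}}K_s$ across the common interface $\Gamma_i$ (where the two slabs touch), not from a surface integral between two graphs that are $\sqrt\sigma$ apart; your ``elementary computation'' with parameter $h\sim\sqrt\sigma$ produces an $O(1)$ number, not $\gamma_n\cdot\mathcal H^{n-1}(\Gamma_i)$.

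The paper proceeds differently. It reorganizes the slab--slab volume interactions as $\textnormal{Per}_s(E;\varphi(\Omega_0))=\sum_j\textnormal{Per}_s(A_j;\varphi(\Omega_0))+O\big(\sum_i\int_{A_i}\int_{B_{i+1}}K_s\big)$, where $A_j=\varphi(\{x_n<g_j\}\cap\Omega_0)$ is a subgraph with a \emph{single} smooth interface $\Gamma_j$ and $B_{i+1}=\varphi(\{x_n>g_{i+1}\}\cap\Omega_0)$. The error is a volume integral between sets at distance $\ge c\sqrt\sigma$, hence $\le C\sigma^{-s/2}$, so after multiplying by $(1-s)$ it is $\ep(\sigma)$. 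The $\delta$-bookkeeping is then done once per $A_j$, replacing the Riemannian perimeters by the Euclidean ones at cost $\ep(\delta)$. Finally---and this is the key external input your sketch is missing---for each single-interface Euclidean subgraph $\varphi^{-1}(A_j)$ (uniformly $C^2$ and transversal to $\partial\Omega_0$) the paper invokes \cite[Lemma~11]{CV11} as a black box to get $|(1-s)\textnormal{Per}_s(\varphi^{-1}(A_j);\Omega_0)-\gamma_n\textnormal{Per}(\varphi^{-1}(A_j);\Omega_0)|\le\ep(\sigma)$, rather than recomputing the half-space limit from scratch.
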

\begin{proof}
    Let $A_i=\varphi(\{x_n<g_i(x')\}\cap\Omega_0)$, $B_i=\varphi(\{x_n>g_i(x')\}\cap\Omega_0)$, and $E_i=A_i\setminus A_{i-1}$. In particular (up to possibly interchanging $E$ with its complement) it follows that $E\cap \varphi(\Omega_0)=\bigcup_{i\in 2\Z} E_i$. We can write the fractional perimeter as the contribution of each subgraph $A_i$ plus an error:
    \begin{align*}
        {\rm Per}_s(E;\Omega_0)&=2\int_E\int_{E^c}K_s(p,q)dV_pdV_q=\sum_{i\in 2\Z} 2\int_{E_i}\int_{E^c}K_s(p,q)dV_pdV_q\\
        &=\sum_{i\in 2\Z} 2\int_{E_i}\int_{E_{i-1}}K_s(p,q)dV_pdV_q+\sum_{i\in 2\Z}2\int_{E_i}\int_{E_{i+1}}K_s(p,q)dV_pdV_q\\
        &\qquad +O\Big(\sum_{i\in 2\Z} 2\int_{E_i}\int_{A_{i-3}\cup B_{i+2}}K_s(p,q)dV_pdV_q\Big)\\
        &=\sum_{j\in \Z} 2\int_{E_j}\int_{E_{j-1}}K_s(p,q)dV_pdV_q+O\Big(\sum_{i\in 2\Z} \int_{E_i}\int_{A_{i-3}\cup B_{i+2}}K_s(p,q)dV_pdV_q\Big)\\
        &=\sum_{j\in \Z} 2\int_{A_{j-1}}\int_{A_{j-1}^c}K_s(p,q)dV_pdV_q+O\Big(\sum_{i\in \Z} \int_{E_i}\int_{A_{i-2}\cup B_{i+1}}K_s(p,q)dV_pdV_q\Big)\\
        &=\sum_{j\in \Z} {\rm Per}_s(A_j;\Omega_0)+O\Big(\sum_{i\in \Z} \int_{E_i}\int_{A_{i-2}\cup B_{i+1}}K_s(p,q)dV_pdV_q\Big)\\
        &=\sum_{j\in \Z} {\rm Per}_s(A_j;\Omega_0)+O\Big(\sum_{i\in \Z} \int_{A_i}\int_{ B_{i+1}}K_s(p,q)dV_pdV_q\Big)\,.
    \end{align*}
    Thanks to the separation estimates, ${\rm dist}(A_i,B_{i+1})\geq c \sqrt{\sg}$, so that passing to coordinates in $q$ and using Lemma \ref{loccomparability} we can bound
    \begin{align*}
        \int_{A_i}\int_{ B_{i+1}}K_s(p,q)dV_pdV_q &\leq \int_{A_i}\int_{\varphi(\B_2)\setminus B_{c\sqrt{\sg}}^c(p)}K_s(p,q)dV_pdV_q\\
        &\leq \int_{A_i}\int_{\{|\varphi^{-1}(p)-y|\geq c\sqrt{\sg}\}}\frac{C}{|\varphi^{-1}(p)-y|^{n+s}}dV_pdy\\
        &\leq \frac{C}{|c\sqrt{\sg}|^s}\leq \frac{C}{\sqrt{\sg}}\leq \frac{C}{\sg}\ep(\sg)\,.
    \end{align*}
    A more precise computation would lead to the much better bound $C\log{\sg}$, but this suffices for our purposes since it shows that
    \begin{align}\label{fcpc1}
        (1-s){\rm Per}_s(E;\varphi(\Omega_0))
        &=\sum_{j\in \Z} (1-s){\rm Per}_s(A_j;\varphi(\Omega_0))+\ep(\sg)\,.
    \end{align}
    Moreover, we obviously have
    \begin{align}\label{fcpc11}
        {\rm Per}(E;\varphi(\Omega_0))
        &=\sum_{j\in \Z} {\rm Per}(A_j;\varphi(\Omega_0))\,.
    \end{align}
    Now, thanks to the assumption on the metric,
    \begin{equation}\label{fcpc2}
        |{\rm Per}(A_j;\varphi(\Omega_0))-{\rm Per}(\varphi^{-1}(A_j);\Omega_0)|\leq \ep(\delta){\rm Per}(A_j;\varphi(\Omega_0))\leq \ep(\delta)\,,
    \end{equation}
    and likewise
    \begin{equation}\label{fcpc22}
        |(1-s){\rm Per}_s(A_j;\varphi(\Omega_0))-(1-s){\rm Per}_s(\varphi^{-1}(A_j);\Omega_0)|\leq \ep(\delta)\,.
    \end{equation}
    On the other hand, the $\varphi^{-1}(A_j)$ are sets with uniformly $C^2$ (graphical) boundaries and which are transversal to $\partial\Omega_0$ in a quantified way thanks to their graphical structure. We can then apply \cite[Lemma 11]{CV11} separately to each of the $\varphi^{-1}(A_j)$, obtaining that
    \begin{equation}\label{fcpc3}
        |(1-s){\rm Per}_s(A_j;\varphi(\Omega_0))-{\rm Per}(A_j;\varphi(\Omega_0))|\leq \ep(\sg)\,.
    \end{equation}
    Combining \eqref{fcpc1}--\eqref{fcpc3}, we conclude the desired result.
\end{proof}

\subsubsection{Uniform $C^{2,\alpha}$ regularity for almost-stable surfaces in dimension $n=3$}
In the previous section we obtained a $C^2$-implies-$C^{2,\alpha}$ result. We will now prove that uniform $C^2$ (and thus also $C^{2,\alpha}$) estimates actually hold unconditionally. We start with a simple general lemma about writing sets with bounded curvatures locally as graphs.
\begin{lemma}[\textbf{Curvature bounds imply graphicality}]\label{Curvimgraph}
    Assume that the flatness assumptions ${\rm FA}_\ell(M,g,4,p_0,\varphi)$ are satisfied. Let $E\subset M$ be an $s$-minimal surface such that $|{\rm I\negthinspace I}_{\partial E}|  \le C_1$ in~$\varphi(\B_{2})$ and
    \begin{equation}\label{fpass1}
    \textnormal{Per}_s(E;\varphi(\B_2))\leq \frac{\kappa}{1-s}=\frac{\kappa}{\sigma}\, .
\end{equation}
Then, up to scaling the metric by a constant factor, performing a rotation in coordinates and considering a different value of $\kappa$, the hypotheses \eqref{flatcond}--\eqref{whtoihwohw2} are satisfied. More precisely, given $\delta>0$, there exist $C\geq 1$ (depending only on $n, C_1$ and $\delta$) and some rotation $\mathcal R:\R^n\to\R^n$ such that
\begin{equation}\label{flatcond0}
    {\rm FA}_\ell(M,Cg,4,p_0,\psi) \s\mbox{is satisfied} 
\end{equation}
with $\psi(x)=\varphi(\frac{1}{C}\mathcal Rx)$,
$E$
has fractional perimeter (as a subset of $(M,Cg)$)
\begin{equation}\label{locfperbound0}
    \textnormal{Per}_s(E;\psi(\B_2))\leq \frac{C\kappa}{1-s}=\frac{C\kappa}{\sigma}\, ,
\end{equation}
and moreover, denoting $\Omega_0 = \B_1'\times(-1,1)\subset\R^n$ as per usual,
\begin{eqnarray}\label{whtoihwohw10}
&&  \psi^{-1}(\partial E)  \mbox{ coincides with } \ \Gamma=\bigcup_{i=1}^N \,\Gamma_i = \bigcup_{i=1}^N \,\{x_n = g_i (x') \} \   \mbox{ inside }\Omega_0,\quad
{\mbox{for some }} g_i: \B_1'\to \R\\
&&\label{whtoihwohw20}
{\mbox{satisfying}}\quad
g_1 < g_2< \cdots < g_{N} \qquad \mbox{and} \qquad \|\nabla g_i\|_{L^\infty(\B_1')} + \|D^2 g_i\|_{L^\infty(\B_1')} < \delta\,.
\end{eqnarray}
\end{lemma}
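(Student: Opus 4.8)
The statement splits into a routine rescaling part and a geometric part. The rescaling arranges that the second fundamental form is as small as we wish while preserving (up to a harmless change of $\kappa$) the flatness assumptions and the fractional perimeter bound; the geometric part then extracts the ordered graphical decomposition \eqref{whtoihwohw10}--\eqref{whtoihwohw20} from the smallness of the curvature \emph{together with} the $s$-minimality. Throughout we may assume $\partial E$ meets the relevant cylinder, the opposite case being vacuous.

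First I would carry out the rescaling. Fix a small $\delta_0=\delta_0(n,\delta)>0$, to be pinned down at the very end, and set $C:=(C_1/\delta_0)^2$. Passing from $(M,g)$ to $(M,Cg)$, distances get multiplied by $\sqrt C$ and the second fundamental form of $\partial E$ divided by $\sqrt C$, so that $|{\rm I\negthinspace I}_{\partial E}|\le C_1/\sqrt C=\delta_0$ on $\psi(\B_2)$, where $\psi$ is the rescaled chart (to be composed with a rotation $\mathcal R$ chosen in the geometric part). By Remark \ref{flatscalingrmk}(b),(c) the assumption ${\rm FA}_\ell$ at scale $4$ survives (in fact improves) under this rescaling, which is \eqref{flatcond0}; by the scaling of $\text{Per}_s$ in Remark \ref{flatscalingrmk}(a) together with the interior fractional perimeter growth \eqref{fpergrowth1}, the bound \eqref{fpass1} in $\varphi(\B_2)$ becomes \eqref{locfperbound0} in $\psi(\B_2)$ with the stated constant $C\kappa$. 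Since $C$ depends only on $n$, $C_1$, $\delta_0$, hence only on $n$, $C_1$, $\delta$, this gives exactly the required uniformity, and we are reduced to: \emph{if $|{\rm I\negthinspace I}_{\partial E}|\le\delta_0$ on $\psi(\B_2)$, $E$ is $s$-minimal there, and \eqref{locfperbound0} holds, then for a suitable rotation $\mathcal R$ the surface $\psi^{-1}(\partial E)$ is, inside $\Omega_0$, a finite ordered union of graphs over $\B_1'$ with $C^1$ and $C^2$ norms $<\delta$.}

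For the geometric part the clean way to obtain this with a \emph{uniform} rescaling factor is a compactness-and-contradiction argument. If the reduced statement failed, there would be data $(M_k,g_k,\varphi_k,p_k,E_k,s_k)$ as above (with $\delta_0=\delta_0(k)\downarrow 0$) for which $\psi_k^{-1}(\partial E_k)\cap\Omega_0$ cannot be written as such an ordered union of graphs over $\B_1'$ for \emph{any} choice of vertical direction. In the rescaled charts the metrics are uniformly close to Euclidean and $|{\rm I\negthinspace I}_{\partial E_k}|\to 0$ on $\B_2$; using the interior perimeter control (from Theorem \ref{BVest}, and the sharper bound at smaller scales, to keep the number of sheets in the cylinder bounded along the sequence) one passes to a subsequence along which $E_k\to E_\infty$ in $L^1_{\rm loc}$ and $\partial E_k\to\partial E_\infty$ in $C^{1,\gamma}_{\rm loc}$, with $|{\rm I\negthinspace I}_{\partial E_\infty}|\equiv 0$, i.e. $\partial E_\infty$ is a union of affine hyperplanes; moreover, testing the first variation identity \eqref{1stvar} (with $\widetilde H_{s_k}[\partial E_k]\equiv 0$) against fixed vector fields and passing to the limit via the kernel estimates of Proposition \ref{prop:kern1}, one gets that $E_\infty$ is $s_\infty$-minimal, $s_\infty=\lim s_k\in[s_0,1]$. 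The key point is now that such a limiting union of hyperplanes must be a \emph{parallel} stack: if $s_\infty<1$ the nonlocal mean curvature of any union of two or more hyperplanes is nonzero at a suitable boundary point, so $\partial E_\infty$ is a single hyperplane; if $s_\infty=1$, the mutual angle of any two sheets of $\partial E_k$ must tend to $0$, since otherwise the non-negligible nonlocal interaction of those two nearly-flat sheets could not be balanced, in $\widetilde H_{s_k}[\partial E_k]=0$, against the local curvature, which tends to $0$. In either case, for $k$ large $\psi_k^{-1}(\partial E_k)\cap\Omega_0$ is a finite ordered union of nearly-parallel graphs over $\B_1'$ with $C^1+C^2$ norm $<\delta$ (the Hessian bound from $|{\rm I\negthinspace I}|\le\delta_0$; the gradient bound from near-parallelism after choosing $\mathcal R$ to send the common limiting direction to $e_n$; each sheet is a full graph over $\B_1'$ because it is a properly embedded $C^2$ hypersurface without boundary in $\psi_k(\B_2)$ of small slope, hence cannot terminate or turn around inside the cylinder; distinct graphs over the common base $\B_1'$ are disjoint, hence totally ordered). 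This contradicts the choice of the sequence and proves the statement, with $\delta_0$ — and therefore $C$ — fixed at the outset in terms of $n$ and $\delta$.

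\textbf{Main obstacle.} The heart of the argument is the identification of the blow-up limit as a parallel stack of hyperplanes; in particular, one must recognize that the curvature bound alone is \emph{not} enough — two flat sheets at a fixed mutual angle can coexist in $\B_2$ without intersecting — so it is genuinely the $s$-minimality that forces near-parallelism: the nonlocality of $\widetilde H_s$ for $s<1$, and for $s=1$ the fact that a non-trivial interaction would have to cancel against vanishing local curvature. Making this quantitative requires bounding the far-field contributions to $\widetilde H_{s_k}[\partial E_k]$ by the perimeter bound through \eqref{remaining3} and passing the first variation to the limit, and it also requires a uniform control on the number of sheets in the cylinder along the sequence. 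The remaining ingredients — the rescaling, the extraction of full graphs from small slope, and the ordering — are routine.
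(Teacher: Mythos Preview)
Your route differs from the paper's and takes an unnecessary detour through $s$-minimality. The paper's proof is a direct elementary-geometry argument: pick the point $x$ of $\varphi^{-1}(\partial E)$ nearest to the chart origin (so that the normal at $x$ is radial), rotate so that $T_x(\partial E)$ becomes horizontal, and then appeal to the following fact, which uses only the curvature bound and the embeddedness of $\partial E$ in the \emph{large} ball $\varphi(\B_2)$: in a cylinder of radius $r_0=c(n)\delta^2/\max(C_1,1)$, every connected component of $\varphi^{-1}(\partial E)$ is a full graph over $\B'_{r_0}$ with slope $<\delta$. Rescaling by $1/r_0$ then makes the Hessians $<\delta$ as well, and the fractional perimeter bound transforms via \eqref{fpergrowth1} (this is the only place $s$-minimality enters). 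Your worry that ``two flat sheets at a fixed angle can coexist in $\B_2$ without intersecting'' is well taken, but its resolution is simpler than nonlocal minimality: if a second sheet $\Sigma_1$ enters the small cylinder, it lies within height $r_0$ of the nearly-horizontal sheet $\Sigma_0$; following a $\Sigma_1$-geodesic in the steepest-descent direction from a point of tilt $\theta$, the height drops by at least $c\sin^2\theta/C_1$ over arclength of order $\sin\theta/C_1$ (all still inside $\B_2$, where $|{\rm I\negthinspace I}|\le C_1$ applies), and disjointness from $\Sigma_0$ forces $r_0\ge c\sin^2\theta/C_1$, hence $\theta<\delta$. Each such graph is then \emph{full} over $\B'_{r_0}$ because $\partial E$ bounds a set: a sheet exiting through the top of the cylinder over a proper subregion $U_1\subsetneq\B'_{r_0}$ would leave $E$ and $E^c$ adjacent near the top over $\partial U_1$ with no piece of $\partial E$ separating them (all of $\partial E$ having small tilt there), a contradiction.

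Your compactness scheme, beyond being more involved, has a real gap: to pass to a limit you need the number of sheets in the cylinder uniformly bounded along the contradiction sequence, and you cite Theorem \ref{BVest}. But that result gives only ${\rm Per}\le C(1+\sqrt\kappa)/\sqrt{1-s}$, which blows up precisely in the case $s_k\to 1$ that you single out; the genuinely uniform sheet bound (Lemma \ref{classperball}) already presupposes the graphical structure \eqref{whtoihwohw1}--\eqref{whtoihwohw2} you are trying to establish, so invoking it here would be circular. Without a sheet bound the $C^{1,\gamma}$ compactness of $\partial E_k$ fails, the limit need not be a finite union of planes, and your parallelism argument does not get off the ground.
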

\begin{proof}
    Let $x$ be the closest point in $\varphi^{-1}(\partial E)\cap \B_{\ep}$ to $0$, with $\ep$ to be selected later depending only on $n, C_1$ and $\delta$. If there is no such point, the Proposition is trivially true with $C=\frac{4}{\ep}$. Otherwise, perform a rotation $\mathcal R$ in coordinates so that $\mathcal R x=(0,...,0,|x|)$. Then, defining $F=\mathcal R(\varphi^{-1}(E)-x)$, we have that $0\in F$ and moreover the tangent space $T_0E'$ is oriented orthogonally to the vertical direction $e_n=(0,...,0,1)$, i.e. $T_0F=\{y\,:\,y_n=0\}$. Then, (the boundary of) $F$ satisfies the hypotheses \eqref{whtoihwohw10}--\eqref{whtoihwohw20} in place of $\Gamma$, up to possibly replacing $\B_1'\times(-1,1)$ and $\delta$ by $\B_{1/C_2}'\times(-1/C_2,1/C_2)$ and $C_2$ in their statements for some $C_2=C_2(C_1,n)$ --- this is a standard fact, which follows from arguing by Taylor expansion as in the proof of Lemma \ref{lem:kernel-comp} using the second fundamental form bound (which passes to $F$) and the fact that $T_0F=\{y_n=0\}$. Then, we can choose $C_3=C_3(C_2,n,\delta)$ so that, considering the rescaled set $C_3F$, the hypotheses \eqref{whtoihwohw10}--\eqref{whtoihwohw20} will be satisfied with $\B_1'\times(-2,2)$ and $\delta$ instead. Since $\mathcal R x=(0,...,0,|x|)$, we deduce that the set $C_3\mathcal R(\varphi^{-1}(E))=C_3F+C_3\mathcal R x$ satisfies the same hypotheses with $\B_1'\times (-2+C_3|x|,2+C_3|x|)\subset \B_1'\times (-2+C_3\ep,2)$ instead as the domain, so that letting $\ep=\frac{1}{C_3}$ we have shown that $C_3\mathcal R(\varphi^{-1}(E))$ satisfies \eqref{whtoihwohw10}--\eqref{whtoihwohw20} with $\B_1'\times(-1,1)$ and $\delta$.
    
    Defining $C=C_3$, so that $\psi(x)=\varphi(\frac{1}{C_3}\mathcal{R}^{-1}x)$, since $\psi^{-1}(\partial E)=C_3\mathcal R(\varphi^{-1}(E))$ we conclude that \eqref{whtoihwohw10}--\eqref{whtoihwohw20} hold just as written. Moreover, as we have repeatedly used during the article, the condition \eqref{flatcond0} then holds thanks to (b) in Remark \ref{flatscalingrmk}, and \eqref{locfperbound0} holds thanks to \eqref{fpergrowth1}. This finishes the proof of the lemma.
\end{proof}
We are now ready to prove the main theorem of this section.
\begin{theorem}\label{C2alpha}
   Let $M$ be a closed manifold of dimension $3$. Let $s_0\in (s_0,1)$ and $s\in(0,1)$. Assume that the flatness assumptions ${\rm FA}_\ell(M,g,4,p_0,\varphi)$ are satisfied, and  let $E$ be a $\Lambda$-almost stable $s$-minimal surface in $\varphi(\B_2)$. Assume, moreover, that $\textnormal{Per}_s(E;\varphi(\Omega_0))\leq \frac{\kappa}{1-s}=\frac{\kappa}{\sigma}$. Then $E$ satisfies uniform $C^{2,\alpha}$ estimates and uniform separation estimates of order $\sqrt\sigma$ in $\varphi(\B_1)$, in the sense that there are some positive constants $r$, $C$ depending on $n,\Lambda, s_0$ and $\kappa$ such that for all $x\in\B_1$, after a rotation $\varphi^{-1}(\partial E) \cap \B_r(x)$ is a union of $C^{2,\alpha}$ graphs $g_i$ with uniform estimates $\|g_i\|_{C^{2,\alpha}}\leq C$ and at vertical separation $\inf (g_{i+1}-g_i)\geq\frac{1}{C}\sqrt{\sigma}$.
\end{theorem}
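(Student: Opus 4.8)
The plan is to argue by a compactness-and-contradiction scheme, combining the a priori estimates of the previous subsection with a global classification of stable cones. First I would reduce, via Lemma \ref{Curvimgraph}, to the situation where the hypotheses \eqref{flatcond}--\eqref{whtoihwohw2} of the earlier subsections hold, \emph{provided} one has a uniform bound on the second fundamental form $|{\rm I\negthinspace I}_{\partial E}|$ at a suitably small scale around every point of $\varphi(\B_1)$. So the real content is to establish such a uniform curvature bound: once this is known, Lemma \ref{Curvimgraph} puts us in graphical form with $\delta$ as small as we like, and then Corollary \ref{whtu292112-2} upgrades $C^2$ to $C^{2,\alpha}$ with uniform constants, while Proposition \ref{optsep} gives the $\sqrt\sigma$ separation. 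Thus the theorem will follow from the claim: there exist $r_\circ>0$ and $C_\circ$, depending only on $n$, $\Lambda$, $s_0$ and $\kappa$, such that $|{\rm I\negthinspace I}_{\partial E}|\le C_\circ/r_\circ$ on $\varphi(\B_{3/2})$ (after rescaling the metric by $r_\circ^{-2}$, i.e.\ at scale $r_\circ$).

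To prove this curvature estimate I would suppose it fails: there is a sequence $(M_k,g_k)$, parametrizations $\varphi_k$, fractional parameters $s_k\to 1$ (the case $s_k$ bounded away from $1$ being handled by a standard regularity-theory argument as in the end of Corollary \ref{whtu292112-2}, or one may simply note $\sigma_k\to 0$ is the only delicate regime), $\Lambda_k\le\Lambda$, $\kappa_k\le\kappa$, and $\Lambda$-almost stable $s_k$-minimal sets $E_k$ with a point $p_k$ where the curvature of $\partial E_k$ blows up relative to the inverse distance to $\partial\varphi_k(\B_2)$. Applying a point-selection (Simon-type) argument, one rescales around $p_k$ by the maximal curvature scale $\rho_k\to 0$, obtaining sets $\tilde E_k$ in the rescaled manifolds $(M_k,\rho_k^{-2}g_k)$, which by Remark \ref{flatscalingrmk} and Remark \ref{rmk:asresc} converge (the metrics converging to the flat metric on $\R^3$, and $\Lambda_k\rho_k^{n+s}\to 0$) to a limiting object in $\R^3$ with $|{\rm I\negthinspace I}|(0)=1$, hence nontrivial. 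The monotonicity formula (Lemma \ref{PerMon}) together with \eqref{locfperbound} provides the local fractional perimeter bounds needed to extract a varifold limit; and, crucially, the separation and $C^{2,\alpha}$ estimates of Proposition \ref{optsep} and Corollary \ref{whtu292112-2}, which are \emph{conditional on the graphical hypotheses but uniform in $s$}, show that where the $\tilde E_k$ are locally graphical they converge in $C^{2,\alpha}$; combined with the curvature normalization at the origin this forces the limit to be a nontrivial stable minimal surface in $\R^3$, or rather a stable $s$-minimal cone passed to the classical limit. One then invokes the classification of stable $s$-minimal cones in $\R^4$ (equivalently, stable minimal surfaces in $\R^3$ with Euclidean area growth) from \cite{CDSV} to conclude the limit must be a hyperplane, contradicting $|{\rm I\negthinspace I}|(0)=1$.

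The delicate point in this blow-up is to pass the almost-stability inequality to the limit: the error term $\Lambda\big(\int_{\partial E}|\xi|\big)^2$ in Definition \ref{almoststab} must be shown to be negligible in the limit. This is exactly where Theorem \ref{BVest}, or rather Proposition \ref{prop:BVestas}, enters: it provides the perimeter bound $\textnormal{Per}(\tilde E_k;\cdot)\le C(1+\sqrt\kappa)/\sqrt{\sigma_k}$, which after rescaling by $\rho_k$ and using $\Lambda\rho_k^{n+s}\to 0$ makes $\Lambda_k\rho_k^{n+s}\textnormal{Per}(\tilde E_k)^2$ tend to $0$; thus the limiting surface is genuinely stable, not merely almost stable. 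A second subtlety is that the compactness must be run in the correct class: one needs the $C^{2,\alpha}$ estimates to be available at \emph{every} scale up to the blow-up scale, which is why the conditional estimates of this section were carefully arranged to be uniform as $s\to 1$. The main obstacle is therefore the correct bookkeeping of the three competing small parameters ($\sigma_k$, $\rho_k$, $\Lambda_k\rho_k^{n+s}$) and ensuring the graphical decomposition persists under the rescaling so that the regularity theory applies along the whole sequence; the classification input from \cite{CDSV} is used as a black box.

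Once the uniform curvature bound on $\varphi(\B_{3/2})$ (at a uniform small scale $r_\circ$) is in hand, I would finish as follows. Fix $x\in\B_1$; rescale the metric by $r_\circ^{-2}$ around $\varphi(x)$, so that ${\rm FA}_\ell$ holds at scale $4$ and $|{\rm I\negthinspace I}_{\partial E}|\le C_1$ there, while $\textnormal{Per}_s$ still satisfies a bound of the form $\kappa'/\sigma$ by \eqref{fpergrowth1}. Apply Lemma \ref{Curvimgraph} with $\delta$ chosen as small as required by Corollary \ref{whtu292112-2} and Proposition \ref{optsep}; this rotates and rescales the chart and puts $\partial E$ into the graphical form \eqref{whtoihwohw1}--\eqref{whtoihwohw2}. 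Then Corollary \ref{whtu292112-2} gives $\|g_i\|_{C^{2,\alpha}(\B'_{1/2})}\le C$ for every index $i$ with $\min_{\overline\B'_{1/2}}|g_i|\le 1/2$, and Proposition \ref{optsep} gives $\inf_{\B'_{1/2}}(g_{i+1}-g_i)\ge c\sqrt\sigma$; scaling back to $(M,g)$ produces the asserted $r$, $C$ and the $\sqrt\sigma$ separation in $\varphi(\B_r(x))$. Since $x\in\B_1$ was arbitrary and all constants depend only on $n$, $\Lambda$, $s_0$ and $\kappa$, a finite cover of $\B_1$ completes the proof.
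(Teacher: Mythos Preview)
Your overall scheme matches the paper's: reduce to a uniform curvature bound via Lemma~\ref{Curvimgraph}, Corollary~\ref{whtu292112-2} and Proposition~\ref{optsep}, then prove the curvature bound by a White-type blow-up contradiction with point selection. The one real confusion is in the classification step. The blow-up limit is \emph{not} an $s$-minimal cone, and the paper does not invoke the cone classification in $\R^4$ from \cite{CDSV} here. The paper splits into two cases. For $s_k$ bounded away from $1$ (your aside) it uses the Bernstein result for stable $C^2$ $s$-minimal surfaces in $\R^3$ from \cite[Corollary~2.12]{CCS}. For the main case $s_k\to 1$, since after rescaling $|{\rm I\negthinspace I}|\le 2$ on balls of radius $\mathcal M_k/2\to\infty$, Lemma~\ref{Curvimgraph} and Corollary~\ref{whtu292112-2} already apply and give uniform $C^{2,\alpha}$ bounds, so the layers converge in $C^2$ to a union of smooth \emph{classical} minimal surfaces (Proposition~\ref{optsep} forces their mean curvatures to vanish in the limit, and Lemma~\ref{classperball} bounds the number of sheets). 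The paper then shows this limit varifold is stable (via the convergence of second variations in \eqref{q5etgqwa4e50}, rather than your direct bound on the almost-stability error term, though both routes work) and applies the Schoen--Simon curvature estimates \cite{SS} for stable minimal hypersurfaces with the Euclidean area bound \eqref{agrle} to obtain $|{\rm I\negthinspace I}|\le C/R$ on $B_{R/2}$, contradicting $|{\rm I\negthinspace I}|(0)=1$ for $R$ large. Your parenthetical ``stable minimal surfaces in $\R^3$ with Euclidean area growth'' names the correct limiting object, but that is not what the $\R^4$ cone result in \cite{CDSV} addresses; the relevant input is \cite{SS} (or the classical stable Bernstein theorem in $\R^3$).
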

\begin{proof}
The combination of Lemma \ref{Curvimgraph} with Corollary~\ref{whtu292112-2} and Proposition \ref{optsep} readily shows that, in order to prove the Theorem, it suffices to see that $|{\rm I\negthinspace I}_{\partial E}|  \le C$ in~$\varphi(\B_{3/2})$, where~$C$ depends on the constants in the statement of the Theorem.
Letting $F:=\varphi^{-1}(\partial E)$, for convenience we will actually prove that
\begin{equation}\label{whioghohw657rt3}
|{\rm I\negthinspace I}_{\partial F} (x)|{\rm dist}(x, \partial \B_{7/4})  \le C \quad \mbox{for all }x\in \partial F\cap \B_{7/4},
\end{equation}
which considering $x\in\B_{3/2}$ implies that $|{\rm I\negthinspace I}_{\partial E}|  \le C$ in~$\varphi(\B_{3/2})$ for a different choice of $C$ (determined by the flatness assumptions).

To prove \eqref{whioghohw657rt3}, assume by
contradiction that
there exist sequences~$s_k$, $E_k\subset M_k$  (satisfying the assumptions of Theorem~\ref{C2alpha}), with associated $F_k=\varphi_k^{-1}(\partial E_k)$, such that
\[\max_{x \in\partial F_k\cap \varphi(B_{7/4}) }  |{\rm I\negthinspace I}_{\partial F_k} (x)|{\rm dist}(x, \partial \B_{7/4})  =: \mathcal M_k \uparrow \infty,\]
and let $x_k$ denote a sequence of points where the previous maxima are attained.

Let us consider
\[
R_k : =  \frac{\mathcal M_k}{{\rm dist}(x_k,\partial \B_{7/4})}.
\]
By construction, $F_k$ satisfies $|{\rm I\negthinspace I}_{\partial F_k}(x_k)|  =R_k$. Moreover, letting $r={\rm dist}(x_k, \partial \B_{7/4})$, we have that $|{\rm I\negthinspace I}_{\partial F_k}|  \le 2R_k$ in~$\B_{r/2}(x_k)$.

Now, by Proposition \ref{fanormal}, up to performing a change of coordinates we can (and do) assume w.l.o.g. that moreover $x_k=0$ and $g_{ij}(0)=\delta_{ij}$, which will simplify the proof. We should also modify the quantities above accordingly by uniformly controlled multiplicative constants, but we omit this for clarity since it introduces no change in the argument.

Considering the rescaled set $\widetilde F_k=R_k F_k$, we then have that
$|{\rm I\negthinspace I}_{\partial \widetilde F_k}(0)|  =1$ and $|{\rm I\negthinspace I}_{\partial \widetilde F_k}|  \le 2$ in~$\B_{\mathcal M_k/2}$. Moreover, defining $\widetilde \varphi_k(\cdot)=\varphi_k(\frac{1}{R_k}\cdot)$, we can write $\widetilde F_k=\widetilde\varphi^{-1}_k(E_k)$, and then by (b) in Remark \ref{flatscalingrmk} the flatness assumptions ${\rm FA}_\ell(M_k,R_k^2g_k,4R_k,p_k,\widetilde \varphi_k)$ are satisfied. Furthermore, by \eqref{fpergrowth}, up to changing the value of $\kappa$ the bound \eqref{locfperbound} is satisfied. We recall that $\mathcal{M}_k\uparrow\infty$ by assumption.\\

We divide the remainder of the proof into two steps.
\vspace{3pt}

\noindent{\bf Step~1}. Let us show first that necessarily~$s_k \to 1$. Indeed, suppose (up to extracting a subsequence)
that~$s_k\to s \in[s_*,1)$.
Then, thanks to (uniform, since $s_k$ is bounded away from~1)  layer separation estimates  ---here we can even use some
rough layer separation estimate as in Lemma~\ref{whtiohwoiwh}--- and the standard $C^3$ regularity of fractional minimal
surfaces (e.g. arguing as in the last part of the proof of Corollary~\ref{whtu292112-2}), by Ascoli--Arzel\`a we would find that the~$\partial \widetilde F_k$
converge locally in~$C^2$ (as submanifolds of $\R^3$)
to an embedded hypersurface $\partial \widetilde F\subset\R^3$ of class $C^2$. Moreover, by Remark \ref{rmk:asresc} the $\partial E_k$ are $\Lambda/R_k^{n+s_k-2}$-almost stable in $(M_k,R_k^2 g_k)$, which since $R_k\to\infty$ gives immediately that $\partial \widetilde F$ must be a stable $s$-minimal surface. See e.g.  Step~2 of the proof of \cite[Proposition 6.1]{CCS} for a similar argument on how stability passes to the limit in the present case.
Hence by the Bernstein--type result for $C^2$ stable $s$-minimal surfaces in \cite[Corollary 2.12]{CCS}, $\widetilde F$ must be a half-space.
On the other hand, passing to the limit the equation  $|{\rm I\negthinspace I}_{\partial \widetilde F_k}(0)|  = 1$ we obtain, thanks to the~$C^2$ convergence, that~$|{\rm I\negthinspace I}_{\partial \widetilde F}(0)| = 1$, which is a contradiction.

\vspace{3pt}

\noindent{\bf Step~2}. In light of Step~1, from now on we assume that~$s_k\to 1$ (or equivalently that~$\sigma_k :=(1-s_k) \to 0$).
Fix~$R \ge 1$, and let~$\Gamma_{k,\ell}$, with~$1\le \ell\le N_k$,  be the connected components
of the~$C^{2}$ hypersurfaces~$\partial \widetilde F_k  \cap \B_{2R}$.

On the one hand, since $\widetilde F_k$ satisfies $|{\rm I\negthinspace I}_{\partial \widetilde F_k}|  \le 2$, the combination of Lemma \ref{Curvimgraph} and Corollary~\ref{whtu292112-2} (plus a scaling+finite covering argument as usual) shows that the $\partial \widetilde F_k$ satisfy uniform $C^{2,\alpha}$ estimates in $\B_R$. Moreover, by Lemma \ref{classperball} we deduce that the number of layers $N_k$ of $\partial \widetilde F_k$ in $B_{R}$ is bounded. Therefore, up to passing to a subsequence we can assume that $N=N_k$ is constant.

Then, by Ascoli-Arzelà, up to passing to a further subsequence (which again we do not relabel) as $k\to\infty$ each of the $\{\Gamma_{k,l}\}_{l=1}^N$ converge simultaneously as $C^2$ submanifolds to limit surfaces $\{\widetilde \Gamma_l\}_{l=1}^N$. Moreover, since for a fixed $k$ the $\{\Gamma_{k,l}\}_{l=1}^N$ are disjoint, the $\widetilde \Gamma_l$ can only possibly touch tangentially, but they cannot traverse each other. Now, by \eqref{wiowhoih3} in Proposition \ref{optsep}, as $k\to\infty$ the local mean curvatures of the $\widetilde\varphi(\Gamma_{k,l})$ are converging to zero, which together with the fact that the metric $g_{ij}$ in coordinates is converging to the Euclidean metric (recall that we could assume that $g_{ij}(0)=\delta_{ij}$, and combine this with \eqref{hsohoh2} with $R=R_k$ large) shows that the local (i.e. classical) mean curvatures of the $\Gamma_{k,l}$ are converging to zero as well. Thus, the $\widetilde \Gamma_l$ are classical minimal surfaces. This implies, by the maximum principle for classical minimal surfaces, that for $i\neq j$ the surfaces $\widetilde \Gamma_i$ and $\widetilde \Gamma_j$ are actually either identical or disjoint.

Let the multiplicity $n_l$ denote the number of layers that have collapsed onto the same limit $\Gamma_l$; it is clear then that the $\partial \widetilde F_k$ converge in the varifold sense to the varifold $V=\sum_l n_l \Gamma_l$ in $B_R$ (perhaps after decreasing $R$ slightly to ensure that the $\Gamma_l$ are transversal to $\partial\B_R$). Moreover, from this and Lemma \ref{StableLocPerComp}, the mass $\|V\|$ of this varifold satisfies
\begin{align}
    \gamma_n \|V\|_{\B_{R}}&=\gamma_n\sum_l n_l \textnormal{Per}(\Gamma_l,\B_{R})\label{vmaspf}\\
    &=\gamma_n\lim_k \textnormal{Per}(\widetilde F_k,\B_R)=\lim_k (1-s_k)\textnormal{Per}_s(\widetilde F_k,\B_{R})\nonumber\\
    &=\lim_k (1-s_k)\textnormal{Per}_s(E_k,\widetilde\varphi_k(\B_{R}))\,,\nonumber
\end{align}
where the last fractional perimeter is computed on $(M_k,R_k^2g_k)$. The last equality follows, once again, from the fact that the metric coefficients are converging to the Euclidean ones in the coordinates given by $\widetilde\varphi_k$, as shown by a change of variables and our estimates for the kernel $K_s$ (see Proposition 4.3 and Lemma 4.4 in \cite{CFS} for full details).\\
Note that, in particular, from the expression above we deduce that
\begin{equation}\label{agrle}
\textnormal{Per}(\cup_l \Gamma_l;\B_{R})\leq CNR^{n-1}=CR^{n-1}\, .
\end{equation}
\textbf{Claim.} $V$ (or equivalently, each of the $\Gamma_l$) is a stable classical minimal surface.
\begin{proof}[Proof of the claim]
    More generally, we have the following. Given a vector field $X\in C_c^\infty(B_\rho;\R^n)$, define $X_k:=(\widetilde \varphi_j)_* X$ and extend it by zero to a vector field on $\widetilde M_k$. Fix $t\in\R$, and let $\psi_X^t$ and $ \psi_{X_k}^t$ denote their corresponding flows at time $t$. Then,
    \begin{equation}\label{q5etgqwa4e50}
        \frac{d^\ell}{d t^\ell}(1-s_k)\textnormal{Per}_{s_k}(\psi_{X_k}^t(E_k);\widetilde\varphi_k(\B_R))\to \gamma_n\frac{d^\ell}{d t^\ell}\|(\psi_X^t)_\#V\|_{\B_R}\, \quad \mbox{as } k\to\infty\, .
    \end{equation}
The proof is as the one for \eqref{convItep}, defining the single variable functions $f_k(t)=(1-s_k)\textnormal{Per}_{s_k}(\psi_{X_k}^t(E_k);\widetilde\varphi_k(\B_R))$ and $f(t)=\gamma_n\|(\psi_X^t)_\#V\|_{\B_R}$, showing that $f_k(t)\to f(t)$ as $k\to\infty$ thanks to Lemma \ref{StableLocPerComp} as in \eqref{vmaspf}, and applying Lemma \ref{enboundslemma2} to see that the functions $\frac{d^\ell}{dt^\ell}f_k(t)$ are locally uniformly bounded. See \cite[Lemma 4.4]{CFS} for full details.
\end{proof}
With the claim at hand, we can apply the curvature estimates for $C^2$ stable minimal hypersurfaces with area bounds of type \eqref{agrle}, see \cite{SS}, and deduce that $|{\rm I\negthinspace I}_{\cup_l \Gamma_l}|  \le \frac{C}{R}$ in $B_{R/2}$. On the other hand, let~$\Gamma_{k,0}$ denote the component of $\partial \widetilde F_k$ that passes through the origin (recall that $0\in\partial \widetilde F_k$), which verifies that $|{\rm I\negthinspace I}_{\Gamma_{k,0}}(0)| = 1$. Thanks to the~$C^2$ convergence, we deduce that~$|{\rm I\negthinspace I}_{\Gamma_0}(0)| = 1$ as well, where $\Gamma_0$ denotes the component of $\cup_l \Gamma_l$ obtained as a limit of the~$\Gamma_{k,0}$. Choosing $R$ large enough so that $\frac{C}{R}<1$, this clearly gives a contradiction.
\end{proof}

\subsection{Convergence of s-minimal surfaces with bounded index to classical minimal surfaces}\label{sec:convbind}
We continue to assume that $l=n+2$
in this section---recall \eqref{flatcond}.

\subsubsection{Estimates for $s$-minimal surfaces with bounded index}

Up to now, we have obtained estimates in the almost-stable case. We will now focus on the finite Morse index case. The pointwise $C^2$ estimates and pointwise classical mean curvature estimates do not necessarily hold anymore in the bounded index case, since we expect that neck-pinching may occur by analogy with classical minimal surfaces, as in the case of the rescalings of a catenoid converging to a hyperplane. On the other hand, as recorded in the next Proposition, the powerful covering argument in Lemma \ref{morsecovering} will remarkably allow us to obtain a bound for the classical perimeter and a decay for the $L^1$-norm of the mean curvature also in this case, since these are integral quantities with a positive power-type scaling. In particular, we will use this in the next subsection to obtain the convergence, in the sense of varifolds, to a stationary limit in a very simple manner.

\begin{proposition}\label{LocBoundFMI}
Let $M$ be a closed manifold of dimension $3$, and assume that the flatness assumptions ${\rm FA}_\ell(M,g,4,p,\varphi)$ are satisfied. Let $E$ be an $s$-minimal surface in $\varphi(\B_2)$, with Morse index bounded by $m$ and with

\begin{equation}\label{fpas1}
    \textnormal{Per}_s(E;\varphi(\B_2))\leq \frac{\kappa}{1-s}=\frac{\kappa}{\sigma}\, .
\end{equation}
Then, there exists $C>0$, depending only on $n$, $m$ and $\kappa$, such that
\begin{equation}\label{perindbound}
\textnormal{Per}(E;\varphi(\B_1))\leq C
\end{equation}
and
\begin{equation}\label{has1}
    \int_{\partial E\cap \varphi(\B_1)}|H_{\partial E}|\leq C\sqrt{1-s}\, .
\end{equation}
\end{proposition}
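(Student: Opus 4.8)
The strategy is to run the dyadic covering machinery of Lemma \ref{morsecovering} twice, once for the classical perimeter and once for the $L^1$ norm of the mean curvature, reducing in each case to the almost-stable situation already handled in the previous section. First I would recall, via Lemma \ref{asineq} (with $m+1$ cubes at pairwise distance $\gtrsim D_0 r$) together with Lemma \ref{loccomparability}, that among any $(m+1)$ well-separated cubes $\Qb_r(x_0),\dots,\Qb_r(x_m)\subset\Qb_1(0)$ there is one, say $\Qb_r(x_\ell)$, on which $E$ is $\Lambda$-almost stable with $\Lambda = C m/(D_0/2)^{n+s}$ after rescaling the metric by $(2/r)^2$. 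Picking $D_0$ large so that this $\Lambda$ is below the threshold of Theorem \ref{C2alpha} (equivalently Corollary \ref{whtu292112-2} / Proposition \ref{optsep}), and using \eqref{fpergrowth1} to propagate the fractional perimeter bound \eqref{fpas1} into the rescaled ball (up to changing $\kappa$), we get on $\varphi_{x_\ell,r/2}(\B_1)$ uniform $C^{2,\alpha}$ estimates and separation estimates of order $\sqrt{\sigma}$ for $\partial E$.

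For the perimeter bound \eqref{perindbound}: on the good cube, the uniform $C^{2,\alpha}$ and separation estimates of Theorem \ref{C2alpha} imply (exactly as in Lemma \ref{classperball}, or just by counting the bounded number of graphical layers with uniform estimates) that $\textnormal{Per}(E;\varphi(\B_{r/4}(x_\ell)))\le C r^{n-1}$ with $C=C(n,s_0,m,\kappa)$. Scaling back and setting $\mathcal{S}(\Qb):=\textnormal{Per}(E;\varphi(\frac{1}{2\sqrt n}\Qb))$, hypothesis (ii) of Lemma \ref{morsecovering} holds with $\beta=n-1$ and $M_0=C$, while (i) is trivial since $E$ has finite perimeter; the lemma yields $\textnormal{Per}(E;\varphi(\B_1))\le C$. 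This is essentially the argument of the proof of Theorem \ref{BVest}, but now invoking the \emph{sharp} (uncoditional) perimeter bound for almost-stable surfaces in dimension $3$ rather than the $\sigma^{-1/2}$ one.

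For the mean curvature decay \eqref{has1}: on the good (rescaled) cube, $\partial E$ is a bounded union of graphs $\Gamma_i$ with $\min_{\overline B_{1/2}'}|g_i|\le 1/2$, and \eqref{wiowhoih3} in Proposition \ref{optsep} (combined with Lemma \ref{LandNMC}, whose hypotheses are met thanks to the $C^{2,\alpha}$ bound) gives $\|H[\varphi(\Gamma_i)]\|_{L^\infty}\le C\sqrt{\sigma}$ on each layer. Integrating over the bounded number of layers and using the uniform graphical bounds for the surface measure, we obtain $\int_{\partial E\cap \varphi(\B_{r/4}(x_\ell))}|H_{\partial E}|\le C\sqrt{\sigma}\,r^{n-1}$; note that the classical mean curvature is scale-covariant of degree $-1$ while the area is degree $n-1$, so $\int|H|$ scales like $r^{n-2}$, which is \emph{not} a positive power when $n\le 2$ but is fine for $n=3$ where $\beta=n-2=1>0$. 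Thus setting $\mathcal{S}(\Qb):=\int_{\partial E\cap\varphi(\frac{1}{2\sqrt n}\Qb)}|H_{\partial E}|$ (which is clearly subadditive and satisfies (i)), hypothesis (ii) of Lemma \ref{morsecovering} holds with $\beta=n-2=1$ and $M_0=C\sqrt{\sigma}$, and the conclusion $\mathcal{S}(\Qb_{1/2}(0))\le C\sqrt{\sigma}$ gives \eqref{has1} after a final scaling+covering step.

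\textbf{Main obstacle.} The delicate point is bookkeeping the scaling of both quantities through the rescaling of the metric by $(2/r)^2$: one must check that the almost-stability constant, the fractional perimeter bound, and the flatness assumptions all behave as required (this is where Remarks \ref{flatscalingrmk} and \ref{rmk:asresc}, and \eqref{fpergrowth1}, are used, just as in Step 2 of the proof of Proposition \ref{prop:BVestas}), and — crucially for \eqref{has1} — that the \emph{classical} mean curvature scales so that $\int|H|$ has the positive scaling exponent $n-2$, which is what makes Lemma \ref{morsecovering} applicable and is precisely why the statement is restricted to $n=3$. Everything else is a routine assembly of Theorem \ref{C2alpha}, Proposition \ref{optsep}, Lemma \ref{LandNMC} and Lemma \ref{morsecovering}.
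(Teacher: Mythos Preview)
Your proposal is correct and follows essentially the same route as the paper's proof: reduce to the almost-stable case via Lemma \ref{asineq}, invoke Theorem \ref{C2alpha}, Lemma \ref{classperball} and Proposition \ref{optsep} there, and then run Lemma \ref{morsecovering} with $\beta=n-1$ for the perimeter and $\beta=n-2$ for $\int|H|$. One small slip: the displayed bound $\int_{\partial E\cap \varphi(\B_{r/4}(x_\ell))}|H_{\partial E}|\le C\sqrt{\sigma}\,r^{n-1}$ should have exponent $r^{n-2}$ (as you yourself note immediately afterwards), since the mean curvature contributes a factor $r^{-1}$ upon rescaling.
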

\begin{proof}
\textbf{Step 1.} We first prove \eqref{perindbound} in the case where $E$ is actually $\Lambda$-almost stable in $\varphi(\B_2)$, for some fixed $\Lambda$ (say $\Lambda=1$), with $C$ depending on $\Lambda$.

First, observe that by Theorem \ref{C2alpha}, we know a uniform bound on the second fundamental form of $\partial E$ in $\varphi(\B_1)$. Therefore, in the rescaled setting given by Lemma \ref{Curvimgraph} the hypotheses \eqref{flatcond0}--\eqref{whtoihwohw20} are satisfied, which shows that Lemma \ref{classperball} is applicable in this rescaled setting and thus \eqref{perindbound} holds. Scaling back this information, what we have shown is precisely that if $E$ is $\Lambda$-almost stable in $\varphi(\B_2)$, then it satisfies that
$$\textnormal{Per}(E;\varphi(B_{\frac{1}{2C}}))\leq C\,,$$
where $C$ has the right dependencies. As usual, a scaling + finite covering argument then gives that, up to changing the value of $C$, actually
$$\textnormal{Per}(E;\varphi(B_1))\leq C\,,$$
which concludes Step 1.

\noindent \textbf{Step 2.} We prove \eqref{perindbound} under the hypotheses of the present Proposition \ref{LocBoundFMI}.

By the same arguments above, the result in Step 1 rescales appropriately upon looking at smaller scales and centering at different points. Then, the argument using Lemma \ref{morsecovering} that we employed in the proof of Theorem \ref{BVest} (perimeter estimate in the finite index case) to deduce the latter from Proposition \ref{prop:BVestas} (perimeter estimate in the $\Lambda$-almost stable case) applies almost word-by-word to the present situation, and thus shows \eqref{perindbound} in our finite index setting.

\noindent \textbf{Step 3.} We also prove \eqref{has1}, which corresponds to a bound on the $L^1$ norm of the mean curvature, under the hypotheses of the present Proposition \ref{LocBoundFMI}.

To do so, we follow a similar strategy to Steps 1--2. First, assume that $E$ is $\Lambda$-almost stable in $\varphi(\B_2)$ for some fixed $\Lambda$, just as in Step 1. Then \eqref{has1} is indeed true: By the combination of Theorem \ref{C2alpha} and Lemma \ref{Curvimgraph}, after rescaling the metric appropriately the hypotheses of Proposition \ref{optsep} are satisfied. Applying \eqref{wiowhoih3}, scaling back the resulting information and performing a finite covering argument, we deduce that the mean curvature $H_{\partial E}$ satisfies a uniform pointwise bound of type $C\sqrt{\sg}$ in $\varphi(\B_1)$. Moreover, by Step 1 itself we know that $\partial E$ has bounded classical perimeter in $\varphi(\B_1)$. Therefore, integrating $H_{\partial E}$ over $\partial E\cap\varphi(\B_1)$ we see that if $E$ is $\Lambda$-almost stable in $\varphi(\B_2)$ for some fixed $\Lambda$, then
\begin{equation*}
    \int_{\partial E\cap \varphi(\B_1)}|H_{\partial E}|\leq C\sqrt{1-s}\, .
\end{equation*}
The finite index case follows from the $\Lambda$-almost stable case, which we have just proved, by arguing as in Step 2. Indeed, observe that the statement we have just shown can be appropriately rescaled, upon looking at smaller scales and centering at different points. More precisely, the $L^1$ norm of the mean curvature of a hypersurface scales, upon zooming in by a factor $r$, with power $r^{n-2}$, which is a positive power of $r$ for $n=3$ or higher. Then, the covering argument in Step 2 of the proof of Theorem \ref{BVest} can also be applied in our case: Indeed, Lemma \ref{morsecovering} applies to subadditive quantities which satisfy a positive power-type scaling $r^\beta$, which includes the case of the $L^1$ norm of the mean curvature of a hypersurface and $\beta=n-2$. This gives the desired result.
\end{proof}

With the results up to now, we can obtain our first global result.
\begin{proposition}
\label{GlobBoundFMI} Let $E\subset M$ be an $s$-minimal surface with index at most $m$ and a fractional perimeter bound $\textnormal{Per}_s(E;M)\leq\frac{\kappa}{1-s}$. Then, there exists a constant $C=C(M,m,\kappa)$ such that 
\begin{equation*}\textnormal{Per}(E;M)\leq C\quad\mbox{and}\quad \int_{\partial E\cap M}|H_{\partial E}|\leq C\sqrt{1-s}\, .
\end{equation*}
\end{proposition}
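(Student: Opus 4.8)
The plan is to reduce Proposition \ref{GlobBoundFMI} to the local estimates of Proposition \ref{LocBoundFMI} by a rescaling-and-covering argument. By Remark \ref{fbsvdg}, fix $R_0>0$ such that ${\rm FA}_\ell(M,g,R_0,p,\varphi_p)$ holds at every $p\in M$, and set $\lambda:=\max\{1,4/R_0\}$. Passing to the rescaled manifold $\widehat M:=(M,\lambda^2 g)$, part (b) of Remark \ref{flatscalingrmk} (which also records that a flatness assumption at a given scale implies it at every smaller scale) gives that ${\rm FA}_\ell(\widehat M,\lambda^2 g,4,p,\widehat\varphi_p)$ holds at every $p$, where $\widehat\varphi_p:=\varphi_p(\lambda^{-1}\,\cdot\,)$. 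By part (a) of Remark \ref{flatscalingrmk}, under this rescaling the $s$-perimeter of any set is only multiplied by a fixed positive constant; consequently $s$-minimality and the Morse index bound are preserved, and the hypothesis $\textnormal{Per}_s(E;M)\le\frac{\kappa}{1-s}$ becomes $\textnormal{Per}_s(E;\widehat M)\le\frac{\widehat\kappa}{1-s}$ for a new constant $\widehat\kappa=\widehat\kappa(M,\kappa)$.

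Next, by compactness of $\widehat M$ choose finitely many charts $\widehat\varphi_{p_1},\dots,\widehat\varphi_{p_N}$, with $N=N(M)$, such that the balls $\widehat\varphi_{p_j}(\B_1)$ cover $\widehat M$. On each of these the hypotheses of Proposition \ref{LocBoundFMI}, applied on $\widehat M$, are in force: the scale-$4$ flatness assumption holds by construction; $\partial E$ has Morse index at most $m$ in $\widehat\varphi_{p_j}(\B_2)$ because a bound on the index on all of $\widehat M$ restricts to any open subset (competitor vector fields compactly supported in the subset extend by zero to $\widehat M$); and $\textnormal{Per}_s(E;\widehat\varphi_{p_j}(\B_2))\le\textnormal{Per}_s(E;\widehat M)\le\frac{\widehat\kappa}{1-s}$ by monotonicity of the fractional perimeter. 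Since $n=3$, Proposition \ref{LocBoundFMI} then yields a constant $C_0=C_0(n,m,\widehat\kappa)=C_0(M,m,\kappa)$ with
\begin{equation*}
\textnormal{Per}(E;\widehat\varphi_{p_j}(\B_1))\le C_0 \qquad\text{and}\qquad \int_{\partial E\cap\widehat\varphi_{p_j}(\B_1)}|H_{\partial E}|\le C_0\sqrt{1-s}
\end{equation*}
for every $j=1,\dots,N$, where both quantities are computed with respect to $\widehat g=\lambda^2 g$.

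Summing over $j$ and using that the $\widehat\varphi_{p_j}(\B_1)$ cover $\widehat M$ --- so that $\mathcal H^{n-1}(\partial E\cap\widehat M)\le\sum_j\mathcal H^{n-1}(\partial E\cap\widehat\varphi_{p_j}(\B_1))$, and likewise for the integral of $|H_{\partial E}|$ --- gives $\textnormal{Per}(E;\widehat M)\le NC_0$ and $\int_{\partial E\cap\widehat M}|H_{\partial E}|\le NC_0\sqrt{1-s}$. Scaling back from $\widehat g$ to $g$ multiplies $(n-1)$-dimensional areas, and the integral of $|H_{\partial E}|$ against the area element, by fixed powers of $\lambda$ (depending only on $M$); absorbing these into the constant gives $\textnormal{Per}(E;M)\le C$ and $\int_{\partial E\cap M}|H_{\partial E}|\le C\sqrt{1-s}$ with $C=C(M,m,\kappa)$, as claimed. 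There is no genuine obstacle here; the only points requiring attention are the bookkeeping of the metric-rescaling factors (all harmless, and for $n=3$ just single powers of $\lambda$) and the verification that the Morse index bound and the fractional perimeter bound localize correctly to each chart, both of which are immediate from the definitions and from monotonicity under restriction to smaller open sets.
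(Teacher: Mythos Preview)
Your proof is correct and follows essentially the same covering-and-rescaling strategy as the paper: pass to a metric where scale-$4$ flatness holds at every point, cover by finitely many charts, apply Proposition~\ref{LocBoundFMI} on each, and sum. The only cosmetic difference is that the paper rescales each small ball individually and invokes the monotonicity estimate Lemma~\ref{PerMon} to obtain the right $\delta^{n-s}$ scaling of the local fractional perimeter, whereas you rescale the whole manifold once and simply bound $\textnormal{Per}_s(E;\widehat\varphi_{p_j}(\B_2))\le\textnormal{Per}_s(E;\widehat M)$; your route is slightly more streamlined since it bypasses Lemma~\ref{PerMon} altogether.
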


\begin{proof}
Since $M$ is closed, there exists a number $\delta>0$ with the property that, given any $p\in M$, the flatness assumptions ${\rm FA}_\ell(M,g,4\delta,p,\varphi_p)$ are satisfied for some $\varphi_p$; see Remark \ref{fbsvdg}. Having fixed such a $\delta>0$, by compactness we can find a finite collection of points $p_1,...,p_N$ such that the sets $\varphi_{p_1}(\B_{\delta/32}(p_1)),...,\varphi_{p_N}(\B_{\delta/32}(p_N))$ cover $M$. Moreover, by \eqref{fpergrowth} in Lemma \ref{PerMon} and the assumption on $\textnormal{Per}_s(E;M)$, up to having chosen $\delta$ small enough we deduce that $\textnormal{Per}_s(E;\varphi_{p_i}(\B_{\delta/16}(p_i)))\leq \frac{C}{1-s}\delta^{n-s}$ for each $i$. Applying then Proposition \ref{LocBoundFMI}, appropriately rescaled (i.e. considering the manifold $(M,(\delta/32)^{-2}g)$, by our usual argument), and scaling back, we deduce that
\begin{equation}\label{cpergrowth}
    \textnormal{Per}(E;\varphi_{p_i}(\B_{\delta/32}(p_i)))\leq C\delta^{n-1}
\end{equation}
and
$$
\int_{\partial E\cap \varphi_{p_i}(\B_{\delta/32}(p_i))}|H_{\partial E}|\leq C\sqrt{1-s}\,\delta^{n-2}\, ,$$
where $C$ has the dependencies indicated in the statement of the proposition.

Adding up these inequalities for $i=1,...,N$, since the $\varphi_{p_i}(\B_{\delta/32}(p_i))$ cover $M$ we reach the conclusion of the theorem.
\end{proof}

\subsubsection{Convergence to a smooth classical minimal surface -- Proof of Theorem \ref{FMIConv}}
This subsection proves, in several steps, the subsequential convergence (as $s\to 1$) of sequences of $s$-minimal surfaces with bounded Morse index and fractional perimeters to limiting smooth, classical minimal surfaces, culminating in the proof of Theorem \ref{FMIConv}.

In fact, using the results in the previous section, we can readily prove the convergence to a limit stationary varifold. The remainder of the section will then focus on proving its regularity.

\begin{theorem}\label{limvarcor}
Let $M$ be a closed manifold of dimension $3$. Let $s_i\to 1^-$ as $i\to\infty$, and let $E_i\subset M$ be an associated sequence of $s_i$-minimal surfaces with index $\leq m$ and $\limsup_i (1-s_i)\textnormal{Per}_{s_i}(E_i;M)\leq C_1$. Then, a subsequence of the $\partial E_i$ converges, in the varifold sense, to a limit integral stationary varifold $V$. Moreover, the $\partial E_i$ in the subsequence converge in the Hausdorff sense to $\Sigma:={\rm supp}(V)$.
\end{theorem}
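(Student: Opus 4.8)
\textbf{Proof plan for Theorem \ref{limvarcor}.}

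The plan is to deduce this essentially for free from the estimates already established in Proposition \ref{GlobBoundFMI}, combined with the standard varifold compactness theorem of Allard and the characterization of stationarity via the first variation. First I would invoke Proposition \ref{GlobBoundFMI}: since the $E_i$ are $s_i$-minimal surfaces with index $\leq m$ and $\limsup_i(1-s_i)\textnormal{Per}_{s_i}(E_i;M)\leq C_1$, for all $i$ large enough we have the uniform classical perimeter bound $\textnormal{Per}(E_i;M)\leq C(M,m,C_1)$ and the mean curvature decay $\int_{\partial E_i}|H_{\partial E_i}|\leq C\sqrt{1-s_i}$. Associating to each $\partial E_i$ the integral rectifiable varifold $V_i=|\partial E_i|$ (well-defined, since the $\partial E_i$ are $C^2$ hypersurfaces of locally finite area), the mass of $V_i$ is precisely $\textnormal{Per}(E_i;M)$, so $\sup_i \|V_i\|(M)<\infty$. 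By Allard's compactness theorem for integral varifolds with locally bounded first variation — here the first variation of $V_i$ tested against $X\in\mathfrak X(M)$ is $\delta V_i(X)=\int_{\partial E_i}H_{\partial E_i}\langle X,\nu_{\partial E_i}\rangle\,d\mathcal H^{n-1}$, whose total variation is controlled by $\|X\|_{L^\infty}\int_{\partial E_i}|H_{\partial E_i}|\leq C\sqrt{1-s_i}\|X\|_{L^\infty}$ — we extract a subsequence (not relabeled) such that $V_i\to V$ in the varifold sense, with $V$ an integral varifold.

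Next I would show that $V$ is stationary. This is immediate from the first-variation computation above: for any fixed $X\in\mathfrak X(M)$, the convergence $V_i\to V$ as varifolds gives $\delta V(X)=\lim_i\delta V_i(X)$, and $|\delta V_i(X)|\leq C\sqrt{1-s_i}\,\|X\|_{L^\infty(M)}\to 0$ since $s_i\to 1^-$. Hence $\delta V(X)=0$ for all $X$, i.e. $V$ is a stationary integral varifold in $M$. (One should be slightly careful that Allard's theorem is usually stated with a bound on the full first variation mass; here the bound $\delta V_i(X)\leq C\sqrt{1-s_i}\|X\|_{L^\infty}$ is in fact a uniform bound of the form $\|\delta V_i\|(M)\leq C\sqrt{1-s_i}\leq C$, which is more than enough, and moreover the mean curvatures lie in $L^1$ so the generalized mean curvature of each $V_i$ exists.)

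Finally, the Hausdorff convergence of $\partial E_i$ to $\Sigma:={\rm supp}(V)$ is a standard consequence of varifold convergence together with a uniform density lower bound. On one side, varifold (hence weak-$*$) convergence of the measures $\|V_i\|\to\|V\|$ forces $\limsup_i\,{\rm dist}(x,\Sigma)\to 0$ uniformly for $x\in\partial E_i$ — otherwise some subsequence would carry a fixed amount of mass into a region where $\|V\|=0$, contradicting weak-$*$ convergence. On the other side, the monotonicity formula (Lemma \ref{PerMon}, or rather its classical analogue for the $V_i$, which have bounded $L^1$ mean curvature) gives a uniform lower density bound $\|V_i\|(B_r(p))\geq c\,r^{n-1}$ at every $p\in\partial E_i$ for $r$ below a fixed scale; passing this to the limit shows every point of $\Sigma$ is approached by points of $\partial E_i$. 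The combination of the two inclusions is exactly Hausdorff convergence $\partial E_i\to\Sigma$. The only mild technical point — and the one I would expect to need the most care — is justifying that the generalized first variation of the approximating varifolds is genuinely given by the classical formula $\int H\langle X,\nu\rangle$ with the $L^1$ bound on $H$, so that Allard's compactness applies with an integral (not merely rectifiable) limit; but this is guaranteed because each $\partial E_i$ is a $C^2$ hypersurface, for which the divergence theorem on $\partial E_i$ gives the formula directly. No deep new input is needed here: the substance of the theorem is entirely contained in the uniform perimeter bound and the mean-curvature decay of Proposition \ref{GlobBoundFMI}.
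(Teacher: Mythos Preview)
Your Step 1 (varifold compactness and stationarity) is correct and essentially identical to the paper's argument: Proposition \ref{GlobBoundFMI} gives uniform mass bounds and $L^1$ mean-curvature decay, Allard's compactness yields an integral limit, and $|\delta V_i(X)|\le C\sqrt{1-s_i}\|X\|_{C^0}\to 0$ forces stationarity.

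The gap is in your Hausdorff-convergence argument. You invoke ``Lemma \ref{PerMon}, or rather its classical analogue for the $V_i$, which have bounded $L^1$ mean curvature'' to get a uniform lower density bound $\|V_i\|(B_r(p))\ge c\,r^{n-1}$. But the classical monotonicity formula does \emph{not} yield such a uniform lower bound from an $L^1$ mean-curvature control alone: integrating the monotonicity inequality from $0$ to $\rho$ produces an error term of the form $\int_0^\rho r^{-(n-1)}\int_{B_r}|H|\,d\|V_i\|\,dr$, and with only a global bound $\int|H|\le C\sqrt{1-s_i}$ this diverges. A concrete obstruction is a round sphere of radius $\varepsilon$: it has $\int|H|=8\pi\varepsilon\to 0$ yet density ratio $\sim\varepsilon^2$ at scale $1$. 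So the ``classical analogue'' does not do the job, and the per-$i$ $L^\infty$ bound on $H_{\partial E_i}$ (from $C^2$ smoothness) is non-uniform in $i$.

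The paper uses exactly the lemma you name and then set aside. Lemma \ref{PerMon} is a monotonicity statement for the \emph{fractional} perimeter of $s$-minimal surfaces, with constants uniform as $s\to 1$; combined with the fact that the blow-up of a $C^2$ boundary at $p\in\partial E_i$ is a half-space (so the small-scale fractional density is at least $c(n)/(1-s_i)$), it gives $(1-s_i)\,\mathrm{Per}_{s_i}(E_i;B_r(p))\ge c\,r^{n-s_i}$ uniformly in $i$. The interpolation inequality \eqref{interpropRn} then upgrades this to the classical lower bound $\mathrm{Per}(E_i;B_r(p))\ge c\,r^{n-1}$, which is what the Hausdorff argument needs. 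So keep Lemma \ref{PerMon} rather than replacing it by its classical cousin, and add the interpolation step.
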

\begin{proof}
\textbf{Step 1.} Varifold convergence.

The convergence in the varifold sense of a subsequence (not relabeled) of the $\partial E_i$ to a limit integral varifold $\Sigma$ follows directly from Proposition \ref{GlobBoundFMI}, by Allard's compactness theorem (\cite{All}). To check the stationarity, let $X$ be a smooth vector field on $M$, and denote its flow at time $t$ by $\psi_X^t$. By Proposition \ref{GlobBoundFMI}, there is a uniform constant $C$ such that we can bound
\begin{equation}\label{Hl1bound}
    \Bigg|\frac{d}{dt}\Big|_{t=0}\textnormal{Per}(\psi_X^t(E_i);M)\Bigg|=\Bigg|\int_{\partial E_i}X\cdot\vec H_{\partial E_i}\Bigg|\leq \|X\|_{C^0}\int_{\partial E_i}|H_{\partial E_i}|\leq C\sqrt{1-s_i}\|X\|_{C^0}\, .
\end{equation}
Letting $i\to\infty$, since $\sqrt{1-s_i}\to 0$ the convergence in the varifold sense shows that the limit varifold $V$ has vanishing first variation, or in other words it is stationary.

\noindent \textbf{Step 2.} Convergence in Hausdorff distance.

Let $\delta>0$ be such that the flatness assumptions ${\rm FA}_\ell(M,g,4\delta,p,\varphi_p)$ are satisfied for every $p\in M$, and such that $\varphi_p$ corresponds to the exponential map in normal coordinates; see Remark \ref{fbsvdg}. Up to arguing in what follows over the rescaled manifold $(M,\frac{1}{\delta^2}g)$ instead, we can also just assume that actually ${\rm FA}_\ell(M,g,4,p,\varphi_p)$ holds.\\
Now, by the varifold convergence proved in Step 1, to obtain the Hausdorff convergence it suffices to prove that the $E_i$ satisfy lower perimeter estimates, meaning that there exists a constant $c=c(M)>0$ such that (for $i$ large enough) if $p\in\partial{E_i}$ then
\begin{equation*}
    cr_2^{n-1}\leq \textnormal{Per}(E;\varphi_p(\B_{r_2})) 
    \end{equation*}
    for all $0<r_2<c$. To prove that this holds, fix $i$, and let $p\in\partial{E_i}$. Since the fixed set $E_i$ has a smooth boundary, performing a blow-up of $E_i$ around $p$ we see that the corresponding sequence of rescalings converges locally (in normal coordinates, say) in $H^{s_i/2}$ norm to a half-space in $\R^n$. Scaling back this information, this shows that 
$$\liminf_{r_1\to 0}\frac{\textnormal{Per}_{s_i}(E_i;\varphi_p(\B_{r_1}))}{r_1^{n-s}}= c(n,s_i)\, ,
$$
where the universal constant
$$
c(n,s_i):=\textnormal{Per}_{s_i}(\{x_n\leq 0\};\B_1)\geq \frac{c(n)}{1-s_i}>0
$$
comes from the contribution in $\B_{1}\subset\R^n$ to the fractional perimeter of a half-space. Multiplying by $(1-s_i)$ and applying Lemma \ref{PerMon}, we obtain that
\begin{equation*}
        c(n)\leq (1-s_i)\liminf_{r_1\to 0}\frac{\textnormal{Per}_{s_i}(E_i;\varphi_p(\B_{r_1}))}{r_1^{n-s}}\leq C(1-s_i)+C\frac{(1-s_i)\textnormal{Per}_{s_i}(E_i;\varphi_p(\B_{r_2}))}{r_2^{n-s}}
    \end{equation*}
    for all $r_2\leq 1$. Selecting $i$ large enough so that $C(1-s_i)\leq c(n)/4$, we deduce that
    \begin{equation*}
    cr_2^{n-s}\leq (1-s_i)\textnormal{Per}_{s_i}(E_i;\varphi_p(\B_{r_2})) 
    \end{equation*}
    for some $c=c(n)>0$. Combining this with the interpolation result of \eqref{interpropRn}, we conclude that
    \begin{equation*}
    cr_2^{n-1}\leq \textnormal{Per}(E_i;\varphi_p(\B_{r_2}))\, , 
    \end{equation*}
    which is precisely what we wanted to prove. 
\end{proof}
The rest of the section focuses on proving the smoothness of $\Sigma$. We begin with some preliminary results. The first one is the following lemma, which shows that the Morse index can concentrate at most at $m$ many points along a subsequence.

\begin{lemma}[\textbf{Almost stability outside of $m$ points}]\label{aspoints} Let $E_i$ be as in Theorem \ref{limvarcor}. Then, up to passing to a subsequence $E_{i_j}$, there exist points $p_1,...,p_l$ in $M$ with $l\leq m$ such that the following holds: Given any $p\neq p_1,...,p_l$, there exists a radius $r_p>0$ such that the $E_{i_j}$ are eventually $\Lambda_p$-almost stable on $B_{r_p}(p)$ for some $\Lambda_p<\infty$.
    
\end{lemma}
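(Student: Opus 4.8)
The plan is to localise the finite-index condition via Lemma \ref{asineq}, extracting the concentration points as the common part, over all dyadic scales, of the ``bad set'' where almost-stability fails at that scale. The one delicate point is that the almost-stability constant produced must be controlled \emph{uniformly as $s_i\to1$}, so that the estimates of Section \ref{sec:asests} remain applicable in the limit.

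\textbf{Bad sets at each scale.} Fix $s_0=1/2$, so that $s_i\in(s_0,1)$ for $i$ large, and let $C=C(M,s_0)$ be the constant in the bound $K_{M,s}(p,q)\le C\,{\rm dist}_M(p,q)^{-(n+s)}$ of \eqref{kerkdtcmp}, valid for all $p,q\in M$ and uniform for $s\in(s_0,1)$. For an integer $k\ge3$ set $\Lambda_k:=C\,m\,k^{\,n+1}$, call $E_i$ \emph{$k$-good at $q\in M$} if $E_i$ is $\Lambda_k$-almost stable in the metric ball $B_{1/k}(q)$, and let $B_i^{(k)}\subset M$ denote the set of points at which $E_i$ is not $k$-good. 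If $q_0,\dots,q_m\in B_i^{(k)}$ were pairwise at distance $\ge3/k$, then Lemma \ref{asineq} applied with the (pairwise disjoint) balls $\mathcal U_a:=B_{1/k}(q_a)$, together with the fact that ${\rm dist}_M(p,q)\ge 3/k-2/k=1/k$ and hence $K_{M,s_i}(p,q)\le C\,k^{\,n+s_i}\le C\,k^{\,n+1}$ whenever $p\in\mathcal U_a$, $q\in\mathcal U_b$, $a\ne b$, would produce an index $a$ for which $E_i$ is $\Lambda$-almost stable in $B_{1/k}(q_a)$ with $\Lambda\le m\,C\,k^{\,n+1}=\Lambda_k$ --- i.e.\ $k$-good at $q_a$, a contradiction. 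Hence every $(3/k)$-separated subset of $B_i^{(k)}$ has at most $m$ points, so $B_i^{(k)}$ is covered by at most $m$ balls of radius $4/k$. Moreover, restricting test functions shows that $k$-goodness at $q$ implies $k'$-goodness at $q$ for all $k'\ge k$ (since $\Lambda_{k'}\ge\Lambda_k$), so the sets $B_i^{(k)}$ decrease in $k$.

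\textbf{Extracting the concentration points.} By a diagonal argument over $k\in\{3,4,5,\dots\}$ we pass to a subsequence $E_{i_j}$ (not relabeled) along which, for every fixed $k$, the closed sets $B_{i_j}^{(k)}$ converge in the Hausdorff distance to a closed set $B^{(k)}$, and the centres of a covering of $B_{i_j}^{(k)}$ by $\le m$ balls of radius $4/k$ converge; then $B^{(k)}$ is itself covered by $\le m$ balls of radius $4/k$, and $B^{(k')}\subseteq B^{(k)}$ for $k'\ge k$. Define
\[
\{p_1,\dots,p_l\}\ :=\ \bigcap_{k\ge3} B^{(k)},
\]
which, being contained in $B^{(k)}$ for every $k$, is covered by $\le m$ balls of arbitrarily small radius and is therefore a finite set with $l\le m$ elements. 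Now let $p\notin\{p_1,\dots,p_l\}$; then $p\notin B^{(k_0)}$ for some $k_0\ge3$, and since $B^{(k_0)}$ is closed we have ${\rm dist}_M(p,B^{(k_0)})>0$. Hausdorff convergence $B_{i_j}^{(k_0)}\to B^{(k_0)}$ then gives $p\notin B_{i_j}^{(k_0)}$ for all large $j$, i.e.\ $E_{i_j}$ is $\Lambda_{k_0}$-almost stable in $B_{1/k_0}(p)$ for all large $j$. This is the assertion of the lemma, with $r_p=1/k_0$ and $\Lambda_p=\Lambda_{k_0}$.

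\textbf{Main obstacle.} Most of the substance is in Lemma \ref{asineq}, which already encodes that finite index forces almost-stability away from $m$ regions; the only genuine points here are (i) keeping the constant $\Lambda_k$ independent of $s$ by tracking the uniform kernel bound \eqref{kerkdtcmp} --- essential so that the $s\to1$ almost-stable theory applies --- and (ii) passing to the Hausdorff limit of the bad sets \emph{before} intersecting over scales, which is what upgrades the per-$i$, scale-$1/k_i$ almost-stability into a single radius $r_p$ valid for all large $i$.
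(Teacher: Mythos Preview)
Your argument is correct and reaches the conclusion cleanly. A couple of small technical points are worth noting: the bad sets $B_i^{(k)}$ are actually \emph{open} (if $\xi$ witnesses instability in $B_{1/k}(q)$, the same $\xi$ works for $B_{1/k}(q')$ whenever $q'$ is close enough to $q$ that the ball still contains ${\rm supp}\,\xi$), so strictly speaking you should pass to Hausdorff limits of their closures; and the diagonal extraction should allow for the possibility that $B_{i_j}^{(k)}$ is eventually empty. Neither point affects the substance.

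Your route is genuinely different from the paper's. The paper defines a ``concentration scale'' function $R(p,i,C)$ and a concentration set $IC(\{i\})$, then argues \emph{by contradiction}: assuming every subsequence carries at least $m+1$ concentration points, it iteratively extracts a subsequence along which $m+1$ specific points $q_1,\dots,q_{m+1}$ concentrate simultaneously (with quantitative bounds $R(q_a,i_j,j)<1/j$), and then applies Lemma~\ref{asineq} to $m+1$ small disjoint balls around them to force almost-stability at one of them, contradicting concentration. Your argument is instead \emph{constructive}: you build the bad sets scale by scale, show directly via Lemma~\ref{asineq} that each is covered by $\le m$ small balls, and obtain the concentration points as the nested intersection of their Hausdorff limits. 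Both hinge on the same key lemma and on the uniform-in-$s$ kernel bound \eqref{kerkdtcmp}; your approach has the advantage of producing explicit values $r_p=1/k_0$ and $\Lambda_p=Cm\,k_0^{\,n+1}$ tied to the first scale at which $p$ escapes the bad set, while the paper's contradiction argument is perhaps more in keeping with the classical local-minimal-surface literature but less quantitative.
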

\begin{proof}
Define the concentration scale by 
\begin{equation*}
R(p,i,C)=\sup\{ r>0: \mbox{ a.s. ineq. } \eqref{lamasineq} \mbox{ holds for } E_i \mbox{ on } B_r(p) \mbox{ with constant } C \}\, .
\end{equation*}
Define also the "nonlocal index concentration set" of the $E_i$ (or equivalently of the associated sequence of indices $\{i\}$) as the set
\begin{equation*}
IC(\{i\})=\{ p\in M\ :\ \limsup_C\, \liminf_i \,R(p,i,C)\,=\,0\}.
\end{equation*}
\noindent \textbf{Step 1}. We show that, up to passing to a subsequence, the nonlocal index concentration set has at most $m$ points.

Indeed, assume by contradiction that every subsequence has at least $m+1$ points in its concentration set. In particular, for our entire sequence $E_i$ there exist $p_1,...,p_{m+1}$ in the nonlocal index concentration set. We focus on $p_1$. Given $j\in\N$, define $C_j=j$ and $\ep_j=1/j$; by definition, there exists a subsequence of indices $\{i_j^1\}_j$ such that $R(p_1,i_j^1,C_j)<\ep_j$, or in other words $R(p_1,i_j^1,j)<1/j$. This implies that $q_1:=p_1$ belongs to the nonlocal index concentration set of the subsequence of surfaces $E_{i_j}$, i.e. $q_1\in IC(\{i_j^1\}_j)$.

Now, if $m>0$ (otherwise the Lemma is trivial), by our contradiction assumption $E_{i_j^1}$ needs to have another point $q_2$ in its index concentration set, since it must have at least $(m+1)\geq 2$ such points in total. Therefore, there exists some subsequence of the $\{i_j^1\}_j$, denoted by $\{i_j^2\}_j$, such that $R(q_2,i_j^2,j)<1/j$. Moreover, since $i_j^2\geq i_j^1$ it is immediate to see that $R(q_1,i_j^2,j)<1/j$ still holds.

Now, both $q_1$ and $q_2$ are in the index concentration set $IC(\{i_j^2\}_j)$. If $(m+1)>2$, by our contradiction assumption there must exist another point $q_3$ which is also in the set, and we can proceed as before; iterating this argument $(m+1)$ times, we find $q_1,...,q_{m+1}$ and a subsequence $\{i_j^{m+1}\}_j$ as above such that $R(q_1,i_j^{m+1},j)<1/j,...,R(q_{m+1},i_j^{m+1},j)<1/j$.

Select now $\delta$ small enough so that the balls $B_{\delta}(q_1),...,B_{\delta}(q_{m+1})$ are at a positive distance $d>0$ from each other. The almost-stability inequality in Lemma \ref{almoststab}, applied with $E=E_{i_j^{m+1}}$ and $\mathcal{U}_k=B_\delta(q_k)$ for $k=1,...,m+1$, says that $E_{i_j^{m+1}}$ is $\Lambda$-almost stable on one of the balls, say $B_{\delta}(q_{k_j})$, with a constant $\Lambda<\infty$ independent of $j$. Then, up to passing to a subsequence, we can assume that the $E_{i_j^{m+1}}$ are actually all $\Lambda$-almost stable on the same ball, say $B_{\delta}(q_1)$. In other words, $\delta\leq R(q_1,i_j^{m+1},\Lambda)$ for all $j$. Taking $j$ large enough so that $j\geq\Lambda$, we deduce that 
$$\delta \leq R(q_1,i_j^{m+1},\Lambda)\leq R(q_1,i_j^{m+1},j)<1/j\, .$$
But then, by taking $j$ large enough so that also $1/j<\delta$, we reach a contradiction. This concludes the proof of the claim in Step 1.

\noindent \textbf{Step 2}. Conclusion of the proof.

Let $\{i_j\}_j$ be the subsequence given by the statement of Step $1$, and let $p_1,...,p_l$ be all the points in the nonlocal index concentration set of the $E_{i_j}$, so that $l\leq m$. Consider a point $p$ different from the $p_1,...,p_l$, so that it is not in the nonlocal index concentration set. Then, by definition, there exist a radius $r_p>0$ and numbers $\Lambda(p)$ and $j_0(p)$ such that, for all $j\geq j_0$, we have that $R(p,i_j,\Lambda)\geq r_p$, or in other words the $E_{i_j}$ are eventually $\Lambda(p)$-almost stable on $B_{r_p}(p)$. This concludes the proof of the lemma.
\end{proof}
The next proposition shows the regularity of $\Sigma$, the support of $V$, outside of the finitely many points where the index can concentrate.
\begin{proposition}\label{LocStableConv} Let $V$ be the limit integral stationary varifold obtained in Theorem \ref{limvarcor} as a subsequential limit of the $\partial E_i$. Then there exist points $p_1,...,p_l$ in $M$, with $l\leq m$, such that $\Sigma={\rm supp}(V)$ corresponds to a smooth classical minimal surface outside of $p_1,...,p_l$. Moreover, up to passing to a subsequence, the $\partial E_i$ converge as (multi-sheeted) $C^{2}$ local normal graphs over any compact subset of $\Sigma\setminus\{p_1,...,p_l\}$.
\end{proposition}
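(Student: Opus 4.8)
The plan is to combine the almost-stability dichotomy of Lemma \ref{aspoints} with the a priori estimates of Theorem \ref{C2alpha} and a compactness argument. Fix the subsequence $\{E_{i_j}\}$ and the points $p_1,\ldots,p_l$ (with $l\leq m$) given by Lemma \ref{aspoints}. Given any $p\in\Sigma\setminus\{p_1,\ldots,p_l\}$, that lemma furnishes a radius $r_p>0$ and a constant $\Lambda_p<\infty$ such that the $E_{i_j}$ are eventually $\Lambda_p$-almost stable in $B_{r_p}(p)$. On the other hand, by \eqref{fpergrowth} in Lemma \ref{PerMon} together with the uniform bound $\sup_j (1-s_{i_j})\textnormal{Per}_{s_{i_j}}(E_{i_j};M)<\infty$, the $E_{i_j}$ satisfy a local fractional perimeter bound of the form $\textnormal{Per}_{s_{i_j}}(E_{i_j};B_{r_p}(p))\leq \kappa/(1-s_{i_j})$ on that ball, after rescaling the metric so that the flatness assumptions hold at scale $4$. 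Thus Theorem \ref{C2alpha} applies and, scaling back, shows that in a possibly smaller ball $B_{\rho_p}(p)$ each $\partial E_{i_j}$ is (in normal coordinates centered at $p$, after a rotation) a finite union of $C^{2,\alpha}$ graphs with uniform $C^{2,\alpha}$ estimates and mutual vertical separation at least $c\sqrt{1-s_{i_j}}$. By Lemma \ref{classperball} the number of such graphs is uniformly bounded, so along a further subsequence it equals a fixed integer $N_p$.

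Next, by Ascoli--Arzel\`a the $N_p$ graphs converge in $C^2$ to limit graphs $\widetilde\Gamma_1,\ldots,\widetilde\Gamma_{N_p}$. By \eqref{wiowhoih3} in Proposition \ref{optsep} the classical mean curvatures of the approximating graphs tend to zero, and since the metric coefficients converge to the Euclidean ones in these coordinates (combine $g_{ij}(0)=\delta_{ij}$ with \eqref{hsohoh2} applied at large scale), the limit graphs have vanishing classical mean curvature, i.e. they are classical minimal surfaces. As the approximating graphs are disjoint for each fixed $j$, the $\widetilde\Gamma_k$ can only touch tangentially, so by the strong maximum principle for classical minimal surfaces they are pairwise either identical or disjoint; hence their union is a smooth embedded minimal surface near $p$, and the $\partial E_{i_j}$ converge to it as $n_k$-sheeted $C^2$ normal graphs for suitable multiplicities $n_k$. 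Since $C^2$ multi-sheeted graph convergence implies local varifold convergence, this local limit $\sum_k n_k[\widetilde\Gamma_k]$ must agree with $V$ in $B_{\rho_p}(p)$, so $\Sigma={\rm supp}(V)$ coincides near $p$ with this smooth classical minimal surface.

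Finally, cover $\Sigma\setminus\{p_1,\ldots,p_l\}$ by countably many balls $B_{\rho_{p_k}}(p_k)$ as above and extract a diagonal subsequence along which the conclusions of the previous paragraph hold simultaneously on all of them. This yields that $\Sigma\setminus\{p_1,\ldots,p_l\}$ is a smooth classical minimal surface and that the $\partial E_i$ converge as multi-sheeted $C^2$ normal graphs over every compact subset of $\Sigma\setminus\{p_1,\ldots,p_l\}$, as claimed.

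The main technical point is ensuring the hypotheses of Theorem \ref{C2alpha} --- a bounded almost-stability constant, a local fractional perimeter bound of the correct homogeneity, and flatness at the right scale --- hold \emph{uniformly} along the sequence on a fixed neighbourhood of each $p$; this is precisely what Lemma \ref{aspoints}, Lemma \ref{PerMon}, and the rescaling invariance used throughout Section \ref{sec:asests} provide. Once these are in place, the passage to the limit is routine, the only remaining subtlety being the use of the strong maximum principle to conclude that the limiting sheets do not cross and hence glue to a well-defined embedded minimal surface with locally constant multiplicity.
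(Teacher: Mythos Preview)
Your proof is correct and follows essentially the same route as the paper: invoke Lemma \ref{aspoints} to get almost-stability away from finitely many points, apply Theorem \ref{C2alpha} locally to obtain uniform $C^{2,\alpha}$ graphical structure, pass to the limit by Ascoli--Arzel\`a, and use \eqref{wiowhoih3} together with the maximum principle to identify the limit sheets as disjoint smooth minimal surfaces. One small inaccuracy: the clause ``since the metric coefficients converge to the Euclidean ones in these coordinates'' is out of place here --- you are working at a fixed scale on $M$, not in a blow-up, so the metric does not move; this is harmless because \eqref{wiowhoih3} already controls the \emph{Riemannian} mean curvature, which is all you need. The paper also handles the last step slightly more economically: instead of a diagonal subsequence over a countable cover, it notes that the uniform $C^{2,\alpha}$ estimates hold for the whole (already extracted) subsequence on each ball, so a finite cover of any compact $K\subset\Sigma\setminus\{p_1,\ldots,p_l\}$ suffices without further extraction.
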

\begin{proof}
Let $p_1,...,p_l$ be as in Lemma \ref{aspoints}, and pass to a subsequence (still labeled $E_i$) verifying the conclusions of both Theorem \ref{limvarcor} and Lemma \ref{aspoints}. Then, given $p\neq p_1,...,p_l$, for $i$ large enough the $E_i$ are $\Lambda(p)$-almost stable in $B_{r_p}(p)$ for some $\Lambda(p)<\infty$ and $r_p>0$. By Theorem \ref{C2alpha}, which (after scaling) we can apply with some $\varphi_p$ up to choosing a smaller value for $r_p$, the $E_i$ satisfy then uniform $C^{2,\alpha}$ estimates in $B_{r_p}(p)$. Moreover, by Proposition \ref{LocBoundFMI} (rescaled appropriately), the $E_i$ have perimeter bounded by $Cr_p^{n-1}$ inside the ball.

    Therefore, by Lemma \ref{Curvimgraph}, up to making $r_p$ even smaller the set $\varphi^{-1}(\partial E_i)$ is made of only a finite, bounded number $d_i$ of uniformly $C^{2,\alpha}$ graphs $\{\Gamma_{i,l}\}_{l=1}^{d_i}$. Moreover, by \eqref{wiowhoih3} the mean curvatures of the $\varphi(\Gamma_{i,l})$ are going to $0$ as $s\to 1$. Then, by standard Ascoli-Arzelà arguments, the $E_i$ converge as $C^2$ submanifolds to $\Sigma={\rm supp}(V)$ in $B_{r_p}(p)$, which shows that $\Sigma$ is a union of ordered $C^2$ minimal hypersurfaces around $p$. Then, the maximum principle implies that these minimal hypersurfaces are actually disjoint as well. Moreover, by the usual regularity estimates for the classical minimal surface equation, we can bootstrap the $C^2$ regularity and deduce that $\Sigma$ is actually a union of ordered, disjoint smooth ($C^\infty$) submanifolds in $B_{r_p}(p)$. Since the choice of $p\neq p_1,...,p_l$ was arbitrary, we conclude that $\Sigma$ is smooth everywhere except possibly at the points $p_1,...,p_l$.\\
    
    Let now $K$ be a compact subset $K\subset\Sigma\setminus\{p_1,...,p_l\}$. By the above, the $E_i$ eventually satisfy $C^{2,\alpha}$ estimates around any point different from $p_1,...,p_l$; by compactness of $K$, covering it with finitely many balls where such $C^{2,\alpha}$ estimates hold we deduce that, actually, the $E_i$ eventually satisfy uniform $C^{2,\alpha}$ estimates (depending on $K$) in a whole neighbourhood of $K$. Since the $E_i$ are converging to $\Sigma$, we conclude that we can eventually write the $E_i$, locally, as normal graphs over $K$. This concludes the proof.
\end{proof}

We are now ready to deduce the crucial result that the mass of our limit varifold $V$ is the limit of the fractional perimeters of the $E_i$, which we had not addressed up to now. We more generally prove the stronger result that the variations of the varifold mass of $V$ of any order are the limits of the corresponding variations of the fractional perimeters of the $E_i$. The proof is delicate, since it needs to account for the potentially singular convergence around the $p_1,...,p_l$ as well as the risk (when localising the fractional perimeter) of concentration of mass/classical perimeter close to boundaries of domains but which the fractional perimeter would not be able to capture. 
\begin{proposition}\label{globperconv}
    In the setting of Theorem \ref{limvarcor},
    $$\lim_i (1-s_i)\textnormal{Per}_s(E_i;M)=\gamma_n\|V\|_M\,,
    $$
    with $\gamma_n$ as in \eqref{gammadef}.\\
    More generally, we have the following. Let $X$ be a vector field of class $C^\infty$ on $M$, and let $ \psi^t := \psi_X^t$ denote its flow at time $t$. Then, for every $k\in\N$,
    \begin{equation}\label{q5etgqwa4e5}
        \frac{d^k}{d t^k}(1-s_i)\textnormal{Per}_s(\psi^t(E_i);M)\to \frac{d^k}{d t^k}\|\psi_\#^t(V)\|_M\, \quad \mbox{as } i\to\infty\, .
    \end{equation}
\end{proposition}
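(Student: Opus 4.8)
The plan is to localise the problem and treat separately the ``good'' region $\Sigma\setminus\{p_1,\dots,p_l\}$, where the convergence is $C^2$, and small balls around the finitely many concentration points $p_1,\dots,p_l$, where no control on multiplicity or regularity is available. Fix $\varepsilon>0$. First I would choose small radii $\rho>0$ so that the balls $B_\rho(p_j)$ are disjoint, and so that $\|V\|_{\cup_j B_{2\rho}(p_j)}\le\varepsilon$ (possible since $V$ has finite mass). On the complement $K:=M\setminus\cup_j B_\rho(p_j)$, which is a compact subset of $\Sigma\setminus\{p_1,\dots,p_l\}$ after intersecting with a neighbourhood of $\Sigma$, Proposition \ref{LocStableConv} says the $\partial E_i$ converge as finitely many ordered $C^2$ normal graphs with separation of order $\sqrt{1-s_i}$; applying Lemma \ref{StableLocPerComp} (after passing to normal coordinates patch by patch, rescaling the metric, and using a finite covering of $K$ by charts where \eqref{flatcond}--\eqref{whtoihwohw2} hold) gives
$$\big|(1-s_i)\textnormal{Per}_{s_i}(E_i;\text{nbhd of }K)-\gamma_n\textnormal{Per}(\partial E_i;\text{nbhd of }K)\big|\xrightarrow{i\to\infty}0\,,$$
and then the $C^2$ (hence varifold) convergence on $K$ gives $\gamma_n\textnormal{Per}(\partial E_i;\text{nbhd of }K)\to\gamma_n\|V\|_{\text{nbhd of }K}$.

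The contribution near the concentration points is controlled crudely from above: by Lemma \ref{PerMon} (in the form \eqref{fpergrowth1}, rescaled) and the global bound $\sup_i(1-s_i)\textnormal{Per}_{s_i}(E_i;M)\le C_1$, we get $(1-s_i)\textnormal{Per}_{s_i}(E_i;B_\rho(p_j))\le C\rho^{n-s_i}\le C\rho^{n-1}$ for each $j$, which can be made $\le\varepsilon$ by shrinking $\rho$ (here $C$ depends on $M,m,C_1$ but not on $i$ via Proposition \ref{GlobBoundFMI} and the monotonicity formula). The only subtlety is that $\textnormal{Per}_s(E;\Omega)$ as defined in the excerpt drops the $\Omega$--$\Omega^c$ interactions, so the pieces over $K$ and over the $B_\rho(p_j)$ do not add up exactly to $\textnormal{Per}_s(E;M)$; but the missing cross-interactions across the (smooth, fixed) boundaries $\partial B_\rho(p_j)$ are bounded, after passing to coordinates and using Lemma \ref{loccomparability}, by $C\rho^{-s}\le C\rho^{-1}$ uniformly in $E$, hence their contribution after multiplying by $(1-s_i)$ is $O((1-s_i))\to 0$. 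Combining the three estimates and letting $\varepsilon\to 0$ yields $\lim_i(1-s_i)\textnormal{Per}_{s_i}(E_i;M)=\gamma_n\|V\|_M$.

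For the higher-order statement \eqref{q5etgqwa4e5}, I would follow the Ascoli--Arzelà scheme used for \eqref{convItep} and in the proof of Theorem \ref{C2alpha}: set $f_i(t):=(1-s_i)\textnormal{Per}_{s_i}(\psi^t(E_i);M)$ and $f(t):=\gamma_n\|\psi^t_\#V\|_M$. The pointwise convergence $f_i(t)\to f(t)$ for each fixed $t$ follows from the first part of the proposition applied to the sequence $\psi^t(E_i)$, which is again a sequence of $s_i$-minimal surfaces with index $\le m$ (criticality and index are preserved by diffeomorphisms) and with $\sup_i(1-s_i)\textnormal{Per}_{s_i}(\psi^t(E_i);M)<\infty$ by Lemma \ref{enboundslemma2}, whose limit varifold is exactly $\psi^t_\#V$ by the varifold convergence of Theorem \ref{limvarcor} and continuity of push-forward; one should note the limit varifold is independent of subsequence because its mass density is determined (it equals $\gamma_n^{-1}\lim f_i(t)$ on balls, via the localised version of the argument above), so no diagonal argument is needed. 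Then Lemma \ref{enboundslemma2} (valid for $K_{s_i}$ with constants uniform as $s_i\to1$) gives that $t\mapsto f_i(t)$ is bounded in $C^{k+1}_{loc}$ uniformly in $i$; hence along any sequence $i\to\infty$ a subsequence has $\frac{d^j}{dt^j}f_i\to g_j$ locally uniformly for $j\le k$, and since $f_i\to f$ pointwise we must have $g_j=\frac{d^j}{dt^j}f$, so the full sequence converges.

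\textbf{Main obstacle.} The delicate point is the interplay between the concentration points and the non-standard localisation of $\textnormal{Per}_s$: one must simultaneously (i) apply the sharp comparison Lemma \ref{StableLocPerComp} only on the good region, where uniform $C^{2,\alpha}$ and $\sqrt\sigma$-separation hold, (ii) absorb the near-$p_j$ contribution via the monotonicity formula rather than any regularity, and (iii) check that the cross-interaction terms neglected by the definition of $\textnormal{Per}_s(\cdot;\Omega)$ are harmless, which is where one genuinely uses that the localisation domains have fixed smooth boundaries and that $(1-s_i)\to0$ kills the $O(\rho^{-1})$ error. Getting these three scales ($\rho$ fixed small, then $i\to\infty$, then $\rho\to0$) in the right order is the crux; everything else is a routine adaptation of the arguments already appearing in the excerpt.
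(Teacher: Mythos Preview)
Your overall strategy matches the paper's, but the treatment of the nonlocal cross-interactions---the part you yourself flag as ``the only subtlety''---has a genuine gap. The claimed bound $C\rho^{-s}$ uniformly in $E$ is false: the crude estimate $\int_{B_\rho}\int_{M\setminus B_\rho}K_s\sim\frac{\rho^{n-s}}{1-s}$ blows up as $s\to 1$, and for a set $E$ whose boundary hugs $\partial B_\rho$ the cross-term $\int_{E\cap B_\rho}\int_{E^c\setminus B_\rho}K_s$ really is of order $\frac{1}{1-s}$, so the factor $(1-s_i)$ does not kill it at fixed $\rho$. More importantly, you never address the analogous cross-terms across the \emph{chart} boundaries: when you ``apply Lemma \ref{StableLocPerComp} patch by patch on a finite covering of $K$'' you are effectively partitioning $K=\bigcup_j\mathcal C_j$ and comparing $\sum_j(1-s_i)\textnormal{Per}_{s_i}(E_i;\mathcal C_j)$ with $\gamma_n\sum_j\textnormal{Per}(E_i;\mathcal C_j)$, but the former differs from $(1-s_i)\textnormal{Per}_{s_i}(E_i;K)$ by the interactions $I(\mathcal C_j,\mathcal C_k)$ for $j\ne k$, and these are not small for free. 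The paper's fix is to build the partition (via Lemma \ref{LipDecomp}) so that $\Sigma$ meets each $\partial\mathcal C_j$ \emph{transversally}; by $C^2$ convergence this passes quantitatively to $\partial E_i$, forcing $\textnormal{Per}(E_i;\mathcal N_r\partial\mathcal C_j)\le Cr$, and then the interpolation bound \eqref{interpropM} controls the near-boundary interaction by $\frac{C}{1-s_i}r^s$. One sends $r\to 0$, then $i\to\infty$, then $\delta\to 0$.

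In Step 2 there is a second slip: the assertion that $\psi^t(E_i)$ is again $s_i$-minimal with index $\le m$ is wrong---criticality for $\textnormal{Per}_{s_i}$ is not preserved by a non-isometric diffeomorphism. The paper does not re-invoke minimality; instead it observes that Step 1 only used (i) the $C^2$ convergence of $\partial E_i$ to $\Sigma$ away from $\{p_j\}$ and (ii) the growth estimates \eqref{fpergrowth}/\eqref{perindbound} on small balls around the $p_j$, both already established for $E_i$. Since $\psi^t$ is a fixed diffeomorphism of $M$, these properties transfer directly to $\psi^t(E_i)$ (with bad set $\psi^t(\cup_j B_\delta(p_j))$ and constants depending on $X,t$), so Step 1 yields $f_i(t)\to f(t)$ without reproving anything. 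Your closing Ascoli--Arzel\`a argument via Lemma \ref{enboundslemma2} is then correct.
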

    \begin{proof}
    \textbf{Step 1.} We prove that the fractional perimeters of the $E_i$ converge to the varifold mass of $V$.

    Let $\delta>0$, which will be fixed until the end of the proof and then made to go to $0$. Let $\Sigma$ be as in Proposition \ref{LocStableConv}, so that the $E_i$ converge as uniform $C^2$ normal graphs over $K:=\Sigma\setminus\{B_\delta(p_1)\cup...\cup B_\delta(p_l)\}$.  Define the compact manifold with boundary $N:=M\setminus\{B_{4\delta}(p_1)\cup...\cup B_{4\delta}(p_l)\}$; by slightly increasing the value of $\delta$ if necessary, we can moreover assume that $\Sigma$ intersects $\partial N$ transversely and only in its smooth part.
    Let now $\{\mathcal C_j\}$ be a decomposition of $N$ as in Lemma \ref{LipDecomp}, with associated $(1+\delta)$-biLipschitz equivalences $F_j: \mathcal{C}_j \to C_j \subset\R^n$ which are given by the restriction of geodesic normal coordinate maps $\widetilde F_j:B_{r_j}\to \B_{r_j}$ to the $C_j$. By the proof of Lemma \ref{LipDecomp} and the assumption that $\Sigma$ is transverse to $\partial N$, it is simple to see that we can moreover require that $\Sigma$ intersect each of the $\mathcal{C}_j$ transversely and only in the smooth part of $\mathcal{C}_j$.
    
    It is clear then that, since the $E_i$ are converging (with multiplicity) as $C^2$ submanifolds to $\Sigma$ in $N$, for all $i$ large enough the same transversality properties are also true for the $E_i$, and in a quantifiable way. We record in a claim the precise property that we will need, and which follows directly from this discussion. We will use the notation $\mathcal N_r F$ to denote the set (neighborhood) of points at distance at most $r$ from a set $F$.
    
    \noindent \textbf{Claim.} There exists a constant $C$ such that, for all $i$ large enough and $r$ small enough, $\partial E_i\cap\bigcup_j\mathcal N_{3r} \partial\mathcal C_j$ can be covered with $Cr^{2-n}$ (or less) balls $\{B_{r/2}(q_k)\}_k$ of radius $r/2$.
    
    This is to be read as ``since $\Sigma$ is smooth and quantitatively transversal to the $\mathcal C_j$, it accumulates area close to the boundaries of the $\mathcal C_j$ comparably to how a hyperplane passing through the center of a unit ball does so relative to the boundary of said ball. Since the $\partial E_i$ converge in $C^2$ to $\Sigma$ in $K$, the same is true of the $\partial E_i$". In particular, if $\{B_{r/2}(q_k)\}_k$ is a collection of balls of radius $r$ as in the claim and $r$ is small enough, by the classical perimeter bound in \eqref{perindbound} for $i$ large enough we can bound
    $$\textnormal{Per}(E_i;\mathcal N_r \partial\mathcal C_j)\leq \sum_k \textnormal{Per}(E_i;B_{r/2}(q_k))\leq C r^{2-n}r^{n-1}=Cr\, ,
    $$
    which goes to $0$ as $r\to 0$ and thus shows that the $E_i$ do not accumulate perimeter close to the boundary of the $\mathcal C_j$. This simple technical assumption, which is satisfied by our $E_i$, is essential to guarantee that the fractional and classical perimeters of our objects inside the $\mathcal C_j$ are comparable.\\
    
    We are now ready to begin the proof of Step 1. We will omit the constant $\gamma_n$ in what follows, and every instance of a fractional perimeter should be understood as incorporating this multiplicative constant in front.
    
    Observe that, by transversality, we can split
    $$\textnormal{Per}(E_i;M)=\textnormal{Per}(E_i;M\setminus N)+\textnormal{Per}(E_i;N)=\textnormal{Per}(E_i;M\setminus N)+\sum_j\textnormal{Per}(E_i;\mathcal C_j)\, .$$
    An iterated application of the triangle inequality then leads to the bound
    \begin{align}\label{perdsumf}
        &\Big|\textnormal{Per}(E_i;M)-(1-s_i)\textnormal{Per}_{s_i}(E_i;M)\Big|=\\
        &=\Big|\textnormal{Per}(E_i;M\setminus N)+\sum_j\textnormal{Per}(E_i;\mathcal C_j)-(1-s_i)\textnormal{Per}_{s_i}(E_i;M)\Big| \nonumber\\
        &\leq \Big|\textnormal{Per}(E_i;M\setminus N)-(1-s_i)\textnormal{Per}_{s_i}(E_i;M\setminus N)\Big|+\sum_j\Big|\textnormal{Per}(E_i;\mathcal C_j)-(1-s_i)\textnormal{Per}_{s_i}(E_i;\mathcal C_j)\Big| \nonumber\\
        &\ \ \ +\Big|(1-s_i)\textnormal{Per}_{s_i}(E_i;M)-[(1-s_i)\textnormal{Per}_{s_i}(E_i;M\setminus N)+ \sum_j (1-s_i)\textnormal{Per}_{s_i}(E_i;\mathcal C_j)]\Big|\, . \nonumber
    \end{align}
    Since $M\setminus N\subset \cup B_{4\delta}(p_i)$, we immediately see that we can bound the first term as
    \begin{align}\label{eqfpsm1}
        \Big|\textnormal{Per}(E_i;M\setminus N)-(1-s_i)\textnormal{Per}_{s_i}(E_i;M\setminus N)\Big|\leq C(\delta^{n-1}+\delta^{n-s})+\ep(\sg)\leq C\delta^{n-1}+\ep(\sg)\, ,
    \end{align}
    simply by applying \eqref{perindbound} and \eqref{fpergrowth} on each of the balls $B_{4\delta}(p_i)$ and bounding the remaining nonlocal interactions between different balls by $\ep(\sg)$ thanks to disjointness (up to choosing $\delta$ small enough) of the $B_{4\delta}(p_i)$.
    
    The second term is the main one. To obtain a bound for $\Big|\textnormal{Per}(E_i;\mathcal C_j)-(1-s_i)\textnormal{Per}_{s_i}(E_i;\mathcal C_j)\Big|$, we can argue as in Proposition \ref{StableLocPerComp}, using $F_j^{-1}: C_j\subset\R^n\to \mathcal C_j$ as a chart parametrisation and the fact that $F_j$ is a $(1+\delta)$-biLipschitz equivalence. The only difference is that $C_j$ is not a cylinder, so that the $F_j(E_i)$ cannot be exactly written as a union of subgraphs inside $C_j$; on the other hand, the $F_j$ are the restriction of geodesic normal coordinate maps $\widetilde F_j:B_{r_j}\to \B_{r_j}$ to the $C_j$, so that (up to choosing $r_j$ small enough) the $\widetilde F_j(E_i)$ are indeed a union of subgraphs $A_k$ inside $\B_{r_j}$. We can then decompose $F_j(E_i)=\cup_k (A_k\cap C_j)$, where (as argued at the beginning) the $A_k\cap C_j$ are quantitatively transversal the $\partial C_j$. This allows us to apply \cite[Lemma 11]{CV11} to each of the $A_k\cap C_j$, exactly as in the proof of Proposition \ref{StableLocPerComp}. Anyhow, we conclude that
    \begin{align}
        \sum_j\Big|\textnormal{Per}(E_i;\mathcal C_j)-(1-s_i)\textnormal{Per}_{s_i}(E_i;\mathcal C_j)\Big|&\leq \ep(\delta)[\sum_j \textnormal{Per}(E_i;\mathcal C_j)+\sum_j(1-s_i)\textnormal{Per}_s(E_i;\mathcal C_j)] +\ep(\sg)\nonumber\\
        &\leq\ep(\delta) +\ep(\sg)\,,\label{eqfpsm2}
    \end{align}
    where we have bounded the sum of the classical and fractional perimeters inside the $\mathcal C_j$ by the total ones in $M$ (which are uniformly bounded thanks to our assumptions and Proposition \ref{GlobBoundFMI}).\\
    
    We are left with bounding the third term in the RHS of \eqref{perdsumf}, i.e. seeing that $(1-s_i)\textnormal{Per}_{s_i}(E_i;M)$ is very close to the sum of fractional perimeters in $M\setminus N$ and in the $\mathcal C_j$. It is here where we need the quantitative transversality property stated above to get rid of the interactions. Let $I(A,B):=\iint_{E_i\cap A\times E_i^c\cap B} K(p,q)$. Then, we can compute
    \begin{align*}
       \textnormal{Per}_{s_i}(E_i;M) &=\iint_{E_i\cap M\times E_i^c\cap M} K(p,q)=I(M,M)\\
       &=I(M\setminus N,M\setminus N)+I(M\setminus N,N)+\sum_j I(\mathcal C_j,\mathcal C_j)+\sum_j I(C_j,M\setminus \mathcal C_j)\\
       &=\textnormal{Per}_{s_i}(E_i;M\setminus N)+\sum_j\textnormal{Per}_{s_i}(E_i;\mathcal C_j)+I(M\setminus N,N)+\sum_j I(C_j,M\setminus \mathcal C_j)\, .
    \end{align*}
    We can then bound
    \begin{align}
       &\Big|\textnormal{Per}_{s_i}(E_i;M)-[\textnormal{Per}_{s_i}(E_i;M\setminus N) +\sum_j\textnormal{Per}_{s_i}(E_i;\mathcal C_j)]\Big|=\nonumber\\
       &= I(M\setminus N,N)+\sum_j I(C_j,M\setminus \mathcal C_j)\nonumber\\
       &\leq I(M\setminus N,N) + \sum_j\Big[I(\mathcal N_r\partial \mathcal C_j,\mathcal N_r\partial \mathcal C_j)+I(\mathcal C_j\setminus \mathcal N_r\partial \mathcal C_j,M\setminus C_j)+I(\mathcal C_j, (M\setminus \mathcal C_j)\setminus \mathcal N_r\partial \mathcal C_j)\Big]\,. \label{toteqfpf}
    \end{align}
    Since $M\setminus N$ is formed of $l$ balls of radius $\delta$, we can bound
    $$(1-s_i)I(M\setminus N,N)\leq(1-s_i)\textnormal{Per}_{s_i}(M\setminus N;M)\leq lC\delta^{n-s}\,.$$
    Moreover, since $\mathcal C_j\setminus \mathcal N_r\partial \mathcal C_j$ and $M\setminus \mathcal C_j$ are at a positive distance $r$ from each other, we have that
    $$(1-s_i)I(\mathcal C_j\setminus \mathcal N_r\partial \mathcal C_j,M\setminus C_j)\leq(1-s_i)\sum_j\iint_{ (\mathcal C_j\setminus \mathcal N_r\partial \mathcal C_j)\times (M\setminus \mathcal C_j)} K(p,q)\leq (1-s_i)C(r,\delta)=C(r,\delta)\ep(\sg)\, ,
    $$
    and likewise
    $$
    (1-s_i)I(\mathcal C_j, (M\setminus \mathcal C_j)\setminus \mathcal N_r\partial \mathcal C_j)\leq C(r,\delta)\ep(\sg)\, .
    $$
    Therefore, by \eqref{toteqfpf} we see that
    \begin{align}
       &(1-s_i)\Big|\textnormal{Per}_{s_i}(E_i;M)-[\textnormal{Per}_{s_i}(E_i;M\setminus N) +\sum_j\textnormal{Per}_{s_i}(E_i;\mathcal C_j)]\Big|\leq \nonumber\\
       &\hspace{5cm} \leq lC\delta^{n-s}+C(r,\delta)\ep(\sg)+\sum_j I(\mathcal N_r\partial \mathcal C_j,\mathcal N_r\partial \mathcal C_j)\,. \label{toteqfpf2}
    \end{align}
    It remains to bound $I(\mathcal N_r\partial \mathcal C_j,\mathcal N_r\partial \mathcal C_j)$. For that, recall that $\partial E_i\cap \mathcal N_{2r}\partial \mathcal C_j$ is contained in a union $\cup_k B_{r/2}(q_k)$ of at most $Cr^{2-n}$ balls of radius $r/2$, as we recorded in the claim at the beginning of the proof. Considering the balls $B^k:=B_r(q_k)$ of radius $r$ instead, observe that if $p\in E_i\cap \mathcal N_r\partial \mathcal C_j$ and $q\in E_i^c\cap \mathcal N_r\partial \mathcal C_j$, and either  $p\notin \cup_k B^k$ or $q\notin \cup_k B^k$, then either ${\rm dist}(p,\cup_k B_{r/2}(q_k))\geq r/2$ or ${\rm dist}(q,\cup_k B_{r/2}(q_k))\geq r/2$. Therefore, we can bound
    \begin{align*}
    {\rm dist}(p,q)&\geq {\rm dist}(p,\partial E_i)+{\rm dist}(\partial E_i,q)\\
    &= \min\{{\rm dist}(p,\partial E_i\cap \mathcal N_{2r}\partial \mathcal C_j),{\rm dist}(p,\partial E_i\cap (\mathcal N_{2r}\partial \mathcal C_j)^c)\}\\
    &\hspace{1cm} +\min\{{\rm dist}(q,\partial E_i\cap \mathcal N_{2r}\partial \mathcal C_j),{\rm dist}(q,\partial E_i\cap (\mathcal N_{2r}\partial \mathcal C_j)^c)\}\\
    &\geq \min\{{\rm dist}(p,\cup_k B_{r/2}(q_k)),r\}\\
    &\hspace{1cm} +\min\{{\rm dist}(q,\cup B_{r/2}(q_k)),r\}\\
    &\geq r/2\,.
    \end{align*}
    Since $K_s(p,q)$ is bounded for $(p,q)$ with ${\rm dist}(p,q)\geq r/2>0$, for example thanks to \eqref{kerkdtcmp}, this shows that there is a constant $C(r)$ such that
    $$I(\mathcal N_r\partial \mathcal C_j\setminus\cup_k B^k,\mathcal N_r\partial \mathcal C_j)+I(\mathcal N_r\partial \mathcal C_j,\mathcal N_r\partial \mathcal C_j\setminus\cup_k B^k)\leq C(r)\,,$$
    so that we can bound
    \begin{align}
        I(\mathcal N_r\partial \mathcal C_j,\mathcal N_r\partial \mathcal C_j)&\leq I(\mathcal N_r\partial \mathcal C_j\cap\cup_k B^k,\mathcal N_r\partial \mathcal C_j\cap\cup_k B^k)+C(r)\nonumber\\
        &\leq I(\cup_k B^k,\cup_k B^k)+C(r) \label{equbkc}\,.
    \end{align}
    Observe that
    \begin{align*}
        I(\cup_k B^k,\cup_k B^k)&=\iint_{(E_i\cap \cup_k B^k)\times (E_i^c\cap  \cup_k B^k)} K(p,q)\leq\iint_{(E_i\cap \cup_k B^k)\times (E_i\cap \cup_k B^k)^c} K(p,q)=\textnormal{Per}_{s_i}(E_i\cap\cup_k B^k;M)\,.
    \end{align*}
    Then, applying the interpolation result of \eqref{interpropM}, we can bound
    \begin{align*}
        I(\cup_k B^k,\cup_k B^k)&\leq\textnormal{Per}_{s_i}(E_i\cap\cup_k B^k;M)\leq \frac{C}{1-s_i}\big[\textnormal{Per}(\partial(E_i\cap\cup_k B^k);M)\big]^s\\
        &\leq \frac{C}{1-s_i}\big[\textnormal{Per}(\partial E_i\cap\cup_k B^k;M)+\textnormal{Per}(\cup_k \partial B^k;M)\big]^s\\
        &\leq \frac{C}{1-s_i}\big[\sum_k \textnormal{Per}(\partial E_i\cap B^k;M)+\sum_k\textnormal{Per}(\partial B^k;M)\big]^s\,.
    \end{align*}
    The computation after the claim at the beginning of the proof shows then that both sums can be bounded by $Cr$. Substituting into \eqref{equbkc},
    \begin{align*}
        (1-s_i)I(\mathcal N_r\partial \mathcal C_j,\mathcal N_r\partial \mathcal C_j)&\leq Cr^s+(1-s_i)C(r)\,,
    \end{align*}
    
    which by \eqref{toteqfpf2} shows that
    \begin{align*}
       (1-s_i)\Big|\textnormal{Per}_{s_i}(E_i;M)-[\textnormal{Per}_{s_i}(E_i;M\setminus N) +\sum_j\textnormal{Per}_{s_i}(E_i;\mathcal C_j)]\Big|&\leq C\delta^{n-s}+Cr^s+C(r)\ep(\sg)
       \,.
    \end{align*}
    Substituting this bound as well as \eqref{eqfpsm1}, \eqref{eqfpsm2} into \eqref{perdsumf}, we can finally estimate
    \begin{align*}
       \Big|\textnormal{Per}(E_i;M)-(1-s_i)\textnormal{Per}_{s_i}(E_i;M)\Big|&\leq C\delta^{n-1}+Cr^s+C(r)\ep(\sg)+\ep(\delta)\,.
    \end{align*}
    Fix $\delta>0$. Choosing $r$ small enough so that $Cr^s\leq \delta$, and then choosing $i$ large enough (thus $\sg$ small enough) so that $C(r)\ep(\sg)\leq \delta$, we deduce that
   \begin{align*}
       \Big|\textnormal{Per}(E_i;M)-(1-s_i)\textnormal{Per}_{s_i}(E_i;M)\Big|&\leq C\delta
    \end{align*}
    for all $i$ large enough, or in other words
     \begin{align*}
       \limsup_{i\to\infty} \Big|\textnormal{Per}(E_i;M)-(1-s_i)\textnormal{Per}_{s_i}(E_i;M)\Big|&\leq C\delta
       \,.
    \end{align*}
    By arbitrariness of $\delta$ we find that $\lim_{i} (1-s_i)\textnormal{Per}_{s_i}(E_i;M)=\lim_{i} \textnormal{Per}(E_i;M)$,
    which since the $\partial E_i$ converge in the varifold sense to $V$ shows that $\lim_{i} (1-s_i)\textnormal{Per}_{s_i}(E_i;M)=\|V\|_M$
    as desired.
    
    \noindent \textbf{Step 2.} We now show \eqref{q5etgqwa4e5}, which states the convergence of the derivatives of the fractional perimeters along the flow of a vector field to the derivatives of the mass of $V$.
    
    The argument is as in the proof of \eqref{convItep} simpler version of the proof of \eqref{q5etgqwa4e50}. Let $X$ be a vector field on $M$, and let $\psi^t:=\psi_X^t$ denote its flow at time $t$. Define $f_i(t)=(1-s_i)\textnormal{Per}_{s_i}(\psi^t(E_i);M)$, and likewise $f(t)=\|\psi_\#^t(V)\|_M$. In step $1$ we have precisely shown that $f_i(0)\to f(0)$ as $i\to\infty$. It is very simple to see that, given any other $t$, we have that $f_i(t)\to f(t)$ as well: Indeed, we have only used the regular convergence of the $E_i$ to $\Sigma$ outside of a small set and the estimates such as \eqref{fpergrowth} for the $E_i$ on that small set; considering the translated sets $\psi^t(E_i)$ for a fixed $t$ preserves all these properties, since $\psi^t$ is just a fixed diffeomorphism of $M$, and in particular the small set of possibly bad convergence of the $\psi^t(E_i)$ towards $\psi^t(\Sigma)$ is just the flow at time $t$ of the corresponding set at time $0$.\\
    Therefore, we see that the single-variable functions $f_i(t)$ converge pointwise to $f(t)$. By Lemma \ref{enboundslemma2} and the standing assumption that the $E_i$ have uniformly bounded fractional perimeters (upon multiplication by $(1-s_i)$), we also know that the functions $\frac{d^k}{dt^k}f_i(t)$ are locally uniformly bounded. By Arzel\`a-Ascoli, we deduce that $\frac{d^k}{dt^k}f_i(t)$ converges locally uniformly and moreover the limit needs to be $\lim_i \frac{d^k}{dt^k}f_i(t)=\frac{d^k}{dt^k}f(t)$. This concludes the proof of \eqref{q5etgqwa4e5}.
\end{proof}
We can now finally prove the regularity of the limit surface $\Sigma$.
\begin{theorem}\label{smoothlim}
Let $V$ be the limit integral stationary varifold obtained in Theorem \ref{limvarcor}. Then, $\Sigma={\rm supp}(V)$ is a smooth surface.
\end{theorem}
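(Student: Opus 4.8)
The statement will follow once I show that each of the points $p_1,\dots,p_l$ provided by Proposition~\ref{LocStableConv} is a removable singularity of $\Sigma$. Recall what is already in hand: by Proposition~\ref{LocStableConv}, $\Sigma$ is a smooth embedded minimal surface in $M\setminus\{p_1,\dots,p_l\}$, $l\le m$, with the $\partial E_i$ converging in $C^2$ as (multi-sheeted) normal graphs over compact subsets of $\Sigma\setminus\{p_1,\dots,p_l\}$; by Theorem~\ref{limvarcor} the varifold $V$ is integral and stationary; and by Proposition~\ref{globperconv} together with Proposition~\ref{GlobBoundFMI} it has finite mass, $\|V\|_M=\lim_i (1-s_i)\textnormal{Per}_{s_i}(E_i;M)<\infty$. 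Fix $j$ and a ball $B_{2\rho}(p_j)$ containing no other $p_k$.

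\textbf{Step 1 (stability in a punctured neighbourhood of $p_j$).} The plan is to prove that $V$ is stable in $B_{2\rho}(p_j)\setminus\{p_j\}$, i.e. that the classical Jacobi quadratic form of $\Sigma$ is nonnegative on $C^1_c\big(\Sigma\cap(B_{2\rho}(p_j)\setminus\{p_j\})\big)$. Given such a $\xi$, extend it to a normal vector field $X$ on $M$ compactly supported in $B_{2\rho}(p_j)\setminus\{p_j\}$ and set $\xi_i:=\langle X,\nu_{\partial E_i}\rangle_g$; since each $E_i$ is $s_i$-minimal, the first-variation term in Proposition~\ref{12varprop} drops, so $\tfrac{d^2}{dt^2}\big|_{t=0}\textnormal{Per}_{s_i}(\psi_X^t(E_i))=\delta^2\textnormal{Per}_{s_i}(E_i)[\xi_i,\xi_i]$. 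By Lemma~\ref{aspoints} the Morse index of the $E_i$ concentrates only at $p_1,\dots,p_l$, so covering $\textnormal{supp}\,X$ by finitely many balls on which the $E_i$ are eventually $\Lambda$-almost stable (the potential part of $\delta^2\textnormal{Per}_{s_i}$ being uniformly controlled there thanks to the $C^2$ bound of Theorem~\ref{C2alpha}), one obtains $\delta^2\textnormal{Per}_{s_i}(E_i)[\xi_i,\xi_i]\ge-\Lambda\big(\int_{\partial E_i}|\xi_i|\big)^2$ for $i$ large. Multiplying by $1-s_i$ and letting $i\to\infty$: the left-hand side converges, by \eqref{q5etgqwa4e5} in Proposition~\ref{globperconv} together with the $C^2$ convergence of $\partial E_i$ to $\Sigma$, to $\gamma_n$ times the Jacobi form of $\Sigma$ evaluated at $\xi$, while the right-hand side is $O(1-s_i)\to 0$ because $\int_{\partial E_i}|\xi_i|$ is bounded by Proposition~\ref{GlobBoundFMI}. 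Hence the Jacobi form of $\Sigma$ is nonnegative on $\xi$, as claimed.

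\textbf{Step 2 (blow-up and removal).} Now $V$ is stationary and $\Sigma$ is an embedded stable minimal surface of finite mass in $B_{2\rho}(p_j)\setminus\{p_j\}$. By the monotonicity formula the density $\Theta:=\Theta(V,p_j)$ exists and is finite, and rescalings of $V$ at $p_j$ subconverge in the varifold sense to a stationary integral cone $C$ in $\R^3$. Stability is scale-invariant, so $C$ is stable on its regular part; embeddedness passes to the limit, since two sheets of $C$ cannot cross transversally (transversality being stable and the $\partial E_i$, hence $\Sigma$, embedded) nor meet tangentially (strong maximum principle). Thus $\textnormal{supp}(C)\setminus\{0\}$ is smooth and embedded, so its link is an embedded closed minimal $1$-manifold in $\Sp^2$, i.e. a disjoint union of great circles; as any two distinct great circles of $\Sp^2$ intersect, the link is a single great circle and $C$ is a multiplicity-$\Theta$ plane through the origin. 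Consequently the Schoen--Simon sheeting theorem for stable minimal hypersurfaces gives that, in a smaller punctured ball, $\Sigma$ is a union of $\Theta$ disjoint smooth minimal graphs over a disk in the tangent plane; each is a bounded solution of the uniformly elliptic minimal surface equation on a punctured disk, hence extends smoothly across the centre, and the $\Theta$ graphs, being disjoint away from $p_j$, cannot touch at $p_j$ without coinciding. Therefore $\Sigma=\textnormal{supp}(V)$ is a single smooth minimal graph near $p_j$ (the varifold carrying the constant multiplicity $\Theta$ there). Repeating for $j=1,\dots,l$ shows $\Sigma$ is smooth.

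\textbf{Main obstacle.} The crux is Step~1: converting the (almost-)stability of the nonlocal surfaces $E_i$ into a genuine classical stability inequality for the limit $\Sigma$ on a punctured neighbourhood of a singular point, which requires handling simultaneously the nonlocal-to-local transition of the second variation (via \eqref{q5etgqwa4e5}), the uniform control of its potential part away from $p_j$ (via Theorem~\ref{C2alpha}), and the concentration of the Morse index (via Lemma~\ref{aspoints}), with the almost-stability defect being absorbed by the vanishing factor $1-s_i$. Once this is secured, Step~2 is a standard removable-singularity argument for embedded stable minimal surfaces in three-manifolds.
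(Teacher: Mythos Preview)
Your Step~2 is essentially the argument behind the references \cite{SS}, \cite{Isol}, \cite{Sharp} that the paper simply cites, and is fine once Step~1 is in place.

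The gap is in Step~1. Your covering argument does not yield $\delta^2\textnormal{Per}_{s_i}(E_i)[\xi_i,\xi_i]\ge-\Lambda\big(\int|\xi_i|\big)^2$ for $\xi_i$ supported in the whole annulus. Almost stability on each covering ball $B_k$ is an inequality only for test functions supported \emph{in that single $B_k$}; the second variation is a quadratic form, not subadditive in the support, and there is no partition-of-unity mechanism to glue these local inequalities (compare: a catenoid is stable on small pieces yet unstable on their union). Nor is the ``potential part'' of $\delta^2\textnormal{Per}_s$ uniformly controlled as $s\to1$ under a mere $C^2$ bound. In the Euclidean model that term at $p$ is essentially the nonlocal $|A|^2$, namely $\int_{\partial E}|\nu(p)-\nu(q)|^2K_s(p,q)\,dq$; for a uniformly $C^2$ boundary one has $|\nu(p)-\nu(q)|\le C|p-q|$, so this integral behaves like $\int_{\B_1'}|x'|^{-(n+s-2)}\,dx'\sim 1/(1-s)$. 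The best your line of reasoning produces is therefore $(1-s_i)\,\delta^2\textnormal{Per}_{s_i}\ge -C$, which gives nothing in the limit.

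The paper's Step~1 handles the bad point $p_j$ by a contradiction argument that avoids any gluing. If $\Sigma$ were unstable on every punctured ball $B_r(p_j)\setminus\{p_j\}$, one can successively extract unstable directions $X_1,\dots,X_{m+1}$ supported on $(m+1)$ \emph{disjoint} annuli centred at $p_j$: each unstable vector field has compact support inside some annulus, and one then repeats the instability hypothesis inside a strictly smaller punctured ball. Now apply Lemma~\ref{asineq} directly to the $E_i$ with these $(m+1)$ disjoint open sets: for each $i$ at least one annulus carries $\Lambda$-almost stability, and by pigeonhole the \emph{same} annulus, say $A_1$, works along a subsequence. For a vector field supported in that single annulus your limiting computation (multiply by $1-s_i$, use \eqref{q5etgqwa4e5}, and let the almost-stability defect vanish with $1-s_i$) \emph{does} apply verbatim and yields $\tfrac{d^2}{dt^2}\big|_{t=0}\|(\psi_{X_1}^t)_\# V\|_M\ge 0$, contradicting the choice of $X_1$. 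This combinatorial use of the bounded-index hypothesis via Lemma~\ref{asineq}, rather than a covering, is the missing idea.
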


\begin{proof}
Let $p_1,...,p_l$ and $\Sigma$ be as in Proposition \ref{LocStableConv}.

\noindent \textbf{Step 1.} Stability on a punctured ball: Given any $p\in M$, we show that there exists $r_p>0$ such that $\Sigma$ is stable on $B_{r_p}\setminus \{p\}$.

Assume first that $p\neq p_1, p_2,...,p_l$. By Lemma \ref{aspoints}, there exist $r_p>0$ and $\Lambda_p>0$ such that the $E_{i}$ are eventually $\Lambda$-almost stable on $B_{r_p}(p)$. This implies that $\Sigma$ is stable in $B_{r_p}(p)$: Let $X$ be a smooth vector field supported in $B_{r_p}(p)$, and let and let $ \psi^t := \psi_X^t$ denote its flow at time $t$. Then, by \eqref{q5etgqwa4e5} and the definition of $\Lambda$-almost stability (see Definition \ref{almoststab}, and recall the last line in Definition \ref{2quaddef}) we have that
\begin{align*}
        \frac{d^2}{d t^2}\|\psi_\#^t(V)\|_M&=\lim_i \frac{d^2}{d t^2}(1-s_i)\textnormal{Per}_s(\psi^t(E_i);M)\geq -\lim_i (1-s_i)\Lambda_p  \Big(\int_{\partial E_i\cap B_{r_p}(p)}|X\cdot \nu_{\Sigma}|\Big)^2\\
        &\geq -\lim_i (1-s_i)\Lambda_p  [\|X\|_{L^{\infty}(M)}\textnormal{Per}(\partial E_i;M)]^2= 0\, ,
\end{align*}
as desired.

Let now $p$ be one of the $p_1,...,p_l$ instead. If (arguing by contradiction) we assume that there exists no $r_p>0$ such that $\Sigma$ is stable on $B_{r_p}(p)\setminus \{p\}$, given $\delta>0$ we can in particular find $(m+1)$ vector fields $X_1$,..., $X_{m+1}$, supported respectively on small disjoint annuli $A_1,...,A_{m+1}$ centered at $p$ and contained in $B_\delta(p)$, such that $\frac{d^2}{dt^2}\Big|_{t=0}\|(\psi_{X_k}^t)_\# V\|_M<0$ for every $k$.

On the other hand, for each $i$  we can find (by Lemma \ref{asineq}) one of the $A_1,...,A_{m+1}$ such that $E_i$ is $\Lambda$-almost stable in it, for some $\Lambda<\infty$ depending only on the fixed $A_1,...,A_{m+1}$. Thus, by passing to a subsequence (which we do not relable), we can assume that the $E_i$ are all $\Lambda$-almost stable on the same annulus, say $A_1$. From this we deduce (see \eqref{lamasineq}) that $\frac{d^2}{dt^2}\big|_{t=0}\text{Per}_s(\psi_{X_1}^t(E_i))\geq -C$ for some constant $C<\infty$ depending on $\Lambda$, $X_1$ and $\sup_i\textnormal{Per}(\partial E_i;M)$, the supremum of classical perimeters being bounded thanks to Proposition \ref{GlobBoundFMI}. By Proposition \ref{globperconv}, we can then pass to the limit as $i\to\infty$ to deduce that
\begin{align*}
    \frac{d^2}{dt^2}\Big|_{t=0}\|(\psi_{X_1}^t)_\# V\|_M&=\lim_i (1-s_i)\frac{d^2}{dt^2}\big|_{t=0}\text{Per}_s(\psi_{X_1}^t(E_i))\geq -\lim_i (1-s_i)C=0\, ,
\end{align*}
which is a contradiction.

This concludes the proof of Step $1$.

\noindent \textbf{Step 2.} We prove that $\Sigma={\rm supp}(V)$ is smooth everywhere.

In Proposition \ref{LocStableConv} we have proved that the support $\Sigma$ of $V$ is a smooth (minimal) surface in all of $M\setminus \{p_1,...,p_l\}$. In particular, we have that (with $n=3$)
$$\mathcal H^{n-3}(\textnormal{Sing}(\Sigma)) =\mathcal H^{0}(\textnormal{Sing}(\Sigma)) \leq l < \infty \, .
$$
We will now show that if $p$ is one of the $p_1,...,p_l$, then $\Sigma$ is also smooth around $p$. First, by Step 1, we deduce that $V$ is a stable integral varifold on a punctured ball $B_{r_p}\setminus\{p\}$. Moreover, we have just argued that $\mathcal H^{n-3}(\textnormal{Sing}(\Sigma)) < \infty$ with $n=3$. The classical result\footnote{Recall that $n=3$ refers to ambient dimension in the present article. This corresponds to $n=2$ in the notation employed in \cite{SS}.} in \cite{SS} on the regularity of stable minimal surfaces with small singular set (points, in our case, which have null $2$-capacity on a surface) can then be used to show that $\Sigma$ is actually smooth also at the point $p$. To be precise, for $n>3$ this would be precisely the result in \cite{SS}, and in the present case $n=3$ the removability of point singularities was first given in \cite{Isol} by using a tangent cone analysis on top of the results in \cite{SS}. See also the appendix of \cite{Sharp} for how a simple capacitary argument allows to apply \cite{SS} directly instead.
\end{proof}
We can now complete the proof of Theorem \ref{FMIConv}. We have essentially already obtained most of its points.
\begin{proof}[Proof of Theorem \ref{FMIConv}]
    \textbf{Step 1.} Convergence to a limit.
    
    By Theorem \ref{limvarcor}, we have the convergence of a subsequence of the $\partial E_i$ to a limit stationary integral varifold $V$ in the varifold sense. Moreover, by Theorem \ref{smoothlim}, this varifold is supported on a smooth surface $\Sigma$. Let $\Sigma_1,...,\Sigma_m$ denote the connected components of $\Sigma$; then, by the Constancy Theorem (see \cite[Theorem 41.1]{SimonNotes}), the $\Sigma_k$ are minimal surfaces and there are integers $n_k$ such that $V=\sum_{k=1}^{m} n_k [\Sigma_k]$, where $[\Sigma_k]$ denotes the integer rectifiable varifold associated to the smooth surface $\Sigma_k$. Combining this with Proposition \ref{globperconv}, we deduce that
    $$\lim_i (1-s_i)\textnormal{Per}_s(E_i;M)=\lim_i\textnormal{Per}(E_i;M)=\sum_{k=1}^{m} n_k\rm{Per}(\Sigma_k)\,.$$
    \noindent \textbf{Step 2.} Constancy of the multiplicity in the convergence.
    
    Let $p_1,...,p_l$ be as in Proposition \ref{LocStableConv}. We want to show that a subsequence of the $E_i$ converges towards $\Sigma\setminus\{p_1,...,p_l\}$ with constant multiplicity over each component. Let then $K:=K_\delta=\Sigma_k\setminus\{B_\delta(p_1)\cup...\cup B_\delta(p_l)\}$. If $\delta$ is small enough, then $K_\delta$ is still a connected set\footnote{Thanks to $\Sigma_k$ being smooth and connected, so that locally around the $p_j$, topologically we are just removing a disk from a bigger disk.}. Moreover, by Proposition \ref{LocStableConv}, the $E_i$ are eventually graphs over $K_\delta$. Fix $i$; then, since (given a natural number $d\in\N$) having a fixed number of leaves $d$ is both an open and closed condition, by connectedness the number of leaves of $E_i$ over the limit is constant on $K_\delta$. Now, denoting this multiplicity by $d=d_i$, a priori it could vary when varying the index $i$; on the other hand, $d_i$ is uniformly bounded, since $\limsup_i d_i\textnormal{Per}(\Sigma\cap K_\delta)\leq \limsup_i\textnormal{Per}(E_i)$ and the latter quantity is uniformly bounded by Proposition \ref{GlobBoundFMI}. Therefore, passing to a subsequence we can take $d_i$ to be identical for all $i$. Finally, observe that choosing any $\delta'<\delta$ does not change the result, since $K_\delta\subset K_{\delta'}$ and as argued above the number of leaves over $K_\delta'$ needs to be constant, therefore (for $i$ large enough) this number coincides with the one over $K_\delta$.

    \noindent \textbf{Step 3.} The minimal surface $\Sigma$ has Morse index bounded by $\p$.

    To check this, consider $(\p+1)$ vector fields $X_0,...,X_\p$ of class $C^\infty$ on $M$.
    
Letting $a:=(a_0,a_1,...,a_\p)\in\R^{\p+1}$ and $X[a]=a_0X_0+...+a_\p X_\p$, we can define the quadratic form $Q_{i}(a):=(1-s_i)\frac{d^2}{dt^2}\Big|_{t=0}\textnormal{Per}_s(\psi^{t}_{X[a]}(E_i))$, which we can write as $Q_{i}(a)=Q_{i}^{kl}a_ka_l$ for some coefficients $Q_{i}^{kl}$. From \eqref{q5etgqwa4e5} and the polarization identity for a quadratic form, it is immediate to see that $Q_{\ep_j}^{kl}\to Q_{0}^{kl}$ as $j\to \infty$, where $Q_0(a):= \frac{d^2}{dt^2}\Big|_{t=0}\|(\psi_{X_{[a]}}^t)_\# V\|_M=Q_{0}^{kl}a_ka_l$.

Now, since the $E_i$ have Morse index $\leq \p$, by definition we know that for every $i$ there must exist some $a^{i}\in\Sp^{\p}$ such that
    \begin{equation}
    Q_{i}(a^{i})=(1-s_i)\frac{d^2}{dt^2}\Big|_{t=0}\textnormal{Per}_s(\psi^{t}_{X[a^i]}(E_i))\geq 0\, ;
    \end{equation}
the convergence of the coefficients $Q_{i}^{kl}$ to $Q_0^{kl}$ then immediately shows that $Q_0(a)\geq 0$ for some $a\in\Sp^\p$ as well, which is precisely a restatement of the fact that $\Sigma$ have Morse index at most $\p$.
\end{proof}
\section{Density and equidistribution of classical minimal surfaces -- Proofs of Corollary \ref{FracYau4} and Theorems \ref{Density}-\ref{Equi}}
With the results up to now, we can give:
\begin{proof}[Proof of Corollary \ref{FracYau4}]
    Given $\p\in\N^+$, let $\{s_i\}_i$ be a sequence of numbers in $(0,1)$ such that $\lim_i s_i=1$ and
    $$\lim_i (1-s_i)l_{s_i}(\p,M)=\liminf_{s\to 1} (1-s)l_{s}(\p,M)=l_{1}(\p,M)\, ,
    $$
    which exists by definition \eqref{deflp1} of $l_{1}(\p,M)$. Let moreover $E_{\p}^{s_i}$ be given by Theorem \ref{FracYau3}. Applying Theorem \ref{FMIConv} to the $\partial E_{\p}^{s_i}$, which are smooth $s_i$-minimal surfaces with Morse index bounded by $\p$ and which satisfy the fractional perimeter bound
    $$(1-s_i){\rm Per}_{s_i}(\partial E_{\p}^{s_i})=(1-s_i)l_{s_i}(\p,M)\leq C\p^{s_i/n}\,,$$
    we conclude the desired result.
\end{proof}

In what follows, it will be useful to keep in mind the notation and properties in \eqref{minmaxval}--\eqref{lpsbounds} and \eqref{deflp11}--\eqref{lp1bounds}. To lighten the notation, we will write $l_{s,\p}(N,h)$ instead of $l_s(\p,(N,h))$ when metrics appear.

We start with the following simple lemma, an analogue of \cite[Lemma 1]{MNS} and which shows that the $\p$-widths are continuous (more precisely, locally Lipschitz) with respect to the metric:
\begin{lemma}\label{lipcompwidth}
    Let $\widetilde g$ be a $C^{\infty}$ Riemannian metric on $M$, and let $C_1<C_2$ be positive constants. Then, there is $K=K(\widetilde g, C_1, C_2)$ such that
    $$|\p^{-1/n}l_{1,\p}(M,g')-\p^{-1/n}l_{1,\p}(M,g)|\leq K |g'-g|_{\widetilde g}\,,
    $$
    for any $g,g'\in\{h \mbox{ smooth metric on } M : C_1\widetilde g \leq h \leq C_2\widetilde g$ and any $p\in \N$.
\end{lemma}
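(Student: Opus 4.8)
The plan is to reduce the statement to a one-sided Lipschitz estimate for the min-max values $c_{\ep,s}^d$ (or $c_{\ep,s}$) with respect to the metric, via the biLipschitz comparison Lemma \ref{Lipineq}, and then pass to the limits $\ep\to 0$ and $s\to 1$. First I would note that if $g,g'$ both lie in $\{h : C_1\widetilde g\leq h\leq C_2\widetilde g\}$ and $|g'-g|_{\widetilde g}\leq \eta$, then the identity map $\mathrm{id}:(M,g)\to(M,g')$ is $(1+C\eta)$-biLipschitz for a constant $C=C(\widetilde g,C_1,C_2)$: indeed $g'=g+(g'-g)$ and $g'-g$ is controlled by $\eta\,\widetilde g\leq (\eta/C_1) g$, so lengths of curves — and hence the Riemannian distances $d_g$, $d_{g'}$ — are distorted by a factor $(1+C\eta)$. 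A standard (and here we only need a slightly cruder) version of Lemma \ref{Lipineq}, applied with $\Omega_1=\Omega_2=M$, $M_1=(M,g)$, $M_2=(M,g')$ and $F=\mathrm{id}$, then gives $\mathcal E_{\ep,s}^d(u,(M,g'))\leq (1+C\eta)^{3n+s}\,\mathcal E_{\ep,s}^d(u,(M,g))$ for all $u$ with $|u|\leq 1$ (the potential term transforms by the volume Jacobian, the Sobolev term by the distance distortion; one uses $dV_{g'}\leq (1+C\eta)^n dV_g$, which follows from $C_1\widetilde g\leq g,g'\leq C_2\widetilde g$ and $|g'-g|_{\widetilde g}\leq\eta$).

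Next I would take infima over admissible families. Since the underlying function space $H^{s/2,d}(M)$ and the family $\widetilde{\mathcal F}_\p^d$ depend only on the smooth structure and the Hausdorff dimension — which is a biLipschitz invariant — a set $A$ is admissible for $(M,g)$ iff it is admissible for $(M,g')$, so taking $\inf_{A}\sup_{u\in A}$ on both sides of the energy inequality yields $c_{\ep,s}^d(\p,(M,g'))\leq (1+C\eta)^{3n+s}c_{\ep,s}^d(\p,(M,g))$, and symmetrically with $g,g'$ interchanged. Letting $\ep\to 0$ (recall $l_s^d=\lim_\ep c_{\ep,s}^d$, the limit existing by monotonicity) preserves the inequality, giving $l_s^d(\p,(M,g'))\leq (1+C\eta)^{3n+s} l_s^d(\p,(M,g))$. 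Multiplying by $(1-s)$ and taking $\liminf_{s\to 1}$ gives the same bound for $l_1^d(\p,(M,g'))$, and then Step 2 of the proof of Theorem \ref{clasweylthm} (the identity $l_1(\p,M)=l_1^d(\p,M)$, valid for every metric) lets me replace $l_1^d$ by $l_1$ throughout. Dividing by $\p^{1/n}$, I obtain $\p^{-1/n}l_{1,\p}(M,g')\leq (1+C\eta)^{3n+s}\,\p^{-1/n}l_{1,\p}(M,g)$ uniformly in $\p$; combining with the reverse inequality and the a priori bound $\p^{-1/n}l_{1,\p}(M,g)\leq C(M,\widetilde g,C_1,C_2)$ from \eqref{lp1bounds} (uniform over the compact family of metrics), the factor $(1+C\eta)^{3n+s}-1\leq C'\eta$ for $\eta$ small produces $|\p^{-1/n}l_{1,\p}(M,g')-\p^{-1/n}l_{1,\p}(M,g)|\leq K\eta = K|g'-g|_{\widetilde g}$ with $K=K(\widetilde g,C_1,C_2)$, as desired. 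For $|g'-g|_{\widetilde g}$ not small, the bound is automatic after enlarging $K$ using the same uniform a priori bound, since the left-hand side is then controlled by $2C(M,\widetilde g,C_1,C_2)$.

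The only genuinely delicate point — and the step I expect to be the main obstacle — is verifying that the biLipschitz distortion constant for $\mathrm{id}:(M,g)\to(M,g')$ is genuinely $1+O(|g'-g|_{\widetilde g})$ with a constant depending only on $\widetilde g, C_1, C_2$ (and not on $g,g'$ individually). This requires a short but careful argument: from $C_1\widetilde g\leq g\leq C_2\widetilde g$ one gets that $g$-geodesic distance and $\widetilde g$-distance are comparable with constants $\sqrt{C_1},\sqrt{C_2}$; and from $|g'-g|_{\widetilde g}\leq\eta$ together with $g\geq C_1\widetilde g$ one gets $(1-\eta/C_1)g\leq g'\leq (1+\eta/C_1)g$ as bilinear forms, hence the ratio of lengths of any curve in the two metrics lies in $[\sqrt{1-\eta/C_1},\sqrt{1+\eta/C_1}]=1+O(\eta)$, and the same for the infimum over curves, i.e. for the distances. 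Everything else is bookkeeping: checking that Lemma \ref{Lipineq} (stated for subdomains with piecewise $C^1$ boundary) applies with $\Omega_1=\Omega_2=M$ and $F=\mathrm{id}$ — which is immediate, or one reproves the one-line change-of-variables estimate directly — and that the admissibility conditions in $\widetilde{\mathcal F}_\p^d$ are metric-independent in the required sense.
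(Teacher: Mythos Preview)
Your proposal is correct and follows essentially the same route as the paper: apply the biLipschitz comparison of Lemma \ref{Lipineq} with $F=\mathrm{id}_M$ to obtain a multiplicative inequality between the Allen--Cahn energies, pass to min-max values, take the limits $\ep\to 0$ and $s\to 1$ (using $l_1=l_1^d$ from Step~2 of Theorem \ref{clasweylthm}), and convert to the additive Lipschitz bound via the uniform upper bound on the normalized widths. The only cosmetic difference is that the paper applies the width upper bound \eqref{lpsboundsd} at the $s$-level (and compares $g$ to $\widetilde g$ via the same multiplicative inequality to make the bound uniform over the family of metrics) before taking $\liminf_{s\to 1}$, whereas you invoke \eqref{lp1bounds} afterwards; both orderings work.
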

\begin{proof}
    By Step 2 in the proof of Theorem \ref{clasweylthm}, we know that $l_{1,\p}(M,g)=l_{1,\p}^d(M,g)$. Therefore, the lemma is equivalent to proving that
    $$|\p^{-1/n}l_{1,\p}^d(M,g')-\p^{-1/n}l_{1,\p}^d(M,g)|\leq K |g'-g|_{\widetilde g}\,.
    $$
    To prove that this, we start by looking at the fractional Allen-Cahn functional for some $s\in(0,1)$. We consider Lemma \ref{Lipineq}, with $(M_1,g_1)=(M,g')$ and $(M_2,g_2)=(M,g)$, $\Omega_1=M_1$ and $\Omega_2=M_2$, and $F=id_M$ viewed as a map $F:M_1\to M_2$. Then, if $u\in H^{s/2,d}(M_1)=H^{s/2,d}(M_2)$, it holds that
$$
\mathcal E_{\ep,s}^d(u\circ F,M_1)\leq \text{Lip}(F)^{3n+s}\mathcal E_{\ep,s}^d(u,M_2)\, ,
$$
and then, by equality of the spaces $H^{s/2,d}(M_1)$ and $H^{s/2,d}(M_2)$, taking $p$-th min-max values according to \eqref{minmaxvald} we get that
$$
c_{\ep,s}^d(\p,M_1)\leq \text{Lip}(F)^{3n+s}c_{\ep,s}^d(\p,M_2)\, .
$$
Sending $\ep$ to $0$, we deduce that
$$
l_{s,\p}^d(M_1)\leq \text{Lip}(F)^{3n+s}l_{s,\p}^d(M_2)\, .
$$

Now, we can bound
\begin{align*}
\rm{Lip}(F)^{3n+s}&=\Bigg(\sup_{(p,q)\in M\times M}\frac{\rm{dist}_{M_2}(p,q)}{\rm{dist}_{M_1}(p,q)}\Bigg)^{3n+s}\leq \Bigg(\sup_{(p,q)\in M\times M}\frac{\sup_{v\in TM}\frac{|v|_{g_2}}{|v|_{g_1}}\rm{dist}_{M_1}(p,q)}{\rm{dist}_{M_1}(p,q)}\Bigg)^{3n+s}\\
&=\Bigg(\sup_{v\in TM}\frac{|v|_{g_2}}{|v|_{g_1}}\Bigg)^{3n+s}=\Bigg(1+\sup_{v\in TM}\frac{|v|_{g_2}-|v|_{g_1}}{|v|_{g_1}}\Bigg)^{3n+s}\\
&\leq \Bigg(1+\sup_{v\in TM}\frac{|v|_{\widetilde g}}{|v|_{g_1}}\sup_{v\in TM}\frac{|v|_{g_2}-|v|_{g_1}}{|v|_{\widetilde g}}\Bigg)^{3n+s}\leq \Bigg(1+\frac{1}{C_1}|g_2-g_1|_{\widetilde g}\Bigg)^{3n+s}\\
&=\Bigg(1+\frac{1}{C_1}|g'-g|_{\widetilde g}\Bigg)^{3n+s}\, .
\end{align*}

Substituting, and using again the definition of $M_1$ and $M_2$, we get that
\begin{equation}\label{inlsmet}
    l_{s,\p}^d(M,g')\leq \Bigg(1+\frac{1}{C_1}|g'-g|_{\widetilde g}\Bigg)^{3n+s}l_{s,\p}^d(M,g)\, ,
\end{equation}
or in other words
$$
l_{s,\p}^d(M,g')-l_{s,\p}^d(M,g) \leq\Bigg[\Bigg(1+\frac{1}{C_1}|g'-g|_{\widetilde g}\Bigg)^{3n+s}-1\Bigg]l_{s,\p}^d(M,g)\, .
$$
Now, applying first \eqref{inlsmet} with $(M,g)$ and $(M,\widetilde g)$ instead on the right hand side, and using then the upper bound in \eqref{lpsboundsd} with $\Omega$ equal to the Riemannian manifold $(M,\widetilde g)$, we get that
$$
(1-s)l_{s,\p}^d(M,g')\p^{-s/n}-(1-s)l_{s,\p}^d(M,g)\p^{-s/n} \leq C(\widetilde g,C_1,C_2)\Bigg[\Bigg(1+\frac{1}{C_1}|g'-g|_{\widetilde g}\Bigg)^{3n+s}-1\Bigg]\, ,
$$
or equivalently
$$
(1-s)l_{s,\p}^d(M,g')\p^{-s/n} \leq (1-s)l_{s,\p}^d(M,g)\p^{-s/n} + C(\widetilde g,C_1,C_2)\Bigg[\Bigg(1+\frac{1}{C_1}|g'-g|_{\widetilde g}\Bigg)^{3n+s}-1\Bigg]\,.
$$
Taking finally $\liminf_s$ on both sides and substracting again, by definition of $l_{1,\p}^d$ we get that
$$
l_{1,\p}^d(M,g')\p^{-1/n}-l_{1,\p}^d(M,g)\p^{-1/n} \leq C(\widetilde g,C_1,C_2)\Bigg[\Bigg(1+\frac{1}{C_1}|g'-g|_{\widetilde g}\Bigg)^{3n+s}-1\Bigg]\, ,
$$
from which the desired result follows by a Taylor expansion.
\end{proof}
The short proof of Theorem \ref{Density} on the density for generic metrics then follows as in \cite{IMN} (see also \cite{GG2}). We indicate the few modifications needed.
\begin{proof}[Proof of Theorem \ref{Density}]
The proof is just as in \cite[Proposition 3.1]{IMN}, which uses the minimal surfaces associated to the Almgren-Pitts spectrum and the Weyl Law associated to them. One simply need substitute the continuity of the $\p$-widths in \cite[Lemma 2.1]{IMN} by Lemma \ref{lipcompwidth}, the attainability of the min-max widths in \cite[Proposition 2.2]{IMN} by Corollary \ref{FracYau4}, and the Weyl Law from \cite{LMN} by Theorem \ref{clasweylthm}.
\end{proof}
Likewise, one can obtain Theorem \ref{Equi} on the equidistribution of classical minimal surfaces for generic metrics:
\begin{proof}[Proof of Theorem \ref{Equi}]
    The proof in \cite{MNS}, which again uses the minimal surfaces and Weyl Law associated to the Almgren-Pitts spectrum, can be replicated in our case. The result in \cite[Lemma 1]{MNS} on the Lipschitz continuity of the $\p$-widths is replaced by Lemma \ref{lipcompwidth} from the present paper, the attainability of the min-max widths in \cite[Proposition 2.2]{IMN} (which is used in the proof of \cite[Lemma 2]{MNS}) is replaced by Corollary \ref{FracYau4}, and the Weyl Law from \cite{LMN} is replaced by Theorem \ref{clasweylthm}.
\end{proof}

\appendix

\section{Proof of Theorem \ref{mmbounds}}\label{app:minmaxthm}
\begin{proof}[Proof of Theorem \ref{mmbounds}]
    The existence of a critical point $u_{\ep,\p}$ with energy $c_{\ep,s}$ follows from applying a min-max theorem to the functional $\mathcal{E}_{\ep,s}$ with $\widetilde{\mathcal{F}}_\p$ as the set of min-max families, exactly as in the proof of \cite[Theorem 3.4]{CFS}. The fact that $u_{\ep,\p}$ has index bounded by $\p$ comes from the fact that min-max critical points of families of the type \eqref{cohom} satisfy this upper index bound; this is thanks to the Hausdorff measure condition $\mathcal{H}^\p(A)<\infty$, as described in \cite{LS}, which intuitively corresponds to considering sets $A$ with at most $\p$ parameters in the min-max.
    
    Finally, the upper and lower energy bounds in \eqref{critlevbound} follow the same strategy as in the proof of \cite[Theorem 3.2]{CFS}. To be precise, in \cite{CFS} we considered the family
    \begin{equation*}
    \mathcal{G}_\p:= \big{\{} A = f(\Sp^\p) \ : \ f \in C^0 (\Sp^p;  H^{s/2}(M)\backslash \{0\} ) \s \text{and} \s f(-x)=-f(x) \  \forall \ x\in\Sp^\p \big{\}}
    \end{equation*}
    instead.
    
    By the Borsuk--Ulam theorem, given $A\in \mathcal G_\p$ (and assuming that the Hausdorff condition holds) then
    $A\in \mathcal F_\p$ as well. This inclusion was, in fact, the only property that was used to prove the lower bounds, thus they hold for $\mathcal F_\p$ as well.\\
    On the other hand, the upper bounds were shown by constructing an explicit $f:\Sp^\p\to H^{s/2}(M)\setminus{0}$ such that all elements in $A=f(\Sp^\p)\in\mathcal G_\p$ have low energy. If the Hausdorff measure condition $\mathcal{H}^\p(A)<\infty$ holds, then $A\in \mathcal F_\p$ and thus the same set $A$ works in our case. Unfortunately, the only reasonable way of showing the measure condition is to see that $f:\Sp^\p\to H^{s/2}(M)\setminus{0}$ is Lipschitz, which is not true for the function $f$ considered in \cite[Theorem 3.2]{CFS} -- in fact, it is only $\beta$-Holder continuous for an intermediate exponent $\beta\in (0,1)$ in general. We now explain the construction and how to adapt it to our case to obtain a Lipschitz map.
    
    Essentially, via the exponential map and a finite covering of our manifold with $\p$ small balls, in \cite{CFS} the construction is reduced to the case $M=\B_{r_\p}(0)$, the Euclidean ball with radius $r_\p=C\p^{-\frac{1}{n}}$. Then, $f$ is defined as the map which sends $a=(a_0, a_1, \dotsc, a_\p) \in \Sp^\p$ to the function $u_a\in H^{s/2}(M)\setminus \{0\}$ defined by $u_a(x)={\rm sgn}(P_a(x_n))$ with $P_a(z)=a_0+a_1 z +\dotsc +a_p z^\p$. Therefore, $u_a$ is a characteristic function with values $\pm 1$ depending on the sign of $P_a(x_n)$, and it jumps at most $\p$ times when $x_n$ coincides with a root of $P_a$ with odd multiplicity. This shows that $u_a$ has BV norm at most $C\p\cdot r_\p^{n-1}=C\p^{1/n}$, which by the interpolation result in \eqref{ipr11} shows that it has $H^{s/2}$ norm at most $\frac{C}{1-s}\p^{s/n}$ as desired.
    
    The only reason why this assignation $f:\Sp^\p\to H^{s/2}(M)\setminus\{0\}$ might not be Lipschitz is that, when changing $a$, two roots of $P_a$ might approach each other (or approach $\pm r_\p$). Just as an example, consider $t\in N:=(0,1)$ and $P_t(z)=z(z-t)$, which has the roots $0$ and $t$. Define the single-variable function $u_t(z)={\rm sgn}(P_t(z))$ for $z\in(-1,1)$, so that $u_t$ takes the value $-1$ on $(-1,0)$, $+1$ on $(0,t)$, and $-1$ again on $(t,1)$. As $t\to 0$, it is clear that $[u_t]_{H^{s/2}(-1,1)}\to 0$: for example, by applying \eqref{ipr11} to $u_t+1$, we see that $[u_t]_{H^{s/2}(-1,1)}^2\leq Ct^{1-s}$, or equivalently $[u_t]_{H^{s/2}(-1,1)}\leq Ct^{\frac{1-s}{2}}$. On the other hand, as a function of $t$, $t\mapsto Ct^{\frac{1-s}{2}}$ is not Lipschitz, and in general this bound cannot be improved.\\
    
    Going back to the case $M=\B_{r_\p}$, to solve this issue we regularise our functions in a simple way. Consider a smooth, nondecreasing, odd function $\eta$ such that $\eta(x)={\rm sgn}(x)$ for $|x|\geq 1$ and $\eta(x)=0$ only at $x=0$. Define $\eta_\delta(x)=\eta(x/\delta)$, which as $\delta\to 0$ converges to ${\rm sgn}(x)$, and set $u_{a,\delta}(x)=\eta_\delta\circ P_a(x_n)$, which is a smoothed out version of the function $u_a(x)$. Therefore, the $BV$ norm of $u_{a,\delta}$ is bounded in the same way as for $u_a$, since the function $u_{a,\delta}(x)$ varies in total just as much (it is monotone where $P_a$ is, and there are at most $(\p-1)$ values of $x_n$ where $P_a(x_n)$ changes monotonicity since it is a polynomial of degree at most $\p$). The interpolation result \eqref{ipr11} then shows once again that $[u_{a,\delta}]_{H^{s/2}(\B_{r_\p})}\leq \frac{C}{1-s}\p^{s/n}$. Moreover, since $W(\pm 1)=0$ and $W(s)\leq\frac{1}{4}$ for $s\in[-1,1]$, we see that
    $$\lim_{\delta\to 0}\sup_{a\in\Sp^\p}\frac{1}{\ep^s}\int_{\B_{r_\p}} W(u_{a,\delta})\leq \frac{1}{4\ep^s}\lim_{\delta\to 0}\sup_{a\in\Sp^\p} \big|\{x:|P_a(x)|\leq \delta\}\cap\B_{r_\p}\big|=0\,,$$
    where the last equality follows from the fact that the set of $x\in\B_{r_\p}$ where the polynomial $P_a(x_n)$ ---whose vector of coefficients $a$ has norm one--- is close to zero has very small measure. This elementary fact is easy to see, for example by a compactness+contradiction argument.
    
    Then, by choosing (fixing) $\delta>0$ small enough depending on $\p$ and $\ep$, we find that $\mathcal E_{\ep,s}(u_{a,\delta},\B_{r_\p})\leq \frac{C}{1-s}\p^{s/n}$. Moreover, by the definition of $\eta$ we see that $u_{a,\delta}$ is never the zero function, since neither is $P_a$, and moreover $u_{-a,\delta}=-u_{a,\delta}$ by oddness of $\eta$. Let $f:\Sp^\p\to H^{s/2}(M)\setminus\{0\}$, $f(a)= u_{a,\delta}$. To see that $A=f(\Sp^\p)$ is in $\mathcal F_\p$, it remains only to see that $f$ is a Lipschitz function, since it will imply that $\mathcal H^\p(A)<\infty$; with this, we will conclude the proof of the desired upper bounds in \eqref{critlevbound}.
    
    Now, letting $g(a,x):=u_{a,\delta}(x)$, by smoothness in $(a,x)$ we can bound $\|g\|_{C^2}\leq C(\delta)$ for some constant $C(\delta)$, the value of which is not important (as we have already fixed $\delta$). This shows that, given $a$ and $b=a+h$ in $\Sp^\p$,
    \begin{align*}
        [u_{a+h,\delta}-u_{a,\delta}]_{H^{s/2}(M)}^2&=\iint_{\B_{r_\p}\times\B_{r_\p}} \Big((u_{a+h,\delta}-u_{a,\delta})(x)-(u_{a+h,\delta}-u_{a,\delta})(y)\Big)^2K(x,y)\\
        &=\iint_{\B_{r_\p}\times\B_{r_\p}} \Big(\int_0^{|h|}\frac{\partial}{\partial s} \big[u_{a+s\frac{h}{|h|},\delta}(x)-u_{a+s\frac{h}{|h|},\delta}(y)\big]ds\Big)^2K(x,y)\\
        &\leq |h|^2\iint_{\B_{r_\p}\times\B_{r_\p}} \sup_s\big[\frac{\partial}{\partial s} u_{a+s\frac{h}{|h|},\delta}(x)-\frac{\partial}{\partial s}u_{a+s\frac{h}{|h|},\delta}(y)\big]^2K(x,y)\\
        &\leq |h|^2\|g\|_{C^2}\iint_{\B_{r_\p}\times\B_{r_\p}} |x-y|^2K(x,y)\\
        &\leq |h|^2 C(\delta)\,,
    \end{align*}
    which after taking square roots on both sides proves that $a\mapsto u_{a,\delta}$ is a Lipschitz map from $\Sp^\p$ to $H^{s/2}(\B_{r_\p})$. This concludes the proof in the case $M=\B_{r_\p}$. As mentioned at the beginning, in general one actually decomposes $M$ as a union of $\p$ subsets which (passing to normal coordinates) are of this type; see the proof of \cite[Theorem 3.2]{CFS}. The additional interactions between the respective subsets can simply be bounded directly by the fractional perimeters of the subsets, including in the proof of the Lipschitz property above in the general case.
\end{proof}
\section{Step 4 of the proof of Proposition \ref{supleq2}}\label{app:hausdorffmod}
In Step 1, given $A\in \tilde{\mathcal{F}}_\p^d(\Omega)$, we constructed $A'\subset H^{s/2}(M)\setminus\{0\}$ with energy not surpassing that of $A$ by much (in the sense that every $U\in A'$ is obtained from some $u\in A$ by $U\big|_{C_i}=u\circ F_i$, and the bound \eqref{Uucin} holds). On the other hand, we explained that $f:H^{s/2,d}(\Omega)\to H^{s/2,d}(M)$ defined by $f(u):=U$ may not necessarily be a Lipschitz map; in case it were, it would imply that $A'\in\tilde{\mathcal{F}}_\p^d(M)$ and we would conclude the proof.

Fix $A\in \tilde{\mathcal{F}}_\p^d(\Omega)$. We want to regularise the map $f$ (restricted to $A$) by a very close equivariant map $f_{\delta}:A\to H^{s/2}(M)\setminus\{0\}$ which is moreover Lipschitz, which will allows us to conclude the result. Let $\xi:[0,+\infty)\to[0,1]$ be a smooth, increasing function such that $\xi(0)=0$ and $\xi(x)=1$ for $x\geq 1$. Let $\delta>0$, which we will choose later \textit{depending on $A$}, and set $$\xi_\delta(t)=\xi(t/\delta)$$ and $\eta_\delta(p)=\xi_\delta\circ {\rm dist}(p,\cup_i \partial C_i))$. The assignation $f_{\delta}(u)=U_\delta:=\eta_\delta U$ is still equivariant, and it is not zero if $u$ isn't either. Define $A_\delta'=f_{\delta}(A)$.

We now compute:
\begin{align*}
    [U_\delta]_{H^{s/2}(M)}&=\sum_{i,j} \iint_{C_i\times C_j}(U_\delta(p)-U_\delta(q))^2{\rm dist}_M(p,q)^{-(n+s)}\,dp\,dq\\
    &= \sum_i \iint_{C_i\times C_i} ((\eta_\delta U)(p)-(\eta_\delta U)(q))^2{\rm dist}_M(p,q)^{-(n+s)}\,dp\,dq \\
    &\qquad+ \sum_{i\neq j} \iint_{C_i\times C_j}((\eta_\delta U)(p)-(\eta_\delta U)(q))^2{\rm dist}_M(p,q)^{-(n+s)}\,dp\,dq\\
    &\leq 2\sum_i \iint_{C_i\times C_i} \eta_\delta^2(p) (U(p)- U(q))^2{\rm dist}_M(p,q)^{-(n+s)}\,dp\,dq\\
    &\hspace{0.5cm} +2\sum_i\iint_{C_i\times C_i} U(p)^2(\eta_\delta(p) -\eta_\delta(q))^2{\rm dist}_M(p,q)^{-(n+s)}\,dp\,dq\\
    &\hspace{0.5cm} +4\sum_{i} \iint_{C_i\times(M\setminus C_i)}\eta_\delta^2(p) U^2(p){\rm dist}_M(p,q)^{-(n+s)}\,dp\,dq\, .
\end{align*}
Using that $\eta_\delta\leq 1$ and $|\eta_\delta(p)-\eta_\delta(q)|\leq C(\delta){\rm dist}_M(p,q)$, together with \eqref{kerkdtcmp} we can bound
\begin{align*}
    &2\sum_i \iint_{C_i\times C_i} \eta_\delta^2(p) (U(p)- U(q))^2{\rm dist}_M(p,q)^{-(n+s)}\,dp\,dq\\
    &\qquad+2\sum_i\iint_{C_i\times C_i} U(p)^2(\eta_\delta(p) -\eta_\delta(q))^2{\rm dist}_M(p,q)^{-(n+s)}\,dp\,dq\leq\\
    &\leq 2\sum_i \iint_{C_i\times C_i} (U(p)- U(q))^2{\rm dist}_M(p,q)^{-(n+s)}\,dp\,dq+C(\delta)\sum_i\iint_{C_i\times C_i} U(p)^2{\rm dist}_M(p,q)^{-(n+s-2)}\,dp\,dq\\
    &\leq 2\sum_i \iint_{C_i\times C_i} (U(p)- U(q))^2{\rm dist}_M(p,q)^{-(n+s)}\,dp\,dq+C(\delta)\sum_i\int_{C_i}U(p)^2\,dp\\
    &\leq C(\delta)\sum_i\|U\|_{H^{s/2}(C_i)}^2\\
    &\leq C(\delta)\sum_i\|u\|_{H^{s/2}(\mathcal C_i)}^2.
\end{align*}
We now bound the outer interaction term, using \eqref{kerkdtcmp} and the fact that $\eta_\delta(p)\leq C(\delta)\text{dist}_M(p,\partial C_i)$:
\begin{align*}
    \sum_i&\iint_{C_i\times(M\setminus C_i)}\eta_\delta^2(p) U^2(p){\rm dist}_M(p,q)^{-(n+s)}\,dp\,dq\\
    &\leq C\iint_{C_i\times(M\setminus C_i)}\eta_\delta^2(p) U^2(p)\text{dist}_M(p,q)^{-(n+s)}\,dp\,dq\leq C\sum_i\int_{C_i}\eta_\delta^2(p)U^2(p)\text{dist}_M(p,\partial C_i)^{-s}\\
    &\leq C(\delta)\sum_i\int_{C_i}U^2(p)\text{dist}_M(p,\partial C_i)^{2-s}\leq C(\delta)\sum_i\int_{C_i}U^2(p)\\
    &\leq C(\delta)\sum_i\|U\|_{H^{s/2}(C_i)}^2\leq C(\delta)\sum_i\|u\|_{H^{s/2}(\mathcal C_i)}^2\,.
\end{align*}
In other words, we have found that $\|U_\delta\|_{H^{s/2}(M)}^2=\|U_\delta\|_{L^2(M)}^2+[U_\delta]_{H^{s/2}(M)}^2\leq C(\delta)\sum_i\|u\|_{H^{s/2}(\mathcal C_i)}^2$. Applying this to $u_1-u_2$ instead, we conclude that our assignation $f_{\delta}(u)=U_\delta$ is Lipschitz. This shows that $\mathcal H^{n-1}(A_\delta')\leq C(\delta)\mathcal H^{n-1}(A)<\infty$ and therefore $A_\delta'\in\mathcal F_\p$.

We will now show that for $\delta>0$ small enough depending on $A$, we have that
\begin{equation}\label{ACUUddif}
    \sup_{u\in A} |\mathcal E_\ep^d(U,M)-\mathcal E_\ep^d(U_\delta,M)|\leq \frac{C}{1-s}
\end{equation}
for some constant $C$ depending only on the $C_i\subset M$ (but not on $A$). For that, we will separately show that $$\|U_\delta-U\|_{H^{s/2}(M)}\leq \frac{C}{1-s}$$ and
$$\sup_{u\in A}|\mathcal E_\ep^{\rm Pot}(U_\delta,M)-\mathcal E_\ep^{\rm Pot}(U,M)|\leq C$$
for $\delta>0$ small enough (in fact, the second error can be made to go to zero).

Let $V=U-U_\delta=(1-\eta_\delta)U$. The computations above with $V$ in place of $U_\delta$ give that
\begin{align*}
    [V]_{H^{s/2}(M)}
    &\leq 2\sum_i \iint_{C_i\times C_i} (1-\eta_\delta)^2(p) (U(p)- U(q))^2{\rm dist}_M(p,q)^{-(n+s)}\,dp\,dq\\
    &\hspace{0.5cm} +2\sum_i\iint_{C_i\times C_i} U(p)^2(\eta_\delta(p) -\eta_\delta(q))^2{\rm dist}_M(p,q)^{-(n+s)}\,dp\,dq\\
    &\hspace{0.5cm} +4\sum_{i} \iint_{C_i\times(M\setminus C_i)}(1-\eta_\delta)^2(p) U^2(p){\rm dist}_M(p,q)^{-(n+s)}\,dp\,dq\, .
\end{align*}
Since $|U|\leq 1$, we can just bound the last term by $\frac{C}{1-s}$ with a $C$ constant depending on the fractional perimeters of the $C_i$. Moreover, using \eqref{itrop2} and the fact that $\|\eta_\delta\|_{BV(M)}$ is uniformly bounded independently of $\delta$ (which is clear from its definition as a regularisation of a characteristic function) we can bound the second term:
\begin{align*}
    \sum_i\iint_{C_i\times C_i} U(p)^2(\eta_\delta(p) -\eta_\delta(q))^2{\rm dist}_M(p,q)^{-(n+s)}\,dp\,dq&\leq C\iint_{M\times M} (\eta_\delta(p) -\eta_\delta(q))^2{\rm dist}_M(p,q)^{-(n+s)}\,dp\,dq\\
    &\leq \frac{C}{1-s}\,.
\end{align*} 
Furthermore, from the definition of $\eta_\delta$ we can bound
\begin{align*}
    \|U\|_{L^2(M)}&=\sum_i \iint_{C_i} (1-\eta_\delta)^2(p) U(p)^2\,dp\leq \sum_i \int_{C_i\cap\{p:{\rm dist}(p,\partial C_i)\leq \delta\}} U(p)^2\,dp
\end{align*}
and
\begin{align*}
    &\sum_i \iint_{C_i\times C_i} (1-\eta_\delta)^2(p) (U(p)- U(q))^2{\rm dist}_M(p,q)^{-(n+s)}\,dp\,dq\leq\\
    &\qquad\leq \sum_i \iint_{(C_i\cap\{p:{\rm dist}(p,\partial C_i)\leq \delta\})\times C_i} (U(p)- U(q))^2{\rm dist}_M(p,q)^{-(n+s)}\,dp\,dq\,,
\end{align*}
both of which go uniformly to zero as $\delta\to 0$ thanks to the equi-integrability in $H^{s/2}$ of the $u$ in $A$ (which follows immediately from the compactness of $A$, or more precisely the equivalent characterisation of being a totally bounded set of functions, together with the fact that a finite family of functions is always equi-integrable).
Putting everything together, we see that 
$$\|U_\delta-U\|_{H^{s/2}(M)}=\|V\|_{H^{s/2}(M)}\leq \frac{C}{1-s}$$
for $\delta=\delta(A)$ small enough.

Finally, regarding the difference in potential energies, we can write
\begin{align*}
    \mathcal E_\ep^{\rm Pot}(U_\delta,M)&=\frac{1}{\ep^s}\int_M W(U_\delta(p))\,dp=\frac{1}{\ep^s}\int_M W(\eta_\delta(p) U(p))\,dp\,,
\end{align*}
so that (since $|U|,|U_\delta|,|\eta_\delta|\leq 1$)
\begin{align*}
    |\mathcal E_\ep^{\rm Pot}(U_\delta,M)-\mathcal E_\ep^{\rm Pot}(U,M)|&\leq \frac{C}{\ep^s}|\{p:U_\delta(p)\neq U(p)\}|=\frac{C}{\ep^s}|\{p:{\rm dist}(p,\cup_i \partial C_i)\leq \delta\}|\leq C
\end{align*}
for $\delta$ small enough depending on $\ep$.\\

With this, we end the proof of \eqref{ACUUddif}. Using the $A_\delta$ instead of $A$ in Steps 1--3, the only difference being that we need to consider an additional constant error term on the RHS of \eqref{Uucin} (and which immediately gets killed after \eqref{i4in} since it gets divided by $\p^{s/n}$ before letting $\p\to\infty$), we finish the proof of Proposition \ref{supleq2}.

\section{Some Riemannian computations}\label{sec:riemcomput}
\begin{proof}[Proof of \eqref{inprodcomp} from \eqref{inprodcomp0}]
For a function $f(q')$ on $M$, its gradient $\nabla_{q'}^M f$ at $q'=\varphi_q(y)$ is represented in coordinates by the Euclidean vector $\varphi_q^{*}(\nabla_{q'}^M f)=g^{-1}(y)\nabla_y^{\R^n} f(\varphi(y))$. Here $g^{-1}$ denotes the inverse of the matrix $g=(g_{ij})_{ij}$ corresponding to the metric of $M$ in coordinates, and the product is to be understood as a matrix product. This shows, then, that $\nabla_{q'}^M K_{s-2}(\varphi_q(0),\varphi_q(y))$ corresponds to $g^{-1}\nabla_y^{\R^n}K_{s-2}(\varphi_q(0),\varphi_q(y))$ in coordinates.\\

Regarding the normal vector $\nu_{\varphi_q(\hat\Gamma)}(\varphi_q(y))$, it corresponds to 
\begin{align}\label{nucriem}
\varphi^{*}(\nu_{\varphi_q(\hat\Gamma)})=\frac{1}{\sqrt{\nu_{\hat\Gamma}^tg^{-1}\nu_{\hat\Gamma}}}g^{-1}\nu_{\hat\Gamma}(y)
\end{align}
in coordinates: Indeed, let $v\in T_{q'}M$ be tangent to the surface $\varphi_q(\hat\Gamma)$, which means that its Euclidean representative $\varphi^{*}(v)$ is tangent to $\hat\Gamma$ at $y$. Then,
    \begin{align*}
        \langle v, \varphi_*(\frac{1}{\sqrt{\nu_{\hat\Gamma}^tg^{-1}\nu_{\hat\Gamma}}}g^{-1}\nu_{\hat\Gamma})\rangle_g=\frac{1}{\sqrt{\nu_{\hat\Gamma}^tg^{-1}\nu_{\hat\Gamma}}}[(\varphi_*)^{-1}(v)]^tgg^{-1}\nu_{\hat\Gamma}=\frac{1}{\sqrt{\nu_{\hat\Gamma}^tg^{-1}\nu_{\hat\Gamma}}}[(\varphi_*)^{-1}(v)]^t\nu_{\hat\Gamma}=0
    \end{align*}
    by orthogonality. Furthermore,
    \begin{align*}
        |\varphi_*(\frac{1}{\sqrt{\nu_{\hat\Gamma}^tg^{-1}\nu_{\hat\Gamma}}}g^{-1}\nu_{\hat\Gamma})|_g=\frac{1}{\nu_{\hat\Gamma}^tg^{-1}\nu_{\hat\Gamma}}\nu_{\hat\Gamma}^tg^{-1}gg^{-1}\nu_{\hat\Gamma}=\frac{1}{\nu_{\hat\Gamma}^tg^{-1}\nu_{\hat\Gamma}}\nu_{\hat\Gamma}^tg^{-1}\nu_{\hat\Gamma}=1\, .
    \end{align*}
These two properties imply that $\varphi_*(\frac{1}{\sqrt{\nu_{\hat\Gamma}^tg^{-1}\nu_{\hat\Gamma}}}g^{-1}\nu_{\hat\Gamma})$ is the unit vector orthogonal to $\varphi_q(\hat\Gamma)$, as we wanted to see.\\

Combining our computations so far, we conclude that
\begin{align*}
\langle\nu_{\varphi_p(\hat\Gamma)}(\varphi_p(y)),\nabla_{q'}^M K_{s-2}(\varphi_p(0),\varphi_p(y))\rangle_{g(\varphi_p(y))}&=\frac{1}{\sqrt{\nu_{\hat\Gamma}^t g^{-1} \nu_{\hat\Gamma}}}\big[g^{-1}\nu_{\hat\Gamma}(y)\big]^t g \big[g^{-1}\nabla_y^{\R^n}K_{s-2}(\varphi_p(0),\varphi_p(y))\big]\\
&=\frac{1}{\sqrt{\nu_{\hat\Gamma}^t g^{-1} \nu_{\hat\Gamma}}}\big[\nu_{\hat\Gamma}(y)\big]^t g^{-1}(y) \nabla_y^{\R^n}K_{s-2}(\varphi_p(0),\varphi_p(y))\,,
\end{align*}
which leads to \eqref{inprodcomp} from \eqref{inprodcomp0}.
\end{proof}

\begin{proof}[Proof of \eqref{mcrequl}]
We now show \eqref{mcrequl}, i.e. that the mean curvature of $\varphi_p(\hat\Gamma)$ at $p\in M$ is the same as the mean curvature of $\hat\Gamma$ at $0\in\R^n$ thanks to having taken normal coordinates. Let $X_i=\varphi_*(e_i)$, where $e_1,...,e_n$ is the standard basis of $\R^n$. Consider the second fundamental form of $\varphi_p(\hat\Gamma)$ at $p$, represented by the matrix $\Big(\langle \nabla_{X_i} \nu_{\varphi_p(\hat\Gamma)},X_j\rangle_g(p)\Big)_{ij}$ in our coordinates. Here $\nabla_V W$ indicates covariant differentiation with respect to the Levi-Civita connection, which letting $\varphi^*(V)=V^ie_i$ and $\varphi^*(W)=W^je_j$ has the coordinate expression
$$
\varphi^*(\nabla_V W)=V^i\frac{\partial W^j}{\partial x^i}e_j+V^iW^j\Gamma_{ij}^ke_k\,.
$$
Now, in normal coordinates, the metric is Euclidean at the origin and its first derivatives vanish there; this also gives that the Christoffel symbols $\Gamma_{ij}^k$ of the Levi-Civita connection vanish at $0$. Therefore, we can compute
\begin{align*}
    \langle \nabla_{X_i} \nu_{\varphi_p(\hat\Gamma)},X_j\rangle_g(p)&=\big([\varphi^*(\nabla_{X_i} \nu_{\varphi_p(\hat\Gamma)})]^tg[e_j]\big)(0)=\frac{\partial\varphi^*(\nu_{\varphi_p(\hat\Gamma)})}{\partial x^i}\Big|_{x=0}\cdot e_j\,.
\end{align*}
Now, differentiating \eqref{nucriem} and recalling once again that $g$ and its first derivatives are Euclidean at the origin, we see that actually $\frac{\partial\varphi^*(\nu_{\varphi_p(\hat\Gamma)})}{\partial x^i}\Big|_{x=0}=\frac{\partial\nu_{\hat\Gamma}}{\partial x^i}\Big|_{x=0}$, thus we arrive at
\begin{align*}
    \langle \nabla_{X_i} \nu_{\varphi_p(\hat\Gamma)},X_j\rangle_g(p)&=\frac{\partial\nu_{\hat\Gamma}}{\partial x^i}\Big|_{x=0}\cdot e_j\,.
\end{align*}
On the other hand, the expression on the right is actually the $(i,j)$-th entry of the second fundamental form of $\hat\Gamma$ at $0$, which shows that actually both matrices are equal. Since the traces of the second fundamental forms are precisely the corresponding mean curvatures, this concludes our proof.
\end{proof}

\section{Gradient behaviour of the kernel $K_s$ -- Proof of Proposition \ref{gradcomp}}\label{app:gradk}
We will first show three intermediate results on the gradient behaviour of the heat kernel $H_M$, after which the proof of Proposition \ref{gradcomp} will be given.
\begin{proposition}\label{prop:hkexploc}
    Let $M$ be a closed, $n$-dimensional Riemannian manifold. Assume that the flatness assumptions ${\rm FA}_\ell(M,g,4,p_0,\varphi)$ are satisfied, with $l=n+2$. Let $H_M(p,q,t)$ be the heat kernel of $M$, and let $E_M:=\frac{1}{(4\pi t)^{n/2}}e^{-\frac{{\rm dist}_M^2(p,q)}{4t}}$. Then, writing $H(x,y,t): = H_M(\varphi(x),\varphi(y),t)$ and $E(x,y,t):=E_M(\varphi(x),\varphi(y),t)$, there are constants $C,c,r>0$ depending only on $n$ such that
    \begin{align*}
        |\nabla_y (H-E)|(x,y,t)\leq C|x-y|\Big(1+\frac{|x-y|^2}{t}\Big)E+Ct^{l+1}+Ce^{-c/t}
    \end{align*}
    for all $(x,y,t)\in\B_{r}(0) \times \B_{r}(0)\times (0,1]$.
\end{proposition}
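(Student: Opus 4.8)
\textbf{Proof strategy for Proposition \ref{prop:hkexploc}.} The plan is to exploit the classical parametrix construction for the heat kernel on a Riemannian manifold, which is available under the flatness assumption ${\rm FA}_\ell$ with $\ell = n+2$, and to differentiate it once to gain control on $\nabla(H-E)$. Recall that the Minakshisundaram--Pleijel parametrix expresses
\[
H_M(p,q,t) = E_M(p,q,t)\Big(u_0(p,q) + u_1(p,q)t + \cdots + u_N(p,q)t^N\Big) + R_N(p,q,t),
\]
where the $u_j$ are smooth coefficients determined recursively by transport equations along geodesics (with $u_0$ built from the volume density $\Theta(p,q)^{-1/2}$ in normal coordinates), and $R_N$ is a remainder which, together with its spatial derivatives, is $O(t^{N+1-n/2})$ in a neighborhood of the diagonal (for $N$ large enough depending on $n$). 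The key point is that, thanks to ${\rm FA}_\ell$, all the relevant quantities are controlled in $C^{\ell}$ (hence in $C^1$, since $\ell \geq 2$) by constants depending only on $n$: in particular $u_0(x,x) = 1$, $\nabla_x u_0|_{x=y}$ and $\nabla_y u_0|_{x=y}$ vanish (because in normal coordinates $\Theta = 1 + O(|x-y|^2)$), and more generally $|u_0(x,y) - 1| \leq C|x-y|^2$ with $|\nabla u_0(x,y)| \leq C|x-y|$ on $\B_r \times \B_r$. Choosing $N = \ell = n+2$ suffices to make $R_N$ and $\nabla R_N$ bounded by $Ct^{l+1}$ on $(0,1]$.

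First I would write $H - E = E(u_0 - 1) + E(u_1 t + \cdots + u_N t^N) + R_N$ and differentiate term by term. The term $E(u_0-1)$ contributes $\nabla E \cdot (u_0 - 1) + E \nabla u_0$; using $|\nabla E| \leq C\frac{|x-y|}{t} E$, $|u_0 - 1| \leq C|x-y|^2$, and $|\nabla u_0| \leq C|x-y|$, this is bounded by $C|x-y|\big(1 + \frac{|x-y|^2}{t}\big)E$, which is exactly the first term claimed. For the terms $E u_j t^j$ with $j \geq 1$, differentiating gives $\nabla E \cdot u_j t^j + E\, \nabla u_j\, t^j$; here $\nabla E \cdot t^j$ is $\frac{|x-y|}{t}E \cdot t^j = |x-y|\, t^{j-1} E$, and since $j \geq 1$ and $E \leq C t^{-n/2} e^{-|x-y|^2/4t}$, one absorbs the exponential: for $|x-y|$ bounded, $t^{j-1-n/2} e^{-|x-y|^2/4t} \leq C$ uniformly, and more precisely these terms are $O(1)$, hence dominated by $C|x-y|(1 + |x-y|^2/t)E$ plus a bounded (hence $Ct^{l+1}$-absorbable, after noting $t \leq 1$) remainder. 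Actually the cleanest bookkeeping is: each such term is either $\leq C|x-y|(1+|x-y|^2/t)E$ or $\leq C$ for $t \in (0,1]$; the latter is $\leq C$, which we fold into the $Ct^{l+1}$ slot after possibly enlarging $C$ — wait, $Ct^{l+1} \to 0$, so a bounded-but-not-small term must instead be handled by observing that for these lower-order parametrix terms the explicit power of $t$ is strictly positive and the Gaussian factor is harmless, giving a genuine $O(t)$ bound, which is $\leq Ct^{l+1}$ only if we are more careful. The honest resolution is to keep $N$ large: taking $N$ large enough (say $N \geq \ell + n$) forces every parametrix term beyond $u_0$ to carry a power $t^j$ with $j$ large, so after extracting $\nabla E$ one still has $t^{j-1}$ with $j - 1 \geq \ell + 1$, and the Gaussian $E$ only helps; thus those terms are $\leq C t^{l+1}$ as required, while $\nabla R_N$ is $\leq C t^{N+1-n/2} \leq C t^{l+1}$.

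Finally, the term $Ce^{-c/t}$ accounts for the difference between the genuine heat kernel and the (compactly-truncated) parametrix away from but near the diagonal, i.e. the standard fact that gluing the parametrix with a cutoff at a fixed small radius $r$ produces an error which, together with its derivatives, decays exponentially like $e^{-c/t}$ as $t \to 0^+$ (this is where the geometry of $M$ outside $\varphi(\B_4)$ enters, but only through such exponentially small corrections, uniformly by ${\rm FA}_\ell$). The main obstacle I anticipate is the bookkeeping of powers of $t$ versus the Gaussian weight: one must be careful that the bound is stated \emph{pointwise} in $(x,y,t)$ rather than after integration, so the $(1 + |x-y|^2/t)E$ shape is essential and cannot be simplified; verifying that every differentiated parametrix term genuinely fits into one of the three allowed shapes — the Gaussian term, the polynomial-in-$t$ term $Ct^{l+1}$, or the exponentially small term — requires choosing $N$ sufficiently large and tracking the $C^1$-norms of the Minakshisundaram coefficients via the transport equations, all of which are controlled by ${\rm FA}_\ell$ with the stated $\ell = n+2$ (and enlarging $\ell$ in the cited Proposition \ref{prop:kern1} framework if the parametrix order forces it, which is harmless since ${\rm FA}_\ell$ for larger $\ell$ is assumed available in Remark \ref{fbsvdg}).
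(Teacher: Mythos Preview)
Your core idea—differentiate the Minakshisundaram--Pleijel expansion and exploit $|u_0-1|\le C|x-y|^2$, $|\nabla u_0|\le C|x-y|$—is exactly the computation the paper carries out, but you are missing the structural step that makes the constants depend only on $n$. The coefficients $u_j$ are locally determined by the metric and hence controlled by ${\rm FA}_\ell$; the remainder $R_N=H_M-E_M\sum t^ju_j$, however, is built from the \emph{global} heat kernel of $M$ (via Duhamel), so its bounds a priori depend on all of $M$, not just on the chart. The paper resolves this by first invoking a Localisation Lemma (Lemma~\ref{lemaux2}, from \cite{FracSobPaper}) to pass from $M$ to a model manifold $M'=(\mathbb T^n,g')$ that agrees with $M$ on $\B_1$ and is flat outside $\B_2$; this costs precisely the $Ce^{-c/t}$ term (namely $|\nabla(H-H')|\le Ce^{-c/t}$), and on $M'$ the entire geometry is controlled by $n$ alone, so the heat kernel expansion (Proposition~\ref{prop:hkexp}) carries uniform constants. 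Your attempt to extract $Ce^{-c/t}$ from the cutoff in the parametrix on $M$ itself is a different mechanism and does not yield $n$-only constants without something equivalent to this localisation.

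Your bookkeeping for the $j\ge1$ terms also goes astray. In the paper, the $C^2$ remainder bound of Proposition~\ref{prop:hkexp} gives $|\nabla(H'-E'\sum_{j=0}^\ell t^ju_j)|\le Ct^{\ell+1}$ directly with $\ell=n+2$ fixed; the finitely many intermediate terms $E'(tu_1+\cdots+t^\ell u_\ell)$ are then handled by the product rule and absorbed alongside the $u_0-1$ contribution. Your proposed fix of taking $N\ge\ell+n$ so that every term past $u_0$ lands in the $Ct^{\ell+1}$ bucket is incompatible with the hypothesis: constructing $u_j$ with controlled norms needs roughly $j$ derivatives of the metric, so ${\rm FA}_\ell$ with $\ell=n+2$ caps you at $N=\ell$, and Remark~\ref{fbsvdg} does not let you raise $\ell$ while keeping constants independent of $M$ (the radius $R_0$ there shrinks with $\ell$).
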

\begin{proof}[Proof of Proposition \ref{prop:hkexploc}]
\textbf{Step 1.} Reduction to the case of a torus with perturbed metric.\\
As usual, $g_{ij}$ will denote the coefficients of the metric $g$ in the coordinates given by $\varphi$. We want to embed isometrically a piece of our manifold $M$ inside a torus with perturbed metric (from its original flat one). For that, let $\eta:\R^n\to\R$ be a standard cutoff, with $\chi_{\B_1}\le \eta \le \chi_{\B_2}$. Consider the $n$-dimensional flat torus $(\mathbb T^n,\delta_{ij})$ of side length $8$, obtained from the hypercube of side length $8$ centered at $0\in\R^n$ by identifying opposite faces, and let $\varphi':\R^n\to\mathbb T^n$ denote the quotient map defining $\mathbb T^n$. Define a new Riemannian manifold $M'=(\mathbb T^n,g')$, where $g'_{ij}:=\eta g_{ij}+(1-\eta)\delta_{ij}$ is just the usual flat metric except on $\varphi'(\B_2)\subset \mathbb T^n$. It is obvious then that $M'$ satisfies the flatness assumptions ${\rm FA}_\ell(M',g',1,p_0'=0,\varphi')$, since by construction $g_{ij}\equiv g'_{ij}$ in $\B_1$ in the coordinates given by $\varphi$ and $\varphi'$. Thanks to this, as recorded in \cite[Lemma 2.17]{FracSobPaper} the heat kernels of both manifolds in $\B_{1/2}$ are actually quantitatively close, in the sense that 
\begin{align}\label{hhpb}
        |\nabla_y (H-H')|(x,y,t)\leq Ce^{-c/t}\, ;
\end{align}
we reproduce here the precise statement for the convenience of the reader.
\begin{lemma}[\textbf{Localisation principle}, \cite{FracSobPaper}]\label{lemaux2}
Let $(M,g)$ and  $(M',g')$ be  two Riemannian $n$-manifolds. Assume that both $M$  and $M'$ satisfy the flatness assumptions ${\rm FA_\ell}(M,g,1,p_0, \varphi)$ and ${\rm FA_\ell}(M',g,1,p_0', \varphi')$ respectively, and suppose that $g_{ij}\equiv g'_{ij}$ in $\B_1(0)$ in the coordinates induced by $\varphi^{-1}$ and $(\varphi')^{-1}$. 

Then, letting $H(x,y,t): = H_M(\varphi(x),\varphi(y),t)$ and $H'(x,y,t): = H_{M'} (\varphi'(x),\varphi'(y),t)$, we have that the difference $(H-H')(x,y,t)$ is of class $C^\ell$ in  $\B_{1/2}(0) \times \B_{1/2}(0)\times [0,\infty)$ and 
\[
\bigg|\frac{\partial^{|\alpha|}}{\partial x^{\alpha}} \frac{\partial^{|\beta|}}{\partial y^{\beta}} (H-H')(x,y,t)\bigg| \le C \exp(-c/t)\quad \mbox{for } (x,y,t)\in \B_{1/2}(0) \times \B_{1/2}(0)\times [0,\infty),
\]
whenever  $\alpha$ and $\beta$ are multi-indices satisfying $|\alpha|+|\beta| \le \ell$, 
with $C,c>0$ depending only on $n$ and $\ell$.
\end{lemma}
Moreover, defining $E_{M'}:=\frac{1}{(4\pi t)^{n/2}}e^{-\frac{{\rm dist}_{M'}^2(p,q)}{4t}}$ and $E'(x,y,t):=E_{M'}(\varphi(x),\varphi(y),t)$, there is $r=r(n)>0$ such that
\begin{align}\label{eepb}
        |\nabla_y (E-E')|(x,y,t)=0
\end{align}
for $(x,y,t)\in\B_{r}(0) \times \B_{r}(0)\times (0,\infty)$, simply because $E\equiv E'$ in this domain. Indeed, by a trivial comparison with the Euclidean metric, the flatness assumptions imply that there is some small $r=r(n)>0$ for which if $(x,y)\in\B_{r}(0) \times \B_{r}(0)$, then the distances ${\rm dist}_M(\varphi(x),\varphi(y))$ and ${\rm dist}_{M'}(\varphi'(x),\varphi'(y))$ are completely determined by the metrics $g_{ij}\equiv g'_{ij}$ in $\B_1$, and therefore they are actually identical. Since $E$ and $E'$ depend only on ${\rm dist}_M$ and ${\rm dist}_{M'}$, we conclude \eqref{eepb}.\\

Combining \eqref{hhpb} and \eqref{eepb}, we deduce that
\begin{align}\label{hepb}
        |\nabla_y (H-E)|(x,y,t)\leq |\nabla_y (H'-E')|(x,y,t)+ Ce^{-c/t}
\end{align}
for $(x,y,t)\in\B_{r}(0) \times \B_{r}(0)\times (0,\infty)$, thus reducing our problem to showing the proximity of $H'$ and $E'$.\\

\textbf{Step 2.} Closeness between $\nabla_y H'$ and $\nabla_y E'$.\\
Now, the closeness between $ H_{M'}$ and $ E_{M'}$ is precisely the content of the heat kernel expansion on a compact manifold. A precise statement that will serve for our purposes is the following:
\begin{proposition}[\textbf{Heat kernel expansion}]\label{prop:hkexp}
    Let $(N,h)$ be a closed, $n$-dimensional Riemannian manifold. Consider the new manifold $(N,h')$, and assume that $\frac{1}{C}h\le h' \le Ch$ and $\| h'\|_{C^k((N,h))}\leq C$ for all $1\le k \le l=n+2$. The latter means that, given any pair $X,Y$ of vector fields on $(N,h)$ with $C^l$ norm bounded by $1$, $h'(X,Y)$ is a function on $(N,h)$ with $C^l$ norm bounded by $C$.\\
    Then, there is
    a constant $C'=C'(h,C,l)$ such that there exist smooth functions $u_0,...,u_l$ on $M\times M$ with $C^2$ norms bounded by $C'$ such that
    $$\|H_{N,h'}-E_{N,h'}(u_0+tu_1+...+t^lu_l)\|_{C^2(M\times M)}\leq C't^{l+1}$$
    for all $t\in (0,1]$. Moreover, $u_0(p,p)\equiv 1$ and $\nabla_y u_0(p,p)\equiv 0$.
\end{proposition}
\begin{proof}[Proof of Proposition \ref{prop:hkexp}]
    For $h'=h$, i.e. without quantifying the result on the metric, this is a standard result obtained by constructing the heat kernel on a closed manifold using some parametrix approach. The fact that the coefficients in the expansion can be quantified in terms of the metric, in the sense that the result holds uniformly for $h'$ satisfying the assumptions in the present proposition, follows from a careful inspection of the version of the approach mentioned above which uses normal coordinates to construct the parametrix. This is a standard way of constructing the parametrix, which the reader can find for example in \cite[Chapter 3]{Ros}. In this approach, one first constructs locally a parametrix working on normal coordinates around every point of the manifold, by explicitly solving certain ODEs in polar coordinates to define an approximate heat kernel (parametrix) of the form
    $$H^{(0)}=E_{N,h'}(u_0+tu_1+...+t^lu_l)$$
    on a neighbourhood of the diagonal of the manifold. The $u_i$ are explicit in the metric in normal coordinates, and since the latter (together with derivatives) is naturally uniformly controlled for all our admissible $h'$, so are the $u_i$. In the precise case of $u_0$, we have that $u_0(p,q)=\mathcal D^{-1/2}$ with
    $$\mathcal D={\rm det} \big[D_{{\rm exp}_p^{-1}(q)}{\rm exp}_p\big]=1+O({\rm dist}_{N,h'}^2(p,q))\,.$$
    This shows that $u_0(p,p)\equiv 1$ and $\nabla_y u_0(p,p)\equiv 0$.
    
    One then proceeds by extending in space the parametrix to a function $H^{(0)}$ on all of $M\times M$ simply by the use of a cutoff which is identically $1$ on a neighbourhood of the diagonal. Of course, our assumptions imply that any fixed choice of such a cutoff function has uniformly bounded $C^l$ norm for all our admissible $h'$.
    
    Finally, one constructs the heat kernel on $M\times M$ iteratively, essentially by considering the error $LH^{(k)}$ of the approximate heat kernel $H^{(k)}$ at a certain step $k$ to vanishing under the heat operator $L:=\partial_t-\Delta$, convolving this error with the approximate heat kernel $H^{(k)}$ itself, and adding this quantity to $H^{(k)}$ to obtain a better approximation $H^{(k+1)}$; see \cite[page 98]{Ros} for the formal computation. This procedure converges to the true (unique) solution $H_{N,h'}$, and the difference/error between $H^{(0)}$ and $H_{N,h'}$ is then given by an explicit infinite sum made of iterated convolutions between $H^{(0)}$ and $LH^{(0)}$. This error difference (together with its first $\lfloor l-n/2\rfloor\geq 2$ spatial derivatives) is then shown to be uniformly bounded by $Ct^{l+1}$, see \cite[Lemma 3.18]{Ros}, for all $t\in(0,1]$, which gives our desired result. Here $C$ is a constant which depends on $H^{(0)}$, $LH^{(0)}$ and the metric tensor of $(N,h')$,  which shows that $C$ is uniformly controlled for all $h'$: since $H^{(0)}$ is explicitly constructed in terms of the metric in normal coordinates (as explained at the beginning), both $H^{(0)}$ and $LH^{(0)}$ (and, of course, the metric tensor itself $h'$ as well) are uniformly controlled for all $h'$ satisfying our assumptions.
\end{proof}
Applying Proposition \ref{prop:hkexp} with $N=\mathbb T^n$, $ h=\delta_{ij}$ and $h'=g'$, and noting that the conditions $\frac{1}{C}h\le h' \le Ch$ and $\| h'\|_{C^k((N,h))}\leq C$ for all $1\le k \le l$ hold for some $C=C(n)$ thanks to the flatness assumptions and the fact that $h=h'$ outside of $\varphi'(\B_2)$, we deduce that
\begin{align*}
        |\nabla_y (H'-E')|(x,y,t)&\leq |\nabla_y (H'-u_0 E')|+|\nabla_y ((u_0-1) E')|\\
        &\leq |\nabla_y (H'-u_0 E')|+|u_0-1||\nabla_y E'|+|E'||\nabla_y (u_0-1)|\\
        &\leq |\nabla_y \big[E'(tu_1+...+t^lu_l)\big]|+Ct^{l+1}+|u_0-1||\nabla_y E'|+|E'||\nabla_y (u_0-1)|\\
        &\leq Ct|\nabla_y E'|+CtE'+Ct^{l+1}+|u_0-1||\nabla_y E'|+E'|\nabla_y (u_0-1)|\,.
\end{align*}
We have bounded the $u_k$ and their derivatives by uniform constants, with the exception of $(u_0-1)$. For this quantity, given that $u_0(p,p)\equiv 1$ and $\nabla^N u_0(p,p)\equiv 0$, bounding the second derivatives of $u_0$ by uniform constants we have the Taylor expansion $|u_0(\varphi(x),\varphi(y))-1|\leq C|x-y|^2$, which leads to
\begin{align}
        |\nabla_y (H'-E')|(x,y,t)&\leq C(t+|x-y|^2)|\nabla_y E'|+C|x-y|E'+Ct^{l+1}\,. \label{grfprb}
\end{align}
From the explicit expression of $E'$ we can bound $|\nabla_y E'|\leq C\frac{|x-y|}{t}E'$. Substituting into \eqref{grfprb} and combining the resulting expression with \eqref{hepb}, together with the fact that $E\equiv E'$ for $(x,y,t)\in\B_{r}(0) \times \B_{r}(0)\times (0,\infty)$ we conclude the proof of Proposition \ref{prop:hkexploc}.
\end{proof}
We can now finally give the proof of Proposition \ref{gradcomp}.
\begin{proof}[Proof of Proposition \ref{gradcomp}]
Define the explicit kernel $\widetilde K_s:= \frac{\alpha_{n,s}}{{\rm dist}^{n+s}(p,q)}$. Observe that, letting $E_t:=\frac{1}{(4\pi t)^{n/2}}e^{-\frac{{\rm dist}(p,q)^2}{4t}}$, the explicit kernel corresponds exactly to $\widetilde K_s=\frac{s/2}{\Gamma(1-s/2)} \int dt\,\frac{E_t}{t^{1+s/2}}$. Therefore, we can write
    \begin{align}
    \nabla_q K_s&=\frac{s/2}{\Gamma(1-s/2)}\nabla_q \int  \frac{H_t}{t^{1+s/2}}dt=\frac{s/2}{\Gamma(1-s/2)}\int \frac{\nabla_q E_t}{t^{1+s/2}}dt+\frac{s/2}{\Gamma(1-s/2)}\int \frac{\nabla_q (H_t-E_t)}{t^{1+s/2}}dt\nonumber\\
    &=\nabla_q \widetilde K_s+\frac{s/2}{\Gamma(1-s/2)}\int \frac{\nabla_q (H_t-E_t)}{t^{1+s/2}}dt\,,\label{kerqthecomp}
    \end{align}
    Now, we can compute
    \begin{align}
        \nabla_q \widetilde K_s(p,q)&=\nabla_q \frac{\alpha_{n,s}}{{\rm dist}_M^{n+s}(p,q)}\nonumber\\
        &=-\alpha_{n,s}(n+s)\frac{\nabla_q \,{\rm dist}(p,q)}{{\rm dist}_M^{n+s+1}(p,q)}\,.\label{kstwgrd}
    \end{align}
    Observe that the gradient $\nabla_q \,{\rm dist}(p,q)$ corresponds to the tangent vector of the geodesic from $p$ to $q$. Recalling that $\varphi={\rm exp}_p$ on its domain of definition, letting $q:=\varphi(y)$ we deduce that $[D_y\varphi]^{-1}(\nabla_q \,{\rm dist}(p,q))=\frac{y}{|y|}$. On the other hand, thanks to this and the Gauss Lemma we can write, for a vector $v\in\R^n$,
    \begin{align*}
        \partial_{v}{\rm dist}(p,\varphi(y))&=[D_q{\rm dist}(p,\cdot)]\circ [D_y\varphi](v)=\langle \nabla_q{\rm dist}(p,q),D_y\varphi(v)\rangle_g\\
        &=\langle D_y({\rm exp}_p)(\frac{y}{|y|}),D_y({\rm exp}_p)(v)\rangle_g=\frac{y}{|y|}\cdot v\,,
    \end{align*}
    which shows that
    \begin{align*}
        \nabla_y{\rm dist}(p,\varphi(y))=\frac{y}{|y|}=[D_y\varphi]^{-1}(\nabla_q \,{\rm dist}(p,q))\,.
    \end{align*}
    We remark that for a function different from the Riemannian distance, the equality between the LHS and RHS would not be true in general.\\
    
    In particular, from \eqref{kstwgrd} we immediately deduce that
    \begin{align*}
        \nabla_y \widetilde K_s(p,\varphi(y))
        &=-\alpha_{n,s}(n+s)\frac{y}{|y|^{n+s+2}}\,,
    \end{align*}
    so that thanks to \eqref{kerqthecomp} and the flatness assumptions we arrive at
    \begin{align}
    \Big|\nabla_y K_s(p,\varphi(y))-[-\alpha_{n,s}(n+s)\frac{y}{|y|^{n+s+2}}]\Big|&=\Big|\nabla_y K_s(p,\varphi(y))-\nabla_y \widetilde K_s(p,\varphi(y))\Big|\nonumber\\
    &\leq C\Big|\nabla_q K_s(p,q)-\nabla_q \widetilde K_s(p,q)\Big|_g\nonumber\\
    &\leq C\frac{s/2}{\Gamma(1-s/2)}\int_0^\infty \frac{|\nabla_q (H_t-E_t)(p,q)|_g}{t^{1+s/2}}dt\nonumber\\
    &\leq C\frac{s/2}{\Gamma(1-s/2)}\int_0^\infty \frac{|\nabla_y (H_t-E_t)(p,\varphi(y))|}{t^{1+s/2}}dt\,.\label{difgheq}
    \end{align}

    We will now bound \eqref{difgheq} by $O\Big(\frac{1}{|y|^{n+s-1}}\Big)$ and conclude.
    
    By Proposition \ref{prop:hkexploc}, we have the estimate
   \begin{align*}
        |\nabla_y (H-E)|(p,y,t)\leq C|y|\Big(1+\frac{|y|^2}{t}\Big)E(p,y,t)+Ct^{l+1}+Ce^{-c/t}
    \end{align*}
    for all $(x,y,t)\in\B_{r}(0) \times \B_{r}(0)\times (0,1]$.\\
    
    Since $l\geq 1$, we can easily bound
    \begin{align*}
    \int_0^1 \frac{t^{l+1}+e^{-c/t}}{t^{1+s/2}}dt=\int_0^1 t^{l-s/2}+\int_0^1\frac{e^{-c/t}}{t^{1+s/2}}dt\leq C\,.
    \end{align*}
    Using that
    $$E_t(p,\varphi(y))=\frac{1}{(4\pi t)^{n/2}}e^{-\frac{{\rm dist}(p,\varphi(y))^2}{4t}}=\frac{1}{(4\pi t)^{n/2}}e^{-\frac{|y|^2}{4t}}$$
    and performing the change of variables $r=\frac{t}{|y|^2}$, we see that
    \begin{align*}
    \int_0^1 \frac{C|y|\Big(1+\frac{|y|^2}{t}\Big)E(p,y,t)}{t^{1+s/2}}\,dt&= C|y|\int_0^1 \frac{\Big(1+\frac{|y|^2}{t}\Big)e^{-\frac{|y|^2}{4t}}}{t^{1+(n+s)/2}}\,dt\\
    &\leq C|y|^{-(n+s-1)}\int_0^{\infty} \frac{1+r^{-1}}{r^{1+(n+s)/2}}e^{-1/r}\,dr\\
    &=C|y|^{-(n+s-1)}\,.
    \end{align*}
    Finally, $\nabla H_t$ and $\nabla E_t$ are uniformly bounded for large times: it is clear for $E_t$ from its explicit formula, and a constant bound for the derivatives of $H_t$ is obtained in the proof of \cite[Theorem 2.13]{FracSobPaper}, by combining Lemma \ref{lemaux2} with \cite[Proposition 2.19]{FracSobPaper}. Therefore,
    \begin{align*}
        \int_1^\infty \frac{|\nabla_y (H_t-E_t)(p,\varphi(y))|}{t^{1+s/2}}dt\leq C\,.
    \end{align*}
    Putting everything together, we conclude the bound for \eqref{difgheq} and the proof of Proposition \ref{gradcomp}.
\end{proof}

\end{document}